\documentclass{amsart}

\usepackage[a4paper,margin=3cm]{geometry}
\usepackage{amsmath}
\usepackage{amssymb}
\usepackage{amsthm}
\usepackage{booktabs}
\usepackage{caption}
\usepackage[shortlabels]{enumitem}
\usepackage{mathtools}
\usepackage{multirow}
\usepackage{threeparttable}
\usepackage[all]{xy}
\usepackage{subfiles}
\usepackage{mathrsfs}
\usepackage{longtable}
\usepackage{todonotes}
\usepackage{hyperref}
\usepackage{pdflscape}
\usepackage{siunitx}
\usepackage{ulem}

\newtheorem{theorem}{Theorem}
\newtheorem{lemma}[theorem]{Lemma}
\newtheorem{prop}[theorem]{Proposition}
\newtheorem{corollary}[theorem]{Corollary}
\theoremstyle{definition}
\newtheorem{definition}[theorem]{Definition}
\newtheorem{example}[theorem]{Example}
\theoremstyle{remark}
\newtheorem{remark}[theorem]{Remark}

\DeclareMathOperator{\sgn}{sgn}
\DeclareMathOperator{\diag}{diag}
\DeclareMathOperator{\del}{del}
\DeclareMathOperator{\Aut}{Aut}

\DeclareMathOperator{\Out}{Out}
\DeclareMathOperator{\Hom}{Hom}
\DeclareMathOperator{\soc}{soc}
\DeclareMathOperator{\Core}{Core}

\DeclareMathOperator{\Rad}{Rad}
\DeclareMathOperator{\End}{End}

\newcommand{\boldL}{\mathbf{L}}

\DeclareMathOperator{\GL}{GL}
\DeclareMathOperator{\SL}{SL}

\DeclareMathOperator{\PSL}{PSL}
\let\L\relax
\DeclareMathOperator{\L}{L}

\newcommand{\boldU}{\mathbf{U}}
\DeclareMathOperator{\GU}{GU}
\DeclareMathOperator{\SU}{SU}
\DeclareMathOperator{\PSU}{PSU}
\DeclareMathOperator{\U}{U}

\newcommand{\boldS}{\mathbf{S}}
\DeclareMathOperator{\GSp}{GSp}
\DeclareMathOperator{\Sp}{Sp}
\DeclareMathOperator{\PSp}{PSp}
\let\S\relax
\DeclareMathOperator{\S}{S}

\newcommand{\boldO}{\mathbf{O}}

\DeclareMathOperator{\SO}{SO}
\DeclareMathOperator{\POmega}{P\Omega}

\let\O\relax
\DeclareMathOperator{\O}{O}

\DeclareMathOperator{\AGL}{AGL}
\DeclareMathOperator{\Alt}{Alt}
\DeclareMathOperator{\Sym}{Sym}

\newcommand{\mleq}{<_{\max}}

\DeclareMathOperator{\M}{M}
\DeclareMathOperator{\HS}{HS}
\DeclareMathOperator{\J}{J}
\DeclareMathOperator{\Co}{Co}
\DeclareMathOperator{\McL}{McL}
\DeclareMathOperator{\Suz}{Suz}
\DeclareMathOperator{\He}{He}
\DeclareMathOperator{\Fi}{Fi}
\DeclareMathOperator{\Ru}{Ru}
\DeclareMathOperator{\ON}{O'N}
\DeclareMathOperator{\Th}{Th}
\DeclareMathOperator{\Ly}{Ly}
\DeclareMathOperator{\HN}{HN}

\newcommand{\Atlas}{\mathbb{A}\mathbb{T}\mathbb{L}\mathbb{A}\mathbb{S}}

\title[Generation of second maximal subgroups of almost simple groups]{The structure and generation of the second maximal subgroups of the almost simple groups with alternating, classical or sporadic socle}

\author{Patricia Medina Capilla}
\address{Patricia Medina Capilla, Mathematics Institute, University of Warwick, Coventry, CV4 7AL. ORCID: https://orcid.org/0009-0005-6183-5954}
\email{patricia.medina-capilla@warwick.ac.uk}

\begin{document}

\begin{abstract}
	Let $G$ be an almost simple group whose socle is an alternating, classical, or sporadic group, and let $H$ be a non-parabolic maximal subgroup of $G$. We prove that any maximal subgroup $M$ of $H$ can be generated by at most $7$ elements, and that this bound is sharp when the socle of $G$ is alternating or classical; this improves the bound of $12$ due to Burness, Liebeck and Shalev. When the socle is sporadic, at most $5$ generators suffice, and this is again best possible. The proof relies upon a detailed structural analysis of $H$ and $M$, especially when $H$ is a subgroup of a wreath product. In particular, we determine the chief factors of $M$ and subsequently bound its number of generators using the theory of crowns.

    We also correct the classification of maximal subgroups $H$ of almost simple groups requiring more than three generators, established by Lucchini, Marion and Tracey. Their bound of five generators remains valid, but their classification is missing several pairs $(G,H)$, including cases in which $G$ has socle $\mathrm{PSU}_n(q)$.
\end{abstract}

\maketitle

\section{Introduction}\label{section:intro}

The generation of finite simple groups has long been a central topic of interest in group theory. A key result, established in \cite{AG, S}, is that every finite simple group can be generated by two elements. This result has led to extensive work on the 2-generation of these groups, such as in \cite{MQR}, which demonstrates that a finite simple group is highly likely to be generated by two randomly selected elements. Moreover, analogous questions have been studied for related classes of groups. For instance, \cite{DVL1995} shows that the minimal number of generators of an almost simple group is also uniformly bounded, in this case by three.

Naturally, our attention turns to the subgroups of almost simple groups. In \cite{BLS2013}, Burness, Liebeck, and Shalev showed that maximal subgroups of almost simple groups can be generated by six elements, although it was not known whether this bound was best possible. Following this work, in \cite{LMT}, Lucchini, Marion and Tracey investigated whether this bound could be lowered using the so-called theory of crowns. Originally developed by Gaschütz in \cite{G} for solvable groups and later extended to all finite groups by Dalla Volta and Lucchini in \cite{DVL1998}, the theory of crowns provides a method for bounding the number of generators of a finite group based on information about its chief factors. Using this approach, they lowered the bound on the number of generators of these groups to five and showed that this result is best possible. Moreover, they provided a classification of the maximal subgroups that require four or five generators.

Pursuing this further, one can investigate the next level of the subgroup lattice by studying the maximal subgroups of the maximal subgroups of an almost simple group $G$. We call such a subgroup a \textit{second maximal subgroup} of $G$. Burness, Liebeck and Shalev studied these groups in \cite{BLS}, and proved the following: if $G$ is an almost simple group whose socle is alternating, sporadic, or a classical group of Lie type, and $M$ is a second maximal subgroup which is not maximal in a parabolic subgroup of $G$, then $M$ can be generated by 12 elements. In particular, we have $d(M) \leq 12$ for any such second maximal subgroup, where $d(M)$ denotes the minimal number of elements required to generate $M$. The main aim of this paper is to establish the following improvement to this result.

\begin{theorem}\label{theorem:bigresult}
	Let $M$ be a second maximal subgroup of an almost simple group $G$. Suppose that the socle of $G$ is an alternating group, a classical group, or a sporadic group. Furthermore, in the case that $G$ is a classical group, assume that $M$ is not maximal in a parabolic subgroup. Then $d(M) \leq 7$. Moreover, there exist second maximal subgroups requiring $7$ generators in the alternating and classical cases, whilst $d(M) \leq 5$ in all the sporadic cases. 
\end{theorem}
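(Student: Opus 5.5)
The plan is to fix a maximal subgroup $H$ of $G$ (non-parabolic when $G$ is classical) and a maximal subgroup $M$ of $H$. We bound $d(M)$ through the theory of crowns. By the Dalla Volta--Lucchini refinement of Gaschütz's results, $d(M)$ is controlled chief factor by chief factor. For a non-Frattini abelian chief factor $A \cong \mathbb{F}_p^k$ with $\delta_M(A)$ complemented $M$-equivalent copies, we have roughly
\[
d(M) \le \theta + \left\lceil \frac{\delta_M(A) + \dim H^1(M/C_M(A),A)}{\dim \End_M(A)} \right\rceil ,
\]
where $\theta \in \{0,1\}$. Non-abelian chief factors $T^r$ contribute only through the growth of the crown, which is $O(\log \delta)$ in the number of equivalent copies. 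So in every case the task is to list the chief factors of $M$ together with their multiplicities and cohomology, and to show that these quantities stay small.

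We proceed through the O'Nan--Scott / Aschbacher description of $H$.

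\emph{Sporadic socle.} The maximal subgroups $H$ are known explicitly (Atlas and its successors). For each one, we compute directly the maximal subgroups $M$ and their chief series, or simply generating sets, and verify that $d(M)\le 5$. This is a finite computation.

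\emph{Alternating socle.} Here $H$ is intransitive ($S_k\times S_{n-k}$), imprimitive ($S_a\wr S_b$), affine, diagonal, product type ($S_a \wr S_b$ in product action), or almost simple primitive. The almost simple case reduces to known bounds for maximal subgroups of almost simple groups, namely the bound $d\le 5$ of Lucchini--Marion--Tracey, applied within $H$.

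\emph{Classical socle.} We run through the classes $\mathcal{C}_1$ (non-parabolic, i.e.\ stabilisers of nondegenerate or nonsingular subspaces) through $\mathcal{C}_8$ and $\mathcal{S}$. In each class, $H$ is built from smaller classical groups, direct products, wreath products $X\wr S_t$, extensions by $r^{1+2m}$, or tensor products. Bounded-size pieces, such as outer automorphisms and diagonal or field parts, contribute at most one generator each after absorption.

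The key structural step is to describe the maximal subgroups $M$ of $H$ when $H$ is (a subgroup of) a wreath product $X \wr S_t$, or a direct product $H_1\times H_2$. For $H \le X\wr S_t$, $M$ either
\begin{itemize}
\item contains the base group $X^t$ and projects to a maximal subgroup of the top group, or
\item is the normaliser of a diagonal-type or product-type subgroup, or
\item is $Y\wr S_t$ with $Y$ maximal in $X$.
\end{itemize}
We then determine the chief factors. Abelian factors come from $\mathbb{F}_p S_t$-permutation modules on the base, which contain at most two trivial composition factors with small $H^1$, together with the chief factors of $Y$ and of a maximal subgroup of $S_t$. Combining $d(\text{max of }S_t)\le 2$-ish bounds with $d(Y)$ and the permutation-module contributions, we aim for $d(M)\le 7$. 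For direct products, Gaschütz's counting gives roughly $d(M)\le d(M_1)+d(M_2)$ in the worst case. We will use the sharper fact that central or abelian factors of both pieces must be equivalent before they add. To attain the bound $7$, we exhibit explicit examples. In the alternating case these come from $M = (S_2\wr S_a)\times \dots$ or similar intransitive-in-imprimitive configurations with many $C_2$ chief factors, for which $\delta$ for the trivial $\mathbb{F}_2$-module is large enough to force $7$. In the classical case they come from analogous $\mathcal{C}_1/\mathcal{C}_2$ configurations with $\GL_1$-type tori.

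The main obstacle is the abelian part: counting trivial and low-dimensional modules over $\mathbb{F}_2$ and $\mathbb{F}_3$ in $M$ when $M$ is a product of several pieces sitting inside a wreath product, since these multiplicities add across factors. First, we bound the abelianisation $M/M'$ and use the fact that $d(M)\le \max(d(M/M'), \ldots)$-type crown estimates. For each nontrivial module we invoke the known bounds $\dim H^1(S_t, \text{perm})\le 1$ and $H^1(X,V)\le \dim V/2$ (Guralnick-type). A careful case table then yields $7$ as the maximum.
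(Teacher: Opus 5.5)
You have the same overall framework as the paper: crowns, a pass through O'Nan--Scott and Aschbacher, and computation for the sporadics. However, the step that actually carries the proof is missing, and the additive estimate you sketch for it does not close at $7$.

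In the $\mathscr{C}_2$ and $\mathscr{C}_7$ cases, $H$ is not $X\wr\Sym(t)$ with $X$ simple. It is a subgroup $N_1^t.\del((I_1/N_1)^t).\diag((P_1/I_1)^t).\Sym(t)$ of $P_1\wr\Sym(t)$ whose base is layered and partly abelian. So $M$ can also meet an intermediate abelian layer in a proper $\Sym(t)$-invariant subgroup, which your trichotomy (modelled on a simple base) does not describe.

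More seriously, take $M=(R\wr\Sym(t))\cap H$ with $R$ maximal in the classical group $P_1$. Then $d(R\cap N_1)$ can be $4$ (Proposition \ref{prop:atmost4inI}) and $d(\overline H/\pi(N_1^t))$ can be $4$, so ``$d(Y)$ plus top plus permutation-module contributions'' only gives $8$. Guralnick-type $H^1$ bounds do not help here: the problem is the number of non-Frattini \emph{central} $C_2$'s, for which $d((L_A)_{\delta})=\delta$ exactly.

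The missing idea is that most central chief factors of $R$ become Frattini when lifted to $M$:
\begin{itemize}
\item If a central chief factor $A$ of $E$ inside $F$ lies in $[E,F]$, then $A^t$ is Frattini in $H$ (Lemmas \ref{lemma:jumpingbabyresultfull} and \ref{lemma:jumpingbabyresultdel}).
\item This gives $\delta_{H,F^t}(W)\le\delta_{F/[E,F]}(W)$ for central $W$ (Theorem \ref{theorem:mainjumping}).
\item With the classical-group input of Lemma \ref{lemma:prereqjumpingclassicals}, this yields $\delta_{M,(R\cap N_1)^t}(A)\le 3$ (Proposition \ref{prop:jumpingappliedtoclassmaxs}). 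Only then does the count close as $3+4$ or $4+3$.
\end{itemize}
This also requires per-chief-factor data for maximal subgroups of classical groups, not just $d(H)\le 5$. The published version of that data is wrong in $\mathscr{C}_2$ and $\mathscr{C}_7$, and Theorem \ref{theorem:LMTcrowns} corrects it.

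Your estimates for the ``bounded'' pieces are also off exactly where the maximum is attained, so your extremal examples are misplaced. Maximal subgroups of $\Sym(t)$ are not $2$-generated in general: those of diagonal type can have $\delta(C_2)=4$. Outer automorphism groups can contribute three non-Frattini $C_2$'s, for example $\delta_{\Out(\PSL_4(9))}(C_2)=3$.

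The paper's alternating example combines exactly these two. It takes $H=T^k.(\Out(T)\times\Sym(k))$ of diagonal type with $T=\PSL_4(9)$, and $M=T^k.(\Out(T)\times J_2)$ with $J_2$ a diagonal-type maximal subgroup of $\Sym(k)$. This gives $\delta_M(C_2)=3+4$ (Proposition \ref{prop:altdiagonaltype}). By contrast, every maximal subgroup of an intransitive $H$, and of $(\Sym(2)\wr\Sym(a))\cap G$, satisfies $d(M)\le 5$, so your $(\Sym(2)\wr\Sym(a))\times\cdots$ candidates cannot work.

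In the classical case the example (Example \ref{example:boundtightC2}) lies in $\mathscr{C}_2$ of type $\O^+_{4m}(9)\wr\Sym(5)$. The essential feature is that $I_1/\Omega_1\cong C_2^2$ is non-cyclic. Over the intransitive $\Sym(2)\times\Sym(3)$, each orbit then contributes a $2$-dimensional trivial head, giving $\delta(C_2)=2\cdot 2+2+1=7$. A $\GL_1$-type torus has cyclic $I_1/\Omega_1$, so this doubling is unavailable, and you would have to produce seven $C_2$'s some other way.

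Two further omissions:
\begin{itemize}
\item The plan does not treat $\overline G\not\le\overline\Sigma$ (graph automorphisms of $\PSp_4(2^f)$, triality of $\POmega_8^+(q)$). There Aschbacher's classes do not apply directly and novelty maximal subgroups arise.
\item ``A finite computation'' for the sporadics is not feasible as stated for $2^{1+24}.\Co_1$, $2^{10+16}.\O_{10}^+(2)$ and $2^{5+10+20}.(\Sym(3)\times \L_5(2))$ in $\mathbb{M}$, or for the large $2$-locals of $\mathbb{B}$. These need a module-by-module chief-factor analysis.
\end{itemize}
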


When $G$ is an exceptional group of Lie type, or $M$ is contained in a parabolic subgroup of $G$, Burness, Liebeck and Shalev show in the same paper that $M$ is generated by 70 elements. This requires different techniques, including methods from the representation theory of algebraic groups. Moreover, the existing literature on the parabolic case is incomplete, due to an error in the analysis of these second maximal subgroups. These cases will be treated in a forthcoming paper, joint with Adam Thomas and Gareth Tracey.

The starting point of the proof of Theorem \ref{theorem:bigresult} is the extensive literature on the maximal subgroups of the almost simple groups, which provides the essential foundation for our approach. Building on these results, we undertake a systematic classification of the second maximal subgroups of almost simple groups. Once sufficiently detailed structural information about these subgroups has been established, we use a combination of methods, most notably consisting of the theory of crowns, as well as some ad hoc techniques, in order to bound $d(M)$.

When the socle of $G$ is a sporadic group, the situation is comparatively straightforward, since the maximal subgroups of $G$ are explicitly known. This allows us to use computational methods to classify the relevant second maximal subgroups and to bound the number of generators of $M$. If the socle of $G$ is instead a classical or alternating group, the maximal subgroups of $G$ have been broadly classified into geometric classes, together with a class consisting of almost simple groups. For most of these classes, it is sufficient to obtain a rough classification for $M$ in terms of unspecified extensions of groups which are known. This level of information is adequate for our purposes, and we then use ad hoc methods to bound $d(M)$.

The remaining cases primarily correspond to classes of maximal subgroups that are described as direct products or wreath products of almost simple groups. These classes constitute the main difficulty of this paper, necessitating the theory of crowns to be applied to its fullest potential. This, in turn, requires detailed information about $M$ and its chief factors. To this end, we give a complete classification of these second maximal subgroups, together with a corresponding description of their chief factors.

An additional strength of the theory of crowns is that it often yields not only upper bounds but also matching lower bounds for $d(M)$, thereby showing that the results obtained are best possible. This phenomenon occurs for second maximal subgroups arising as maximal subgroups of wreath products, enabling us to prove that the given bound on $d(M)$ is tight both when the socle of $G$ is an alternating group and a classical group. If the socle of $G$ is sporadic we instead obtain a tight bound of 5 on $d(M)$.

The aforementioned classification results for $M$ depend on the classification of maximal subgroups of almost simple groups. Consequently, bounding $d(M)$ relies on existing information about the chief series of such maximal subgroups, as developed in \cite{LMT}. Upon a detailed analysis of these results, we identified a gap in the case of maximal subgroups of classical groups lying in the Aschbacher classes $\mathscr{C}_2$ and $\mathscr{C}_7$. This mistake leads to \cite[Table 1]{LMT}, which lists all maximal subgroups that are 4- or 5-generated, being incomplete. For example, it is asserted that, for a maximal subgroup of a classical almost simple group to be 4- or 5-generated, the almost simple group must have socle $\PSL_n(q)$ or $\POmega_{2n}^\pm(q)$, for some $n, q$. However, we exhibit in this paper a maximal subgroup of an almost simple group with socle $\PSU_n(q)$ that is 4-generated. In Section \ref{section:LMTcorrections}, we identify the mistakes in \cite{LMT}, and in Section \ref{section:LMTcorrectionproof} we prove the following corrected version of their result.

\begin{theorem}\label{theorem:LMTcorrection}
    Let $H$ be a maximal subgroup of $G$, for some almost simple group $G$. Then $d(H) \leq 5$, and there is a classification of the pairs $(G, H)$ for which $d(H) > 3$, see Table \ref{table:LMTtable} in Section \ref{section:LMTcorrections}.
\end{theorem}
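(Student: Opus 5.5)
The plan follows the strategy of \cite{LMT}: run through the maximal subgroups $H$ of $G$ class by class and bound $d(H)$ by crown theory. Write $G_0 = \soc(G)$. If $H$ does not contain $G_0$, then either $H$ is almost simple, or $H \cap G_0$ has a large normal subgroup $N$ with known chief factors. Then $d(H) \le d(N\text{-part}) + d(H/N)$, and usually $d(H/N)$ is controlled by $d(\Out(G_0)\text{-section})\le 3$. Concretely, I would use the Dalla Volta--Lucchini formula: $d(H)$ is the maximum, over non-Frattini chief factors $A$, of the number of generators needed for the crown-based power $L_{A,\delta_H(A)}$. For abelian $A \cong \mathbb{F}_p^r$ this is
\[
\theta + \left\lceil \frac{\delta_H(A) + s}{r'} \right\rceil,
\]
in the usual notation. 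For nonabelian $A$, the number of generators needed grows only logarithmically in $\delta_H(A)$.

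The cases, in order:
\begin{enumerate}[(i)]
\item \emph{Sporadic socle.} The maximal subgroups are explicitly known, so $d(H)$ can be checked computationally, giving $d(H)\le 3$ except for the few known cases.
\item \emph{Alternating socle.} Intransitive, imprimitive and affine cases. Here $H$ is $(S_k\times S_{n-k})\cap G$, $(S_a\wr S_b)\cap G$, or $\AGL_d(p)\cap G$. The chief factors can be written down directly, and at most two abelian $\mathbb{F}_2$-factors can pile up, so $d(H)\le 4$ with equality only in listed cases.
\item \emph{Classical socle.} Go through Aschbacher classes $\mathscr{C}_1$--$\mathscr{C}_8$ and $\mathscr{S}$. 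For $\mathscr{C}_1,\mathscr{C}_3,\ldots$ the argument of \cite{LMT} stands. The new work is in $\mathscr{C}_2$ and $\mathscr{C}_7$.
\end{enumerate}

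For $\mathscr{C}_2$ and $\mathscr{C}_7$, $H$ is contained in a group of shape $(\text{Cl}_m(q)\circ\cdots).[\text{scalars}].(\Out\text{-part}).S_t$. I would compute explicitly the abelian chief factors of $H$ lying below the top $S_t$ or $\Sym$ quotient. These are the determinant and spinor-norm layers, the central product quotient, and the field, graph and diagonal automorphism parts. I would also determine the $H$-module structure on each. The error in \cite{LMT} presumably occurs in how these layers are counted when $G$ induces diagonal automorphisms, for instance for $\PSU_n(q)$ with the $\GU_m(q)\wr S_t$ subgroup.

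From these counts, $\delta_H(A)\le 3$ for $A=\mathbb{F}_2$ trivial, and for other abelian modules the bound is smaller. Combined with the nonabelian factors $T^t$, which need only 2 generators, this gives $d(H)\le 5$. It also gives exactly the list of $(G,H)$ with $d(H)=4$ or $5$, namely those where three or four independent $C_2$ quotients survive together with a non-cyclic top.

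The main obstacle is this $\mathscr{C}_2/\mathscr{C}_7$ bookkeeping. One must determine precisely, for each type (linear, unitary, symplectic, orthogonal $\pm$, $\circ$), which outer automorphisms of $G_0$ normalise $H$, and compute $H/[H,H]$ to find the number of $C_2$ and $C_{\text{odd}}$ quotients. I would do this by writing $H/(T^t)$ as a subgroup of $(\Out(T))\wr S_t$ intersected with the image of $G$. I would check the small cases ($q\le 3$, $m\le 2$, where $\text{Cl}_m(q)$ is solvable) separately by direct computation. These cases are where extra abelian factors appear, and they produce the new entries of Table \ref{table:LMTtable}.
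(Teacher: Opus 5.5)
Your overall direction (accept \cite{LMT} outside $\mathscr{C}_2,\mathscr{C}_7$ and recount the $C_2$ chief factors there) matches the paper. However, the counting that is supposed to deliver both the bound and the table is wrong. Crown theory gives $d(H)$ as a \emph{maximum} of $d((L_A)_{\delta_H(A)})$ over classes of non-Frattini chief factors, not a sum over types. Since $\delta_{\overline H}(A)\le 2$ for nonabelian $A$, Theorem \ref{theorem:crowns}(2) shows the factors $T^t$ can only force $d=2$, so ``$3$ from the $C_2$'s plus $2$ from $T^t$'' is not an argument. Worse, for the trivial module $C_2$ one has $d((L_A)_\delta)=\delta$ exactly, i.e.\ $\delta_H(C_2)=\dim_{\mathbb F_2}H/H'H^2$, so $d(H)\ge 4$ forces $d(H)=\delta_H(C_2)$. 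Your bound $\delta_H(C_2)\le 3$ would therefore prove $d(H)\le 3$ for every $H$, which is false. For $\overline\Omega=\PSU_4(3)$, $\ddot G=\langle\ddot\delta^2,\ddot\phi\rangle$ and $H$ of type $\GU_2(3)\wr\Sym(2)$ one gets $d(\overline H)=\delta_{\overline H}(C_2)=4$, and there are rows with $d=5$. What must actually be shown is:
(a) $d((L_A)_{\delta_{\overline H}(A)})\le 3$ for every non-Frattini $A\not\cong C_2$;
(b) $\delta_{\overline H}(C_2)=i+j+k+l\le 5$, computed along $H_{\overline\Omega}\le H_{\overline G\cap\overline\Delta}\le H_{\overline G\cap\overline\Gamma}\le\overline H$, with the table being exactly the cases $i+j+k+l\ge 4$.
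The numbers $i,j,k$ depend only on $\overline G/\overline\Omega$. But the definitions of $(\gamma),(F),(D)$ in \cite{LMT} are themselves wrong (Example \ref{example:FDcounterexample}), so even the $(\star)$-rows in $\mathscr{C}_1$ and $\mathscr{C}_4$ survive only after this repair. Thus ``the argument of \cite{LMT} stands'' is not quite true.

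Your claim that every other abelian factor has smaller $\delta$ reproduces the third error of \cite{LMT}. For type $\O_m^\pm(q)\wr\Sym(4)$ in $\mathscr{C}_2$ or $\mathscr{C}_7$, a noncentral $A\cong C_2^2$ on which $\overline H$ acts as $\Sym(4)$ on its Klein four-group can occur with $\delta_{\overline H}(A)=3$. It arises twice inside $\del((I_1/\Omega_1)^4)$ with $I_1/\Omega_1\cong C_2^2$, and once from the top $\Sym(4)$. Step (a) then holds only because $r(A)=2$ and $s(A)=0$ give $h(A)=1+\lceil 3/2\rceil=3$.

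You also do not identify what produces the new rows, namely $l\in\{2,3\}$ when $t=2$. Whether the $C_2$ in the layer $\del((I_1/N_1)^2)$ (and its $\mathscr{C}_7$ analogues) is non-Frattini depends on whether $H$ contains the bare swap $\sigma$, giving a $C_2^2$ quotient, or only elements $(b,1)\sigma$, giving $C_4$ (Lemma \ref{lemma:diagnonfrattini}). This is what leads to conditions such as $(D^{\mathsf c})_1$ and $4\mid|I_1/\Omega_1|$. For orthogonal $\mathscr{C}_7$ with $t=2$, the paper even needs a reduction to finitely many subgroups of $(D\wr\Sym(2))\times C_4$ (Lemma \ref{lemma:reducingLMTtofinitegp}). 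Computing $H/H'H^2$ is a legitimate way to get $\delta_H(C_2)$, but your proposal gives no mechanism for doing this uniformly.

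Finally, your case split has further errors:
\begin{itemize}
\item It omits exceptional socles entirely.
\item For alternating socle, the $4$-generated maximal subgroups are the diagonal-type ones $(T^k.(\Out(T)\times\Sym(k)))\cap G$ with $\Sym(k)\le H$ and $d(\Aut(T)\cap H)=3$, not intransitive, imprimitive or affine ones.
\item For sporadic socle, $d(H)\le 3$ with no exceptions.
\end{itemize}
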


We conclude by outlining the structure of this paper. In Section \ref{section:theoryofcrowns}, we introduce the theory of crowns, following \cite[Section 2]{LMT}. In Section \ref{section:prelimsmethod}, we describe several methods for computing maximal subgroups of finite groups, beginning with general results and progressing to more specific cases, including a classification of the maximal subgroups of certain wreath products. Section \ref{section:wreathproducts} is devoted to a more detailed study of wreath products, as the cases when $M$ is contained in a wreath product constitute the main difficulty of this paper. In particular, we determine the chief factors of certain subgroups of wreath products, extending \cite[Lemma 3.9]{LMT}. In Section \ref{section:classicalprelims}, we establish several results concerning the chief factors of maximal subgroups of classical groups of Lie type, including the aforementioned correction to \cite[Theorem 1]{LMT}, along with further refinements. Finally, in Sections \ref{section:classicals}, \ref{section:alternating} and \ref{section:sporadics} we apply these results to the almost simple groups with socle equal to a classical group of Lie type, an alternating group, and a sporadic group respectively, completing the proof of Theorem \ref{theorem:bigresult}.

\subsection*{Acknowledgments.}
The author thanks Adam Thomas and Gareth Tracey for their guidance and support, as well as their helpful comments and feedback.
This work was supported by the Additional Funding Programme for Mathematical Sciences, delivered by EPSRC (EP/V521917/1) and the Heilbronn Institute for Mathematical Research.

\numberwithin{theorem}{section}

\section{Theory of crowns} \label{section:theoryofcrowns}
The theory of crowns, developed by Dalla Volta and Lucchini, provides a powerful framework for relating the generation of a finite group to the structure of its chief factors. In the first part of this section, we introduce their work, culminating in a result relating the number of generators of a group to its chief factors. For a more thorough treatment of the theory, including proofs of the results presented here, see \cite[Section 2]{LMT}. We then establish several additional results that will be useful throughout the remainder of the paper when applying the theory of crowns.

We say a group $A$ is a $G$-group if $G$ acts on $A$ by automorphisms. Two $G$-groups $A, B$ are $G$-isomorphic, $A \cong_G B$, if the group actions of $G$ on $A$ and $B$ are equivalent. That is, there exists an isomorphism $\phi \colon A \to B$ such that $\phi(a^g) = \phi(a)^g$ for every $a \in A$ and $g \in G$.

\begin{definition}\label{def:CFsEquivalence}
	Let $A, B$ be two $G$-groups. We say $A$ is \emph{$G$-equivalent} to $B$, and write $A \equiv_G B$, if there exist isomorphisms $\phi \colon A \to B$ and $\Phi \colon A \rtimes G \to B \rtimes G$ such that the following diagram commutes
	\[
	\xymatrix{
		1 \ar[r] & A \ar[r] \ar[d]^\phi & A \rtimes G \ar[r] \ar[d]^\Phi & G \ar[r] \ar[d] ^{\text{id}}& 1\\
		1 \ar[r] & B \ar[r] & B \rtimes G \ar[r] & G \ar[r] & 1.\\
	}
	\]
\end{definition}

\begin{prop}
	Let $A, B$ be two $G$-groups. Then $A$ is $G$-equivalent to $B$ if $A$ is $G$-isomorphic to $B$. If additionally $A$ and $B$ are abelian, then $A$ is $G$-equivalent to $B$ if and only if $A$ is $G$-isomorphic to $B$.
\end{prop}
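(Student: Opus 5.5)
The first claim follows by building $\Phi$ directly from a $G$-isomorphism. Given $\phi\colon A\to B$ with $\phi(a^g)=\phi(a)^g$, I will set $\Phi(a,g)=(\phi(a),g)$ on $A\rtimes G\to B\rtimes G$, where multiplication is $(a,g)(a',g')=(a\,a'^{g^{-1}},gg')$ (or whichever convention the semidirect product uses). Checking that $\Phi$ is a homomorphism reduces to $\phi(a'^{g^{-1}})=\phi(a')^{g^{-1}}$, which is exactly $G$-equivariance. $\Phi$ is bijective since $\phi$ is, it restricts to $\phi$ on $A$, and it induces the identity on $G$. So the diagram in Definition \ref{def:CFsEquivalence} commutes.

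For the converse, assume $A,B$ are abelian and $A\equiv_G B$ via $\phi,\Phi$. Commutativity of the right-hand square says that for each $g\in G$ (identified with $(1,g)$), $\Phi(g)=(\beta(g),g)$ for some function $\beta\colon G\to B$. Commutativity of the left square gives $\Phi(a)=\phi(a)$.

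Now apply $\Phi$ to the relation $a^g = g^{-1}ag$ inside $A\rtimes G$:
\[
\phi(a^g)=\Phi(g)^{-1}\phi(a)\Phi(g).
\]
Write $\Phi(g)=bg$ with $b=\beta(g)\in B$. Then the right-hand side is $g^{-1}b^{-1}\phi(a)bg=(b^{-1}\phi(a)b)^g$. Since $B$ is abelian, $b^{-1}\phi(a)b=\phi(a)$, so $\phi(a^g)=\phi(a)^g$. Hence $\phi$ is a $G$-isomorphism.

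The only delicate point is bookkeeping: matching the semidirect product convention with the action, and checking that conjugation by $b$ is trivial. Abelianness is used exactly there. In general, $\phi$ only intertwines the actions up to inner automorphisms of $B$, which is why the equivalence is weaker than $G$-isomorphism for non-abelian groups.
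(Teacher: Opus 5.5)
Your argument is correct in both directions: $\Phi(a,g)=(\phi(a),g)$ is a homomorphism precisely because $\phi$ is $G$-equivariant. Conversely, writing $\Phi(1,g)=(\beta(g),g)$ gives $\phi(a^g)=\phi(a)^{\beta(g)g}$ in $B\rtimes G$, and this reduces to $\phi(a)^g$ when $B$ is abelian. The paper states this proposition without proof, deferring to \cite[Section 2]{LMT}, and your argument is the standard one behind it.
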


We apply these equivalence relations in the following setting. Let $G$ be a finite group, and consider a chief series
\[
1 = N_0 < N_1 < \cdots < N_k = G
\]
of $G$. The group $G$ acts by conjugation on each chief factor, and the multiset
\[
\{ N_1/N_0, \dots, N_k/N_{k-1} \}
\]
of $G$-groups is independent of the choice of chief series of $G$. Our interest lies in the $G$-equivalence classes of a certain subset of chief factors of $G$. Let $\Phi(G) = \bigcap_{M <_{\max} G} M$ denote the Frattini subgroup of a group $G$.

\begin{definition}
	We say a chief factor $H / K$ of $G$ is \emph{non-Frattini} if
	\[
	H / K \nleq \Phi(G / K).
	\]
\end{definition}

For the statement of the following proposition, we say that a section $H / K$ of a group $G$ is complemented if there exists some subgroup $U$ of $G$ such that $UH = G$ and $U \cap H = K$.

\begin{prop} [{\cite[Lemma 2.3]{LMT}}]
	Let $H / K$ be a chief factor of $G$.
	\begin{enumerate}[\upshape(1)]
		\item If $H / K$ is non-abelian, then it is non-Frattini.
		\item If $H / K$ is abelian, then it is non-Frattini if and only if it is complemented.
	\end{enumerate}
\end{prop}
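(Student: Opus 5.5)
We first pass to the quotient $\bar G = G/K$. Then $H/K$ is a minimal normal subgroup $N$ of $\bar G$, and the chief factor is non-Frattini exactly when $N \nleq \Phi(\bar G)$. Likewise $H/K$ is complemented in $G$ if and only if $N$ has a complement-type supplement in $\bar G$: a subgroup $U \geq K$ with $UH = G$ and $U\cap H = K$ corresponds to $\bar U = U/K$ with $\bar U N = \bar G$ and $\bar U \cap N = 1$. Conversely, any subgroup of $\bar G$ pulls back to a subgroup of $G$ containing $K$. So we may assume $K = 1$ and $N = H$ is a minimal normal subgroup of $G$.

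For (1), suppose $N$ is non-abelian but $N \leq \Phi(G)$. The plan is the standard fact that $\Phi(G)$ is nilpotent, which we prove by the Frattini argument. For a Sylow $p$-subgroup $P$ of $\Phi(G)$ we have $G = \Phi(G) N_G(P)$, hence $G = N_G(P)$ because $\Phi(G)$ consists of non-generators. So every Sylow subgroup of $\Phi(G)$ is normal and $\Phi(G)$ is nilpotent. A nilpotent group has only soluble subgroups, whereas $N$ is a direct product of non-abelian simple groups, which is a contradiction.

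For (2), let $N$ be abelian. Suppose first that $N \nleq \Phi(G)$. Then some maximal subgroup $U$ of $G$ does not contain $N$, so $UN = G$ by maximality. Since $N$ is abelian and normal, $U \cap N$ is normalised by $U$ and centralised by $N$, hence is normal in $UN = G$. Minimality of $N$ and $U \cap N \neq N$ then give $U\cap N = 1$, so $N$ is complemented. Conversely, suppose $U$ satisfies $UN = G$ and $U \cap N = 1$. Then $U$ is a proper subgroup (as $N\neq 1$), so it lies in some maximal subgroup $M$. If $N \leq \Phi(G) \leq M$, then $G = UN \leq M$, a contradiction. Hence $N \nleq \Phi(G)$.

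The only step needing real care is the nilpotency of $\Phi(G)$ in (1), specifically justifying the Frattini argument step $G = \Phi(G)N_G(P)$ and the non-generator property. The rest is direct. In the paper's context we could alternatively just cite \cite[Lemma 2.3]{LMT}.
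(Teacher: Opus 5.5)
Your proof is correct and complete. The reduction to $K=1$ is valid, since $U\cap H=K$ forces $K\le U$. Part (1) follows from the nilpotency of $\Phi(G)$ via the Frattini argument. In part (2), the observation that $U\cap N$ is normalised by $U$ and by the abelian group $N$, and is therefore normal in $UN=G$, is exactly what is needed. The paper gives no argument of its own here; it simply cites \cite[Lemma 2.3]{LMT}, and what you have written is the standard self-contained proof. Note that the implication ``complemented $\Rightarrow$ non-Frattini'' does not actually use that $N$ is abelian.
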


We introduce the last concept we require in order to present the main result of this section.

\begin{definition}
	A group $L$ is said to be \emph{monolithic} if it has a unique minimal normal subgroup $A$. If, in addition, $A \nleq \Phi(L)$, then $L$ is called a \emph{monolithic primitive group}.
\end{definition}

\begin{definition}
	Let $L$ be a monolithic primitive group, and let $A$ be its unique minimal normal subgroup. The \emph{crown-based power} of $L$ of length $k$ is
	\[
	L_k := \left\lbrace (\ell_1, \dots, \ell_k) : \ell_1 A = \ell_i A \quad \forall 1 \leq i \leq k \right\rbrace \leq L^k,
	\]
	where $L_0 := 1$.
\end{definition}

For an irreducible $G$-group $A$, let $\delta_G(A)$ denote the number of non-Frattini chief factors of $G$ which are $G$-equivalent to $A$. It can be shown that this value does not depend on the choice of chief series of $G$. Additionally, let
\[
L_A := \begin{cases*}
	A \rtimes (G / C_G(A)) \qquad & if $A$ is abelian,\\
	G / C_G(A) \qquad & otherwise,
\end{cases*}
\]
which is a monolithic primitive group with unique minimal normal subgroup $A$. If $A$ is a non-Frattini chief factor of $G$, then there exists a normal subgroup $N$ of $G$ such that $G / N \cong L_A$. We define
\[
R_G(A) := \bigcap_{\substack{{N \lhd G,}\\ { G/N \cong L_A}}} N.
\]
Then, for any non-Frattini chief factor $A$ of $G$,
\[
G / R_G(A) \cong (L_A)_{\delta_G(A)}.
\]

Suppose that $A$ is an abelian chief factor of $G$. Then $L_A/A$ is an irreducible subgroup of $\GL(A)$, and so $\mathbb F:=\End_{L_A/A}(A)$ is a field by Schur’s lemma, and $A$ is an $\mathbb F[L_A/A]$-module. 

\begin{theorem} [{\cite[Proposition 2.6]{LMT}}] \label{theorem:crowns}
	Let $G$ be a non-cyclic finite group. Then the following holds:
	\begin{enumerate}[\upshape(1)]
		\item We have
		\[
		d(G) = \max_{\substack{A \text{ a non-Frattini} \\ \text{chief factor of } G}} d((L_A)_{\delta_G(A)}).
		\]
		
		\item Suppose that for every non-abelian chief factor $A$ of $G$ we have
		\[
		\delta_G(A) \leq \frac{|A|}{2n |\text{Out}(S)|},
		\]
		where $S$ is the simple group such that $A \cong S^n$. Then either $d(G) = 2$ or
		\[
		d(G) = \max_{\substack{A \text{ an abelian, non-Frattini} \\ \text{chief factor of } G}} d((L_A)_{\delta_G(A)}).
		\]
		
		\item Let $A$ be an abelian, non-Frattini chief factor of $G$. Then,
		\begin{align*}
			d((L_A)_{\delta_G(A)}) & = h(A)\\
			& \leq \theta(A) + \delta_G(A),
		\end{align*}
		where
		\begin{align*}
			r(A) & = \dim_{\mathbb F}(A),\\
			s(A) & = \dim_{\mathbb F}(H^1(L_A/A,A)),\\ 
			\theta(A) & = \begin{cases*}
				0 \qquad & if $A$ is central,\\
				1 \qquad & otherwise.
			\end{cases*}\\
			h(A) & = \theta(A) + \left\lceil \frac{\delta_G(A) + s(A)}{r(A)} \right\rceil
		\end{align*}
		Additionally, if $A$ is central, then $d((L_A)_{\delta_G(A)}) = \delta_G(A)$.
	\end{enumerate}
\end{theorem}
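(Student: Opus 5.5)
The plan is to assemble the three parts from the general theory of crowns rather than prove everything from first principles; this is essentially how \cite[Proposition 2.6]{LMT} is obtained.

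For part (1), I would use the basic reduction of Dalla Volta and Lucchini. For a non-Frattini chief factor $A$ we have $G/R_G(A)\cong (L_A)_{\delta_G(A)}$, so each crown-based power is a quotient of $G$, and $d(G)\ge \max_A d((L_A)_{\delta_G(A)})$. For the reverse inequality, set $m$ equal to this maximum and induct on $|G|$.
\begin{itemize}
\item Take a minimal normal subgroup $N$ of $G$. Quotienting by Frattini factors is harmless, so we may assume $N$ is non-Frattini.
\item By induction, $G/N$ is generated by $m$ elements $g_1N,\dots,g_mN$.
\item The key point is the Gasch\"utz-type counting lemma. The number of lifts $(g_1n_1,\dots,g_mn_m)$ that generate $G$ equals the corresponding count in the quotient $L_{N,\delta}=G/R_G(N)$, where $\delta=\delta_G(N)$. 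This uses that the chief factors $G$-equivalent to $N$ are exactly the ones that are ``seen'' by $R_G(N)$.
\item Since $d(L_{N,\delta})\le m$, that count is positive, so some lift generates $G$.
\end{itemize}
I expect this lifting/counting step to be the main obstacle. It requires the identification of $R_G(N)$ and the fact that the probability of lifting depends only on the crown.

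For part (2), I would take the estimate from \cite{DVL1998} for non-abelian monolithic primitive $L$ with socle $A\cong S^n$. Count the pairs generating $L$ modulo the socle and lift them to $L_k$. The number of $2$-generating tuples of $L_k$ lying over a fixed generating pair of $L/A$ is at least $|A|^{2k}$ times a positive proportion. This proportion stays positive as long as $k$ is less than the number of $\Aut$-classes of generating pairs, which is bounded below by $|A|/(2n|\Out(S)|)$; I would use the known lower bound on the number of generating pairs of $S$. Under the hypothesis, every non-abelian crown-based power is then $2$-generated. Since $G$ is non-cyclic we have $d(G)\ge 2$, and the maximum in (1) is attained either at $2$ or at an abelian factor.

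For part (3), I would use the explicit formula of Aschbacher--Guralnick and Dalla Volta--Lucchini for abelian crowns. Here $L_k=A^k\rtimes H$ with $H=L_A/A$ and $A$ an $\mathbb F H$-module.
\begin{itemize}
\item Complements to $A^k$ are parametrised by $H^1(H,A)^k$.
\item An $m$-tuple generates $L_k$ iff its projection generates $H$ and its image spans a module condition in $A^{k}$ (of dimension $k+s$ over $\mathbb F$ after quotienting by coboundary data).
\item Linear algebra over $\mathbb F$ then gives $d(L_k)=\theta(A)+\lceil (k+s)/r\rceil$.
\end{itemize}
The bound by $\theta+\delta$ follows from $s(A)<r(A)$, a standard fact for irreducible non-trivial modules, and is trivial in the central case. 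In the central case $L_k$ is elementary abelian of rank $k$, so $d=\delta_G(A)$.
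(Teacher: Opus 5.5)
The paper gives no proof of this statement; it quotes it from \cite{LMT}. Assembling it from crown theory is therefore the only route, and your argument for (1) is the standard one. The lifting step you flag as the obstacle is in fact an equivalence. If $M$ is maximal with $MN=G$, then $G/\Core_G(M)$ is primitive with a minimal normal subgroup $G$-equivalent to $N$, so $\Core_G(M)\geq R_G(N)$. Since also $N\cap R_G(N)=1$, a lift $(g_1n_1,\dots,g_mn_m)$ generates $G$ if and only if its image generates $G/R_G(N)$. Gasch\"utz's lemma in $G/R_G(N)\cong (L_N)_{\delta_G(N)}$ then supplies such a lift, giving $d(G)=\max\{d(G/N),d(G/R_G(N))\}$. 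The genuine problems are in (2) and (3).

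In (2), your claim that every non-abelian crown-based power is $2$-generated under the hypothesis is false, because $(L_A)_k$ maps onto $L_A/A$. Take $A=\PSL_4(9)$ and $L_A=\Aut(A)$, which occurs with $\delta_G(A)=1$ for $G=\Aut(\PSL_4(9))$. Then $L_A/A\cong\Out(\PSL_4(9))\cong D_8\times C_2$, so $d((L_A)_k)\geq 3$ for every $k$. Your count also silently assumes that $L_A/A$ is $2$-generated. Run it with $d=\max\{2,d(L_A/A)\}$ generators instead. The threshold is the number of orbits of $C_{\Aut(L_A)}(L_A/A)$, a group of order at most $n|\Out(S)||A|$, on the generating lifts of a fixed generating $d$-tuple of $L_A/A$. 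This yields only $d((L_A)_k)\leq\max\{2,d(L_A/A)\}$ for $k\leq|A|/(2n|\Out(S)|)$. You still need an induction to absorb $d(L_A/A)$. For a non-abelian minimal normal $N$, the identity above gives $d(G)\leq\max\{2,d(G/N)\}$, since $L_N/N$ is a quotient of $G/N$. The hypothesis of (2) passes to $G/N$, so induction on $|G|$ finishes.

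In (3), your own criterion requires the projection of the tuple to generate $L_A/A$. The remaining condition is that the $k$ coordinate vectors in $A^m$ be $\mathbb F$-linearly independent modulo $\operatorname{Der}(L_A/A,A)$, which has $\mathbb F$-dimension $r+s$ when $A$ is non-central; this amounts to $k\leq (m-1)r-s$. So the computation gives $d((L_A)_k)=\max\{d(L_A/A),h(A)\}$, not $h(A)$, and the first term cannot be dropped. Let $A$ be the faithful absolutely irreducible $4$-dimensional $\mathbb F_3$-module of $2^{1+4}_+$ and $G=A\rtimes 2^{1+4}_+$. Then $\delta_G(A)=1$, $s(A)=0$ and $r(A)=4$, so $h(A)=\theta(A)+\delta_G(A)=2$, whereas $(L_A)_1=G$ maps onto $C_2^4$. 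Hence (3) can only be proved with this maximum. That is harmless for the rest of the paper: $L_A/A=G/C_G(A)$ is a proper quotient of $G$, so by induction its generator number is already dominated by the maximum in (1). Your use of $s(A)<r(A)$, via Aschbacher--Guralnick's $|H^1(L_A/A,A)|<|A|$, to get $h(A)\leq\theta(A)+\delta_G(A)$ is correct.
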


In order to avoid repeated verification of the bound in (2), we give the following useful lemma. Note that all logarithms here and in the following discussion are base 2.

\begin{lemma}
	Suppose that $\delta_G(A) \leq 5$ for all non-abelian chief factors $A$ of $G$. Then the condition in (2) of Theorem \ref{theorem:crowns} holds.
\end{lemma}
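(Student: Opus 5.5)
It suffices to show that $\frac{|A|}{2n|\Out(S)|} \geq 5$ for every non-abelian chief factor $A \cong S^n$ with $S$ non-abelian simple. Then any $\delta_G(A)\leq 5$ satisfies the inequality in (2) of Theorem \ref{theorem:crowns}. Since $|A| = |S|^n$, the condition becomes
\[
|S|^n \geq 10\, n\, |\Out(S)|.
\]

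The first step is to reduce to the case $n=1$. The quantity $|S|^n/n$ is increasing in $n$ because $|S|\geq 60 > 2$. Indeed, $|S|^{n+1}/(n+1) \geq |S|^n/n$ is equivalent to $n|S| \geq n+1$, which is clear. So it is enough to prove $|S| \geq 10|\Out(S)|$ for every non-abelian finite simple group $S$.

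The second step is to verify $|S|\geq 10|\Out(S)|$ using standard facts from the classification of finite simple groups.
\begin{itemize}
\item If $S$ is alternating, then $|\Out(S)|\leq 4$, with the maximum attained only for $\Alt_6$. Since $|S|\geq 60 > 40$, the bound holds.
\item If $S$ is sporadic, then $|\Out(S)|\leq 2$, and the bound is trivial.
\item If $S$ is of Lie type over $\mathbb{F}_q$ with $q=p^f$, then $|\Out(S)| = d\,f\,g$. Here $d$ is the order of the diagonal automorphism group, bounded by the rank plus one (essentially $\gcd(n,q\pm1)$ for types $\L$ and $\U$), $f=\log_p q$, and $g\leq 6$ is the graph automorphism factor.
\end{itemize}

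The main work is the Lie type case. The obstacle is that a uniform crude estimate is needed that still covers the smallest groups. I would use two standard facts. First, $|S|$ is at least roughly $q^{\dim}/\gcd$-type quantities. Second, $d\leq n$ for $\L_n(q)$ and $\U_n(q)$, with $d\leq 4$ otherwise. Since $f\leq \log q$ and $d\leq q+1$, we get $|\Out(S)| \leq 6(q+1)\log q$ for rank-one type groups. For $\L_2(q)$ we have $|S|\geq q(q^2-1)/2$, and $|\Out(S)|\leq 2f$, so the bound holds as soon as $q(q^2-1)\geq 40\log q$. This is true for every $q\geq 4$, and the remaining case $\L_2(5)\cong\Alt_5$ checks directly as $60\geq 40$. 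In higher rank, $|S|$ grows at least like $q^{n^2-1}/n$ while $|\Out(S)|$ grows only like $6n\log q$, so the inequality holds with enormous room.

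The only place where care is needed is the handful of smallest groups, such as $\L_3(4)$, $\U_3(3)$, $\L_2(8)$, $\POmega_8^+(2)$ and $\Sp_4(2)'$. For these I would simply check the bound against the known values of $|\Out(S)|$. The largest ratio occurs for $\L_3(4)$, with $|\Out(S)|=12$ and $|S|=20160$, and it is nowhere near $1/10$. Combining these cases shows $|S|\geq 10|\Out(S)|$ always, and therefore $\delta_G(A)\leq 5\leq |A|/(2n|\Out(S)|)$, as required.
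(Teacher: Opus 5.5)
Your argument is correct in outline, but it takes a different route from the paper.

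\textbf{The paper's route.} The paper does no case analysis of its own and does not reduce to $n=1$. It quotes the uniform bound $|\Out(S)|\le \frac{6}{7}\log|S|$ from \cite[Lemma 2.1]{MQR}. It then applies the monotonicity of $2^z/z$ at $z=\log(|S|^n)\ge\log 60$, which gives $|S|^n/(2n|\Out(S)|)\ge 35/\log 60\approx 5.92$ for all $n$ at once.

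\textbf{Your route.} Your reduction via $|S|^n/n\ge |S|$ is more elementary than the $2^z/z$ step. It also makes clear that only the much weaker inequality $|S|\ge 10|\Out(S)|$ is needed. The cost is that you must verify that inequality across the classification yourself. In the Lie type case the proposal only sketches this: it says $|S|$ ``grows at least like'' a power of $q$, and it leaves the ``handful of smallest groups'' unspecified. The inequality is true; the minimum of $|S|/|\Out(S)|$ is $30$, attained at $\Alt(5)$.

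\textbf{Closing the Lie type case.} The quickest rigorous fix is to quote a uniform bound such as the one in \cite{MQR}. This gives $10|\Out(S)|\le \frac{60}{7}\log|S|\le |S|$ for all $|S|\ge 60$, which is essentially what the paper does.

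\textbf{Minor slips.} Neither affects your conclusion.
\begin{itemize}
\item The inequality $q(q^2-1)\ge 40\log q$ fails at $q=4$, since $60<80$. This is harmless because $\L_2(4)\cong\Alt(5)$, and for even $q$ the order is $q(q^2-1)$ rather than $q(q^2-1)/2$. The ``remaining case'' you meant is therefore $q=4$, not $\L_2(5)$.
\item The largest value of $|\Out(S)|/|S|$ is $1/30$, attained at $\Alt(5)$, not at $\L_3(4)$.
\end{itemize}
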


\begin{proof}
	By \cite[Lemma 2.1]{MQR}, we have that
	\[
	\left| \text{Out}(S) \right| \leq \frac{6}{7} \log \left| S \right|,
	\]
	The function $2^z / z$ is monotonic for $z \geq 1$. Hence,
	\[
	2^z \geq \frac{60}{\log 60} z \qquad\qquad \forall z \geq \log 60.
	\]
	Since $S$ is a non-abelian simple group, we have that $\log (\left| S \right| ^n ) \geq \log 60$. Thus, letting $z = \log (\left| S \right| ^n )$, we obtain that
	\[
	\left| S \right| ^n \geq \frac{60}{\log 60} \log (\left| S \right| ^n ) \geq \frac{70}{\log 60} n \left| \text{Out} (S) \right|.
	\]
	As such,
	\[
	\frac{|S^n|}{2n |\text{Out}(S)|} \geq \frac{35}{\log 60} \approx 5.92 \geq 5
	\]
	for all non-abelian simple groups $S$, as required.
\end{proof}

We now establish several auxiliary results that are used throughout the remainder of this paper. In order to prove these results, we first require a slight generalisation of a result in \cite{G}. In the following, the abelian socle of $G$, written $\soc_{\operatorname{ab}}(G)$, is the product of the abelian minimal normal subgroups of $G$.

\begin{lemma}\label{lemma:gaschutz}
	Let $G$ be a group with trivial Frattini subgroup. Then $G$ splits over $\soc_{\operatorname{ab}}(G)$.
\end{lemma}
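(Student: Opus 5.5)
The plan is to use the standard Gaschütz argument: find a single subgroup complementing $\soc_{\operatorname{ab}}(G)$ by intersecting suitable maximal subgroups one minimal normal subgroup at a time.

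Let $S = \soc_{\operatorname{ab}}(G)$. The first step is to show that $S$ is abelian and completely reducible as a $G$-module. Each abelian minimal normal subgroup is elementary abelian. Distinct minimal normal subgroups intersect trivially and so commute elementwise, hence $S$ is abelian. Write $S = A_1 \times \cdots \times A_k$ as a direct product of abelian minimal normal subgroups of $G$; this is possible by choosing the $A_i$ greedily, since any minimal normal subgroup either lies in the current product or meets it trivially.

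Next I will build the complement inductively. Suppose $U_{i-1} \le G$ satisfies $U_{i-1}(A_1\cdots A_{i-1}) = G$ and $U_{i-1}\cap (A_1\cdots A_{i-1}) = 1$, with $U_0 = G$. The key step is to choose $U_i$ as the intersection of $U_{i-1}$ with a maximal subgroup $M_i$ of $G$ not containing $A_i$. Such an $M_i$ exists because $\Phi(G)=1$ and $A_i \neq 1$. Since $A_i$ is abelian and normal, $M_i \cap A_i$ is normalised by $M_i$ and by $A_i$, hence normal in $M_iA_i = G$. Minimality of $A_i$ then gives $M_i\cap A_i = 1$ and $M_i A_i = G$. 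So $M_i$ complements $A_i$, but intersecting the $M_i$ directly loses control of the product with the earlier factors. This is the expected obstacle.

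To avoid it, I will argue differently: choose a subgroup $U$ minimal subject to $US = G$, and show $U \cap S = 1$. The intersection $U\cap S$ is normalised by $U$, and by $S$ because $S$ is abelian, so $U \cap S \trianglelefteq G$. If $U\cap S \neq 1$, it contains a minimal normal subgroup $A$ of $G$, which is abelian and therefore lies in $S$. Pick a maximal subgroup $M$ of $G$ with $A\not\le M$, as above. Then $M\cap A = 1$ and $G = MA$. By Dedekind's law, $U = U\cap MA = (U\cap M)A$ since $A\le U$. Hence $G = US = (U\cap M)AS = (U\cap M)S$. Minimality of $U$ forces $U\cap M = U$, so $A \le U \le M$, which contradicts the choice of $M$. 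Therefore $U\cap S=1$ and $U$ is the desired complement.

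The only delicate points are checking that $U\cap S$ is normal, which uses that $S$ is abelian, and that $M\cap A=1$, which uses that $A$ is abelian and minimal normal. Both are routine, so I expect the whole argument to be short; the complete reducibility from the first step is not even needed.
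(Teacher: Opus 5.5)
Your argument is correct, but it takes a different route from the paper. The paper chooses a minimal family of maximal subgroups $M_1,\dots,M_d$ with $\bigcap_i \Core_G(M_i)=1$ and embeds $G$ subdirectly in $\prod_i G/\Core_G(M_i)$. It notes that the factors whose minimal normal subgroup (contained in $G$) is abelian are affine primitive groups $V_i:L_i$. Dedekind's law then gives the explicit complement $G\cap(L_1\times\cdots\times L_r\times G_{r+1}\times\cdots\times G_d)$ to $V_1\times\cdots\times V_r$. Finally, a Goursat-type argument identifies $V_1\times\cdots\times V_r$ with $\soc_{\operatorname{ab}}(G)$.

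You instead use the classical minimal-supplement argument of Gasch\"utz. Once $S$ is known to be abelian, which you verify correctly, a minimal $U$ with $US=G$ meets $S$ in a $G$-normal subgroup. Any minimal normal $A\le U\cap S$ is avoided by some maximal $M$, and then $U=(U\cap M)A$ contradicts the minimality of $U$.

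Your route is shorter and needs no structure theory of primitive groups. It also proves more: every abelian normal subgroup $N$ with $N\cap\Phi(G)=1$ is complemented. What it does not provide is the explicit description of the complement and of $\soc_{\operatorname{ab}}(G)$ in terms of primitive quotients, which matches the crown-theoretic viewpoint of the paper.

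Two small remarks. You can delete the abandoned first attempt (intersecting the $M_i$ one at a time) and the complete-reducibility step. Also, in your final argument the fact $M\cap A=1$ is never used; only $MA=G$ is needed.
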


\begin{proof}
	Let $\{M_1, \dots, M_d\}$ be a set of maximal subgroups of $G$ of minimal size such that
	\[
	\bigcap_i \Core_G(M_i) = 1.
	\]
	Then $G$ embeds as a subdirect subgroup of $G_1 \times \cdots \times G_d$, where $G_i := G/\Core_G(M_i)$. Identifying $G$ with its image under this embedding, the minimality of $d$ ensures that $G\cap G_i>1$ for all $i$. Thus, each $G_i$ has a minimal normal subgroup which is contained in $G$. Without loss of generality, let $G_1, \dots, G_r$ be the $G_i$ for which this minimal normal subgroup is abelian. Since $G_i$ is a primitive permutation group, we deduce that $G_i$ is of affine type when $1 \leq i \leq r$. Hence, in this case there exist subgroups $V_i$, $L_i$ such that $G_i = V_i : L_i$, where $V_i$ is the unique minimal normal subgroup of $G_i$. Thus $G$ contains $V_1 \times \cdots \times V_r$, so, by Dedekind’s law, $G=(V_1 \times \cdots \times V_r):H$, where 
	\[
	H:=G\cap (L_1 \times \cdots \times L_r \times G_{r+1} \times \cdots \times G_d).
	\]
	By construction, $V_1 \times \cdots \times V_r$ is contained in the abelian socle of $G$. Additionally, by Goursat's Lemma, any minimal normal subgroup of $G$ which is abelian is contained in $G_1 \times \cdots \times G_r$. Therefore, since 
	\[
	\soc_{\operatorname{ab}}(G) \cap (G_1 \times \cdots \times G_r) = \soc_{\operatorname{ab}}(G \cap (G_1 \times \cdots \times G_r)) = V_1 \times \cdots \times V_r,
	\]
	we obtain that $V_1 \times \cdots \times V_r$ is the abelian socle of $G$.
\end{proof}

We now introduce a useful concept which allows us to compute $\delta_G(A)$ from the chief factors of a normal subgroup $N$ of $G$ and those of $G/N$. In practice, this significantly simplifies arguments involving the chief factors of $G$.

\begin{definition}
	Let $G$ be a group, with two normal subgroups $Y < X$. Then there exists a chief series of $G$ through both $X$ and $Y$, 
	\[
	1 < N_1 < \cdots < N_s = Y < N_{s+1} < \cdots < N_l = X < \cdots < G.
	\] 
	We say the chief factors $N_i/N_{i-1}$ for $s+1 \leq i \leq l$  are the \emph{chief factors of $G$ contained in $X/Y$}. We let $\delta_{G,X/Y}(A)$ be the number of non-Frattini chief factors of $G$ in $X/Y$ which are $G$-equivalent to $A$.
\end{definition}

This definition implies that, for any normal subgroup of $G$, we have $\delta_G(A) = \delta_{G,N}(A) + \delta_{G/N}(A)$. The following results provide an upper bound for $\delta_{G,N}(A)$ in terms of the non-Frattini chief factors of $N$.

\begin{prop}\label{prop:CFsFromNormalSubgp}
	Let $G$ be a group, $N$ a normal subgroup of $G$, and $A$ a chief factor of $G$ contained in $N$. Then:
	\begin{enumerate}[\upshape(1)] 
		\item The chief factors of $N$ in $A$ are all isomorphic;
		\item The chief factors of $N$ in $A$ are either all central or all non-central in $N$;
		\item The chief factors of $N$ in $A$ are either all Frattini or all non-Frattini in $N$; and
		\item If all of the chief factors of $N$ in $A$ are Frattini in $N$ then $A$ is Frattini in $G$.
	\end{enumerate}
\end{prop}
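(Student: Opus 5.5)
Write $A = H/K$ with $K < H \le N$, $H,K \lhd G$, and fix a chief series of $N$ refining $K < H$:
\[
K = K_0 < K_1 < \cdots < K_m = H .
\]
The factors $K_i/K_{i-1}$ are the chief factors of $N$ in $A$. Every $g \in G$ normalises $N$, $H$ and $K$, so conjugation by $g$ permutes the minimal normal subgroups of $N/K$ lying in $H/K$, and more generally the normal subgroups of $N$ between $K$ and $H$. The plan is to use this $G$-action throughout: $A$ is a minimal normal subgroup of $G/K$, so it is $G$-irreducible. For $N$-structure, $A$ is a direct product of $G$-conjugates of a minimal normal subgroup $B = K_1/K$ of $N/K$.

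For (1), let $B = K_1/K$. The product of the $G$-conjugates $B^g$ is $G$-invariant, nontrivial and contained in $A$, so it equals $A$. By the standard argument it is a direct product of some of these conjugates, and each is an $N$-minimal normal subgroup of $N/K$. Hence $A = B^{g_1} \times \cdots \times B^{g_m}$ as an $N$-group. Each $B^{g_i}$ is $N$-isomorphic to $B$ twisted by $g_i$, so as groups all the factors are isomorphic to $B$. By Jordan–Hölder for $N$-groups, every chief series of $N$ through $A$ has factors $N$-isomorphic to the $B^{g_i}$, which gives (1).

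For (2), conjugation by $g$ gives an isomorphism $B \to B^g$ that intertwines the action of $n$ with that of $n^g$. Since $N \lhd G$, $B$ is central in $N$ if and only if $B^g$ is.

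For (3), I use the characterisation of Frattini factors of $N$ from the preceding proposition. A non-abelian factor is always non-Frattini, and if $B$ is non-abelian then all the $B^{g_i}$ are. In the abelian case, a factor $X/Y$ is non-Frattini if and only if it is complemented in $N$. If $U$ complements $B = K_1/K$ in $N/K$, then $U^g$ complements $B^g$. Independence of the chief series then says that, among chief factors of $N$ equivalent to a given one, the count of Frattini ones is well defined. Here "Frattini" transfers under $G$-conjugation because $\Phi(N/K)^g = \Phi(N/K)$. So in a decomposition $A = \prod B^{g_i}$, each summand lies in $\Phi(N/K)$ if and only if $B$ does. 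This argument is the main obstacle: I must relate "chief factor $K_i/K_{i-1}$ is Frattini in $N/K_{i-1}$" to the direct summand picture. I would argue that $A \cap \Phi(N/K)$ is a $G$-invariant normal subgroup of $G/K$ inside $A$, hence is $1$ or $A$. If it is $A$, every factor in any series is Frattini, since the image of a Frattini subgroup lies in the Frattini subgroup of the quotient. If it is $1$, then each $B^{g_i}$ is complemented, and passing to quotients preserves complementation of the remaining summands.

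For (4), suppose all the $N$-factors are Frattini. By the dichotomy just established, $A \le \Phi(N/K)$. Since $\Phi(N/K)$ is characteristic in $N/K$, which is normal in $G/K$, the standard fact $\Phi(N/K) \le \Phi(G/K)$ gives $A \le \Phi(G/K)$. Hence $A$ is Frattini in $G$.
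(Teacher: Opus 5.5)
Your arguments for (1), (2) and (4), and the dichotomy $A\cap\Phi(N/K)\in\{1,A\}$ in (3), agree with the paper's proof. There is a genuine gap in the remaining case of (3), where $A\cap\Phi(N/K)=1$.

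\textbf{The gap.} Complements $U^{g_i}$ of the individual summands only show that the \emph{lowest} factor of a chief series of $N$ through $A$ is non-Frattini. For a higher factor $K_i/K_{i-1}$ you need a maximal subgroup of $N$ that contains $K_{i-1}$ but not $K_i$. Your principle that ``passing to quotients preserves complementation of the remaining summands'' is false in general. Take $X=C_4\times C_2=\langle a\rangle\times\langle b\rangle$, with minimal normal subgroups $B_1=\langle b\rangle$ and $B_2=\langle a^2b\rangle$. Both are complemented by $\langle a\rangle$, and $B_1\cap B_2=1$. Yet $B_1B_2/B_1$ is exactly $\Phi(X/B_1)$, since $X/B_1\cong C_4$. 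The mechanism is that a complement $U$ of $B_2$ need not contain $B_1$, and $U\cap B_1B_2$ can be a diagonal subgroup, here $\langle a^2\rangle$. So $UB_1$ need not complement $B_1B_2/B_1$. Your remark about independence of the chief series does not help, because it still leaves you to prove non-Frattini-ness of the higher factors in some series.

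What breaks in this example is that $B_1B_2\cap\Phi(X)\neq 1$. Your argument therefore has to use the hypothesis $A\cap\Phi(N/K)=1$ for the whole of $A$, not merely complementation of each summand.

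\textbf{How the paper fills it.} First pass to $(N/K)/\Phi(N/K)$. Then $A$ embeds isomorphically, and a maximal subgroup witnessing non-Frattini-ness there pulls back to one in $N/K$. Next apply Lemma~\ref{lemma:gaschutz}: a group with trivial Frattini subgroup splits over its abelian socle, say $\soc_{\operatorname{ab}}=W$ with the group equal to $W:L$. Since $W$ is a completely reducible module, $A$ has an invariant complement $J$ in $W$, so the group equals $A:JL$. Writing $A=X_1\times\cdots\times X_s$, the subgroup $\bigl(\prod_{j\neq i}X_j\bigr):JL$ is maximal, contains $X_1\cdots X_{i-1}$, and does not contain $X_i$. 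This shows every factor in the series is non-Frattini. Some such input, essentially Gasch\"utz's complement theorem for abelian normal subgroups meeting $\Phi$ trivially, is the missing idea. Once it is supplied, the rest of your sketch, including the deduction of (4), goes through.
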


\begin{proof}
	Assume without loss of generality that $A$ is a minimal normal subgroup of $G$ in $N$. If $A$ is abelian, then, by Clifford's Theorem, there exist some minimal $N$-normal subgroups $X_i$ of $A$ such that $A = X_1 \times \cdots \times X_s$. Moreover for each $i$ there exists some $g \in G$ such that $X_1^g = X_i$. We show the same is true if $A$ is non-abelian. Let $A = S^b$ for some non-abelian simple group $S$, and write $A = S_1 \times \cdots \times S_b$, where each group $S_i$ is isomorphic to $S$. Thus, any normal subgroup of $A$ is equal to $\prod_{i \in I} S_i$ for some subset $I$ of $\{1, \dots, b\}$. Let $X_1$ be a minimal $N$-normal subgroup of $A$. Since $\langle X_1^g : g \in G \rangle = A$, and $X_1 \cap X_1^g$ is either trivial or equal to $X_1$, we may choose $g_1, \dots, g_s$ in $G$ such that 
	\[
	X_j := X_1^{g_j} = \prod_{i \in I_j} S_i
	\]
	for some subsets $I_j$ which partition $\{1, \dots, b\}$. 
	
	The sections 
	\[
	A_1 := X_1, A_2 := \frac{X_1 \times X_2}{X_1}, \dots, A_s := \frac{X_1 \times \cdots \times X_s}{X_1 \times \cdots \times X_{s-1}}
	\]
	form a set of chief factors of $N$ in $A$. Hence, we have proven (1) and (2), since the action of $N$ on $X_1$ is equivalent to the action of $N^g$ on $X_1^g$. 
	
	We next prove (3). Suppose that $A$ is an abelian chief factor of $G$, as otherwise the result is immediate. Since $\Phi(N)$ is characteristic in $N$, and $A$ is a minimal normal subgroup of $G$, the subgroup $\Phi(N)\cap A$ is either $A$ or $1$. If $\Phi(N)\cap A=A$, then $A_1,\dots,A_s$ are all Frattini in $N$. We may therefore assume that $\Phi(N)\cap A=1$, and we show that $A_1,\dots,A_s$ are all non-Frattini in $N$. 
	
	By the Correspondence Theorem, it is sufficient to prove that the chief factors of $N/\Phi(N)$ contained in $\Phi(N)A/\Phi(N) \cong A$ are non-Frattini. Thus, we may assume that $\Phi(N) = 1$. By Lemma \ref{lemma:gaschutz}, $N = \soc_{\operatorname{ab}}(N) : L$ for some subgroup $L$. Moreover, $A$ is an $N$-normal subgroup of $\soc_{\operatorname{ab}}(N)$, so $A$ is complemented in $\soc_{\operatorname{ab}}(N)$ by some $N$-normal subgroup $J$. Hence $N = A : JL$.  Therefore, for any $i$,
	\[
	\left( \prod_{j \neq i} X_j \right) : JL
	\]
	is a maximal subgroup of $N$ which contains $X_1 \times \cdots \times X_{i-1}$ but does not contain $X_1 \times \cdots \times X_i$. This implies that $A_i$ is non-Frattini in $N$, as required.
	
	If all of the chief factors of $N$ in $A$ are Frattini, then
	\[
	A \leq \Phi(N) \leq \Phi(G),
	\]
	proving (4).
\end{proof}

\begin{remark}
	Let $G$, $N$, and $A$ be as in Proposition \ref{prop:CFsFromNormalSubgp}, and let $A_1, \dots, A_s$ be the chief factors of $N$ in $A$. Note that it is not necessary for $A_i$ to be $N$-equivalent to $A_j$ for all $i,j$, unless $A_i$ is central. Indeed, consider the group $G = \Alt(4) \wr \Sym(5)$, with normal subgroup $N = \Alt(4)^5$. In $N$, each copy of the Klein 4-group $K$ is not $N$-equivalent to the rest, despite $K^5$ being a minimal normal subgroup of $G$.
\end{remark}

\begin{prop}\label{prop:CFequivalenceinN}
	Let $G$ be a group and $N$ a normal subgroup of $G$. Let $A$ and $B$ be two chief factors of $G$ contained in $N$, which are $G$-equivalent. If $\{A_1,..,A_s\}$ and $\{B_1,..,B_t\}$ are the $N$-chief factors contained in $A$ and $B$ respectively, then $s=t$ and, up to reordering, $A_i \equiv_N B_i$ for each $i$.
\end{prop}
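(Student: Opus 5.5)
Throughout, $G$-equivalence is taken in the sense of Definition~\ref{def:CFsEquivalence}. The plan is to transport the structure from $A$ to $B$ along the $G$-equivalence and then restrict to $N$.

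By definition there are isomorphisms $\phi\colon A\to B$ and $\Phi\colon A\rtimes G\to B\rtimes G$ with $\Phi|_A=\phi$ which induce the identity on $G$. Restricting $\Phi$ to the preimage $A\rtimes N$ of $N$ gives an isomorphism $\Phi_N\colon A\rtimes N\to B\rtimes N$. Here we use that $N\lhd G$: the preimage of $N$ under $A\rtimes G\to G$ is $A\rtimes N$, and $\Phi$ commutes with the projections. Hence $\Phi_N$ extends $\phi$ and induces the identity on $N$, so $A\equiv_N B$ as $N$-groups.

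The key step is to show that $\phi$ carries $N$-invariant subgroups of $A$ to $N$-invariant subgroups of $B$, and that it respects the $N$-equivalence type of sections. Write $\Phi(a,g)=(\phi(a)\beta(g),g)$ for some map $\beta\colon G\to B$; this form is forced because $\Phi$ lies over the identity on $G$ and restricts to $\phi$ on $A$. Conjugation by $(1,n)$ in $A\rtimes N$ maps to conjugation by $(\beta(n),n)$. Therefore
\[
\phi(a^n)=\phi(a)^{n\,\beta(n)'}
\]
for an appropriate element $\beta(n)'$ of $B$. In other words, the action of $N$ is twisted only by inner automorphisms of $B$. Since $A$ and $B$ are direct products of copies of a simple group, inner automorphisms fix each direct factor. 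So an $N$-invariant subgroup $X\le A$, which is a product of simple factors (or any subgroup when $A$ is abelian, where $\beta$ plays no role), is sent to an $N$-invariant subgroup $\phi(X)$ of $B$.

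Now take the decomposition $A=X_1\times\cdots\times X_s$ into minimal $N$-normal subgroups constructed in the proof of Proposition~\ref{prop:CFsFromNormalSubgp}. Then $B=\phi(X_1)\times\cdots\times\phi(X_s)$, and each $\phi(X_i)$ is minimal $N$-normal, because $\phi^{-1}$ enjoys the same property. Consequently $t=s$, after comparing with the decomposition for $B$; the count is independent of the chosen decomposition by the Jordan--Hölder property.

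For the equivalences $A_i\equiv_N B_i$, restrict $\Phi_N$ to $X_i\rtimes N$. Its image is $\phi(X_i)\rtimes N$ twisted by $\beta$, and we compose with the projection killing the other factors $\phi(X_j)$. This gives an isomorphism $X_i\rtimes N\to \phi(X_i)\rtimes N$ over the identity on $N$. It works because $\beta(n)$ decomposes componentwise, and the components outside $\phi(X_i)$ act trivially on $\phi(X_i)$.

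I expect the main obstacle to be the non-abelian case, where one must ensure this componentwise projection of $\beta$ still yields a well-defined homomorphism. I would check this directly from the cocycle-type identity $\beta(gh)=\beta(g)^{h}\beta(h)$ modulo the centre, which is trivial here.
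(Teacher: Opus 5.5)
Your proposal is correct and follows essentially the same route as the paper. You restrict $\Phi$ to $A\rtimes N$, then observe that the $N$-action transported by $\phi$ differs from that on $B$ only by conjugation by elements of $B$; the paper phrases this via $\Phi^{-1}(1,n)=(x,n)$. Hence $\phi$ carries $N$-invariant normal subgroups of $A$ to those of $B$, and the resulting chains can be matched.

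Your explicit construction of each equivalence $X_i\equiv_N\phi(X_i)$ fills in a step the paper only asserts. The well-definedness you worry about is automatic: the map is the composite of $\Phi_N|_{X_i\rtimes N}$ with the quotient of $B\rtimes N$ by the normal subgroup $\prod_{j\neq i}\phi(X_j)$, so it is a homomorphism with no cocycle check needed.
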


\begin{proof}
	There exist isomorphisms $\phi \colon A \to B$ and $\Phi \colon A \rtimes G \to B \rtimes G$ such that
	\[
	\xymatrix{
		1 \ar[r] & A \ar[r]^-{\iota_1} \ar[d]^\phi & A \rtimes G \ar[r]^-{\pi_1} \ar[d]^\Phi & G \ar[r] \ar[d] ^{\text{id}}& 1\\
		1 \ar[r] & B \ar[r]^-{\iota_2} & B \rtimes G \ar[r]^-{\pi_2} & G \ar[r] & 1\\
	}
	\]
	is a commutative diagram. Thus,
	\[
	\Phi(A \rtimes N) = B \rtimes N,
	\]
	and, as such, $A \equiv_N B$. Let $X$ be an $N$-invariant subgroup of $A$, and let $Y := \phi(X)$. We show that $Y$ is an $N$-invariant subgroup of $B$. Let $a \in X$ and $n \in N$. Note that in $B \rtimes G$ we have that $(\phi(a)^n, 1)$ is equal to $(\phi(a),1)^{(1,n)}$. Hence, to show $Y$ is $N$-invariant, it is sufficient to show that $\iota_2(\phi(a))^{(1,n)}$ lies in $\iota_2(Y)$. By the commutative diagram,
	\[
	\iota_2(\phi(a))^{(1,n)} = \Phi((a, 1))^{(1,n)} \in \iota_2(B).
	\]
	Additionally, $\pi_2\Phi((y,m)) = m$, for any $m \in N$ and $y \in A$, so $\Phi^{-1}(1,n) = (x,n)$ for some $x \in A$. Thus, 
	\[
	\iota_2(\phi(a))^{(1,n)} = \Phi((a,1)^{(x,n)}).
	\]
	Now, $(a, 1)^{(x,n)} \in \iota_1(X)$, by definition, and hence $\Phi((a,1)^{(x,n)}) \in \iota_2(Y)$. Thus, $Y = \phi(X)$ is indeed an $N$-invariant subgroup of $B$. Similarly, if $Y$ is an $N$-invariant subgroup of $B$, then $\phi^{-1}(Y)$ is an $N$-invariant subgroup of $A$. Hence, given a maximal chain
	\[
	1 < X_1 < \cdots < X_s = A
	\]
	of $N$-invariant subgroups of $A$, we obtain a maximal chain
	\[
	1 < \phi(X_1) < \cdots < \phi(X_s) = B
	\]
	of $N$-invariant subgroups of $B$, and $X_i/X_{i-1} \equiv_N \phi(X_i)/\phi(X_{i-1})$.
\end{proof}

\begin{corollary}\label{cor:upperbounddeltaGN}
	Let $G$ be a group and let $N$ be a normal subgroup of $G$. Given a chief factor $A$ of $G$, let $A \cong S^b$ for some simple group $S$. Then,
	\[
	\delta_{G,N}(A) \leq \max_{\{N\text{-group }B \cong S^c, \, c \mid b\}} \delta_N(B).
	\]
\end{corollary}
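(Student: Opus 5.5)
Fix a chief series of $G$ through $N$, and let $C_1,\dots,C_m$ be the non-Frattini chief factors of $G$ in $N$ that are $G$-equivalent to $A$, so $m=\delta_{G,N}(A)$. The plan is to pass from these to chief factors of $N$ and bound how many of those are non-Frattini and $N$-equivalent to a single $N$-group $B$.

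First we refine. Refining the chief series of $G$ below $N$ to a chief series of $N$ breaks each $C_j$ into $N$-chief factors $C_{j,1},\dots,C_{j,s_j}$. By Proposition~\ref{prop:CFsFromNormalSubgp}(1) these are pairwise isomorphic. Since $C_j$ is a minimal normal subgroup of the relevant quotient of $G$, it is the product of the $G$-conjugates of one of them (as in the proof of that proposition). Hence each $C_{j,i}\cong S^{c}$ with $c\mid b$; in the non-abelian case the $I_j$ partition $\{1,\dots,b\}$ into equal blocks, and the abelian case is analogous.

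Next we check the non-Frattini condition. Since $C_j$ is non-Frattini in $G$, Proposition~\ref{prop:CFsFromNormalSubgp}(4) shows that not all of the $C_{j,i}$ are Frattini in $N$. Then Proposition~\ref{prop:CFsFromNormalSubgp}(3) shows that all of them are non-Frattini in $N$. These statements were proved for a minimal normal subgroup, so we apply them in $G/K$, where $C_j=H/K$, with $N$ replaced by $NK/K$ (note $K\le N$ since $C_j$ lies in $N$). Frattini-ness of $N$-chief factors above $K$ is computed in $N/K$, and the chief factors of $N/K$ are the $N$-chief factors above $K$, so this transfer is routine.

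Then we use equivalence. By Proposition~\ref{prop:CFequivalenceinN}, since $C_j\equiv_G C_1$ for every $j$, we have $s_j=s_1$ and, after reordering, $C_{j,1}\equiv_N C_{1,1}$. Again this is applied to factors sitting at different heights of the series; the proposition's proof only uses the diagram isomorphisms, so it carries over. Setting $B:=C_{1,1}\cong S^c$ with $c\mid b$, the factors $C_{1,1},\dots,C_{m,1}$ are $m$ distinct non-Frattini $N$-chief factors in the chosen series, all $N$-equivalent to $B$. Therefore $\delta_N(B)\ge m=\delta_{G,N}(A)$, which gives the bound.

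The main obstacle I expect is the transfer step: applying Propositions~\ref{prop:CFsFromNormalSubgp} and~\ref{prop:CFequivalenceinN}, which are phrased for $A$ sitting directly in $N$, to chief factors $H/K$ in the middle of the series. The key points to verify are that non-Frattini is intrinsic to the quotient and that $\delta_N$ is independent of the chosen chief series.
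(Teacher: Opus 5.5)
Your proposal is correct and follows essentially the same route as the paper. You split each non-Frattini $G$-chief factor in $N$ into $N$-chief factors, use Proposition~\ref{prop:CFsFromNormalSubgp}(1), (3) and (4) to see that these are isomorphic to some $S^c$ with $c\mid b$ and are non-Frattini in $N$, and use Proposition~\ref{prop:CFequivalenceinN} to find one factor in each piece that is $N$-equivalent to $C_{1,1}$, which gives $\delta_N(C_{1,1})\ge\delta_{G,N}(A)$. Your extra care in transferring these propositions to chief factors $H/K$ in the middle of the series, by passing to $G/K$, handles correctly a point the paper leaves implicit through its ``without loss of generality'' reduction.
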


\begin{proof}
	Let $A_1, \dots, A_d$ be the non-Frattini chief factors of $G$ in $N$ which are $G$-equivalent to $A$, so $d = \delta_{G,N}(A)$.  For each $i$, let $\{A_{i,1}, \dots, A_{i,k_i}\}$ be the set of chief factors of $N$ in $A_i$. Each $A_i$ is non-Frattini in $G$, so, by Proposition \ref{prop:CFsFromNormalSubgp}(3) and (4), each $A_{i,j}$ is non-Frattini in $N$. Additionally, by Proposition \ref{prop:CFequivalenceinN}, for each $i$ there exists some $j_i$ such that $A_{1,1} \equiv_N A_{i, j_i}$. Thus, $N$ contains at least $d$ non-Frattini chief factors $N$-equivalent to $A_{1,1}$, implying that
	\[
	\delta_{G, N}(A) \leq \delta_N(A_{1,1}) .
	\]
	Additionally, since $A_{1,1} \cong A_{1,i}$ for all $i$ by Proposition \ref{prop:CFsFromNormalSubgp}(1), we have that $|A_{1,1}|^{k_1} = |A|$. Hence $A_{1,1} \cong S^c$ for some $c \mid b$.
\end{proof}

\section{Maximal subgroups of finite groups}\label{section:prelimsmethod}
In this section, we describe the maximal subgroups of a finite group $G$, focusing in particular on the case where $G$ is a wreath product. We additionally describe the non-Frattini chief factors of certain maximal subgroups of such wreath products.

Suppose that $G$ is a finite group, with chief series
\[
1 = G_0 < G_1 < \cdots < G_m = G.
\]
For each $1 \leq i \leq m$, let $X_i = G_i/G_{i-1}$. Let $M$ be a maximal subgroup of $G$, and let $i$ be chosen minimally so that $M$ does not contain $G_i$. Hence $M$ contains $G_{i-1}$, and $M/G_{i-1}$ is a maximal subgroup of $G/G_{i-1}$ which does not contain $G_i/G_{i-1}$. Thus $MG_i=G$ and so, in particular, 
\begin{align*}
	\frac{M\cap G_j}{M\cap G_{j-1}} & \cong X_j \qquad \forall j \neq i,\\
	\frac{M\cap G_i}{M\cap G_{i-1}} & < X_i.
\end{align*}

Since each chief factor is a minimal normal subgroup in a quotient of $G$, each $X_i$ is isomorphic to $S^t$ for some simple group $S$ and some $t \in \mathbb{N}$. To classify the maximal subgroups of a general finite group, we are therefore led to consider the following problem: given any simple group $S$, an extension $S^t.C$ where $S^t$ is a minimal normal subgroup of this extension, and a maximal subgroup $M \mleq S^t.C$ which does not contain $S^t$, what are the possibilities for $M \cap S^t$? The main objective of this section is to answer this question.

If $S$ is abelian then the group $M \cap S^t$ must be a submodule of the irreducible $C$-module $S^t$, and hence $M \cap S^t$ is trivial. The situation is more complicated when $S$ is non-abelian simple. In this case, a complete classification is provided by Corollary \ref{cor:maximalsubgpextension} when $t=1$, and by Proposition \ref{prop:XSimpleMaxSubgrpXkC} when $t>1$. These results are summarised in Proposition \ref{prop:maximalsubgroupgeneralmethod}.

However, these structural descriptions are not by themselves sufficient to determine a chief series for $M$, since the sections $X_1,\dots,X_{i-1}$ need not remain chief factors of $M$. Computing the chief factors of $M$ is particularly difficult if $i < m$ and $G_i$ does not act trivially on $G_{i-1}$; see the discussion after Proposition \ref{prop:maximalsubgroupgeneralmethod} for more details. To compute these chief factors we require an understanding of the extension 
\[
	\frac{M}{G_{i-1}} = \left( \frac{M \cap G_i}{G_{i-1}} \right) . \left( \frac{G}{G_i} \right).
\]
Accordingly, throughout this section we prove supplementary results that provide information about such extensions when $G$ belongs to certain families of groups which will appear in future sections. For example, Proposition \ref{prop:maxsubgpdirectproduct} fully classifies $M$ when $G$ is a subgroup of a direct product. In Section \ref{section:wreathproducts} we then see how these results can be used to classify the chief factors of maximal subgroups of wreath products.

\subsection{Maximal subgroups of an extension \texorpdfstring{$S.C$}{S.C}}

We begin by classifying the maximal subgroups of an extension $S.C$, where $S$ is a non-abelian simple group. To do so, we first establish a preliminary result regarding the structure of the maximal subgroups of subgroups of direct products.

\begin{prop}\label{prop:maxsubgpdirectproduct}
	Let $X_1, X_2$ be two groups, and let $G$ be a subgroup of $X_1 \times X_2$. For each $i$, let $\pi_i$ be the quotient map $X_1 \times X_2 \to X_i$, and
	\[
	P_i := \pi_i(G), \quad N_i := G \cap X_{i}.
	\]
	Then, for any maximal subgroup $M$ of $G$, one of the following must hold:
	\begin{enumerate}[\upshape(1)]
		\item $M$ contains $N_i$ for some $i$, and 
		\[
		M = (P_i \times \pi_{3-i}(M)) \cap G,
		\]
		where $\pi_{3-i}(M)$ is maximal in $P_{3-i}$; or
		\item $M$ is subdirect in $P_1 \times P_2$, and
		\[
		M = (M_1 \times M_2) . C
		\]
		where $M_i$ are maximal $P_i$-normal subgroups of $N_i$, and $M_i . C \cong P_i$, for each $i$.
	\end{enumerate}
\end{prop}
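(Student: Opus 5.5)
The natural dichotomy is whether $M$ contains one of the normal subgroups $N_1, N_2$ of $G$. Note that $N_i = G \cap X_i$ is normal in $G$: identifying $X_1$ with $X_1\times 1$, conjugation by $g=(g_1,g_2)$ acts on $N_1$ through $g_1$. Moreover $\pi_{3-i}$ restricted to $G$ has kernel exactly $N_i$. I will treat the two cases in turn.

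\emph{Case (1).} Suppose $N_i \le M$, say $i=1$. Then $M$ is the full preimage of $\pi_2(M)$ under $\pi_2|_G \colon G \to P_2$, since $\ker(\pi_2|_G)=N_1\le M$. By the correspondence theorem, $\pi_2(M)$ is maximal in $P_2$. Any element $g\in G$ with $\pi_2(g)\in\pi_2(M)$ lies in $M$. Hence
\[
M=\{g\in G : \pi_2(g)\in \pi_2(M)\}=(P_1\times \pi_2(M))\cap G,
\]
using that $\pi_1(g)\in P_1$ automatically. The case $i=2$ is symmetric. This step is routine.

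\emph{Case (2).} Suppose $M$ contains neither $N_1$ nor $N_2$.
\begin{enumerate}[(a)]
\item \emph{Subdirectness.} Since $N_1$ is normal in $G$, $MN_1$ is a subgroup strictly containing $M$, so $MN_1=G$. Applying $\pi_2$, and using $\pi_2(N_1)=1$, gives $\pi_2(M)=P_2$. Symmetrically $\pi_1(M)=P_1$, so $M$ is subdirect in $P_1\times P_2$.
\item \emph{The groups $M_i$.} Set $M_i:=M\cap N_i = M\cap X_i$. From $G=MN_1$ and $M\cap N_1=M_1$ we get $N_1/M_1 \cong G/M$ as sets. More usefully, $M_1$ is normalised by $M$, and it is normalised by $N_1$ because $N_1$ commutes with elements of $X_2$. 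Concretely, for $n\in N_1$ and $m=(m_1,m_2)\in M$, conjugation of $N_1$ by $m$ is conjugation by $m_1$. As $m_1$ ranges over $\pi_1(M)=P_1$, $M_1$ is $P_1$-normal in $N_1$.
\item \emph{Maximality of $M_1$.} Let $M_1< K\le N_1$ with $K$ $P_1$-normal. Then $K$ is normalised by $M$ (via $\pi_1$), so $MK$ is a subgroup. By maximality of $M$, either $MK=M$, giving $K\le M\cap N_1=M_1$, a contradiction; or $MK=G$, giving $N_1 = (M\cap N_1)K = K$ by Dedekind's law. Hence $M_1$ is a maximal $P_1$-normal subgroup of $N_1$, and likewise for $M_2$.
\item \emph{The extension structure.} $M_1\times M_2$ is normal in $M$; put $C:=M/(M_1\times M_2)$. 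The map $\pi_1|_M\colon M\to P_1$ is surjective with kernel $M\cap X_2=M_2$. Therefore
\[
P_1\cong M/M_2,
\]
which is an extension of $M_1M_2/M_2\cong M_1$ by $C$, i.e.\ $P_1 \cong M_1.C$. Similarly $P_2\cong M_2.C$.
\end{enumerate}

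The main obstacle is the maximality of $M_i$ among $P_i$-normal subgroups in step (c). Its crux is the observation that the $G$-action (equivalently the $M$-action) on $N_1$ factors through $P_1 = \pi_1(M)$; everything else is bookkeeping with Dedekind's law.
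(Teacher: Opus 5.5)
Your proof is correct and follows essentially the same route as the paper. The paper splits on whether $M$ is subdirect rather than on whether $M$ contains some $N_i$, but for a maximal $M$ these are equivalent, and both arguments rest on $\ker(\pi_{3-i}|_G)=N_i$, the dichotomy $MN_i\in\{M,G\}$, and Goursat's Lemma (which you make explicit via $\ker(\pi_1|_M)=M_2$), with your step (c) spelling out the Dedekind-law argument behind the paper's terse claim that $M_i$ is a maximal $M$-invariant subgroup of $N_i$.
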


\begin{proof}
	Assume that $M$ is subdirect in $P_1 \times P_2$, and let $M_i$ denote $M \cap N_i$ for each $i$. Fix $i \in \{1, 2\}$. If $M$ contains $N_i$, then, by Goursat's Lemma, $M/N_i$ and $G/N_i$ are both isomorphic to $P_{3-i}$. Hence, the order of $M$ is equal to that of $G$, which is a contradiction. Therefore, $M_i < N_i$, so $M_i$ is a maximal $M$-invariant subgroup of $N_i$. As $M$ is subdirect, this implies that $M_i$ is a maximal $P_i$-normal subgroup of $N_i$, and hence $M$ lies in (2) by Goursat's Lemma.
	
	If $M$ is not subdirect then we can assume without loss of generality that $\pi_1(M) < P_1$. As such, 
	\[
	\frac{M N_2}{N_2} \cong \frac{M}{N_2 \cap M} \cong  \pi_1(M) < P_1.
	\]
	In particular, $MN_2 < G$ and thus $M$ contains $N_2$. Hence, $\pi_1(M) = M/N_2$ is maximal in $P_1$. Additionally,
	\[
	M \leq (\pi_1(M) \times P_2) \cap G < G,
	\]
	so $M = (\pi_1(M) \times P_2) \cap G$.
\end{proof}

\begin{prop} \label{prop:maxsubgpsextensioncenter1}
	Let $G$ be a group of shape $X.C$, where the normal subgroup $X$ of $G$ has trivial centre. Let $M$ be a maximal subgroup of $G$, not containing $X$. Then, the following hold:
	\begin{enumerate}[\upshape(1)]
		\item $G$ is isomorphic to a subdirect subgroup of $\widetilde{X} \times C$, where $\widetilde{X} = G/C_G(X) \leq \Aut(X)$; and
		\item Identifying $G$ with its image under the embedding in (1), either: $M \cap X = \widetilde{M} \cap X$ for some maximal  subgroup $\widetilde{M}$ of $\widetilde{X}$; or $M\cap X$ is a maximal $\widetilde{X}$-normal subgroup of $X$, and $\widetilde X/(M \cap X)$ is isomorphic to a quotient of $C$.
	\end{enumerate}
\end{prop}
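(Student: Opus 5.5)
For (1), I will use the map $G \to G/C_G(X) \times G/X$, $g \mapsto (gC_G(X), gX)$. Its kernel is $C_G(X) \cap X = Z(X) = 1$, so it is injective. Its projections onto the two factors are surjective, so the image is subdirect in $\widetilde{X} \times C$, where $C = G/X$. The image of $X$ lies in $\widetilde{X}\times 1$, since $X$ maps trivially to $G/X$. It is the copy of $\mathrm{Inn}(X) \cong X$, which is normal in $\widetilde{X}$. Under this identification we have $N_1 := G \cap (\widetilde{X} \times 1) = X$, because an element $g$ with trivial image in $C$ lies in $X$.

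For (2), I will apply Proposition \ref{prop:maxsubgpdirectproduct} with $X_1 = \widetilde{X}$ and $X_2 = C$, so that $P_1 = \widetilde{X}$, $P_2 = C$, $N_1 = X$, and $N_2 = G \cap (1 \times C) = C_G(X)$.

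Suppose first that $M$ lies in case (1) of that proposition. Since $M$ does not contain $X = N_1$, it must contain $N_2$. Then $M = (\widetilde{M} \times C) \cap G$, where $\widetilde{M} := \pi_1(M)$ is maximal in $\widetilde{X}$. Intersecting with $X = G \cap (\widetilde{X} \times 1)$ gives $M \cap X = \widetilde{M} \cap X$, which is the first alternative.

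Suppose instead that $M$ lies in case (2). Then $M$ is subdirect and $M = (M_1 \times M_2).C'$, where $M_1 = M \cap N_1 = M \cap X$ is a maximal $\widetilde{X}$-normal subgroup of $X$. Moreover $M_1.C' \cong P_1 = \widetilde{X}$ and $M_2.C' \cong P_2 = C$. So $C'$ is isomorphic to $\widetilde{X}/(M\cap X)$ and also to the quotient $C/M_2$. Hence $\widetilde{X}/(M \cap X)$ is isomorphic to a quotient of $C$, which is the second alternative.

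The only real care needed is bookkeeping in the Goursat description inside the proof of Proposition \ref{prop:maxsubgpdirectproduct}. The notation $(M_1\times M_2).C$ there means precisely $M/(M_1\times M_2) \cong P_1/M_1 \cong P_2/M_2$. I would state this explicitly so that the quotient claim follows. Beyond that, the step most worth checking is $N_1 = X$ exactly, which I have justified above; everything else is a direct substitution.
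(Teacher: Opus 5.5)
Your proposal is correct and is essentially the paper's proof. Both use the same embedding $g \mapsto (\text{conjugation by } g,\, gX)$; your $G/C_G(X)\times G/X$ is exactly the paper's image in $\Aut(X)\times C$. Both then make the same identifications $N_1 = X$ and $N_2 = C_G(X)$ and split into the same two cases of Proposition~\ref{prop:maxsubgpdirectproduct}. Your explicit reading of the Goursat isomorphism, $\widetilde X/(M\cap X)\cong M/(M_1\times M_2)\cong C/M_2$, simply spells out what the paper calls ``the final condition of (2)''.
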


\begin{proof}
	Let $\pi$ be the quotient map $X.C \to C$. Consider the map
	\begin{align*}
		\phi \colon X.C & \to \Aut(X) \times C\\
		g & \mapsto ((x \mapsto x^g), \pi(g)).
	\end{align*}
	This map is injective, since $Z(X) = 1$. By definition the projection of $\phi(G)$ to each coordinate is $\widetilde X$ and $C$ respectively, thus proving (1). Additionally, we have $\phi(G) \cap \widetilde{X} = \phi(X)$ and $\phi(G) \cap C = \phi(C_G(X))$. Hence we can apply Proposition \ref{prop:maxsubgpdirectproduct}, where $P_1 = \widetilde X$, $P_2 = C$, $N_1 = \phi(X)$, and $N_2 = \phi(C_G(X))$. If $\phi(M)$ is in case (1) of Proposition \ref{prop:maxsubgpdirectproduct}, then, using the fact that $\phi(M)$ does not contain $\phi(X)$, we have that $\phi(M) = (\widetilde M \times C) \cap \phi(G)$ for some maximal subgroup $\widetilde M$ of $\widetilde X$. Hence $\phi(M) \cap \phi(X) = \widetilde M \cap \phi(X)$, as required. Similarly, if $\phi(M)$ is in case (2), then $\phi(M) \cap \phi(X)$ is a maximal $\widetilde X$-normal subgroup of $\phi(X)$. By the final condition of (2) we must have that $\widetilde X/(M \cap X)$ is isomorphic to some quotient of $C$.
\end{proof}

To summarise what we require from the result above for our classification result, we introduce the following definitions.

\begin{definition}
	Let $G$ be an almost simple group, with socle $S$. We say a proper, non-maximal subgroup $M$ of $G$ is a \textit{novelty maximal subgroup} of $G$ if there exists some almost simple group $K$ with $G \leq K \leq \Aut(S)$ and a maximal subgroup $\widetilde M \mleq K$ such that $\widetilde M \cap G = M$.
\end{definition}

\begin{definition}
	Let $X$ be a quasisimple group. We say a proper, non-maximal subgroup $M$ of $X$ is a \textit{novelty maximal subgroup} of $X$ if $M$ contains $Z(X)$ and $M/Z(X)$ is a novelty maximal subgroup of $X/Z(X)$.
\end{definition}

\begin{corollary}\label{cor:maximalsubgpextension}
	Let $X$ be an almost simple or quasisimple group, and $X.C$ an extension of $X$. Suppose that $M$ is a maximal subgroup of $X.C$. Assume that $M$ does not contain $\soc(X)$ when $X$ is almost simple, and does not contain $X$ when $X$ is quasisimple. Then $M \cap X$ is either a maximal subgroup of $X$, a novelty maximal subgroup of $X$, or is central in $X$. Moreover, if $C$ has no quotients isomorphic to $X/Z(X)$, then $M \cap X$ is not central.
\end{corollary}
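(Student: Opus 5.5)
We split according to whether $X$ is almost simple or quasisimple, and in each case apply Proposition \ref{prop:maxsubgpsextensioncenter1} to a suitable centreless normal subgroup.

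\textbf{Almost simple case.} Let $S=\soc(X)$. Then $S$ is characteristic in $X$, hence normal in $G:=X.C$, and $Z(S)=1$. Apply Proposition \ref{prop:maxsubgpsextensioncenter1} to $G=S.(G/S)$. Here $\widetilde S=G/C_G(S)\le\Aut(S)$ is almost simple with socle $S$. Since $C_X(S)=1$, the group $X$ embeds in $\widetilde S$, so $S\le X\le\widetilde S\le\Aut(S)$.

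\emph{First alternative.} Suppose $M\cap S=\widetilde M\cap S$ for some $\widetilde M\mleq\widetilde S$. Under the embedding $\phi$ of the proposition, case (1) of Proposition \ref{prop:maxsubgpdirectproduct} applies, so $\phi(M)=(\widetilde M\times G/S)\cap\phi(G)$. I would run the argument of Proposition \ref{prop:maxsubgpsextensioncenter1} with $X$ in place of $S$ wherever only the embedding matters: $\phi(G)\cap\widetilde S$ contains $\phi(X)$, because elements of $X$ map into $\widetilde S$ while their $C$-coordinate is the image of $X$. This should give $M\cap X=\widetilde M\cap X$. The bookkeeping here is a bit delicate: the $C$-coordinate in the embedding must be taken modulo $S$, not modulo $X$. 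Next, $\widetilde M$ does not contain $S$, since $M\not\ge S$. So $\widetilde M\cap X$ is maximal in $X$ when it is maximal, and otherwise it is a novelty maximal subgroup of $X$ by definition, with $K=\widetilde S$. It is proper in $X$ because it does not contain $S$.

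\emph{Second alternative.} Suppose $M\cap S$ is a maximal $\widetilde S$-normal subgroup of $S$. Since $S$ is simple, $M\cap S=1$. Moreover $\widetilde S$ is a quotient of $G/S$, and $M$ is a complement-like subgroup. This case needs more care to pin down $M\cap X$. $M\cap X$ is normalised by $M$ and meets $S$ trivially. It therefore centralises $S$ modulo… I expect this to be the main obstacle. My plan is to show instead that $M\cap X\cong (M\cap X)S/S\le X/S$, which is solvable-ish and small. This is awkward, and it may be that the intended reading is "central" meaning $M\cap S=1=Z(S)$. In writing the proof I would state the almost simple conclusion in terms of $M\cap S$ if needed.

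\textbf{Quasisimple case.} Pass to $\bar G=G/Z(X)$, noting that $Z(X)$ is characteristic in $X$. If $M\ge Z(X)$, then $\bar M$ is maximal in $\bar G$ and does not contain the simple group $\bar X=X/Z(X)$. Apply Proposition \ref{prop:maxsubgpsextensioncenter1} to $\bar G=\bar X.C$. The first alternative gives that $M\cap X$ contains $Z(X)$ and $(M\cap X)/Z(X)=\widetilde M\cap\bar X$ is maximal or novelty, as above. The second alternative gives $\bar M\cap\bar X=1$, so $M\cap X=Z(X)$ is central. If $M\not\ge Z(X)$, then $MZ(X)=G$. Then $M\cap X$ is normal in $X$, because $X=(M\cap X)Z(X)$. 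Since $X$ is perfect, $X=[X,X]=[M\cap X,M\cap X]\le M$, a contradiction.

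\textbf{The "moreover" clause.} In every case, centrality arises only from the second alternative of Proposition \ref{prop:maxsubgpsextensioncenter1}. That alternative forces $\widetilde X/(M\cap X)$, which contains a copy of $X/Z(X)$ (respectively $S$) as a normal section, to be a quotient of $C$. Showing that this yields a quotient of $C$ isomorphic to $X/Z(X)$ requires care: the quotient of $C$ is almost simple, so this may only give a section. I would try to refine the statement via the Goursat correspondence, under which $M_2.C\cong P_2$ identifies $\widetilde X/M_1$ with $C/M_2$, restricted to the relevant normal subgroup.
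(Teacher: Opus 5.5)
\textbf{The gap is in the almost simple case.} When $X\neq S=\soc(X)$, you apply Proposition~\ref{prop:maxsubgpsextensioncenter1} to $S$. In the second alternative this tells you only that $M\cap S=1$, and you never show that $M\cap X$ is central. The observation $M\cap X\cong (M\cap X)S/S\le X/S$ gives no centrality, and restating the conclusion in terms of $M\cap S$ changes the statement. (When $X=S$ your argument is complete.)

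The missing idea is to apply the proposition to $X$ itself. This is legitimate because $Z(X)=1$ for almost simple $X$ and $M\not\geq X$. Then $\widetilde X=G/C_G(X)$ embeds in $\Aut(X)\le\Aut(S)$, so it is almost simple with socle $S$ and contains a copy of $X$.
\begin{itemize}
\item The first alternative gives $M\cap X=\widetilde M\cap X$. This is proper since $M\not\ge S$, hence maximal or novelty maximal in $X$.
\item The second alternative says $M\cap X$ is a normal subgroup of $X$ not containing $S$. It therefore meets $S$ trivially, centralises $S$, and is trivial because $C_X(S)=1$.
\end{itemize}
This is the paper's proof.

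Your route via $S$ can be rescued, but only with an argument you do not have. Put $K=C_G(S)$. In the second alternative $MK=G$, $L=M\cap K\unlhd G$ and $K/L\cong S$, and $M/L$ meets $SK/L\cong S\times S$ in a full diagonal. An element of $M\cap X$ centralises $K$, since $X\cap K=1$ and both are normal. It also normalises this diagonal, so it centralises $S$ and is trivial.

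Your first alternative is fine, and so is your quasisimple case. There, the perfectness argument forcing $Z(X)\le M$ is an elementary substitute for the paper's $Z(X)=\Phi(X)\le\Phi(X.C)$.

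\textbf{On the ``moreover'' clause, your suspicion is justified.} In the central case, Proposition~\ref{prop:maxsubgpsextensioncenter1} only yields a quotient of $C$ isomorphic to $\widetilde X$ (respectively $\widetilde{X/Z(X)}$). This is an almost simple group with socle $\soc(X/Z(X))$ containing $X/Z(X)$, but it need not equal $X/Z(X)$. The paper's one-line appeal to the proposition does not bridge this, and the clause fails as written.

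Here is a counterexample. Take $G=\{(a,b)\in\Sym(5)^2:\sgn(a)=\sgn(b)\}$ and $X=\Alt(5)\times 1$, so $C\cong\Sym(5)$ has no quotient isomorphic to $\Alt(5)$. The diagonal $M=\{(a,a):a\in\Sym(5)\}$ is maximal in $G$: any subgroup properly containing it meets $\Sym(5)\times 1$ in a nontrivial normal subgroup of $\Sym(5)$ inside $\Alt(5)$, hence contains $X$ and equals $G$. Yet $M\cap X=1$.

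What does follow immediately from the proposition, applied to $X$ (respectively $X/Z(X)$), is the clause with ``isomorphic to $X/Z(X)$'' replaced by ``almost simple with socle $\soc(X/Z(X))$''. Prove that version instead of trying to refine the Goursat correspondence.
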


\begin{proof}
	Suppose that $X$ is an almost simple group. If $M \cap X$ is not normal in $G$, then, by Proposition \ref{prop:maxsubgpsextensioncenter1}, $M \cap X$ is equal to $\widetilde M \cap X$, where $\widetilde M$ is a maximal subgroup of $\widetilde X$. Since $\widetilde X$ is also an almost simple group, $M$ is a maximal or novelty maximal subgroup of $X$. Otherwise, if $M \cap X$ is $G$-normal, then $M \cap X$ is trivial so the result follows directly by the statement of Proposition \ref{prop:maxsubgpsextensioncenter1}.
	
	If $X$ is quasisimple, then $Z(X)=\Phi(X)$, so $Z(X) \leq \Phi(X.C)$. Hence, any maximal subgroup of $X.C$ contains the centre of $X$. Thus, we project onto $\overline{X} = X/Z(X)$, and apply Proposition \ref{prop:maxsubgpsextensioncenter1} to this non-abelian simple group, giving the desired result.
\end{proof}

\subsection{Maximal subgroups of an extension \texorpdfstring{$S^t.C$}{St.C} with \texorpdfstring{$t \geq 2$}{t >= 2}}\label{section:StC}

We next consider extensions of the form $S^t.C$ with $t \geq 2$, where $S$ is a non-abelian simple group, and $S^t$ is a minimal normal subgroup of $S^t.C$. Since there exists a (possibly non-injective) homomorphism from $S^t . C$ into $\Aut(S^t) = \Aut(S) \wr \Sym(t)$, we begin by considering wreath products more generally. 

Let $G$ be a subgroup of a wreath product $K \wr \Sym(t)$ for some group $K$ and some $t \in \mathbb{N}$. Let $X$ be a normal subgroup of $K$ such that $X^t \leq G$. For each $i$, let $\iota_i$ denote the natural injection map $K \to K^t$ given by
\[
\iota_i \colon k \mapsto (1, \dots, k, \dots, 1),
\]
where $k$ is in the $i$th position. Additionally, let $\rho_i$ denote the projection map $K^t \to K$ given by 
\[
\rho_i \colon (k_1, \dots, k_t) \mapsto k_i.
\]
Let $\pi$ denote the quotient map $K \wr \Sym(t) \to \Sym(t)$.

Fix $i \in \{1, \cdots, t\}$. The subgroup $N_G(\iota_i(K))$ contains $X^t$, and $\prod_{j \neq i} \iota_j(X)$ is normal in $N_G(\iota_i(K))$. Let
\[
R_i := N_G(\iota_i(K)) \Big/ \prod_{j \neq i} \iota_j(X),
\]
and let $\beta_i$ denote the map $\beta_i \colon N_G(\iota_i(K)) \to R_i$. Note that $\beta_i(\iota_i(X))$ is isomorphic to $X$, so we write $X$ for the normal subgroup $\beta_i(\iota_i(X)) $ of $ R_i$.

\begin{prop}\label{prop:XNonsimpleMaxSubgpXkC}
	Let $G$, $X$ and $K$ be as above. Assume that $\pi(G)$ is transitive, and that $M$ is a maximal subgroup of $G$ with $M \cap X^t$ non-subdirect. Then,
	\[
	M \cap X^t = M_1 \times \cdots \times M_t
	\]
	for some subgroups $M_i$ of $X$ which are pairwise conjugate in $K$. Additionally, there exist maximal subgroups $\widetilde M_i$ of $R_i$ such that $M_i = \widetilde M_i \cap X$.
\end{prop}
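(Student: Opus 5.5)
The plan is to prove the product decomposition first, using a Dedekind-law argument with the coordinate projections of $M\cap X^t$. The conjugacy and the maximality statements then follow by running the same argument inside the coordinate stabiliser.

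\emph{Step 1: $G=MX^t$.} Since $M\cap X^t$ is not subdirect, $X^t\nleq M$. As $X^t\trianglelefteq G$ and $M$ is maximal, $G=MX^t$. In particular $\pi(M)=\pi(G)$ is transitive.

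\emph{Step 2: the product decomposition.} For each $j$ put $M_j:=\rho_j(M\cap X^t)\le X$, so that $M\cap X^t\le M_1\times\cdots\times M_t$.

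Each $m\in M$ maps $\iota_j(K)$ onto $\iota_{j^{\pi(m)}}(K)$. Conjugation by $m$ therefore carries $\iota_j(M_j)$ onto $\iota_{j^{\pi(m)}}(M_{j^{\pi(m)}})$, because $m$ normalises $M\cap X^t$. Hence $M$ normalises $P:=M_1\times\cdots\times M_t$, and $MP$ is a subgroup lying between $M$ and $G$.

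If $MP=G$, then $X^t=X^t\cap MP=(X^t\cap M)P=P$ by Dedekind's law, since $P\le X^t$. This contradicts the fact that some $M_j$ is proper in $X$. So $MP=M$ by maximality, which gives $P\le M\cap X^t\le P$. Thus $M\cap X^t=M_1\times\cdots\times M_t$.

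Moreover, each $m\in M$ with $i^{\pi(m)}=j$ acts on coordinates through an element of $K$. By Step 1 such $m$ exist for all $i,j$, so the $M_i$ are pairwise $K$-conjugate, and in particular all are proper in $X$.

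\emph{Step 3: the maximal subgroups $\widetilde M_i$.} Fix $i$ and let $T:=\beta_i(N_M(\iota_i(K)))\le R_i$.

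Since $X^t\le N_G(\iota_i(K))$ and $G=MX^t$, we get $N_G(\iota_i(K))=N_M(\iota_i(K))X^t$, and hence $R_i=XT$. One also checks that $T\cap X=M_i$. Indeed, if $\beta_i(m)\in X$ then $m\in X^t\cap M$, and so its $i$th coordinate lies in $M_i$. Consequently $T$ is proper in $R_i$, because $M_i<X$.

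Choose a maximal subgroup $\widetilde M_i$ of $R_i$ containing $T$, and set $V:=\widetilde M_i\cap X$. Then $M_i\le V$, and $V<X$: otherwise $\widetilde M_i\ge XT=R_i$. It remains to show $V\le M_i$.

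For this, transport $V$ around the coordinates. For $j$, pick $m\in M$ with $i^{\pi(m)}=j$ and let $V_j$ be the $j$th coordinate of $\iota_i(V)^m$. This is well defined because $N_M(\iota_i(K))$ normalises $V$. The reason is that $V$ is normal in $\widetilde M_i$, and $\beta_i(N_M(\iota_i(K)))=T\le\widetilde M_i$. Since $X^t$ is normal, the elements $\prod_{j\neq i}\iota_j(X)$ act trivially on the $i$th coordinate modulo this quotient, and this is the point to check carefully.

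Then $M$ normalises $Q:=V_1\times\cdots\times V_t$. Repeating the Dedekind argument of Step 2, the equality $MQ=G$ would force $Q=X^t$ and hence $V=X$, a contradiction. Therefore $Q\le M\cap X^t$, and so $V\le M_i$. This gives $M_i=\widetilde M_i\cap X$.

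The main obstacle is the well-definedness of the transported subgroups $V_j$. It requires identifying the action of $N_M(\iota_i(K))$ on the $i$th coordinate with the action of $T$ on $X\le R_i$, and keeping track of the twisting $K$-elements in the wreath product.
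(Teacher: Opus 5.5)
Your proposal is correct and follows essentially the same route as the paper. You show that the product of the coordinate projections is $M$-invariant and proper, hence equals $M\cap X^t$. You then transport the $i$th coordinate of an overgroup around the $\pi(M)$-orbit, which gives an $M$-invariant proper subgroup of $X^t$ containing $M\cap X^t$; this is the $M$-core of that overgroup's intersection with $X^t$, the object the paper calls the ``normal closure''.

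The only difference is this. The paper shows directly that $\beta_i(N_M(\iota_i(K)))$ is maximal in $R_i$ via an intermediate subgroup $J$, whereas you take an arbitrary maximal overgroup $\widetilde M_i$ of $T$. Your conclusion $\widetilde M_i\cap X=M_i$, together with $R_i=XT$ and Dedekind's law, gives $\widetilde M_i=(\widetilde M_i\cap X)T=T$, so you also recover the paper's stronger statement.
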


\begin{proof}
	Note that $M \cap X^t$ is a proper subgroup of $X^t$, and thus
	\[
	M = (M \cap X^t) . (G/X^t).
	\]
	Since $\pi(G)$ is a transitive subgroup of $\Sym(t)$, the groups $\rho_i(M \cap X^t)$ are all conjugate in $K$, and the same is true for $M \cap \iota_i(X)$.
	
	As $M \cap X^t$ is not subdirect,
	\[
	M \cap X^t \leq \rho_1(M \cap X^t) \times \cdots \times \rho_t(M \cap X^t) < X^t.
	\]
	Hence, since $M \cap X^t$ is a maximal $M$-normal subgroup of $X^t$, it follows that $M \cap X^t = \prod \iota_i\rho_i(M \cap X^t)$. For each $i$, let $M_i=\rho_i(M \cap X^t) < X$ and let $\widetilde M_i = \beta_i(N_M(\iota_i(K)))$. Recall that we write $X$ for the image of $\iota_i(X)$ under $\beta_i$, and so
	\begin{align*}
		\widetilde M_i \cap X & = \beta_i(N_M( \iota_i(K))) \cap X\\
		& = \beta_i(N_M(\iota_i(K)) \cap \beta_i^{-1}(X))\\
		& = \beta_i(N_M(\iota_i(K)) \cap X^t)\\
		& = \beta_i(M \cap X^t) \\
		& = M_i.
	\end{align*} 
	In particular, $\widetilde M_i$ is a proper subgroup of $R_i$. It remains to show, without loss of generality, that $\widetilde{M}_1$ is a maximal subgroup of $R_1$. Let $D := \langle \prod_{j \neq 1} \iota_j(X) \rangle$, and let $J$ be a group with $N_M(\iota_1(K))D \leq J < N_G(\iota_1(K))$. We show that $J$ is equal to $N_M(\iota_1(K))D$, proving the desired result. The group $J$ does not contain $X^t$, since $X^t N_M(\iota_1(K))=N_G(\iota_1(K))$, so
	\[
	J \cap X^t = J_1 \times X \cdots \times X
	\]
	for some proper subgroup $J_1$ of $X$ which contains $M_1$. Additionally, $J \cap X^t$ is a normal subgroup of $J$, and thus is normalised by $N_M(\iota_1(K))$. Since $\pi(M)$ is a transitive subgroup of $\Sym(t)$, there exists a right transversal $\{m_1 = 1, m_2, \dots, m_t \}$ of $N_M(\iota_1(K))$ in $M$ such that $\iota_1(K)^{m_i} = \iota_i(K)$ for each $i$. Let $m_i = k_i \sigma_i$ for each $i$, for some $k_i \in K^t$ and $\sigma_i \in \Sym(t)$. Thus, the normal closure of $J \cap X^t$ in $M$ is
	\[
	(J \cap X^t)^M = \prod_i \iota_1(J_1)^{m_i} = J_1 \times J_1^{\rho_1(k_2)} \times \cdots \times J_1^{\rho_1(k_t)}.
	\]
	Moreover, $(J \cap X^t)^M$ is a proper $M$-normal subgroup of $X^t$, so it follows that $(J \cap X^t)^M = M_1 \times \cdots \times M_t$. However, this implies that $J_1 = M_1$, and hence $J = N_M(K_1)D$, as required.
\end{proof}

Having treated the non-subdirect case, we now consider the case where $G \cap X^t$ is subdirect. Here we assume that $X$ is simple and apply the following result.

\begin{lemma}[{\cite[Theorem 4.16(iii)]{CS}}]\label{lemma:subdirectsubgpXk}
	Let $S$ be a non-abelian simple group, and $t \geq 1$. Let $L$ be a subdirect subgroup of $S^t$. Then $L \cong S^l$ for some $l \leq t$.
\end{lemma}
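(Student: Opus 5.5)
I will prove this by induction on $t$. The statement to establish is that any subdirect $L \leq S^t$ is a direct product of "diagonal" subgroups: there is a partition $\{1,\dots,t\} = I_1 \sqcup \cdots \sqcup I_l$ such that $L = D_1 \times \cdots \times D_l$. Each $D_j \leq \prod_{i\in I_j} S$ is a full diagonal subgroup $\{(s^{\alpha_i})_{i \in I_j} : s \in S\}$ for some automorphisms $\alpha_i$. Each $D_j \cong S$, so $L \cong S^l$. The case $t=1$ is trivial, since a subdirect subgroup of $S$ is $S$ itself.

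For the inductive step, write $S^t = S^{t-1} \times S$. Let $L_1$ be the projection of $L$ to the first $t-1$ coordinates; it is subdirect in $S^{t-1}$, so by induction $L_1 \cong S^{l'}$ with the diagonal structure above. By Goursat's Lemma, with $N_1 = L \cap S^{t-1}$ and $N_2 = L \cap S$ (the last factor), we have $L_1/N_1 \cong S/N_2$. Since $L$ projects onto the last factor $S$, the group $N_2$ is normal in $S$, so $N_2 \in \{1, S\}$.

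If $N_2 = S$, then $L/N_1$ is trivial, so $L = L_1 \times S$ and we are done with $l = l'+1$. If $N_2 = 1$, then $L_1/N_1 \cong S$, and $L$ is the graph of a surjection $\varphi\colon L_1 \to S$ with kernel $N_1$. The normal subgroups of $L_1 \cong D_1\times\cdots\times D_{l'}$ with simple quotient are exactly the products omitting one factor. This is the standard fact that normal subgroups of a direct product of non-abelian simple groups are sub-products: any normal subgroup projecting nontrivially to a factor contains that factor, by taking commutators with that factor and using that $S$ has trivial centre. So $\ker\varphi = \prod_{j\neq j_0} D_j$, and $\varphi$ factors through $D_{j_0} \cong S$ via an isomorphism. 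Hence $L = \prod_{j \ne j_0} D_j \times \{(d,\varphi(d)) : d \in D_{j_0}\}$. Adding index $t$ to the block $I_{j_0}$ still gives a full diagonal subgroup, so $L \cong S^{l'}$ with $l' \le t-1$.

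The only step needing care is the description of the normal subgroups of $S^{l'}$ used to identify $\ker\varphi$. It rests on non-abelian simplicity: for $1 \ne n \in N \trianglelefteq S^{l'}$ with nontrivial $j$th coordinate, the commutators $[n, \iota_j(s)]$ lie in $N \cap \iota_j(S)$, and some such commutator is nontrivial because $Z(S)=1$. Simplicity of $S$ then gives $\iota_j(S) \le N$. Everything else is bookkeeping with Goursat's Lemma, and in all cases $l \le t$.
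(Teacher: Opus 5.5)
Your argument is correct. The paper gives no proof here: it quotes \cite[Theorem 4.16(iii)]{CS}. In the proof of Proposition \ref{prop:XSimpleMaxSubgrpXkC} it also quotes the finer part of \cite[Theorem 4.16]{CS}, namely that $L$ is a product of full diagonal subgroups over a partition of $\{1,\dots,t\}$.

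Your induction via Goursat's Lemma is a self-contained replacement for both citations. Because you carry the partition/diagonal description through the induction, you recover that finer statement as well as the isomorphism type.

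If only $L \cong S^l$ is wanted, you can skip the analysis of normal subgroups of $L_1$. When $N_2 = 1$, the kernel of the projection $L \to L_1$ is $L \cap (1 \times S) = N_2 = 1$, so $L \cong L_1 \cong S^{l'}$ at once. Identifying $\ker\varphi$ is needed only to see that the new coordinate attaches diagonally to a single block.

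One small slip: in the case $N_2 = S$, it is $L_1/N_1$ that is trivial, not $L/N_1$ (in fact $L/N_1 \cong S$). Your conclusion $L = L_1 \times S$ is still correct.
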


The preceding result, together with Proposition \ref{prop:XNonsimpleMaxSubgpXkC}, yields the following proposition.

\begin{prop}\label{prop:XSimpleMaxSubgrpXkC}
	Let $S$ be a non-abelian simple group. Let $G=S^t.C$, with $S^t$ a minimal normal subgroup of $G$, and let $M$ be a maximal subgroup of G. Then one of the following holds:
	\begin{enumerate}[\upshape(1)]
		\item $M = S^t . J$ for some maximal subgroup $J$ of $C$;
		\item $M = (M_1 \times \cdots \times M_t) . C$, for some $M_i$ which are maximal or novelty maximal subgroups of $S$, and are all conjugate in $\Aut(S)$; or
		\item $M \cong S^\ell . C$, $0 \leq \ell < t$.
	\end{enumerate}
	
	Additionally, case (3) with $\ell > 0$ occurs only if there exists a partition $\Phi$ of $\{1, \dots, t\}$ of size $\ell$ which is normalised by $\pi(S^t.C)$.
\end{prop}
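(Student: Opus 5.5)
Split according to whether $M$ contains $S^t$. If $S^t \leq M$, then $M/S^t$ is a maximal subgroup of $C$, which gives case (1). So assume from now on that $S^t \nleq M$. Then $MS^t=G$, so $M=(M\cap S^t).C$, and $M\cap S^t$ is a proper $M$-normal subgroup of $S^t$. Since $S^t$ is a minimal normal subgroup of $G$, conjugation by $G$ permutes the $t$ simple direct factors transitively. We therefore embed $G/C_G(S^t)$ into $\Aut(S)\wr\Sym(t)$ with transitive image in $\Sym(t)$, and work there, taking $X=S$ and $K=\Aut(S)$ in the notation preceding Proposition \ref{prop:XNonsimpleMaxSubgpXkC}. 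The kernel $C_G(S^t)$ does not affect intersections with $S^t$, because $S^t$ meets it trivially.

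\emph{Non-subdirect case.} Suppose $M\cap S^t$ is not subdirect in $S^t$. By Proposition \ref{prop:XNonsimpleMaxSubgpXkC},
\[
M\cap S^t=M_1\times\cdots\times M_t,
\]
where the $M_i$ are pairwise $\Aut(S)$-conjugate and $M_i=\widetilde M_i\cap S$ for maximal subgroups $\widetilde M_i$ of $R_i$. Here $R_i$ is a group of the form $S.D$ with $S$ normal, because $\prod_{j\neq i}\iota_j(S)$ has been factored out. We apply Corollary \ref{cor:maximalsubgpextension} to $R_i$ with $X=S$. If $\widetilde M_i$ contained $S$, then $M_i=S$ for every $i$, contradicting properness. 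So each $M_i$ is maximal, novelty maximal, or central in $S$; the central option means $M_i=1$. If $M_i=1$ for all $i$, then $M\cap S^t=1$, which is case (3) with $\ell=0$. Otherwise all $M_i$ are nontrivial, since they are conjugate, and we are in case (2). Then $M=(M_1\times\cdots\times M_t).C$ because $MS^t=G$.

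\emph{Subdirect case.} Suppose $M\cap S^t$ is subdirect. By Lemma \ref{lemma:subdirectsubgpXk}, $M\cap S^t\cong S^\ell$ with $\ell\le t$, and $\ell<t$ because the subgroup is proper. So $M\cong S^\ell.C$, which is case (3).

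\emph{The partition condition.} This is the step needing the most care. The proof of Lemma \ref{lemma:subdirectsubgpXk} in \cite{CS} shows that a subdirect subgroup of $S^t$ is a product of full diagonal subgroups, one for each block of a partition $\Phi$ of $\{1,\dots,t\}$ into $\ell$ blocks. The partition is determined by the subgroup: $i$ and $j$ lie in the same block exactly when the projection of $M\cap S^t$ to $S_i\times S_j$ is not surjective. Since $M$ normalises $M\cap S^t$, conjugation by $M$ permutes the factors while preserving $M\cap S^t$, and hence preserves $\Phi$; so $\Phi$ is normalised by $\pi(M)$. Finally, $G=MS^t$ and $\pi(S^t)=1$ give $\pi(M)=\pi(G)$, so $\Phi$ is normalised by $\pi(S^t.C)$. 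Because $\ell>0$ and $\ell<t$, $\Phi$ has $\ell$ blocks, so the partition is nontrivial as required.
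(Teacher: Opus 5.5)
Your proof follows the paper's own route. You split on whether $S^t\le M$. In the non-subdirect case you use Proposition~\ref{prop:XNonsimpleMaxSubgpXkC} together with Corollary~\ref{cor:maximalsubgpextension}. In the subdirect case you use Lemma~\ref{lemma:subdirectsubgpXk} and its diagonal-partition structure.

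There is one gap, in the reduction to $G/C_G(S^t)$. The sentence ``the kernel $C_G(S^t)$ does not affect intersections with $S^t$, because $S^t$ meets it trivially'' holds only when $C_G(S^t)\le M$. If $M$ does not contain $C_G(S^t)$, then maximality gives $MC_G(S^t)=G$, so the image of $M$ in $G/C_G(S^t)$ is the whole group. That image is not maximal, so Proposition~\ref{prop:XNonsimpleMaxSubgpXkC} cannot be applied to it. Its intersection with the image of $S^t$ is also the whole image of $S^t$, not the image of $M\cap S^t$. This case really occurs. Take $t=1$, $G=S\times S$ with $S^t=S\times 1$, and $M$ the diagonal subgroup. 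Then $C_G(S^t)=1\times S\not\le M$, while $M\cap S^t=1$.

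The fix is short, and it is what the paper gets from Proposition~\ref{prop:maxsubgpsextensioncenter1}.
\begin{itemize}
\item Suppose $C_G(S^t)\not\le M$. Then $M\cap S^t$ is normalised by $M$ and centralised by $C_G(S^t)$, so it is normal in $MC_G(S^t)=G$. Since $S^t$ is minimal normal and $S^t\not\le M$, this forces $M\cap S^t=1$, which is case~(3) with $\ell=0$.
\item Suppose $C_G(S^t)\le M$. Then $M/C_G(S^t)$ is maximal in $G/C_G(S^t)$. Dedekind's law gives $M\cap S^tC_G(S^t)=(M\cap S^t)C_G(S^t)$, so the intersection in the quotient is the isomorphic image of $M\cap S^t$. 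Your non-subdirect argument then goes through unchanged.
\end{itemize}
Your subdirect case and the partition argument never use the quotient, so they are fine as written.
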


\begin{proof}
	If $M$ contains $S^t$, then $M$ is in (1). Hence, we assume that $M \cap S^t < S^t$. Additionally, the case where $M \cap S^t$ is trivial is covered by case (3) with $\ell = 0$, so we assume that $M \cap S^t$ is non-trivial for the remainder of this proof.
	
	By Proposition \ref{prop:maxsubgpsextensioncenter1}, we can embed $G$ as a subgroup of $\Aut(S^t) \times C$. Moreover, $M \cap S^t$ is either a $G$-normal subgroup of $S^t$, or is equal to $\widetilde M \cap S^t$, for some maximal subgroup $\widetilde M$ of the projection of $G$ to $\Aut(S^t)$. Since the only proper $G$-normal subgroup of $S^t$ is the trivial group, we may assume that $M$ is of the latter form. Thus, to determine the structure of $M \cap S^t$, we may assume that $G \leq \Aut(S^t) = \Aut(S) \wr \Sym(t)$. Since $S^t$ is a minimal normal subgroup of $G$, $\pi(G)$ is a transitive subgroup of $\Sym(t)$.
	
	Suppose that $M \cap S^t$ is non-subdirect. For each $i$, let $R_i$ be defined as in Proposition \ref{prop:XNonsimpleMaxSubgpXkC}, and recall that $S$ is a normal subgroup of $R_i$. For each $i$, let $\widetilde M_i$ be a maximal subgroup of $R_i$ as defined previously, and let $M_i = \widetilde M_i \cap S$ so that
	\[
	M = \left( M_1 \times \cdots \times M_t \right) . C.
	\]
	Additionally, recall that the subgroups $M_i$ are pairwise conjugate in $\Aut(S)$. Since $M \cap S^t$ is a non-trivial subgroup, each group $M_i$ is either a maximal subgroup, or a novelty maximal subgroup of $S$, by Corollary \ref{cor:maximalsubgpextension}.
	
	If $M\cap S^t$ is instead subdirect, then Lemma \ref{lemma:subdirectsubgpXk} implies that $M  \cap S^t =  S^\ell$ for some $\ell > 0$. Then, by \cite[Theorem 4.16]{CS}, there exists a partition $\Phi$ of $\{1, \dots, t\}$ of size $\ell$ such that
	\[
	M \cap S^t = \prod_{B \in \Phi} \diag \left( \prod_{i \in B} \iota_i(S) \right),
	\]
	where
	\[
	\diag \left( \prod_{i \in B} \iota_i(S) \right) = \left\{ \prod_{i \in B} \iota_i(x^{\phi_i}) : x \in S \right\}
	\]
	for some automorphisms $\phi_i \in \Aut(S)$. Since this group is $M$-normal, the subgroup $\pi(M) = \pi(G)$ of $\Sym(t)$ must stabilise $\Phi$.
\end{proof}

This concludes the analysis of the maximal subgroups of $S^t.C$ which do not contain $S^t$, in the case that $S$ is a non-abelian simple group, and $S^t$ is a minimal normal subgroup of $S^t.C$. We summarise these results in the following proposition.

\begin{prop}\label{prop:maximalsubgroupgeneralmethod}
	Suppose that $G$ is a group with chief series
	\[
	1 = G_0 < G_1 < \cdots < G_l = G.
	\]
	For any maximal subgroup $M$ of $G$ there exists some $i$ such that $M$ contains $G_{i-1}$ but does not contain $G_i$. Let $G_i/G_{i-1} \cong S^t$ for some simple group $S$ and $t \in \mathbb N$. Then $M$ has a normal series
	\[
	1 = G_0 < G_1 < \cdots < G_{i-1} < M \cap G_i < \cdots < M \cap G_l = M,
	\]
	where $(M \cap G_j)/(M \cap G_{j-1}) \cong G_j/G_{j-1}$ for all $j > i$, and one of the following holds:
	\begin{enumerate}[\upshape(1)]
		\item $(M \cap G_{i})/G_{i-1}$ is trivial;
		\item $S$ is non-abelian and $(M\cap G_{i})/G_{i-1} \cong L^t$ for some maximal or novelty maximal subgroup $L$ of $S$; or
		\item $S$ is non-abelian and $(M \cap G_{i})/G_{i-1} \cong S^d$ for some $d \mid t$.
	\end{enumerate}
\end{prop}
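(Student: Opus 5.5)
The plan is to assemble the earlier results. First, choose $i$ minimal such that $G_i \not\leq M$; such an $i$ exists because $M$ is proper and $G_l = G$. Then $G_{i-1} \leq M$, and by maximality $MG_i = G$, since $MG_i$ is a subgroup containing $M$ but not equal to it.

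For the normal series, intersect the chief series with $M$. For $j \le i-1$ we have $M \cap G_j = G_j$. For $j > i$ we use $G_j \ge G_i$ and Dedekind's law: $(M\cap G_j)G_i = G_j \cap MG_i = G_j$. Hence
\[
\frac{M \cap G_j}{M\cap G_{j-1}} \cong \frac{(M\cap G_j)G_{j-1}}{G_{j-1}} = \frac{G_j}{G_{j-1}},
\]
where the last equality holds because $(M\cap G_j)G_{j-1} \supseteq (M\cap G_j)G_i = G_j$ when $j-1\ge i$. In the case $j = i+1$ we have $G_{j-1}=G_i$ directly. This gives the series and the isomorphisms $(M \cap G_j)/(M \cap G_{j-1}) \cong G_j/G_{j-1}$ for $j>i$.

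For the trichotomy, pass to $\overline G = G/G_{i-1}$, in which $\overline{G_i} \cong S^t$ is a minimal normal subgroup and $\overline M$ is maximal and does not contain it. Write $\overline G = S^t.C$ with $C = G/G_i$. If $S$ is abelian, then $\overline M\cap \overline{G_i}$ is normalised by $\overline M$ and by the abelian group $\overline{G_i}$, hence by $\overline M\,\overline{G_i}=\overline G$. By minimality it is trivial, which is case (1). If $S$ is non-abelian, apply Proposition~\ref{prop:XSimpleMaxSubgrpXkC}; its case (1) is excluded because $\overline M \not\geq S^t$. In its case (2), $\overline M\cap S^t = M_1\times\cdots\times M_t$ with the $M_i$ maximal or novelty maximal in $S$ and pairwise $\Aut(S)$-conjugate, so all $M_i \cong L$ for a fixed $L$, giving case (2) with $L^t$. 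In its case (3), $\overline M\cap S^t \cong S^\ell$: if $\ell=0$ we are in case (1), and if $\ell>0$ the proposition supplies a partition $\Phi$ of $\{1,\dots,t\}$ into $\ell$ blocks stabilised by the transitive group $\pi(G)$.

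The main point needing care is showing $\ell \mid t$. Since $S^t$ is minimal normal, $\pi(\overline G)$ is transitive on $\{1,\dots,t\}$, and a transitive group preserving a partition permutes its blocks transitively. So all blocks have equal size $t/\ell$, giving $\ell \mid t$ and case (3) with $d=\ell$. Also, the diagonal description in the proof of Proposition~\ref{prop:XSimpleMaxSubgrpXkC} should be checked to be the $M$-invariant one, so that $\Phi$ is indeed stabilised by $\pi(M)=\pi(G)$; as noted there, this follows from $M$ normalising $M\cap S^t$ together with $MS^t=G$.
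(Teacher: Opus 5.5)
Your proposal is correct and follows the paper's route. The paper obtains this proposition by summarising three ingredients: the discussion opening Section~\ref{section:prelimsmethod} (minimal $i$, $MG_i=G$, isomorphic intersections), the remark that $M\cap S^t$ is trivial when $S$ is abelian, and Corollary~\ref{cor:maximalsubgpextension} with Proposition~\ref{prop:XSimpleMaxSubgrpXkC} for non-abelian $S$. Your Dedekind-law check of the quotients and your block-size argument for $d \mid t$ simply make explicit what the paper leaves implicit.
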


In the remainder of this paper, we determine the chief factors of such maximal subgroups $M$, which is essential for applying the theory of crowns. Following the notation of the above proposition, it is fairly straightforward to compute a chief series of $M/G_{i-1}$, but determining the chief factors of $M$ contained in $G_{i-1}$ is more challenging. These chief factors depend on the structure of the group $G$, as well as on the group $\frac{M \cap G_i}{G_{i-1}}$. However, they also depend on the extension $(M \cap G_i).(G/G_i)$, which in general can be difficult to study.

To remedy this, we consider the maximal subgroups of $G$ where $G$ belongs to a specific family of groups. For instance, we apply Proposition \ref{prop:maxsubgpdirectproduct} when $G$ is a subgroup of a direct product, and apply Proposition \ref{prop:XNonsimpleMaxSubgpXkC} when $G$ is a subgroup of a wreath product. In the latter setting, however, we obtain more precise results when considering only those groups $G$ whose intersection with the base group is a subdirect subgroup.

\begin{prop} \label{prop:subdirectwreathcase}
	Let $G \leq K \wr \Sym(t)$, and let $X$ be a normal subgroup of $K$ such that $X^t \leq G$. Let $\{N_1, \dots, N_d\}$ be the set of maximal $K$-normal subgroups of $X$. Assume that $\pi(G)$ is transitive, and that $G \cap K^t$ is subdirect. Any maximal subgroup $M$ of $G$ with $M \cap X^t$ proper and subdirect in $X^t$ contains $(\bigcap_{i=1}^d N_i)^t$.
\end{prop}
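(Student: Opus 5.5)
Set $N := \bigcap_{i=1}^d N_i$, which is a normal subgroup of $K$. The plan is to show that every maximal $K$-normal subgroup of $X^t$ that arises as $M\cap X^t$ already contains $N^t$. Since $M \cap X^t$ is normal in $M$ and $MX^t = G$, the subgroup $M\cap X^t$ is $G$-invariant. Because it is proper in $X^t$, it is a proper $G$-normal subgroup. We use subdirectness in the following way. Write $Y := M \cap X^t$ and $\rho_i(Y) = X$ for all $i$. For each $i$ set
\[
Y_i := Y \cap \iota_i(X),
\]
identified with a subgroup of $X$. Since $\rho_i(Y)=X$, the group $Y_i$ is normal in $X$. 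It is also normalised by $N_G(\iota_i(K))$, whose projection $\rho_i$ onto $K$ is all of $K$, because $G\cap K^t$ is subdirect. Hence $Y_i$ is a $K$-normal subgroup of $X$. Moreover the $Y_i$ are pairwise conjugate under $G$, since $\pi(G)$ is transitive.

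The key step is to show $Y_i \geq N$, from which $Y \geq \prod_i \iota_i(Y_i) \geq N^t$ follows. For this, consider the quotient $\overline{Y} := Y/\prod_i \iota_i(Y_i)$ inside $\prod_i X/Y_i$. By construction it is subdirect and meets each factor $X/Y_i$ trivially. By Goursat's lemma applied to each coordinate against the rest, $X/Y_i$ is then isomorphic to a quotient of the projection of $Y$ onto the other coordinates. We argue as follows.

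First suppose $Y_i = X$. Then $Y \geq X^t$, contradicting properness, so $Y_i < X$. Next let $N'$ be a maximal $K$-normal subgroup of $X$ containing $Y_i$. We want to deduce that $Y_i$ is itself an intersection of some of the $N_j$, or at least contains $N$. I would consider $Z := Y (N'^{\,t})$. This is a $G$-normal subgroup of $X^t$, because $N'^{\,t}$ becomes $G$-invariant after replacing $N'$ by the intersection of its $K$-conjugates along the transitive action. That intersection is still $K$-normal, so in fact $N'^{\,t}$ is invariant since $N'$ is $K$-normal.

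Now take the maximal overgroup $M N'^{\,t}$. Either it is all of $G$, or it equals $M$. If $M N'^{\,t} = M$, then $Y \geq N'^{\,t} \geq N^t$ and we are done. Otherwise $M N'^{\,t} = G$, which gives $Y N'^{\,t} = X^t$. Then $Y$ is a subdirect-type complement modulo $N'^{\,t}$, so $X^t/N'^{\,t}\cong (X/N')^t$ is a quotient by a $G$-invariant subgroup covering.

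Running through all $N_j$ this way, the aim is to obtain that either $Y \ge N_j^t$ for some $j$, or $Y N_j^t = X^t$ for every $j$. In the latter case $Y$ projects onto $(X/N)^t$, since $X/N$ embeds in $\prod_j X/N_j$ as a subdirect product of $K$-chief-like factors. Then $YN^t = X^t$, and so $M N^t = G$.

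The main obstacle is deriving a contradiction in this remaining case. Here $MN^t=G$, so maximality of $M$ must be combined with the fact that $Y$ is proper. I would pass to $G/N^t$ and use that $Y\cap N^t$ is $G$-normal. By minimality arguments, choose a $G$-chief factor inside $X^t$ lying below $N^t$ that is not covered by $Y$. Its $K$-normal projections lie in $N$, which sits inside every maximal $K$-normal subgroup. This should force $Y$ to be contained in a proper $G$-normal subgroup of the form $\prod_i$ (something contained in a maximal $N_j$), contradicting $YN_j^t=X^t$. Making this last step rigorous is where I expect the difficulty to lie, likely requiring Proposition \ref{prop:XNonsimpleMaxSubgpXkC}-style normal-closure computations with a transversal of $N_M(\iota_1(K))$.
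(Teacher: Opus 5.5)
Your proposal has a genuine gap exactly where you stop, and the route you sketch for closing it cannot work.

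\textbf{A minor error first.} $Y=M\cap X^t$ is normal in $M$, but it need not be normalised by $X^t$, so it is not $G$-invariant in general. You should instead deduce the $K$-normality of $Y_i$ from the fact that $M\cap K^t$ is subdirect in $K^t$. This follows from $MX^t=G$ together with the subdirectness of $G\cap K^t$. Your dichotomy $MN_j^t\in\{M,G\}$ only uses $N_j^t\unlhd G$, which is fine.

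\textbf{The real problem: the case $YN_j^t=X^t$ for all $j$.} Projecting onto every $(X/N_j)^t$ does not imply projecting onto $(X/N)^t$, which is merely a subdirect product of these. Moreover, this case is not contradictory at all. Take $S$ non-abelian simple, $K=X=S\times S$, $t=2$, $G=K\wr\Sym(2)$, and $M=Y\rtimes\langle\sigma\rangle$ with $Y=\{((a,b),(b,a)) : a,b\in S\}$.
\begin{itemize}
\item Overgroups of $Y$ in $S^4$ are products of diagonals over refinements of the partition that glues positions $(1,1),(2,2)$ and $(1,2),(2,1)$.
\item The two proper intermediate refinements are swapped by $\sigma$, so $M$ is maximal in $G$, and $Y$ is proper and subdirect.
\item Here $N_1=S\times 1$ and $N_2=1\times S$. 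Since $Y\cap N_j^2=1$, we get $YN_j^2=X^2$ for both $j$.
\item Yet $YN^2=Y\neq X^2$.
\end{itemize}
So in this case the conclusion must be proved directly; no contradiction is available.

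\textbf{The missing idea.} You need to compare $M$ with a maximal subgroup of the base group, as the paper does.
\begin{enumerate}
\item Take $\widetilde M$ maximal in $G\cap K^t$ containing $M\cap K^t$. It is subdirect in $K^t$ and does not contain $X^t$.
\item Each $L_i=\rho_i(\widetilde M\cap\iota_i(X))$ is either $X$ or a maximal $K$-normal subgroup of $X$. Indeed, if $L_i<L'<X$ with $L'$ $K$-normal, then $\widetilde M\iota_i(L')=G\cap K^t$, and Dedekind's law gives $L_iL'=X$. Hence $\widetilde M\cap X^t\ge\prod_i\iota_i(N_{j_i})$ for suitable $j_i$.
\item Since $M\cap X^t\unlhd M$, it lies in $D:=\bigcap_{m\in M}(\widetilde M\cap X^t)^m$, which is $M$-invariant and proper in $X^t$. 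If $MD=G$, Dedekind's law would give $D=X^t$. So maximality forces $D\le M$, and hence $M\cap X^t=D$.
\item Every conjugate $(\widetilde M\cap X^t)^{k\tau}$ contains $\prod_i\iota_i(N_{j_{\tau^{-1}(i)}})\ge N^t$, which finishes the proof.
\end{enumerate}
Different coordinates may need different $N_{j_i}$. In the example above, $\widetilde M=\{((a,b),(b,d))\}$ gives $L_1=N_1$ and $L_2=N_2$. This is exactly what your uniform use of a single $N'^{\,t}$ cannot detect.
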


\begin{proof}
	The group $M \cap X^t$ is subdirect, and $M \cap K^t$ projects onto the subdirect subgroup $\frac{G \cap K^t}{X^t}$ of $(K/X)^t$, so $M \cap K^t$ is subdirect in $K^t$. Since $M \cap K^t$ is a proper subgroup of $G \cap K^t$, there exists a maximal subgroup $\widetilde{M}$ of $G \cap K^t$ containing $M \cap K^t$. Note that $\widetilde M$ is subdirect in $K^t$, and does not contain $X^t$, since $M X^t = G$.
	
	Let $L_i := \rho_i(\widetilde M \cap \iota_i(X))$ for each $i$. If $L_i$ is a proper subgroup of $X$, then $L_i$ is a maximal $K$-normal subgroup of $X$, since $\widetilde M$ is a maximal, subdirect subgroup of $K^t$. Otherwise, $L_i = X$, and thus contains all maximal $K$-normal subgroups of $X$. Hence, each $L_i$ contains some maximal $K$-normal subgroup $N_{j_i}$ of $X$.
	
	Now, since $M$ is maximal, and
	\[
	M \cap X^t \leq \bigcap_{m \in M} (\widetilde M \cap X^t) ^m < X^t,
	\]
	we have
	\[
	M \cap X^t = \bigcap_{m \in M} (\widetilde M \cap X^t) ^m.
	\]
	Let $m$ be an element of $M$, and write $m = k \sigma$ for some $k \in K$ and $\sigma \in \Sym(t)$. Since $\widetilde M \cap X^t$ contains $N_{j_1} \times \cdots \times N_{j_t}$, we have
	\[
	(\widetilde M \cap X^t)^m \geq N_{j_{\sigma^{-1}(1)}} \times \cdots \times N_{j_{\sigma^{-1}(t)}}.
	\]
	Thus, $M \cap X^t$ contains $(\bigcap_{k=1}^t N_{j_k})^t$, which contains $(\bigcap_{j=1}^d N_j)^t$, as required.
\end{proof}

\begin{prop} \label{prop:nonsubdirectwreathcase}
	Let $G, X$ and $K$ be defined as previously, with $\pi(G)$ transitive and $G \cap K^t$ subdirect. Let $M$ be a maximal subgroup of $G$ with $M \cap X^t$ not subdirect, and not normal in $K^t$. Then there exists some $g \in G$  and some maximal subgroup $M_1$ of $K$ such that
	\[
	M^g = ( M_1 \wr \Sym(t)) \cap G.
	\]
\end{prop}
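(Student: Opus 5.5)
We plan to build on Proposition \ref{prop:XNonsimpleMaxSubgpXkC}. Since $\pi(G)$ is transitive and $M \cap X^t$ is not subdirect, that result gives $M \cap X^t = M_1' \times \cdots \times M_t'$, where the $M_i' < X$ are pairwise conjugate in $K$. Moreover, each $M_i' = \widetilde M_i \cap X$ for a maximal subgroup $\widetilde M_i$ of $R_i$.

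The first step is to pass from $X$ to $K$. We use the hypothesis that $G \cap K^t$ is subdirect in $K^t$, so that $N_G(\iota_i(K))$ projects onto $K$ under $\rho_i$. Set $\widehat M_1 := \rho_1(N_M(\iota_1(K))) \leq K$; this normalises $M_1'$. We would like $\widehat M_1$ to be a proper subgroup of $K$, and then take $M_1$ to be a maximal subgroup of $K$ containing it. Properness is where the hypothesis that $M \cap X^t$ is not normal in $K^t$ enters. Since $M\cap X^t$ is a direct product, it is normal in $K^t$ exactly when each $M_i'$ is $K$-normal. So $M_1'$ is not normal in $K$, and hence $N_K(M_1')$ is a proper subgroup of $K$ containing $\widehat M_1$.

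Next we conjugate to align coordinates. Transitivity of $\pi(M)=\pi(G)$ gives a transversal $m_i = k_i\sigma_i$ of $N_M(\iota_1(K))$ in $M$ with $\iota_1(K)^{m_i}=\iota_i(K)$. Using subdirectness of $G\cap K^t$, we choose $g \in G\cap K^t$ whose $i$th coordinate compensates for $\rho_1(k_i)$, so that after conjugating by $g$ all $M_i'$ equal $M_1'$. After this, every element of $M^g$ lies in $N_K(M_1')\wr \Sym(t)$. The delicate point is that $g$ must lie in $G$ and not merely in $K^t$. We would use $X^t\le G$ together with the fact that the coordinates of $G\cap K^t$ cover $K$ to adjust coordinate by coordinate.

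With $M_1 := N_K(M_1')$, or a maximal subgroup of $K$ containing it, we get $M^g \le (M_1\wr\Sym(t))\cap G$. This intersection is proper in $G$, since otherwise subdirectness of $G \cap K^t$ forces $M_1 = K$. Maximality of $M^g$ then gives equality, and a posteriori $M_1$ must itself be maximal in $K$.

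The main obstacle is the conjugation step. We must produce a single $g\in G$ that simultaneously conjugates all coordinates, rather than just some $g \in K^t$. We also need to check that the normaliser $N_K(M_1')$ is independent of the choice of coordinate after conjugation. Our first attempt is to use that, for each $i$, some element of $G\cap K^t$ has prescribed $i$th coordinate. We would then take $g$ as a product of such elements, correcting one coordinate at a time. Any disturbance this causes in the other coordinates is absorbed because it is again an element of $N_K(M_1')$, or of $X$ acting on the product structure.
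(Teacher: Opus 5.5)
There is a genuine gap, and it sits in the step you yourself flag as the main obstacle. Subdirectness of $G\cap K^t$ only lets you prescribe \emph{one} coordinate of an element of $G\cap K^t$. The remaining coordinates are then arbitrary elements of $K$, with no reason to lie in $N_K(M_1')$ or in $X$. So ``correcting one coordinate at a time'' can undo earlier corrections, and your claim that the disturbance is ``absorbed'' has no justification. As written, nothing shows that a single $g\in G$ with $(M_i')^{g_i}=M_1'$ for all $i$ exists, and the whole argument rests on this. Your other worry, that $N_K(M_1')$ might depend on the coordinate, is vacuous once every coordinate of $M^g\cap X^t$ equals $M_1'$.

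The missing idea is that the conjugating elements can be taken inside $X$. Then $g$ can be chosen in $X^t\le G$, where coordinates can be prescribed independently.
\begin{itemize}
\item Since $M$ is maximal and $M\not\ge X^t$, we have $G=MX^t$.
\item As $X^t\le N_G(\iota_1(K))$, Dedekind's law gives $N_G(\iota_1(K))=N_M(\iota_1(K))X^t$.
\item Apply $\rho_1$ (on the stabiliser of the first coordinate, as in your definition of $\widehat M_1$). Using $G\cap K^t\le N_G(\iota_1(K))$ and $\rho_1(G\cap K^t)=K$, we get $K=\widehat M_1X\le N_K(M_1')X$.
\item If $M_i'=(M_1')^{k}$ with $k\in K$, write $k=nx$ with $n\in N_K(M_1')$ and $x\in X$. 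Then $M_i'=(M_1')^{x_i}$ with $x_i:=x\in X$.
\item Consequently $g:=(1,x_2^{-1},\dots,x_t^{-1})\in X^t\le G$ satisfies $M^g\cap X^t=(M_1')^t$.
\end{itemize}
This is in substance what the paper does. It uses elements $h_i\sigma_i\in G$ with $\rho_1(h_i)=1$ and $m_i\in X^t$ with $m_ih_i\sigma_i\in M$, so that the $i$th projection is conjugate to the first by $\rho_1(m_i)\in X$.

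With this step supplied, the rest of your argument is correct. Taking $M_1$ to be a maximal subgroup of $K$ containing $N_K(M_1')$ gives $M^g=(M_1\wr\Sym(t))\cap G$ directly from the maximality of $M^g$. That is slightly more economical than the paper, which works with $\rho_1(M\cap K^t)$. The paper must therefore also show that $M\cap K^t$ is not subdirect, and that $\rho_1(M\cap K^t)$ is self-normalising and maximal in $K$.
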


\begin{proof}
	As in the proof of Proposition \ref{prop:XNonsimpleMaxSubgpXkC}, $M \cap X^t$ is equal to $\rho_1(M \cap X^t) \times \cdots \times \rho_t(M \cap X^t)$. Additionally, the groups $\rho_i(M \cap X^t)$ are pairwise conjugate in $K$, and are not $K$-normal, since $M \cap X^t$ is not normal in $K^t$.
	
	Suppose that $M \cap K^t$ is subdirect in $K^t$. Then, by Goursat's Lemma, $\rho_1(M \cap \iota_1(K))$ is a normal subgroup of $K$, and its intersection with $X$ is equal to $\rho_1(M \cap X^t)$. However, $\rho_1(M \cap X^t)^K$ is contained in $X$, since $X \unlhd K$, and is not equal to $\rho_1(M \cap X^t)$. Thus, any normal subgroup $N$ of $K$ which contains $\rho_1(M \cap X^t)$ has $N \cap X > \rho_1(M \cap X^t)$, contradicting the statement above. Hence $M \cap K^t$ is not subdirect in $K^t$.
	
	Let $M_i := \rho_i(M \cap K^t)$, so
	\[
	M \cap K^t = \left( M_1 \times \cdots \times M_t \right) \cap G.
	\]
	Note that $M_i \cap X = \rho_i(M \cap X^t)$ and hence each $M_i$ is a non-normal subgroup of $K$. Since $M/(M \cap X^t ) \cong G/X^t$ and $G/X^t \cap (K/X)^t$ is a subdirect subgroup of $(K/X)^t$, we have that $M_iX = K$.
	
	Let $\sigma_i$ be an element of $\pi(G)$ which maps $1$ to $i$. Then, there exists some $h_i \in K^t$ such that $h_i \sigma_i \in G$. Additionally, since $G \cap K^t$ is subdirect, we may assume that $\rho_1(h_i)$ is trivial for each $i$. Since $X^t M = G$, there exists some $m_i \in X^t$ with $m_i h_i \sigma_i \in M$, and hence $ M_1 ^{\rho_1(m_i)}$ is equal to $ M_i$. Let
	\[
	g := (1, \rho_1(m_2 ^{-1}), \dots, \rho_1(m_t ^{-1})) \in X^t \leq G.
	\]
	Note that $M^g \cap K^t = M_1^t \cap G$. We claim that $M^g = ( M_1 \wr \Sym(t)) \cap G$. Indeed, consider any element $h$ of $M^g$. We may write $h = k \sigma$ for some $k \in K^t$ and $\sigma \in \Sym(t)$. Then, for any $i$, we have that $ M_1^{\rho_i(k)} =  M_1$, and so $\rho_i(k) \in N_K( M_1)$. Thus $M^g$ is contained in $(N_K(M_1) \wr \Sym(t)) \cap G$. Since $N_K(M_1) < K$, and $M_1X = K$, we have that $N_K(M_1) \cap X < X$, so
	\[
	M^g \leq (N_K(M_1) \wr \Sym(t)) \cap G < G.
	\]
	This implies that $M^g = (N_K(M_1) \wr \Sym(t)) \cap G$. If $M_1 < N_K(M_1)$, then, by the same reasoning, we have that $M^g \cap X^t < (N_K(M_1) \cap X)^t$, which is a contradiction. So $M_1 = N_K(M_1)$, and hence $M^g = ( M_1 \wr \Sym(t)) \cap G$.
	
	Finally, we show that $M_1$ is a maximal subgroup of $K$. Suppose otherwise, so there exists a subgroup $J$ of $K$ with $M_1 < J < K$. This implies that
	\[
	M^g < (J \wr \Sym(t)) \cap G < G,
	\]
	as above, which is a contradiction.
\end{proof}

\subsection{Non-Frattini chief factors of wreath products}\label{section:wreathproducts}

The aim of this section is to determine the chief factors of certain families of subgroups of wreath products, extending \cite[Lemma 3.9]{LMT}. This will be of particular use when considering second maximal subgroups which are maximal in a subgroup of a wreath product; see for instance Sections \ref{section:classicalsC2} and \ref{section:altcase2}.

Retaining the notation of \cite[Lemma 3.9]{LMT}, let $E$ be a group and let $t \geq 2$ be an integer. Consider a subgroup $H$ of $E \wr \Sym(t)$, and let $F$ be a normal subgroup of $E$ such that $F^t \leq H$. Lemma 3.9 of \cite{LMT} can then be applied, for example, in order to compute the chief factors of $H$ when $E = F$ and $H$ contains $\Alt(t)$.

However, there are subgroups of wreath products to which this lemma does not apply, as it requires conditions (b)(ii) and (b)(iii) of \cite[Definition 3.8]{LMT} to hold. These state, respectively, that $\Phi(E) \cap F = \Phi(F)$ and $\delta_{E,F}(W) \leq 1$ for all non-Frattini chief factors $W$ of $E$. These properties frequently fail in the groups we consider in this paper. In this section, we impose no conditions on $E$ and $F$. Instead, we restrict our attention to certain families of subgroups $H$ of $E \wr \Sym(t)$. We then classify the chief factors of $H$ contained in $F^t$, and give conditions under which such chief factors are Frattini in $H$. Additionally, we determine which chief factors of $H$ contained in $F^t$ can be $H$-equivalent.

Consider a chief series
\[
1 = F_0 < F_1 < \cdots < F_r = F = E_0 < E_1 < \cdots < E_s = E
\]
of $E$, through $F$. We can obtain a normal series of $H$ given by
\[
1 = F_0^t < F_1^t < \cdots < F_r^t = F^t = E_0^t \cap H < E_1^t \cap H < \cdots < E_s^t \cap H = E^t \cap H.
\]
Hence, each chief factor of $H$ in $F^t$ is contained in $F_i^t/F_{i-1}^t$ for some $i$. In the following discussion we determine the chief factors of $H$ in each $F_i^t/F_{i-1}^t$, and determine conditions for such chief factors to be $H$-equivalent.

Let $A = F_i/F_{i-1}$. When $A$ is abelian, $A$ is isomorphic to $\mathbb F_p^b$ for some prime $p$ and some $b \in \mathbb N$. In the remainder of this section we use additive notation for $A$ and $A^t$ when $A$ is abelian.

\begin{definition}
	Let $X$ be an abelian group, and $t \geq 2$. The \textit{deleted subgroup} of $X^t$ is
	\[
	\del(X^t) := \{ (x_1, \dots, x_t) \in X^t : \sum_{i=1}^t x_i = 1 \}.
	\]
\end{definition}

Additionally, in the remainder of this section, when $\diag(X^t)$ is referenced for any group $X$ and $t>2$, we specifically refer to the diagonal subgroup
\[
\diag(X^t) = \{ (x, \dots, x) : x \in X \}
\]
of $X^t$.

Note that the exponent of a group $G$, written $\exp(G)$, is the smallest $n$ such that $g^n = 1$ for all $g \in G$.

\begin{prop}\label{prop:t5CFsinwreathproduct}
	Let $t \geq 5$. Assume that $H \cap E^t$ is subdirect, and that $H$ contains $\Alt(t)$. Let $A, B$ be chief factors of $E$ in $F$. Then either:
	\begin{enumerate}[\upshape(1)]
		\item $A^t$ is a chief factor of $H$; or
		\item $A$ is central in $F$, and either:
		\begin{enumerate}[\upshape(i)]
			\item $\exp(A) \nmid t$ and $A^t = \diag(A^t) \times \del(A^t)$, where $\diag(A^t)$ and $\del(A^t)$ are minimal $H$-normal subgroups; or
			\item $\exp(A) \mid t$ and the chief factors of $H$ in $A^t$ are $\diag(A^t)$, $\del(A^t)/\diag(A^t)$, and $A^t/\del(A^t)$, the first two of which are Frattini in $H$.
		\end{enumerate}
	\end{enumerate}
	Additionally, if two chief factors of $H$ contained in $A^t$ and $B^t$ respectively are $H$-equivalent, then $A \equiv_E B$.
\end{prop}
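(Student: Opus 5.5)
We study the action of $H$ on $A^t = F_i^t/F_{i-1}^t$, where $A=F_i/F_{i-1}$ is a chief factor of $E$ in $F$. Since $H\cap E^t$ is subdirect, the coordinate subgroups $\iota_j(A)$ are acted on by their stabilisers through (images of) $E$. The symmetric-group part contains $\Alt(t)$, which permutes the coordinates $2$-transitively.

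\textbf{Case split.} The first step separates two cases: either $F$ acts non-trivially on $A$ (equivalently $A$ is non-central in $F$, which includes all non-abelian $A$), or $F$ centralises $A$.

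\textbf{The non-central case.} Let $N$ be a non-trivial $H$-normal subgroup of $A^t$. We aim to show that $N$ contains some $\iota_j(A)$.

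If $A$ is non-abelian, every normal subgroup of $A^t$ is a product of the coordinate factors $\iota_j(S)$, where $A\cong S^b$. Since $\iota_j(F)\le H$ normalises $N$, taking commutators $[N,\iota_j(A)]$ puts a non-trivial normal subgroup of $A$ inside $N\cap\iota_j(A)$ for some $j$. By subdirectness, $N\cap \iota_j(A)$ is then $E$-invariant, hence all of $\iota_j(A)$. Transitivity of $\Alt(t)$ then gives $N=A^t$.

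If $A$ is abelian, pick $n=(a_1,\dots,a_t)\in N$ with $a_j\neq 0$. Because $F$ acts non-trivially and $A$ is $E$-irreducible, there exists $f\in F$ with $a_j^f\ne a_j$. Then $n^{\iota_j(f)}-n$ is a non-zero element of $N\cap\iota_j(A)$. The $E$-irreducibility of $A$, via subdirectness, then gives $\iota_j(A)\le N$, and transitivity again gives $N=A^t$. This proves (1).

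\textbf{The central case.} Here $F$ centralises $A$, so $A$ is abelian and the action of $H$ on $A^t$ factors through $H/F^t$. This quotient acts as $E/F$ on the coordinates combined with the permutation action. The key step is to classify the $H$-submodules $N$.

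Using $\Alt(t)$ with $t\ge5$, and taking differences $n-n^{\sigma}$ for a $3$-cycle $\sigma$, any $N\not\le\diag(A^t)$ contains a non-zero element of the form $(x,-x,0,\dots,0)$. The $E$-irreducibility of $A$ together with $2$-transitivity then gives $\del(A^t)\le N$. So every submodule either contains $\del(A^t)$ or lies in $\diag(A^t)$. Since $\diag(A^t)\cong A$ and $A^t/\del(A^t)\cong A$ (via the sum map) are irreducible $H$-modules, the submodule lattice is determined by whether $\diag(A^t)\le\del(A^t)$, that is, whether $tA=0$, i.e.\ $\exp(A)\mid t$.

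If $\exp(A)\nmid t$, then $\diag\cap\del=0$ and we get the decomposition in (2)(i). If $\exp(A)\mid t$, the series is uniserial with factors $\diag(A^t)$, $\del(A^t)/\diag(A^t)$, and $A^t/\del(A^t)$.

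\textbf{The Frattini claim in (2)(ii).} This is the step I expect to be the main obstacle. We need a complement argument: if $\diag(A^t)$ were complemented in $H$ modulo $F_{i-1}^t$, the complement would meet $A^t$ in an $H$-submodule intersecting $\diag(A^t)$ trivially. By the uniseriality just established, that submodule must be $0$. This forces $A^t$ to embed in the quotient $H/\text{complement}$, which should give a contradiction.

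I would argue more directly via Proposition \ref{prop:CFsFromNormalSubgp}: a maximal subgroup not containing $\diag(A^t)$ has $U\cap A^t$ an $H$-submodule not containing $\diag(A^t)$, hence $U\cap A^t=0$. Then $|A^t|$ divides $|H:U|$, while the primitive quotient has a unique minimal normal subgroup, which is impossible. The same argument applies to $\del(A^t)/\diag(A^t)$ in $H/\diag(A^t)$, using that $A^t/\diag(A^t)$ is again uniserial.

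\textbf{The equivalence claim.} An $H$-equivalence between chief factors in $A^t$ and $B^t$ restricts to the stabiliser $N_H(\iota_1(E))$. We then read off the action on the first coordinate (for diagonal-type factors, the whole action factors through $E$) to obtain $A\equiv_E B$, using the mechanism of Proposition \ref{prop:CFequivalenceinN}.
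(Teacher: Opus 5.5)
Your argument for the case where $F$ acts non-trivially on $A$ is correct, and the difference $n^{\iota_j(f)}-n$ is a neat shortcut. The central case, however, contains a genuine error. If $F$ centralises $A$, then $(H\cap E^t)/F^t$ is only a subdirect subgroup of $(E/F)^t$, so the coordinates of a single element may induce different automorphisms of $A$. In that situation neither $\diag(A^t)$ nor $\del(A^t)$ is $H$-invariant, and your claim that $\diag(A^t)$ and $A^t/\del(A^t)$ are irreducible $H$-modules fails.

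For example, take $E=\Sym(3)$, $F=\Alt(3)=A$ and $H=E\wr\Sym(t)$. Then $F$ centralises $A$, yet $H$ acts on $A^t=C_3^t$ as the monomial group $C_2\wr\Sym(t)$, which is irreducible; so (1) holds and (2) does not. The paper splits instead on whether $\diag(A^t)$ (equivalently $\del(A^t)$) is $H$-normal, i.e.\ whether for every $k\sigma\in H$ the image of $k$ in $\Aut(A)^t$ has the form $(\phi,\dots,\phi)$. Centrality of $A$ in $F$ is a consequence of that condition, not a sufficient condition for it. Your own dichotomy (every $H$-submodule contains $\del(A^t)$ or lies in $\diag(A^t)$) repairs the gap: subdirectness shows that if the action is not diagonal, the only $H$-submodules are $0$ and $A^t$.

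Two smaller points in the same part. First, an element of $H\cap E^t$ sends $(x,-x,0,\dots,0)$ to $(x^{e_1},-x^{e_2},0,\dots,0)$, so one more $3$-cycle difference is needed to reach $\del(A^t)\le N$. Second, in the Frattini step the contradiction is simply $|A|^t=|H:U|=|\diag(A^t)|=|A|$, which follows from $U\cap A^t$ trivial and $UA^t=U\diag(A^t)=H$.

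The equivalence assertion is where most of the paper's proof lies, and your sketch omits its key step. Restricting an $H$-equivalence $\phi$ to $P=N_H(\iota_1(E))$ is legitimate, since the argument of Proposition~\ref{prop:CFequivalenceinN} works for any subgroup. But before you can read off the first coordinate, you must show $\phi(A_1)=B_1$, where $A_1=\iota_1(A)$ and $B_1=\iota_1(B)$.

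For full factors with $A$ non-abelian, or abelian and non-central in $F$, this follows from an order count: the only $P$-invariant normal subgroups of $B^t$ are then $1$, $B_1$, $B_2\times\cdots\times B_t$ and $B^t$. When $A$ and $B$ are central in $F$ but $A^t$ and $B^t$ are still chief factors of $H$, you must also exclude $P$-invariant subgroups of $B^t$ of order $|B|$ other than $B_1$. The paper does this as follows. Since $A_1$ is fixed pointwise by $\Alt(\{2,\dots,t\})$, the image $\phi(A_1)$ consists of vectors $(b_1,b_2,\dots,b_2)$. An element $e\in H\cap E^t$ with $e_1=1$ acting non-trivially on $A$ in another coordinate (obtained from $[k\sigma,(1,2,3)]$) then forces $b_2=0$.

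Elements of this kind also show that a full factor $A^t$ cannot be $H$-equivalent to a $\diag$- or $\del$-type factor of $B^t$, a mixed case you do not mention. Finally, passing from an equivalence of $A_1$ and $B_1$ over $P$ (or over $H\cap E^t$) to $A\equiv_E B$ requires factoring out a normal subgroup centralising both, such as $(1\times E^{t-1})\cap H$. This is routine for abelian factors but needs a short argument in the non-abelian case.
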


Which chief factors can arise depends on how large $H \cap E^t$ is. For example, if $E/F$ is abelian and $(H \cap E^t)/F^t$ is equal to $\del((E/F)^t)$, then the first case occurs if and only if $F_i/F_{i-1}$ is non-central in $E$. On the other hand, if $(H \cap E^t)/F^t$ is equal to $\diag((E/F)^t)$ and $H$ contains $\Sym(t)$, then the first case occurs if and only if $F_i/F_{i-1}$ is non-central in $F$.

\begin{proof}
	Let $A$ be a minimal normal subgroup of $E$ contained in $F$. If $A$ is nonabelian, then $A^t$ is a minimal normal subgroup of H. So assume that $A$ is abelian, and let $K$ be an $H$-normal, non-trivial, proper subgroup of $A^t$. We show that $K$ is one of $\diag(A^t)$ or $\del(A^t)$. Let $a = (a_1, \dots, a_t)$ be a non-trivial element of $K$. Thus, $K$ contains
	\[
	a - a^{(1, 2, 3)} - (a - a^{(1, 2, 3)})^{(1,4,5)} = (a_1 - a_2, 0 , 0, a_2 - a_1, 0, \dots, 0),
	\]
	and hence contains $(a_1 - a_2, a_2 - a_1, 0, \dots, 0)$. Suppose that $a_1 - a_2 = 0$ for all $a \in K$. Then $K$ is contained in $\diag(A^t)$, since $\Alt(t)$ is $2$-transitive. Let $(a, \dots, a)$ be a non-trivial element of $K$ for some $a \in A$. For any $e_1 \in E$, there exist some elements $e_2, \dots, e_t$ in $E$ such that $(e_1, \dots, e_t)$ is in $H \cap E^t$. Thus $K$ contains $(a^{e_1}, \dots, a^{e_t})$. Since $A$ is a minimal normal subgroup of $E$, this implies that $|K| \geq |A|$, so $K = \diag(A^t)$.
	
	On the other hand, if there exists some $a$ for which $a_1 - a_2 \neq 0$, then we have
	\[
	((a_1 - a_2)^{e_1}, (a_2 - a_1)^{e_2}, 0, \dots, 0)
	\]
	contained in $K$, where $e_i$ are defined as above. Hence, since $t \geq 5$,
	\[
	((a_1 - a_2)^{e_1}, 0, (a_2 - a_1)^{e_1}, 0, \dots, 0)
	\]
	is also contained in $K$. Thus $K$ contains $\del(A^t)$. If $K$ strictly contains $\del(A^t)$ then one can similarly show that $K$ is equal to $A^t$, so $K$ is one of $\diag(A^t)$ or $\del(A^t)$.
	
	The groups $\diag(A^t)$ and $\del(A^t)$ are $H$-normal if and only if the following holds: for any $k \sigma \in H$ with $k \in E^t$ and $\sigma \in \Sym(t)$, the image of $k$ in $\Aut(A)^t$ is equal to $(\phi, \dots, \phi)$ for some isomorphism $\phi$ of $A$. If neither of these groups is $H$-normal, case (1) holds. Thus, suppose that both $\diag(A^t)$ and $\del(A^t)$ are normal in $H$. In particular, $A$ is a central section of $F$. Note that $\diag(A^t) \leq \del(A^t)$ if and only if $\exp(A) \mid t$. Furthermore, if $\diag(A^t) \leq \del(A^t)$ then $\diag(A^t)$ and $\del(A^t)/\diag(A^t)$ are not complemented in $A^t$ by an $H$-normal subgroup, and so are Frattini chief factors of $H$. Hence, case (2)(ii) holds. On the other hand, if $\diag(A^t)$ is not contained in $\del(A^t)$ then $A^t = \diag(A^t) \times \del(A^t)$, so case (2)(i) holds.
	
	If case (1) holds, there exists an element $k \sigma$ of $H$ such that $k = (k_1, \dots, k_t) \in E^t$ where some $k_i, k_j$ have different images in $\Aut(A)$. By conjugating by an element of $\Alt(t)$, we can assume that $k_1, k_2$ have different images in $\Aut(A)$. Additionally, by multiplying on the right by some $\tau \in \Alt(t)$ we can assume that $\sigma$ is equal to $1$ or $(4,5)$. Hence,
	\[
	[k \sigma, (1,2,3)] = (k_1^{-1} k_2, k_2^{-1} k_3, k_3^{-1} k_1, 1, \dots, 1) \in H.
	\]
	Thus, for any $i \neq j$, the group $H \cap E^t$ contains an element $(e_1, \dots, e_t)$ where $e_i$ acts non-trivially on $A$ and $e_j = 1$.
	
	Let $A = F_i/F_{i-1}$ and $B = F_j / F_{j-1}$ and assume that $A^t$ and $B^t$ contain $H$-chief factors $\widetilde A, \widetilde B$ which are $H$-equivalent. We show that this implies that $A$ and $B$ are $E$-equivalent. Let $\phi, \Phi$ be isomorphisms as given in Definition \ref{def:CFsEquivalence}, and write $A^t = A_1 \times \cdots A_t$ and $B^t = B_1 \times \cdots \times B_t$ for some groups $A_i$, $B_i$ isomorphic to $A$, $B$ respectively.
	
	To begin with, we show that $\widetilde A = A^t$ if and only if $\widetilde B = B^t$. Suppose for a contradiction that $\widetilde A$ is not equal to $A^t$ while $\widetilde B = B^t$. Note that this implies that $A$ and $B$ are abelian. For any element $e = (e_1, \dots, e_t)$ of $H \cap E^t$, if $e_1$ centralises $A$ then $e$ centralises $A^t$ and thus centralises $\widetilde A$ and $\widetilde B$. However, since $\widetilde B = B^t$, there exists some element $e \in H \cap E^t$ for which $e_1 = 1$ and $e_2$ does not centralise $B$. Hence $e$ centralises $\widetilde A$ but not $\widetilde B$, which is a contradiction.
	
	Assume that $\widetilde A = A^t$ and $\widetilde B = B^t$. We show that $\phi(A_1) = B_1$, and so, without loss of generality, $\phi(A_i) = B_i$ for all $i$.
	
	If $A$ is abelian then $\phi$ is an $(H \cap E^t)$-invariant isomorphism. Since $\widetilde A = A^t$, the group $H \cap E^t$ contains some element $e$ for which $e_1 = 1$ and $e_t$ acts non-trivially on $A$. Since $A_1$ is centralised by $(2, i, j)$ for any $i, j > 2$, we have that $\phi(a_1, 0, \dots, 0)$ is equal to $(b_1, b_2, b_2, \dots, b_2)$ for some $b_1, b_2 \in B$. Assume that there exists some $a_1 \in A$ such that the second coordinate of $\phi(a_1, 0, \dots, 0) $ is non-zero. The group $\phi(A_1)$ is $(H \cap E^t)$-normal, so this implies that $\phi(A_1)$ projects onto $B_i$ for any $i > 1$. However, $A_1$ is centralised by $e$, so $e_i$ acts trivially on $B$ for any $i > 1$. Hence, $e$ centralises $B^t$, as $e_1 = 1$, but does not centralise $A^t$, so we have a contradiction. Thus, $\phi(A_1) = B_1$.
	
	If $A$ is non-abelian, then it is a product of non-abelian simple groups. Hence, $\phi$ must map each $A_i$ to some $B_j$. Suppose that $A_1$ maps to $B_i$ where $i \neq 1$. Let $j, k$ be distinct elements of $\{1, \dots, t\} \setminus \{1, i\}$. Then, by the commutative diagram in Definition \ref{def:CFsEquivalence}, we have that
	\[
	(\phi(A_1^{(1,j,k)}),1) = (\phi(A_1),1)^{\Phi((1,(1,j,k)))}.
	\]
	However, $\phi(A_1^{(1,j,k)})$ is equal to $\phi(A_j)$, while $(B_i,1)^{\Phi((1,(1,j,k)))}$ is equal to $(B_i, 1)$. Thus $\phi$ maps $A_1$ and $A_j$ to $B_i$, which is a contradiction.
	
	We have shown that, if $\widetilde A = A^t$, then $\phi(A_i) = B_i$ for all $i$. Let $N = (1 \times E^{t-1}) \cap H$. Thus, 
	\[
	\phi\left((N \cap F_i^t)/F_{i-1}^t \right) = \phi(A_2 \times \cdots \times A_t) = B_2 \times \cdots \times B_t = (N \cap F_j^t)/F_{j-1}^t,
	\]
	and so
	\[
	\frac{A^t}{(N \cap F_i^t)/F_{i-1}^t} \equiv_{H \cap E^t} \frac{B^t}{(N \cap F_j^t)/F_{j-1}^t}.
	\]
	Additionally, $N$ centralises these sections, so
	\[
	\frac{A^t}{(N \cap F_i^t)/F_{i-1}^t} \equiv_{(H \cap E^t)/N} \frac{B^t}{(N \cap F_j^t)/F_{j-1}^t}.
	\]
	Since $(H \cap E^t)/N \cong E$ and the images of $A^t$ and $B^t$ under this quotient map are $A$ and $B$ respectively, this proves the desired result.
	
	Now assume that $\widetilde A$ and $\widetilde B$ are not equal to $A^t$ and $B^t$ respectively. This implies that the chief factors of $F$ in $A$ and $B$ are central. Let $\widetilde A = \widetilde A_1/\widetilde A_2$ for some subgroups $\widetilde A_i$ of $A^t$. There exists an $(H \cap E^t)$-invariant subgroup of $A^t$ which complements $\widetilde A_2$ in $\widetilde A_1$. For instance, if $\widetilde A = \del(A^t)/\diag(A^t)$ we can take this normal subgroup to be $\langle (a, -a, 0, \dots, 0), \dots, (0, \dots, 0, a, -a, 0) : a \in A \rangle$. Hence, $H \cap E^t$ acts the same on $\widetilde A$ as on this subgroup of $A^t$, so we can view $\widetilde A$ as a subgroup of $A^t$, and $\widetilde B$ as a subgroup of $B^t$. Let $X$ be a minimal $(H \cap E^t)$-normal subgroup of $\widetilde A$, and let $Y = \phi(X)$. The group $H \cap E^t$ acts diagonally on $A^t$, by above, so $X$ is isomorphic to $A$. Let $(a_1, \dots, a_t)$ be a non-zero element of $X$ so that $\langle (a_1, \dots, a_t) \rangle ^{H\cap E^t} = X$. Choose $i$ with $a_i \neq 0$. Note that this implies that $\rho_i$, the projection map from $A^t$ to $A_i$, is an isomorphism when restricted to $X$. Let $\phi(a_1, \dots, a_t) = (b_1, \dots, b_t)$ and assume that $b_j \neq 0$. Let $\pi_j$ be the natural projection map from $B^t$ to $B_j$. Then the map $\pi_j \circ \phi \circ \rho_i^{-1}$ is an $E$-invariant isomorphism from $A$ to $B$ since $H \cap E^t$ acts diagonally on $A^t$ and $B^t$. Thus $A \equiv_E B$, as required.
\end{proof}

\begin{remark}
	If $t < 5$ then $\diag(A^t)$ and $\del(A^t)$ may not be the only $H$-normal, proper, non-trivial subgroups of $A^t$. For instance, let $E = F = C_7$, and let $H = E \wr \Alt(3)$. Then, the group $\{(a, a^2, a^4) : a \in C_7\}$ is $H$-normal.
\end{remark}

\begin{prop}\label{prop:t3CFsinwreathproduct}
	Let $t \geq 3$. Assume that $H \cap E^t$ is subdirect and that $H$ contains $\Alt(t)$. Suppose that there exists some $\alpha \in E$ such that $H$ contains $(1, \dots, 1, \alpha) (1,2)$. Let $A, B$ be chief factors of $E$ in $F$. Then the chief factors of $H$ in $A^t$ are as given in Proposition \ref{prop:t5CFsinwreathproduct}, and are all non-equivalent when non-Frattini. 
	
	Moreover, if two chief factors of $H$ contained in $A^t$ and $B^t$ are $H$-equivalent, then $A \equiv_E B$.
\end{prop}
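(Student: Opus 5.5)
The plan is to rerun the proof of Proposition \ref{prop:t5CFsinwreathproduct}, replacing each use of $t \geq 5$ by an argument that combines the extra element $g := (1,\dots,1,\alpha)(1,2) \in H$ with $\Alt(t)$. First, $\Alt(t)$ together with $g$ projects onto $\Sym(t)$ under $\pi$, since $\pi(g)=(1,2)$. Now take $A$ a minimal normal subgroup of $E$ in $F$, and suppose $A$ is abelian (the non-abelian case is unchanged). Let $K$ be a non-trivial proper $H$-normal subgroup of $A^t$, and take $a=(a_1,\dots,a_t)\in K$.

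Step one is to obtain $(a_1-a_2, a_2-a_1,0,\dots,0)\in K$ without using $(1,4,5)$. When $t \geq 4$ we have $(1,2)(3,4)\in\Alt(t)$. When $t=3$ we instead use $g$. If $\alpha$ centralises $A$, or more generally acts on $A$ the way the corresponding coordinates of subdirect elements of $H\cap E^t$ do, then $a - a^{g}$ is $(a_1-a_2, a_2-a_1, a_3-a_3^\alpha)$. Correcting the last coordinate by an element of $H\cap E^t$ whose first two coordinates are trivial (or equal), produced by commutators as in the previous proof, leaves $(a_1-a_2,a_2-a_1,0)$. From there the dichotomy follows exactly as before: either $K \le \diag(A^t)$, in which case subdirectness forces $K=\diag(A^t)$; or $K$ contains $\del(A^t)$, using 2-transitivity of $\Alt(t)$ for $t\ge 3$ to move the support $\{1,2\}$ to $\{1,3\}$ and generate all of $\del(A^t)$. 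The cases (1), (2)(i), (2)(ii) and the Frattini claims then follow verbatim.

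The second part concerns non-equivalence of non-Frattini chief factors within a single $A^t$. The only possible pair is $\diag(A^t)$ and $\del(A^t)$ in case (2)(i). The element $g$ acts on both. I will choose $a\in A$ with $a^\alpha$ related to $a$, and compare the action of $g$ on $(a,\dots,a)$ with its action on $(a,-a,0,\dots,0)$. The transposition $(1,2)$ acts trivially on the diagonal but as $-1$ on the image of $(a,-a,0,\dots)$. After composing with the $\alpha$-twist, which acts identically in both modules because $H\cap E^t$ acts diagonally here, the two modules differ by the sign character of $\Sym(t)$. The equivalence would then force $-1 = 1$ on $A$, so $p=2$, and $p=2$ means $\exp(A)\mid t$ fails only if $t$ is odd. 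In that case I will instead compare dimensions: $\diag$ has the size of $A$, while $\del(A^t)$ has $|A|^{t-1}$, so they are not isomorphic for $t\geq 3$. In fact this dimension count alone settles every case, and I will use it first.

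Finally, the cross-claim $A\equiv_E B$ goes as in Proposition \ref{prop:t5CFsinwreathproduct}, except for one step. There, the 3-cycle $(1,j,k)$ with $j,k\notin\{1,i\}$ requires $t\ge4$. For $t=3$ I will use the transposition element $g$, or a conjugate of it, which fixes a point, to show $\phi(A_1)=B_1$. Likewise, the element with $e_1=1$ and $e_i$ acting non-trivially is built from $[g\,k\sigma,(1,2,3)]$. The main obstacle I expect is the $t=3$ bookkeeping with $\alpha$, making sure the twist in the last coordinate does not spoil these commutator computations.
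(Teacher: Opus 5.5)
Your non-equivalence argument is the paper's. The non-Frattini chief factors in $A^t$ have orders $|A|$ and $|A|^{t-1}$, which differ once $t\geq 3$, so the sign-character detour can simply be dropped. Replacing the $3$-cycles $(1,j,k)$ by point-fixing transposition-type elements in the equivalence part is also a sensible adaptation.

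The gap is in step one, which is the real content of the proposition. With $g=(1,\dots,1,\alpha)(1,2)$ the $\alpha$-twist sits outside the support of $(1,2)$, so
\[
a-a^{g}=(a_1-a_2,\,a_2-a_1,\,0,\dots,0,\,a_t-a_t^{\alpha}).
\]
``Correcting the last coordinate'' is not an operation available inside $K$. $K$ is only closed under sums and the action of $H$. Acting on $x=a-a^g$ by an element $e\in H\cap E^t$ whose first two coordinates centralise $A$ merely replaces $x_t$ by $x_t^{e_t}$. Sums of such translates can kill $x_t$ only under extra hypotheses (for instance $e_t$ inverting $x_t$ with $p$ odd), and nothing in the statement guarantees these.

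You also cannot restrict to the case where $\alpha$ centralises $A$. If it does not, then $g$ normalises neither $\diag(A^t)$ nor $\del(A^t)$, and what has to be proved is that $A^t$ is irreducible. That is exactly the case your correction was meant to cover. The same issue affects $t=4$: $a-a^{(1,2)(3,4)}$ is in general supported on four coordinates, and you give no way to isolate a two-coordinate element from it.

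The missing idea is to use $\Alt(t)$ to reposition $\alpha$. Conjugate $g$ by elements of $\Alt(t)$, which is transitive for $t\geq 3$, and then multiply on the right by elements of $\Alt(t)$. This shows that $H$ contains $(1,\dots,1,\alpha,1,\dots,1)\tau$ for every position of $\alpha$ and every odd permutation $\tau$. Take $g_1=(\alpha,1,\dots,1)(1,2)$ and $g_2=(1,1,\alpha,1,\dots,1)(1,3)$. Then $a-a^{g_1}=(a_1-a_2,\,a_2-a_1^{\alpha},\,0,\dots,0)$, and
\[
(a-a^{g_1})-(a-a^{g_1})^{g_2}=(a_1-a_2,\,0,\,a_2-a_1,\,0,\dots,0)\in K.
\]
This holds because $g_2$ fixes coordinate $2$, so the twisted entry cancels, while the twist in $g_2$ falls on coordinate $3$, where $a-a^{g_1}$ vanishes. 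The computation works uniformly for every $t\geq 3$ without knowing how $\alpha$ acts on $A$. From here the dichotomy $K\in\{\diag(A^t),\del(A^t)\}$ and the remaining case analysis proceed exactly as in Proposition~\ref{prop:t5CFsinwreathproduct}.
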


\begin{proof}
	For any $i$, we can conjugate $(1, \dots, 1, \alpha) (1,2)$ by an element of $\Alt(t) \leq H$ to obtain $(1, \dots, 1, \alpha, 1, \dots, 1) \sigma \in H$ with $\alpha$ in the $i$th coordinate, for some $\sigma \in \Sym(t) \setminus \Alt(t)$. Thus we can multiply by an element of $\Alt(t)$ to obtain $(1, \dots, 1, \alpha, 1, \dots, 1) \tau \in H$ for any $\tau \in \Sym(t) \setminus \Alt(t)$.
	
	Let $A$ be a minimal normal subgroup of $E$ contained in $F$. Let $K$ be an $H$-normal, non-trivial, proper subgroup of $A^t$. We show that $K$ is one of $\diag(A^t)$ or $\del(A^t)$ when $A$ is abelian. Let $a = (a_1, \dots, a_t)$ be a non-trivial element of $K$. Thus, $K$ contains
	\[
	a - a^{(\alpha, 1, \dots, 1) (1,2)} - (a - a^{(\alpha, 1, \dots, 1) (1,2)})^{(1,1,\alpha,1,\dots, 1)(1,3)} = (a_1 - a_2, 0 , a_2 - a_1, 0, \dots, 0).
	\]
	Hence, either $K$ is contained in $\diag(A^t)$ or contains $\del(A^t)$, and so $K$ is equal to $\diag(A^t)$ or $\del(A^t)$ as previously. The argument then follows as in Proposition \ref{prop:t5CFsinwreathproduct}.
	
	Since $t \geq 3$, two non-Frattini chief factors of $H$ in $A^t$ cannot have the same order, so in particular these chief factors are not $H$-equivalent.
\end{proof}

\begin{prop}\label{prop:t2CFsinwreathproduct}
	Let $t = 2$, and assume that $H$ is as in Proposition \ref{prop:t3CFsinwreathproduct}. Let $A, B$ be chief factors of $E$ in $F$. If there exists a chief factor of $F$ in $A$ which is non-central, then $A^t$ is a chief factor of $H$. If $A^t$ is not a chief factor of $H$ then it contains two chief factors isomorphic to $A$. If both are non-Frattini, then they are not $H$-equivalent.
	
	Moreover, if two chief factors of $H$ contained in $A^t$ and $B^t$ respectively are $H$-equivalent, then $A \equiv_F B$.
\end{prop}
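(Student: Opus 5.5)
We pass to $H/F_{i-1}^2$ and so assume that $A$ is a minimal normal subgroup of $E$ contained in $F$. We write $A^2=A_1\times A_2$ and use the element $s:=(1,\alpha)(1,2)\in H$.

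\textbf{Step 1: the $H$-normal subgroups of $A^2$.} Let $K$ be a non-trivial, proper, $H$-normal subgroup of $A^2$.
\begin{itemize}
\item If $A$ is non-abelian, every normal subgroup of $A^2$ is a product of simple direct factors. The subdirectness of $H\cap E^2$ makes $\rho_1(K\cap A_1)$ normal in $E$, and $s$ swaps $A_1$ and $A_2$. Hence $K=A^2$, and $A^2$ is a chief factor.
\item If $A$ is abelian, the same argument shows $K\cap A_i\in\{0,A_i\}$, and conjugation by $s$ gives $K\cap A_1=0$ if and only if $K\cap A_2=0$. So either $K=A^2$, or $K$ is the graph $\{(a,\psi(a)) : a\in A\}$ of a group automorphism $\psi$ of $A$. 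The graph condition forces $\psi(a^{e_1})=\psi(a)^{e_2}$ for all $(e_1,e_2)\in H\cap E^2$.
\end{itemize}

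\textbf{Step 2: the non-central case and the count of chief factors.} Since $F^2\le H$, the element $(f,1)$ lies in $H\cap E^2$ for every $f\in F$. The relation above then gives $\psi(a^f)=\psi(a)$, so $a^f=a$. Thus if such a $K$ exists, $F$ centralises $A$ and every chief factor of $F$ in $A$ is central. Contrapositively, a non-central chief factor of $F$ in $A$ forces $A^2$ to be a chief factor of $H$.

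If $A^2$ is not a chief factor, take $K$ as above. Then $K\cong A$ via the first projection. Any $H$-normal $L$ with $K<L$ has order greater than $|A|$, so it meets $A_1$ or $A_2$ non-trivially; by Step 1, $L=A^2$. Hence $A^2/K\cong A$ is also a chief factor, giving exactly two chief factors, each isomorphic to $A$.

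\textbf{Step 3: non-equivalence (expected main obstacle).} Suppose both chief factors are non-Frattini and $H$-equivalent; since they are abelian, they are $H$-isomorphic. The delicate point is to produce a second $H$-normal complement $K'=\{(a,\psi'(a))\}$ with $\psi'\neq\psi$. I would obtain this from non-Frattini-ness: the quotient $A^2/K$ is complemented in $H$, and intersecting that complement with $A^2$ gives an $H$-normal complement to $K$ in $A^2$. This mirrors the argument in Proposition~\ref{prop:CFsFromNormalSubgp}(3), applied to $H/\Phi(H)$ after invoking Lemma~\ref{lemma:gaschutz}.

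Given $K$ and $K'$, let $\mathbb F$ be the endomorphism field of $A$ as an $E$-module.
\begin{enumerate}[(a)]
\item The element $s$ maps $(a,\psi a)$ to $(\psi a,\,a^\alpha)$. Since $K$ is $s$-invariant, this forces $\psi^2=\alpha|_A$, and likewise $\psi'^2=\alpha|_A$.
\item The map $\psi^{-1}\psi'$ is an $E$-endomorphism of $A$, hence a scalar $\lambda\in\mathbb F$. From $\lambda^2=1$ and $\psi'\neq\psi$ we get $\lambda=-1$, which is impossible in characteristic $2$.
\item In odd characteristic, identify $K$ and $K'$ with $A$ via the first projection. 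Then $H\cap E^2$ acts on both by the first coordinate, while $s$ acts by $\psi$ on $K$ and by $-\psi$ on $K'$.
\item An $H$-isomorphism $\theta\colon K\to K'$ is therefore an $E$-endomorphism of $A$, so it commutes with $\psi$. But it must also satisfy $\theta\psi=-\psi\theta$, which gives a contradiction.
\end{enumerate}

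\textbf{Step 4: chief factors in different sections.} For factors inside $A^2$ and $B^2$, I will restrict an $H$-equivalence to the normal subgroup $F^2\le H$. This is legitimate because, as in the proof of Proposition~\ref{prop:CFequivalenceinN}, an $H$-equivalence restricts to an $N$-equivalence for any $N\unlhd H$. The subgroup $F^2$ acts coordinatewise on both sections.
\begin{itemize}
\item When both are full $A^2$ and $B^2$, the swap argument of Proposition~\ref{prop:t5CFsinwreathproduct} shows $\phi(A_1)$ lies in a single coordinate $B_j$. Composing with projections then gives an $F$-equivalence $A\equiv_F B$.
\item In the graph case, project $K\cong A$ and the corresponding subgroup of $B^2$ onto a coordinate, exactly as in the final paragraph of the proof of Proposition~\ref{prop:t5CFsinwreathproduct}.
\end{itemize}
Only $F$-equivalence survives, because the swap may twist the $E$-action by $\alpha$.
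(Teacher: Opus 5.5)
The genuine gap is in Step~3(b) and (d). Your $\psi$ is not $E$-linear. It only satisfies $\psi(a^{e_1})=\psi(a)^{e_2}$ for $(e_1,e_2)\in H\cap E^2$. Since $H\cap E^2$ is subdirect in both coordinates, this does show that conjugation by $\psi$ normalises $\mathbb F=\End_E(A)$. However, it may induce a non-trivial field automorphism $\tau$ of $\mathbb F$.

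Writing $\psi'=\psi\lambda$, the relation $(\psi')^2=\psi^2=\alpha|_A$ then only gives $\lambda\,\tau(\lambda)=1$, not $\lambda^2=1$. Likewise, in (d) an element $\theta\in\mathbb F$ need not commute with $\psi$, and $\theta\psi=-\psi\theta$ only gives $\tau(\theta)=-\theta$. Both deductions are valid only when $\tau=1$, for instance when $A$ is absolutely irreducible for $E$.

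This cannot be patched, because the non-equivalence assertion is false in general. Let $E=\mathbb F_4\rtimes\langle c\rangle\cong\Alt(4)$, where $c$ acts as multiplication by $\omega$ with $\omega^2+\omega+1=0$. Take $F=A=\mathbb F_4$, $\alpha=1$ and $H=\langle F^2,(c,c^{-1}),(1,2)\rangle\le E\wr\Sym(2)$. Then:
\begin{itemize}
\item all the hypotheses hold, and each $K_\mu=\{(a,\mu a^2):a\in\mathbb F_4\}$ with $\mu\in\mathbb F_4^\times$ is $H$-normal;
\item since $H=A^2\rtimes\langle(c,c^{-1}),(1,2)\rangle$ and $A^2=K_1\oplus K_\omega$, both chief factors $K_1$ and $A^2/K_1\cong_H K_\omega$ are complemented, hence non-Frattini;
\item multiplication by $\omega^2$ on $A^2$ commutes with $H$ and maps $K_1$ onto $K_\omega$, so the two factors are $H$-isomorphic.
\end{itemize}
Here $\psi$ is $a\mapsto a^2$ and $\psi'=\omega\psi$, so $\psi^{-1}\psi'$ is multiplication by $\omega^2$, whose square is $\omega\neq 1$. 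The same construction over $\mathbb F_9$, with $c$ of order $8$ and $(c,c^3)$ in place of $(c,c^{-1})$, gives $\psi'=-\psi$ and defeats (d) in odd characteristic.

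The paper's proof makes exactly the same move, treating $\psi_1\psi_2^{-1}$ as a scalar of $\GL(A)$. So this part of the statement needs an extra hypothesis such as $\End_E(A)=\mathbb F_p$; under that hypothesis your (a)--(d) go through.

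There are also some smaller issues.
\begin{itemize}
\item In Step~3 you use the wrong chief factor to build $K'$: a complement $U$ of $A^2/K$ has $U\cap A^2=K$. Use instead that $K$ itself is non-Frattini. If $UK=H$ and $U\cap K=1$, then $U\cap A^2$ is normalised by $U$ and by the abelian group $A^2$, so it is $H$-normal. By Dedekind's law it complements $K$ in $A^2$.
\item In Step~4, the argument of Proposition~\ref{prop:t5CFsinwreathproduct} relies on $3$-cycles, so it does not carry over to $t=2$. In the abelian non-central case you can use $A_1=C_{A^2}(1\times F)$, since $C_A(F)$ is trivial.
\item In the non-abelian case, $\phi(A_1)$ is an $(H\cap E^2)$-invariant normal subgroup of $B^2$, hence equal to $B_1$ or $B_2$. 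If $\phi(A_1)=B_2$, you need the paper's extra observation that $F$ then induces inner automorphisms on $A$ and on $B$.
\item You omit the mixed case in which one factor is a graph and the other is $B^2$. It is excluded because $H\cap E^2$ acts irreducibly on the graph but stabilises $B\times 1$.
\end{itemize}

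Steps~1 and~2 are correct, and they prove the first two assertions more cleanly than the paper does.
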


\begin{proof}
	If some chief factor of $F$ in $A$ is non-central, then, by Proposition \ref{prop:CFsFromNormalSubgp}, any chief factor of $F$ in $A$ is non-central. Let $X$ be a minimal normal subgroup of $F$ in $A$, on which $F$ acts non-centrally. Then any minimal normal subgroup $K$ of $H$ in $A^t$ contains $X^t$. However, this implies that $K$ contains $A^t$, so $A^t$ is a chief factor of $H$.
	
	By Goursat's Lemma, any $H$-normal subgroup of $A^t$ which is non-trivial and proper is of the form
	\[
	\{(a,a^\phi) : a \in A\}
	\]
	for some isomorphism $\phi$ in $A$. For such a group to be normal, we require $a^{g_1\phi} = a^{\phi g_2}$ for any $(g_1, g_2) \in H \cap E^t$. Thus, if $f_{g}$ is the image of $g \in E$ in $\Aut(A)$, we require $f_{g_1}^{\phi} = f_{g_2}$. As such, the image of $H \cap E^t$ in $\Aut(A)^t$ is equal to the diagonal subgroup $\{(f_g, f_g^\phi) : g \in E\}$.
	
	Suppose that $A^t$ contains two non-Frattini, $H$-equivalent chief factors. Thus, there exist distinct isomorphisms $\psi_1, \psi_2$ of $A$ such that $X_i := \{(a, a^{\psi_i}) : a \in A \}$ are normal subgroups of $H$. This implies that the image of $H \cap E^t$ in $\Aut(A)^t$ is contained in $\{(f_e, f_e^{\psi_i}) : e \in E \}$ for $i = 1,2$. Thus, $\psi_1 \psi_2^{-1}$ commutes with the image of $E$ in $\Aut(A) = \GL(A)$. However, this image is an irreducible subgroup of $\GL(A)$, so by Schur's Lemma $\psi_1 \psi_2^{-1}$ is equal to a scalar $\lambda$ of $\GL(A)$. Using the fact that $X_1, X_2$ are both normalised by $(1, \alpha) (1,2)$, we can show that $\lambda = -1$. Hence, we let $\psi = \psi_1$ and $-\psi = \psi_2$. Let $\phi \colon X_1 \to X_2$ be an $H$-isomorphism and note that there exists some isomorphism $\widetilde \phi$ of $A$ such that  
	\[
	\phi(a, a^{\psi}) = (a^{\widetilde \phi}, - a^{\widetilde \phi \psi}) \qquad \forall a \in A.
	\]
	For any $e_1 \in E$, let $e = (e_1, e_2) \in H \cap E^t$. Then $\phi((a, a^{\psi_1}))^e = \phi((a, a^{\psi_1})^e)$ implies that $a^{e_1 \widetilde \phi} = a^{\widetilde \phi e_1}$. Thus, $\widetilde \phi$ is a scalar of $\GL(A)$, which we denote by $\mu$. Applying the same argument with $(1, \alpha) (1,2)$ gives $2 \mu a^{\psi \alpha} = 0$ for all $a \in A$. So the exponent of $A$ is 2, but then $\psi = - \psi$, so $X_1 = X_2$.
	
	Let $A = F_i/F_{i-1}$ and $B = F_j/F_{j-1}$. Assume that $A^t$ and $B^t$ contain $H$-chief factors $\widetilde A$ and $\widetilde B$, respectively, which are $H$-equivalent. If there exists a chief factor $X$ of $F$ in $A$ which is central then, by Proposition \ref{prop:CFsFromNormalSubgp}, all of the chief factors of $F$ in $A$ are central. Thus $\widetilde A$ and $\widetilde B$ are centralised by $F^t$. It follows that the chief factors of $F$ contained in $B$ are also central. By the proof of Proposition \ref{prop:CFsFromNormalSubgp}, in fact $F$ centralises $A$ and $B$. Consequently, it suffices to prove that $A \cong B$ in order to deduce that $A \equiv_F B$. Suppose for a contradiction that $A \not\cong B$. Without loss of generality, we may assume that: $A$ is isomorphic to $B^2$, $\widetilde B = B^2$, and $\widetilde A$ is one of $\diag(A^2)$ or $A^2/\diag(A^2)$. Hence, the image of $E$ in $\GL(A)$ is conjugate to the image of $H \cap E^2$ in $\GL(B^2) = \GL(A)$. Thus $E$ does not act irreducibly on $A$, which is a contradiction.
	
	If the chief factors of $F$ in $A$ are non-central, then the chief factors of $F$ in $B$ are also non-central, so $\widetilde A = A^t$ and $\widetilde B = B^t$. Let $\phi, \Phi$ be given as in Definition \ref{def:CFsEquivalence}. If $A$ is abelian, since $F$ does not centralise any subgroup of $A$, we can show that $\phi$ maps $A_k$ to $B_k$ for any $k$, and so as previously we obtain that $A \equiv_E B$. In particular, $A \equiv_F B$.
	
	Suppose instead that $A$ is non-abelian. Recall that $A = F_i/F_{i-1}$ and $B = F_j/F_{j-1}$. We assume that $i < j$ without loss of generality. If $\phi(A_k) = B_k$ for every $k$ then we are done as previously, so we assume that $\phi(A_1) = B_2$. By the definition of $\Phi$, this implies that the image of $F$ in $\Aut(A)$ and $\Aut(B)$ is inner. Hence, $F/F_{i-1}$ is equal to $A \times C_{F/F_{i-1}}(A)$. Similarly, there is a quotient of $C_{F/F_{i-1}}(A)$ which is equal to a direct product of $B$ and a group which centralises $B$. Thus there exists a quotient of $F$ which is isomorphic to $A \times B$. It is now simple to show that $A \equiv_F B$, since $A \cong B$.
\end{proof}

When $t=2$, the last statement of Proposition \ref{prop:t3CFsinwreathproduct} no longer holds, as the following example shows.

\begin{example}
	Let $S = \Alt(6)$, and let $g_1, g_2$ be elements of $\Aut(S)$ such that $\langle S, g_1, g_2 \rangle = \Aut(S)$. Consider $F = S \times S$ and 
	\[
	E = \langle F, (g_1, g_2), (g_2, g_1) \rangle \leq \Aut(S)^2.
	\]
	The chief factors $A = S \times 1$ and $B = S^2/(S \times 1)$ of $E$ are not $E$-equivalent. Indeed, these chief factors are not $E$-isomorphic since their centralisers are $S \times 1$ and $1 \times S$ respectively. As such, the two non-abelian chief factors $A$ and $B$ are $E$-equivalent if and only if there exists a maximal subgroup $M$ of $E$ such that $E/\operatorname{Core}_E(M)$ has two chief factors $E$-isomorphic to $A$ and $B$ respectively, see \cite[Proposition 1.4]{JP}. However, there exists no isomorphism of $\Aut(S)$ which maps $g_1$ to $g_2$, so, by Goursat's Lemma and Proposition \ref{prop:maxsubgpdirectproduct}, any maximal subgroup $M$ of $E$ contains one of $S \times 1$ or $1 \times S$. As such, $E/\operatorname{Core}_E(M)$ contains at most one chief factor isomorphic to $S$.
	
	Consider the group
	\[
	H = \langle F^2, ((g_1, g_2), (g_2, g_1)), (1,2) \rangle \leq E \wr \Sym(2).
	\]
	The chief factors $A^2$ and $B^2$ are $H$-equivalent. Indeed, consider the maximal subgroup
	\[
	M = \langle ((s_1, s_2), (s_2, s_1)), ((g_1, g_2), (g_2, g_1)), (1,2) : s_i \in S \rangle
	\]
	of $H$, for which $H / \operatorname{Core}_H(M) \cong H$ contains two chief factors $H$-isomorphic to $A^2$ and $B^2$ respectively. Thus, by \cite[Proposition 1.4]{JP}, $A^2 \equiv_H B^2$.
	
	We have shown that there exists a group $H$ satisfying the conditions of Proposition \ref{prop:t2CFsinwreathproduct} but for which $A^2 \equiv_H B^2$ does not imply $A \equiv_E B$. Such a group exists also in the case that $A, B$ are abelian. For instance, if $n > 2$ and $\iota$ denotes the inverse transpose map in $\Aut(\GL_n(p))$, one can take $V = C_p^n$, $F = V \times V$, and $E$ a subgroup of $(V : \GL_n(p))^2$ given by
	\[
	E = \langle F, (g, g^{\iota}) : g \in \GL_n(p) \rangle.
	\]
	In $E$, the chief factors $A = V \times 1$ and $B = (V \times V)/(V \times 1)$ are not $E$-equivalent, while $A^2 \equiv_H B^2$ if 
	\[
	H := \langle F^2, ((g, g^{\iota}), (g^{\iota}, g)), (1,2) \rangle \leq E \wr \Sym(2).
	\]
\end{example}

We summarise the above results in the following corollary.

\begin{corollary}\label{cor:CFsinwreathproduct}
	Let $t \geq 2$ and let $H$ be a subgroup of $E \wr \Sym(t)$ containing $F^t$ and $\Alt(t)$, where $H \cap E^t$ is subdirect in $E^t$. If $t \leq 4$ assume additionally that $H$ contains $(1, \dots, 1, \alpha) (1,2)$ for some $\alpha \in E$. Let $A, B$ be two chief factors of $E$ in $F$. Then either:
	\begin{enumerate}[\upshape(1)]
		\item $A^t$ is a chief factor of $H$; or
		\item $A$ is a central section of $F$, and either:
		\begin{enumerate}[\upshape(i)]
			\item $\exp(A) \nmid t$ and $A^t$ contains $H$-chief factors isomorphic to $A$ and $A^{t-1}$; or
			\item $\exp(A) \mid t$ and $A^t$ contains $H$-chief factors isomorphic to $A$, $A^{t-2}$ and $A$, the first two of which are Frattini.
		\end{enumerate}
	\end{enumerate}
	The non-Frattini chief factors of $H$ in $A^t$ are non-equivalent. If $A^t$ and $B^t$ contain $H$-equivalent chief factors, then $A \equiv_F B$. If in addition $t \geq 3$, then $A \equiv_E B$.
\end{corollary}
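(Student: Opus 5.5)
The corollary is a packaging of Propositions \ref{prop:t5CFsinwreathproduct}, \ref{prop:t3CFsinwreathproduct} and \ref{prop:t2CFsinwreathproduct}, so I will split by the value of $t$ and read each clause off the matching proposition. For $t \geq 5$, Proposition \ref{prop:t5CFsinwreathproduct} applies directly, since $H$ contains $\Alt(t)$ and $H \cap E^t$ is subdirect. For $3 \leq t \leq 4$ the extra element $(1,\dots,1,\alpha)(1,2)$ is assumed, so Proposition \ref{prop:t3CFsinwreathproduct} applies; it reproduces the dichotomy of Proposition \ref{prop:t5CFsinwreathproduct}. When $t \geq 5$ and $H$ happens to contain such an element, I would also invoke Proposition \ref{prop:t3CFsinwreathproduct}. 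For $t=2$ I use Proposition \ref{prop:t2CFsinwreathproduct}.

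Step one is the structural dichotomy. If $A^t$ is not a chief factor of $H$, then $A$ is abelian and the $H$-chief factors in $A^t$ are $\diag(A^t)$ and $\del(A^t)$, or the three-step series. I then translate the sizes: $|\del(A^t)| = |A|^{t-1}$ and $\diag(A^t) \cong A$. This gives case (i) with factors isomorphic to $A$ and $A^{t-1}$, and case (ii) with factors $A$, $A^{t-2}$ and $A$, the first two Frattini. The hypothesis ``$A$ central in $F$'' is exactly as stated in those propositions. For $t=2$, Proposition \ref{prop:t2CFsinwreathproduct} gives that non-centrality forces $A^2$ to be chief. Otherwise $A^2$ has two chief factors isomorphic to $A$, and I must sort these into (i) or (ii). If $\exp(A) \nmid 2$, then $\diag \cap \del = 0$ and $A^2 = \diag \oplus \del$ with both factors $\cong A = A^{t-1}$, which is case (i). If $\exp(A) = 2$, then $\diag = \del$, giving factors $A$, $A^0 = 1$ and $A$. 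Here the ``middle factor'' is trivial, so case (ii) should be read with the empty $A^{t-2}$ discarded, and the first factor is Frattini because $\diag$ has no $H$-invariant complement. I would verify this last point via the element $(1,\alpha)(1,2)$, arguing as in the Frattini claim of Proposition \ref{prop:t5CFsinwreathproduct}.

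Step two is non-equivalence within $A^t$. For $t \geq 3$, this is the final sentence of Proposition \ref{prop:t3CFsinwreathproduct}: the non-Frattini factors have distinct orders $|A|$ and $|A|^{t-1}$, or there is only one non-Frattini factor. When $t \geq 5$ without the extra element, the same order argument works, because at most the two factors $\diag$ and $\del$ in case (i) are non-Frattini and their orders differ. For $t=2$ the orders coincide, and I instead quote the Schur-lemma argument of Proposition \ref{prop:t2CFsinwreathproduct}.

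Step three covers equivalence across $A^t$ and $B^t$. For $t \geq 3$, Propositions \ref{prop:t5CFsinwreathproduct} and \ref{prop:t3CFsinwreathproduct} give $A \equiv_E B$. I then need $A \equiv_F B$, deduced by restricting along $F \leq E$: an $E$-equivalence, via its pair $(\phi,\Phi)$, restricts to $A \rtimes F \to B \rtimes F$, exactly as at the start of the proof of Proposition \ref{prop:CFequivalenceinN}. For $t=2$, Proposition \ref{prop:t2CFsinwreathproduct} gives $A \equiv_F B$ directly. The main obstacle is bookkeeping in the degenerate $t=2$, exponent-$2$ case, making sure the statement of case (ii) is interpreted consistently there. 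Everything else is direct citation.
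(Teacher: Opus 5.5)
Your reduction to Propositions~\ref{prop:t5CFsinwreathproduct}, \ref{prop:t3CFsinwreathproduct} and \ref{prop:t2CFsinwreathproduct} is exactly how the paper obtains the corollary: it is stated as a summary, with no further proof. Your treatment of $t \geq 3$ is fine, including the passage from $A \equiv_E B$ to $A \equiv_F B$ by restricting $(\phi,\Phi)$.

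The gap is the case $t=2$. There you assume that the $H$-invariant subgroups of $A^2$ are $\diag(A^2)$ and $\del(A^2)$. However, $H \cap E^2$ is only subdirect and the swap is twisted by $\alpha$. So, as in the proof of Proposition~\ref{prop:t2CFsinwreathproduct}, the proper non-trivial invariant subgroups are graphs $X_\phi=\{(a,a^\phi) : a \in A\}$. Here $\phi$ intertwines the actions of the two coordinates of $H\cap E^2$ and satisfies $\phi\alpha\phi=1$ on $A$. Neither $\diag$ nor $\del$ need be invariant, and there can be more than two such graphs. So ``no $H$-invariant complement, hence Frattini'' does not follow. Citing Proposition~\ref{prop:t2CFsinwreathproduct} for non-equivalence does not rescue it either. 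Its Schur step only gives $\psi_1\psi_2^{-1} \in \End_E(A)^\times$, and this is forced to be $\pm 1$ (and to commute with $\psi_1$) only when $\End_E(A)=\mathbb{F}_p$.

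In fact the $t=2$ clauses fail without such a hypothesis. Take $E=\Alt(4)$ and $F=A=V_4$, identified with $\mathbb{F}_4$ so that an element $c$ of order $3$ acts as multiplication by $\omega$. Let $H=A^2\rtimes\langle (c,c^{-1}),(1,2)\rangle$, so $\alpha=1$. All hypotheses hold, $A$ is central in $F$, and $\exp(A)=2=t$. The subgroups $X_\lambda=\{(x,\lambda x^2) : x\in\mathbb{F}_4\}$, for $\lambda\in\mathbb{F}_4^\times$, are three $H$-invariant subgroups meeting pairwise trivially. Hence $A^2=X_1\times X_\omega$ with both factors complemented, so both are non-Frattini. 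Moreover, multiplication by $\omega^2$ on both coordinates commutes with $H$ and maps $X_1$ onto $X_\omega$, so the two factors are $H$-isomorphic. Thus none of (1), (2)(i), (2)(ii) holds, and the non-equivalence clause fails. The analogous example with $E=\mathbb{F}_9\rtimes C_4$ and the graphs $\{(x,\lambda x^3)\}$, $\lambda^4=1$, breaks non-equivalence in case (i) as well.

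The fix is to add, for $t=2$, the hypothesis that $A$ is absolutely irreducible for $E$. Then the only invariant graphs are $X_\phi$ and $X_{-\phi}$, and your argument goes through with these in place of $\diag$ and $\del$. For $p$ odd, $A^2=X_\phi\oplus X_{-\phi}$. An $H$-isomorphism between them would be a scalar $\mu$ with $2\mu=0$, so they are non-equivalent. For $p=2$, the invariant subgroup is unique. A non-Frattini minimal normal subgroup $X\le A^2$ would have a complement $M$ in $H$, and then $A^2\cap M$ would be an invariant complement to $X$ in $A^2$. So the unique invariant subgroup is Frattini.
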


We next determine which chief factors can be non-Frattini in $H$. Since $\Phi(X^t)$ is equal to $\Phi(X)^t$, any chief factor $A$ which is Frattini in $F$ must give rise to a section $A^t$ of $H$ which is also Frattini. However, if $A$ is non-Frattini in $F$, then it is not immediately clear whether $A^t$ is Frattini in $H$ or not.

\begin{lemma}\label{lemma:jumpingbabyresultfull}
	Let $t \geq 2$, and assume that $H \cap E^t$ is subdirect. If $F_i/F_{i-1}$ is a central chief factor of $E$ and is contained in $[E/F_{i-1}, F/F_{i-1}]$, then $F_i^t/F_{i-1}^t$ is Frattini in $H$.
\end{lemma}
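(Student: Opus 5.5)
The plan is to show directly that $F_i^t/F_{i-1}^t \leq \Phi(H/F_{i-1}^t)$. Every chief factor of $H$ contained in $F_i^t/F_{i-1}^t$ is then Frattini in $H$.

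\textbf{Step 1: reduction.} First I pass to the quotient by $F_{i-1}^t$, which is normal in $H$. The hypotheses are preserved under this quotient:
\begin{enumerate}[(i)]
\item $H/F_{i-1}^t$ embeds in $(E/F_{i-1}) \wr \Sym(t)$;
\item it contains $(F/F_{i-1})^t$;
\item its intersection with the base group is still subdirect.
\end{enumerate}
So I may assume $F_{i-1}=1$. Then $A := F_i$ is a minimal normal subgroup of $E$ which is central in $E$, and $A \leq [E,F]$. Since $A$ is central in $E$, every element of $E^t$ centralises $A^t$ coordinatewise. In particular $A^t$ is central in $H \cap E^t$.

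\textbf{Step 2: a maximal subgroup missing $A^t$.} Suppose for a contradiction that some maximal subgroup $M$ of $H$ does not contain $A^t$. Since $A^t \unlhd H$, we get $H = MA^t$. Because $A^t \leq E^t$, every element of $H \cap E^t$ can be written as $ma$ with $a \in A^t$ and $m \in M \cap E^t$. The same holds for every element of $F^t \leq H$, again with $m \in M\cap E^t$.

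\textbf{Step 3: the key commutator computation.} Fix $j$, and take $e \in E$ and $f \in F$. By subdirectness of $H\cap E^t$, there is some $h \in H \cap E^t$ with $\rho_j(h) = e$. We also have $\iota_j(f) \in F^t \leq H$. A coordinatewise computation gives
\[
[h,\iota_j(f)] = \iota_j([e,f]).
\]
Now write $h = ma$ and $\iota_j(f) = m'a'$ as in Step 2, with $m,m' \in M \cap E^t$ and $a,a' \in A^t$. Since $a$ and $a'$ are central in $H\cap E^t$, which contains $m$ and $m'$, we get
\[
[h,\iota_j(f)] = [m,m'] \in M.
\]
Hence $\iota_j([e,f]) \in M$ for all $e \in E$, $f \in F$ and all $j$. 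These elements generate $[E,F]^t$, so $[E,F]^t \leq M$.

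\textbf{Step 4: conclusion.} By hypothesis $A \leq [E,F]$, so $A^t \leq [E,F]^t \leq M$. This contradicts the choice of $M$. Therefore $A^t$ lies in every maximal subgroup of $H$, that is, $A^t \leq \Phi(H)$. Undoing the reduction of Step 1 proves the lemma.

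The main point needing care is Step 3. There one has to check two things: that the decomposition $h = ma$ really produces $m$ inside $E^t$, which holds because $A^t \leq E^t$; and that centrality of $A$ in $E$, rather than merely in $F$, is what lets the factors $a,a'$ drop out of the commutator. Subdirectness is used only to realise an arbitrary $e \in E$ in the $j$th coordinate of some element of $H\cap E^t$.
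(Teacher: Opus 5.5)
Your proof is correct and follows the paper's argument. Both proofs use subdirectness to obtain $[E,F]^t \le [H\cap E^t, F^t]$, and both use centrality of $F_i/F_{i-1}$ in $E$ to place $A^t$ in $Z(H\cap E^t)\cap[H\cap E^t,H\cap E^t]$. The only difference is that the paper then quotes the standard facts $Z(X)\cap X' \le \Phi(X) \le \Phi(H)$ for $X = H\cap E^t \unlhd H$, whereas your Steps 2--3 prove the needed containment directly using a maximal subgroup of $H$.
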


\begin{proof}
	Since $\frac{H \cap E^t}{F_{i-1}^t}$ is a subdirect subgroup of $(E/F_{i-1})^t$, we have
	\begin{align*}
		\frac{F_i^t}{F_{i-1}^t} \leq \left[ \frac{E}{F_{i-1}}, \frac{F}{F_{i-1}} \right]^t  = \left[\frac{H \cap E^t}{F_{i-1}^t}, \frac{F^t}{F_{i-1}^t} \right] \leq \left[\frac{H \cap E^t}{F_{i-1}^t}, \frac{H\cap E^t}{F_{i-1}^t}\right].
	\end{align*}
	Thus, since $F_i/F_{i-1}$ is a central chief factor of $E$,
	\begin{align*}
		F_i^t/F_{i-1}^t \leq \left[\frac{H \cap E^t}{F_{i-1}^t}, \frac{H\cap E^t}{F_{i-1}^t}\right] \cap Z \! \left( \frac{H \cap E^t}{F_{i-1}^t} \right)  \leq \Phi \! \left( \frac{H \cap E^t}{F_{i-1}^t} \right)  \leq \Phi( H/F_{i-1}^t ),
	\end{align*}
	as required.
\end{proof}

We show that a similar result holds if $H$ contains $\del(D^t)$ for some $D \unlhd E$ instead.

\begin{lemma}\label{lemma:jumpingbabyresultdel}
	Let $D \unlhd E$ be an abelian subgroup such that $H \cap D^t = \del(D^t)$. Assume that $H \cap E^t$ is subdirect. If $D_i/D_{i-1}$ is a central chief factor of $E$ and is contained in $[E/D_{i-1}, D/D_{i-1}]$, then $\del(D_i^t/D_{i-1}^t)$ is Frattini in $H$.
\end{lemma}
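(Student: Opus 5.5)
I will mirror the proof of Lemma \ref{lemma:jumpingbabyresultfull}, with $\del(D^t)$ in place of $F^t$. Throughout, work modulo $\del(D_{i-1}^t) = H \cap D_{i-1}^t$, which is normal in $H$. Since $D$ is abelian and normal in $E$, the group $E$ acts on $D$ by conjugation, so $D_i/D_{i-1}$ is an $E$-module. The hypotheses say this module is trivial, since it is central, and that it lies in $[E/D_{i-1}, D/D_{i-1}]$. That is, $D_i$ is contained in $[E,D]D_{i-1}$, where $[E,D]$ is generated by elements $d^{-1}d^{e}$ with $d\in D$ and $e\in E$. The goal is
\[
\del(D_i^t)\del(D_{i-1}^t) \leq \bigl[H\cap E^t, \del(D^t)\bigr]\,\del(D_{i-1}^t).
\]
Once this holds, the section $\del(D_i^t)/\del(D_{i-1}^t)$ lies in the derived subgroup of $(H\cap E^t)/\del(D_{i-1}^t)$.

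This section is also central in $(H\cap E^t)/\del(D_{i-1}^t)$. Indeed, each coordinate of an element of $H\cap E^t$ acts trivially on $D_i/D_{i-1}$, so it acts trivially on $D_i^t/D_{i-1}^t$ and hence on the subgroup. The standard fact $G'\cap Z(G)\leq \Phi(G)$, already used in the proof of Lemma \ref{lemma:jumpingbabyresultfull}, then places the section in $\Phi\bigl((H\cap E^t)/\del(D_{i-1}^t)\bigr)$. Since $H\cap E^t$ is normal in $H$, this is contained in $\Phi\bigl(H/\del(D_{i-1}^t)\bigr)$, so the section is Frattini in $H$.

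The main step is proving the displayed inclusion. Take $x=(x_1,\dots,x_t)\in\del(D_i^t)$, using additive notation on $D$. Write it as a sum of elements of the form $(0,\dots,y,\dots,-y,\dots,0)$ with $y\in D_i$, which is possible because the coordinates sum to zero. By hypothesis, modulo $D_{i-1}$ each such $y$ is a sum of terms $d^{e}-d$ with $d\in D$ and $e\in E$, so it suffices to handle one generator $y=d^e-d$ placed in coordinates $j$ and $k$.

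By subdirectness, choose $h=(h_1,\dots,h_t)\in H\cap E^t$ with $h_j=e$. Set $z=(0,\dots,d,\dots,-d,\dots,0)\in\del(D^t)$, with $d$ in coordinate $j$ and $-d$ in coordinate $k$. Then
\[
[z,h] = (0,\dots,d^{e}-d,\dots,-(d^{h_k}-d),\dots,0).
\]
Since $d^{h_k}-d$ lies in $[E,D]$ but not necessarily in $D_i$, this commutator is not yet the required generator. This is the step I expect to be the obstacle.

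To get around it, I would use the centrality of $D_i/D_{i-1}$ to reduce to showing that the map $[E,D]\to D$ given by $y\mapsto(\dots,y,\dots,-y,\dots)$ lands in $[H\cap E^t,\del(D^t)]$ modulo $\del(D_{i-1}^t)$. Subtracting the element $[z,h]$ above from the desired generator leaves an error term in the $k$-coordinate only, and this term has coordinate sum adjusted to zero. I would then use the transitivity of $\pi(H)$ together with conjugation, as in Proposition \ref{prop:t5CFsinwreathproduct}, to move and cancel this term.

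If that does not work, I would take a cleaner route. Choose $h$ and $h'$ in $H\cap E^t$ with $h_j=e$, $h'_j=1$ and $h'_k=h_k$; such elements should exist by subdirectness combined with normal closure arguments. Then $[z,h][z,h']^{-1}$ isolates the term $d^e-d$ in coordinate $j$, with a compensating entry chosen inside $\del$. Failing even that, a filtration argument on $[E,D]$ should settle the remaining case.
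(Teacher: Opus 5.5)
There is a genuine gap, and it sits exactly at the step you flag as the obstacle. You compute $[z,h]=(0,\dots,d^{e}-d,\dots,-(d^{h_k}-d),\dots,0)$ but never use the hypothesis that controls this element. Since $z\in\del(D^t)\le H$ and $h\in H$, we have $[z,h]\in H$. Also $[z,h]\in D^t$, because $D^t\unlhd E^t$. Hence $[z,h]\in H\cap D^t=\del(D^t)$, so its coordinates sum to zero, which forces $d^{h_k}-d=d^{e}-d$.

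So $[z,h]$ is already exactly $(0,\dots,y,\dots,-y,\dots,0)$ with $y=d^{e}-d$, and there is no error term to cancel. Equivalently, the difference you describe lies in $\del(D^t)$ and is supported on a single coordinate, so it is zero. This observation is the whole content of the paper's proof, which does it in coordinates $1,2$. Once it is in place, the rest of your outline goes through: the reduction modulo $\del(D_{i-1}^t)$, writing elements of $\del(D_i^t)$ as sums of two-coordinate elements, centrality of the section, $G'\cap Z(G)\le\Phi(G)$, and the passage from $H\cap E^t$ to $H$. Your version with an arbitrary pair $j,k$ is in fact slightly more explicit than the paper's.

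The fallback routes you propose would not close the gap as written. The lemma assumes neither that $\pi(H)$ is transitive nor that $\Alt(t)\le H$, so the conjugation machinery of Proposition \ref{prop:t5CFsinwreathproduct} is not available.

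The existence of $h'\in H\cap E^t$ with $h'_j=1$ and $h'_k=h_k$ also does not follow from subdirectness, which controls only one coordinate at a time. For example, take $E=D\rtimes Q$ and $H\cap E^t=\del(D^t)\rtimes\diag(Q^t)$, which is subdirect and meets $D^t$ in $\del(D^t)$. If $h_j=e\in Q\setminus\{1\}$, then $h_k\in De$, while every $h'$ with $h'_j=1$ has $h'_k\in D$. So no such $h'$ exists.

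The filtration argument is not specified enough to assess. As it stands, the central step of your proof is left unresolved.
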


\begin{proof}
	The proof of this result is entirely similar, with the only difficulty lying in proving that 
	\[
	\del \left( \left[ \frac{E}{D_{i-1}}, \frac{D}{D_{i-1}} \right]^t \right) \leq \left[ \frac{H \cap E^t}{\del(D_{i-1}^t)}, \frac{\del(D^t)}{\del(D_{i-1}^t)} \right].
	\]
	To prove this inclusion, we may assume that $D_{i-1} = 1$, and show that
	\[
	\del \left( \left[ E, D \right]^t \right) \leq \left[ H \cap E^t, \del(D^t) \right].
	\]
	Consider any $e \in E$. There exist some elements $e_i$ of $E$ such that $(e, e_2, \dots, e_t) \in H$, since $H \cap E^t$ is subdirect. Additionally, given any $d \in D$, the element $(d, d^{-1}, 1, \dots, 1)$ lies in $H \cap D^t = \del(D^t)$. Thus, $[(e, e_2, \dots, e_t), (d, d^{-1}, 1, \dots, 1)]$ is contained in $\del(D^t)$, as $\del(D^t)$ is a normal subgroup of $H \cap E^t$. Hence,  
	\[
	([e,d], [e_2, d^{-1}], 1, \dots, 1) \in \del(D^t),
	\]
	which implies that $[e,d] = [e_2, d^{-1}]^{-1}$. Therefore, for any $e \in E$ and $d \in D$, we have
	\[
	([e,d], [e,d]^{-1}, 1, \dots, 1) \in [H \cap E^t, \del(D^t)].
	\]
	The proof now concludes as in Lemma \ref{lemma:jumpingbabyresultfull}.
\end{proof}

The above results together with Corollary \ref{cor:upperbounddeltaGN} yield the following theorem.

\begin{theorem} \label{theorem:mainjumping}
	Let $H$ be a subgroup of $E \wr \Sym(t)$ as in Corollary \ref{cor:CFsinwreathproduct}. Then, for any central chief factor $W$ of $H$, we have
	\begin{align*}
		\delta_{H, F^t}(W) \leq {} & \delta_{\frac{F}{[E,F]}}(W).
	\end{align*}
	When $W$ is non-central, if $W \cong S^b$ for some simple group $S$, we have that
	\[
	\delta_{H, F^t}(W) \leq \max_{\{ F\text{-group } B \cong S^c, \, c \mid b\}} \delta_{F}(B).
	\]
\end{theorem}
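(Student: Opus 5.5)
We would prove both inequalities by walking up the chief series $1=F_0<\cdots<F_r=F$ of $E$ and counting, for each $i$, the non-Frattini chief factors of $H$ inside $F_i^t/F_{i-1}^t$ that are $H$-equivalent to $W$. Corollary \ref{cor:CFsinwreathproduct} describes the chief factors of $H$ in each $A^t$, where $A=F_i/F_{i-1}$. The non-Frattini ones inside a single $A^t$ are pairwise non-equivalent, so each index $i$ contributes at most one to $\delta_{H,F^t}(W)$. Moreover, if two indices $i,j$ both contribute, then $F_i/F_{i-1}\equiv_F F_j/F_{j-1}$. So it suffices to bound the number of indices $i$ that contribute.

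\textbf{Non-central case.} Let $W\cong S^b$ be non-central, and suppose the indices $i_1,\dots,i_d$ contribute. We distinguish two cases.
\begin{itemize}
\item If the contributing factor is all of $A_k^t$, where $A_k=F_{i_k}/F_{i_k-1}$, then it is non-Frattini in $H$. Since $\Phi(F^t)=\Phi(F)^t$, together with Proposition \ref{prop:CFsFromNormalSubgp}(4), each chief factor of $F$ inside $A_k$ is non-Frattini in $F$.
\item Otherwise $A_k$ is central in $F$ by case (2) of Corollary \ref{cor:CFsinwreathproduct}.
\end{itemize}
In either case we apply Proposition \ref{prop:CFequivalenceinN} to the $E$-chief factors $A_k$ with $N=F$. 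The equivalences $A_k\equiv_F A_l$ from the corollary give a fixed $F$-group $B$ (a chief factor of $F$ inside $A_{1}$) together with $F$-equivalent, non-Frattini copies of $B$ inside each $A_k$. Hence $d\le\delta_F(B)$, which is exactly the argument of Corollary \ref{cor:upperbounddeltaGN}. Comparing orders, $|B|^{m}=|A_k|$ and $|W|$ is $|A_k|^t$, $|A_k|$ or $|A_k|^{t-1}$, so $B\cong S^c$ with $c\mid b$.

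\textbf{Central case.} For central $W$ we sharpen this to $\delta_{F/[E,F]}(W)$. A contributing index $i$ must have $A=F_i/F_{i-1}$ central in $E$, and hence in $F$. Otherwise $A^t$ is either a non-central chief factor of $H$, or it splits as in case (2) with a non-trivial action of $H\cap E^t$ on each piece, since these actions are diagonal copies of the action of $E$ on $A$. If moreover $A\le[E/F_{i-1},F/F_{i-1}]$, then Lemma \ref{lemma:jumpingbabyresultfull} makes all of $A^t$ Frattini in $H$, so $i$ contributes nothing. The remaining contributing indices are therefore those where $F_i/F_{i-1}$ is central in $E$ and not contained in $[E,F]F_{i-1}/F_{i-1}$. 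We may choose the chief series of $E$ to pass through $[E,F]$, which is normal in $E$. Then these indices correspond to chief factors of $E$ lying in $F/[E,F]$. Every such factor is an abelian central section of $F/[E,F]$ and is isomorphic to $W$ as a trivial module. Being central, they are pairwise equivalent as $F$-groups, and they are non-Frattini in $F/[E,F]$ because each contributes a non-Frattini factor to $H$. That last step is a Frattini-lifting argument in the style of the first case, applied to the quotient by $[E,F]^t$. This yields $\delta_{H,F^t}(W)\le\delta_{F/[E,F]}(W)$.

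\textbf{Main obstacle.} The delicate step is the final one: transferring non-Frattini-ness from $H$ down to $F/[E,F]$. Chief factors of $F/[E,F]$ inside a central $E$-factor may split further as $F$-factors, and the images of different contributing $F_i/F_{i-1}$ in $F/[E,F]$ must be matched with genuinely distinct non-Frattini central $F/[E,F]$-factors. We would handle this by showing that $\Phi(F/[E,F])$ contains the image of $\Phi(F)$. Then any factor that is Frattini in $F/[E,F]$ comes from a section lying in $\Phi(F)[E,F]$, and such a section is Frattini in $H$, either by $\Phi(F^t)=\Phi(F)^t\le\Phi(H)$ or by Lemma \ref{lemma:jumpingbabyresultfull}.
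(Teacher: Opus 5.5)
Your argument follows the paper's proof. Corollary~\ref{cor:CFsinwreathproduct} gives at most one non-Frattini contribution from each $E$-chief factor $A$ of $F$. Such a contribution occurs only when $A$ is non-Frattini in $F$, and contributing factors are $F$-equivalent. In the central case, Lemma~\ref{lemma:jumpingbabyresultfull} discards the central factors lying in $[E,F]$, and the count is finished exactly as in Corollary~\ref{cor:upperbounddeltaGN}. Two small points need attention. In your second bullet you never say why $A_k$ is non-Frattini in $F$. The inclusion $\Phi(F/F_{i-1})^t=\Phi(F^t/F_{i-1}^t)\leq \Phi(H/F_{i-1}^t)$ covers both bullets at once, which is how the paper phrases it.

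The real problem is your ``main obstacle'' paragraph: it invents a difficulty and then resolves it incorrectly. A central chief factor of $E$ has prime order, so it cannot split further as an $F$-group. Moreover, when $F_{i-1}\geq [E,F]$, the factor $F_i/F_{i-1}$ is Frattini in $F/[E,F]$ exactly when $F_i/F_{i-1}\leq \Phi(F/F_{i-1})$. That is precisely the condition for it to be Frattini in $F$, so the lifting inclusion above settles the step directly. Your sentence about the quotient by $[E,F]^t$ already said essentially this.

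The route you propose instead is backwards. To conclude that every factor which is Frattini in $F/[E,F]$ comes from a section inside $\Phi(F)[E,F]$, you would need $\Phi(F/[E,F])\leq \Phi(F)[E,F]/[E,F]$. That is the reverse of the true inclusion you quote, and it fails in general. For example, take $E=F=C_5:C_4$, the Frobenius group of order $20$. Then $\Phi(F)=1$, but $\Phi(F/[E,F])=\Phi(C_4)\cong C_2$. The corresponding factor is Frattini, as it should be, but not because it lies in $\Phi(F)[E,F]$. Drop that paragraph and your proof is complete.
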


\begin{proof}
	Let $W$ be a non-Frattini chief factor of $H$ in $F^t$. By Corollary \ref{cor:CFsinwreathproduct}, for any chief factor $A$ of $E$ in $F$ we have $\delta_{H, A^t}(W) \leq 1$, with equality only if $A$ is non-Frattini in $F$.
	
	Suppose that $W$ is central. If $\delta_{H, A^t}(W) = 1$, then $A$ is a central chief factor of $E$ which is not contained in $[E,F]$, by Lemma \ref{lemma:jumpingbabyresultfull}, and is non-Frattini in $F$. Hence, $\delta_{H, F^t}(W)$ is bounded above by the number of central chief factors of $E$ in $F/[E,F]$ which are non-Frattini in $F$ and isomorphic to $W$. This is bounded above by $\delta_{\frac{F}{[E,F]}}(W)$, by Corollary \ref{cor:upperbounddeltaGN}.
	
	Suppose that $W$ is non-central. By the final statement of Corollary \ref{cor:CFsinwreathproduct}, $\delta_{H, F^t}(W)$ is at most the number of chief factors of $E$ in $F$ which are $F$-equivalent to $A$ and non-Frattini in $F$. Thus, by Corollary \ref{cor:upperbounddeltaGN}, we obtain the desired result.
\end{proof}

\section{Chief factors of maximal subgroups of classical groups}\label{section:classicalprelims}

As shown in Section \ref{section:prelimsmethod}, many of the second maximal subgroups we consider in this paper are constructed from maximal subgroups of almost simple groups. Consequently, in order to bound the number of non-Frattini chief factors of these second maximal subgroups, it is essential to have a detailed understanding of the chief factors of the maximal subgroups themselves. This section establishes several results describing the chief factors of such groups, with particular attention to the case in which the ambient almost simple group is classical. We begin by introducing the notation and definitions for classical groups that are used throughout this section, and Section \ref{section:classicals}.

Following \cite{KL}, let $V$ be a vector space over a finite field $\mathbb{F}$, and let $\kappa$ be either the zero form, or a non-degenerate unitary, symplectic or quadratic form on $V$. We say the form $\kappa$ is of type $\boldL, \boldU, \boldS$ or $\boldO$, respectively. We may refer to type $\boldU$ as $\boldL^-$. If $\kappa$ is of type $\boldO$, and $\dim(V)$ is even, then there are two similarity classes of quadratic form $\kappa$, which we refer to as having type $\boldO^+$ and $\boldO^-$. With this notation, there is only one similarity class per type of $\kappa$. We refer to any such triple $(V, \mathbb{F}, \kappa)$ as a classical geometry.

We define the following groups:
\begin{enumerate}[(1)]
	\item Let $S = S(V, \mathbb{F}, \kappa)$ be the group of special $\kappa$-isometries of $V$;
	\item Let $I = I(V, \mathbb{F}, \kappa)$ be the group of $\kappa$-isometries of $V$;
	\item Let $\Delta = \Delta(V, \mathbb{F}, \kappa)$ be the group of $\kappa$-similarities of $V$;
	\item Let $\Gamma = \Gamma(V, \mathbb{F}, \kappa)$ be the group of $\kappa$-semisimilarities of $V$;
	\item If $\kappa$ is of type $\boldO$, we let $\Omega = \Omega(V, \mathbb{F}, \kappa)$ be the index two subgroup of $S$ equal to the kernel of the spinor norm $\theta$ (see \cite[Proposition 2.5.7]{KL}). Otherwise, we let $\Omega = S$;
	\item If $\kappa$ is of type $\boldL$ with $\dim(V) > 2$, we let $\Sigma = \Sigma(V, \mathbb{F}, \kappa) = \langle \Gamma, \iota \rangle$, where $\iota$ is the inverse transpose map on $\Gamma$. Otherwise we let $\Sigma = \Gamma$. 
\end{enumerate}
Since there is only one similarity class per type of $\kappa$, its type determines the isomorphism class of the above groups.

We have defined the following chain of groups 
\begin{align}\label{eq:chainofsubgps}
	\Omega \leq  S \leq  I \leq  \Delta \leq  \Gamma \leq  \Sigma,
\end{align}
with $\Gamma$ acting on the vector space $V$.

For any subgroup $K$ of $\Sigma$, we write $\overline K$ to denote the image of $K$ under the quotient by the scalars of $\Sigma$. Additionally, we write $\ddot K$ to denote $\overline K / \overline \Omega$. A group $G$ is a classical group if there exists a classical geometry $(V, \mathbb F, \kappa)$ such that either $\Omega \leq G \leq \Sigma$, or $\overline \Omega \leq G \leq \overline \Sigma$.

\begin{remark}
	If $\overline \Omega$ is a simple group, then the group $\overline \Sigma$ is contained in $\Aut(\overline \Omega)$. In almost all cases $\overline \Sigma = \Aut(\overline \Omega)$, unless $\Omega$ is equal to $\Sp_4(q)$ with $q$ even, or $\Omega_8^+(q)$ for any $q$. Then, $\Aut(\overline \Omega)$ is equal to $\langle \overline \Sigma, \overline \gamma \rangle$, for some graph automorphism $\gamma$.
\end{remark}

\subsection{Aschbacher's Theorem}\label{section:Aschbacher}

Let $(V, \mathbb{F}, \kappa)$ be a classical geometry, with $n = \dim(V)$. Let $p$ be a prime, and let $q = p^f$ be a prime power with $\mathbb{F} = \mathbb F_{q^u}$, where $u = 2$ when $\kappa$ is of type $\boldU$, and $u = 1$ otherwise. Let $G$ be a finite classical group with $\Omega \leq G \leq \Sigma$, and let $H$ be a maximal subgroup of $G$ which does not contain $\Omega$. Then $H$ is classified by Aschbacher's Theorem. This theorem states that either $H$ is in one of eight classes of groups $\mathscr{C}_1$ to $\mathscr{C}_8$, or $\overline H$ is almost simple and we say $H$ belongs to $\mathscr{S}$.

In the case that $H$ does not contain $\Omega$, or is in $\mathscr{S}$, it is easy to compute the chief factors of $H$. Thus, most of our work is dedicated to classes $\mathscr{C}_1$ to $\mathscr{C}_8$. As such, we dedicate this section to discussing the structure of the groups in each of these classes. Particularly, we treat the maximal subgroups $H$ whose structures we require in this paper, and do not detail the group structures of any $H$ which we do not require. For instance, this may be the case if the number of chief factors of any maximal subgroup $M$ of $H$ is very small, and so we do not need detailed information about the group structure of $H$ to determine this.

The classes $\mathscr{C}_i$ split further into types, which generally indicate more information about the group structure of $H$. For instance, if $H \in \mathscr{C}_1$, then $H$ can be of type $\GL_m(q) \oplus \GL_{n-m}(q)$, in which case $H \cap I$ is a subgroup of $\GL_m(q) \times \GL_{n-m}(q)$. For any classical group $\Omega \leq L \leq \Sigma$, we let $H_L = H_\Sigma \cap L$, where $H_\Sigma$ is the largest subgroup of $\Sigma$ of the same type as $H$, see \cite[Section 3.1]{KL}. Similarly, if $H_{\overline \Sigma}$ denotes the image of $H_\Sigma$ under the quotient map by the scalars of $\Sigma$, we let $H_{\overline L} = H_{\overline\Sigma} \cap \overline L$. For example, $H = H_G$ and $\overline H = H_{\overline G}$. We remark that this differs from the notation of \cite[Section 6]{LMT}, where $H_L = H \cap L$, and so $H_L$ depends on both $G$ and the type of $H$, unlike in our notation where it only depends on the type of $H$. Finally, we let $c$ be the index of $H_{\overline \Sigma} \overline \Omega/\overline \Omega$ in $\overline \Sigma / \overline \Omega$.

For more details regarding each class $\mathscr{C}_i$, see Section 4.$i$ of \cite{KL}.

\subsubsection{Class $\mathscr{C}_1$}\label{section:C1explanation}

If $H$ is a parabolic subgroup, or of type $\Sp_{n-2}(q)$, we do not need detailed information about the structure of $H$, and so we do not treat these maximal subgroups in this section.

Hence, we may assume that $H$ is of type $\GL_m(q) \oplus \GL_{n-m}(q) $, $\GU_m(q) \perp \GU_{n-m}(q)$, $\Sp_m(q) \perp \Sp_{n-m}(q)$ or $\O_m^{\epsilon_1}(q) \perp \O_{n-m}^{\epsilon_2}(q)$. Then, $H$ stabilises a vector subspace decomposition $V = V_1 \perp V_2$, where each $V_i$ has a bilinear form $\kappa_i$ of the same type as $G$. For $X \in \{\Omega, S, I, \Delta, \Gamma, \Sigma\}$, we can consider $X(V_i, \mathbb{F}, \kappa_i)$. For instance, $H_I$ is equal to $I(V_1, \kappa_1) \times I(V_2, \kappa_2)$. One might expect that $H$ is contained in $\Sigma(V_1, \kappa_1) \times \Sigma(V_2, \kappa_2)$, however in certain cases $H$ is instead contained in the direct product of two slightly larger groups, which we now define.

\begin{definition}\label{def:sigma*}
	Let $(V, \mathbb F, \kappa)$ be a classical geometry. Let
	\[
	\Sigma^*(V, \mathbb F, \kappa) := \begin{cases*}
		\Sigma(V, \mathbb F, \kappa) : \langle \iota \rangle \qquad & in case $\boldL$, with $n \leq 2$,\\
		\Sigma(V, \mathbb F, \kappa) & otherwise,
	\end{cases*}
	\]
	where $\iota$ is an element of order 2 which acts as the inverse transpose map on $\Gamma(V, \mathbb F, \kappa)$.
\end{definition}

Note that, if $\dim(V) \leq 2$ and we are in case $\boldL$, then the inverse transpose automorphism is inner in $\Aut(\PSL_n(q))$. Consequently, $\Sigma^* = \Sigma : C_2$, and $\overline \Sigma^* \cong \overline \Sigma \times C_2$.

In the following statement, for $X \in \{\Omega, I, \Sigma^*\}$, let $X_i := X(V_i, \kappa_i)$.

\begin{prop}\label{prop:prelimsC1}
	Let $H$ be of type $\GL_m(q) \oplus \GL_{n-m}(q) $, $\GU_m(q) \perp \GU_{n-m}(q)$, $\Sp_m(q) \perp \Sp_{n-m}(q)$ or $\O_m^{\epsilon_1}(q) \perp \O_{n-m}^{\epsilon_2}(q)$. Then
	\[
	H_\Omega = (\Omega_1 \times \Omega_2) : \del(I_1/\Omega_1 \times I_2 / \Omega_2),
	\]
	and
	\[
	H_\Sigma = (I_1 \times I_2):\diag(\Sigma^*_1/I_1 \times \Sigma^*_2/I_2) \leq \Sigma_1^* \times \Sigma_2^*.
	\]
\end{prop}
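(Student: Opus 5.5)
Both formulas come down to one point: an element of the stabiliser of $V = V_1 \perp V_2$ acts on each summand, and its two restrictions must be \emph{compatible} — same determinant product, same similarity multiplier, same field automorphism. I will describe the stabiliser $H_\Sigma$ of the decomposition (the largest subgroup of $\Sigma$ of this type, as in \cite[Section 4.1]{KL}) in terms of these restrictions, and then intersect with $\Omega$.

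For $H_\Sigma$, let $g \in H_\Sigma$. Since $g$ preserves each $V_i$ (or, in case $\boldL$ with the duality $\iota$, preserves the decomposition up to the standard identification), restriction gives a homomorphism $H_\Sigma \to \Sigma^*_1 \times \Sigma^*_2$. This map is injective, because $V = V_1 \oplus V_2$.

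The role of $\Sigma^*$ is as follows. When $\dim V_i \le 2$ in case $\boldL$, the restriction of $\iota$ to $V_i$ is an inverse-transpose map that is not in $\Sigma_i$ by definition. This is exactly why $\Sigma_i^*$ is needed rather than $\Sigma_i$.

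The image contains $I_1 \times I_2$, since any pair of isometries glues to an isometry of $V$. The quotient $\Sigma^*/I$ is generated by the images of three things: similarities (recorded by the multiplier $\tau$), field automorphisms (the Frobenius $\phi$ relative to a fixed basis adapted to the decomposition), and possibly $\iota$. For $g$ in $H_\Sigma$, the induced elements on $V_1$ and $V_2$ share the same multiplier, the same field automorphism, and the same $\iota$-component. Hence the image of $H_\Sigma$ in $\Sigma_1^*/I_1 \times \Sigma_2^*/I_2$ lies in the diagonal. Conversely, take $\delta_1 \oplus \delta_2$ with equal multipliers, $\phi$ acting coordinatewise in the adapted basis, and $\iota$ applied simultaneously. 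These realise the whole diagonal and split it off, which gives $H_\Sigma = (I_1 \times I_2):\diag(\ldots)$.

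Two cases need separate handling. In case $\boldL$ the form is zero, so $I = \GL$ and $\Sigma^*/I$ involves only $\phi$ and $\iota$. In case $\boldO$ with odd dimension, I must check that the diagonal identification of $\Delta_i/I_i$ is still right.

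For $H_\Omega$, first note $H_I = I_1 \times I_2$. An element $(g_1,g_2)$ of $H_I$ lies in $\Omega$ exactly when its image under the invariants defining $\Omega$ in $I$ is trivial. These invariants are the determinant in cases $\boldL$, $\boldU$ and $\boldO$ (for $S$), and additionally the spinor norm in case $\boldO$. Both invariants are multiplicative across orthogonal sums: $\det(g_1 \oplus g_2) = \det g_1 \det g_2$, and $\theta(g_1\oplus g_2)=\theta(g_1)\theta(g_2)$ by \cite[Proposition 2.5.7]{KL}. Moreover, $I_i/\Omega_i$ is abelian and embeds in the same target group via these invariants (in case $\boldS$ it is trivial). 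Therefore $H_\Omega/(\Omega_1\times\Omega_2)$ is precisely the kernel of the product map, which is $\del(I_1/\Omega_1 \times I_2/\Omega_2)$.

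To see that this extension splits, I will choose explicit representatives. In cases $\boldL$ and $\boldU$, use the diagonal matrices $\diag(\lambda,1,\ldots,1)\oplus\diag(\lambda^{-1},1,\ldots,1)$. In case $\boldO$, use products of reflections $r_{v_1}r_{v_2}$ with $v_i\in V_i$ of prescribed norms. These generate a complement isomorphic to $\del$.

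I expect the main obstacle to be the orthogonal case. There, $I_i/\Omega_i$ can be $C_2^2$ or $C_2$ depending on $q$ and $\dim V_i$, including the degenerate small-dimension cases such as $\dim V_i = 1$ and $\O_2^\pm$. The delicate points are checking that the determinant and spinor-norm invariants are compatible across the two summands and that the reflection-built complement really exists. Here I would rely on the tables in \cite[Section 4.1]{KL} rather than recompute the spinor norms by hand.
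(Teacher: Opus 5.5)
Your identification of the two quotients is correct and follows the paper's route. The paper takes $H_\Gamma$ to be the set of pairs $(g_1,g_2)\in\Gamma_1\times\Gamma_2$ with $\tau(g_1)=\tau(g_2)$ and $\sigma(g_1)=\sigma(g_2)$ \cite[Lemma 2.1.2]{KL}. It then adjoins $\iota$, taken with respect to a basis adapted to $V_1\oplus V_2$; this $\iota$ normalises $H_\Gamma$ and acts on each block by inverse transpose. For $H_\Omega$ the paper simply quotes \cite[Section 4.1]{KL}, and your multiplicativity argument for $\det$ and the spinor norm is a good self-contained replacement. One small correction: $\iota$ is not a map on $V$, so it cannot be ``restricted'' to $V_i$. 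Define the embedding of $H_\Sigma=H_\Gamma\langle\iota\rangle$ into $\Sigma_1^*\times\Sigma_2^*$ by $\iota\mapsto(\iota_1,\iota_2)$.

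The step that fails is the splitting, and it cannot be repaired.

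\emph{Case $\boldO$.} Take $q$ odd, $V_1$ a $2$-space with $4\mid q-\epsilon_1$, and $\dim V_2\geq 2$. Then $\del(I_1/\Omega_1\times I_2/\Omega_2)\cong C_2^2$. The class given by determinant $1$ and non-square spinor norm in both coordinates lifts only to pairs $(g_1,g_2)$ with $g_1\in S_1\setminus\Omega_1$. But $S_1\cong C_{q-\epsilon_1}$ is cyclic, and its unique involution $-1_{V_1}$ lies in $\Omega_1$. So this class has no lift of order $2$, and $\Omega_1\times\Omega_2$ has no complement in $H_\Omega$, since any complement would be isomorphic to $C_2^2$. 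For example, in $\O_2^+(5)\perp\O_{n-2}^{\epsilon}(5)$ the set $S_1\setminus\Omega_1$ consists of two elements of order $4$. Concretely, $V_1$ has no orthogonal pair of vectors whose norms lie in different square classes. Hence your products $r_{v_1}r_{v_2}$ and $r_{u_1}r_{u_2}$ never commute, and no other choice of elements would work either.

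\emph{Case $\boldU$.} The claim that $\delta_1\oplus\delta_2$ ``splits off'' the diagonal fails when $q$ is odd and $\dim V_1$ is odd. Any complement to $I_1\times I_2$ in $H_\Sigma$ meets $H_\Delta$ in a complement there, which is cyclic of order $q-1$. Its generator $(\delta_1,\delta_2)$ has $\tau(\delta_1)$ generating $\mathbb F_q^*$. However, $\delta_1^{q-1}=1$ forces $\det\delta_1\in\mathbb F_q^*$. Hence $\tau(\delta_1)^{\dim V_1}=\det\delta_1^{\,q+1}=(\det\delta_1)^2$ is a square, so $\tau(\delta_1)$ is a square and cannot generate $\mathbb F_q^*$. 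For $\GU_1(q)\perp\GU_{n-1}(q)$ this is just the fact that $\mathbb F_{q^2}^*$ does not split over its subgroup of order $q+1$.

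So the colons in the statement cannot be taken literally; they should be read as ``$.$''. The true statement is the extension with the stated quotients. This is also what the paper's proof actually establishes, since it writes $H_\Gamma=(I_1\times I_2).\diag(\Gamma_1/I_1\times\Gamma_2/I_2)$. It is all that is used later: Proposition~\ref{prop:C1secondmaxls} restates these groups with ``$.$'' and only uses $N_i$ and $P_i$. You should drop the complement constructions and prove the extension statement.
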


\begin{proof}
	The result for $H_{\Omega}$ follows by the statements in \cite[Section 4.1]{KL}. Additionally, by \cite[Lemma 4.1.1]{KL}, $H_I = I_1 \times I_2$. In fact, since $H_\Gamma$ stabilises $V_1 \perp V_2$, $H_\Gamma$ is contained in $\Gamma_1 \times \Gamma_2$. An element $(g_1, g_2) \in \Gamma_1 \times \Gamma_2$ is in $H_\Gamma$ if and only if $\tau(g_1) = \tau(g_2)$ and $\sigma(g_1) = \sigma(g_2)$ (see \cite[Lemma 2.1.2]{KL}), so $H_\Gamma = (I_1 \times I_2) . \diag(\Gamma_1/I_1 \times \Gamma_2/I_2)$. If $\Gamma = \Sigma$, the result is proven, so we assume that we are in case $\boldL$ with $n > 2$. 
	
	By definition, $H_\Sigma = N_{\Sigma}(H_\Gamma)$. The inverse transpose of a block diagonal matrix is the block diagonal matrix whose blocks are the inverse transposes of the corresponding blocks. Thus, $\iota$ normalises $H_\Gamma$, and $\iota$ acts on each $\Gamma_i$ as the inverse transpose automorphism.
\end{proof}

\subsubsection{Class $\mathscr{C}_2$}\label{section:C2explanation}

In this section, $H$ stabilises a vector subspace decomposition of the form $V = V_1 \oplus \cdots \oplus V_t$, where $m = \dim(V_i)$ and $n = mt$. Each $V_i$ is additionally either non-degenerate or a totally singular $m$-space, and $V_i$ is orthogonal to $V_j$ for any $i \neq j$. Let $\kappa_i$ be a bilinear form on each $V_i$ as defined in \cite[Section 4.2]{KL}.

As in Section \ref{section:C1explanation}, for each $X \in \{\Omega, S, I, \Sigma, \Sigma ^*\}$, we define $X_i := X(V_i, \kappa_i)$. Note that, by definition of $\mathscr C_2$, the group $X_1$ is isomorphic to $X_i$ for any $X$ as above and any $i$.

Assume first that $H$ is of type $\GL_m(q) \wr \Sym(t)$, $\GU_m(q)\wr \Sym(t)$, $\Sp_m(q)\wr \Sym(t)$, or $\O_m^\epsilon(q) \wr \Sym(t)$. In these cases, the vector spaces $V_i$ are pairwise isometric.

\begin{prop}\label{prop:C2wreathstructure}
	Suppose that $H$ is of type $\GL_m(q) \wr \Sym(t)$, $\GU_m(q)\wr \Sym(t)$, $\Sp_m(q)\wr \Sym(t)$, or $\O_m^\epsilon(q) \wr \Sym(t)$. If $H$ is not of type $\O_1(q) \wr \Sym(t)$, then
	\[
	H_{\Omega} = \Omega_1^t . \del((I_1 / \Omega_1)^t) . \Sym(t) \leq I_1 \wr \Sym(t).
	\]
	On the other hand, if $H$ is of type $\O_1(q) \wr \Sym(t)$, then
	\[
	2^{t-1}.\Alt(t) \leq H_{\Omega} \leq 2^{t-1} . \Sym(t).
	\]
	If $H$ is of type $\GL_1(q) \wr \Sym(2)$, then
	\[
		H_\Sigma = (I_1^t : \diag((\Sigma_1/I_1)^t)) : \Sym(t) \leq \Sigma_1 \wr \Sym(t).
	\]
	Otherwise,
	\[
	H_\Sigma = (I_1^t : \diag((\Sigma^*_1/I_1)^t)) : \Sym(t) \leq \Sigma^*_1 \wr \Sym(t).
	\]
\end{prop}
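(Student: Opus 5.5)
The proof follows the model of Proposition \ref{prop:prelimsC1}, using the description of $\mathscr{C}_2$ subgroups in \cite[Section 4.2]{KL}. I will treat $H_\Omega$ first and then $H_\Sigma$.

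For $H_\Omega$, start from \cite[Lemma 4.2.1 and Proposition 4.2.9--4.2.11]{KL}, which give $H_I = I_1 \wr \Sym(t)$ when the $V_i$ are pairwise isometric. Then compute $H_\Omega = H_I \cap \Omega$ in two stages.
\begin{enumerate}[(a)]
\item Intersect the base group $I_1^t$ with $\Omega$. An element $(g_1,\dots,g_t)$ lies in $S$ exactly when $\prod \det(g_i)=1$, and in the orthogonal case it lies in $\Omega$ when additionally the product of spinor norms is trivial. This gives $H_\Omega \cap I_1^t = \Omega_1^t.\del((I_1/\Omega_1)^t)$, where $I_1/\Omega_1$ is abelian (cyclic, or $2\times 2$ in the orthogonal cases) and $\del$ is taken additively.
\item Show that $H_\Omega$ still projects onto all of $\Sym(t)$. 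For this, exhibit, for each transposition $(i,j)$, an element $x\sigma\in\Omega$ with $x\in I_1^t$ and $\pi(x\sigma)=(i,j)$. The permutation matrix swapping $V_i$ and $V_j$ has determinant $(-1)^m$ and a computable spinor norm. These can be corrected by an element of $I_1$ acting on one block: choose a $g\in I_1$ with the required determinant and spinor norm, which exists because $I_1\to I_1/\Omega_1$ is surjective. This is the content of \cite[Proposition 4.2.11]{KL}, and it splits the extension, giving $\Omega_1^t.\del((I_1/\Omega_1)^t).\Sym(t)$.
\end{enumerate}

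For type $\O_1(q)\wr\Sym(t)$ the correction in (b) fails, since $I_1=\{\pm1\}$ and $\Omega_1=1$. There the base intersection is $2^{t-1}$. The image of $H_\Omega$ in $\Sym(t)$ contains $\Alt(t)$, because 3-cycles have determinant $1$ and trivial spinor norm up to a diagonal sign correction, but it may or may not contain transpositions depending on the spinor norm of a reflection-like swap (cf. \cite[Proposition 4.2.15]{KL}). So I only claim the stated bounds.

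For $H_\Sigma$, as in Proposition \ref{prop:prelimsC1}, $H_\Gamma$ stabilises the decomposition. Hence $H_\Gamma\le \Gamma_1\wr\Sym(t)$, and $(g_1,\dots,g_t)\sigma$ lies in $H_\Gamma$ if and only if all $g_i$ have a common similarity multiplier $\tau$ and a common field automorphism, by \cite[Lemma 2.1.2]{KL}. This yields $I_1^t:\diag((\Gamma_1/I_1)^t):\Sym(t)$. The diagonal is a genuine complement: take $(\delta,\dots,\delta)$ and $(\phi,\dots,\phi)$ for a fixed similarity $\delta$ and a fixed field automorphism $\phi$ acting blockwise with respect to a basis adapted to the decomposition. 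When $\Sigma=\Gamma$ this completes the proof.

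In case $\boldL$ with $n>2$, the inverse transpose $\iota$ with respect to a basis compatible with the decomposition is block diagonal, acting as inverse transpose on each block, and it normalises $H_\Gamma$. When $m\le 2$, the blockwise $\iota$ is not in $\Sigma_1$ but in $\Sigma_1^*$, which explains why $\Sigma^*_1$ appears. The exception $\GL_1(q)\wr\Sym(2)$, where $n=2$, has $\Sigma=\Gamma$, so no $\iota$ is adjoined and $\Sigma_1$ suffices. Finally, $H_\Sigma=N_\Sigma(H_\Gamma)$ gives the claimed shape.

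I expect step (b) to be the main obstacle: checking case by case, especially for orthogonal types with $m$ odd or $q$ even, that a lift of every transposition lies in $\Omega$ and not merely in $S$. I would rely on the corresponding tables in \cite[Section 4.2]{KL} for the determinant and spinor norm of the block-swapping element.
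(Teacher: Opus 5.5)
Your overall route is the paper's. The paper cites \cite[Section 4.2]{KL} for $H_\Omega$, and your steps (a) and (b) reconstruct that computation. For $H_\Sigma$ it uses exactly $H_I=I_1\wr\Sym(t)$, the equal-$\tau$/equal-field-automorphism criterion, and the blockwise action of $\iota$; that is also how you account for $\Sigma_1^*$ and for the exception $\GL_1(q)\wr\Sym(2)$. One small fix in (b): what you need is that $I_1$, acting on a single block, maps \emph{onto} $I/\Omega$. This holds via $\det$ in cases $\boldL$ and $\boldU$, trivially in case $\boldS$, and in case $\boldO$ with $m\ge 2$ because the reflections of $V_1$ already generate $I$ modulo $\Omega$. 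Surjectivity of $I_1\to I_1/\Omega_1$ is a tautology and is not the point.

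The genuine problem is your two splitting claims.

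In (b), lifting every transposition into $H_\Omega$ gives surjectivity onto $\Sym(t)$, not a splitting, and the extension need not split. Take $\O_m^\epsilon(q)\wr\Sym(2)$ with $m$ even, $q$ odd and the block swap $\sigma$ of non-square spinor norm. A lift $(b,c)\sigma\in H_\Omega$ of $(1,2)$ is an involution only if $c=b^{-1}$. Its image in the abelian group $I/\Omega$ then equals that of $\sigma$, which is non-trivial. This claim is harmless, since the statement only asserts a ``.'' there.

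The second claim is not harmless. $(\delta,\dots,\delta)$ and $(\phi,\dots,\phi)$ need not generate a complement to $I_1^t$, and in general none exists. By the equal-multiplier criterion, projecting to the first coordinate maps any complement of $I_1^t$ in $H_\Gamma\cap\Gamma_1^t=I_1^t.\diag((\Gamma_1/I_1)^t)$ isomorphically onto a complement of $I_1$ in $\Gamma_1$. Intersecting with $\Delta_1$ then gives a complement of $I_1$ in $\Delta_1$. For type $\GU_1(q)\wr\Sym(t)$ with $q$ odd, $\Delta_1=\mathbb{F}_{q^2}^*$ is cyclic of order $q^2-1$ and $I_1$ is its subgroup of order $q+1$. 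Since $\gcd(q+1,q-1)=2$, no complement exists.

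So this step cannot be repaired. What your argument establishes is $H_\Sigma=(I_1^t.\diag((\Sigma_1^*/I_1)^t)):\Sym(t)$, together with its analogue for $\GL_1(q)\wr\Sym(2)$. This is also all the paper's own proof gives: it concludes as in Proposition \ref{prop:prelimsC1}, whose proof only yields $H_\Gamma=(I_1\times I_2).\diag(\Gamma_1/I_1\times\Gamma_2/I_2)$. It is also the form in which the result is quoted later, for instance in the proof of Proposition \ref{prop:LMTcorrectionLUSC2}.
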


\begin{proof}
	The results for $H_\Omega$ are given by \cite[Section 4.2]{KL}. Additionally, $H_I = I_1 \wr \Sym(t)$, and $H_\Gamma \leq \Gamma_1 \wr \Sym(t)$. The subgroup $\Sym(t)$ belongs to $H_\Gamma$, while an element $(g_1, \dots, g_t) \in \Gamma_1^t$ is contained in $H_{\Gamma}$ if and only if $\tau(g_1) = \tau(g_i)$ and $\sigma(g_1) = \sigma(g_i)$ for all $i$. If $G$ is in case $\boldL$ with $n  >2$ then $\iota$ normalises $H_\Gamma$, commutes with $\Sym(t)$, and acts as the inverse transpose automorphism on each $\Gamma_i$. We conclude as in Proposition \ref{prop:prelimsC1}.
\end{proof}

\begin{prop}
	Suppose that $H$ is of type $\GL_{n/2}(q^u).2$. Then,
	\[
	\Omega_1 \leq H_\Omega \leq I_1.2 = H_I.
	\]
\end{prop}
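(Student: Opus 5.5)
We need to prove: for $H$ of type $\GL_{n/2}(q^u).2$, we have $\Omega_1 \leq H_\Omega \leq I_1.2 = H_I$. Here $V = V_1 \oplus V_2$ with $V_1, V_2$ totally singular subspaces of dimension $n/2$, swapped by an element of $H$. This is the $\mathscr C_2$ type arising in the unitary, symplectic and orthogonal-plus cases (in KL notation, $\GL_{n/2}(q^u).2$). The group $I_1$ here is $\GL(V_1)$ (so $\GL_{n/2}(q^u)$), acting on $V_2 \cong V_1^*$ via the dual (or dual-conjugate) action, and $\Omega_1 = \SL(V_1)$.

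The plan is to read the structure off \cite[Section 4.2]{KL}, in the same way as Proposition \ref{prop:C2wreathstructure}, and argue in three steps.

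First, I would compute $H_I$. An isometry $g$ stabilising $\{V_1, V_2\}$ either fixes both or swaps them. If it fixes both, then $g|_{V_1}$ is an arbitrary element $A \in \GL(V_1)$, and preservation of the form forces $g|_{V_2}$ to be the adjoint inverse of $A$ with respect to the pairing $V_1 \times V_2 \to \mathbb F$ induced by $\kappa$. Hence the stabiliser of both spaces in $I$ is isomorphic to $I_1 = \GL(V_1)$. Choosing bases of $V_1$ and $V_2$ that are dual under $\kappa$ (a standard basis), the map interchanging $e_i$ and $f_i$, with appropriate signs in the symplectic case, is an isometry swapping $V_1$ and $V_2$. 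This gives $H_I = I_1.2$ as in \cite[Lemma 4.2.7]{KL}, which is the right-hand equality.

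Second, I would embed $\Omega_1$ in $H_\Omega$. An element acting as $A \in \SL(V_1)$ on $V_1$ and as its dual inverse on $V_2$ has determinant $\det A \cdot \det A^{-\sigma} = 1$ (with $\sigma$ trivial or the involution in case $\boldU$), so it lies in $S$. In the orthogonal case I would also need it to lie in $\Omega$, i.e. to have trivial spinor norm. The cleanest route is to note that $\SL(V_1)$ is perfect unless $n/2 \leq 2$ and $q$ is small, and $S/\Omega$ is abelian, so the perfect case is immediate. The small cases I would cite from \cite[Lemma 4.1.9]{KL} or \cite[Proposition 4.2.7]{KL}, which state that the Levi-type $\GL$ acting on a pair of totally singular subspaces meets $\Omega$ in a subgroup containing $\SL$. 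Alternatively, $\SL(V_1)$ is generated by transvections, and each of these lifts to a Siegel transformation in $\Omega$.

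Third, $H_\Omega = H_I \cap \Omega \leq H_I$ holds by the definition of $H_L$, giving $\Omega_1 \leq H_\Omega \leq I_1.2$. The main obstacle is the spinor-norm verification in the orthogonal case for small $n/2$ and $q$, where I would rely on the explicit statements in \cite[Section 4.2]{KL}.
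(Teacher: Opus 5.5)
Your proposal is correct. The paper proves this purely by citation: $H_I = I_1.2$ is \cite[Lemma 4.2.3]{KL}, and $\Omega_1 \leq H_\Omega$ is read off from the proofs of \cite[Propositions 4.2.4, 4.2.5, 4.2.7]{KL}. Your reference to ``Lemma 4.2.7'' should be Lemma 4.2.3; 4.2.7 is the proposition for case $\boldO^+$.

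You instead unpack what those results do. You compute $N_I\{V_1,V_2\}$ in a standard basis, which gives $H_I = I_1.2$. The determinant computation then gives $\Omega_1 \leq S$, which already finishes cases $\boldU$ and $\boldS$ because there $\Omega = S$. You treat the spinor norm in case $\boldO^+$ separately. This is longer but transparent.

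Your version also needs no fallback citation for the small cases. $\SL(V_1)$ fails to be perfect only for $\SL_2(2)$ and $\SL_2(3)$. Since $\SL_2(3)$ has abelianisation $C_3$, it still maps trivially to $S/\Omega \cong C_2$. So only $\SL_2(2)$, inside $\Omega_4^+(2)$, needs your second argument.

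That second argument in fact works uniformly. Take a basis with $B(e_k,f_l)=\delta_{kl}$, where $B$ is the bilinear form associated with $\kappa$. For $i \neq j$, the lift of the transvection $e_j \mapsto e_j + \lambda e_i$ is $T \colon x \mapsto x + B(x,\lambda f_j)e_i - B(x,e_i)\lambda f_j$. This is the Siegel transformation for the orthogonal singular pair $e_i, \lambda f_j$. It lies in $\Omega$: for $q$ odd because it has odd order $p$, and for $q$ even because $T-1$ has rank $2$.
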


\begin{proof}
	By \cite[Lemma 4.2.3]{KL} we have that $H_I = I_1.2$. For the result for $H_\Omega$ see the proofs of \cite[Propositions 4.2.4, 4.2.5, 4.2.7]{KL}.
\end{proof}

\begin{prop}\label{prop:prelimC2Omq2}
	Suppose that $H$ is of type $\O_{n/2}(q)^2$. We have
	\[
	H_{\Omega} = S_1 \times S_2,
	\]
	and
	\[
	H_\Sigma = (I_1^2 : \diag((\Sigma_1/I_1)^2)) : \Sym(2) \leq \Sigma_1 \wr \Sym(2).
	\]
\end{prop}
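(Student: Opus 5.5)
The plan is to argue exactly as in Propositions \ref{prop:prelimsC1} and \ref{prop:C2wreathstructure}, with the input from \cite{KL} specialised to type $\O_{n/2}(q)^2$. Here $n \equiv 2 \pmod 4$ and $q$ is odd. The space decomposes as $V = V_1 \perp V_2$, where $V_1$ and $V_2$ are similar but not isometric odd-dimensional orthogonal spaces; that is, the two quadratic forms have discriminants differing by a non-square.

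\textbf{Step 1: $H_\Omega$.} First I record that $H_I = I_1 \times I_2$ by \cite[Lemma 4.1.1]{KL}, and that $H_S$ consists of the pairs whose determinants have product $1$. Because $m = n/2$ is odd, $I_j = S_j \times \langle -1_{V_j} \rangle$. The claim $H_\Omega = S_1 \times S_2$ is the statement of \cite[Proposition 4.2.14]{KL}. To make it plausible, I would check it via the spinor norm. Since $V_1$ and $V_2$ have discriminants differing by a non-square, the element $(-1_{V_1}, 1)$ has spinor norm detecting that non-square. The element $(-1_{V_1}, -1_{V_2}) = -1_V$ is not in $\Omega$ when $n \equiv 2 \pmod 4$ with the relevant discriminant. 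These facts cut $H_S$ down to index two, leaving exactly $S_1 \times S_2$.

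\textbf{Step 2: $H_\Sigma$.} First, $H_\Gamma$ stabilises the decomposition up to swapping the summands. There is an element of $\Delta$ swapping $V_1$ and $V_2$: a similarity with non-square multiplier, which exists because the spaces are similar. Hence $H_\Delta$ is not contained in $\Gamma_1 \times \Gamma_2$ but in $\Gamma_1 \wr \Sym(2)$. Next, as in Proposition \ref{prop:C2wreathstructure}, an element $(g_1, g_2) \in \Gamma_1 \times \Gamma_2$ lies in $H_\Gamma$ if and only if $\tau(g_1) = \tau(g_2)$ and $\sigma(g_1) = \sigma(g_2)$ \cite[Lemma 2.1.2]{KL}. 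This gives the base part $I_1^2 : \diag((\Gamma_1/I_1)^2)$. In type $\boldO$ we have $\Sigma = \Gamma$ and $\Sigma^* = \Sigma$, so no $\iota$ issue arises. It then remains to exhibit the swap.

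\textbf{Main obstacle.} The difficulty is that $V_1$ and $V_2$ are only similar, so the swap is not a permutation matrix. I would fix a similarity $\lambda \colon V_1 \to V_2$ with non-square multiplier $\mu$, and take the element $\tau$ acting as $\lambda$ on $V_1$ and $\mu\lambda^{-1}$ on $V_2$. This $\tau$ is a similarity of $V$ interchanging $V_1$ and $V_2$, and $\tau^2 = \mu$ is a scalar. To write $H_\Sigma$ inside $\Sigma_1 \wr \Sym(2)$, I identify $V_2$ with $V_1$ via $\lambda$. Since $\dim V_1$ is odd, the similarity group $\Delta_1$ equals $I_1 \times$ scalars. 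Therefore, after rescaling by an element of the diagonal part, one can choose a representative of the swap that acts as the coordinate permutation $(1,2)$, giving a semidirect product with $\Sym(2)$.

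Finally, I compare orders with $|H_\Sigma| = |\Sigma : H_\Sigma|^{-1}|\Sigma|$, using the $c$-values in \cite[Table 3.5.E]{KL}, to confirm equality rather than mere containment.
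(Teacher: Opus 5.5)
Your overall route is the paper's: cite \cite{KL} for $H_\Omega$, use $H_I=I_1\times I_2$, get the base from the multiplier and field-automorphism conditions, then adjoin a swap. However, both places where you go beyond citation break down.

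\textbf{Step 1.} The fact $-1_V\notin\Omega$ only shows that $H_\Omega$ has index $2$ in $H_S=(S_1\times S_2)\times\langle -1_V\rangle$. It does not single out $S_1\times S_2$, and in fact $S_1\times S_2\not\le\Omega$. Indeed, for $g_1\in S_1\setminus\Omega_1$, the element $(g_1,1)$ is a product of reflections in vectors of $V_1$. Its spinor norm on $V$ therefore equals that of $g_1$ on $V_1$, which is a non-square. What is true is $H_\Omega=\{(g_1,g_2)\in I_1\times I_2 : \det g_1=\det g_2,\ \theta(g_1)=\theta(g_2)\}$. Like $S_1\times S_2$, this is a complement to the central subgroup $\langle -1_V\rangle$ of $H_S$. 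Hence $H_\Omega\cong S_1\times S_2$, and this isomorphism is the sense in which the statement, and your citation, holds.

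\textbf{Step 2.} The rescaling fails. In the coordinates $V\cong V_1\oplus V_1$ given by $\lambda$, the form is $Q_1(x)+\mu Q_1(y)$ and your swap is $(x,y)\mapsto(\mu y,x)$. Elements of the diagonal part are pairs with equal multipliers, so multiplying by them never yields the coordinate permutation. That permutation is a similarity only when $\mu^2=1$.

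No better choice of swap helps in general. Suppose $x\in\Gamma$ interchanges $V_1$ and $V_2$, is $\phi^i$-semilinear with multiplier $c$, and satisfies $x^2=1$. Then $\phi^{2i}=1$ and $Q(v)=Q(x^2v)=c^{1+p^i}Q(v)$, so $c^{1+p^i}=1$. On the other hand, $c$ must be a non-square, because $D(Q_2)\equiv c^{m}D(Q_1)$ modulo squares with $m$ odd. For $q\equiv 1\pmod{4}$, both possibilities force $c$ to be a square: $c^2=1$ when $i=0$, and $c^{1+\sqrt q}=1$ when $i=f/2$. So $H_\Sigma$ contains no involution outside $B:=H_\Sigma\cap(\Sigma_1\times\Sigma_2)$, and the extension $B.2$ does not split in $\Sigma$.

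What your construction does give is $H_\Sigma=B\langle\tau\rangle\le\Sigma_1\wr\Sym(2)$ with $\tau^2=\mu 1_V$ scalar. This extension splits in $\overline\Sigma$, where $\overline\tau$ is an involution. It also splits in $\Sigma$ when $q\equiv 3\pmod{4}$, by choosing $\lambda$ with $\mu=-1$, so that the coordinate swap has multiplier $-1$. The paper simply records $H_\Delta=(I_1^2.\diag((\Delta_1/I_1)^2)):\Sym(2)$ and argues as in Proposition~\ref{prop:C2wreathstructure}. Only the image in $\overline\Sigma$ is used later, so the ``$:\Sym(2)$'' should be read there.

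Finally, your order comparison is unnecessary: every element swapping the $V_i$ already lies in $B\tau$.
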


\begin{proof}
	The proof follows by the same argument as that of Proposition \ref{prop:C2wreathstructure} by noting that $H_I = I_1 \times I_2$ and $H_{\Delta} = (I_1^2.\diag((\Delta_1/I_1)^2)):\Sym(2)$.
\end{proof}

\subsubsection{Class $\mathscr{C}_3$}\label{section:C3explanation}

Let $\mathbb{F}_{\sharp}$ be a finite extension of $\mathbb{F}$ of degree $r$, which is prime. Let $V_{\sharp}$ be the vector space $V \otimes_{\mathbb{F}} \mathbb{F}_{\sharp}$ over $\mathbb{F}_{\sharp}$, and $\kappa_{\sharp}$ the form on this vector space as defined in \cite[Section 4.3]{KL}. Then we define
\[
X_{\sharp} := X_{\sharp}(V_{\sharp}, \mathbb{F}_{\sharp}, \kappa_{\sharp})
\]
for $X$ equal to $\Omega$ or $\Gamma$.

\begin{prop}
	Suppose that $H$ is in $\mathscr{C}_3$. Then, 
	\[
	\Omega_\sharp \leq H_\Omega.
	\]
	If $G$ is in case $\boldL$, then
	\[
	H_\Sigma = \Gamma_\sharp . 2,
	\]
	and otherwise
	\[
	H_\Sigma = \Gamma_\sharp.
	\]
\end{prop}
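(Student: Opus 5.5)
We prove this as the analogous statements for $\mathscr{C}_1$ and $\mathscr{C}_2$ were proved: read off the structure of $H_I$ and $H_\Gamma$ from \cite[Section 4.3]{KL}, then handle the extra inverse-transpose element $\iota$ in case $\boldL$. Throughout, $V$ is viewed as the $\mathbb{F}_\sharp$-space $V_\sharp$ of dimension $n/r$, with $\kappa$ recovered from $\kappa_\sharp$ by composing with the trace map $\mathbb{F}_\sharp \to \mathbb{F}$, as in \cite[Section 4.3]{KL}.

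\textbf{The inclusion $\Omega_\sharp \leq H_\Omega$.} By \cite[Lemma 4.3.2]{KL} and the case-by-case descriptions in \cite[Propositions 4.3.6--4.3.20]{KL}, every $\kappa_\sharp$-isometry of $V_\sharp$ is a $\kappa$-isometry of $V$, so $I_\sharp \leq I$. Moreover $\Omega_\sharp$ is perfect apart from a handful of small cases. In the perfect cases $\Omega_\sharp = [\Omega_\sharp,\Omega_\sharp] \leq [I,I] \leq \Omega$. The small non-perfect cases (for instance $\Omega_\sharp = \SL_2(2)$ or $\SL_2(3)$, or $\Omega_\sharp = \Omega_\sharp^\pm$ of small dimension) are checked directly from \cite{KL}. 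For the orthogonal cases, where $\Omega$ is the kernel of the spinor norm, I would instead quote the explicit statement $\Omega_\sharp \leq H_\Omega$ from \cite[Section 4.3]{KL} rather than compute spinor norms. Finally $\Omega_\sharp$ preserves the $\mathbb{F}_\sharp$-structure, so it lies in $H_\Sigma \cap \Omega = H_\Omega$.

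\textbf{The structure of $H_\Sigma$ outside case $\boldL$.} By definition $H_\Gamma$ is the stabiliser in $\Gamma$ of the $\mathbb{F}_\sharp$-structure, namely the set of $\mathbb{F}$-semilinear maps normalising the field $\mathbb{F}_\sharp^\times \leq \GL(V)$. Such a map is $\mathbb{F}_\sharp$-semilinear, since the automorphism it induces on $\mathbb{F}_\sharp$ extends one of $\mathbb{F}$. It is also a semisimilarity of $\kappa_\sharp$: a semisimilarity of the trace form restricts to one of $\kappa_\sharp$, which is the content of \cite[Lemma 4.3.2]{KL} and its analogues. Conversely every element of $\Gamma_\sharp$ lies in $\Gamma$. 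Hence $H_\Gamma = \Gamma_\sharp$, and since $\Sigma = \Gamma$ outside case $\boldL$, this gives $H_\Sigma = \Gamma_\sharp$.

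\textbf{Case $\boldL$.} Here $H_\Gamma = \Gamma_\sharp$ by the same argument. It remains to show $H_\Sigma = N_\Sigma(H_\Gamma)$ contains $H_\Gamma$ with index exactly $2$. Since $|\Sigma : \Gamma| = 2$, it suffices to find one element of $\Sigma \setminus \Gamma$ normalising $\Gamma_\sharp$. Choose an $\mathbb{F}$-basis of $V$ adapted to an $\mathbb{F}_\sharp$-basis of $V_\sharp$, and choose a basis of $\mathbb{F}_\sharp$ over $\mathbb{F}$ for which multiplication by $\lambda \in \mathbb{F}_\sharp$ has a matrix $T_\lambda$ whose transpose is again in the image of $\mathbb{F}_\sharp$. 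One way: take a trace-orthonormal-type basis, or compose $\iota$ with the Gram matrix $J$ of the trace form $\operatorname{Tr}(xy)$, which satisfies $J T_\lambda J^{-1} = T_\lambda^{\mathsf{T}}$. Then $\iota' := \iota \circ (\text{conjugation by } J)$ maps $T_\lambda \mapsto T_\lambda^{-1}$, so it normalises $\mathbb{F}_\sharp^\times$ and hence normalises $\Gamma_\sharp$, giving $H_\Sigma = \Gamma_\sharp.2$.

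The main obstacle is this last step: checking that the twisted inverse-transpose really lies in $\Sigma \setminus \Gamma$ and normalises the whole of $\Gamma_\sharp$, including its field-automorphism part, and not only the torus $\mathbb{F}_\sharp^\times$. I would handle this by noting that normalising $\mathbb{F}_\sharp^\times$ forces normalising its normaliser in $\Gamma$, which is exactly $\Gamma_\sharp$ by the previous step.
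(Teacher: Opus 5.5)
Your proposal is correct. For the inclusion $\Omega_\sharp\le H_\Omega$ it is the paper's argument: perfectness of $\Omega_\sharp$. Note that $[I,I]\le\Omega$ holds uniformly, because $I/\Omega$ is abelian in every case, including $\boldO$, so your separate hedge for the orthogonal case is not needed.

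Beyond that you do more than the paper does. The paper takes $H_\Gamma=\Gamma_\sharp$ as the definition of the $\mathscr{C}_3$ member in \cite[Section 4.3]{KL}. Outside case $\boldL$ it gets $H_\Sigma=\Gamma_\sharp$ from $\Sigma=\Gamma$, and it leaves the index-$2$ claim in case $\boldL$ implicit. You instead:
\begin{itemize}
\item re-derive $H_\Gamma=\Gamma_\sharp$ from the description of $H_\Gamma$ as $N_\Gamma(\mathbb{F}_\sharp^\times)$, using nondegeneracy of the trace pairing to pass from $\kappa$ to $\kappa_\sharp$;
\item exhibit an explicit element $\iota\cdot J\in\Sigma\setminus\Gamma$, with $J$ the Gram matrix of $\operatorname{Tr}(xy)$, which inverts $\mathbb{F}_\sharp^\times$ and therefore, as $\Gamma\unlhd\Sigma$, normalises $N_\Gamma(\mathbb{F}_\sharp^\times)=\Gamma_\sharp$.
\end{itemize}
Your ``it suffices to find one element'' step also needs $H_\Sigma\cap\Gamma=H_\Gamma$. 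This is built into the notation $H_L=H_\Sigma\cap L$, and with it you get a self-contained proof of the case-$\boldL$ statement that the paper does not spell out. The price is a few checks the paper outsources to KL, for example that inverse-transpose with respect to your adapted basis differs from KL's $\iota$ by an element of $\GL_n(q)$, and so still lies in $\Sigma\setminus\Gamma$.

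One caveat applies to both your argument and the statement: $|\Sigma:\Gamma|=2$ only when $n>2$. For the $\mathscr{C}_3$ subgroup of type $\GL_1(q^2)$ in dimension $n=2$, the paper's convention gives $\Sigma=\Gamma$, hence $H_\Sigma=\Gamma_\sharp$. So the case-$\boldL$ assertion should be read with $n>2$, which is how the paper later uses it.
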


\begin{proof}
	By definition, $H_\Gamma = \Gamma_\sharp$. Since $\Omega_\sharp$ is perfect by \cite[Section 4.3]{KL}, $H_\Omega \geq \Omega_\sharp$.
\end{proof}

\subsubsection{Class $\mathscr{C}_4$}\label{section:C4explanation}

If $H \in \mathscr{C}_4$, then $H$ stabilises the decomposition $V = V_1 \otimes V_2$. Let $\kappa_i$ be a form on $V_i$ as defined in \cite[Section 4.4]{KL}. For each $X \in \{\Omega, I, \Delta, \Gamma, \Sigma^*\}$, define $X_i$ for each $V_i$ as previously.

\begin{prop}
	Suppose that $H$ is in $\mathscr{C}_4$. Then
	\[
	\overline \Omega_1 \times \overline \Omega_2 \leq H_{\overline \Omega} \leq \overline \Delta_1 \times \overline \Delta_2,
	\]
	and
	\[
	H_{\overline \Sigma} = (\overline \Delta_1 \times \overline \Delta_2) : \diag((\overline \Sigma^*_1/\overline \Delta_1) \times (\overline \Sigma^*_2/\overline \Delta_2))
	\]
\end{prop}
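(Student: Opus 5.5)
The plan follows the pattern of the proofs of Propositions \ref{prop:prelimsC1} and \ref{prop:C2wreathstructure}: first pin down $H_\Gamma$ as a subgroup of $\Gamma_1 \times \Gamma_2$ acting on $V_1 \otimes V_2$, then pass to the quotient by scalars, and finally treat $\iota$ separately in case $\boldL$.

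\textbf{Lower bound.} By \cite[Section 4.4]{KL}, the natural map $\Omega_1 \times \Omega_2 \to \GL(V_1 \otimes V_2)$, $(g_1,g_2) \mapsto g_1 \otimes g_2$, lands in $\Omega$. Indeed, each $\Omega_i$ is perfect (or generated by elements that are easily checked to lie in $\Omega$), so the image consists of special isometries with trivial spinor norm. Its image modulo scalars is $\overline \Omega_1 \times \overline \Omega_2$, because $g_1 \otimes g_2$ is scalar only when both $g_i$ are scalar. The image preserves the decomposition, so it lies in $H_\Omega$, and hence $\overline\Omega_1 \times \overline\Omega_2 \leq H_{\overline\Omega}$.

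\textbf{Upper bound and the shape of $H_{\overline\Sigma}$.}
\begin{enumerate}[(1)]
	\item Let $g \in \Delta$ stabilise the tensor decomposition; in the $\mathscr{C}_4$ case the two factors are non-isometric or of different dimension, so $g$ cannot swap them. By the standard lemma (\cite[Lemma 4.4.3]{KL}), $g = g_1 \otimes g_2$ with $g_i \in \GL(V_i)$, determined up to scalars.
	\item Since $\kappa = \kappa_1 \otimes \kappa_2$, the element $g$ is a similarity precisely when each $g_i$ is a similarity of $\kappa_i$, after rescaling $g_1$ and $g_2$ by reciprocal scalars. This gives $H_{\overline\Delta} = \overline\Delta_1 \times \overline\Delta_2$ modulo scalars, and therefore $H_{\overline\Omega} \leq \overline\Delta_1 \times \overline\Delta_2$.
	\item Field automorphisms act coordinatewise on a basis of $V_1 \otimes V_2$ built from standard bases of the $V_i$. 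They therefore act diagonally on the two factors, which produces the $\diag$ term for $\Gamma/\Delta$.
	\item In case $\boldL$ with $n>2$, $\iota$ acts on a Kronecker product as the Kronecker product of inverse transposes, so it normalises $H_\Gamma$ and acts as $\iota$ on both factors simultaneously. This is why $\Sigma^*_i$ appears.
\end{enumerate}

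\textbf{Splitting.} Together, steps (1)--(4) show $H_{\overline\Sigma} = (\overline\Delta_1 \times \overline\Delta_2).\diag(\ldots)$. To see that the extension is split, I would use the standard field and graph automorphisms relative to the chosen bases. These normalise $\overline\Delta_1 \times \overline\Delta_2$ and generate a complement of the required diagonal shape.

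\textbf{Main obstacle.} The delicate case is when some $V_i$ has dimension at most $2$ in case $\boldL$. There $\Sigma_i$ does not contain $\iota$, yet $\iota$ on $V$ still induces inverse transpose on the small factor, which is inner in $\overline\Sigma_i$ modulo scalars. This is exactly why Definition \ref{def:sigma*} uses $\Sigma^*_i$. I would need to check carefully that the diagonal quotient is correctly formed in this case, and that the unitary and orthogonal tensor types, where $\kappa_i$ may mix symplectic and orthogonal forms, give no extra outer elements beyond those listed.
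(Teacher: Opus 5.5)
Your proposal is correct and follows essentially the same route as the paper. The paper quotes \cite[(4.4.8)]{KL} for $H_{\overline \Delta} = \overline \Delta_1 \times \overline \Delta_2$ and then argues as in Proposition \ref{prop:prelimsC1}, with field automorphisms acting diagonally and $\iota$ acting as inverse transpose on both factors. You instead re-derive (4.4.8) from the tensor-decomposition lemma and the factorisation $\kappa = \kappa_1 \otimes \kappa_2$; for the lower bound when some $\Omega_i$ is not perfect, cite \cite[Section 4.4]{KL} rather than the vague ``easily checked'' remark.
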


\begin{proof}
	By \cite[(4.4.8)]{KL}, $H_{\overline \Delta} = \overline \Delta_1 \times \overline \Delta_2$. The proof then proceeds as in Proposition \ref{prop:prelimsC1}.
\end{proof}

\subsubsection{Class $\mathscr{C}_5$}\label{section:C5explanation}

Let $\mathbb{F}_\sharp$ be a subfield of $\mathbb{F}$ of prime index $r$, and $V_\sharp$ an $n$-dimensional vector space over $\mathbb{F}_\sharp$, so that $V = V_\sharp \otimes \mathbb{F}$. Let $\kappa_\sharp$ be a form on $V_\sharp$ as described in \cite[Section 4.5]{KL}. Finally, let $X_\sharp := X(V_\sharp, \mathbb F_\sharp, \kappa_\sharp)$ for $X \in \{ \Omega, S, I, \Delta , \Gamma, \Sigma\}$.

\begin{prop}
	Let $H$ be in $\mathscr{C}_5$. Then,
	\[
	\overline \Omega_\sharp \leq H_{\overline \Omega}
	\]
	and
	\[
	H_{\overline \Sigma} = \overline \Delta_\sharp . \left( \frac{\overline \Sigma}{\overline \Delta} \right).
	\]
\end{prop}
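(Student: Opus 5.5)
The plan follows the template of the earlier structural propositions for $\mathscr{C}_1$, $\mathscr{C}_3$ and $\mathscr{C}_4$: first identify $H_{\overline \Delta}$ using \cite[Section 4.5]{KL}, then read off $H_{\overline \Omega}$ and $H_{\overline \Sigma}$ from it.

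For the first assertion, recall from \cite[Section 4.5]{KL} that $H$ is the stabiliser of the $\mathbb F_\sharp$-form $V_\sharp$ of $V$. Every element of $\Omega_\sharp$ acts $\mathbb F$-linearly on $V = V_\sharp \otimes \mathbb F$, preserves $\kappa$ (since $\kappa$ restricts to a scalar multiple of $\kappa_\sharp$ on $V_\sharp$), and has determinant $1$. We then want $\Omega_\sharp \leq \Omega$. The cleanest route is that $\Omega_\sharp$ is perfect in all cases where $H$ is a genuine $\mathscr C_5$ subgroup, as stated in \cite[Section 4.5]{KL}. Being perfect, it lies in the derived subgroup of $S$. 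Since $S/\Omega$ is abelian (it is trivial or the spinor-norm quotient of order $2$), this gives $\Omega_\sharp \leq \Omega$. As $\Omega_\sharp$ stabilises $V_\sharp$, we conclude $\Omega_\sharp \leq H_\Omega$. Passing to the quotient by scalars yields $\overline \Omega_\sharp \leq H_{\overline \Omega}$.

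For the second assertion, I would use \cite[(4.5.?)]{KL}, namely the description $H_\Delta = \Delta_\sharp Z$, where $Z$ denotes the scalars of $\Delta$. This gives $H_{\overline \Delta} = \overline \Delta_\sharp$.

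Next we need $H_{\overline \Sigma}\overline \Delta = \overline \Sigma$. Field automorphisms of $\mathbb F$ can be chosen to act on a basis of $V_\sharp$ in which the Gram matrix of $\kappa_\sharp$ is defined over the prime field. These automorphisms then stabilise $V_\sharp$, so they lie in $H_\Gamma$. In case $\boldL$ with $n > 2$, the inverse transpose $\iota$ with respect to such a basis also stabilises $V_\sharp$, and hence $\iota \in H_\Sigma$. Hence $\overline \Sigma = H_{\overline \Sigma}\overline \Delta$. Combining with $H_{\overline \Sigma} \cap \overline \Delta = H_{\overline \Delta} = \overline \Delta_\sharp$, we obtain
\[
H_{\overline \Sigma} = \overline \Delta_\sharp . \left( \frac{H_{\overline \Sigma}}{H_{\overline \Delta}} \right) \cong \overline \Delta_\sharp . \left( \frac{\overline \Sigma}{\overline \Delta} \right).
\]

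The main obstacle is the equality $H_{\overline \Delta} = \overline \Delta_\sharp$. A similarity stabilising $V_\sharp$ up to scalars must restrict to an $\mathbb F_\sharp$-similarity of $\kappa_\sharp$. The difficulty is that in the unitary-to-orthogonal or unitary-to-symplectic subfield cases the similarity factors need not match the field subgroups in the obvious way. I would check these cases directly against the tables in \cite[Section 4.5]{KL} rather than argue uniformly, and also check the small perfectness exceptions for $\Omega_\sharp$ (e.g.\ $q_\sharp \leq 3$), where the maximality conditions in \cite{KL} exclude them or the containment $\Omega_\sharp \leq \Omega$ can be verified directly.
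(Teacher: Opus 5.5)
Your proposal is correct and follows essentially the paper's route. The equality $H_{\overline \Delta} = \overline \Delta_\sharp$ is quoted from \cite[(4.5.5)]{KL}, and the containment $\overline \Omega_\sharp \leq H_{\overline \Omega}$ comes from perfectness of $\Omega_\sharp$, with the non-perfect cases treated separately (the paper reads these off from \cite[Tables 8.1, 8.5, 8.7, 8.12]{BHRD}). Your check that field automorphisms and $\iota$ normalise $H_\Gamma$, giving $H_{\overline \Sigma}\overline \Delta = \overline \Sigma$, makes explicit a step the paper leaves implicit.

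One correction: those non-perfect cases are genuine maximal subgroups rather than being excluded by maximality. For example, $\SL_2(3) < \SL_2(27)$, and its image $\Alt(4)$ is maximal in $\PSL_2(27)$. So it is your direct verification that handles them, and it is immediate in such cases: in this example $\Omega = S$ and determinants are preserved.
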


\begin{proof}
	By \cite[(4.5.5)]{KL}, $H_{\overline \Delta} = \overline \Delta_\sharp$. If $\overline \Omega_\sharp$ is perfect, then $\overline \Omega_\sharp$ is contained in $H_{\overline \Omega}$. Otherwise, by \cite[Tables 8.1, 8.5, 8.7, 8.12]{BHRD}, we have that $\overline \Omega_\sharp \leq H_{\overline \Omega}$.
\end{proof}

\subsubsection{Class $\mathscr{C}_6$}\label{section:C6explanation}

\begin{prop}\label{prop:C6prelims1}
	Let $G$ be of type $\boldL$ or $\boldU$, and assume that $H$ is of type $r^{2m}.\Sp_{2m}(r)$, with $n = r^m$ and $r$ an odd prime. If $n=3$ and $c=1$, then
	\[
	H_{\overline \Omega} = 3^2 . Q_8,
	\]
	and otherwise
	\[
	H_{\overline \Omega} = r^{2m} : \Sp_{2m}(r).
	\]
	In both cases,
	\[
	H_{\overline \Sigma} = r^{2m} : \GSp_{2m}(r).
	\]
\end{prop}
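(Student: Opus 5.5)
The plan is to follow the pattern of the earlier subsections: read off $H_{\overline\Omega}$ from the structural description of $\mathscr C_6$ subgroups in \cite[Section 4.6]{KL}, then find $H_{\overline\Sigma}$ as a normaliser. In case $\boldL$ or $\boldU$ with $n=r^m$, $r$ odd, the group $H_\Delta$ is the normaliser in $\Delta$ of an extraspecial group $R=r^{1+2m}$ of exponent $r$. By \cite[Proposition 4.6.3]{KL} we have $H_\Delta = (Z\circ R).\Sp_{2m}(r)$, where $Z$ is the group of scalars of $\Delta$. Passing to the quotient by scalars, we obtain $H_{\overline\Delta}=r^{2m}:\Sp_{2m}(r)$. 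The splitting over $\overline R=r^{2m}$ comes from the standard fact that, for $r$ odd, the Weil representation realises $\Sp_{2m}(r)$ as a complement; this is recorded in \cite[Proposition 4.6.5]{KL}.

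Next I would compute $H_{\overline\Omega}$. By \cite[Proposition 4.6.5]{KL}, $H_{\overline\Omega}$ has index $c$ in $H_{\overline\Delta}$ intersected with $\overline\Omega$. I would split into cases according to $c$.
\begin{itemize}
\item \textbf{Case $c=1$ with $n\geq 5$, or $n=3$ with $c=3$.} Here $H_{\overline\Omega}$ contains $\overline R$ together with the perfect group $\Sp_{2m}(r)$, or the relevant part of it. This gives $r^{2m}:\Sp_{2m}(r)$, with the split complement inherited from $H_{\overline\Delta}$.
\item \textbf{Case $n=3$, $c=1$.} Here $\Sp_2(3)=\SL_2(3)=Q_8.3$, and the determinant/index condition removes the $3$. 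This leaves $3^2.Q_8$, exactly as in \cite[Table 8.3, 8.5]{BHRD}.
\end{itemize}
I would quote the low-dimensional tables of \cite{BHRD} to confirm the $n=3$ exception. This also handles whether the extension is split or not, which is the point where I expect the most care is needed: the statement writes $3^2.Q_8$ rather than using a colon, so no splitting claim is required there.

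Finally, for $H_{\overline\Sigma}$, recall $\overline\Sigma=\overline\Gamma$ or $\langle\overline\Gamma,\iota\rangle$, and $H_\Sigma=N_\Sigma(H_\Gamma)$. The outer part of $H_{\overline\Sigma}$ acts on $\overline R=r^{2m}$ and preserves the symplectic commutator form up to scalars. Hence $H_{\overline\Sigma}/\overline R$ embeds in $\GSp_{2m}(r)$, where the similitude factor records the action on $Z(R)$. Conversely, the field automorphisms of order dividing $|\mathbb F:\mathbb F_{r}(\omega)|$ centralise a suitable complement. Moreover, either a field automorphism (case $\boldU$) or $\iota$ composed with a suitable element (case $\boldL$) inverts $Z(R)$. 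Together these realise all similitude multipliers, since $\GSp_{2m}(r)/\Sp_{2m}(r)\cong C_{r-1}$ and the multiplier is determined by the induced power map on $Z(R)\cong C_r$.

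The main obstacle is showing that the full $C_{r-1}$ is attained and that the extension splits. For surjectivity, I would appeal to \cite[Proposition 4.6.5 and Table 3.5.A]{KL}, which list $c$ and the structure of $H_{\overline\Sigma}$. For splitting, I would argue via the coprime/Weil-representation complement: the Schur–Zassenhaus-type argument fails, but $\GSp_{2m}(r)$ acts on the Heisenberg group $R$ and lifts projectively to $V$. Since this lift can be realised by semilinear maps modulo scalars, it gives the complement $\GSp_{2m}(r)$ in $\overline\Sigma$.
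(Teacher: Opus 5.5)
Your route is essentially the paper's. You use that $H_{\overline\Delta}/\overline R\cong\Sp_{2m}(r)$, because linear maps centralise the scalar group $Z(R)$. The remaining elements of $H_{\overline\Sigma}$ then act on $\overline R\cong\mathbb F_r^{2m}$ as symplectic similitudes, a field automorphism contributing multiplier $p$ and $\iota$ contributing $-1$. The paper makes this explicit with the generators $x_i,y_i$, showing that $\phi_\beta$ acts as $\delta_{\beta'}(p)$ and $\iota$ as $\delta_{\beta'}(-1)$.

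The step that fails is ``together these realise all similitude multipliers''. The multiplier of an element of $H_\Sigma$ is the power map it induces on $Z(R)=\langle\omega I\rangle$:
\begin{itemize}
\item elements of $\Delta$ induce the identity;
\item an element of $\Gamma$ with field automorphism $\lambda\mapsto\lambda^{p^i}$ induces $\omega\mapsto\omega^{p^i}$;
\item elements of $\Sigma\setminus\Gamma$ additionally invert.
\end{itemize}
So the image of $H_{\overline\Sigma}$ in $\GSp_{2m}(r)$ consists exactly of the similitudes with multiplier in $\langle p,-1\rangle\le\mathbb F_r^\times$. In case $\boldU$ this group is $\langle p\rangle$, which already contains $-1$. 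The image is all of $\GSp_{2m}(r)$ only when $\langle p,-1\rangle=\mathbb F_r^\times$. That always holds for $r=3$, but not in general.

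For example, take $\overline\Omega=\PSL_5(11)$, so $r=5$, $m=1$ and $q=p=11$. Here $\overline\Sigma/\overline\Omega\cong D_{10}$, which has no element of order $4$. Hence $H_{\overline\Sigma}/H_{\overline\Omega}\cong H_{\overline\Sigma}\overline\Omega/\overline\Omega$ cannot be $\GSp_2(5)/\Sp_2(5)\cong C_4$; in fact $H_{\overline\Sigma}=5^2:(\SL_2(5).2)$. So no appeal to \cite{KL} can supply the surjectivity you need.

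The paper's own proof makes the same leap from $\delta_{\beta'}(p)$ and $\delta_{\beta'}(-1)$ to $\GSp_{2m}(r)$. Its final equality is therefore only justified, and only true, under the extra hypothesis $\langle p,-1\rangle=\mathbb F_r^\times$. What both arguments genuinely establish is $r^{2m}:\Sp_{2m}(r)\le H_{\overline\Sigma}\le r^{2m}:\GSp_{2m}(r)$, with the precise description above. Only the upper bound is used later, in the $\mathscr C_6$ subsection.

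Two smaller repairs are needed.
\begin{itemize}
\item Your splitting argument should not rely on lifting all of $\GSp_{2m}(r)$. Instead, note that $-1\in\Sp_{2m}(r)$ is central in the top group and inverts $\overline R$. For odd $r$ this forces the relevant second cohomology to vanish, so the extension splits.
\item Your case split for $H_{\overline\Omega}$ omits $n\ge 5$ with $c\ne 1$; for instance $c=5$ for $\PSL_5(11)$. The conclusion holds for every $c$ there, because $H_{\overline\Delta}=r^{2m}:\Sp_{2m}(r)$ is perfect while $\overline\Delta/\overline\Omega$ is abelian, so $H_{\overline\Omega}=H_{\overline\Delta}$.
\end{itemize}
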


\begin{proof}
	By \cite[Proposition 4.6.5]{KL} and the discussion preceding it, we have that $V = W \otimes \cdots \otimes W$ for some $r$-dimensional vector space $W$ over $\mathbb{F}$. Let $\{w_1, \dots, w_r\}$ be a basis for $W$, and let $\beta$ be the corresponding basis for $V$ given by $\{w_{i_1} \otimes \cdots \otimes w_{i_{n/r}} : 1 \leq i_j \leq r\}$. Then, if $\omega$ is a primitive $r$th root of unity in $\mathbb{F}^*$, we set
	\[
	R = \langle x_1, y_1, z_1 \rangle \otimes \cdots \otimes \langle x_r, y_r, z_r \rangle
	\]
	where
	\[
	x_i = \diag(1, \omega, \dots, \omega^{r-1})
	\]
	and $y_i$ maps $w_j$ to $w_{j+1}$ with the indices modulo $r$. Finally, $z_i$ is any scalar of order $r$ in the $i$th copy of $W$. Hence, $\overline{R} = \langle x_1, y_1 \rangle \times \cdots \times \langle x_r, y_r \rangle = r^{2m}$, and can be seen as a $2m$-dimensional vector space over $\mathbb F_r$ with basis $\beta' := \{x_1, \dots, x_r, y_1, \dots, y_r\}$. This group is normalised by a subgroup isomorphic to $\Sp_{2m}(r)$, by the definition of $\mathscr{C}_6$ in \cite[Section 4.6]{KL}. Additionally, it is also normalised by $ \phi_\beta $ always, and by $\iota$ in the $\boldL$ case. Now, $\phi_\beta$ acts on $R$ by fixing $y_i$ and by mapping $x_i$ to $x_i^p$. Hence, $\phi_\beta$ acts like $\delta_{\beta'}(p)$ does on $\mathbb{F}_r^{2m}$ as described in \cite[Section 2.4]{KL}. Similarly, $\iota$ acts like $\delta_{\beta'}(-1)$. So we indeed have that
	\[
	H_{\overline \Sigma} = r^{2m} : \GSp_{2m}(r).\qedhere
	\]
\end{proof}

\begin{prop}\label{prop:C6prelims2}
	Let $H$ be of type $(4 \circ 2 ^{1+2m}). \Sp_{2m}(2)$. Then, if $n=4$ and $c=2$, we have
	\[
	H_{\overline \Omega} = 2^4 . \Alt(6),
	\]
	and otherwise
	\[
	H_{\overline \Omega} = 2^{2m} . \Sp_{2m}(2).
	\]
	In both cases,
	\[
	H_{\overline \Sigma} = (2^{2m} . \Sp_{2m}(2)) \times 2.
	\]
\end{prop}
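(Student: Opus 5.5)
The plan is to follow the pattern of Proposition \ref{prop:C6prelims1}, using the explicit construction of the $\mathscr{C}_6$ subgroups in \cite[Section 4.6]{KL}. Here $G$ is of type $\boldL$ or $\boldU$ with $n = 2^m$, and $q = p$ (or $q^u$) contains a primitive fourth root of unity. We have $V = W \otimes \cdots \otimes W$ with $\dim W = 2$. The normal subgroup $R = 4 \circ 2^{1+2m}$ is generated by the scalar of order $4$ together with tensor products of the Pauli-type matrices $x_i = \diag(1,-1)$ and $y_i$, the latter swapping the two basis vectors of the $i$th factor. Modulo scalars we get $\overline R = 2^{2m}$, which we regard as a $2m$-dimensional $\mathbb F_2$-space with basis $\beta' = \{x_1,\dots,x_m,y_1,\dots,y_m\}$. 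Conjugation induces a non-degenerate symplectic form on this space, given by the commutator in $R$.

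\textbf{Step 1 (the $\overline\Omega$ statement).} We read off $H_{\overline \Omega}$ from \cite[Proposition 4.6.6]{KL} and \cite[Tables 8.x]{BHRD}. Generically the full $\Sp_{2m}(2)$ acting on $\overline R$ is realised inside $\overline\Omega$, giving $2^{2m}.\Sp_{2m}(2)$. When $n = 4$ and $c = 2$, only an index-$2$ subgroup is realised. Since $\Sp_4(2)' = \Alt(6)$, this gives $2^4.\Alt(6)$. The value of $c$ is exactly what detects whether the outer part lies in $\overline\Omega$, so these cases come from the determinant and spinor conditions recorded in \cite{KL}.

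\textbf{Step 2 (the $\overline\Sigma$ statement).} The group $H_{\overline\Sigma} = N_{\overline\Sigma}(\overline R)$ acts on $\overline R$ preserving the commutator form. Here $\GSp_{2m}(2) = \Sp_{2m}(2)$, because $\mathbb F_2^\times$ is trivial, so the image in $\Aut(\overline R)$ is at most $\Sp_{2m}(2)$. By Step 1 the image is exactly $\Sp_{2m}(2)$ once we pass to $H_{\overline\Delta}$; the kernel of the action contains $\overline R$ itself, since $\overline R$ is abelian. To bound the kernel, we note that any element centralising $\overline R$ acts on $R$ by $r \mapsto r\lambda(r)$ with $\lambda(r) \in Z(R)$. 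Such automorphisms of $R$ are inner modulo scalars, because $R$ acts absolutely irreducibly. The kernel is therefore $\overline R$, giving $H_{\overline\Delta} = 2^{2m}.\Sp_{2m}(2)$.

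\textbf{Step 3 (field and graph automorphisms).} We must show that they contribute exactly an extra direct factor $2$. As in Proposition \ref{prop:C6prelims1}, the field automorphism $\phi_\beta$ fixes $x_i$ and $y_i$, since their entries are $0$ and $\pm1$. It maps the order-$4$ scalar $\omega$ to $\omega^p$, which equals $\omega^{-1}$ exactly when $p \equiv 3 \pmod 4$. In case $\boldL$, the inverse-transpose $\iota$ also centralises $\overline R$, because $x_i^{-T} = x_i$ and $y_i^{-T} = y_i$. Hence the field and graph automorphisms normalise $R$, act trivially on $\overline R$, and commute with the chosen copy of $\Sp_{2m}(2)$. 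The conditions on $q$ from \cite[Section 4.6]{KL} (namely $q = p$ in case $\boldL$ and $q = p$ odd in case $\boldU$) force $\overline\Sigma/\overline\Delta$ to have order at most $2$ here. This yields $H_{\overline\Sigma} = (2^{2m}.\Sp_{2m}(2)) \times 2$.

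The main obstacle is justifying that this extra involution splits off as a direct factor rather than merging into the $\Sp_{2m}(2)$ action. I would handle it by choosing the element $\phi_\beta$ (or $\iota\phi_\beta$) explicitly in the tensor basis $\beta$. I would then check that it centralises the standard generators of the normalising $\Sp_{2m}(2)$, namely tensor products of $2\times 2$ Clifford matrices with entries in $\mathbb F_p(\omega)$, modulo scalars. If that direct check fails, the fallback is to argue that $\overline\Sigma/\overline\Delta$ is central in $H_{\overline\Sigma}/\overline R$. A cohomological splitting argument would then be needed, since $H^2(\Sp_{2m}(2),\mathbb F_2)$ may be nonzero. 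For that reason the explicit check is preferable.
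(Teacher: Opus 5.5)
Your Steps 1 and 2 follow the paper's route: its proof just invokes \cite[Proposition 4.6.6]{KL} as in Proposition~\ref{prop:C6prelims1}. Step 3, however, has a genuine gap. It cannot be closed, because the direct-product conclusion is false for every $m\geq 2$.

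Let $\psi$ be $\phi_\beta$ in case $\boldU$ and $\iota$ in case $\boldL$. All the matrices used below lie in $H_\Delta$.

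Your explicit check already fails on the element $s_j$ that is $\diag(1,\omega)$ on the $j$th tensor factor and the identity elsewhere. The map $\psi$ sends it to $\diag(1,\omega^{-1})=s_jx_j$ (in case $\boldU$ because $\omega^p=\omega^{-1}$), and this is not $s_j$ modulo scalars.

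Choosing a different coset representative does not help. By your Step 2, the kernel of the conjugation action of $H_{\overline\Sigma}$ on $\overline R$ is $\overline R\langle\psi\rangle$. Since $\Sp_{2m}(2)$ fixes no non-zero vector of $\overline R$, a non-trivial central element of $H_{\overline\Sigma}$ would have to be $\psi r$ with $r\in\overline R$.
\begin{itemize}
\item Commuting with $s_j$ forces the $y_j$-coordinate of $r$ to be $1$.
\item Let $h_j$ be $\left(\begin{smallmatrix}1&1\\1&-1\end{smallmatrix}\right)$ on the $j$th factor. It is fixed by $\psi$ modulo scalars and swaps $x_j$ and $y_j$, so commuting with it forces the $x_j$- and $y_j$-coordinates of $r$ to agree.
\item Hence $r=\prod_j x_jy_j$.
\item Now take the permutation matrix $t=\diag(1,0)\otimes I_2+\diag(0,1)\otimes y$ on the first two factors, where $y=\left(\begin{smallmatrix}0&1\\1&0\end{smallmatrix}\right)$. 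It is fixed by $\psi$, maps $y_1\mapsto y_1y_2$ and $x_2\mapsto x_1x_2$, and fixes $x_1$ and $y_2$.
\item So $r^t=r\,x_1y_2\neq r$.
\end{itemize}
Thus $Z(H_{\overline\Sigma})=1$. Equivalently, $g\mapsto g^{-1}g^{\psi}$ defines a non-zero class in $H^1(\Sp_{2m}(2),\overline R)$. That class, not anything in $H^2(\Sp_{2m}(2),\mathbb F_2)$, is the relevant obstruction.

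So the last assertion of the statement is wrong as written, and the paper's one-line proof asserts it too. What your argument actually gives is $H_{\overline\Sigma}/\overline R\cong\Sp_{2m}(2)\times 2$, that is, $H_{\overline\Sigma}=2^{2m}.(\Sp_{2m}(2)\times 2)$.

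As a sanity check, take $\PSU_4(3)\cong\POmega_6^-(3)$, so $m=2$ and $c=2$. There $H_{\overline\Sigma}$ is the image $(2^6/\langle -1\rangle):\Sym(6)$ of the monomial subgroup $2\wr\Sym(6)$ of $\O_6^-(3)$. It has order $23040$ and trivial centre.

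The weaker description suffices for the later $\mathscr C_6$ argument in Section~\ref{section:classicals}. That argument only uses the quotient $H_{\overline G}/2^{2m}\leq \Sp_{2m}(2)\times C_2$.
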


\begin{proof}
	This can be deduced from \cite[Proposition 4.6.6]{KL} as in the case above.
\end{proof}

\begin{prop}\label{prop:C6prelims3}
	Suppose that we are in case $\boldL$, and $H$ is of type $2_-^{1+2}.\O_2^-(2)$ with $q=p \geq 3$. Then 
	\[
	H_{\overline \Omega} = \Alt(4),
	\]
	and
	\[
	H_{\overline \Sigma} = \Sym(4).
	\]
\end{prop}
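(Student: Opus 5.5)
The plan is to follow the model of Propositions \ref{prop:C6prelims1} and \ref{prop:C6prelims2}. I would read off the structure from \cite[Proposition 4.6.6]{KL} together with the tables of \cite{BHRD} for $n=2$, and then compute the action of $\overline\Sigma$ on the extraspecial normaliser. Here $n=2$, $G$ is of type $\boldL$, and $q=p\geq 3$. Hence $\Sigma=\Gamma$ by definition, since $\dim V\leq 2$. Moreover $\Gamma=\Delta=\GL_2(p)$, as there are no field automorphisms, so $\overline\Sigma=\PGL_2(p)$ and $\overline\Omega=\PSL_2(p)$.

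First I will identify the subgroup. Let $R=2^{1+2}_-\cong Q_8\leq\SL_2(p)$; this exists for every odd $p$. The $\mathscr C_6$ subgroup $H_\Sigma$ is its normaliser $N_{\GL_2(p)}(R)$. Since $R$ acts absolutely irreducibly on $V$, $C_{\GL_2(p)}(R)$ is the group of scalars. Therefore $\overline{H_\Sigma}=N_{\GL_2(p)}(R)/Z$ embeds in $\Aut(Q_8)$, via the map with kernel $RZ/Z$, and modulo $\overline R\cong 2^2$ it embeds in $\Out(Q_8)\cong\Sym(3)=\O_2^-(2)$. So $\overline{H_\Sigma}\leq 2^2.\Sym(3)\cong\Sym(4)$. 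For the reverse inequality I will exhibit elements of $\GL_2(p)$ that induce the full $\Sym(3)$ on $\overline R$.
\begin{enumerate}[(i)]
	\item An element of order $3$ permuting $i,j,k$ cyclically lies in $\SL_2(p)$, since $\SL_2(p)$ contains $2.\Alt(4)=\SL_2(3)$ for all odd $p$.
	\item An involution of $\Out(Q_8)$, for instance swapping $i$ and $j$, is induced by the image of $(1+i)$ or $(i+j)$ in $\GL_2(p)$. For example, $i+j$ squares to $-2$, so it is invertible, and it normalises $R$.
\end{enumerate}
This gives $H_{\overline\Sigma}\cong\Sym(4)$.

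Next I will intersect with $\overline\Omega$. The image of $R.3\leq\SL_2(p)$ is $2^2.3\cong\Alt(4)\leq\PSL_2(p)$, so $\Alt(4)\leq H_{\overline\Omega}$. Whether $H_{\overline\Omega}$ is all of $\Sym(4)$ depends on whether the involutive outer elements have square determinant. The element $i+j$ has determinant $2$, and $\det(1+i)=2$ as well. The claimed equality $H_{\overline\Omega}=\Alt(4)$ therefore requires $2$ to be a nonsquare, that is $p\equiv\pm3\pmod 8$. This matches \cite[Table 8.1]{BHRD}, where $H$ of this type is maximal with structure $\Alt(4)$ exactly for $p\equiv\pm3\pmod 8$. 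When $p\equiv\pm1\pmod 8$, $\PSL_2(p)$ contains $\Sym(4)$ and $\Alt(4)$ is not maximal; there the maximal $\mathscr C_6$ subgroup is of type $2^{1+2}_-.\O_2^-(2)$ with $c=2$.

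The main obstacle is reconciling the statement with the exact hypotheses under which $H$ of this type is a maximal subgroup, as listed in \cite[Proposition 4.6.6]{KL} and \cite{BHRD}. I would cite those to restrict to the case $p\equiv\pm3\pmod8$, where $H_{\overline\Omega}=\Alt(4)$ has index $2$ in $H_{\overline\Sigma}=\Sym(4)$, consistent with $c=1$ and $|\overline\Sigma:\overline\Omega|=2$. If instead the intended convention is that $H_{\overline\Omega}$ is the intersection with the $\Alt(4)$ arising from $R.3$, the index computation above still gives $\Alt(4)$ in that case. In either reading the only genuinely computational step is the determinant check on the outer involution. Everything else follows \cite{KL} exactly as in Proposition \ref{prop:C6prelims1}.
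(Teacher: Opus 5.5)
Your computation is correct, but it takes a genuinely different route from the paper, whose proof is only a citation of \cite[Proposition 4.6.7]{KL}.

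You work directly in $\overline\Sigma=\mathrm{PGL}_2(p)$. Schur's lemma gives $N_{\GL_2(p)}(R)/Z\hookrightarrow\Aut(Q_8)\cong\Sym(4)$. The order-$3$ elements of $\SL_2(3)\le\SL_2(p)$, together with $i+j$ (or $1+i$), show that this embedding is onto. The square class of $\det(i+j)=2$ then decides which coset of $\Alt(4)$ lies in $\overline\Omega$. (A wording slip: the map to $\Aut(Q_8)$ is injective; $RZ/Z$ is the kernel of the composite map to $\Out(Q_8)$.)

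The citation buys brevity. Your route is self-contained and, more importantly, shows that the first equality is not true as stated. Every element of $N_{\GL_2(p)}(R)$ inducing an odd permutation has determinant equal to $2$ times a square. Hence $H_{\overline\Omega}=\Alt(4)$ (and $c=1$) precisely when $p\equiv\pm3\pmod 8$. For $p\equiv\pm1\pmod 8$ one gets $H_{\overline\Omega}=H_{\overline\Sigma}\cong\Sym(4)$ and $c=2$. Only $H_{\overline\Sigma}\cong\Sym(4)$ holds for every odd $p$.

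So the right conclusion is that the proposition needs the hypothesis $p\equiv\pm3\pmod 8$, or else the alternative $H_{\overline\Omega}\cong\Sym(4)$ added. You should drop your fallback about an ``intended convention''. The paper defines $H_{\overline\Omega}=H_{\overline\Sigma}\cap\overline\Omega$, so no reading makes it $\Alt(4)$ when $2$ is a square modulo $p$. The omission is harmless downstream: the only later use, in the $\mathscr C_6$ case of Section~\ref{section:classicals}, needs merely that $H_{\overline G}$ is $\Alt(4)$ or $\Sym(4)$.
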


\begin{proof}
	See \cite[Proposition 4.6.7]{KL}.
\end{proof}

\begin{prop}\label{prop:C6prelims4}
	Suppose that $G$ is in case $\boldO$ or $\boldS$, and $H$ is of type $2_\pm^{1+2m}.\O_{2m}^\pm(2)$. Then,
	\[
	H_{\overline \Omega} = 2^{2m} . \Omega^+_{2m}(2),
	\]
	and
	\[
	H_{\overline \Sigma} = 2^{2m} . \O^+_{2m}(2).
	\]
\end{prop}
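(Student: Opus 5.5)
The statement is Proposition \ref{prop:C6prelims4}, for $H$ of type $2_\pm^{1+2m}.\O^\pm_{2m}(2)$ in cases $\boldO$ and $\boldS$. I would follow the same strategy as in Propositions \ref{prop:C6prelims1} and \ref{prop:C6prelims2}. First I would read off $H_{\overline \Omega}$ from the description of $\mathscr{C}_6$ in \cite[Section 4.6]{KL}, specifically \cite[Proposition 4.6.8]{KL} and the discussion preceding it. Then I would obtain $H_{\overline \Sigma}$ by determining how the extra similarities and field automorphisms act on the extraspecial normal subgroup.

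Concretely, $V = W^{\otimes m}$ with $\dim W = 2$ and $n = 2^m$. The normal subgroup $R = 2_\pm^{1+2m}$ is generated by tensor products of the matrices $x = \diag(1,-1)$ and $y \colon w_1 \leftrightarrow w_2$ acting in each factor. Its centre consists of the scalars $\pm 1$, so $\overline R \cong 2^{2m}$. The conjugation action of $N_{\overline\Gamma}(\overline R)$ on $\overline R$ preserves the quadratic form $Q(\bar r) = r^2 \in Z(R) = \{\pm 1\} \cong \mathbb F_2$. This gives a homomorphism $N_{\overline\Gamma}(\overline R) \to \O(\overline R, Q)$ with kernel $\overline R$, because the centraliser of the absolutely irreducible group $R$ consists of scalars. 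The form $Q$ has type $\pm$ matching the type of $R$. I would reconcile this with the stated $\O^+_{2m}(2)$, which appears in the statement even for the $-$ type, using the conventions of \cite{KL}: there $q = p$ is odd, and the types are arranged so that $H_{\overline\Omega}$ is as stated. I would quote \cite[Proposition 4.6.8]{KL} for $H_{\overline\Omega} = 2^{2m}.\Omega^+_{2m}(2)$ rather than rederive it.

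For $H_{\overline\Sigma}$, I would argue as follows. Since $q = p$ is prime for this type, there are no field automorphisms, so $\overline\Sigma = \overline\Gamma = \overline\Delta$. It remains to check that $\overline\Delta \cap N(\overline R)$ induces the full orthogonal group $\O^+_{2m}(2)$ on $\overline R$, that is, that some similarity (or isometry of non-square spinor norm, or determinant $-1$) induces an element of $\O \setminus \Omega$. I would take the index $c$ together with the structure of $H_{\overline I}$ and $H_{\overline\Delta}$ from \cite[Proposition 4.6.8]{KL}, which records that $H_{\overline\Delta}/\overline R$ realises the full orthogonal group. Since $R$ is absolutely irreducible, $N(\overline R)/\overline R$ embeds in $\O(\overline R, Q)$, and this gives the upper bound.

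The main obstacle is the last step: confirming that $\overline\Sigma$ induces exactly $\O^+_{2m}(2)$ and not only $\Omega^+_{2m}(2)$. In particular, one must ensure that no part of $\overline\Sigma$ outside $\overline\Delta$ contributes anything more. I would first try to settle this directly from the tables of \cite[Table 4.6.B]{KL} and \cite{BHRD}. If that fails, I would exhibit an explicit element, namely the Hadamard-type map $w_1 \mapsto w_1 + w_2,\ w_2 \mapsto w_1 - w_2$ in one tensor factor. This map is a similarity that swaps $x$ and $y$ in that factor, so it induces a transvection (reflection) on $\overline R$, which lies outside $\Omega$.
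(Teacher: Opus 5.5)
The gap is the step where you propose to reconcile the sign with the conventions of \cite{KL}. No such convention exists. Your own argument is sound. Since $q=p$, $\overline\Sigma=\overline\Delta$, and $N_{\overline\Sigma}(\overline R)/\overline R$ embeds in $\Out(R)\cong\O^\epsilon_{2m}(2)$, where $\epsilon$ is the type of $R$. The Hadamard similarity supplies a reflection. Together these give $H_{\overline\Sigma}=2^{2m}.\O^\epsilon_{2m}(2)$. In case $\boldS$ one has $R=2^{1+2m}_-$, so this is $2^{2m}.\O^-_{2m}(2)$, which genuinely differs from the $+$ version. For instance, the $\mathscr C_6$ subgroup of $\PSp_4(3)$ is $2^4{:}\Alt(5)=2^4.\Omega^-_4(2)$, while $|\Omega^+_4(2)|=36$. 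So the statement can only be proved with the sign read as $-$ in case $\boldS$ and $+$ in case $\boldO$, and you should conclude that rather than try to force $+$. One small point in your upper bound: an element of $\overline\Delta$ normalising $\overline R$ normalises $R$ itself. This holds because conjugation preserves multipliers and the isometries in $Z(\Delta)R$ are exactly $R$, and it is what makes $Q(\bar r)=r^2$ invariant.

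Likewise, you cannot quote \cite[Proposition 4.6.8]{KL} for $H_{\overline\Omega}=2^{2m}.\Omega^\epsilon_{2m}(2)$ without a congruence condition, and your own Hadamard element shows why. Suppose $p\equiv\pm1\pmod 8$, so that $\sqrt2\in\mathbb F_p$. Then $g=\tfrac{1}{\sqrt2}h$ on one orthogonal tensor factor, with the identity on the other factors, is an isometry that normalises $R$ and induces a reflection on $\overline R$.
In case $\boldS$, $g$ lies in $\Sp_n(p)=\Omega$.
In case $\boldO$, let $v$ span the $(-1)$-eigenspace of $\tfrac{1}{\sqrt2}h$ and let $(u_j)$ be an orthonormal basis of the remaining factors. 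Then $g$ is the product of the $2^{m-1}$ reflections in the vectors $v\otimes u_j$. Hence $\det g=1$ and its spinor norm is an even power, so a square, and $g\in\Omega$.
Therefore $H_{\overline\Omega}=H_{\overline\Sigma}=2^{2m}.\O^\epsilon_{2m}(2)$ when $p\equiv\pm1\pmod 8$; for example, $\PSp_4(p)$ contains $2^4.\Sym(5)$ rather than $2^4.\Alt(5)$. The form $2^{2m}.\Omega^\epsilon_{2m}(2)$ holds only for $p\equiv\pm3\pmod 8$. This is the same kind of dependence on $c$ that the paper records for the other $\mathscr C_6$ types in Propositions \ref{prop:C6prelims1} and \ref{prop:C6prelims2}.

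With these corrections, your route is a genuine argument where the paper only cites \cite[Proposition 4.6.8]{KL}: it computes $H_{\overline\Sigma}$ from $\Out(R)$ together with an explicit similarity. It does establish the sandwich $2^{2m}.\Omega^\epsilon_{2m}(2)\le H_{\overline G}\le 2^{2m}.\O^\epsilon_{2m}(2)$, which is all the later $\mathscr C_6$ analysis uses.
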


\begin{proof}
	See \cite[Proposition 4.6.8]{KL}.
\end{proof}

\subsubsection{Class $\mathscr{C}_7$}\label{section:C7explanation}

In this case, $H$ stabilises a tensor decomposition $V_1 \otimes \cdots \otimes V_t$ of $V$. Let $\kappa_i$ be forms on each $V_i$, as defined at the beginning of \cite[Section 4.7]{KL}. Note that the vector spaces $V_i$ are pairwise similar. For each $X \in \{\Omega, I, \Delta, \Gamma, \Sigma^*\}$, let $X_i = X(V_i, \mathbb F, \kappa_i)$ for each $i$.

\begin{prop}\label{prop:C7structure}
	Suppose that $H$ is in $\mathscr{C}_7$. Then,
	\[
	(\overline \Omega_1^t : \del((\overline I_1/\overline \Omega_1)^t)) . \Alt(t) \leq H_{\overline \Omega} \leq (\overline I_1 ^t : \del((\overline \Delta_1/\overline I_1)^t)) : \Sym(t),
	\]
	and
	\[
	H_{\overline \Sigma} = (\overline \Delta_1^t : \diag((\overline \Sigma^*_1/\overline \Delta_1)^t)):\Sym(t) \leq \overline \Sigma^*_1 \wr \Sym(t).
	\]
\end{prop}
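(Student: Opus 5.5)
We would follow the pattern of the proofs of Propositions \ref{prop:prelimsC1} and \ref{prop:C2wreathstructure}. The plan is to first determine the structure of $H_{\overline \Delta}$ from \cite[Section 4.7]{KL}, then obtain $H_{\overline \Sigma}$ as the normaliser of $H_{\overline\Delta}$ in $\overline\Sigma$, and finally read off $H_{\overline\Omega}$ by intersecting with $\overline\Omega$.

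\textbf{Step 1: the group $H_{\overline \Delta}$.} By \cite[Lemma 4.7.3]{KL} and the surrounding discussion, this group is $\overline \Delta_1^t : \Sym(t)$. Here an element of $\Delta_1^t$ acts on $V_1\otimes\cdots\otimes V_t$ as $v_1\otimes\cdots\otimes v_t\mapsto v_1g_1\otimes\cdots\otimes v_tg_t$. Its image in $\overline\Delta$ depends only on the images of the $g_i$ modulo the central product of the scalar groups, which is what gives $\overline\Delta_1^t$. The factor $\Sym(t)$ permutes the tensor factors, which is possible since the $V_i$ are pairwise similar. One must check that a product of similarities with multipliers $\tau(g_i)$ is a similarity of $V$ with multiplier $\prod_i\tau(g_i)$, so that the whole of $\overline\Delta_1^t$ lies in $\overline\Delta$. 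The isometries are then exactly those tuples with $\prod_i \tau(g_i)=1$ modulo scalars, that is $\overline I_1^t$ extended by $\del((\overline\Delta_1/\overline I_1)^t)$.

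\textbf{Step 2: the group $H_{\overline\Sigma}$.} Fix bases of the $V_i$ and take the tensor basis of $V$. With this choice, the field automorphism $\phi$ acts coordinatewise, and so does $\iota$ in case $\boldL$, since the inverse transpose of a Kronecker product is the Kronecker product of the inverse transposes. Both commute with $\Sym(t)$ and act on each factor as the corresponding automorphism of $\Gamma_1$ or $\Sigma^*_1$. For $\dim V_i\le 2$ the inverse transpose on a factor is inner in $\Aut(\PSL_2)$, which is why $\Sigma^*_1$ rather than $\Sigma_1$ appears. Since $H_{\overline\Sigma}=N_{\overline\Sigma}(H_{\overline\Delta})$ and $\overline\Sigma/\overline\Delta$ is generated by the images of $\phi$ (and $\iota$), we get
\[
H_{\overline\Sigma}=(\overline\Delta_1^t:\diag((\overline\Sigma^*_1/\overline\Delta_1)^t)):\Sym(t).
\]
Here the diagonal appears because $\phi$ and $\iota$ act identically on every factor.

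\textbf{Step 3: bounds for $H_{\overline\Omega}$.} For the upper bound, we have $H_{\overline\Omega}\le H_{\overline I}$, and Step 1 gives $H_{\overline I}= (\overline I_1^t:\del((\overline\Delta_1/\overline I_1)^t)):\Sym(t)$. For the lower bound, note that $\overline\Omega_1^t$ is contained in $\overline\Omega$ because each $\Omega_1$ is perfect, apart from small exceptions which are excluded in $\mathscr C_7$. Also $\Alt(t)$ lies in $\overline\Omega$ by \cite[Lemma 4.7.2]{KL}. It remains to show that $\del((\overline I_1/\overline\Omega_1)^t)$ lies in $\overline\Omega$. An element $(g,g^{-1},1,\dots,1)$ is conjugate under a transposition to $(g^{-1},g,1,\dots,1)$, and transpositions have determinant and spinor norm computable from \cite[Section 4.7]{KL}. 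The intended argument is that a commutator-type element of the form $(g,g^{-1},1,\dots)$ lies in $[H_{\overline I},H_{\overline I}]\le\overline\Omega$, because $\overline I/\overline\Omega$ is abelian.

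\textbf{Main obstacle.} We expect the main difficulty to be this last containment, namely identifying the determinant and spinor norm of elements $(g,g^{-1},1,\dots,1)$. The first thing to try is to write such an element as $[(g,1,\dots,1),(1,2)]$. The complication is that $(1,2)$ itself may not lie in $\overline I$ or in $\overline\Omega$. In that case we fall back on the explicit computations of \cite[Propositions 4.7.3--4.7.8]{KL}.
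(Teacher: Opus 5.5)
Your handling of $H_{\overline\Sigma}$, and the upper bound via $H_{\overline I}$, follows the paper's proof. The paper starts from $H_{\overline\Delta}=\overline\Delta_1\wr\Sym(t)$ \cite[(4.7.3)]{KL}. It then observes that $\phi$ (and $\iota$ in case $\boldL$) act diagonally on the tensor factors and commute with $\Sym(t)$, and concludes as for $\mathscr{C}_1$ and $\mathscr{C}_2$.

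The difference is the lower bound for $H_{\overline\Omega}$. The paper does not prove it: ``the same argument as Proposition~\ref{prop:C2wreathstructure}'' means the $H_{\overline\Omega}$ statement is taken from \cite[Section 4.7]{KL}. You instead try to derive it. That internal route works, but it needs two corrections.

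First, the obstacle you single out is not one. Your own description of $H_{\overline I}$ already contains $\Sym(t)$, so the transposition $(1,2)$ lies in $\overline I$. Since $\overline I/\overline\Omega$ is abelian in every case, for every $g\in\overline I_1$ we get
$[(g,1,\dots,1),(1,2)]=(g^{-1},g,1,\dots,1)\in[\overline I,\overline I]\le\overline\Omega$.
These elements, their $\Sym(t)$-conjugates and $\overline\Omega_1^t$ generate $\overline\Omega_1^t.\del((\overline I_1/\overline\Omega_1)^t)$. So no fallback to \cite{KL} is needed for this part.

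Second, your claim that non-perfect $\Omega_1$ are excluded from $\mathscr{C}_7$ is false. The case $\overline\Omega_1=\Sp_4(2)\cong\Sym(6)$ occurs; the paper uses it in the proof of Proposition~\ref{prop:C7Omegatcase}. There, perfectness of $\Alt(6)$ plus the commutator trick only place the index-$2$ subgroup of tuples in $\Sym(6)^t$ whose product lies in $\Alt(6)$ inside $\overline\Omega$, not all of $\overline\Omega_1^t$. You need one further argument. For instance, here $q=2$ and $G$ is of type $\boldO$, so an isometry $g$ lies in $\Omega$ exactly when $\operatorname{rank}(g-1)$ is even. For $g\in\Sp_4(2)$ acting on the first factor, $\operatorname{rank}((g-1)\otimes 1)=4^{t-1}\operatorname{rank}(g-1)$, which is even. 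Alternatively, cite \cite{KL} here as the paper does.

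As you note, $\Alt(t)\le\overline\Omega$ for $t\le 4$ still rests on the determinant and spinor-norm computations in \cite{KL}.
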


\begin{proof}
	This proof follows the same argument as the proof of Proposition \ref{prop:C2wreathstructure}, using the fact that $H_{\overline \Delta} = \overline \Delta_1 \wr \Sym(t)$ by \cite[(4.7.3)]{KL}.
\end{proof}

\subsubsection{Class $\mathscr{C}_8$}\label{section:C8explanation}

By \cite[(4.8.1)]{KL}, we have
\[
H_{\Gamma} = \Gamma(V, \mathbb{F}, \kappa_\sharp)
\]
where $\kappa_\sharp$ is one of the non-degenerate forms indicated in \cite[Table 4.8.A]{KL}. We note that the scalars of $\Gamma(V, \mathbb{F}, \kappa_\sharp)$ are equal to those of $\Gamma$. Hence,
\[
H_{\overline \Gamma} = \overline \Gamma(V, \mathbb{F}, \kappa_\sharp).
\]
As previously, for each $X \in \{\Omega, S, I, \Delta, \Gamma, \Sigma\}$, let $X_\sharp$ be $X(V, \mathbb{F}, \kappa_\sharp)$.

\begin{prop}
	Suppose that $H$ is in $\mathscr{C}_8$. Then,
	\[
	H_{\overline \Omega} \geq \overline\Omega_{\sharp},
	\]
	and
	\[
	H_{\overline \Sigma} = \overline\Gamma_\sharp . C,
	\]
	where $C$ is a subgroup of $\overline \Sigma / \overline \Gamma$.
\end{prop}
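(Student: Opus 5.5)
The plan is to argue exactly as in the earlier classes, starting from the description $H_\Gamma = \Gamma(V,\mathbb F,\kappa_\sharp) = \Gamma_\sharp$ given by \cite[(4.8.1)]{KL}, together with the observation that $\Gamma_\sharp$ and $\Gamma$ have the same scalars, so that $H_{\overline\Gamma} = \overline\Gamma_\sharp$.

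For the lower bound on $H_{\overline\Omega}$, we note that $H_\Omega = H_\Sigma \cap \Omega$ contains $H_\Gamma \cap \Omega = \Gamma_\sharp \cap \Omega$. The first step is to check that $\Omega_\sharp \le \Omega$. In every $\mathscr C_8$ case the subgroup $\Omega_\sharp$ consists of isometries of $\kappa_\sharp$, hence of determinant-one linear maps. It therefore lies in $S(V,\mathbb F,\boldsymbol 0) = \Omega$ when $G$ has type $\boldL$, and in $\Sp_n(q) = \Omega$ when $\kappa_\sharp$ is quadratic and $G$ is symplectic in even characteristic. When $\Omega_\sharp$ is perfect this is immediate, since $\Omega_\sharp \le \Gamma_\sharp \cap S$ and a perfect subgroup of $S$ lies in $[S,S]$, which contains $\Omega$-generating commutators; so $\Omega_\sharp \le \Omega$, as $S/\Omega$ is abelian. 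The finitely many non-perfect $\Omega_\sharp$ (small $n$ and $q$, e.g.\ $\Omega_3(3)$, $\SU_3(2)$, $\Omega_2^\pm$, $\Sp_4(2)$) I would handle by direct inspection of \cite[Section 4.8]{KL} or \cite[Tables 8.x]{BHRD}, as was done in the $\mathscr C_5$ proposition. Passing to the quotient by scalars then gives $\overline\Omega_\sharp \le H_{\overline\Omega}$.

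For the structure of $H_{\overline\Sigma}$, we use the fact that $H_\Sigma = N_\Sigma(H_\Gamma)$ and $H_\Sigma \cap \Gamma = H_\Gamma$. This gives $H_\Sigma/H_\Gamma \cong H_\Sigma\Gamma/\Gamma \le \Sigma/\Gamma$. Since the scalars of $\Sigma$ lie in $\Gamma$ and coincide with those of $H_\Gamma$, the quotient by scalars gives $H_{\overline\Sigma} = \overline\Gamma_\sharp . C$, where $C \cong H_{\overline\Sigma}\overline\Gamma/\overline\Gamma \le \overline\Sigma/\overline\Gamma$.

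The only genuinely delicate step is the inclusion $\Omega_\sharp \le \Omega$ in the small non-perfect cases, and I would settle these by citation rather than computation.
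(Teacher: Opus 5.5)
Your argument is correct and is essentially the paper's. The paper reads both claims off $H_\Gamma=\Gamma_\sharp$ from \cite[Section 4.8]{KL}, so that $H_{\overline\Gamma}=\overline\Gamma_\sharp$ and, as you show via $H_\Sigma\cap\Gamma=H_\Gamma$, the quotient $H_{\overline\Sigma}/\overline\Gamma_\sharp$ embeds in $\overline\Sigma/\overline\Gamma$. The step you flag as delicate is in fact routine: in $\mathscr C_8$ the group $G$ has type $\boldL$ or $\boldS$, where $\Omega=S$, so $\Omega_\sharp\le S_\sharp\le \SL(V)=\Omega$ in type $\boldL$, while in type $\boldS$ the group $\Omega_\sharp$ preserves the polar form $\kappa$ of $\kappa_\sharp$ and hence lies in $\Sp_n(q)=\Omega$. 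No perfectness argument or small-case check is needed, and note that determinant one comes from $\Omega_\sharp\le S_\sharp$, not merely from its elements being isometries of $\kappa_\sharp$.
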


\begin{proof}
	This follows immediately by the results in \cite[Section 4.8]{KL}, since $H_{\overline \Gamma} = \overline \Gamma_\sharp$ by definition.
\end{proof}

\subsection{Errors in the proof of \texorpdfstring{\cite[Theorem 1]{LMT}}{Theorem 1}}\label{section:LMTcorrections}

Using the descriptions of the group structure of each maximal subgroup $H$ from the previous section, we are now in a position to establish results concerning the chief factors of these groups.

To begin with, we describe the non-Frattini chief factors of the maximal subgroups of the almost simple groups. As mentioned in the introduction, this was treated in \cite[Theorem 1]{LMT}, although there are some errors in the proof of this result, which we wish to identify in this section. We then rectify these mistakes in Section \ref{section:LMTcorrectionproof}. 

The proof of \cite[Theorem 1]{LMT} relies on the theory of crowns by proving that, for any maximal subgroup $H$ of an almost simple group $G$, and for any non-Frattini chief factor $A \not\cong C_2$ of $H$, we have
\[
d((L_A)_{\delta_H(A)}) \leq 3.
\]
Consequently, if $d(H) > 3$, then $\delta_H(C_2) = d(H)$, reducing the problem of bounding $d(H)$ to bounding $\delta_H(C_2)$. 

The mistakes found in \cite{LMT} all occur within the case where $G$ is an almost simple classical group. Hence, to ease notation, we let $(V, \mathbb F, \kappa)$ be a classical geometry with $\overline \Omega$ simple, and let $G$ be a group such that $\Omega \leq G \leq \Sigma$. Thus, $\overline G$ is an almost simple classical group. Assume that $\overline H$ is a maximal subgroup of $\overline G$ which does not contain $\overline \Omega$, and let $H$ denote the preimage of this subgroup in $G$.

We now describe the three mistakes found. Firstly, in the introduction of \cite{LMT}, the properties $(\gamma)_i$, $(F)_i$ and $(D)_i$ are defined as follows: $(\gamma)_1$ holds if $\overline G/(\overline G \cap \overline \Gamma) \cong C_2$, and  $(\gamma)_0$ holds otherwise; $(F)_1$ holds if $\ddot G \cap \langle \ddot \phi \rangle $ has even order (where $\phi$ is as defined in \cite[Chapter 2]{KL}), and $(F)_0$ holds otherwise; and $(D)_i$ holds if $\delta_{\ddot G \cap \ddot \Delta}(C_2) = i$. It is then claimed that $\delta_{\overline G}(C_2) = 3$ if and only if $(F)_1$ and $(D)_2$ hold in the $\boldO$ case, or $(\gamma)_1$, $(F)_1$, and $(D)_1$ hold in the $\boldL$ case. We argue that this is not the case, with the following examples.

\begin{example}\label{example:FDcounterexample}
	Let $(V, \mathbb F, \kappa)$ be in case $\boldO^+$ with $q$ odd and both $f$ and $n$ divisible by 4. Then $\kappa$ has square discriminant, so $\ddot \Sigma = \ddot \Delta \times \langle \ddot \phi \rangle \cong D_8 \times C_f $, by \cite[Proposition 2.7.3]{KL}. Let 
	\[
	G = \langle I, \delta^2, \phi^2, \delta\phi\rangle,
	\]
	for which $\ddot G/ \langle \ddot \phi \rangle \cong D_8$ and $\ddot G \cap \ddot \Delta \cong C_2^2$. Thus one can determine that $\delta_{\overline G}(C_2) = \delta_{\ddot G}(C_2) = 2$, despite $(F)_1$ and $(D)_2$ holding for $G$.
	
	Similarly, let $(V, \mathbb F, \kappa)$ be in case $\boldL$ with $q = 9$ and $n = 4$, so $\ddot \Sigma = \langle \ddot \delta \rangle : \langle \ddot \phi, \ddot \iota \rangle$ is isomorphic to $C_4 : (C_2 \times C_2)$. Note that $\ddot \delta^{\ddot \phi} = \ddot \delta^3$, so $\ddot \phi$ acts by inversion on $\langle \ddot \delta \rangle$, as does $\ddot \iota$. Let $G$ be a subgroup of $\Sigma$ such that $\ddot G = \langle \ddot\delta^2, \ddot\phi, \ddot\delta \ddot\iota \rangle$. Then $(\gamma)_1$, $(F)_1$ and $(D)_1$ hold for this group. However, $\ddot \delta^2 = (\ddot \delta \ddot \iota \ddot \phi)^2$, implying that $\langle \ddot \delta^2 \rangle$ is a Frattini chief factor of $\ddot G$, and so $\delta_{\overline G}(C_2) \leq 2$. Finally, let $G$ be a subgroup of $\Sigma$ such that $\ddot G = \langle \ddot \delta^2, \ddot \delta \ddot \phi, \ddot \delta \ddot \iota \rangle$. Then $\ddot G \cong C_2^3$, despite $(F)_0$ holding for this group.
\end{example}

We correct this error by providing the following new definition of $(\gamma)_i$, $(F)_i$ and $(D)_i$. We additionally define a new property $(H)_i$, which we remark on later in this section.

\begin{definition}
	Let $\Omega \leq G \leq \Sigma$ be a classical group. We say that:
	\begin{enumerate}[(1)]
		\item $(\gamma)_1$ holds if and only if $|\overline G/(\overline G \cap \overline \Gamma)|$ is even, and $(\gamma)_0$ holds otherwise;
		\item $(F)_i$ holds if $\delta_{\overline G, (\overline G \cap \overline \Gamma)/(\overline G \cap \overline \Delta)}(C_2) = i$;
		\item $(D)_i$ holds if $\delta_{\overline G, \overline G \cap \overline \Delta}(C_2) = i$;
		\item $(H)_i$ holds if $\delta_{H_{\overline G}, H_{\overline \Omega}}(C_2) = i$.
	\end{enumerate}
\end{definition}

Note that, if $(\gamma)_i$, $(F)_j$ and $(D)_k$ hold for $G$, then $\delta_{\overline G}(C_2) = i + j + k$ by definition, so the claim in the introduction of \cite{LMT} holds for this new definition. We provide the following lemma to aid the reader in computing $i$, $j$ and $k$.

\begin{lemma}\label{lemma:GammaiFiDi}
	Suppose first that $\overline G$ is not in case $\boldL$. Then $(F)_1$ holds if and only if $|\overline G/(\overline G \cap \overline \Delta)|$ is even, and $(\gamma)_0$ holds. Moreover:
	\begin{enumerate}[\upshape(1)]
		\item If $\overline G$ is in cases $\boldU$ or $\boldS$, then $(D)_1$ holds if and only if $|\ddot G \cap \ddot \Delta|$ is even;
		\item If $\overline G$ is in case $\boldO$, then $(D)_i$ holds if $\delta_{\overline G \cap \overline \Delta}(C_2) = i$, unless $\overline G/\overline \Omega$ projects onto $D_8$ and $(\overline G \cap \overline \Delta)/\overline \Omega \cong C_2^2$. In this case, $(D)_1$ holds, despite $\delta_{\overline G \cap \overline \Delta}(C_2) = 2$.
	\end{enumerate}
	If $\overline G$ is in case $\boldL$ with $n > 2$, then $(F)_1$ holds if and only if $(\overline G \cap \overline \Gamma)/(\overline G \cap \overline \Delta)$ has the same order as $\overline G/(\overline G \cap \langle \overline \Delta, \overline \iota \rangle)$, and is even. If $\overline G$ is in case $\boldL$ with $n = 2$ then $(F)_1$ holds if and only if $|\overline G/(\overline G \cap \overline \Delta)|$ is even.
\end{lemma}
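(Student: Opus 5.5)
The plan is to work entirely in the quotient $\ddot \Sigma = \overline \Sigma/\overline \Omega$, since $\overline \Omega$ is a non-abelian simple group and so $\delta_{\overline G,\overline \Omega}(C_2) = 0$. All the quantities $(\gamma)_i$, $(F)_i$, $(D)_i$ can then be computed as $C_2$-counts in the series $\ddot G \cap \ddot \Delta \leq \ddot G \cap \ddot \Gamma \leq \ddot G$. I will use the explicit structure of $\ddot \Sigma$ from \cite[Section 2.7]{KL}:
\begin{itemize}
\item[] $\ddot \Delta$ is cyclic in cases $\boldL,\boldU$;
\item[] $\ddot \Delta$ has order at most $2$ in case $\boldS$;
\item[] $\ddot \Delta$ is a subgroup of $D_8$ (or $C_2^2$, $C_4$, $C_2$) in case $\boldO$;
\item[] $\ddot \Gamma/\ddot \Delta \cong \langle \ddot \phi\rangle$ is cyclic;
\item[] $\ddot \Sigma/\ddot \Gamma$ has order $2$ in case $\boldL$ with $n>2$ and is trivial otherwise.
\end{itemize}
The key tool is the characterisation of non-Frattini $C_2$-chief factors. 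By the proposition following the definition of non-Frattini, an abelian chief factor is non-Frattini if and only if it is complemented. So for a central $C_2$ factor inside a normal section $X/Y$, being non-Frattini amounts to $Y$ having an index-$2$ overgroup in $X$ that extends to a normal subgroup of index $2$ in the whole group, i.e.\ to a surjection of $\ddot G$ onto $C_2$ that is nontrivial on that layer.

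First, outside case $\boldL$ the group $\ddot G \cap \ddot \Gamma = \ddot G$, so $(\gamma)_0$ holds. Moreover $\ddot G/(\ddot G\cap \ddot \Delta)$ is cyclic, hence it has exactly one $C_2$ chief factor when its order is even. That factor is non-Frattini, being the top of a cyclic quotient, so $(F)_1$ holds exactly when $|\overline G/(\overline G\cap\overline \Delta)|$ is even.

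For (1), $\ddot \Delta$ is cyclic in cases $\boldU,\boldS$, so $\ddot G\cap\ddot\Delta$ is cyclic. It therefore contains at most one $C_2$ chief factor of $\ddot G$, namely its top factor, which is present precisely when its order is even. I must show this factor is complemented in $\ddot G$. The step I expect to be the main obstacle is here, since the factor can be Frattini if some element of $\ddot G\setminus \ddot \Delta$ squares into the nontrivial coset, as in Example~\ref{example:FDcounterexample}. I would handle it by using that in cases $\boldU,\boldS$ the automorphism $\ddot\phi$ acts on the cyclic $\ddot \Delta$ by a power map with $\ddot\Sigma = \ddot\Delta:\langle\ddot\phi\rangle$, and then check via the explicit action (\cite[Prop.~2.6.3, 2.4.4]{KL}) that the index-$2$ subgroup of $\ddot G\cap\ddot\Delta$ together with suitable elements of $\ddot G$ generates a subgroup of index $2$ in $\ddot G$.

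For (2), $\ddot G\cap \ddot\Delta$ is a $2$-group of order at most $8$ in case $\boldO$, so I would run through its subgroups directly, comparing $\delta_{\ddot G\cap\ddot\Delta}(C_2)$ with the number of those factors complemented in $\ddot G$. Disagreement can only occur when some element of $\ddot G$ outside $\ddot \Delta$ fuses or squares onto a factor. Using $\ddot\Sigma$ in case $\boldO^+$, where $\ddot \Delta$ projects to $D_8$, this happens exactly when $\ddot G$ maps onto $D_8$ while $\ddot G\cap\ddot\Delta\cong C_2^2$, as in Example~\ref{example:FDcounterexample}. In that case only one of the two $C_2$ factors is complemented.

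For case $\boldL$ with $n>2$, let $\ddot\iota$ invert $\ddot \Delta$ and commute with $\ddot\phi$. I would compare the $C_2$ top of $(\ddot G\cap\ddot\Gamma)/(\ddot G\cap\ddot\Delta)$ with the quotient $\ddot G/(\ddot G\cap\langle\ddot\Delta,\ddot\iota\rangle)$. This factor is non-Frattini precisely when it survives in this abelian quotient, which is equivalent to the stated order equality together with evenness. This again needs a complement check like the one in (1). When $n=2$ there is no $\iota$, and the argument reduces to the cyclic case.
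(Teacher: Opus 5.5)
Your outline matches the paper's proof in its architecture:
\begin{itemize}
\item cyclicity of $\ddot G/(\ddot G\cap\ddot\Delta)$ for $(\gamma)$ and $(F)$;
\item the explicit structure of $\ddot\Sigma$ in cases $\boldS$ and $\boldU$;
\item a finite check in case $\boldO$;
\item an intersection-versus-projection comparison in $C_f\times C_2$ for case $\boldL$.
\end{itemize}
In case $\boldL$ no further complement check is needed: that quotient is abelian, so it is either a direct product or cyclic generated by some $\ddot\phi^i\ddot\iota$, which gives $(F)_0$. However, the steps that carry the content are not carried out. (Minor point: a cyclic normal subgroup of order divisible by $4$ contains several $C_2$ chief factors, and only one of them can be non-Frattini.)

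In case $\boldU$ you only announce a check. What is needed is the following. Write $\ddot G=\langle\ddot\delta^j,\ddot\delta^\lambda\ddot\phi^i\rangle$ with $|\ddot G\cap\ddot\Delta|=(q+1,n)/j$ even, and pass to $\ddot G/\langle\ddot\delta^{2j}\rangle$. This is a central extension of $C_2$ by $C_{2f/i}$, hence abelian. If $2f/i$ is odd, the $C_2$ splits off. Otherwise $i\mid f$, and $(\ddot\delta^\lambda\ddot\phi^i)^{2f/i}=\ddot\delta^{\lambda(q^2-1)/(p^i-1)}=1$ because $q+1$ divides $(q^2-1)/(p^i-1)$. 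So the quotient is $C_2\times C_{2f/i}$ and the factor is complemented. This is exactly where $\boldU$ differs from $\boldL$: the element inverting $\ddot\Delta$ is $\ddot\phi^f$, itself a power of $\ddot\phi$, so no square like $(\ddot\delta\ddot\iota\ddot\phi)^2=\ddot\delta^2$ can arise.

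In case $\boldO$, the claim ``this happens exactly when $\ddot G$ maps onto $D_8$ while $\ddot G\cap\ddot\Delta\cong C_2^2$'' is asserted, not argued. Running through subgroups of $\ddot G\cap\ddot\Delta$ cannot settle it, because whether a $C_2$ factor is Frattini depends on all of $\ddot G$, including its field part of unbounded order. The paper first reduces, via $\Out(\overline\Omega)\cong D\times C_{lf}$, to the intersection of $\ddot G$ with $D$ and its projection onto $D$, and then uses Magma.

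Worse, the claim is false in case $\boldO^-$ with $f$ even. There $\ddot\phi$ has order $2f$ and $\ddot\phi^f$ is the image of a reflection (compare the proof of Proposition~\ref{prop:LMTC7calcs}). So for $\ddot G=\langle\ddot\phi\rangle$, an element outside $\ddot\Delta$ squares onto the factor: $\ddot G\cap\ddot\Delta=\langle\ddot\phi^f\rangle\le\Phi(\ddot G)$. Hence $(D)_0$ holds, although $\delta_{\overline G\cap\overline\Delta}(C_2)=1$ and $\ddot G$ does not map onto $D_8$.
\begin{itemize}
\item For instance, take $\overline G=\Aut(\POmega_8^-(4))$: then $\ddot G=\ddot\Sigma\cong C_4$ and $\ddot G\cap\ddot\Delta=\Phi(\ddot G)\cong C_2$.
\item For $q$ odd, $\POmega_8^-(9)$ works the same way: there $\ddot\Sigma\cong C_2\times C_4$, and one takes a cyclic subgroup of order $4$ meeting $\ddot\Delta$ in order $2$.
\end{itemize}
So part (2) of the statement needs a further exception in this case. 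The paper's reduction has the same blind spot: here $\ddot\Delta$ does not lie in any direct factor $D$ of $\Out(\overline\Omega)$ with cyclic complement.
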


\begin{proof}
	If $G$ is not in case $\boldL$ with $n > 2$, then $\ddot G/(\ddot G \cap \ddot \Delta)$ is a cyclic group, so the first statement of the result is clear.
	
	Suppose that $G$ is in case $\boldS$. When $q$ is even, $\ddot \Delta$ is trivial, so $(D)_0$ holds. Hence, we assume that $q$ is odd. Then $\ddot \Sigma = \ddot \Delta \times \langle \ddot \phi \rangle \cong C_2 \times C_f$. If $|\ddot G \cap \ddot \Delta|$ is odd then $(D)_0$ holds. Otherwise, if $|\ddot G \cap \ddot \Delta|$ is even, then $\ddot G$ is a direct product, and thus $(D)_1$ holds.
	
	If instead $\overline G$ is in case $\boldO$, then $\Out(\overline \Omega)$ is isomorphic to $D \times C_{lf}$, where $D$ is one of $C_2, C_2^2$ or $D_8$, and $l$ is $1$ or $2$. It is clear that $\delta_{\overline G, \overline G \cap \overline \Delta}(C_2)$ depends only on the intersection of $\ddot G = \overline G / \overline \Omega$ with $D$, and its projection to $D$. As such, we can prove the desired result using Magma \cite{magma}.
	
	Assume that $\overline G$ is in case $\boldL$, with $n > 2$. Then $\overline \Sigma / \overline \Delta $ is isomorphic to $ C_f \times C_2$. If the projection of $\overline G$ to $C_f$ is not equal to the intersection of $\overline G / (\overline G \cap \overline \Delta)$ with $C_f$ then this implies that $\overline G / (\overline G \cap \overline \Delta)$ is cyclic, and equal to $\langle \overline \phi^i \overline \iota \rangle(\overline G \cap \overline \Delta)$ for some $i$. Hence, $\overline \phi^{2i}$ is a square in this group. Additionally, $\langle \overline \phi(\overline G \cap \overline \Delta) \rangle \cap \overline G / (\overline G \cap \overline \Delta)$ is equal to $\langle \overline \phi^{2i} \rangle(\overline G \cap \overline \Delta)$, so $(F)_0$ holds.
	
	On the other hand, suppose that the projection of $\overline G$ to $C_f$ is equal to the intersection of $\overline G/(\overline G\cap\overline\Delta)$ with $C_f$. Then $\overline G/(\overline G\cap\overline\Delta)$ is a direct product, and hence $(F)_1$ holds if and only if the projection of $\overline G$ to $C_f$ has even order.
	
	Finally, assume that $\overline G$ is in case $\boldU$. Since $\ddot \Sigma = \langle \ddot \delta \rangle : \langle \ddot \phi \rangle$, there exist some $j, \lambda \in \mathbb N$ such that $\ddot G = \langle \ddot \delta^j, \ddot \delta^\lambda \ddot \phi^i \rangle$, where $(q+1,n)/j = |\ddot G \cap \ddot \Delta|$. We may assume that $2 \mid (n, q+1)/j$, since otherwise it is clear that $\ddot G \cap \ddot \Delta$ contains no chief factors isomorphic to $C_2$. 
	
	The group $N = \langle \ddot \delta^{2j} \rangle$ contains no non-Frattini chief factors isomorphic to $C_2$, so we may consider $\ddot G / N$. This group contains a normal subgroup $\langle \ddot \delta^j \rangle / N$, which is isomorphic to $C_2$. Additionally, $\ddot G/\langle \ddot \delta^j \rangle \cong C_{2f/i}$. As the automorphism group of $C_2$ is trivial, this implies that $\ddot G/N$ is an abelian group. Hence, $\ddot G/N$ is isomorphic to $C_2 \times C_{2f/i}$ or $C_{4f/i}$.
	
	If $(F)_0$ holds then $2f/i$ is odd, and so $\ddot G/N \cong C_2 \times C_{2f/i}$. Thus, $(D)_1$ holds in this case. So we assume that $(F)_1$ holds. For any $l$, we have
	\[
	(\ddot \delta^\lambda \ddot \phi^i)^l = \ddot \delta^{\lambda(1 + p^i + \cdots + p^{i(l-1)})} \ddot \phi^{li}.
	\]
	In particular, 
	\[
	(\ddot \delta^\lambda \ddot \phi^i)^{2f/i} = \ddot \delta^{\lambda \frac{q^2-1}{p^i-1}}.
	\]
	Therefore, $\langle \ddot \delta^\lambda \ddot \phi^i \rangle N $ complements $\langle \ddot \delta^j \rangle/N$ if and only if
	\[
	\lambda \frac{q^2-1}{p^i-1} \equiv 0 \pmod {2j}.
	\]
	As $2 \mid (2f/i)$, in particular $i$ divides $f$. Since $2j$ divides $q+1$, it also divides $\frac{q^2-1}{p^i-1} = (q+1) \frac{q-1}{p^i-1}$. Hence the above always holds, and so $\ddot G/N \cong C_2 \times C_{2f/i}$, implying that $(D)_1$ holds.
\end{proof}

In the lemma above, we do not mention when $(D)_1$ holds in the $\boldL$ case. We defer the explanation for this omission to Remark \ref{remark:Hl}.

The properties $(\gamma)_i$, $(F)_i$ and $(D)_i$ are used in \cite{LMT} as follows. Recall that, in order to bound $d(\overline H)$, our main goal is to bound $\delta_{\overline H}(C_2)$. To do so, we use the following normal series of $\overline H$
\[
H_{\overline \Omega} \leq H_{\overline \Delta \cap \overline G} \leq H_{\overline \Gamma \cap \overline G} \leq \overline H,
\]
from which we obtain
\[
\delta_{\overline H}(C_2) = \delta_{\overline H, H_{\overline \Omega}}(C_2) + \delta_{\overline H, H_{\overline \Delta \cap \overline G}/H_{\overline \Omega}}(C_2) + \delta_{\overline H, H_{\overline \Gamma \cap \overline G}/H_{\overline \Delta \cap \overline G}}(C_2) + \delta_{\overline H / H_{\overline \Gamma \cap \overline G}}(C_2).
\]
However, since $\overline H$ is maximal in $\overline G$, this implies that $\overline H/H_{\overline \Omega} \cong (\overline H \overline \Omega)/\overline \Omega = \overline G/\overline \Omega$. As such, the above is equivalent to
\[
\delta_{\overline H}(C_2) = \delta_{\overline H, H_{\overline \Omega}}(C_2) + \delta_{\overline G, (\overline \Delta \cap \overline G)/\overline \Omega}(C_2) + \delta_{\overline G, (\overline \Gamma \cap \overline G)/(\overline \Delta \cap \overline G)}(C_2) + \delta_{\overline G / (\overline \Gamma \cap \overline G)}(C_2).
\]
Hence, if $(\gamma)_i$, $(F)_j$, $(D)_k$ and $(H)_l$ hold, then we have that $\delta_{\overline H}(C_2) = i + j + k + l$. Thus, our goal when correcting \cite[Theorem 1]{LMT} is to determine conditions on $\overline G$ and $\overline H$ under which $i+j+k+l > 3$. The second mistake we identify in \cite{LMT} concerns the value of $l$ when $H$ is of type $\mathscr C_2$ and $\mathscr C_7$, which we demonstrate with the following example.

\begin{example}
	Let $\overline \Omega = \PSU_4(3)$ and $\ddot G = \langle \ddot \delta^2, \ddot \phi\rangle$. Let $H \in \mathscr C_2$ be of type $\GU_2(3) \wr \Sym(2)$. Then $(\gamma)_0$, $(F)_1$ and $(D)_1$ hold. In \cite{LMT}, it is claimed that $(H)_1$ holds, and hence that $d(\overline H) = 3$. However, by the proof of Proposition \ref{prop:LMTcorrectionLUSC2}, $(H)_2$ holds and $d(\overline H) = 4$.
	
	Similarly, let $\overline \Omega = \PSL_4(9)$ and $\ddot G = \langle \ddot \delta^2, \ddot \phi, \ddot \iota \rangle$, for which $(\gamma)_1$, $(F)_1$ and $(D)_1$ hold. Consider $H \in \mathscr C_2$ of type $\GL_2(9) \wr \Sym(2)$. In \cite{LMT}, it is claimed that $(H)_1$ holds, implying that $d(\overline H) = 4$. However, by the proof of Proposition \ref{prop:LMTcorrectionLUSC2}, $(H)_2$ holds and $d(\overline H) = 5$.
\end{example}

This error leads to several missing rows in \cite[Table 1]{LMT}. For instance, there are no rows in this table in which $\overline G$ is of type $\boldU$.

The final mistake we have identified in \cite{LMT} concerns the number of central non-Frattini chief factors of $\overline H$. In \cite[Theorem 2.7]{LMT} it is claimed that $\delta_{\overline H}(A) \leq 2$ when $A$ is a non-central chief factor, and hence $d((L_A)_{\delta_{\overline H}}(A)) \leq 3$ follows immediately by Theorem \ref{theorem:crowns}. However, we show in the proof of Proposition \ref{prop:LMTcorrectionOC2} that we may have $\delta_{\overline H}(A) = 3$ when $A \cong C_2^2$. Using the stronger bound in Theorem \ref{theorem:crowns} it is possible to show that $d((L_A)_{\delta_{\overline H}}(A)) \leq 3$ still holds in this case. As such, $d(\overline H) > 3$ implies that $d(\overline H) = \delta_{\overline H}(C_2)$ as before, and so the proof strategy of \cite{LMT} remains the same.

In light of the previous mistakes, we give here a corrected version of \cite[Theorem 2.7]{LMT}, for which we require the following notation. Suppose that $\overline G$ is in case $\boldO^\pm$ with $q$ odd, and $\overline H $ in $\mathscr C_2$ or $\mathscr C_7$ is of type $\O_m^\pm(q) \wr \Sym(2)$. Then the socle of $\overline G$ is equal to $\POmega^+_{2m}(q)$ or $\POmega^+_{m^2}(q)$ respectively. The discriminant of the quadratic form of $G$, which we denote by $D(Q)$, must be square, and as such $\ddot \Gamma \cong D_8 \times C_f$. Since $2m$ and $m^2$ are divisible by 4, by \cite[Proposition 2.7.3]{KL} we have $\ddot \Gamma = \ddot \Delta \times \langle \ddot \phi \rangle$. Let $r_\Box$ and $r_\boxtimes$ be two reflections with square and non-square spinor norm respectively, and let $\delta$ be defined as in \cite[Section 2]{KL}. Then, $\ddot r_\Box \ddot r_\boxtimes$ is a rotation by $180$ degrees in $D_8$, $\ddot r_\Box$, $\ddot r_\boxtimes$ are reflections parallel to the square's sides in $D_8$, and $\ddot \delta$ is a reflection through two of the square's vertices. Let $\rho = r_\Box \delta$, so that $\ddot \rho$ is a rotation by $90$ degrees. We let $\pi$ be the quotient map $\pi \colon \overline G \to \overline \Sigma/\langle \overline \Omega, \overline \phi \rangle \cong D_8$.

Additionally, we say $(D^{\mathsf{c}})_1$ holds if $2 \mid |H_{\overline I}\overline \Omega|/|\overline G \cap H_{\overline I}\overline \Omega|$, noting that we can compute $H_{\overline I}\overline \Omega$ using the information detailed in \cite[Section 4]{KL}, specifically the value of $c = [\overline \Sigma : H_{\overline \Sigma} \overline \Omega ]$. Finally, following \cite{LMT}, we say that $(\star)$ holds if $(\gamma)_i$, $(F)_j$ and $(D)_k$ hold with $i + j + k = 3$.

\begin{theorem}\label{theorem:LMTcrowns}
	Let $G$ be a classical group, with $\Omega \leq G \leq \Sigma$, and let $H$ be a maximal subgroup of $G$. Fix a non-Frattini chief factor $A$ of $\overline H$. 
	\begin{enumerate}[\upshape(1)]
		\item If $A$ is non-abelian, $\delta_{\overline H}(A) \leq 2$.
		\item If $A$ is abelian but non-central, then $\delta_{\overline H}(A) \leq 2$, unless $A \cong C_2^2$ and $H$ is in $\mathscr{C}_2$ or $\mathscr{C}_7$ of type $\O_m^\pm(q) \wr \Sym(4)$. In this case $\delta_{\overline H}(A) \leq 3$, and $d((L_A)_{\delta_{\overline H}(A)}) \leq 3$, where $L_A$ is as defined in Section \ref{section:theoryofcrowns}.
		\item If $A$ is central, then $\delta_{\overline H}(A) \leq 3$, unless $A \cong C_2$ and $(\overline G, \overline H)$ are one of the pairs indicated in Table \ref{table:LMTtable}. In this case, $\delta_{\overline H}(A) = d$, where $d$ is as in the last column of Table \ref{table:LMTtable}.
	\end{enumerate}
	The rows of Table \ref{table:LMTtable} marked with $\ast$ indicate rows which were not present in \cite[Table 1]{LMT}. A row marked with $\dagger$ indicates that the row is present in \cite[Table 1]{LMT} but has been modified.
\end{theorem}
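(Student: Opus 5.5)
The plan is to follow the structure of the proof of \cite[Theorem 2.7]{LMT} class by class through Aschbacher's classes $\mathscr C_1,\dots,\mathscr C_8$ and $\mathscr S$. We re-derive only the cases where the gaps identified above occur, namely $\mathscr C_2$ and $\mathscr C_7$, and otherwise quote \cite{LMT}.

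Throughout we use the normal series
\[
H_{\overline\Omega}\leq H_{\overline\Delta\cap\overline G}\leq H_{\overline\Gamma\cap\overline G}\leq\overline H
\]
together with the additivity $\delta_{\overline H}(A)=\delta_{\overline H,H_{\overline\Omega}}(A)+\delta_{\overline G/\overline\Omega}(A)$. This additivity uses $\overline H/H_{\overline\Omega}\cong\overline G/\overline\Omega$, which holds by maximality.

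\textbf{Quotient contribution.} The contribution of $\overline G/\overline\Omega\leq\Out(\overline\Omega)$ is bounded directly from the known structure of $\Out(\overline\Omega)$. For non-abelian factors and for non-central abelian factors it is at most $1$, since $\Out(\overline\Omega)$ is soluble with small non-central chief factors. For central factors it is given by $(\gamma)_i,(F)_j,(D)_k$ via Lemma~\ref{lemma:GammaiFiDi}.

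\textbf{Contribution of $H_{\overline\Omega}$.} This is the main work. For every class other than $\mathscr C_2$ and $\mathscr C_7$, the explicit descriptions of $H_{\overline\Omega}$ and $H_{\overline\Sigma}$ in Section~\ref{section:Aschbacher} show it is a (sub)direct product of at most two quasisimple-by-soluble pieces, or an $r$-group extended by $\Sp_{2m}(r)$ (or $\O^\pm_{2m}(2)$, or $\Alt(4)$). Proposition~\ref{prop:maxsubgpdirectproduct} and Corollary~\ref{cor:upperbounddeltaGN}, together with the bounds in \cite{LMT}, then give $\delta\leq 2$ for non-abelian and non-central $A$, and the tabulated values for $C_2$.

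For $\mathscr C_2$ and $\mathscr C_7$ we apply Theorem~\ref{theorem:mainjumping} with $E$ the relevant stabiliser $\Sigma_1^*$ (respectively $\overline\Sigma_1^*$) and $F=\Omega_1$ or $I_1$, using the structures in Propositions~\ref{prop:C2wreathstructure} and~\ref{prop:C7structure}. The hypotheses of Corollary~\ref{cor:CFsinwreathproduct} hold for the following reasons. $H$ contains $\Alt(t)$, and in fact $\Sym(t)$ or an odd element $(1,\dots,1,\alpha)(1,2)$ in all but the $\O_1$ case, which is handled separately. Moreover, $H\cap E^t$ is subdirect because of the diagonal factor $\diag((\Sigma_1^*/I_1)^t)$.

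Theorem~\ref{theorem:mainjumping} then bounds:
\begin{itemize}
\item non-central $\delta_{\overline H,F^t}(A)$ by $\delta_{F}(B)$, which is $\leq 1$ for quasisimple-by-abelian $F$;
\item central $\delta_{\overline H,F^t}(C_2)$ by $\delta_{F/[E,F]}(C_2)$.
\end{itemize}
The remaining contribution of $(H\cap E^t)/F^t\cong\del$ or $\diag$ of $(I_1/\Omega_1)^t$ or $(\overline\Delta_1/\overline I_1)^t$ is computed using Proposition~\ref{prop:t5CFsinwreathproduct}. Its diagonal part is Frattini exactly when $\exp\mid t$, and its deleted quotient can contribute an extra central $C_2$. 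This extra $C_2$ is the source of $(H)_2$, and is detected by the condition $(D^{\mathsf c})_1$. The contribution of $\Sym(t)$ itself is one $C_2$, plus $\Alt(t)$-pieces for $t\leq4$.

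\textbf{Main obstacle.} The hard step is the small-$t$ case, $t\in\{2,4\}$, of type $\O^\pm_m(q)\wr\Sym(t)$ with $q$ odd.

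For $t=4$, the Klein four subgroup of $\Sym(4)$ gives a non-central $C_2^2$. Further copies arise from $\del((I_1/\Omega_1)^4)$, whose module $\del(C_2^4)/\diag$ is the same $\Sym(3)$-module. So $\delta$ can reach $3$. Here we will compute $s(A)=\dim H^1(\Sym(3),C_2^2)=0$ and $r(A)=2$ in Theorem~\ref{theorem:crowns}(3). This gives $h(A)=1+\lceil 3/2\rceil=3$, which proves part (2).

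For central $C_2$ we will enumerate, presumably with Magma as in Lemma~\ref{lemma:GammaiFiDi}, the possible images $\pi(\overline G)\leq D_8$, in terms of $r_\Box,r_\boxtimes,\delta,\rho$. For each image we decide which of the elements $\ddot r_\Box\ddot r_\boxtimes$ and $\ddot\delta$ lift into $H$ as Frattini or complemented sections; this yields the $d$-values and the new starred rows of Table~\ref{table:LMTtable}.

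The $\boldL$ and $\boldU$ cases of type $\GL_m(q)\wr\Sym(2)$ and $\GU_m(q)\wr\Sym(2)$ are handled similarly. There we compare $\langle\ddot\delta^j\rangle$ with the $\del$ part, and use the complementation computation from the $\boldU$ case of Lemma~\ref{lemma:GammaiFiDi}.
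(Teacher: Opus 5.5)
Your skeleton matches the paper's: you quote \cite{LMT} outside $\mathscr C_2,\mathscr C_7$, split off $\overline G/\overline\Omega$ via $\overline H/H_{\overline\Omega}\cong\overline G/\overline\Omega$, and for $t=4$ obtain $h(A)=1+\lceil 3/2\rceil=3$ from $r(A)=2$, $s(A)=0$. The gap is the central-$C_2$ analysis for $t=2$, which is where every new row of Table~\ref{table:LMTtable} comes from; your plan rests there on a wrong principle.

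\textbf{The $t=2$ analysis.} You treat the section $\del((I_1/N_1)^t)$ with Proposition~\ref{prop:t5CFsinwreathproduct} and the rule that ``the diagonal part is Frattini exactly when $\exp\mid t$''. That proposition needs $t\geq 5$. It also only concerns sections $A^t$ with $A$ a chief factor of $E$ inside $F$ and $F^t\leq H$, and $\del((I_1/N_1)^t)$ is not of that form. For $t=2$ and $|I_1/N_1|$ even, each $C_2$ chief factor in $\del((I_1/N_1)^2)$ becomes, modulo $K^2$ for a suitable index-two $N_1\leq K<I_1$, a diagonal $\diag(C_2^2)$ with $\exp=2\mid t$. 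Your rule declares it Frattini and returns $(H)_1$. That is exactly the erroneous conclusion of \cite{LMT} refuted by the $\PSU_4(3)$ example in Section~\ref{section:LMTcorrections}.

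The missing idea is Lemma~\ref{lemma:diagnonfrattini}: such a diagonal $C_2$ is non-Frattini if $H$ contains the bare swap $\sigma$, and Frattini if only $(b,1)\sigma$ is available. The paper then has to carry out three further steps:
\begin{enumerate}
\item determine when $\sigma$ itself lies in the relevant subgroup (in case $\boldU$, $(D)_1$ puts an element of determinant $-1$ into $N_1$, whence $\sigma\in H_{G\cap\Delta}$; in case $\boldL$ this depends on the parity of $m$);
\item identify $2\mid|I_1/N_1|$ with $(D^{\mathsf c})_1$;
\item check that the factor is not swallowed by the scalars ($|I_1/\Omega_1|_2>2$), with the extra condition $4\mid q-1$ for $m$ odd in case $\boldL$.
\end{enumerate}
Saying the extra $C_2$ is ``detected by $(D^{\mathsf c})_1$'' supplies none of this.

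\textbf{Type $\O_m^\pm(q)\wr\Sym(2)$.} Enumerating the images $\pi(\overline G)\leq D_8$ does not determine $\delta_{\overline H}(C_2)$. The rows of Table~\ref{table:LMTtable} depend on $(\overline G\cap\overline\Delta)/\overline\Omega$ as well as on $\pi(\overline G)$, and also on the $2$-part of the field-automorphism component. For instance, in one subcase $\overline\phi^f\overline\Omega_1^2=(\overline r_1,\overline r_2)\overline\Omega_1^2$, so that factor is Frattini whenever $(F)_1$ holds. Since $f$ is unbounded, a Magma enumeration only becomes finite after a reduction such as Lemma~\ref{lemma:reducingLMTtofinitegp}. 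That lemma quotients by a subgroup of $\langle\overline\Omega_1^2,g\rangle$ carrying no non-Frattini $C_2$, by showing $g^{4f/a}$ is a square, and so lands in $(D\wr\Sym(2))\times C_4$. In $\mathscr C_2$, the Frattini decisions come from explicit absorption arguments, not from listing subgroups of $D_8$: Lemma~\ref{lemma:jumpingbabyresultdel}, Lemma~\ref{lemma:diagnonfrattini}, and the commutator $[(\phi_1,\phi_2)^i(1,b),\sigma]$.

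\textbf{Smaller points.} Theorem~\ref{theorem:mainjumping} requires $F^t\leq H$ and $H\cap E^t$ subdirect in $E^t$. So you must take $F=N_1$ and $E=P_1$, not $F=I_1$ and $E=\Sigma_1^*$. Besides $\O_1$, the non-quasisimple $\Omega_1$ of Table~\ref{table:CFsofIinOmega} (e.g.\ $\Omega_2^\pm(q)$, $\Omega_4^+(2)$, $\Omega_4^+(3)$) also need separate treatment, since your bound $\delta_F(B)\leq 1$ is stated only for quasisimple-by-abelian $F$.
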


\begin{landscape}
	\begin{longtable}{cllllc}
		\caption{The exceptional cases $(\overline G, \overline H)$ in part 2(ii) of Theorem \ref{theorem:LMTgenerators}} \label{table:LMTtable}\\
		
		& $\overline G_0$ & Class & Type & Conditions & $d(H)$ \\\toprule \endfirsthead
		& $\overline G_0$ & Class & Type & Conditions & $d(H)$ \\\toprule \endhead
		& $\L_n(q)$ & $\mathscr{C}_1$ & $P_{m, n-m}$ & $m$ and $n-m$ are even; $(\star)$ holds & $4$\\
		
		& $\L_n(q)$ & $\mathscr C_2$ & $\GL_m(q) \wr \Sym(t)$ & $t > 2$; $(\star)$ holds& $4$\\
		
		$\ast$ & $\L_n(q)$ & $\mathscr{C}_2$ & $\GL_m(q) \wr \Sym(2)$ & $(H)_2$, $(D^{\mathsf{c}})_1$, $(D)_i$, $(F)_j$, $(\gamma)_k$ hold, with $i + j + k \geq 2$ & $2 + i +j + k$\\
		
		$\ast$ & $\L_n(q)$ & $\mathscr{C}_2$ & $\GL_m(q) \wr \Sym(2)$ & $(H)_1$, $(D)_1$, $(F)_1$, $(\gamma)_1$ hold & $4$\\
		
		& $\L_n(q)$ & $\mathscr C_4$ & $\GL_{n_1} (q) \otimes \GL_{n_2}(q)$ & $n_1, n_2$ are even; $q$ is odd; $d(G \langle \delta \rangle/\langle\delta\rangle) = 2$ & $4$\\
		
		& $\L_n(q)$ & $\mathscr{C}_7$ & $\GL_m(q) \wr \Sym(t)$ & $t > 2$; $(\star)$ holds & $4$\\
		
		$\ast$ & $\L_n(q)$ & $\mathscr{C}_7$ & $\GL_m(q) \wr \Sym(2)$ & $(H)_3$, $(D^{\mathsf{c}})_1$, $(F)_i$, $(\gamma)_j$ hold, with $i + j \geq 1$; & $3 + i + j$\\*
		&&&& $2 \nmid |\overline G \cap \overline \Delta|/|\overline \Omega|$; $2 \mid \frac{(q-1,m)^2}{(q-1,n)}$\\
		
		$\ast$ & $\L_n(q)$ & $\mathscr{C}_7$ & $\GL_m(q) \wr \Sym(2)$ & $(H)_2$, $(D^{\mathsf{c}})_1$, $(D)_1$, $(F)_i$, $(\gamma)_j$ hold, with $i + j \geq 1$ & $3 + i + j$\\
		
		$\ast$ & $\L_n(q)$ & $\mathscr{C}_7$ & $\GL_m(q) \wr \Sym(2)$ & $(H)_2$,  $(F)_1$, $(\gamma)_1$ hold; $2 \nmid |\overline G \cap \overline \Delta|/|\overline \Omega|$; $2 \mid \frac{(q-1,m)^2}{(q-1,n)}$ & $4$\\
		
		$\ast$ & $\L_n(q)$ & $\mathscr{C}_7$ & $\GL_m(q) \wr \Sym(2)$ & $(H)_1$, $(D)_1$, $(F)_1$, $(\gamma)_1$ hold & $4$\\
		
		$\ast$ & $\U_n(q)$ & $\mathscr{C}_2$ & $\GU_m(q) \wr \Sym(2)$ & $(H)_2$, $(D)_1$, $(D^{\mathsf{c}})_1$, $(F)_1$ hold & $4$\\
		
		$\ast$ & $\U_n(q)$ & $\mathscr{C}_7$ & $\operatorname{GU}_m(q) \wr \Sym(2)$ & $4 \mid m$; $2 \mid \frac{(q+1,m)^2}{(q+1,n)}$; $2 \nmid |\overline G \cap \overline \Delta|/|\overline \Omega|$; $(H)_3$, $(D^{\mathsf{c}})_1$, $(F)_1$ hold & $4$\\
		
		$\ast$ & $\U_n(q)$ & $\mathscr{C}_7$ & $\operatorname{GU}_m(q) \wr \Sym(2)$ & $4 \mid m$; $(H)_2$, $(D^{\mathsf{c}})_1$, $(D)_1$, $(F)_1$ hold & $4$\\
		
		& $\POmega^\pm_n(q)$ & $\mathscr{C}_1$ & $\O_m^{\epsilon_1}(q) \oplus \O_{n-m}^{\epsilon_2}(q)$ & $q$ is odd; both $Q_i$ have non-square discriminant; & $4$\\*
		& & & & $(\star)$ holds; $(\epsilon_1, \epsilon_2) \neq (-,-)$ & \\
		
		& $\POmega^\pm_n(q)$ & $\mathscr{C}_1$ & $\O_m^{\epsilon_1}(q) \oplus \O_{n-m}^{\epsilon_2}(q)$ & $q$ is odd; $Q_2$ has square discriminant; & $2 + i + j$\\*
		& & & & $(D)_i$ and $(F)_j$ hold with $i + j \geq 2$ & \\
		
		$\dagger$ & $\POmega^\pm_n(q)$ & $\mathscr{C}_2$ & $\O_m^{\epsilon}(q) \wr \Sym(t)$ & $q$ is odd; $t>2$ ; $m > 1$; $(\star)$ holds & $4$\\
		
		$\ast$ & $\POmega^\pm_n(q)$ & $\mathscr{C}_2$ & $\O_m^-(q) \wr \Sym(2)$ & $q$ is odd; $D_1 = \boxtimes$; $(\star)$ holds & $4$\\
		
		$\ast$ & $\POmega^\pm_n(q)$ & $\mathscr{C}_2$ & $\O_m^-(q) \wr \Sym(2)$ & $q$ is odd; $(\overline G \cap \overline \Delta)/\overline \Omega = \pi(\overline G) = \langle \ddot \rho^2, \ddot \delta \rangle$; $(F)_0$ holds & $4$\\
		
		$\ast$ & $\POmega^\pm_n(q)$ & $\mathscr{C}_2$ & $\O_m^+(q) \wr \Sym(2)$ & $q$ is odd; $(\overline G \cap \overline \Delta)/\overline \Omega = \pi(\overline G) = \langle \ddot \rho^2, \ddot \delta \rangle$;  $(F)_i$ holds & $4+i$\\
		
		$\ast$ & $\POmega^\pm_n(q)$ & $\mathscr{C}_2$ & $\O_m^+(q) \wr \Sym(2)$ & $q$ is odd; $(F)_1$ holds; $(\overline G \cap \overline \Delta)/\overline \Omega = \pi(\overline G) \in \{ \langle \ddot r_\Box, \ddot r_\boxtimes \rangle,  D_8 \}$ & $4$\\
		
		$\ast$ & $\POmega^\pm_n(q)$ & $\mathscr{C}_2$ & $\O_m^+(q) \wr \Sym(2)$ & $q$ is odd; $(\overline G \cap \overline \Delta)/\overline \Omega \in \{ \langle \ddot \rho^2 \rangle$, $\langle \ddot \delta \rangle, \langle \ddot \delta \ddot \rho^2 \rangle \}$; & $4$\\*
		& & & & $(F)_1$ and $(H)_2$ hold & \\
		
		$\ast$ & $\POmega^\pm_n(q)$ & $\mathscr{C}_2$ & $\O_m^+(q) \wr \Sym(2)$ & $q$ is odd; $(\overline G \cap \overline \Delta)/\overline \Omega = \pi(\overline G) \in \{  1, \langle \ddot r_\Box \rangle, \langle \ddot r_\boxtimes \rangle \}$; & $4$\\*
		& & & & $(F)_1$ holds; $D_1 = \Box$&  \\
		
		$\ast$ & $\POmega^\pm_n(q)$ & $\mathscr{C}_2$ & $\O_m^\epsilon(q) \wr \Sym(2)$ & $mq$ is odd; $(\overline G \cap \overline \Delta)/\overline \Omega \in \{ \langle \ddot r_\Box \rangle, \langle \ddot r_\boxtimes \rangle\}$;  & $4$\\*
		&&&& $(F)_1$ and $(H)_2$ hold&  \\
		
		$\ast$ & $\POmega^\pm_n(q)$ & $\mathscr{C}_2$ & $\O_m^\epsilon(q) \wr \Sym(2)$ & $mq$ is odd; $(F)_1$ and $(D)_2$ hold & $4$\\
		
		& $\POmega^\pm_n(q)$ & $\mathscr{C}_4$ & $\O_{n_1}^{\epsilon_1}(q) \otimes \O_{n_2}^{\epsilon_2}(q)$ & $q$ is odd; $n_1$ is even; $n_2$ is odd; $(\star)$ holds; & $4$\\*
		& & & & $Q_1$ has square discriminant if $\epsilon_1 = -$ & \\
		
		& $\POmega^\pm_n(q)$ & $\mathscr{C}_4$ & $\O_{n_1}^{\epsilon_1}(q) \otimes \O_{n_2}^{\epsilon_2}(q)$ & $q$ is odd; each $Q_i$ has square discriminant; & $4 + i$\\*
		& & & & $(D)_0$ and $(F)_i$ hold & \\
		
		& $\POmega^\pm_n(q)$ & $\mathscr{C}_4$ & $\O_{n_1}^{\epsilon_1}(q) \otimes \O_{n_2}^{\epsilon_2}(q)$ & $q$ is odd; each $Q_i$ has square discriminant; & $4$\\*
		& & & & $(D)_1$ and $(F)_1$ hold & \\
		
		& $\POmega^\pm_n(q)$ & $\mathscr{C}_4$ & $\O_{n_1}^{\epsilon_1}(q) \otimes \O_{n_2}^{\epsilon_2}(q)$ & $q$ is odd; $\epsilon_i \in \{\pm\}$; $\epsilon_i = +$ if $D(Q_i)$ is non-square; & $4$\\*
		& & & & the $Q_i$ have distinct discriminants; $(D)_1$ holds & \\
		
		& $\POmega^\pm_n(q)$ & $\mathscr{C}_4$ & $\O_{n_1}^{\epsilon_1}(q) \otimes \O_{n_2}^{\epsilon_2}(q)$ & $q$ is odd; each $Q_i$ has non-square discriminant; & $4$\\*
		& & & & $(D)_1$ and $(F)_1$ hold & \\
		
		& $\POmega^\pm_n(q)$ & $\mathscr{C}_4$ & $\O_{n_1}^{\epsilon_1}(q) \otimes \O_{n_2}^{\epsilon_2}(q)$ & $q$ is odd; each $Q_i$ has non-square discriminant; & $4$\\*
		& & & & $(D)_0$ and $(F)_1$ hold & \\
		
		& $\POmega^\pm_n(q)$ & $\mathscr{C}_4$ & $\O_{n_1}^{\epsilon_1}(q) \otimes \O_{n_2}^{\epsilon_2}(q)$ & $q$ is odd; each $Q_i$ has non-square discriminant; & $4 + i$\\*
		& & & & $(D)_1$ and $(F)_i$ hold & \\
		
		& $\POmega^\pm_n(q)$ & $\mathscr{C}_7$ & $\O_m^\epsilon(q) \wr \Sym(3)$ & $(\star)$ holds and either $m \not\equiv 2 \pmod 4$ or $q \not\equiv 3 \epsilon \pmod 4$ & $4$\\
		
		& $\POmega^\pm_n(q)$ & $\mathscr{C}_7$ & $\O_m^\epsilon(q) \wr \Sym(t)$ & $t > 3$; $(\star)$ holds & $4$\\
		
		$\ast$ & $\POmega^\pm_n(q)$ & $\mathscr{C}_7$ & $\O_m^-(q) \wr \Sym(2)$ & $m \equiv 2 \pmod 4$; $D(V_1) = \boxtimes$; $\frac{\overline G \cap \overline \Delta}{\overline \Omega} = \pi(\overline G) = \langle \ddot \rho^2, \ddot \delta \rangle$ & $4$\\
		
		$\ast$ & $\POmega^\pm_n(q)$ & $\mathscr{C}_7$ & $\O_m^-(q) \wr \Sym(2)$ & $m \equiv 2 \pmod 4$; $D(V_1) = \boxtimes$; $\frac{\overline G \cap \overline \Delta}{\overline \Omega} = \pi(\overline G) = \langle \ddot r_\Box, \ddot r_\boxtimes \rangle$; $(F)_1$ holds & $4$\\
		
		$\ast$ & $\POmega^\pm_n(q)$ & $\mathscr{C}_7$ & $\O_m^+(q) \wr \Sym(2)$ & $m \equiv 2 \pmod 4$; $D(V_1) = \boxtimes$; $\frac{\overline G \cap \overline \Delta}{\overline \Omega} = \pi(\overline G) = \langle \ddot \rho^2, \ddot \delta \rangle$; $(F)_i$ holds & $4 + i$\\
		
		$\ast$ & $\POmega^\pm_n(q)$ & $\mathscr{C}_7$ & $\O_m^+(q) \wr \Sym(2)$ & $m \equiv 2 \pmod 4$; $D(V_1) = \boxtimes$; $\frac{\overline G \cap \overline \Delta}{\overline \Omega} < \pi(\overline G) =  \langle \ddot \rho^2, \ddot \delta \rangle$; $(F)_1$ holds & $4$\\
		
		$\ast$ & $\POmega^\pm_n(q)$ & $\mathscr{C}_7$ & $\O_m^+(q) \wr \Sym(2)$ & $m \equiv 2 \pmod 4$; $D(V_1) = \boxtimes$; $(F)_1$ holds; & $4$\\*
		&&&& $\frac{\overline G \cap \overline \Delta}{\overline \Omega} = \pi(\overline G) \in  \{ \langle \ddot \rho^2 \rangle, \langle \ddot \delta \rangle, \langle \ddot \rho^2 \ddot \delta \rangle, \langle \ddot r_\Box, \ddot r_\boxtimes \rangle, D_8 \}$\\
		
		$\ast$ & $\POmega^\pm_n(q)$ & $\mathscr{C}_7$ & $\O_m^\epsilon(q) \wr \Sym(2)$ & $m \equiv 2 \pmod 4$; $D(V_i) = \Box$; $(F)_i$ holds; & $4 + i$\\*
		&&&& $\frac{\overline G \cap \overline \Delta}{\overline \Omega} = \pi(\overline G) \in \{ 1, \langle \ddot \delta \rangle, \langle \ddot \rho^2 \ddot \delta \rangle, \langle \ddot \rho^2, \ddot \delta \rangle \} $\\
		
		$\ast$ & $\POmega^\pm_n(q)$ & $\mathscr{C}_7$ & $\O_m^\epsilon(q) \wr \Sym(2)$ & $m \equiv 2 \pmod 4$; $D(V_i) = \Box$; $(F)_1$ holds; & $4$\\*
		&&&& $\pi(\overline G) < \frac{\overline G \cap \overline \Delta}{\overline \Omega} \in \{ \langle \ddot \delta \rangle, \langle \ddot \rho^2 \ddot \delta \rangle, \langle \ddot \rho^2, \ddot \delta \rangle \} $ \\
		
		$\ast$ & $\POmega^\pm_n(q)$ & $\mathscr{C}_7$ & $\O_m^\epsilon(q) \wr \Sym(2)$ & $m \equiv 2 \pmod 4$; $D(V_1) = \Box$; $(F)_1$ holds; & $4$\\*
		&&&& $\frac{\overline G \cap \overline \Delta}{\overline \Omega} = \pi(\overline G) \in  \{ \langle \ddot \rho^2 \rangle, \langle \ddot r_\Box \rangle, \langle \ddot r_\boxtimes \rangle, \langle \ddot r_\Box, \ddot r_\boxtimes \rangle, D_8 \}$\\
		
		$\ast$ & $\POmega^\pm_n(q)$ & $\mathscr{C}_7$ & $\O_m^+(q) \wr \Sym(2)$ & $4 \mid m$; $\frac{\overline G \cap \langle \overline \Omega, \overline \delta \rangle}{\overline \Omega} = \pi(\overline G \cap \langle \overline \Omega, \overline \delta, \overline \phi \rangle) \in \{ 1, \langle \ddot \delta \rangle\} $; $(F)_1$ holds & $4$\\
		
		$\ast$ & $\POmega^\pm_n(q)$ & $\mathscr{C}_7$ & $\O_m^-(q) \wr \Sym(2)$ & $4 \mid m$; $\frac{\overline G \cap \langle \overline \Omega, \overline \rho^2, \overline \delta \rangle}{\overline \Omega} = \pi(\overline G \cap \langle \overline \Omega, \overline \rho^2, \overline \delta, \overline \phi \rangle) \in \{ \langle \ddot \rho^2 \rangle, \langle \ddot \rho^2, \ddot \delta \rangle\} $; $(F)_1$ holds; & $4$\\
		
		\bottomrule
	\end{longtable}
\end{landscape}

In the following remark, we comment on the difficulty of determining in general whether $(D)_k$ and $(H)_l$ hold, and how this affects the entries in Table \ref{table:LMTtable}. This remark is rather technical and is not needed for the proof of Theorem \ref{theorem:LMTgenerators}, nor for the remainder of this paper. We include it for the interested reader, as it provides context for the formulation of Table \ref{table:LMTtable}.

\begin{remark}\label{remark:Hl}
	Suppose that $G$ is of type $\boldL$ with $n > 2$. The group $\overline \Sigma / \overline \Delta \cong C_f \times C_2$ is not cyclic, making it more difficult to describe explicitly when $(D)_1$ holds, as compared to the cases in Lemma \ref{lemma:GammaiFiDi}. Whether $(D)_1$ holds depends in particular on the structure of $\overline G / (\overline G \cap \overline \Delta)$ and on the preimages of its elements in $\ddot G$. This information is not readily captured by numerical invariants, as is seen in Example \ref{example:FDcounterexample}, and consequently we do not give explicit conditions for when $(D)_1$ holds in this case.
	
	Determining whether $(H)_l$ holds is more difficult still, and is the main focus of the proof of Theorem \ref{theorem:LMTcrowns}. Generally, this problem is very sensitive to the precise structure of $\ddot G$. This presents two main difficulties, both in proving Theorem \ref{theorem:LMTcrowns} and in presenting the resulting conditions in Table \ref{table:LMTtable}. First, there are a very large number of cases to consider, and small variations in the structure of $\ddot G$ can result in substantially different values of $(H)_l$. Second, even after carrying out these computations, incorporating all of the resulting cases into Table \ref{table:LMTtable} would make the table substantially more complicated. 
	
	When $H$ is in $\mathscr C_7$ and is of type $\O_m^\pm(q)\wr\Sym(2)$, we give explicit conditions on $\overline G$ and $\overline H$ for $(H)_l$ to hold for a particular $l$. In this case, $\ddot G$ is a subgroup of $D_8\times C_f$ or $C_2^2\times C_{2f}$, so the possibilities for $\ddot G$ can be described relatively succinctly. Despite this, the first issue mentioned above is a significant hindrance. We therefore reduce the problem of determining the chief factors of $\overline H$ in $H_{\overline \Omega}$ to determining the number of chief factors for a finite collection of groups. This reduces the remaining analysis to a finite computational problem, which can be handled by Magma \cite{magma}. When $H$ is instead of type $\GL_m^\pm(q)\wr\Sym(2)$ or is in $\mathscr C_2$ and is of type $\O_m^\pm(q)\wr\Sym(2)$, the number of possibilities for the structure of $\ddot G$ is much larger, so any attempt to give precise conditions on the structure of $\ddot G$ for which $(H)_l$ holds would require a substantially more complicated table. Moreover, we do not know of a comparable reduction to a computational problem in these cases. For this reason, in some rows of Table \ref{table:LMTtable}, one of the conditions for $d(\overline H) > 3$ is simply that $(H)_l$ holds for some $l$.
	
	Nevertheless, we have aimed to include as much information as possible in each row of Table \ref{table:LMTtable}, so that the table remains as useful as possible to the reader. For instance, consider the first entry of Table \ref{table:LMTtable} where $G$ is of type $\boldU$. Here, the condition for $d(\overline H) = 4$ is that $(H)_2$, $(D)_1$, $(D^{\mathsf{c}})_1$, and $(F)_1$ hold. It would be sufficient for this entry to state only that $(H)_2$, $(D)_1$, and $(F)_1$ hold, since $\delta_{\overline H}(C_2) =4$ if and only if this is the case. However, in the proof of Theorem \ref{theorem:LMTcrowns}, we show that if $(H)_2$ holds, then $(D^{\mathsf{c}})_1$ holds. Since determining whether $(D^{\mathsf{c}})_i$ holds is simpler than determining whether $(H)_l$ holds, we include this condition to help the reader discard pairs $(\overline G, \overline H)$ for which $d(\overline H)$ cannot exceed 3.
\end{remark}

Combining Theorem \ref{theorem:LMTcrowns} with \cite{LMT} allows us to restate a corrected version of \cite[Theorem 1]{LMT}.

\begin{theorem}\label{theorem:LMTgenerators}
	Let $G$ be an almost simple group with socle $G_0$, and let $H$ be a maximal subgroup of $G$. The following assertions hold:
	\begin{enumerate}[\upshape(1)]
		\item $d(H) \leq 5$;
		\item $d(H) \geq 4$ if and only if $\delta_{H}(C_2) = d(H)$ and one of the following holds, with $H$ given up to isomorphism in $\Aut(G_0)$:
		\begin{enumerate}[\upshape(i)]
			\item $G_0$ is an alternating group; $H = (T^k.(\Out(T) \times \Sym(k)) \cap G$ is of diagonal type; $\Sym(k) \leq H$; $d(\Aut(T) \cap H) = 3$. In this case, $d(H) = 4$;
			\item $G_0$ is a classical group, and $(G,H)$ is one of the pairs listed in Table \ref{table:LMTtable}.
		\end{enumerate}
	\end{enumerate}
\end{theorem}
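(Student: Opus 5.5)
The plan is to reduce Theorem \ref{theorem:LMTgenerators} to Theorem \ref{theorem:LMTcrowns} together with the parts of \cite{LMT} that are unaffected by the errors of Section \ref{section:LMTcorrections}. By \cite{LMT}, if $G_0$ is alternating, sporadic or exceptional, or if $G_0$ is classical and $H$ contains $\overline\Omega$ or lies in $\mathscr S$, then the original analysis stands. It gives $d(H)\le 3$, except in the diagonal alternating case (i), where $d(H)=4=\delta_H(C_2)$. So we may assume $\overline G$ is classical and $H\in\mathscr C_1\cup\dots\cup\mathscr C_8$. We then apply Theorem \ref{theorem:crowns}(1) to $\overline H$ and treat the non-Frattini chief factors $A$ one type at a time.

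\emph{Non-abelian $A$.} Theorem \ref{theorem:LMTcrowns}(1) gives $\delta_{\overline H}(A)\le 2$. The lemma following Theorem \ref{theorem:crowns} then lets us use part (2), so non-abelian factors contribute only via $d(G)=2$.

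\emph{Abelian non-central $A$.} If $\delta_{\overline H}(A)\le 2$, Theorem \ref{theorem:crowns}(3) gives $h(A)\le 1+\lceil (2+s(A))/r(A)\rceil$. Using the standard bound $s(A)<r(A)$ for faithful irreducible modules (as in \cite{LMT}), this is at most $3$. In the exceptional case $A\cong C_2^2$ with $\delta_{\overline H}(A)=3$, Theorem \ref{theorem:LMTcrowns}(2) directly gives $d((L_A)_3)\le 3$.

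\emph{Central $A$.} Here $d((L_A)_\delta)=\delta_{\overline H}(A)$. By Theorem \ref{theorem:LMTcrowns}(3) this is at most $3$, unless $A\cong C_2$ and $(\overline G,\overline H)$ appears in Table \ref{table:LMTtable}, in which case it equals the last column, which is at most $5$.

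Combining the three cases: $d(\overline H)\le 5$, and $d(\overline H)\ge 4$ forces $d(\overline H)=\delta_{\overline H}(C_2)$ with $(\overline G,\overline H)$ in Table \ref{table:LMTtable}. Conversely, for a table row we have $\delta_{\overline H}(C_2)\ge 4$ and every other factor contributes at most $3$, so $d=\delta_{\overline H}(C_2)$. This establishes (1) and the equivalence in (2).

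Passing between $H$ and $\overline H$ needs a separate check, since $H$ here is a subgroup of the almost simple $G$ and so is identified with $\overline H$. The preimage remark only concerns notation, so this step is harmless.

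The main obstacle is entirely inside Theorem \ref{theorem:LMTcrowns}(3). To verify that the table is complete, I would use the decomposition $\delta_{\overline H}(C_2)=i+j+k+l$ from the $(\gamma)_i,(F)_j,(D)_k,(H)_l$ discussion. This reduces the problem to computing $l=\delta_{\overline H,H_{\overline\Omega}}(C_2)$ for the $\mathscr C_2$ and $\mathscr C_7$ wreath types, via Theorem \ref{theorem:mainjumping} applied to the structures in Propositions \ref{prop:C2wreathstructure} and \ref{prop:C7structure}, with Magma for the small orthogonal $\ddot G$. Since that theorem is assumed here, the present proof is just the assembly above.
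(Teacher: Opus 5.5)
Your assembly is essentially the paper's own argument: the paper gives no separate proof beyond combining Theorem \ref{theorem:LMTcrowns} with the unaffected parts of \cite{LMT}, via exactly your reduction that every non-Frattini chief factor other than $C_2$ contributes at most $3$. (For non-central abelian factors you can skip the cohomology bound, since Theorem \ref{theorem:crowns}(3) already gives $h(A)\le \theta(A)+\delta_{\overline H}(A)\le 3$.) One addition to your list of cases taken directly from \cite{LMT}: it should also include the case $\overline G\not\le\overline\Sigma$, i.e.\ graph automorphisms of $\PSp_4(2^f)$ and triality for $\POmega_8^+(q)$, because Theorem \ref{theorem:LMTcrowns} is only stated for $\Omega\le G\le\Sigma$.
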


\begin{remark}
	We note that, in the case that the socle of $G$ is a classical group, Theorems \ref{theorem:LMTcrowns} and \ref{theorem:LMTgenerators} are proven for any group $H$ which is a maximal or novelty maximal subgroup of $G$. Indeed, in \cite{LMT} they consider all groups in Aschbacher's classification. The only property of $\overline H$ which is used, other than the group structure described by \cite{KL}, is the fact that $\overline H \overline \Omega/\overline \Omega \cong \ddot G$. This property still holds when $\overline H$ is a novelty maximal subgroup of $\overline G$, and so Theorems \ref{theorem:LMTcrowns} and \ref{theorem:LMTgenerators} hold for such a group. 
\end{remark}

\subsection{Proof of Theorem \ref{theorem:LMTcrowns}}\label{section:LMTcorrectionproof}

Before proving Theorem \ref{theorem:LMTcrowns}, we require the following simple lemma.

\begin{lemma}\label{lemma:diagnonfrattini}
	Let $K = B \times L$ for some groups $B, L$ with $B \cong C_2$. Let $G$ be a subgroup of $K \wr \Sym(2)$ which projects onto $\Sym(2)$, with $G \cap K^2 = \{(k, k^{\phi}) : k \in K\}$ for some isomorphism $\phi$ of $K$ with $B^\phi = B$ and $L^\phi = L$. Let $\sigma$ be the non-trivial element of $\Sym(2)$ in $K \wr \Sym(2)$. If $G$ contains $\sigma$, then the chief factor $G \cap B^2$ is non-Frattini. If $G$ contains $(b,1) \sigma$ where $b$ is the non-identity element of $B$, then $G \cap B^2$ is Frattini.
\end{lemma}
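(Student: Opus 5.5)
The plan is to identify the chief factor $Z := G \cap B^2$ explicitly and then test whether it has a complement, using the characterisation of non-Frattini abelian chief factors from \cite[Lemma 2.3]{LMT}: an abelian chief factor is non-Frattini if and only if it is complemented.

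First I would pin down $Z$. Since $B^\phi = B$ and $|B| = 2$, the automorphism $\phi$ fixes the non-identity element $b$ of $B$. Hence
\[
Z = \{(k,k^\phi) : k \in B\} = \langle (b,b)\rangle \cong C_2 .
\]
Next I would check that $Z$ is central in $G$, so that it is a (central) minimal normal subgroup and hence a chief factor of $G$.
\begin{itemize}
\item $B$ is a direct factor of order $2$ of $K$, so it is central in $K$. Therefore $(b,b)$ centralises $K^2$, and in particular centralises $G \cap K^2$.
\item $(b,b)$ commutes with $\sigma$.
\item Every element of $G$ has the form $x$ or $x\sigma$ with $x \in K^2$, so $(b,b)$ commutes with all of $G$.
\end{itemize}

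\emph{Case $\sigma \in G$.} The aim is to build a complement to $Z$. Write $D := G \cap K^2$ and put
\[
N := \{(l, l^\phi) : l \in L\}.
\]
Because $L^\phi = L$, $N$ is a subgroup of $D$, and $D = Z \times N$. To see that $\sigma$ normalises $N$, note that $\sigma$ normalises $D$. For every $k \in K$ we have $(k,k^\phi)^\sigma = (k^\phi, k) \in D$, which forces $k^{\phi^2} = k$, so $\phi^2 = 1$. Consequently
\[
(l,l^\phi)^\sigma = (l^\phi, (l^\phi)^\phi) \in N .
\]
Now set $U := N\langle \sigma\rangle$. Since $G$ projects onto $\Sym(2)$ with kernel $D$, we have $G = D\langle\sigma\rangle = ZU$. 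For $U \cap Z = 1$: the intersection lies in $U \cap D = N$ (as $\sigma \notin K^2$), and $N \cap Z = 1$ because the coordinates of elements of $N$ lie in $L$ and $B \cap L = 1$. So $U$ complements $Z$, and $Z$ is non-Frattini.

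\emph{Case $(b,1)\sigma \in G$.} Here I would compute the square
\[
\bigl((b,1)\sigma\bigr)^2 = (b,1)(b,1)^\sigma = (b,1)(1,b) = (b,b),
\]
so $Z$ is generated by a square in $G$. Suppose $U$ complemented $Z$. Then $|G:U| = 2$, so $U$ is normal in $G$ and contains every square of $G$. In particular $(b,b) \in U$, contradicting $U \cap Z = 1$. Hence $Z$ is not complemented and is therefore Frattini.

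The only mildly delicate step is the first case. There one must check that $\sigma$ genuinely normalises $N$, which needs $\phi^2 = 1$; this is extracted from the fact that $\sigma$ normalises the diagonal subgroup $D$. One must also confirm that $U$ meets $D$ only in $N$. The second case is routine.
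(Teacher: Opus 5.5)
Your proof is correct. It is essentially the paper's argument carried out inside $G$ rather than in a quotient.

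The paper notes that $G/(G\cap L^2)$ has order $4$. This quotient is $C_2^2$ when $\sigma\in G$ and $C_4$ when $(b,1)\sigma\in G$, and the paper reads off the behaviour of the image of $G\cap B^2$ there. Your constructions match this step for step:
\begin{itemize}
\item Your subgroup $N$ is exactly $G\cap L^2=D\cap L^2$. Writing it this way gives its $\sigma$-invariance immediately, so you do not need to compute $\phi^2=1$.
\item Your complement $N\langle\sigma\rangle$ is the preimage of a complement to the image of $Z$ in $C_2^2$.
\item Your identity $((b,1)\sigma)^2=(b,b)$ is precisely why the quotient is cyclic.
\end{itemize}

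What your version buys is a self-contained Frattini direction. Knowing that the image of a minimal normal subgroup is Frattini in a quotient does not by itself place the subgroup in $\Phi(G)$. For example, in $C_2\times C_4=\langle a\rangle\times\langle c\rangle$, the subgroup $\langle ac^2\rangle$ is not contained in $\Phi(G)=\langle c^2\rangle$. Yet its image in $G/\langle a\rangle\cong C_4$ is the Frattini subgroup. Your argument closes this gap directly in $G$: any complement would be a normal subgroup of index $2$, hence would contain the square $(b,b)$. The paper's formulation is shorter and makes visible that everything is decided in a group of order $4$. Your square argument is the ingredient that justifies transferring the conclusion back to $G$.
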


\begin{proof}
	The group $G/(G \cap L^2)$ has order 4. It is isomorphic to $C_2^2$ if $G$ contains $\sigma$, and it is isomorphic to $C_4$ if $G$ contains $(b, 1) \sigma$. The image of $G \cap B^2$ in $C_2^2$ is non-Frattini, while its image in $C_4$ is Frattini, proving the desired result.
\end{proof}

We split the proof of Theorem \ref{theorem:LMTcrowns} into the following results.

\begin{prop}\label{prop:LMTcorrectionLUSC2}
	Let $H \in \mathscr{C}_2$ be of type $\GL_m(q) \wr \Sym(t)$, $\GU_m(q) \wr \Sym(t)$ or $\Sp_m(q) \wr \Sym(t)$. Then the pair $(\overline G, \overline H)$ satisfies Theorem \ref{theorem:LMTcrowns}.
\end{prop}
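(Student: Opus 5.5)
The approach is to realise $\overline H$ as a subgroup of a wreath product and to read off its chief factors inside $H_{\overline \Omega}$ from Section \ref{section:wreathproducts}. By Proposition \ref{prop:C2wreathstructure}, $H_\Sigma = (I_1^t : \diag((\Sigma^*_1/I_1)^t)):\Sym(t) \leq \Sigma_1^* \wr \Sym(t)$. We also have $H_\Omega = \Omega_1^t.\del((I_1/\Omega_1)^t).\Sym(t)$. So, after passing to the quotient by scalars, $\overline H$ is a subgroup of $E \wr \Sym(t)$ with $E = \Sigma^*_1$ (modulo the appropriate scalars). We take $F$ to be $\Omega_1$, extended by the relevant part of $I_1/\Omega_1$. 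Then $H\cap E^t$ is subdirect and $H$ contains $\Sym(t)$, so the hypotheses of Corollary \ref{cor:CFsinwreathproduct} hold, including the extra element $(1,\dots,1,\alpha)(1,2)$ with $\alpha = 1$ when $t\le 4$.

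We split the non-Frattini chief factors $A$ of $\overline H$ into three parts: those inside $H_{\overline\Omega}$, those in $\overline H/H_{\overline\Omega} \cong \ddot G$, and the top factor $\Sym(t)$.

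\textbf{Non-abelian factors.} These are either $\overline\Omega_1^t$ (or $\PSL_2$-type pieces for small $m$) or come from $\Alt(t)$. By Corollary \ref{cor:CFsinwreathproduct}, each occurs at most once, except for the small coincidences already handled in \cite{LMT}. This gives $\delta_{\overline H}(A)\le 2$.

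\textbf{Abelian non-central factors.} Theorem \ref{theorem:mainjumping} bounds $\delta_{\overline H,F^t}(A)$ by $\delta_F(B)$ for $F = \Omega_1.\del$, and $F$ is a small extension of a quasisimple group. Adding the contributions from $\Sym(t)$ (namely $2^{t-1}$, $\del$ of $C_2^t$, or $C_3$ in $\Sym(4)$) and from $\ddot G$ gives at most $2$. The exceptional $C_2^2$ case only arises for orthogonal type, so it does not appear here.

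\textbf{Central factors.} Central factors of $\overline H$ not isomorphic to $C_2$ are bounded as in \cite{LMT}. All the work is in computing $l$ in $(H)_l$ when $A = C_2$, that is, $\delta_{\overline H,H_{\overline\Omega}}(C_2)$. Theorem \ref{theorem:mainjumping} bounds this by $\delta_{F/[E,F]}(C_2)$. Since $\Omega_1$ is perfect, apart from small exceptions, $F/[E,F]$ is a quotient of $\del((I_1/\Omega_1)^t)$ modulo commutators with $\Sigma^*_1$. By Corollary \ref{cor:CFsinwreathproduct}(2), a central factor $A$ of $F$ gives either $\diag$ and $\del$ pieces, when $2\nmid t$, or a Frattini $\diag$ together with a top $A^t/\del$. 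Consequently:
\begin{itemize}
\item for $t>2$ we get $l\le 1$, which recovers the \cite{LMT} rows under $(\star)$;
\item for $t=2$ the exponent divides $t$, and both $\diag$ and the top factor can contribute, so $l\le 2$.
\end{itemize}

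\textbf{Deciding between $l=1$ and $l=2$ when $t=2$.} Here the analysis becomes explicit. We write $\overline H\cap H_{\overline\Omega}$ modulo $\overline\Omega_1^2$ as a subgroup of $(I_1/\Omega_1)\wr\Sym(2)$, whose cyclic part has order $(q\mp1)$-related. We then use Lemma \ref{lemma:diagnonfrattini} to decide whether the diagonal $C_2$ is Frattini. That depends on whether $\overline H$ contains a genuine swap $\sigma$ or only some $(b,1)\sigma$. In turn, this is governed by whether $\ddot G$ meets $H_{\overline I}\overline\Omega$ with even index, which is exactly condition $(D^{\mathsf c})_1$.

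So the target claim is: $(H)_2$ forces $(D^{\mathsf c})_1$, and then $\delta_{\overline H}(C_2) = 2+i+j+k$. This yields the new $\L$ and $\U$ rows of Table \ref{table:LMTtable}. For $\Sp$ we will check that the analogous swap is always present, or that the $\diag$ piece lies in $[E,F]$ and is Frattini by Lemma \ref{lemma:jumpingbabyresultfull}, so that no new rows arise.

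I expect this last step to be the main obstacle. Determining exactly when the diagonal and top $C_2$ factors are simultaneously non-Frattini requires tracking how elements of $\ddot G$ (the images of $\delta$, $\phi$, $\iota$) lift into $(\Sigma_1^*)^2:\Sym(2)$, and how their components interact with the scalars. I would first handle $\ddot G$ cyclic modulo $\overline\Delta$, and then reduce the non-cyclic $\boldL$ case, where $\iota$ is present, to it.
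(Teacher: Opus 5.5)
Your overall frame is the paper's: embed $H$ in $\Sigma_1^*\wr\Sym(t)$, handle $t>2$ with Corollary \ref{cor:CFsinwreathproduct}, and at $t=2$ use Lemma \ref{lemma:diagnonfrattini} to arrive at $(D^{\mathsf c})_1$. But the $t=2$ bookkeeping, which is the whole point of this correction, is wrong.

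First, $H\cap I_1^t=N_1^t.\del((I_1/N_1)^t)$ is not of the form $F^t$. So Theorem \ref{theorem:mainjumping} says nothing about the section $\del((I_1/N_1)^t)$. Your phrase ``$F/[E,F]$ is a quotient of $\del((I_1/\Omega_1)^t)$'' confuses a subquotient of $I_1$ with a subgroup of $I_1^t$.

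Second, your justification of $l\le 2$ is that both $\diag(A^2)$ and the top $A^2/\del(A^2)$ of a central $A\cong C_2$ contribute. This fails on both counts. Corollary \ref{cor:CFsinwreathproduct}(2)(ii) makes $\diag(A^2)$ Frattini. And the top of $(N_1/\Omega_1)^2$ does not lie in $H_{\overline\Omega}$ at all, so it is counted by $(D)_k$, not by $(H)_l$. The two possible contributors to $l$ are the $\Sym(2)$ on top of $H_\Omega$ and the $C_2$ inside $\del((I_1/N_1)^2)\cong I_1/N_1$. The latter exists exactly when $2\mid |I_1/N_1|=|\overline I/(\overline G\cap\overline I)|$, which is what $(D^{\mathsf c})_1$ records.

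You also misplace the swap. It is false that $H\geq\Sym(t)$ with $\alpha=1$. In cases $\boldL$ and $\boldU$ with $m$ odd, $\det\sigma=-1$, and $H_\Omega$ only contains some $(g,1)\sigma$ with $\det g=-1$. Nor is the presence of a genuine $\sigma$ ``exactly $(D^{\mathsf c})_1$'': for $m$ even, $\sigma\in H_\Omega$ regardless.

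Lemma \ref{lemma:diagnonfrattini} needs two separate inputs: $(D^{\mathsf c})_1$ so that the $C_2$ exists, and an honest swap so that it is non-Frattini. The paper obtains the swap as follows.
\begin{itemize}
\item In case $\boldU$, from $(D)_1$: it forces $|N_1/\Omega_1|$ to be even, hence $-1\in\det N_1$ and $\sigma\in H_{G\cap\Delta}$.
\item In case $\boldL$, from the parity of $m$; odd $m$ additionally needs $4\mid q-1$.
\end{itemize}
It then checks that these side conditions are forced in the relevant rows, together with $|I_1/\Omega_1|_2>2$ (so that the factor is not swallowed by the scalars of $\overline H$).
\begin{itemize}
\item In case $\boldL$: $(F)_1$ makes $q$ a square, and $(D)_1$ with $(D^{\mathsf c})_1$ gives $4\mid q-1$.
\item In case $\boldU$: $(D)_1$ with $(D^{\mathsf c})_1$ gives $4\mid(q+1,n)$.
\end{itemize}
Your chain ``diagonal $C_2$ non-Frattini $\iff$ genuine swap $\iff$ $(D^{\mathsf c})_1$'' has a false middle link, so the table conditions are not established.

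For $\Sp$ none of this arises, since $I_1=\Omega_1$ gives $l=1$ at once. The non-quasisimple $\Omega_1$, including $m=1$, must still be checked against Table \ref{table:CFsofIinOmega}, which you do not address.
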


\begin{proof}
	Note we may assume that $G$ contains the scalars of $\Delta$, and we do so throughout the following proof. By Proposition \ref{prop:C2wreathstructure},
	\[
	H_\Omega = \Omega_1^t . \del((I_1/\Omega_1)^t) . \Sym(t) \leq I_1 \wr \Sym(t).
	\]
	Hence, $H_{\Omega}$ satisfies the conditions of Corollary \ref{cor:CFsinwreathproduct}.
	
	Assume that $\Omega_1$ is quasisimple, and that $t > 2$. Since $G$ is in case $\boldL$, $\boldU$ or $\boldS$, the group $I_1/\Omega_1$ is cyclic, and as such $\delta_{H_{\overline G}, H_{\overline \Omega}} (A)$ is at most 2, with equality only if $A \cong C_2^2$ and $t=4$. Indeed, we have $\delta_{H, H_{\Omega} \cap I_1^t}(A) \leq 1$ by Corollary \ref{cor:CFsinwreathproduct}, with equality only if $A \cong C_r^{t-1}$ (if $r \nmid t$) or $A \cong C_r^{t-2}$ (if $r \mid t$) for some prime $r$. Thus, it is clear that $\delta_{H,H_{\Omega}}(A) \leq 1$ unless $A \in \{C_2^2, C_3 \}$. Additionally, if $A \cong C_3$ then $\overline H$ can have two non-central chief factors isomorphic to $C_3$, but they have different centralisers, so they are non-isomorphic. 
	
	By \cite[Proposition 3.12]{LMT}, we can compute the chief factors in $\delta_{H_{\overline G}/H_{\overline \Omega}}(A)$, since $H_{\overline G}/H_{\overline \Omega}$ is isomorphic to $\overline G / \overline \Omega$. This proves Theorem \ref{theorem:LMTcrowns} when $\Omega_1$ is quasisimple and $t > 2$. 
	
	Next we assume that $t=2$, while $\Omega_1$ remains quasisimple. For this case we require more detailed information about the structure of $H$, which we describe now. By Proposition \ref{prop:C2wreathstructure}, we have
	\[
	H_\Sigma = I_1^t . \diag(( \Sigma_1^*/I_1)^t) . \Sym(t).
	\]
	Note that $H_\Sigma \leq \Sigma^*_1 \wr \Sym(t)$. Additionally, the scalars of $G$ in $H$ are $\diag(Z(\Delta_1)^t) \cap H$. 
	
	By the classification of subgroups of wreath products given in Section \ref{section:wreathproducts}, we must have that
	\[
	H = N_1^t . \del((I_1/N_1)^t) . \diag((P_1/I_1)^t) . \Sym(t) \leq P_1 \wr \Sym(t),
	\]
	where $N_1$ is the largest subgroup of $I_1$ such that $N_1^t$ is contained in $H$, and $P_1$ is the projection of $H \cap (\Sigma_1^*)^t$ to its first coordinate. Note that $N_1 \geq \Omega_1$, and that $H$ satisfies the conditions of Corollary \ref{cor:CFsinwreathproduct}.
	
	We have that $\delta_{H_{\overline G}, H_{\overline \Omega}} (A)$ is at most 2, with equality only if $A$ is $C_2$, and $| I_1/ \Omega_1|$ is even. Hence, applying \cite[Proposition 3.12]{LMT} again, we can prove (1) and (2) of Theorem \ref{theorem:LMTcrowns}. It remains to prove that $\delta_{\overline H}(C_2) \leq 5$, and that $\delta_{\overline H}(C_2) \geq 4$ if and only if $(\overline G, \overline H)$ is in Table \ref{table:LMTtable}.
	
	Firstly, if $G$ is in case $\boldS$, then $I_1/\Omega_1$ is trivial, and so $\delta_{H_{\overline G}, H_{\overline \Omega}} (C_2) = 1$. Thus, $\delta_{\overline H}(C_2) \leq 3$. Similarly, if $G$ is in case $\boldO^\circ$, then  $\delta_{\overline H}(C_2) \leq 3$.
	
	Next, if $G$ is in case $\boldU$, then $\delta_{H_{\overline G}/H_{\overline \Omega}}(C_2)$ is at most $2$, proving that $\delta_{\overline H}(C_2) \leq 4$. Additionally, $\delta_{\overline H}(C_2) = 4$ if and only if $(H)_2$, $(D)_1$ and $(F)_1$ holds. As previously mentioned, it is unfeasible to give numerical conditions on $\ddot G$ which ensure that $(H)_2$ hold. Despite this, it is feasible to determine conditions for $\delta_{H_{\overline G \cap \overline \Delta} , H_{\overline \Omega}}(C_2) = 2$. We have $l \leq \delta_{H_{\overline G \cap \overline \Delta} , H_{\overline \Omega}}(C_2)$, by Corollary \ref{cor:upperbounddeltaGN}, so we use this inequality to remove cases where $(H)_2$ cannot hold. Hence, we consider below under which conditions $\delta_{H_{\overline G \cap \overline \Delta} , H_{\overline \Omega}}(C_2) = 2$, given that $(D)_1$ and $(F)_1$ hold.
	
	Since $(D)_1$ holds, $(\overline G \cap \overline I) / \overline \Omega$ has even order, implying that $N_1 / \Omega_1$ has even order. Thus, $N_1$ contains an element with determinant $-1$, and so $H_{G \cap \Delta}$ contains the natural copy of $\Sym(2)$. Indeed, since $\det(\sigma) = \pm 1$, there exists an element $a \in I_1$ with determinant $\pm 1$ such that $(a, 1) \sigma \in H_{\Omega}$. Since $N_1$ contains $\Omega_1$ and an element with determinant $-1$, we have $a \in N_1$, and hence $\sigma \in H_{G \cap \Delta}$.
	
	Now $\delta_{H_{ G \cap \Delta}, H_{ \Omega}} (C_2) = 2$ if and only if $ H$ contains a non-Frattini chief factor isomorphic to $C_2$ in $\del((I_1/\Omega_1)^2)$. By Lemma \ref{lemma:diagnonfrattini}, this occurs if and only if 2 divides $|I_1/N_1|$, which is equal to $\frac{|\overline I|}{|\overline G \cap \overline I|}$. Indeed, let $Z$ denote the scalars of $\Delta$, which we have assumed are contained in $G$. Then, since $H_{\overline I}/H_{\overline G \cap \overline I} \cong \overline I / (\overline G \cap \overline I)$, we have that $|I_1/N_1| = |H_{IZ}/H_{G \cap IZ}| = |\overline I / (\overline G \cap \overline I)|$. Hence, we have shown that $\delta_{H_{G \cap \Delta}, H_{\Omega}}(C_2) = 2$ if and only if $(D^{\mathsf{c}})_1$ holds. 
	
	Finally, if $\delta_{H_{G \cap \Delta}, H_{\Omega}}(C_2) = 2$ then $\delta_{H_{\overline G \cap \overline \Delta}, H_{\overline \Omega}}(C_2) = 2$ if and only if the chief factor isomorphic to $C_2$ in $\del((I_1/\Omega_1)^2)$ is not contained in $\diag((Z(I_1)\Omega_1/\Omega_1)^2)$. It can be checked that this is the case if $|I_1/\Omega_1|_2 > 2$, where $|I_1/\Omega_1| = q+1$. However, since $(D^{\mathsf{c}})_1$ and $(D)_1$ both hold, we have that $4 \mid (q+1, n)$, and so in particular $|I_1/\Omega_1|_2 \geq 4$.
	
	To summarise, we have shown that, given $(D)_1$ and $(F)_1$ hold, $\delta_{H_{\overline G \cap \overline \Delta} , H_{\overline \Omega}}(C_2) = 2$ if and only if $(D^{\mathsf{c}})_1$ holds. Hence, $\delta_{\overline H}(C_2) = 4$ only if $(D)_1$, $(D^{\mathsf{c}})_1$, and $(F)_1$ hold. 
	
	Assume now that $G$ is in case $\boldL$. We proceed similarly to the $\boldU$ case. Since $I_1/\Omega_1$ is cyclic, and $\delta_{H_{\overline G}/H_{\overline \Omega}}(C_2)$ is at most 3, we have that $\delta_{\overline H}(C_2) \leq 5$. We now consider when $\delta_{\overline H}(C_2) \geq 4$. When $m$ is even, $\sigma$ is contained in $H_\Omega$. If $m$ is odd then, for some element $g \in I_1$ with determinant $-1$, we have $\sigma(g, 1) \in H_\Omega$. For such a $g$, the element $g \Omega_1$ is the unique element of $I_1/\Omega_1$ of order 2. Thus, the non-Frattini chief factor isomorphic to $C_2$ in $\del((\overline I_1 / \overline \Omega_1)^2)$ is non-Frattini in $H_{\overline G \cap \overline \Delta}$ if and only if: $2 \mid |\overline I|/|\overline G \cap \overline I|$; $|I_1/\Omega_1|_2 > 2$; and when $m$ is odd we have $4 \mid | I_1/ \Omega_1|= q-1$. 
	
	If $(F)_1$ holds, then $q$ is a square, and so in particular $4 \mid q-1$. On the other hand, if both $(D)_1$ and $(D^{\mathsf{c}})_1$ hold, then $4 \mid q-1$. Hence, $\delta_{\overline H}(C_2) \geq 4$ implies that $4 \mid q-1$, and if this is the case then $\delta_{H_{\overline G \cap \overline \Delta}, H_{\overline \Omega}}(C_2) = 2$ if and only if $(D^{\mathsf{c}})_1$ holds. This proves Theorem \ref{theorem:LMTcrowns} in this case.
	
	If $\Omega_1$ is instead not quasisimple, we use Section \ref{section:wreathproducts} to compute the non-Frattini chief factors of $\overline H$ contained in $\overline \Omega_1^t$ in order to obtain the same bounds as previously. In particular, we apply Corollary \ref{cor:CFsinwreathproduct} to $H$. Note that, if $t > 2$ and $B$ is a chief factor of $\overline I_1$ contained in $\overline \Omega_1$ which is non-central, then $B^t$ is a chief factor of $\overline H$. In Table \ref{table:CFsofIinOmega} we record the information we require about the groups in which $\overline \Omega_1$ is not simple.
	
	\begin{table}
		\centering
        \caption{The chief factors of $\overline I_1$ in $\overline \Omega_1$ which are non-Frattini in $\overline \Omega_1$, in the case that $\overline \Omega_1$ is not simple, and $\Omega_1 \neq \Omega_1(q)$.}
		\begin{tabular}{lllll}
			&& Chief factors of $\overline I_1$ in $\overline \Omega_1$, \\ 
			Case & $(m, q)$ & non-Frattini in $\overline \Omega_1$ & $I_1 / \Omega_1$ & $\overline \Sigma_1^*/\overline I_1$ \\ \toprule
			$\boldL$ & $(1,q)$ & - & $C_{q-1}$ & $C_f \times C_2$ \\
			& $(2,2)$ & non-central $C_3$, $C_2$ & 1 & $C_2$ \\
			& $(2,3)$ & $C_2^2$, non-central $C_3$ & $C_2$ & $C_2$ \\ \midrule
			$\boldU$ & $(1,q)$ & - & $C_{q+1}$ & $C_{2f}$ \\
			& $(2,2)$ & non-central $C_3$, $C_2$ & $C_3$ & $C_2$ \\
			& $(2,3)$ & $C_2^2$, non-central $C_3$ &  $C_4$ & $C_2$ \\
			& $(3,2)$ & $C_3^2$, $C_2^2$ & $C_3$ & $C_2$ \\ \midrule
			$\boldS$ & $(2,2)$ & non-central $C_3$, $C_2$ & 1 & $C_2$ \\
			& $(2,3)$ & $C_2^2$, central $C_3$ & 1 & $C_2$ \\
			& $(4,2)$ & $\Alt(6)$, $C_2$ & $1$ & $1$ \\ \midrule
			$\boldO$ & $(3,3)$ & $C_2^2$, non-central $C_3$ & $C_2^2$ & 1 \\
			& $(2,q,\pm)$ & $C_r$ for each $r \mid (q \mp 1)/(2,q-1)$ & $C_2^2$ or $C_2$ & $C_f$ or $C_{2f}$ \\
			& $(4,2,+)$ & $C_3^2$, $C_2$, $C_2$ & $C_2$ & $1$ \\
			& $(4,3,+)$ & $C_2^4$, central $C_3$, non-central $C_3$ & $C_2^2$ & $C_2$ \\
			& $(4,q,+)$, $q \geq 4$ & $\PSL_2(q)^2$ & $C_2^2$ or $C_2$ & $C_f$ or $C_{2f}$\\
			\bottomrule
		\end{tabular}
		\label{table:CFsofIinOmega}
	\end{table}
	
	Then the bounds in Theorem \ref{theorem:LMTcrowns} are clear. For instance, consider $\Omega_1 = \PSL_2(2)$. If $t > 2$, the set of non-Frattini chief factors of $H$ in $H_{\Omega}$ is a subset of $\{C_3^{t}, C_2^{t-1}, C_2 \}$. Hence, we obtain that, for any chief factor $A$,
	\begin{align*}
		\delta_{H, H_{\Omega}}(A) & \leq \begin{cases*}
			2 \qquad & if $A \cong C_2$,\\
			1 & otherwise,
		\end{cases*}\\
		\delta_{\overline H/ H_{\overline \Omega}}(A) & \leq 1,
	\end{align*}
	as required. By \cite[Table 8.8]{BHRD}, when $t=2$ the group $H_{\overline \Omega}$ is not a maximal or novelty maximal subgroup of $\PSL_4(2)$, so this case does not arise.
\end{proof}

\begin{prop}\label{prop:LMTcorrectionOC2}
	Let $H \in \mathscr{C}_2$ be of type $\O_m^\epsilon(q) \wr \Sym(t)$. Then the pair $(\overline G, \overline H)$ satisfies Theorem \ref{theorem:LMTcrowns}.
\end{prop}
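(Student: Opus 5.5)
I would follow the template of Proposition \ref{prop:LMTcorrectionLUSC2}. As there, we may assume $G$ contains the scalars of $\Delta$. By Proposition \ref{prop:C2wreathstructure}, when $m>1$ we have
\[
H_\Omega=\Omega_1^t.\del((I_1/\Omega_1)^t).\Sym(t)\le I_1\wr\Sym(t).
\]
The results of Section \ref{section:wreathproducts} then give
\[
H=N_1^t.\del((I_1/N_1)^t).\diag((P_1/I_1)^t).\Sym(t)\le P_1\wr\Sym(t),
\]
where $N_1\ge\Omega_1$ and $P_1$ is the projection of $H\cap(\Sigma_1^*)^t$. So Corollary \ref{cor:CFsinwreathproduct} and Theorem \ref{theorem:mainjumping} apply. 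The key difference from the linear, unitary and symplectic cases is that $I_1/\Omega_1$ is now $C_2^2$ or $C_2$ rather than cyclic, and $\overline G/\overline\Omega$ involves $D_8$.

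\textbf{Non-central factors.} For $t>2$, Corollary \ref{cor:CFsinwreathproduct} shows that each chief factor $A$ of $I_1$ in $\Omega_1$ or in $I_1/\Omega_1$ contributes at most one non-Frattini factor $A^t$, or a factor $A^{t-1}$ or $A^{t-2}$ coming from $\del$. Equivalent factors must come from $E$-equivalent chief factors of $I_1$, by the final statement of that corollary. The only collision is $t=4$ with $I_1/\Omega_1\cong C_2^2$. Here $\del(C_2^4)/\diag$ yields copies of $C_2^2$ that may be equivalent to one another and to a $C_2^2$ in $\overline G/\overline\Omega$, giving $\delta\le3$. For this case I would apply Theorem \ref{theorem:crowns}(3) with $r(A)=2$, $\theta=1$ and $s(A)\le1$ (the first cohomology of a group acting irreducibly on $C_2^2$, namely $\Sym(3)$ or $C_3$), to get $h(A)\le 1+\lceil 4/2\rceil=3$. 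For non-simple $\overline\Omega_1$, meaning $m\le4$ with small $q$, I would read off the factors from Table \ref{table:CFsofIinOmega} as in the previous proof, using \cite[Proposition 3.12]{LMT} for the factors of $H_{\overline G}/H_{\overline\Omega}\cong\overline G/\overline\Omega$. This settles parts (1) and (2).

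\textbf{Central $C_2$ factors.} Since $\delta_{\overline H}(C_2)=i+j+k+l$, the remaining task is to compute $l$, the value for which $(H)_l$ holds. For $t>2$, Theorem \ref{theorem:mainjumping} bounds $l$ by $\delta_{F/[E,F]}(C_2)$, and Lemmas \ref{lemma:jumpingbabyresultfull} and \ref{lemma:jumpingbabyresultdel} kill those $C_2$ lying in commutators. This leaves $l\le1$, which recovers the $(\star)$ row (marked $\dagger$, with the condition $m>1$ added). The type $\O_1(q)\wr\Sym(t)$ is handled directly from $2^{t-1}.\Alt(t)\le H_\Omega\le2^{t-1}.\Sym(t)$.

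\textbf{The case $t=2$ (main obstacle).} Here $\del((I_1/N_1)^2)$ and $\diag$ can contribute up to two or three $C_2$ factors. Whether each is Frattini depends on whether $\sigma$ itself, or only $(b,1)\sigma$, lies in $H$; this is exactly the dichotomy of Lemma \ref{lemma:diagnonfrattini}. It also depends on the spinor norm and discriminant of $V_1$, that is on $D_1\in\{\Box,\boxtimes\}$ and on $mq$ odd. I would first determine, in terms of $(\overline G\cap\overline\Delta)/\overline\Omega$ and $\pi(\overline G)\le D_8$ (written via $\ddot r_\Box,\ddot r_\boxtimes,\ddot\rho,\ddot\delta$), which of $\sigma$ and $(r_\Box,1)\sigma$ lie in $H$. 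Then I would enumerate the subgroups of $D_8\times C_f$, reducing modulo $\langle\ddot\phi^2\rangle$ to a finite list. For each subgroup I would compute $l$, by hand or by Magma on a model of $H/N_1^2$, and confirm that $i+j+k+l\le5$. The resulting cases with total at least $4$ are precisely the $\ast$ rows of Table \ref{table:LMTtable} for $\mathscr C_2$ orthogonal types. I expect this case bookkeeping, and in particular the Frattini status of the diagonal $C_2$ when $\pi(\overline G)=\langle\ddot\rho^2,\ddot\delta\rangle$, to be the hardest step.
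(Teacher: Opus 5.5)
Your handling of $t>2$ essentially follows the paper. There is one bookkeeping slip. For $t=4$ the third equivalent copy of $C_2^2$ is the Klein four-group of the $\Sym(4)$ inside $H_\Omega$, not a factor of $\overline G/\overline\Omega$, since all chief factors of $\ddot G$ are central. Also, $r(A)=2$ holds only because $H_\Omega$ projects onto $\Sym(4)$, so that $L_A/A\cong\Sym(3)$. A $C_3$-action would give $\mathbb F=\mathbb F_4$, $r(A)=1$, and only $h(A)\le 4$.

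The genuine gap is the case $t=2$, which is the substance of the proposition. Your plan rests on a finite reduction that you do not justify, and as stated it fails. A central $C_2$ factor is non-Frattini exactly when it is not contained in the subgroup generated by squares and commutators. Squares of elements $x\ddot\phi^{j}$ with $x$ of order $4$ see the field-automorphism component modulo $4$, not modulo $2$. Take $f=4$ and $\ddot G=\langle\ddot\rho^2,\ddot\rho\ddot\phi\rangle\cong C_2\times C_4$. Here $\langle\ddot\rho^2\rangle$ is non-Frattini, so $(D)_1$ holds. Modulo $\langle\ddot\phi^2\rangle$, however, $\ddot\rho^2=(\ddot\rho\ddot\phi)^2$ becomes Frattini, and the count drops from $2$ to $1$. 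The same issue arises inside $H_{\overline\Omega}$ through elements $(x,y)\sigma(\phi_1,\phi_2)^i$ of $H$, so $l$ is affected too. Even for $\mathscr C_7$ the paper needs Lemma \ref{lemma:reducingLMTtofinitegp}: it quotients by $g^{4f/a}$ into $(D\wr C_2)\times C_4$ and needs an extra argument for elements of order $8$. Remark \ref{remark:Hl} says explicitly that no comparable reduction is known for $\mathscr C_2$ of type $\O_m^\pm(q)\wr\Sym(2)$.

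Your model $H/N_1^2$ is also the wrong object. The $C_2$'s counted by $l$ lie in $\del((I_1/\Omega_1)^2)$, which can sit inside $N_1^2$; for example $N_1=I_1$ when $\overline G\ge\overline I$. You must work in $\overline H/\pi(\Omega_1^2)=H/Z\Omega_1^2$ to see whether the scalars absorb $\del((S_1/\Omega_1)^2)$, and this depends on whether $-1\in\Omega_1$, i.e.\ on $D_1$.

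The paper instead gives an argument that is uniform in the power of $\phi$. It isolates the two candidates $\del((S_1/\Omega_1)^2)$ and $\del((I_1/S_1)^2)$ and proves necessary conditions for each to be non-Frattini in $\overline H$:
\begin{enumerate}
\item the scalar condition, namely $D_1=\Box$ or $m$ odd;
\item Lemma \ref{lemma:jumpingbabyresultdel}, which applies when $\overline P_1/\overline I_1$ projects onto the $C_2$ acting nontrivially on $I_1/\Omega_1$, so that $S_1/\Omega_1$ is a commutator;
\item Corollary \ref{cor:CFsinwreathproduct}, which applies when $N_1\ge S_1$;
\item Lemma \ref{lemma:diagnonfrattini}, which applies when $\sigma(1,a)\in H$ with $a\in S_1\setminus\Omega_1$;
\item the identity $[(\phi_1,\phi_2)^i(1,b),\sigma]\Omega_1^2=(b,b)\Omega_1^2$, valid for every $i$, which applies when $\pi(\overline G)\ge\pi(\overline S)$.
\end{enumerate}
It then proves sufficiency when $\ddot G\le D_8\times C_f$ is a direct product, by passing to $H/(L^2\cap H)$ and applying Lemma \ref{lemma:diagnonfrattini}. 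The bound $\delta_{\overline H}(C_2)\le 5$ comes from observing that three non-Frattini $C_2$'s in $\overline G/\overline\Omega$ force $(\overline G\cap\overline I)/\overline\Omega\neq 1$, hence $l\le 2$. The rows of Table \ref{table:LMTtable} are then read off from the subgroups of $D_8$. To close the gap you need a $\phi$-uniform structural argument of this kind, or a genuine proof of a finite reduction, rather than an enumeration modulo $\langle\ddot\phi^2\rangle$.
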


\begin{proof}
	As in the proof of the previous proposition, we assume that $G$ contains the scalars of $\Delta$. Suppose that $m \neq 1$. Then, by Proposition \ref{prop:C2wreathstructure},
	\[
	H_\Omega = \Omega_1^t . \del((I_1/\Omega_1)^t) . \Sym(t) \leq I_1 \wr \Sym(t),
	\]
	and $H_{\Omega}$ satisfies the conditions of Corollary \ref{cor:CFsinwreathproduct}.
	
	Assume that $\Omega_1$ is quasisimple, and that $t > 2$. The section $I_1/\Omega_1$ is isomorphic to $C_2^2$ or $C_2$, and so $\delta_{H_{\overline G}, H_{\overline \Omega}} (A)$ is at most 3, with equality only if $A \cong C_2^2$ and $t=4$. Note additionally that $\delta_{H_{\overline G}, H_{\overline \Omega}} (C_2) = 1$, since the $C_2$ chief factor in $\Sym(t)$ is non-Frattini in $H_{\overline G}$. 
	
	By \cite[Proposition 3.12]{LMT}, we can compute the chief factors in $\delta_{H_{\overline G}/H_{\overline \Omega}}(A)$, since $H_{\overline G}/H_{\overline \Omega}$ is isomorphic to $\overline G / \overline \Omega$. This proves Theorem \ref{theorem:LMTcrowns} when $\Omega_1$ is quasisimple and $t > 2$. 
	
	Before moving on, we show that if $A$ is a non-central chief factor such that $\delta_{\overline H}(A) = 3$, then $d((L_A)_{\delta_{\overline H}(A)}) \leq 3$. By above, $A \cong C_2^2$, and $\overline H$ acts on $A$ as $\Sym(4)$ acts on its normal subgroup isomorphic to $C_2^2$. Using Magma \cite{magma}, we can compute that $s(A) = 0$ and $r(A) = 2$, and so $h(A) = 3$, see Theorem \ref{theorem:crowns}. Thus, $d((L_A)_{\delta_{\overline H}(A)}) = 3$. 
	
	Next we assume that $t=2$, while $\Omega_1$ remains quasisimple. For this case we use the detailed information about the structure of $H$ given in the proof of Proposition \ref{prop:LMTcorrectionLUSC2}. In particular, we have that $H$ is a subgroup of $\Sigma_1^* \wr \Sym(t)$. Additionally,
	\[
	H = N_1^t . \del((I_1/N_1)^t) . \diag((P_1/I_1)^t) . \Sym(t) \leq P_1 \wr \Sym(t),
	\]
	where $N_1$ is the largest subgroup of $I_1$ such that $N_1^t$ is contained in $H$, and $P_1$ is the projection of $H \cap (\Sigma_1^*)^t$ to its first coordinate. Note that $N_1 \geq \Omega_1$, and that $H$ satisfies the conditions of Corollary \ref{cor:CFsinwreathproduct}.
	
	As above, $\delta_{H_{\overline G}, H_{\overline \Omega}} (A)$ is at most 3 when $A$ is $C_2$, and is at most 1 otherwise. Hence, applying \cite[Proposition 3.12]{LMT} again, we can prove (1) and (2) of Theorem \ref{theorem:LMTcrowns}. It remains to prove that $\delta_{\overline H}(C_2) \leq 5$, and that $\delta_{\overline H}(C_2) \geq 4$ if and only if $(\overline G, \overline H)$ is in Table \ref{table:LMTtable}.
	
	If $G$ is in case $\boldO^\circ$, then  $\delta_{\overline H}(C_2) \leq 3$, so we assume that $G$ is in case $\boldO^\pm$. If $q$ is even, then $\delta_{\overline H}(C_2) \leq 3$. So we assume that $H$ is of type $\O_m^\epsilon(q) \wr \Sym(2)$ with $m > 2$ and $q$ odd. In this case we have that $I_1/\Omega_1$ is isomorphic to $C_2^2$. Suppose that $\delta_{H_{\overline G}/H_{\overline \Omega}}(C_2) = 3$. Then $(\overline G \cap \overline I)/\overline \Omega$ is not trivial, and so $\delta_{H_{\overline G}, H_{\overline \Omega}} (C_2) \leq 2$. Thus, for any $\overline H$ of this type we have that $\delta_{\overline H}(C_2) \leq 5$. 
	
	We next consider when $\delta_{\overline H}(C_2) \geq 4$. Note that, if $m$ is even, then either $Z(I_1)$ is contained in $\Omega_1$, if $D_1 = \Box$, or $S_1 = \Omega_1 \times Z(I_1)$, if $D_1 = \boxtimes$. On the other hand, if $m$ is odd, then $Z(I_1)$ is not contained in $\Omega_1$, and $I_1 = S_1 \times Z(I_1)$. We prove that $\del((S_1/\Omega_1)^2)$ is a non-Frattini chief factor of $\overline H$ only if all of the following hold:
	\begin{enumerate}[\upshape(1)]
		\item $D_1 = \Box$, or $m$ is odd,
		\item $\overline P_1/\overline I_1 \leq C_2 \times C_f$ does not project onto $C_2$,
		\item $N_1 \ngeq S_1$,
		\item if $H$ contains $\sigma (1,a)$, then $a \in \Omega_1$ or $a \not\in S_1$,
		\item when $m$ is even, $\pi(\overline G) \ngeq \pi(\overline S)$.
	\end{enumerate}
	Condition (1) holds if and only if $\del((S_1/\Omega_1)^2)$ is not contained in the scalars of $H$. If condition (2) does not hold then $\del((S_1/\Omega_1)^2)$ is always Frattini by Lemma \ref{lemma:jumpingbabyresultdel}. Next, by Corollary \ref{cor:CFsinwreathproduct}, if (3) does not hold then $\del((S_1/\Omega_1)^2)$ is Frattini. If $H$ contains $\sigma(1,a)$ where $a \in S_1 \setminus \Omega_1$, then $\del((S_1/\Omega_1)^2)$ is Frattini, by Lemma \ref{lemma:diagnonfrattini}. Finally, we assume that (5) does not hold. We may assume that (4) does not hold also, as otherwise the result follows. Let $\ddot \rho^2 \ddot \phi^i$ be in $\ddot G$ for some $i$. Hence, $(\phi_1, \phi_2)^i(1, b)$ is in $H$ for some $b \in S_1 \setminus \Omega_1$, where $\phi_i \in \Sigma_i$ are as defined in \cite[Sections 2.7 and 2.8]{KL}. Since $m$ is even, $\det(\sigma) = (-1)^m = 1$, so $\sigma \in H$. Note that $[(\phi_1, \phi_2)^i (1, b), \sigma]\Omega_1^2 = (b, b)\Omega_1^2$. Therefore, the central chief factor $\del((S_1/\Omega_1)^2)$ of $H$ is contained in $[H/\Omega_1^2, H/\Omega_1^2]$, and so it is Frattini. This implies that its image in $\overline H$ is Frattini also.

	Additionally, if $m$ is even, $\ddot G \leq D_8 \times C_f$ is a direct product, and conditions (1)-(5) hold, then $\del((S_1/\Omega_1)^2)$ is a non-Frattini chief factor of $\overline H$. Indeed, suppose that this holds. By (2), any complement of $S_1/\Omega_1$ in $I_1/\Omega_1$ is a $P_1$-normal subgroup. Choose a complement of $S_1/\Omega_1$ in $I_1/\Omega_1$ which contains $a \Omega_1$ for any $a \in I_1$ with $\sigma (1,a) \in H$. This exists by (3) and (4). Let $L$ be the preimage of this complement in $I_1$. Note that $H/(L^2 \cap H)$ is a subgroup of the wreath product $(P_1/L)\wr \Sym(2)$ which contains the natural copy of $\Sym(2)$. Since (3) holds and $\ddot G$ is a direct product, we can apply Lemma \ref{lemma:diagnonfrattini}, which states that the chief factor $\del((S_1/\Omega_1)^2)$ is non-Frattini in $H/(L^2 \cap H)$. In fact, by the proof of this lemma, since (1) holds and $m$ is even, there exists a maximal subgroup of $H/(L^2 \cap H)$ which contains the image of the scalars of $H$, and does not contain $\del((S_1/\Omega_1)^2)$. Hence, $\del((S_1/\Omega_1)^2)$ is non-Frattini in $\overline H$.
	
	Similarly, $\del((I_1/S_1)^2)$ is a non-Frattini chief factor of $\overline H$ only if:
	\begin{enumerate}[\upshape(1)]
		\item $m$ is even,
		\item $N_1 \leq S_1$,
		\item $(\ddot G \cap \langle \ddot I, \ddot \phi \rangle)/\langle \ddot \phi \rangle$ is contained in $\langle \ddot \rho^2, \ddot \phi \rangle/\langle \ddot \phi \rangle$,
		\item if $\sgn(Q_1) = -$ and $D_1 = \boxtimes$ then $(F)_0$ holds,
		\item if $H$ contains $\sigma(1,a)$ then $a \in S_1$.
	\end{enumerate}
	Moreover, if $\ddot G$ is a direct product and (1)-(5) hold, then $\del((I_1/S_1)^2)$ is a non-Frattini chief factor of $\overline H$. This can be shown by considering $H/(H \cap S_1^2)$ and applying Lemma \ref{lemma:diagnonfrattini}, as above.

	We use these results to determine the bounds in Theorem \ref{theorem:LMTcrowns}. Assume first that $m$ is even. By \cite[Proposition 2.5.13]{KL}, $D$ is always square, and so we use the notation introduced above the statement of Theorem \ref{theorem:LMTcrowns} for the elements of $(\overline G \cap \overline \Delta)/\overline \Omega \cong D_8$. First note that $\det(\sigma) = (-1)^m$, and so $\det(\sigma) = 1$ in this case, and $\sigma \in S$. On the other hand, the spinor norm of $\sigma$ is equal to
	\[
	\theta(\sigma) = \begin{cases*}
		\mu & if $D_1 = \boxtimes$,\\
		1 & if $D_1 = \Box$,
	\end{cases*}
	\]
	where $\mu$ is a generator of $\mathbb{F}^*$. As such, $\sigma \in \Omega$ if and only if $D_1 = \Box$. Otherwise, if $D_1 = \boxtimes$, then $\sigma \in S \setminus \Omega$, so $(1, a) \sigma \in H$ for some $a \in S_1 \setminus \Omega_1$. Thus the first set of conditions is equivalent to: $D_1 = \Box$; $\overline G/\overline I \leq C_2 \times C_f$ does not project onto $C_2$; and $\overline G \ngeq \overline S$. However, $\overline G/\overline I \leq C_2 \times C_f$ does not project onto $C_2$ if and only if $\overline G/\overline I$ is contained in $C_f$. Hence, $\del((S_1/\Omega_1)^2)$ is a non-Frattini chief factor of $\overline H$ only if: $D_1 = \Box$; $\overline G  \leq \langle \overline I, \overline \phi \rangle$; and $\overline G \ngeq \overline S$. The second set of conditions is similarly equivalent to: $\pi(\overline G) \cap \pi(\overline I) \leq \pi(\overline S$); and $(F)_0$ holds if $\sgn(Q_1) = -$ and $D_1 = \boxtimes$. Additionally, note that $\sgn(Q_1) = -$ and $(F)_1$ holds implies that $D_1 = \boxtimes$, so the final condition is equivalent to: $(F)_0$ holds if $\sgn(Q_1) = -$. Considering each of the possible subgroups of $D_8$ and their number of non-Frattini chief factors isomorphic to $C_2$ gives Theorem \ref{theorem:LMTcrowns}. The case where $m$ is odd is similar, using the fact that $\det(\sigma)=-1$.
	
	Suppose finally that $\Omega_1$ is not quasisimple. We continue to assume that $m \neq 1$, so that the structure of $H$ and $H_{\Omega}$ is as indicated previously. We hence treat this case as in the proof of Proposition \ref{prop:LMTcorrectionLUSC2}, by applying Corollary \ref{cor:CFsinwreathproduct} and using Table \ref{table:CFsofIinOmega}. Recall that, if $t > 2$ and $B$ is a chief factor of $\overline I_1$ contained in $\overline \Omega_1$ which is non-central, then $B^t$ is a chief factor of $\overline H$.
	
	If $\Omega_1$ is not $\Omega_2^\pm(q)$, $\Omega_4^+(2)$ or $\Omega_4^+(3)$, then the bounds in Theorem \ref{theorem:LMTcrowns} are clear, as previously. 
	
	If $\Omega_1$ is equal to $\Omega_2^\pm(q)$, then the bounds are clear other than the bound on $\delta_{\overline H}(C_2)$. We must have that $t>2$ in this case, since $\overline G_0$ is simple. If $q$ is even, then $\delta_{ H, \Omega_1^t}(C_2) = 0$, so we can assume that $q$ is odd. Recall that $\delta_{H, \del((I_1/\Omega_1)^t)}(C_2) = 0$. Note that $I_1$ is equal to a dihedral group with group of rotations $S_1$, which strictly contains $\Omega_1$. Since $t > 2$ and $G$ contains $\Omega_1^t. \del((I_1/\Omega_1)^t)$, we have that $\delta_{H, \Omega_1^t}(C_2) = 0$.
	
	In the case where $\Omega_1$ is equal to $\Omega_4^+(2)$, we treat $t=2$ by computer to show that $\delta_{\overline H}(C_2) \leq 3$. On the other hand, if $t > 2$, we treat $H$ as in the $\Omega_1 = \Omega_2^\pm(q)$ case in order to prove that $\delta_{ H_{\overline \Omega}}(C_2) \leq 2$ and so $\delta_{\overline H}(C_2) \leq 3$.
	
	Finally, assume that $\Omega_1 = \Omega_1(q)$. In this case, $H_{\Omega}$ is one of $\del(2^t). \Alt(t)$ or $\del(2^t).\Sym(t)$, and $q = p \geq 3$, by \cite[Proposition 4.2.15]{KL}. Note that, since $\overline \Omega$ is simple, $t \geq 4$. In fact, by \cite[Table 8.17]{BHRD} we have $t \geq 5$. Hence, by the proof of Corollary \ref{cor:CFsinwreathproduct}, the non-Frattini chief factors of $H$ in $H_\Omega$ are equal to: $C_2^{t-1}$ or $C_2^{t-2}$, according as $t$ is odd or even; $\Alt(t)$; and $C_2$ if $H_{\Omega}$ projects onto $\Sym(t)$. Thus, $\delta_{\overline H}(A) \leq 1 + \delta_{\overline H/H_{\overline \Omega}}(A) \leq 3$, with equality only if $A \cong C_2$, as required.
\end{proof}

\begin{prop}
	Let $H \in \mathscr{C}_7$ be of type: $\O_m(q) \wr \Sym(t)$ with $m$ odd; $\GL_m^\pm(q) \wr \Sym(t)$; or $\Sp_m(q) \wr \Sym(t)$. Then the pair $(\overline G, \overline H)$ satisfies Theorem \ref{theorem:LMTcrowns}.
\end{prop}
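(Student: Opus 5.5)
We will follow the template of Propositions \ref{prop:LMTcorrectionLUSC2} and \ref{prop:LMTcorrectionOC2}, now working projectively because $\mathscr C_7$ is only defined up to scalars. We may assume $\overline G$ contains $\overline\Delta\cap\overline G$-scalars as before. By Proposition \ref{prop:C7structure}, $H_{\overline \Sigma} = (\overline \Delta_1^t : \diag((\overline \Sigma^*_1/\overline \Delta_1)^t)):\Sym(t)$. Hence $\overline H$ is a subgroup of $\overline\Sigma_1^*\wr\Sym(t)$ of shape
\[
\overline N_1^t.\del((\overline I_1/\overline N_1)^t).\diag((\overline P_1/\overline I_1)^t).\Sym(t),
\]
where $\overline N_1\ge\overline\Omega_1$ is the largest subgroup with $\overline N_1^t\le\overline H$ and $\overline P_1$ is the first-coordinate projection. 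When $t\ge5$, $\overline H$ contains $\Alt(t)$. For $t\le4$ we exhibit a suitable $(1,\dots,1,\alpha)(1,2)$ by checking the determinant or spinor norm of the coordinate swap, as was done for $\sigma$ in the $\mathscr C_2$ case. So Corollary \ref{cor:CFsinwreathproduct} and Theorem \ref{theorem:mainjumping} apply with $E=\overline P_1$ and $F=\overline N_1$.

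First we take $\overline\Omega_1$ simple and $t>2$. The quotient $\overline I_1/\overline\Omega_1$ is cyclic in the $\boldL^\pm$ and $\boldS$ cases and is at most $C_2$ for $\O_m(q)$ with $m$ odd. Corollary \ref{cor:CFsinwreathproduct} then gives $\delta_{\overline H,H_{\overline\Omega}}(A)\le1$ for every non-central $A$, and at most one central $C_2$ coming from $\Sym(t)/\Alt(t)$. Since $H_{\overline G}/H_{\overline\Omega}\cong\overline G/\overline\Omega$, \cite[Proposition 3.12]{LMT} bounds the remaining contributions. This yields parts (1) and (2) of Theorem \ref{theorem:LMTcrowns}, with no $C_2^2$ exception because $\overline I_1/\overline\Omega_1$ is cyclic. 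It also yields $\delta_{\overline H}(C_2)\le 1+i+j+k$, which is $\ge4$ exactly when $(\star)$ holds; these are the rows with $t>2$.

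Next we treat $t=2$, the main work. Here $\delta_{\overline H}(C_2)=i+j+k+l$, and we must determine when $(H)_l$ holds with $l\in\{1,2,3\}$. The candidate non-Frattini $C_2$ chief factors inside $H_{\overline\Omega}$ are three. One is the image of $\sigma$. Another is $\del((\overline I_1/\overline N_1)^2)$; its $2$-part is nontrivial exactly when $(D^{\mathsf c})_1$ holds, by the computation $|\overline I_1/\overline N_1|=|\overline I|/|\overline G\cap\overline I|$ used earlier. The third is a diagonal $C_2$ arising because $\overline\Delta_1^2$ modulo scalars is not $\overline\Delta_1\times\overline\Delta_1$. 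This third factor accounts for the conditions $2\mid (q\mp1,m)^2/(q\mp1,n)$ and $2\nmid|\overline G\cap\overline\Delta|/|\overline\Omega|$ in the table.

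To decide which of these are non-Frattini we use two tools. Lemma \ref{lemma:diagnonfrattini} applies according to whether $\sigma$ or $(b,1)\sigma$ lies in $\overline H$, with $\det\sigma=\pm1$ depending on the parity of $m$. Lemmas \ref{lemma:jumpingbabyresultfull} and \ref{lemma:jumpingbabyresultdel} show that a factor lying in $[\overline P_1,\overline N_1]$, for instance when the projection to $\overline\Sigma_1^*/\overline I_1$ acts nontrivially, is Frattini. Combining these with $(D)_k,(F)_j,(\gamma)_i$ as in Lemma \ref{lemma:GammaiFiDi} produces the $\ast$ rows for $\L_n(q)$ and $\U_n(q)$ in $\mathscr C_7$. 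It also shows that the $\Sp$ and odd-$m$ orthogonal cases never give $\delta_{\overline H}(C_2)>3$, since there $\overline I_1/\overline\Omega_1$ is trivial or a central factor absorbed by the scalars. The total bound of $5$ follows as before.

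Finally, for $\overline\Omega_1$ not simple we use Table \ref{table:CFsofIinOmega} and Corollary \ref{cor:CFsinwreathproduct} case by case, as in the $\mathscr C_2$ proofs. Small cases excluded by \cite[Tables 8.x]{BHRD} are discarded, and any leftover ones are checked in Magma. The hardest step is the $t=2$ analysis of the third, scalar-induced $C_2$ factor. Its Frattini status depends delicately on how $\ddot G$ sits in $\overline\Sigma/\overline\Delta\cong C_f\times C_2$. We would first try to realise it as $\del((\overline I_1/\overline N_1)^2)$ modulo $\diag(Z^2)$ and test complementation explicitly using $\sigma$ and the field and graph automorphisms $(\phi_1,\phi_2)$, $(\iota_1,\iota_2)$.
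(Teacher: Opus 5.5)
Your skeleton matches the paper's: reduce to the $\mathscr{C}_2$ template, use Corollary \ref{cor:CFsinwreathproduct} for $t>2$, and for $t=2$ track three candidate $C_2$ factors (the swap, the $\del$ factor governed by $(D^{\mathsf c})_1$, and a factor whose order is governed by $(q-1,m)^2/(q-1,n)$). The $t=2$ analysis, however, has a genuine gap exactly where $\mathscr{C}_7$ differs from $\mathscr{C}_2$. The swap $\sigma$ of $V_1\otimes V_2$ permutes the basis vectors $e_i\otimes e_j$ by $m(m-1)/2$ transpositions, so $\det\sigma=(-1)^{m(m-1)/2}$; it is not a sign fixed by the parity of $m$. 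So your claim that for $t\le 4$ one can always find $(1,\alpha)(1,2)\in\overline H$ and apply Corollary \ref{cor:CFsinwreathproduct} is false, in two ways. First, by \cite[Proposition 4.7.3]{KL}, if $m\equiv 2\pmod 4$ and $q\equiv 3\pmod 4$ then $H_{\overline\Omega}$ does not project onto $\Sym(2)$ at all, and $\overline H$ need not either. Second, even when it does, $\overline\sigma\notin H_{\overline\Omega}$ whenever $m\equiv 2,3\pmod 4$ and $(q-1)/(q-1,n)$ is odd. Lemma \ref{lemma:diagnonfrattini} alone does not settle these configurations, and you give no argument that they are absent from Table \ref{table:LMTtable}.

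The missing idea is that in each of these configurations the congruences force $q$ to be a non-square, so $(F)_0$ holds. This is immediate if $q\equiv 3\pmod 4$. If $m\equiv 2\pmod 4$ and $4\mid q-1$, oddness of $(q-1)/(q-1,n)$ forces $q\equiv 5\pmod 8$. If $m$ is odd, then $n$ is odd and $\delta_{\overline H,H_{\overline\Omega}}(C_2)=1$ directly. In the first configuration you also need two further facts. One is that $4\nmid(q-1,m)$ together with $2\mid|K_1/\overline\Omega_1|$ kills the $C_2$ in $\del((\overline\Delta_1/N_1)^2)$. The other is that when $(D)_1$ holds, $\overline H$ projects onto $\Sym(2)$, so $N_1^2$ contributes at most one $C_2$. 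Together these give $\delta_{\overline H}(C_2)\le 3$. Only after reducing to $\overline\sigma\in\overline H$ does the clean count apply: one non-Frattini $C_2$ in $K_1^2$ iff $2\mid|K_1/\overline\Omega_1|$, and one in $\del((\overline\Delta_1/N_1)^2)$ iff $2\mid|\overline\Delta_1/N_1|$. That count is what produces the $\ast$ rows.

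Two smaller corrections. Your third factor is misdescribed. $(\Delta_1\otimes\Delta_1)/Z$ is exactly $\overline\Delta_1\times\overline\Delta_1$ by \cite[(4.7.3)]{KL}. The extra $C_2$ comes instead from the subgroup $K_1>\overline\Omega_1$ with $H_{\overline\Omega}\cap\overline\Delta_1^2=K_1^2.\del((\overline\Delta_1/K_1)^2)$, which exists because $\det(g\otimes 1)=\det(g)^m$ can be the determinant of a scalar. Also, the relevant section is $\overline\Delta_1/N_1$, not $\overline I_1/\overline N_1$. The two agree in cases $\boldL^\pm$, but not for $\Sp_m(q)$ with $q$ odd: there $\overline I_1/\overline\Omega_1$ is trivial but $\overline\Delta_1/\overline I_1\cong C_2$ is not. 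So your justification of the symplectic case does not address the section that could carry an extra $C_2$.
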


\begin{proof}
	We have
	\[
	\overline H = N_1^t . \del((\overline \Delta_1/N_1)^t) . \diag((P_1/\overline \Delta_1)^t) . J,
	\]
	where
	\begin{align*}
		\Alt(t) \leq J & \leq \Sym(t)\\
		\overline \Omega_1 \leq N_1 & \leq \overline \Delta_1\\
		\overline \Delta_1 \leq P_1 & \leq \overline \Sigma_1^*
	\end{align*}
	and $\overline H$ contains the natural copy of $\Alt(t)$. Additionally, if $t > 2$ then $J = \Sym(t)$. Since for cases $\boldL$ and $\boldU$ we have $\overline \Delta_1 = \overline I_1$, this proof follows in the same way as for $\mathscr{C}_2$. We describe some of the details below.
	
	Suppose that $G$ is in case $\boldL$, and $H$ is of type $\GL_m(q) \wr \Sym(2)$. Let $K_1$ be a subgroup of $\overline \Delta_1$ such that $H_{\overline \Omega} \cap \overline \Delta_1^2 = K_1^2 . \del((\overline \Delta_1 / K_1)^2)$. Then, $|K_1/\overline \Omega_1| = (q-1,m)^2/(q-1,n)$. This differs from the $\mathscr{C}_2$ case, where $H_\Omega \cap I_1^t = \Omega_1^t . \del((I_1/\Omega_1)^t)$. If $q$ is even, then $\delta_{\overline H, \overline \Delta_1^2}(C_2) = 0$ and $(D)_0$ holds, so $\delta_{\overline H}(C_2) \leq 3$. Thus, we assume in the following that $q$ is odd.
	
	Suppose first that $m \equiv 2 \pmod 4$, and $q \equiv -1 \pmod 4$, so $H_{\overline \Omega}$ does not project onto $\Sym(2)$, by \cite[Proposition 4.7.3]{KL}. Since $q \equiv -1 \pmod 4$, $q$ is not a square, and so $(F)_1$ does not hold in this case. Additionally $4 \nmid (q-1,m)$ and $2 \mid |K_1/\overline \Omega_1|$, so $\delta_{\overline H, \del((\overline \Delta_1/N_1)^2)}(C_2) = 0$. Moreover, $\delta_{\overline H, N_1^2}(C_2) \leq 1$ if $\overline H$ projects onto $\Sym(2)$, and $\delta_{\overline H, N_1^2}(C_2) \leq 2$ otherwise. Finally, if $2 \mid |H_{\overline G \cap \overline I}|/|H_{\overline \Omega}|$ then $\overline H$ projects onto $\Sym(2)$. 
	
	Suppose that $\delta_{\overline H/ H_{\overline \Omega}}(C_2) = 2$, so we have that $(D)_1$ and $(\gamma)_1$ hold, since $(F)_0$ holds. Hence, $2 \mid |H_{\overline G \cap \overline I}|/|H_{\overline \Omega}|$, and so $\delta_{\overline H, N_1^2}(C_2) \leq 1$, implying that $\delta_{\overline H}(C_2) \leq 3$. On the other hand, if $\delta_{\overline H/ H_{\overline \Omega}}(C_2) \leq 1$, then we also obtain $\delta_{\overline H}(C_2) \leq 3$ from the bounds above.
	
	Next assume that $m \not\equiv 2 \pmod 4$, or $q \not\equiv -1 \pmod 4$ so $H_{\overline \Omega}$ projects onto $\Sym(2)$. Moreover, assume that $H_{\overline \Omega}$ does not contain $\overline \sigma$. Note that $\det(\sigma) = (-1)^{\frac{1}{2} m(m-1)}$, and hence $\overline \sigma \not\in \overline \Omega$ if and only if: $4 \nmid m$ and $4 \nmid (m-1)$, so that $\det(\sigma) = -1$; and $2 \nmid (q-1)/(q-1,n)$, so that $-1$ is not in $\det(Z(\Delta))$. If $m$ is odd, then $n$ is also odd, and $\delta_{\overline H, H_{\overline \Omega}}(C_2) = 1$, implying that $\delta_{\overline H}(C_2) \leq 3$. Hence we can assume that $m \equiv 2 \pmod 4$, and in particular that $q \equiv 1 \pmod 4$. Therefore, $2 \nmid (q-1)/(q-1,n)$ implies that in fact $q \equiv 5 \pmod 8$. As such, $q$ is not a square, and so again $(F)_1$ cannot hold in this case. Additionally, since $m \equiv 2 \pmod 4$, we have the inequality $\delta_{\overline H, \del((\overline \Delta_1/N_1)^2)}(C_2) + \delta_{\overline H, N_1^2}(C_2) \leq 1$, and thus $\delta_{\overline H}(C_2) \leq 3$.
	
	As such, we can assume that $\overline H$ contains $\overline \sigma$. By Lemma \ref{lemma:diagnonfrattini}, $\delta_{ H_{\overline G \cap \overline \Delta}, \del((\overline \Delta_1/\overline \Omega_1)^2)}(C_2)$ is 1 if and only if $2 \mid |\overline \Delta_1/N_1|$ and is 0 otherwise. Similarly, $\delta_{H_{\overline G \cap \overline \Delta}, K_1^2}(C_2)$ is 1 if and only if $2 \mid |K_1/\overline \Omega_1|$ and is 0 otherwise. Using $|\overline \Delta_1 / N_1| = |\overline \Delta \cap H_{\overline \Delta} \overline \Omega|/|\overline G \cap H_{\overline \Delta} \overline \Omega|$ and $|K_1/\overline \Omega_1| = (q-1,m)^2/(q-1,n)$ gives the results in Table \ref{table:LMTtable}.
	
	If $\overline H$ is of type $\Sp_m(q) \wr \Sym(t)$ or $\O_m(q) \wr \Sym(t)$ the proofs that $\delta_{\overline H}(C_2) \leq 3$ are similar to the $\mathscr{C}_2$ case, so we do not describe the details here. 
\end{proof}

In order to prove the result for $\overline H$ of type $\O_m(q) \wr \Sym(t)$ with $m$ even, we first prove the following lemma. This result enables us to compute $\delta_{\overline H}(C_2)$ when $H$ is of type $\O_m(q) \wr \Sym(2)$ by determining the chief factors of a finite collection of groups.

\begin{lemma}\label{lemma:reducingLMTtofinitegp}
	Let $H \in \mathscr{C}_7$ be of type $\O_m^\pm(q) \wr \Sym(2)$, where either $\sgn(Q_1) = +$ or $D(V_1) = \Box$. Let $D \cong \overline \Delta_1 / \overline \Omega_1$. Then, for each group $\overline L$ with $\overline \Omega \leq \overline L \leq \overline \Sigma$, there exists a homomorphism $\pi_{\overline L} \colon H_{\overline L} \to (D \wr C_2) \times C_4$ such that $\delta_{H_{\overline L}}(C_2) = \delta_{\pi_{\overline L}(H_{\overline L})}(C_2)$. Additionally, the map 
	\begin{align*}
		\{ \overline L :  \overline \Omega \leq \overline L \leq \overline \Sigma \} & \to \{ K : \pi_{ \overline \Omega}(H_{\overline \Omega}) \leq K \leq \pi_{\overline \Gamma}(H_{\overline \Gamma})\}\\
		\overline L & \mapsto \pi_{\overline L}(H_{\overline L})
	\end{align*}
	is surjective.
\end{lemma}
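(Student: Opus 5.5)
The plan is to construct $\pi_{\overline L}$ as a quotient map by a normal subgroup of $H_{\overline L}$ that contains no non-Frattini $C_2$ chief factors. The key is then to check that killing it loses none.

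By Proposition \ref{prop:C7structure},
\[
H_{\overline \Sigma} = (\overline \Delta_1^2 : \diag((\overline \Sigma^*_1/\overline \Delta_1)^2)):\Sym(2) \leq \overline \Sigma^*_1 \wr \Sym(2).
\]
For each $\overline L$, set $N_{\overline L} := H_{\overline L} \cap \overline \Omega_1^2$, and define $\pi_{\overline L}$ in two stages.
\begin{enumerate}[\upshape(a)]
\item Quotient by $\overline\Omega_1^2 \cap H_{\overline L}$. This maps the base part $\overline \Delta_1^2 \cap H_{\overline L}$ into $D^2$, and $\Sym(2)$ gives the wreath component $D \wr C_2$.
\item Record the field-automorphism part $\diag((\overline\Sigma_1^*/\overline\Delta_1)^2)$ only through its image in a cyclic group of order $4$. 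This is possible because in case $\boldO^\pm$ with $q$ odd, $\overline\Sigma_1^*/\overline\Delta_1 \cong C_f$ or $C_{2f}$ is cyclic.
\end{enumerate}
So $\pi_{\overline L}$ is the product of the map $H_{\overline L}\to (\overline\Delta_1/\overline\Omega_1)\wr \Sym(2)$ (induced by acting on base coordinates modulo $\overline\Omega_1$, combined with the permutation) and the map $H_{\overline L}\to \overline\Sigma_1^*/\overline\Delta_1 \to C_4$, reduction of a cyclic group of order $f$ or $2f$ modulo its $2'$-part and its $8$-th powers. I need to check that the action of the diagonal field part on $D$ factors through $C_4$, or is trivial. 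Here the hypothesis $\sgn(Q_1)=+$ or $D(V_1)=\Box$ enters: by \cite[Proposition 2.7.3]{KL}, $\ddot\phi$ then centralises $D$, so that $\pi_{\overline L}$ lands in a genuine direct product $(D\wr C_2)\times C_4$.

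Next I show that $\ker \pi_{\overline L}$ carries no non-Frattini $C_2$ chief factors of $H_{\overline L}$. The kernel is an extension of $N_{\overline L}$ by a subgroup of the odd part and the $8$-th powers of the cyclic field group. Four ingredients handle this.
\begin{enumerate}[\upshape(i)]
\item Since $\overline\Omega_1$ is perfect (quasisimple after excluding small cases handled by Table \ref{table:CFsofIinOmega}, or treated separately), the chief factors of $H_{\overline L}$ in $\overline\Omega_1^2$ are non-abelian or Frattini. Corollary \ref{cor:CFsinwreathproduct} and Theorem \ref{theorem:mainjumping} give this.
\item Odd-order cyclic sections contribute no $C_2$.
\item The $2$-part of the cyclic field group beyond $C_4$ contributes only Frattini $C_2$ factors. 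A cyclic $2$-group has a unique non-Frattini $C_2$ quotient, and I would verify that modding out by the $4$-th power subgroup (rather than the square subgroup) retains that top factor together with the interaction with $\ddot\delta$, $\ddot\rho$ seen in Lemma \ref{lemma:GammaiFiDi}. Using $\delta_G=\delta_{G,N}+\delta_{G/N}$, this gives $\delta_{H_{\overline L}}(C_2)=\delta_{\pi(H_{\overline L})}(C_2)$.
\item Non-Frattini chief factors of the quotient remain non-Frattini in $H_{\overline L}$, trivially, since maximal subgroups pull back.
\end{enumerate}
The converse direction, that Frattini factors of $H_{\overline L}$ stay Frattini in the quotient, holds since $\Phi$ maps into $\Phi$ of the image.

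Surjectivity is then the final step. Given $K$ with $\pi_{\overline\Omega}(H_{\overline\Omega})\le K\le \pi_{\overline\Gamma}(H_{\overline\Gamma})$, I take $\overline L := \langle \overline\Omega, \pi_{\overline\Gamma}^{-1}(K)\cdot\text{(lifts)}\rangle$, namely the full preimage in $H_{\overline\Gamma}$ times $\overline\Omega$. Since $H_{\overline\Gamma}\overline\Omega=\overline\Gamma$ and $H_{\overline L}=H_{\overline\Gamma}\cap\overline L$, we get $\pi_{\overline L}(H_{\overline L})=K$, provided $\ker\pi_{\overline\Gamma}\cap H_{\overline\Gamma}$ is contained in $H_{\overline L}$. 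This holds because $\overline L$ is defined as a full preimage containing $\overline\Omega$.

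The main obstacle is (iii) together with compatibility of the definitions across different $\overline L$: the same $\pi$ must be a restriction of $\pi_{\overline\Gamma}$. I expect to resolve this by defining $\pi_{\overline\Gamma}$ once and setting $\pi_{\overline L}:=\pi_{\overline\Gamma}|_{H_{\overline L}}$. The delicate point is then showing that the elements $\phi^{4}$ (modulo odd part) are genuinely Frattini-only contributors in every $H_{\overline L}$. I would check this via the explicit power formula for $(\ddot\delta^\lambda\ddot\phi^i)^l$ as in the proof of Lemma \ref{lemma:GammaiFiDi}.
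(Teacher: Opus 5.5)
There is a genuine gap, located exactly where you flag the main obstacle: defining $\pi_{\overline\Gamma}$ once and setting $\pi_{\overline L}:=\pi_{\overline\Gamma}|_{H_{\overline L}}$ does not preserve $\delta(C_2)$.

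Write $H_{\overline\Gamma}/\overline\Omega_1^2=H_{\overline\Delta}/\overline\Omega_1^2\times\langle g\overline\Omega_1^2\rangle$ with $g$ of order $f$, and suppose $8\mid f$, which is allowed when $\sgn(Q_1)=+$. For $\overline L=\langle\overline\Omega,g^4\rangle$ one has $H_{\overline L}=H_{\overline\Omega}\langle g^4\rangle$. Hence $H_{\overline L}/\overline\Omega_1^2=(H_{\overline\Omega}/\overline\Omega_1^2)\times\langle g^4\overline\Omega_1^2\rangle$, and the second factor is cyclic of even order $f/4$, so it contributes a non-Frattini $C_2$.

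Your map sends $g^4$ to $1$. The reduction $C_f\to C_4$ kills it, and the projection to $D\wr C_2$ along the decomposition kills $\langle g\rangle$. So $\pi(H_{\overline L})$ is a quotient of $H_{\overline\Omega}/\overline\Omega_1^2$, and $\delta_{\pi(H_{\overline L})}(C_2)<\delta_{H_{\overline L}}(C_2)$. Thus your step (iii) is false as stated. The part of the field automorphisms ``beyond $C_4$'' must be measured relative to the image of $H_{\overline L}$ in $C_f$, not relative to $\langle g\rangle$. The same failure recurs for any fixed reduction of $\langle g\rangle$: your ``modulo $8$-th powers'' variant fails for $16\mid f$, and does not even land in $C_4$.

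The paper instead lets the kernel depend on $\overline L$. Let $a$ be the order of the image of $H_{\overline L}$ in $\langle g\overline\Omega_1^2\rangle\cong C_f$. The paper quotients by $\langle\overline\Omega_1^2,g^{f/a}\rangle$, $\langle\overline\Omega_1^2,g^{2f/a}\rangle$ or $\langle\overline\Omega_1^2,g^{4f/a}\rangle$ according as $a$ is odd, $a\equiv 2\pmod 4$ or $4\mid a$. This sends the generator $g^{f/a}$ of that image to a generator of $C_{(a,4)}$. Two things then need proof that your plan does not supply.

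\begin{enumerate}[(1)]
\item When $4\mid a$, you need $g^{4f/a}$ to be a fourth power in $H_{\overline L}/\overline\Omega_1^2$, since a central $C_2$ generated by a square is Frattini. An element of $H_{\overline L}$ lying over $g^{f/a}$ has the form $hg^{f/a}$ with $h\in H_{\overline\Delta}$, and $(hg^{f/a})^4=h^4g^{4f/a}$. When $h\overline\Omega_1^2$ has order $8$ in $D_8\wr\Sym(2)$, you must first correct $h$ by an element of $H_{\overline\Omega}/\overline\Omega_1^2$, using the structure from \cite[Propositions 4.7.6, 4.7.7]{KL}. The power formula from Lemma \ref{lemma:GammaiFiDi} does not address this.
\item Surjectivity is no longer automatic. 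Your full-preimage construction can produce an $a$ that is too large: for $K=\pi_{\overline\Omega}(H_{\overline\Omega})$ it returns $\langle\overline\Omega,g^4\rangle$ again. Instead, let $C_b$ be the projection of $K$ to $C_4$, and choose $\overline L$ with $\overline L/\overline\Omega$ equal to $K/\pi_{\overline\Omega}(H_{\overline\Omega})$, viewed inside $D_8\times C_b\le D_8\times C_f$. Then $a=b$ and the kernel is just $\overline\Omega_1^2$.
\end{enumerate}

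Finally, the hypothesis is not used merely to make $\ddot\phi$ centralise $D$. What it provides is a complement of order $f$ commuting with $H_{\overline\Delta}/\overline\Omega_1^2$. In the excluded case $\sgn(Q_1)=-$, $D(V_1)=\boxtimes$, one has $\overline\phi^f\overline\Omega_1^2=(\overline r_1,\overline r_2)\overline\Omega_1^2$. In some permitted cases the complement is generated by $(\overline r_1,\overline r_2)\overline\phi$ rather than by $\overline\phi$.
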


\begin{proof}
	Let $r_1, r_2$ be two reflections of $V_1, V_2$ respectively, so that $\langle S_i, r_i \rangle = I_i$. Similarly, let $\delta_1, \delta_2$ be elements as defined in \cite[Sections 2.7 and 2.8]{KL} for $V_1$, $V_2$, so that $\langle I_i, \delta_i \rangle = \Delta_i$. Finally, let $\phi_i$ be defined as in \cite[Sections 2.7 and 2.8]{KL} so that $\Gamma_i = \langle \Delta_i, \phi_i \rangle$, and let $\phi = (\phi_1, \phi_2)$.
	
	Let $K = H_{\overline \Gamma} \overline \Omega$, and note that $H_{\overline G} = H_{\overline G \cap K}$ for all $\overline \Omega \leq \overline G \leq \overline \Gamma$. Additionally, $H_{\overline G}/H_{\overline \Omega} \cong (\overline G \cap K)/\overline \Omega$. Note that, when $\overline H \mleq \overline G$, we have that $\overline G \leq K$. However, we do not assume that this is the case.
	
	We have
	\[
	\frac{H_{\overline \Gamma}}{\overline \Omega_1^2} = \frac{H_{\overline \Delta}}{\overline \Omega_1^2} \times C_f \leq (D \wr \Sym(2)) \times C_f.
	\]
	Let $a$ be the order of $(\overline G \cap K)/(\overline G \cap K \cap \overline \Delta) \leq C_f$. Hence, $H_{\overline G}/\overline \Omega_1^2$ is contained in $(H_{\overline \Delta}/\overline \Omega_1^2) \times C_a$, and is equal to $(L_1 \times L_2) . C$ where $L_1 = H_{\overline G \cap \overline \Delta}/\overline \Omega_1^2$, $L_2.C \cong C_a$, and $C$ is a subquotient of $\overline \Delta/\overline \Omega \cong D_8$. As such, $C \cong C_b$ where $b = 1, 2$ or 4, and $L_2 \cong C_{a/b}$.
	
	Let $g$ be an element of order $f$ in $H_{\overline \Gamma}$ such that 
	\[
	\frac{H_{\overline \Gamma}}{\overline \Omega_1^2} = \frac{H_{\overline \Delta}}{\overline \Omega_1^2} \times \langle g \overline \Omega_1^2 \rangle.
	\]
	Note that we can choose $g = \overline \phi$ when $\frac{1}{4}m(p-1)$ is even or $D(V_i) = \boxtimes$, and can choose $g = (\overline r_1, \overline r_2) \overline \phi$ otherwise. We define the map $\pi_{\overline G}$ as a quotient map by a subgroup $N_{\overline G}$ of $\langle \overline \Omega_1^2, g \rangle$ which contains no non-Frattini chief factors of $H_{\overline G}$ isomorphic to $C_2$. This implies that $\delta_{H_{\overline G}}(C_2) = \delta_{\pi_{\overline G}(H_{\overline G})}(C_2)$.
	
	If $a$ is not divisible by 2, then there are no chief factors isomorphic to $C_2$ in $N_{\overline G} := \langle \overline \Omega_1^2, g^{f/a} \rangle \unlhd H_{\overline G}$. Therefore, we can take 
	\[
		\pi_{\overline G} \colon H_{\overline G} \to H_{\overline G}/N_{\overline G} \leq D \wr \Sym(2) \leq (D \wr \Sym(2)) \times C_4.
	\]
	If instead $2 \mid a$ but $4 \nmid a$, then there are no chief factors isomorphic to $C_2$ in $N_{\overline G} := \langle \overline \Omega_1^2, g^{2f/a} \rangle \unlhd H_{\overline G}$, so we can proceed as above. Finally, if $4 \mid a$, we show that any chief factor isomorphic to $C_2$ in $N_{\overline G} := \langle \overline \Omega_1^2, g^{4f/a} \rangle \unlhd H_{\overline G}$ is Frattini. 
	
	Let $h$ be an element of $H_{\overline \Delta}$ such that $h g^{f/a} \in H_{\overline G}$. Note that $h \overline \Omega_1^2$ and $g^{f/a} \overline \Omega_1^2$ commute in $H_{\overline \Gamma}/\overline \Omega_1^2$. If the order of $h \overline \Omega_1^2$ is less than 4, then $g^{4f/a}\overline \Omega_1^2$ is a square in $H_{\overline G}/\overline \Omega_1^2$, proving that there are no non-Frattini chief factors of $H_{\overline G}$ in $\langle \overline \Omega_1^2, g^{4f/a} \rangle$ isomorphic to $C_2$, as required. Thus, we suppose that $h \overline \Omega_1^2$ has order 8. This implies that $\overline \Delta_1/\overline \Omega_1 \cong D_8$. However, it can be shown that $h \overline \Omega_1^2 = (a_1, a_2)\sigma$ where $a_i \in D_8$ have $|a_1a_2| = 4$ and $\sigma$ is the non-identity element of $\Sym(2)$. However, by the proof of \cite[Propositions 4.7.6, 4.7.7]{KL} (see also the proof of Proposition \ref{prop:LMTC7calcs}), $H_{\overline \Omega}/\overline \Omega_1^2$ contains an element $(g_1, g_2) \in D_8^2$ such that $(g_1 a_1, g_2 a_2) \sigma$ has order less than 4. Hence, $H_{\overline G}/\overline \Omega_1^2$ contains $(g_1 a_1, g_2 a_2) \sigma (g^{f/a} \overline \Omega_1^2)$, and so $g^{4f/a}$ is a square in $H_{\overline G}/\overline \Omega_1^2$.
	
	Let $\pi_{\overline G}$ be the quotient map $\overline G \to \overline G/N_{\overline G}$. We have shown that $\delta_{H_{\overline G}}(C_2) = \delta_{\pi_{\overline G}(H_{\overline G})}(C_2)$. Additionally,
	\[
	\pi_{\overline \Omega}(H_{\overline \Omega}) = \frac{H_{\overline \Omega}}{\overline \Omega_1^2} \times 1,
	\]
	and
	\[
	\pi_{\overline \Gamma}(H_{\overline \Gamma}) = \frac{H_{\overline \Delta}}{\overline \Omega_1^2} \times C_{(f,4)}.
	\]
	Hence $\pi_{\overline \Omega}(H_{\Omega}) \leq \pi_{\overline G}(H_{\overline G}) \leq \pi_{\overline \Gamma}(H_{\overline \Gamma})$.
	
	We finally show that the map $\overline G \mapsto \pi_{\overline G}(H_{\overline G})$ given in the statement of the lemma is surjective. Suppose that $L$ is a group with $\pi_{\overline \Omega}(H_{\overline \Omega}) \leq L \leq \pi_{\overline \Gamma}(H_{\overline \Gamma})$. Therefore, $L / \pi_{\overline \Omega}(H_{\overline \Omega})$ is a subgroup of $\pi_{\overline \Gamma}(H_{\overline \Gamma})/\pi_{\overline \Omega}(H_{\overline \Omega})$, which is isomorphic to $(\ddot K \cap D_8) \times C_4$. Let the projection of $L / \pi_{\overline \Omega}(H_{\overline \Omega})$ to $C_4$ be $C_b$. Note that $b \mid f$. Thus, there exists a group $\overline G $ with $\overline \Omega \leq \overline G \leq \overline \Gamma$ such that $\overline G / \overline \Omega$, as a subgroup of $D_8 \times C_f$, is equal to $L / \pi_{\overline \Omega}(H_{\overline \Omega}) \leq D_8 \times C_b$. Note that $N_{\overline G} = \overline \Omega_1^2$ in this case. This implies that 
	\[
	\frac{\pi_{\overline G}(H_{\overline G})}{\pi_{\overline \Omega}(H_{\overline \Omega})} = \frac{L}{\pi_{\overline \Omega}(H_{\overline \Omega})},
	\]
	and hence $\pi_{\overline G}(H_{\overline G}) = L$.
\end{proof}

\begin{prop}\label{prop:LMTC7calcs}
	Let $H \in \mathscr{C}_7$ be of type $\O_m(q) \wr \Sym(t)$ with $m$ even. Then the pair $(\overline G, \overline H)$ satisfies Theorem \ref{theorem:LMTcrowns}.
\end{prop}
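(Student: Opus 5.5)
The proof will follow the pattern of the preceding propositions, splitting into the cases $t>2$ and $t=2$. In both cases we use the description from Proposition \ref{prop:C7structure}:
\[
\overline H = N_1^t . \del((\overline \Delta_1/N_1)^t) . \diag((P_1/\overline \Delta_1)^t) . J,
\]
with $\overline \Omega_1 \leq N_1 \leq \overline \Delta_1$, $\overline \Delta_1 \leq P_1 \leq \overline \Sigma^*_1$ and $\Alt(t) \leq J \leq \Sym(t)$. Here $\overline H$ satisfies the hypotheses of Corollary \ref{cor:CFsinwreathproduct}, with $E = P_1$ and $F = N_1$. Theorem \ref{theorem:mainjumping} and Corollary \ref{cor:CFsinwreathproduct} then bound the non-Frattini chief factors of $\overline H$ inside $H_{\overline \Omega}$, and \cite[Proposition 3.12]{LMT} handles those in $\overline H/H_{\overline \Omega} \cong \overline G/\overline \Omega$.

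For $t>2$: the section $\overline \Delta_1/\overline \Omega_1$ is a $2$-group of order at most $8$, namely a subgroup of $D_8$ or of $C_2^2$. Hence $\del((\overline \Delta_1/N_1)^t)$ contributes at most one non-Frattini factor of each isomorphism type of $\overline H$-module, except for $C_2^2$ when $t=4$, which is the exception in part (2). When $\overline \Omega_1$ is not quasisimple we read the relevant factors off Table \ref{table:CFsofIinOmega}, as in Proposition \ref{prop:LMTcorrectionOC2}. The non-central bounds follow, and the count of $C_2$-factors reduces to the rows with $(\star)$ and $t\geq 3$. For $t=3$ we must check when the $C_2$-factor of $\Sym(3)$ lies in $H_{\overline \Omega}$. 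By \cite[Proposition 4.7.6, 4.7.7]{KL}, this happens precisely when $\sigma$ has spinor norm and determinant trivial, which gives the condition ``$m\not\equiv 2 \pmod 4$ or $q\not\equiv 3\epsilon \pmod 4$''.

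For $t=2$ with $m\equiv 0 \pmod 2$, $q$ odd and $n=m^2$ divisible by 4, we have $\ddot\Gamma = \ddot\Delta\times\langle\ddot\phi\rangle \leq D_8\times C_f$. Assume first that $\sgn(Q_1)=+$ or $D(V_1)=\Box$. We apply Lemma \ref{lemma:reducingLMTtofinitegp}, which replaces $\delta_{H_{\overline G}}(C_2)$ by $\delta_{K}(C_2)$ for a subgroup $K$ with $\pi_{\overline \Omega}(H_{\overline \Omega}) \leq K \leq \pi_{\overline \Gamma}(H_{\overline \Gamma}) \leq (D\wr C_2)\times C_4$. Since $D$ is one of finitely many $2$-groups and the images of $\overline \Omega$ are determined by $m \bmod 4$, $q \bmod 8$ and $D(V_1)$, there are only finitely many configurations. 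For each of them we enumerate all such $K$ in Magma, compute $\delta_K(C_2)$, and translate $K$ back to the corresponding subgroup $\overline G/\overline \Omega$ using the surjectivity statement of the lemma. This yields the $\mathscr C_7$, $\O_m^\pm(q)\wr\Sym(2)$ rows of Table \ref{table:LMTtable}, stated in terms of $\pi(\overline G)$, $(\overline G\cap\overline\Delta)/\overline\Omega$ and $(F)_i$. It also shows that $\delta_{\overline H}(C_2)\leq 5$ and that every non-central $\delta$ is at most $2$.

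The remaining case, $\sgn(Q_1)=-$ with $D(V_1)=\boxtimes$, is handled by hand, following the criteria (1)--(5) from the proof of Proposition \ref{prop:LMTcorrectionOC2} together with Lemma \ref{lemma:diagnonfrattini}. The main obstacle is the reduction step: verifying that the elements $g$ chosen in Lemma \ref{lemma:reducingLMTtofinitegp} and the embedding of $H_{\overline \Omega}/\overline\Omega_1^2$ in $D\wr C_2$ are described correctly for each residue class. The Magma output is only as reliable as these explicit generators, which come from \cite[Propositions 4.7.6, 4.7.7]{KL}, so I would first write down those generators carefully in each case.
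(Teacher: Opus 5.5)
Your plan is essentially the paper's proof: $t>2$ is deferred to the $\mathscr C_2$ argument, and for $t=2$ the count of $C_2$-factors is reduced via Lemma \ref{lemma:reducingLMTtofinitegp} to a Magma enumeration of subgroups of $(D\wr\Sym(2))\times C_4$, case by case in $m \bmod 4$, $D(V_1)$ and $\sgn(Q_1)$. Each subgroup with $\delta_K(C_2)\geq 4$ is then converted into conditions on $\overline G$ by identifying $H_{\overline\Gamma}/H_{\overline\Omega}$ with $(H_{\overline\Gamma}\overline\Omega)/\overline\Omega$ through spinor norms, determinants and similarity factors; this explicit identification, rather than the surjectivity statement alone, is what produces the table's conditions.

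In the residual case $\sgn(Q_1)=-$, $D(V_1)=\boxtimes$, the fact the paper actually uses is that $(\overline r_1,\overline r_2)\overline\Omega_1^2$ is a square in $\overline H$. When $m\equiv 2 \pmod 4$ it equals $\overline\phi^f\overline\Omega_1^2$, so it is Frattini once $(F)_1$ holds, and then $\delta_{\overline H}(C_2)\geq 4$ forces $(\star)$; this is the $\mathscr C_7$ analogue of condition (4) in the second list in the proof of Proposition \ref{prop:LMTcorrectionOC2}. When $4\mid m$ it equals $((\overline r_1,1)\overline\sigma)^2$, exactly as in Lemma \ref{lemma:diagnonfrattini}.
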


\begin{proof} 
	In this case, $\overline \Omega_1$ is simple, and, by \cite[Propositions 4.7.6, 4.7.7]{KL}, $N_1$ contains $\overline S_1$. If $t > 2$ then the proof follows as in the $\mathscr{C}_2$ case, so we additionally assume that $t=2$. Then, as in the $\mathscr{C}_2$ case, we can show that $\delta_{\overline H}(A) \leq 2$ for any chief factor $A$ not isomorphic to $C_2$. Hence, we show that $\delta_{\overline H}(C_2) \leq 5$, and we consider when $\delta_{\overline H}(C_2) \geq 4$. In order to do so, we apply Lemma \ref{lemma:reducingLMTtofinitegp} to reduce to computing the chief factors of subgroups of $(D \wr \Sym(2)) \times C_4$. We split into cases, depending on $H_{\overline \Omega}$. 
	
	Firstly, if $m \equiv 2 \pmod 4$ and $D(V_i) = \boxtimes$, then $H_{\overline \Omega}/\overline \Omega_1^2$ is equal to $\langle (\overline r_1, \overline r_2), (\overline r_1 \overline\delta_1, \overline\delta_2) \rangle$. Additionally, $\overline \Delta_1 / \overline \Omega_1 \cong C_2^2$. Let $\widetilde r_i := \overline r_i \overline \Omega_i$, $\widetilde \delta_i := \overline \delta_i \overline \Omega_i$ and let $\widetilde \phi$ be an element of order 4 in $(D \wr \Sym(2)) \times C_4$ so that this group is equal to $(D \wr \Sym(2)) \times \langle \widetilde \phi \rangle$. Using Magma \cite{magma}, we find that the subgroups of $((C_2^2) \wr \Sym(2)) \times C_4$ containing $L := \langle (\widetilde r_1, \widetilde r_2), (\widetilde r_1 \widetilde \delta_1, \widetilde \delta_2) \rangle$ with $\delta_K(C_2) \geq 4$ are:
	\begin{enumerate}[\upshape(1)]
		\item $\langle L, (\widetilde r_1, 1), (\widetilde \delta_1, 1) \widetilde \phi^2 \rangle$ with $\delta_K(C_2) = 4$;
		\item $\langle L, (\widetilde r_1, 1), (\widetilde \delta_1, 1) \rangle$ with $\delta_K(C_2) = 4$;
		\item $\langle L, (\widetilde r_1, 1) \widetilde \phi ^2, (\widetilde \delta_1, 1) \rangle$ and its conjugate $\langle L, (\widetilde r_1, 1) \widetilde \phi ^2, (\widetilde r_1 \widetilde \delta_1, 1) \rangle$ with $\delta_K(C_2) = 4$;
		\item $\langle L, (\widetilde r_1, 1), \widetilde \phi^2 \rangle$ with $\delta_K(C_2) = 4$;
		\item $\langle L, (\widetilde \delta_1, 1), \widetilde \phi^2 \rangle$ and its conjugate $\langle L, (\widetilde r_1 \widetilde \delta_1, 1), \widetilde \phi^2 \rangle$ with $\delta_K(C_2) = 4$;
		\item $\langle L, (\widetilde r_1, 1), \widetilde \phi^2, (\widetilde \delta_1, 1) \widetilde \phi \rangle$ with $\delta_K(C_2) = 4$;
		\item $\langle L, (\widetilde r_1, 1), \widetilde \phi \rangle$ with $\delta_K(C_2) = 4$;
		\item $\langle L, (\widetilde \delta_1, 1), \widetilde \phi \rangle$ and its conjugate $\langle L, (\widetilde r_1 \widetilde \delta_1, 1), \widetilde \phi \rangle$ with $\delta_K(C_2) = 4$;
		\item $\langle L, (\widetilde r_1, 1), \widetilde \phi^2, \overline \sigma \rangle$ with $\delta_K(C_2) = 4$;
		\item $\langle L, (\widetilde r_1, 1) \widetilde \phi, (\widetilde \delta_1, 1), \widetilde \phi^2 \rangle$ and its conjugate $\langle L, (\widetilde r_1, 1) \widetilde \phi, (\widetilde r_1 \widetilde \delta_1, 1), \widetilde \phi^2 \rangle$ with $\delta_K(C_2) = 4$;
		\item $\langle L, (\widetilde r_1, 1), (\widetilde \delta_1, 1), \widetilde \phi^2, \overline \sigma \rangle$ with $\delta_K(C_2) = 4$;
		\item $\langle L, (\widetilde r_1, 1), \widetilde \phi, \overline \sigma \rangle$ with $\delta_K(C_2) = 4$;
		\item $\langle L, (\widetilde r_1, 1), (\widetilde \delta_1, 1), \widetilde \phi, \overline \sigma \rangle$ with $\delta_K(C_2) = 4$;
		\item $\langle L, (\widetilde r_1, 1), (\widetilde \delta_1, 1), \widetilde \phi^2 \rangle$ with $\delta_K(C_2) = 5$;
		\item $\langle L, (\widetilde r_1, 1), (\widetilde \delta_1, 1), \widetilde \phi \rangle$ with $\delta_K(C_2) = 5$.
	\end{enumerate}
	
	For each subgroup above, we find all almost simple groups $\overline G$ such that $H_{\overline G}$ corresponds to a subgroup above. In order to do so, we note that $c=1$ in this case, and so $H_{\overline \Gamma}/H_{\overline \Omega} \cong \overline \Gamma / \overline \Omega$. For each element of $H_{\overline \Gamma}/H_{\overline \Omega}$ we compute the spinor norm, the determinant, and what each element scales the form by in order to determine that this isomorphism maps
	\begin{align*}
		(\widetilde r_1, 1) & \mapsto \ddot \rho^2\\
		(\widetilde \delta_1, 1) & \mapsto \ddot \delta\\
		\widetilde \phi & \mapsto \ddot \phi\\
		\overline \sigma & \mapsto \ddot r_{\Box} \text{ or } \ddot r_{\boxtimes}.
	\end{align*}
	Suppose that $\sgn(Q_1) = +$, so we may apply Lemma \ref{lemma:reducingLMTtofinitegp}. In Table \ref{table:LMTcorrectionC71}, we list which groups $\overline G$ correspond to the groups listed above. Note, in the third column of this table, the map $\pi$ is the quotient map of $\overline \Gamma$ by $\langle \overline \Omega, \overline \phi \rangle$. Additionally, in the fourth column a dash indicates that no conditions on $|\overline G \cap \langle \overline \Omega, \overline \phi \rangle|/|\overline \Omega|$ are required. However note that in all such cases, since $\overline G / \overline \Omega \leq D_8 \times C_f$ is not a direct product, we must have that $(F)_1$ holds.
	\begin{table}
		\centering
        \caption{Characterisation of the cases for which $\delta_{H_{\overline G}}(C_2) \geq 4$, given $m \equiv 2 \pmod 4$ and $D(V_i) = \boxtimes$.}
		\begin{tabular}{lllcl}
			Subgroup & $(\overline G \cap \overline \Delta)/\overline \Omega$ & $\pi(\overline G)$ & $2 \mid |\overline G \cap \langle \overline \Omega, \overline \phi \rangle|/|\overline \Omega|$ & $\delta_{H_{\overline G}}(C_2)$ \\ \toprule
			1, 6 & $\langle \ddot \rho^2 \rangle$ & $\langle \ddot \rho^2, \ddot \delta \rangle$ & - & 4\\
			2 & $\langle \ddot \rho^2, \ddot \delta \rangle$ & $\langle \ddot \rho^2, \ddot \delta \rangle$ & No & 4\\
			3, 10 & $\langle \ddot \delta \rangle$, $\langle \ddot  \rho^2 \ddot \delta \rangle$ & $\langle \ddot \rho^2, \ddot \delta \rangle$ & - & 4\\
			4, 7 & $\langle \ddot \rho^2 \rangle$ & $\langle \ddot \rho^2 \rangle$ & Yes & 4\\
			5, 8 & $\langle \ddot \delta \rangle$, $\langle \ddot  \rho^2 \ddot \delta \rangle$ & $\langle \ddot \delta \rangle$, $\langle \ddot  \rho^2 \ddot \delta \rangle$ & Yes & 4\\
			9, 12 & $\langle \ddot r_{\Box}, \ddot r_{\boxtimes} \rangle$ & $\langle \ddot r_{\Box}, \ddot r_{\boxtimes} \rangle$ & Yes & 4\\
			11, 13 & $D_8$ & $D_8$ & Yes & 4\\
			14, 15 & $\langle \ddot \rho^2, \ddot \delta \rangle$ & $\langle \ddot \rho^2, \ddot \delta \rangle$ & Yes & 5
			\\
			\bottomrule
		\end{tabular}
		\label{table:LMTcorrectionC71}
	\end{table}
	
	If instead $\sgn(Q_1) = -$ then we note that $\overline  \phi^f \overline \Omega_1^2 = (\overline r_1, \overline r_2)\overline \Omega_1^2$, so if $(F)_1$ holds then $\langle (\overline r_1, \overline r_2) \rangle \overline \Omega_1^2$ is a Frattini chief factor of $\overline H$. Hence, in this case we have $\delta_{\overline H, H_{\overline \Omega}}(C_2) \leq 1$, and so  $\delta_{\overline H}(C_2) \geq 4$ only if $(\star)$ holds. This implies that $\ddot G = L \times C_a$ with $L \in \{\langle \ddot \rho^2, \ddot \delta \rangle, \langle \ddot r_\Box, \ddot r_\boxtimes \rangle, D_8\}$ and $a$ an even divisor of $f$. However, if $L$ is $D_8$ then $\delta_{\overline H, H_{\overline \Omega}}(C_2) = 0$, and if $L$ is one of $\langle \ddot \rho^2, \ddot \delta \rangle$ or $\langle \ddot r_\Box, \ddot r_\boxtimes \rangle$ then $\delta_{\overline H, H_{\overline \Omega}}(C_2) = 1$. So $\delta_{\overline H}(C_2) = 4$ if and only if $\pi(\overline G) = \frac{\overline G \cap \overline \Delta}{\overline \Omega} \in \{\langle \ddot \rho^2, \ddot \delta \rangle, \langle \ddot r_\Box, \ddot r_\boxtimes \rangle\}$ and $(F)_1$ holds. If instead $(F)_0$ holds then the same analysis as above shows that $\delta_{\overline H}(C_2) = 4$ if and only if $\pi(\overline G) = \frac{\overline G \cap \overline \Delta}{\overline \Omega} = \langle \ddot \rho^2, \ddot \delta \rangle$.
	
	Next assume that $m \equiv 2 \pmod 4$ and $D(V_i) = \Box$, so we may apply Lemma \ref{lemma:reducingLMTtofinitegp}. In this case $H_{\overline \Omega}$ is equal to $\overline I_1^2$, and $c=1$. The isomorphism $H_{\overline \Gamma}/H_{\overline \Omega}$ to $\overline \Gamma/\overline \Omega$ maps
	\begin{align*}
		(\widetilde \delta_1, \widetilde \delta_2) & \mapsto \ddot \rho^2\\
		(\widetilde \delta_1, 1) & \mapsto \ddot \delta\\
		\widetilde \phi & \mapsto \ddot \phi\\
		\overline \sigma & \mapsto \ddot r_{\Box} \text{ or } \ddot r_{\boxtimes}.
	\end{align*}
	
	Using the same method as before, we can determine that $H_{\overline G}$ has $\delta_{H_{\overline G}}(C_2) \geq 4$ if and only if it is contained in Table \ref{table:LMTcorrectionC72}. As previously, a dash in the third column indicates that there are no restrictions on $|\overline G \cap \langle \overline \Omega, \overline \phi \rangle|/|\overline \Omega|$.
	
	\begin{table}
		\centering
        \caption{Characterisation of the cases for which $\delta_{H_{\overline G}}(C_2) \geq 4$, given $m \equiv 2 \pmod 4$ and $D(V_i) = \Box$.}
		\begin{tabular}{llcl}
			$(\overline G \cap \overline \Delta)/\overline \Omega$ & $\pi(\overline G)$ & $2 \mid |\overline G \cap \langle \overline \Omega, \overline \phi \rangle|/|\overline \Omega|$ & $\delta_{H_{\overline G}}(C_2)$ \\ \toprule
			1 & 1 & No & 4\\
			1 & $\langle \ddot \delta \rangle$, $\langle \ddot \rho^2 \ddot \delta \rangle$ & - & 4\\
			$\langle \ddot \delta \rangle$, $\langle \ddot \rho^2 \ddot \delta \rangle$ & $\langle \ddot \delta \rangle$, $\langle \ddot \rho^2 \ddot \delta \rangle$ & No & 4\\
			$\langle \ddot \rho^2 \rangle$ & $\langle \ddot \rho^2, \ddot \delta \rangle$ & - & 4\\
			$\langle \ddot \rho^2 \rangle$ & $\langle \ddot \rho^2 \rangle$ & Yes & 4\\
			$\langle \ddot \rho^2, \ddot \delta \rangle$ & $\langle \ddot \rho^2, \ddot \delta  \rangle$ & No & 4\\
			$\langle \ddot \delta \rangle$, $\langle \ddot \rho^2 \ddot \delta \rangle$ & $\langle \ddot \rho^2, \ddot \delta \rangle$ & - & 4\\
			$\langle \ddot r_{\Box} \rangle$, $\langle \ddot r_{\boxtimes} \rangle$ & $\langle \ddot r_{\Box} \rangle$, $\langle \ddot r_{\boxtimes} \rangle$ & Yes & 4\\
			$\langle \ddot r_{\Box}, \ddot r_{\boxtimes} \rangle$ & $\langle \ddot r_{\Box}, \ddot r_{\boxtimes} \rangle$ & Yes & 4\\
			$D_8$ & $D_8$ & Yes & 4
			\\
			1 & 1 & Yes & 5\\
			$\langle \ddot \delta \rangle$, $\langle \ddot \rho^2 \ddot \delta \rangle$ & $\langle \ddot \delta \rangle$, $\langle \ddot \rho^2 \ddot \delta \rangle$ & Yes & 5\\
			$\langle \ddot \rho^2, \ddot \delta \rangle$ & $\langle \ddot \rho^2, \ddot \delta  \rangle$ & Yes & 5\\
			\bottomrule
		\end{tabular}
		\label{table:LMTcorrectionC72}
	\end{table}
	
	Next assume that $m \equiv 0 \pmod 4$, and that $\sgn(Q_i) = +$. As such, $c=4$, and $H_{\overline \Omega} = \langle \overline I_1^2, (\overline \delta_1, \overline \delta_2), \overline \sigma \rangle$. In this case, since $c=4$, we have that $H_{\overline \Gamma}/H_{\overline \Omega} \cong (H_{\overline \Gamma} \overline \Omega)/\overline \Omega < \overline \Gamma / \overline \Omega$. In particular, by the same method as previously, we have $(H_{\overline \Gamma} \overline \Omega)/\overline \Omega = \langle \ddot \delta, \ddot \phi \rangle$. The isomorphism from $H_{\overline \Gamma}/H_{\overline \Omega}$ to $\langle \ddot \delta, \ddot \phi \rangle$ maps $(\overline \delta_1, \overline \delta_2)$ to $\ddot \delta$ and $(\overline \phi_1, \overline \phi_2)$ to $\ddot \phi$. Thus, we can determine that $H_{\overline G}$ has $\delta_{H_{\overline G}}(C_2) \geq 4$ if and only if it is contained in Table \ref{table:LMTcorrectionC73}.
	
	\begin{table}
		\centering
        \caption{Characterisation of the cases for which $\delta_{H_{\overline G}}(C_2) \geq 4$, given $m \equiv 0 \pmod 4$ and $\sgn(Q_i) = +$.}
		\begin{tabular}{llcl}
			$(\overline G \cap \langle \overline \Omega, \overline \delta \rangle)/\overline \Omega$ & $\pi(\overline G \cap \langle \overline \Omega, \overline \delta, \overline \phi \rangle)$ & $2 \mid |\overline G \cap \langle \overline \Omega, \overline \phi \rangle|/|\overline \Omega|$ & $\delta_{H_{\overline G}}(C_2)$ \\ \toprule
			1 & 1 & Yes & 4\\
			$\langle \ddot \delta \rangle$ & $\langle \ddot \delta \rangle$ & Yes & 4
			\\
			\bottomrule
		\end{tabular}
		\label{table:LMTcorrectionC73}
	\end{table}
	
	Finally, suppose that $m \equiv 0 \pmod 4$ and that $\sgn(Q_i) = -$. In this case, $c=2$ and $H_{\overline \Omega} = \langle \overline \Omega_1^2, (\overline r_1, \overline r_2), (\overline \delta_1, \overline \delta_2), (\overline r_1, 1) \overline \sigma \rangle$. Indeed, $\theta(\sigma)$ is equal to $2^{\binom{m}{2}} \mu^{n-1} (\mathbb{F}^\times)^2$, where $\mu$ is a generator of $\mathbb{F}^\times$. Since $4 \mid m$, $2^{\binom{m}{2}}$ is a square while $\mu^{n-1}$ is not a square, so we have that $\theta(\sigma) = \boxtimes$. Additionally, we have that $(H_{\overline \Gamma} \overline \Omega)/\overline \Omega$ is equal to $\langle \ddot \rho^2, \ddot \delta, \ddot \phi \rangle$ and
	\begin{align*}
		(\overline r_1, 1) & \mapsto \ddot \rho^2\\
		(\overline \delta_1, 1) & \mapsto \ddot \delta\\
		(\overline \phi_1, \overline \phi_2) & \mapsto \ddot \phi.
	\end{align*}
	
	Note that $\langle (\overline r_1, \overline r_2) \rangle \overline \Omega_1^2$ is Frattini, since it is a square. By the same method as previously, $H_{\overline G}$ has $\delta_{H_{\overline G}}(C_2) \geq 4$ if and only if it is contained in Table \ref{table:LMTcorrectionC74}.
	
	\begin{table}
		\centering
        \caption{Characterisation of the cases for which $\delta_{H_{\overline G}}(C_2) \geq 4$, given $m \equiv 0 \pmod 4$ and $\sgn(Q_i) = -$.}
		\begin{tabular}{llcl}
			$(\overline G \cap \langle \overline \Omega, \overline \rho^2, \overline \delta \rangle)/\overline \Omega$ & $\pi(\overline G \cap \langle \overline \Omega, \overline \rho^2, \overline \delta, \overline \phi \rangle)$ & $2 \mid |\overline G \cap \langle \overline \Omega, \overline \phi \rangle|/|\overline \Omega|$ & $\delta_{H_{\overline G}}(C_2)$ \\ \toprule
			$\langle \ddot \rho^2 \rangle$ & $\langle \ddot \rho^2 \rangle$ & Yes & 4\\
			$\langle \ddot \rho^2, \ddot \delta \rangle$ & $\langle \ddot \rho^2, \ddot \delta \rangle$ & Yes & 4
			\\
			\bottomrule
		\end{tabular}
		\label{table:LMTcorrectionC74}
	\end{table}
\end{proof}

\subsection{Non-Frattini chief factors of maximal subgroups of classical groups}

If $G$ is any classical group with $\Omega \leq G \leq \Sigma$ for which $\overline G$ is almost simple, we can use Theorem \ref{theorem:LMTcrowns} to determine a bound on the non-Frattini chief factors of any maximal subgroup $H \mleq G$, using the inequality $\delta_H(A) \leq \delta_{\overline H}(A) + 1$. In the lemma below we determine a bound on $\delta_H(A)$ in the case that $\overline G$ is not almost simple.

\begin{lemma} \label{lemma:non_simple_gen_bound}
	Suppose that $G$ is a classical group with $\Omega \leq G \leq \Sigma$, such that $\Omega$ is not quasisimple. Let $H$ be a maximal subgroup of $G$. Then a bound on the non-Frattini chief factors of $\Omega, I, G, \Omega \cap H$ and $H$ is found in Tables \ref{table:chieffactorsnonqs1} and \ref{table:chieffactorsnonqs2}, which, by \cite[Proposition 2.9.2]{KL} list all of the possibilities for $\Omega$.
\end{lemma}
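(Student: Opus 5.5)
The proof will be a finite case check, organised by the list in \cite[Proposition 2.9.2]{KL}. The classical groups $\Omega$ that are not quasisimple are
\[
\SL_2(2),\ \SL_2(3),\ \SU_2(2),\ \SU_2(3),\ \SU_3(2),\ \Sp_2(2),\ \Sp_2(3),\ \Sp_4(2)',\ \Omega_3(3),\ \Omega_2^\pm(q),\ \Omega_4^+(q),
\]
together with the trivial cases $\Omega_1(q)$ and the one-dimensional groups of type $\boldL$ and $\boldU$. For each of these I will write down $\Omega$, $I$ and $\Sigma$ explicitly. I will then determine the non-Frattini chief factors of every group $G$ with $\Omega \leq G \leq \Sigma$ and of each maximal subgroup $H$ of $G$, and record upper bounds for each $\delta(A)$ in Tables \ref{table:chieffactorsnonqs1} and \ref{table:chieffactorsnonqs2}.

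The sporadic small cases come first. These are $\SL_2(2) \cong \Sym(3)$, $\SL_2(3) \cong 2.\Alt(4)$, $\SU_2(2)$, $\SU_2(3)$, $\SU_3(2) \cong 3^{1+2}:Q_8$, $\Sp_4(2)' \cong \Alt(6)$, $\Omega_3(3) \cong \Alt(4)$ and $\Omega_4^+(2)$, $\Omega_4^+(3)$. In each case $\Sigma$ has bounded order, so every overgroup $G$ and every maximal subgroup $H$ can be listed. I will compute $\delta_H(A)$ directly, in Magma \cite{magma} if necessary, and take the maximum over all $G$ and $H$ to give the table entries.

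The infinite families need a structural argument. For $\Omega_2^\pm(q)$, $\Omega$ is cyclic of order $(q\mp1)/(2,q-1)$ and $I$ is dihedral, so $G$ lies in a group of shape $C_{(q\mp1)}.(C_2 \times C_f)$ or $C_{(q\mp 1)}.C_{2f}$. The maximal subgroups of such metacyclic-by-cyclic groups either contain the cyclic normal subgroup or meet it in a subgroup of prime index. Proposition~\ref{prop:maximalsubgroupgeneralmethod} therefore shows that, for each prime $r$ dividing $(q\mp1)$, $H$ has at most one non-Frattini chief factor of order $r$ inside $\Omega \cap H$. The remaining chief factors of $H$ come from $G/\Omega$, which has at most three chief factors $C_2$; I will bound these as in Lemma~\ref{lemma:GammaiFiDi}. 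For $\Omega_4^+(q) \cong \SL_2(q) \circ \SL_2(q)$ with $q \geq 4$, I will use the description $\overline\Omega \cong \PSL_2(q)^2$ with $\overline\Sigma$ inside $\Aut(\PSL_2(q)) \wr \Sym(2)$. Proposition~\ref{prop:maxsubgpdirectproduct} and Proposition~\ref{prop:XSimpleMaxSubgrpXkC} (with $t=2$) then describe $H \cap \overline\Omega$ as a diagonal $\PSL_2(q)$, or as $L_1 \times L_2$ with $L_i$ maximal or novelty in $\PSL_2(q)$, or as $\PSL_2(q) \times L$. The chief factors of these follow from the known maximal subgroups of $\PSL_2(q)$, and Corollary~\ref{cor:CFsinwreathproduct} and Theorem~\ref{theorem:mainjumping} control the sections lying in $\overline\Omega_1^2$. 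Finally, bounds for $\delta_H(A)$ pass from $\overline H$ to $H$ using $\delta_H(A) \leq \delta_{\overline H}(A)+1$. I will use Corollary~\ref{cor:upperbounddeltaGN} for the parts of $H$ lying in $\Omega \cap H$.

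I expect the main obstacle to be the $C_2$ chief factors in the infinite families, in particular $\Omega_4^+(q)$ with $q$ odd. Here $G/\Omega$ can be as large as $D_8 \times C_f$, and whether the various $C_2$ sections of $H$ are Frattini depends delicately on which elements $G$ contains, as in Example~\ref{example:FDcounterexample}. For these I do not intend to find the exact value, only a uniform upper bound, obtained by adding the contributions $(\gamma)$, $(F)$, $(D)$ and a bound for $\delta_{H,H\cap\Omega}(C_2)$ derived from Lemma~\ref{lemma:jumpingbabyresultfull} and Lemma~\ref{lemma:diagnonfrattini}. If that bound is too weak in a particular subcase, I will settle the subcase by a computation in a finite quotient, as in Lemma~\ref{lemma:reducingLMTtofinitegp}.
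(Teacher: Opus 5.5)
Your case split is the paper's ($q\le 3$ by computer, $m=1$ from $\Sigma/\Omega$, $m=2$ from the cyclic/dihedral structure, and $\Omega_4^+(q)=\SL_2(q)\circ\SL_2(q)$ with $q\ge 4$ via Propositions~\ref{prop:maxsubgpdirectproduct} and~\ref{prop:XSimpleMaxSubgrpXkC}). However, the $\Omega_2^\pm(q)$ step fails as written.

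\emph{The bound on $G/\Omega$ is false.} Your shapes $C_{q\mp1}.(C_2\times C_f)$ and $C_{q\mp1}.C_{2f}$ drop the similarity layer $\Delta/I\cong C_{q-1}$; they are essentially the shapes modulo scalars. Lemma~\ref{lemma:GammaiFiDi} concerns $\overline G$ with $\overline\Omega$ simple, so it cannot be used to bound $G/\Omega$. In fact $G/\Omega$ can have four non-Frattini $C_2$'s. Take $\Omega_2^+(q)$ with $q=p^f$ and $f$ even, and fix a hyperbolic basis. Let $D=\{\diag(\lambda,\mu):\lambda\mu\in(\mathbb F_q^\times)^2\}$, let $\sigma$ be the swap, and set $G=\langle D,\sigma,\phi\rangle\le\Sigma$. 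Then $\Omega\le D^2$, $G'\le D^2$ and $D/D^2\cong C_2^2$, so $G/\Omega$ maps onto $C_2^4$. Hence ``one $C_2$ from $\Omega\cap H$ plus at most three from $G/\Omega$'' rests on a false bound. The corrected count only gives $\delta_H(C_2)\le 5$, not the table's $4$.

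\emph{The missing idea is a trade-off between the two layers.} Modulo the diagonal part $\{\diag(\lambda,\mu)\}/\Omega\cong C_{q-1}\times C_2$, the group $\Sigma/\Omega$ is $C_2\times C_f$. So four $C_2$'s force $(G/\Omega)\cap(C_{q-1}\times C_2)$ to have $2$-rank two, and hence $\SO_2^+(q)\le G$. Then $O_2(\Omega)=\Phi(O_2(\SO_2^+(q)))\le\Phi(G)$. Consequently every maximal $H$ with $H\cap\Omega<\Omega$ has odd index and contains $O_2(\SO_2^+(q))$, so $O_2(\Omega)\le\Phi(H\cap\SO_2^+(q))\le\Phi(H)$. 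Thus $\Omega\cap H$ contributes no non-Frattini $C_2$ exactly when $G/\Omega$ contributes four. This is what makes the table entries $\delta_H(C_2)\le 4$ and $\delta_G(C_2)\le 4$ true.

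\emph{Two smaller points.} First, $\Sp_4(2)'\cong\Alt(6)$ is simple. The non-quasisimple case is $\Omega=\Sp_4(2)\cong\Sym(6)$, which is the row $\boldS$, $(4,2)$ with $\delta_\Omega(C_2)=1$.

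Second, for $\Omega_4^+(q)$ the obstacle you anticipate does not arise. The table only claims $\delta_H(C_2)\le 6$ there. The paper gets this crudely: $d(H)\le d(R_1)+4\le 6$, and a central $C_2$ satisfies $\delta_H(C_2)\le d(H)$ by Theorem~\ref{theorem:crowns}. So no $(\gamma)/(F)/(D)$ bookkeeping or finite-quotient computation is needed, and Lemma~\ref{lemma:reducingLMTtofinitegp} is specific to $\mathscr C_7$ in any case. Your uniform treatment via $\overline\Omega\cong\PSL_2(q)^2$ does have the merit of also covering $q$ even, which the paper's write-up only carries out for $q$ odd.
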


\begin{table} 
	\begin{center}
		\caption{Bounds on the number of non-Frattini chief factors isomorphic to $C_2$ in certain subgroups of $\Sigma$, when $\Omega$ is not quasisimple, for some $H \mleq G$.} \label{table:chieffactorsnonqs1}
		\begin{tabular}{llrrrrr}
			Case & $(m, q)$ & $\delta_{\Omega}(C_2)$ & $\delta_{I \cap G}(C_2)$ & $\delta_{G}(C_2)$ & $\delta_{\Omega \cap H}(C_2)$ & $\delta_{H}(C_2)$ \\ \toprule
			$\boldL$ & $(1,q)$ & 0 & 1 & 2 & 0 & 2 \\
			& $(2,2)$ & 1 & 1 & 1 & 1 & 1 \\
			& $(2,3)$ & 0 & 1 & 1 & 2 & 2 \\ \midrule
			$\boldU$ & $(1,q)$ & 0 & 1 & 2 & 0 & 2 \\
			& $(2,2)$ & 1 & 1 & 2 & 1 & 2 \\
			& $(2,3)$ & 0 & 1 & 2 & 2 & 4 \\
			& $(3,2)$ & 2 & 2 & 3 & 2 & 3 \\ \midrule
			$\boldS$ & $(2,2)$ & 1 & 1 & 1 & 1 & 1 \\
			& $(2,3)$ & 0 & 0 & 1 & 2 & 3 \\
			& $(4,2)$ & 1 & 1 & 1 & 2 & 3 \\ \midrule
			$\boldO$ & $(1,q)$ & 0 & 2 & 3 & 0 & 3 \\
			& $(3,3)$ & 0 & 2 & 2 & 2 & 4 \\
			& $(2,q,\pm)$ & 1 & 2 & 4 & 1 & 4 \\
			& $(4,2,+)$ & 2 & 2 & 2 & 2 & 2 \\
			& $(4,3,+)$ & 0 & 2 & 2 & 4 & 4 \\
			& $(4,q,+)$, $q \geq 4$ & 0 & 2 & 3 & 4 & 6 \\ \bottomrule
		\end{tabular}
	\end{center}
\end{table}

\begin{table}
	\begin{center}
		\begin{threeparttable}
			\caption{Bounds on the number of non-Frattini chief factors equivalent to any $A \not\cong C_2$ in certain subgroups of $\Sigma$, when $\Sigma$ is not almost quasisimple.}\label{table:chieffactorsnonqs2}
			\begin{tabular}{llrrrrr}
				Case & $(m, q)$ & $\delta_{\Omega}(A)$ & $\delta_{I \cap G}(A)$ & $\delta_{G}(A)$ & $\delta_{\Omega \cap H}(A)$ & $\delta_{H}(A)$ \\ \toprule
				$\boldL$ & $(1,q)$ & 0 & 1 & 2 & 0 & 2 \\
				& $(2,2)$ & 1 & 1 & 1 & 1 & 1 \\
				& $(2,3)$ & 1 & 1 & 1 & 1 & 1 \\ \midrule
				$\boldU$ & $(1,q)$ & 0 & 1 & 2 & 0 & 2 \\
				& $(2,2)$ & 1 & 2 & 2 & 1 & 2 \\
				& $(2,3)$ & 1 & 1 & 1 & 1 & 1 \\
				& $(3,2)$ & 1 & 1 & 1 & 3\tnote{c} & 3\tnote{c} \\ \midrule
				$\boldS$ & $(2,2)$ & 1 & 1 & 1 & 1 & 1 \\
				& $(2,3)$ & 1 & 1 & 1 & 1 & 1 \\
				& $(4,2)$ & 1 & 1 & 1 & 1 & 1 \\ \midrule
				$\boldO$ & $(1,q)$ & 0 & 0 & 2 & 0 & 2 \\
				& $(3,3)$ & 1 & 1 & 1 & 1 & 1 \\
				& $(2,q,\pm)$ & 1 & 1 & 3\tnote{c} & 1 & 3\tnote{c} \\
				& $(4,2,+)$ & 2 & 2 & 2 & 2 & 2 \\
				& $(4,3,+)$ & 2 & 2 & 1 & 2 & 4 \\
				& $(4,q,+)$, $q \geq 4$ & 2\tnote{na} & 2\tnote{na} & 2\tnote{na} & 2 & 3 \\ \bottomrule
			\end{tabular}
			\begin{tablenotes}
				\item[c] Only achieved if $A$ is a central chief factor.
				\item[na] Only achieved if $A$ is a non-abelian chief factor.
			\end{tablenotes}
		\end{threeparttable}
	\end{center}
\end{table}

\begin{proof}
	If $q$ is equal to $2$ or $3$, then we can prove the result computationally. If $m=1$ then $\Omega = 1$, and we can use the structure results for $\Sigma/\Omega$ given in \cite[Chapter 2]{KL} to prove the desired result. Hence, we assume that $q > 3$ and $m > 1$, and in particular that $\Omega$ is of type $\boldO$. If $m=2$, then $\Omega$ is cyclic, and $I$ is a dihedral group. Thus the result is clear by \cite[Sections 2.7 and 2.8]{KL}.
	
	So we assume that $m=4$, and that $q \geq 4$. Then,
	\[
	\Omega = \SL_2(q) \circ \SL_2(q),
	\]
	with $\SL_2(q)$ quasisimple. So the entries for $\delta_{\Omega}(A)$, $\delta_I(A)$, and $\delta_G(A)$ given in the table are clear. Let $H$ be a maximal subgroup of $G$ such that $H \cap \Omega < \Omega$, and hence $H = (H \cap \Omega) . \left( \frac{G}{\Omega} \right)$.
	
	Suppose first that $q$ is odd. Following the proof of \cite[Proposition 2.9.1]{KL}, we now construct $\Sigma$ by adding generators to $\Omega$. Let $W_1, W_2$ be two $2$-dimensional vector spaces over $\mathbb{F}_q$, with symplectic respective bilinear forms $\kappa_1, \kappa_2$. Let $X_i := X(W_i, \kappa_i)$ for any $X$ in the chain of subgroups (\ref{eq:chainofsubgps}). Then, 
	\[
	\Sigma = \langle \Delta_1 \circ \Delta_2, \phi, \sigma \rangle,
	\]
	where $\phi$ is a field automorphism which acts the same on both $W_1$ and $W_2$, and $\sigma$ swaps the copies of $\Delta_i$.
	
	If the projection of $G$ to $\Sym(2)$ is trivial, then by Proposition \ref{prop:maxsubgpdirectproduct} and \cite[Table 8.1]{BHRD} we can prove the desired result.
	
	Suppose instead that $G$ projects onto $\Sym(2)$. If $H \cap (\SL_2(q) \circ \SL_2(q))$ is central in $\SL_2(q) \circ \SL_2(q)$, or is isomorphic to $Z(\SL_2(q)).\PSL_2(q)$, then $\delta_{H \cap \Omega}(A) \leq 1$ for any $A$, thus proving the desired result. Hence, by Proposition \ref{prop:XSimpleMaxSubgrpXkC}, we may assume that $H \cap (\SL_2(q) \circ \SL_2(q))$ is equal to $R_1 \circ R_2$ for some isomorphic maximal or novelty maximal subgroups $R_i$ of $\SL_2(q)$. By \cite[Table 8.1]{BHRD}, for any chief factor $A$ we have $\delta_{R_i}(A) \leq 2$, with equality only if $A \cong C_2$. If $B \not \cong C_2$ then this implies by Corollary \ref{cor:upperbounddeltaGN} that $\delta_{H, H \cap \Omega}(B) \leq 2$ and so $\delta_H(B) \leq 3$ as required. So we may assume that $B \cong C_2$. However, $d(H) \leq d(R_1) + 4 \leq 6$, so $\delta_H(C_2) \leq 6$ by Theorem \ref{theorem:crowns}.
\end{proof}

Additionally, we show it is often possible to obtain a better bound on $\delta_H(A)$ than $\delta_{\overline H}(A) + 1$ with the following result.

\begin{prop}\label{prop:atmost4inI}
	Let $H$ be a maximal subgroup of a classical group $G$ with $\Omega \leq G \leq \Sigma$. Then for any group $N$ with $\Omega \leq N \leq I$, we have $\delta_{H \cap N}(A) \leq 4$ for any chief factor $A$, with equality only if $A$ is central.
\end{prop}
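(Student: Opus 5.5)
Since $N$ is normal in $G$, the subgroup $H\cap N$ is normal in $H$, and $H\cap N$ contains $H\cap\Omega$ with $(H\cap N)/(H\cap\Omega)\cong (H\cap N)\Omega/\Omega\le N/\Omega\le I/\Omega$. The plan is to bound separately the chief factors of $H\cap N$ inside $H\cap\Omega$ and those in the quotient, via
\[
\delta_{H\cap N}(A)=\delta_{H\cap N,\,H\cap\Omega}(A)+\delta_{(H\cap N)/(H\cap\Omega)}(A).
\]

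\textbf{The quotient term.} The quotient embeds in $I/\Omega$. When $\Omega$ is quasisimple, $I/\Omega$ is cyclic in cases $\boldL,\boldU,\boldS$ and is $C_2$ or $C_2^2$ in case $\boldO$ (from \cite[Chapter 2]{KL}). So $\delta_{(H\cap N)/(H\cap\Omega)}(A)\le 1$ unless $A\cong C_2$ in the orthogonal case, where the bound is $2$. Every such factor is central, since the quotient is abelian.

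\textbf{The term inside $H\cap\Omega$.} By Corollary \ref{cor:upperbounddeltaGN}, this is bounded by $\max\delta_{H\cap\Omega}(B)$ over suitable $B$. I would bound $\delta_{H\cap\Omega}(B)$ class by class using the structure from Section \ref{section:Aschbacher}.

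\emph{Classes $\mathscr C_2$, $\mathscr C_7$.} Here $H\cap\Omega$ is the wreath-type group $\Omega_1^t.\del(\cdot).J$, to which Corollary \ref{cor:CFsinwreathproduct} and Theorem \ref{theorem:mainjumping} apply. They give at most one non-Frattini factor per chief factor of $E$ in $F$, plus one $C_2$ from $\Sym(t)$. I would reuse the counts from the proofs of Propositions \ref{prop:LMTcorrectionLUSC2}, \ref{prop:LMTcorrectionOC2} and \ref{prop:LMTC7calcs}. The key point is that they bound $\delta_{H_{\overline\Omega}}$ rather than $\delta_{\overline H}$, so the contribution of $\overline G/\overline\Omega$ (the properties $(\gamma),(F)$) disappears. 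What remains is at most $2$ (or $3$ for $C_2$, including the scalar $C_2$ lost on passing to $\overline H$).

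\emph{Classes $\mathscr C_1,\mathscr C_4$ (direct products).} I would apply Proposition \ref{prop:prelimsC1} or its $\mathscr C_4$ analogue: each factor $\Omega_i$ contributes at most one non-central factor, and $\del(\cdot)$ contributes at most one more $C_2$.

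\emph{Remaining classes.} For $\mathscr C_3,\mathscr C_5,\mathscr C_6,\mathscr C_8,\mathscr S$, I would read the structure of $H\cap\Omega$ off the propositions above (or \cite{BHRD} for small cases). For non-quasisimple $\Omega$, I would use Lemma \ref{lemma:non_simple_gen_bound}, whose $\delta_{\Omega\cap H}$ column is at most $4$, with $4$ attained only for central $C_2$ or central $C_3$.

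\textbf{Combining, and the main obstacle.} A non-central $A$ can only come from $H\cap\Omega$, where the bounds give at most $3$, so the strict inequality follows. For central $A$, the sum is at most $2+2=4$. The hardest step is showing that a central $C_2$ never reaches $5$ in the orthogonal $\mathscr C_2/\mathscr C_7$ case with $t=2$, where $\delta_{H\cap\Omega}(C_2)$ could a priori be $3$ while $I/\Omega\cong C_2^2$ adds $2$. My first attempt would be to show that when $N/\Omega$ is large, some $\del$-factor becomes Frattini, using Lemma \ref{lemma:jumpingbabyresultdel} and the squaring argument in condition (5) of Proposition \ref{prop:LMTcorrectionOC2}. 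If this fails, I would fall back on the finite reduction of Lemma \ref{lemma:reducingLMTtofinitegp}, restricted to $\overline L\le\overline I$, and check the cases by Magma.
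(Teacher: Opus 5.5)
There is a genuine gap. Your bounds for $\delta_{H\cap\Omega}$ fail exactly where equality occurs, and the splitting you use cannot recover from this.

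In $\mathscr C_7$ of type $\O_m^\epsilon(q)\wr\Sym(2)$ with $m\equiv 2\pmod 4$ and $D(V_i)=\Box$, the proof of Proposition \ref{prop:LMTC7calcs} gives $H_{\overline\Omega}=\overline I_1^2$ with $\overline I_1/\overline\Omega_1\cong C_2^2$. This group does not project onto $\Sym(2)$, so Corollary \ref{cor:CFsinwreathproduct} is not even available, and it already has four non-Frattini central $C_2$ factors. Similarly, the $\mathscr C_4$ row of Table \ref{table:LMTtable} for $\O_{n_1}^{\epsilon_1}(q)\otimes\O_{n_2}^{\epsilon_2}(q)$ with square discriminants, applied with $\overline G=\overline\Omega$, gives $\delta_{H_{\overline\Omega}}(C_2)=4$. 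For $\Omega=\Omega_4^+(q)$, Table \ref{table:chieffactorsnonqs1} records $\delta_{\Omega\cap H}(C_2)=4$. So your count ``$2+2$'' for central factors is unfounded, and the obstacle you isolate ($3+2$) is not the real one.

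Moreover, bounding $\delta_{H\cap N,H\cap\Omega}(A)$ by $\max_B\delta_{H\cap\Omega}(B)$ via Corollary \ref{cor:upperbounddeltaGN} cannot detect that factors of $H\cap\Omega$ become Frattini or get identified inside $H\cap N$. In the $\mathscr C_7$ case above with $N=I\le G$, where $(H\cap I)/(H\cap\Omega)\cong I/\Omega\cong C_2^2$, it only yields $4+2$. You also leave the obstacle open. Your Magma fallback through Lemma \ref{lemma:reducingLMTtofinitegp} computes $\delta_{H_{\overline L}}(C_2)$, i.e.\ modulo scalars. So it cannot settle the one question that lives in $H$ rather than $\overline H$: whether the scalar subgroup adds a further non-Frattini $C_2$.

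The missing idea is not to split off $H\cap\Omega$ at all.
\begin{itemize}
\item After replacing $N$ by $N\cap G$, the group $\overline H\cap\overline N$ is a maximal or novelty maximal subgroup of the classical group $\overline N$. So Theorem \ref{theorem:LMTcrowns} (valid in that setting by the remark after Theorem \ref{theorem:LMTgenerators}) bounds $\delta_{\overline H\cap\overline N}(A)$ directly. Here $(\gamma)_0$ and $(F)_0$ are automatic since $\overline N\le\overline I$, and this bound already accounts for the interaction between $H_{\overline\Omega}$ and the top.
\item The scalars of $H\cap N$ form a cyclic central subgroup, so $\delta_{H\cap N}(A)\le\delta_{\overline H\cap\overline N}(A)+1$, with the extra factor central.
\item Add the routine reductions: $\Omega\le H$; $Z(\Delta)\cap G\not\le H$; and non-simple $\overline\Omega$ via Tables \ref{table:chieffactorsnonqs1}, \ref{table:chieffactorsnonqs2} and the argument of Lemma \ref{lemma:non_simple_gen_bound}. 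This settles every $A\not\cong C_2$.
\item For $A\cong C_2$, one only examines the pairs in Table \ref{table:LMTtable} with $\delta_{\overline H\cap\overline N}(C_2)=4$. The paper names two cases: $\mathscr C_4$ of type $\O_{n_1}^{\epsilon_1}\otimes\O_{n_2}^{\epsilon_2}$ with $n_i$ even and $\overline I_j/\overline\Omega_j\cong C_2^2$; and $\mathscr C_7$ of type $\O_m^\epsilon(q)\wr\Sym(2)$ with $\overline N=\overline\Omega$ and $D(V_i)=\Box$.
\item In both cases $Z(I_j)\le\Omega_j$. So the scalars of $H\cap N$ lie in the centre of the perfect normal subgroup generated by the $\Omega_j$, hence in $\Phi(H\cap N)$, and $\delta_{H\cap N}(C_2)=4$.
\end{itemize}
The $\mathscr C_1$ row of type $\O_m^{\epsilon_1}(q)\perp\O_{n-m}^{\epsilon_2}(q)$ with $(D)_2$, i.e.\ $\overline N=\overline I$ and $H\cap I=I_1\times I_2$ with square discriminants, also reaches $4$. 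The paper's list omits it, but the same observation disposes of it.
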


\begin{proof}
	If $H$ contains $\Omega$ then the result is immediate by \cite[Proposition 3.12]{LMT} and Lemma \ref{lemma:non_simple_gen_bound}. Similarly, if $H$ does not contain $Z(\Delta) \cap G$ then $H = Z.\overline G$ for some cyclic group $Z$, so the result follows by \cite[Proposition 3.12]{LMT} and Lemma \ref{lemma:non_simple_gen_bound}. Hence, we assume that $H$ contains $Z(\Delta) \cap G$ and does not contain $\Omega$. In particular, $\overline H$ is a maximal subgroup of $\overline G$ which does not contain $\overline \Omega$.
	
	If $\overline \Omega$ is not simple, then Tables \ref{table:chieffactorsnonqs1} and \ref{table:chieffactorsnonqs2} provide the desired bound on $\delta_{H \cap N}(A)$ unless $\Omega = \Omega_4^+(q)$. In this case the same argument as in the proof of Lemma \ref{lemma:non_simple_gen_bound} provides the desired result, using the fact that $N/\Omega \leq C_2^2$.
	
	We hence assume that $\overline \Omega$ is simple. If $A \not\cong C_2$ then the result is proven by Theorem \ref{theorem:LMTcrowns}, so it suffices to prove that $\delta_{H \cap N}(C_2) \leq 4$. We may assume that $\delta_{\overline H \cap \overline N}(C_2) \geq 4$, as otherwise the result follows. So, by Table \ref{table:LMTtable}, $G$ is in case $\boldO$, and $H$ is either of type $\O_{n_1}^{\epsilon_1} \otimes \O_{n_2}^{\epsilon_2}$ with $n_i$ even, or is of type $\O_m^\epsilon(q) \wr \Sym(2)$ in $\mathscr C_7$ with $m$ even. 
	
	Assume first that $H$ is of type $\O_{n_1}^{\epsilon_1} \otimes \O_{n_2}^{\epsilon_2}$ with $n_i$ even. Then $\delta_{\overline H \cap \overline N}(C_2) = 4$ if and only if $\overline H \cap \overline N = \overline I_1 \times \overline I_2$, with $\overline I_j/\overline \Omega_j \cong C_2^2$ for each $j$. Hence, $Z(I_j)$ is contained in $\Omega_j$ and so is Frattini in $I_j$ for each $j$. This implies that the scalars in $H \cap N = I_1 \circ I_2$ are Frattini also, so $\delta_{H \cap N}(C_2) = 4$.
	
	Assume that $H$ is of type $\O_m^\epsilon(q) \wr \Sym(2)$ in $\mathscr C_7$ with $m$ even. By Table \ref{table:LMTtable}, we have $\overline N = \overline \Omega$ and $D(V) = D(V_i) = \Box$. As previously, the scalars of each $I_j$ are Frattini in $\Omega_j$, and so the scalars of $H \cap N$ are Frattini, proving that $\delta_{H \cap N}(C_2) = 4$.
\end{proof}

Let $\Sigma^*(V, \mathbb F, \kappa)$ be defined as in Definition \ref{def:sigma*}. Recall that $\Sigma^*$ is a classical group, unless $G$ is of type $\boldL$ and $\dim(V) \leq 2$. In this exceptional case we have that $\Sigma^* = \Sigma : C_2$. We prove the following result for $\Sigma^*$, extending the two results above.

\begin{lemma}\label{lemma:maxsubgpsSigmai}
	Consider a group $G$ with $\Omega \leq G \leq \Sigma^*$, and let $H$ be a maximal subgroup of $G$. Then, for any group $N$ such that $\Omega \leq N \leq I$, and any chief factor $A$, we have:
	\[
	\delta_{H \cap N}(A) \leq \begin{cases*}
		4 \qquad & if $A$ is central,\\
		3 & otherwise,
	\end{cases*}
	\]
	and
	\[
	\delta_{\overline H}(A) \leq \begin{cases*}
		5 \qquad & if $A \cong C_2$,\\
		3 & otherwise.
	\end{cases*}
	\]
	Additionally, 
	\begin{enumerate}[\upshape(1)]
		\item $\delta_{G}(A) \leq 4$, with equality only if $A \cong C_2$ and $G$ is of type $\boldO$;
		\item $\delta_{\overline G}(A) \leq 3$ with equality only if $A \cong C_2$ and $G$ is of type $\boldL$ or $\boldO$;
		\item $\delta_{G \cap N}(A) \leq 2$; and
		\item $\delta_{\overline G \cap \overline N}(A) \leq 2$.
	\end{enumerate}
\end{lemma}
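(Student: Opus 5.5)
The plan is to reduce to the cases already handled, namely Theorem \ref{theorem:LMTcrowns}, Proposition \ref{prop:atmost4inI} and Lemma \ref{lemma:non_simple_gen_bound}, and then to treat separately the single new situation: $G$ of type $\boldL$ with $\dim(V)\leq 2$ and $G\not\leq \Sigma$. In every other case $\Sigma^*=\Sigma$, so $G$ is a genuine classical group.

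\textbf{Bounds (1)--(4) on $G$ itself.} I would start with these, since they feed everything else. First take $\overline\Omega$ simple. By \cite[Proposition 3.12]{LMT}, $\delta_{\overline G}(A)=\delta_{\ddot G}(A)$ for abelian $A$, and the non-abelian chief factor occurs once. I then read off the chief factors of the soluble group $\ddot G\leq\Out(\overline\Omega)$ case by case:
\begin{itemize}
\item[] in case $\boldL$, $\ddot\Sigma=C_{(n,q-1)}:(C_f\times C_2)$, contributing at most $(\gamma)+(F)+(D)\leq 3$ copies of $C_2$;
\item[] in case $\boldO$, $\ddot\Sigma\leq D_8\times C_{2f}$, again at most $3$;
\item[] in cases $\boldU$ and $\boldS$, the metacyclic structure gives at most $2$.
\end{itemize}
For $A\not\cong C_2$, each prime $r$ contributes at most one central and one non-central $C_r$ in a non-equivalent way, so the count is at most $2$. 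Passing from $\overline G$ to $G$ adds only the cyclic centre $Z(G)$, which raises each count by at most one; for $C_2$ this gives the value $4$ only in case $\boldO$, where both $\ddot\Sigma$ and the scalars can contribute. For (3) and (4), $(G\cap N)/\Omega\leq I/\Omega$ is cyclic or $C_2^2$, and a chief series through the perfect group $\Omega$ (or $\overline\Omega$) gives at most $2$. When $\Omega$ is not quasisimple, the entries of Tables \ref{table:chieffactorsnonqs1} and \ref{table:chieffactorsnonqs2} are used directly.

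\textbf{Bounds on $H\cap N$ and $\overline H$ when $\Sigma^*=\Sigma$.} If $H$ does not contain $\Omega$, Proposition \ref{prop:atmost4inI} gives $\delta_{H\cap N}(A)\leq 4$, with equality only when $A$ is central. To sharpen the non-central bound to $3$, I would check that a non-central $A$ with $\delta=4$ cannot occur: by Theorem \ref{theorem:LMTcrowns}(2) the only non-central value exceeding $2$ is $C_2^2$ with $\delta\leq 3$. For $\overline H$, Theorem \ref{theorem:LMTcrowns} together with Table \ref{table:LMTtable} gives $\delta_{\overline H}(C_2)\leq 5$ and $\delta_{\overline H}(A)\leq 3$ otherwise.

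If instead $H\geq\Omega$, then $H/\Omega$ is maximal in the soluble group $G/\Omega$. In that case $\delta_H(A)\leq\delta_\Omega(A)+\delta_{H/\Omega}(A)$, which I bound using $\delta_\Omega\leq 1$ (from the Frattini centre) and the previous paragraph.

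\textbf{The main obstacle: case $\boldL$ with $n\leq 2$ and $G\not\leq\Sigma$.} Here $G$ contains elements acting as $\iota$. Since $\iota$ is inner modulo scalars, $\overline G\leq\overline\Sigma\times C_2$, so $\overline G$ is a subdirect subgroup of $\overline{G\cap\Sigma}.2$ of this form. I would apply Proposition \ref{prop:maxsubgpdirectproduct} to $\overline G\leq \overline\Sigma\times C_2$. Either $H$ contains the $C_2$ factor, in which case $\overline H=\overline{H_0}\times C_2$ for a maximal $H_0$ of $G\cap\Sigma$ and the known bounds increase by at most one, only for central $C_2$; or $H\cap(\overline G\cap\overline\Sigma)$ has index $2$, in which case $\overline H$ is a quotient-type extension and its counts are controlled by those of $\overline G\cap\overline\Sigma$.

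The delicate point is the extra $C_2$. It may push $\delta_{\overline H}(C_2)$ to $5$, which is allowed, but I must verify it cannot reach $6$. For this I plan to use that with $n\leq 2$ we have $\ddot\Sigma=C_{(2,q-1)}.C_f$, which gives at most $2$ copies of $C_2$ before adding $\iota$. The same argument yields $\delta_G(C_2)\leq 3$ in case $\boldL$, consistent with (1) and (2). For the finitely many small $q$ where $\Omega$ is not quasisimple, I would fall back on direct computation as in Lemma \ref{lemma:non_simple_gen_bound}.
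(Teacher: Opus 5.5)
Your skeleton matches the paper's. Every case except type $\boldL$ with $\dim V\le 2$ and $G\not\le\Sigma$ is delegated to Theorem~\ref{theorem:LMTcrowns}, Proposition~\ref{prop:atmost4inI} and Lemma~\ref{lemma:non_simple_gen_bound}, and (1)--(4) are read off from $\ddot\Sigma$ and \cite[Proposition 3.12]{LMT}.

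In the new case the routes differ. The paper counts layer by layer:
\begin{itemize}
\item by Corollary~\ref{cor:maximalsubgpextension}, $H\cap\Omega$ is central or a maximal or novelty maximal subgroup of $\SL_2(q)$, contributing at most two $C_2$'s by \cite[Tables 8.1, 8.2]{BHRD};
\item $(G\cap\Sigma)/\Omega$ contributes at most two;
\item the $\iota$-part contributes one.
\end{itemize}
You instead write $\overline{\Sigma^*}=\overline\Sigma\times\langle c\rangle$ and use Proposition~\ref{prop:maxsubgpdirectproduct} to reduce to maximal subgroups of genuine classical groups. This is more structural, since it shows the $\iota$-part is exactly one extra central $C_2$. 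It still needs the same input, though. To stop at $5$ you need $\delta_{\overline{H_0}}(C_2)\le 4$ for $n\le 2$. That requires your top count of $2$ (i.e.\ $(\gamma)_0$ in Table~\ref{table:LMTtable}) \emph{and} the fact that the part of $\overline{H_0}$ inside $\overline\Omega$ (dihedral, Borel, \dots) contributes at most $2$. State this explicitly, because Theorem~\ref{theorem:LMTcrowns} by itself only gives $5$, and adding one would give $6$.

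Several steps need repair.

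First, your case split assumes $c\in\overline G$, and this can fail. For $q$ odd, take $G=\Omega\langle\delta\iota\rangle$ with $\delta=\diag(\mu,1)$. Then $(\delta\iota)^2=1$, $G\cap\Sigma=\Omega$ and $\overline G\cap\langle c\rangle=1$, so $\overline G$ is a diagonal subgroup isomorphic to $\mathrm{PGL}_2(q)$. Here Proposition~\ref{prop:maxsubgpdirectproduct} makes $\overline H$ isomorphic, via the projection to $\overline\Sigma$, to a maximal subgroup of a genuine classical group in $\overline\Sigma$. Neither of your two cases covers this, although the non-exceptional bounds then apply directly. The option $\overline H=\overline G\cap\overline\Sigma$ is also missing, but it is trivial.

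Second, for $\dim V=1$ we have $\Omega=\SL_1(q)=1$ for \emph{every} $q$, so ``finitely many small $q$'' does not cover it. It is immediate anyway, since $\Sigma^*\cong\mathbb F_q^*:(C_f\times C_2)$.

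Third, the equality clause in (1) is not justified. The estimate $\delta_G\le\delta_{\overline G}+1$ also gives $4$ in case $\boldL$, where $\delta_{\overline G}(C_2)=3$ does occur. Likewise, your claim $\delta_G(C_2)\le 3$ in the exceptional case does not follow from ``the same argument''. Filter through $G\cap\Delta$ instead:
\begin{itemize}
\item $(G\cap\Delta)/\Omega$ embeds in the cyclic group $\GL_n(q)/\SL_n(q)$, so it carries at most one non-Frattini $C_2$; the scalar $C_2$ and the $\ddot\delta$-one cannot both survive;
\item $G/(G\cap\Delta)$ embeds in $C_f\times C_2$, also inside $\Sigma^*$ when $n\le 2$.
\end{itemize}
Together these give $\delta_G(C_2)\le 3$ in case $\boldL$.

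Finally, your extra check for non-central $A$ in $H\cap N$ is unnecessary. Proposition~\ref{prop:atmost4inI} already forces $A$ to be central when the value is $4$. Moreover, Theorem~\ref{theorem:LMTcrowns}(2) concerns $\overline H$, not $H\cap N$, so it would not be the right citation anyway.
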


\begin{proof}
	If $\dim(V) > 2$ or $G$ is not of type $\boldL$, the proof is complete by Theorem \ref{theorem:LMTgenerators} and Proposition \ref{prop:atmost4inI}. Therefore, we may assume that $\dim(V) \leq 2$, and that $G$ is a subgroup of $\Sigma : C_2$. 
	
	Suppose that $\overline \Omega$ is simple. By Corollary \ref{cor:maximalsubgpextension}, $H \cap \Omega$ is either central or a maximal or novelty maximal subgroup of $\SL_2(q)$. Hence, for any chief factor $A$,
	\[
	\delta_{H \cap \Omega}(A) \leq \begin{cases*}
		2 \qquad & if $A \cong C_2$,\\
		1 & otherwise,
	\end{cases*}
	\]
	by \cite[Tables 8.1, 8.2]{BHRD}. Additionally, by \cite[Section 2.2]{KL},
	\[
	\delta_{\frac{G \cap \Sigma}{\Omega}}(A) \leq \begin{cases*}
		2 \qquad & if $A$ is central,\\
		1 & otherwise.
	\end{cases*}
	\]
	Thus, $\delta_{H \cap N}(A) \leq 4$ and $\delta_{\overline H}(A) \leq 5$ for any chief factor $A$, with equality only if $A \cong C_2$.
	
	Suppose that $\overline \Omega$ is not simple. If $\dim(V) = 1$, then $\overline \Omega = 1$, so $\delta_{H \cap N}(A) \leq 1$ and $\delta_{\overline H}(A) \leq 3$ by \cite[Section 2.2]{KL}. Otherwise, $\dim(V) = 2$ and $q=2,3$, so $\delta_L(A) \leq 2$ for any subgroup $L$ of $I$, and any chief factor $A$. Hence, $\delta_{H \cap N}(A) \leq 2$ and $\delta_{\overline H}(A) \leq 3$ since $I = \Sigma$ in this case.
	
	The results for the chief factors of $P$, $\overline P$, $P \cap N$ and $\overline P \cap \overline N$ are clear by Lemma \ref{lemma:non_simple_gen_bound}, \cite[Proposition 3.12]{LMT} and \cite[Chapter 2]{KL}.
\end{proof}

\subsection{Wreath products of maximal subgroups of classical groups}

By Sections \ref{section:StC} and \ref{section:Aschbacher}, in future sections we often bound the number of chief factors of a group $M = (R \wr \Sym(t)) \cap H$ where: $R$ is a maximal subgroup of a classical group $K$, and $H$ is a specific subgroup of $K \wr \Sym(t)$. Our goal is to prove the following result regarding the non-Frattini chief factors of such a group.

\begin{prop} \label{prop:jumpingappliedtoclassmaxs}
	Consider two groups $N_1, K$ such that $\Omega \leq N_1 \leq I \leq K \leq \Sigma$. Assume that $d(\overline K/\overline I) = 2$, and $d(K/N_1) \geq 3$. Let $H$ be a subgroup of $K \wr \Sym(t)$ which contains $N_1^t$ and satisfies the conditions of Corollary \ref{cor:CFsinwreathproduct}. Additionally, for some $R \mleq K$ not containing $N_1$, let
	\[
	M := (R \wr \Sym(t)) \cap H.
	\]
	
	Then 
	\[
	\delta_{M, (R \cap N_1)^t}(A) \leq 3,
	\]
	for any chief factor $A$.
\end{prop}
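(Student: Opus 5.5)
We apply Theorem \ref{theorem:mainjumping} to $M$, viewed as a subgroup of a wreath product over $R$. Put $E := R$ and $F := R \cap N_1$; then $F \unlhd E$ since $N_1 \unlhd K$, and $F^t \leq M$ because $N_1^t \leq H$. The first step is to check that $M \leq E \wr \Sym(t)$ satisfies the hypotheses of Corollary \ref{cor:CFsinwreathproduct}.

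\emph{Subdirectness.} Since $R$ does not contain $N_1$, we have $RN_1 = K$ by maximality. Hence $R$ maps onto $K/N_1$, and the subdirectness of $H \cap K^t$ in $K^t$, modulo $N_1^t \leq H$, lifts to subdirectness of $M \cap R^t$ in $R^t$.

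\emph{Permutation part.} The elements of $\Alt(t)$, and for $t \leq 4$ the element $(1,\dots,1,\alpha)(1,2)$, can be adjusted by elements of $N_1^t \leq H$ so that their base coordinates lie in $R$. This again uses $K = RN_1$, applied coordinatewise.

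With the hypotheses verified, Theorem \ref{theorem:mainjumping} gives two bounds on $\delta_{M,F^t}(A)$.
\begin{itemize}
\item If $A$ is non-central, then $\delta_{M,F^t}(A) \leq \max_B \delta_{R \cap N_1}(B)$.
\item If $A$ is central, then $\delta_{M,F^t}(A) \leq \delta_{F/[E,F]}(A)$.
\end{itemize}

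\emph{Non-central factors.} Here $R \cap N_1$ is a maximal subgroup of $R$-type inside $N_1$; it is maximal or novelty maximal in the relevant sense because $R$ is maximal in $K$ with $RN_1 = K$. Lemma \ref{lemma:maxsubgpsSigmai} (the bound $\delta_{H\cap N}(A) \leq 3$ for non-central $A$, with $N = N_1$) then gives at most $3$.

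\emph{Central factors.} This is the main obstacle: the crude bound of $4$ from Lemma \ref{lemma:maxsubgpsSigmai} must be improved to $3$. I would use the extra hypotheses as follows.
\begin{itemize}
\item Central non-Frattini chief factors of $F$ lying outside $[E,F]$ correspond to chief factors of $F/[E,F]$. This is an abelian group on which $E = R$ acts trivially, so these are central factors of $R$ in $R\cap N_1$ surviving in the abelianisation-type quotient $R/[R,R\cap N_1]$.
\item Compare with $R/(R\cap N_1) \cong K/N_1$. The hypothesis $d(K/N_1) \geq 3$, combined with $d(\overline K/\overline I) = 2$, forces $N_1/\Omega$ to be small relative to $I/\Omega$. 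Since $I/\Omega$ has rank at most $2$, the central $C_2$-factors contributing to the count of $4$ in Proposition \ref{prop:atmost4inI} cannot all appear in $F/[E,F]$.
\item Concretely, I would check the cases where $\delta_{R\cap N_1}(C_2) = 4$ (by the proof of Proposition \ref{prop:atmost4inI}, only the $\boldO$ cases of type $\O\otimes\O$ or $\mathscr C_7$ with $\overline N_1 = \overline\Omega$). I would show that there $d(K/N_1) \geq 3$ with $d(\overline K/\overline I) = 2$ forces some central $C_2$ to lie in $[R,R\cap N_1]$, for instance via an element of $R$ acting nontrivially by swapping or by a field automorphism, as in the arguments of Proposition \ref{prop:LMTC7calcs}. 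This drops the count to $3$.

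I expect this case check, and pinning down exactly how the two generation hypotheses are used, to be the delicate part.
\end{itemize}
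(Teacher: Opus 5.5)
Your check that $M\le R\wr\Sym(t)$ inherits the hypotheses of Corollary \ref{cor:CFsinwreathproduct} (adjusting by $N_1^t$ and using $K=RN_1$ coordinatewise) is fine. The step that fails is the split on whether the chief factor $W$ of $M$ is central.

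In the non-central branch of Theorem \ref{theorem:mainjumping}, the maximum runs over \emph{all} $(R\cap N_1)$-groups $B\cong S^c$ with $c\mid b$. These $B$ can be central in $R\cap N_1$ even though $W$ is non-central in $M$. Two examples: $W=\del(A^t)\cong C_2^{t-1}$, where $A\cong C_2$ is a central chief factor of $R$ and $t\ge 3$ is odd; or $W\le A^t$, where $A\cong C_2^k$ with $k\ge 2$ is a chief factor of $R$ on which $R\cap N_1$ acts trivially. Then $B\cong C_2$ is central, and Lemma \ref{lemma:maxsubgpsSigmai} only gives $\delta_{R\cap N_1}(C_2)\le 4$, so your ``at most $3$'' is unjustified. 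The same problem occurs for central $B\not\cong C_2$ (e.g.\ a central $C_3$), which you never treat.

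The paper instead splits on the chief factor $A_1$ of $R\cap N_1$ lying in $A$:
\begin{itemize}
\item If $A=A_1\cong C_2$ is central in $R$, Lemma \ref{lemma:jumpingbabyresultfull} makes all of $A^t$ Frattini in $M$ whenever $A$ lies in $[R,R\cap N_1]$ modulo the term below. So every $W$ in $A^t$, central or not, is counted by $\delta_{(R\cap N_1)/[R,R\cap N_1]}(C_2)$.
\item If $A\cong C_2^k$ with $k>1$, Propositions \ref{prop:CFsFromNormalSubgp} and \ref{prop:CFequivalenceinN} show that each such $A$ uses up $k$ of the at most $4$ non-Frattini central $C_2$-factors of $R\cap N_1$ (Proposition \ref{prop:atmost4inI}). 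Hence there are at most $\lfloor 4/k\rfloor\le 2$ of them.
\item If $A_1\not\cong C_2$ is central, the separate bound $\delta_{R\cap N_1}(A_1)\le 3$ from Lemma \ref{lemma:prereqjumpingclassicals} is needed.
\end{itemize}
The $\lfloor 4/k\rfloor$ counting step is the missing idea on the non-central side.

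On the central side you correctly isolate $\delta_{(R\cap N_1)/[R,R\cap N_1]}(C_2)\le 3$, which is exactly the first assertion of Lemma \ref{lemma:prereqjumpingclassicals}. However, you neither prove it nor reduce it correctly. The proof of Proposition \ref{prop:atmost4inI} only pins down where $\delta_{\overline R\cap\overline N_1}(C_2)\ge 4$. Since passing from $\overline R\cap\overline N_1$ to $R\cap N_1$ can add a scalar factor, $\delta_{R\cap N_1}(C_2)=4$ is not confined to your two cases. Indeed, the proof of Lemma \ref{lemma:prereqjumpingclassicals} must handle a possible fourth $C_2$ in several other situations:
\begin{itemize}
\item $R\in\mathscr S$ with $\overline R$ classical of type $\boldL$ or $\boldO^\pm$ and $\delta_{R_{\overline N_1}}(C_2)=3$;
\item $R\in\mathscr C_1$ of type $I_1\oplus I_2$ in case $\boldO$, with $R_{N_1}=I_1\times I_2$ and $I_j/\Omega_j\cong C_2^2$;
\item the non-quasisimple case $\Omega=\Omega_4^+(q)$ with $R\cap\Omega=R_1\circ R_2$.
\end{itemize}
The paper runs through $\mathscr S$, all of $\mathscr C_1$--$\mathscr C_8$, and the non-quasisimple $\Omega$. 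There, $d(K/N_1)\ge 3$ and $d(\overline K/\overline I)=2$ are first used to force $\Omega$ to be of type $\boldL$ or $\boldO^\pm$ with $f$ even. This rules out $\mathscr C_6$ and makes the unipotent-radical factors of parabolic $R$ non-central. The hypotheses are then used case by case to push a $C_2$ into $[R,R\cap N_1]$: for instance, $\overline\delta\in\overline K$ gives $[R,R_{N_1}]\ge S_1\times S_2$ in $\mathscr C_1$, and $R$ projects onto $\Sym(2)$ in $\mathscr C_7$. Your heuristic that $N_1/\Omega$ is ``small relative to $I/\Omega$'' does not substitute for this analysis.
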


In order to prove this result, we require the following result regarding the non-Frattini, central chief factors of $R \cap N_1$.

\begin{lemma}\label{lemma:prereqjumpingclassicals}
	Let $N_1, K, R$ be as defined in Proposition \ref{prop:jumpingappliedtoclassmaxs}. Then, 
	\[
		\delta_{(R \cap N_1)/[R, R \cap N_1]}(C_2) \leq 3,
	\]
	and $\delta_{R \cap N_1}(A) \leq 3$ for any central chief factor $A \not\cong C_2$.
\end{lemma}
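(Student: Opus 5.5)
The plan is to apply Corollary \ref{cor:upperbounddeltaGN} to the normal subgroup $R \cap N_1$ of $R$ and reduce everything to chief factors of $R \cap N_1$. We then split according to whether $R$ contains $\Omega$, and otherwise use the bounds of Proposition \ref{prop:atmost4inI} and Lemma \ref{lemma:maxsubgpsSigmai}, sharpened by the hypotheses $d(\overline K/\overline I) = 2$ and $d(K/N_1) \geq 3$.

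\emph{Step 1: the central quotient and its $C_2$ chief factors.} The central chief factors of $R \cap N_1$ of the form counted in $(R\cap N_1)/[R, R\cap N_1]$ are those of $R$ in $R \cap N_1$ on which $R$ acts trivially. Hence
\[
\delta_{(R\cap N_1)/[R,R\cap N_1]}(C_2) \leq \delta_{R, R \cap N_1}(C_2),
\]
up to Frattini considerations, which we control with Proposition \ref{prop:CFsFromNormalSubgp}(3),(4).

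\emph{Step 2: the case $\Omega \leq R$.} Here $R \cap N_1 = N_1$. Since $R$ is maximal in $K$ and does not contain $N_1$, we get $R\cap N_1 = N_1$ only if $N_1 \le R$, which is excluded. So $R$ does not contain $N_1$, and it may or may not contain $\Omega$. If $R \geq \Omega$, then $R/\Omega$ is maximal in $K/\Omega$, and $(R \cap N_1)/\Omega$ lies in the abelian-by-small group $I/\Omega$. The central $C_2$ factors of $R\cap N_1$ then come from $\Omega$ (at most the $\delta_\Omega(C_2)$ entries of Table \ref{table:chieffactorsnonqs1}, or none when $\Omega$ is perfect) together with $(R\cap N_1)/\Omega \leq I/\Omega$. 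The latter is cyclic or $C_2^2$ or $D_8$-type, so it contributes at most 2.

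\emph{Step 3: the case $\Omega \not\leq R$, with $\overline\Omega$ simple.} Here $\overline R$ is a maximal (or novelty) subgroup of $\overline K$. Proposition \ref{prop:atmost4inI} gives $\delta_{R\cap N_1}(A)\le 4$, with equality only for central $A$. To get down to 3, observe that equality in Proposition \ref{prop:atmost4inI} forces the configurations identified in its proof. These are case $\boldO$, with $R$ of type $\O_{n_1}\otimes\O_{n_2}$ or $\mathscr C_7$ of type $\O_m\wr\Sym(2)$, and $\overline N_1\cap\overline R$ essentially all of $\overline I_1\times\overline I_2$. In those configurations $R\cap N_1$ would force $N_1/\Omega\cong C_2^2$ to be covered by $R$ in a way that makes $K/N_1$ a quotient of $\overline K/\overline I$ times a small group. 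The hypotheses $d(\overline K/\overline I)=2$ and $d(K/N_1)\ge 3$ are then used to show that at least one of the four $C_2$ factors lies in $[R, R\cap N_1]$: some element of $R$ outside $N_1$ (for instance a field or graph automorphism, or a similarity $\delta$) acts nontrivially by swapping the two tensor factors' reflection classes.

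For central $A\not\cong C_2$, Theorem \ref{theorem:LMTcrowns}(3) and Proposition \ref{prop:atmost4inI} already give $\delta\le 3$. When $\overline\Omega$ is not simple, Tables \ref{table:chieffactorsnonqs1} and \ref{table:chieffactorsnonqs2} give the bound directly, except in the $\boldO$ cases $(4,3,+)$ and $(4,q,+)$. These cases we treat as in Lemma \ref{lemma:non_simple_gen_bound}, using $R_1\circ R_2$ together with the same commutator argument.

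The main obstacle is Step 3. Extracting from $d(K/N_1)\geq 3$ a concrete element of $R$ that acts nontrivially on one of the central $C_2$ sections requires case-by-case use of the $\ddot\rho,\ddot\delta,\ddot r_\Box,\ddot r_\boxtimes$ description of $\ddot K\le D_8\times C_f$. I would first try to show that $d(K/N_1)\ge3$ forces $K/I$ to be non-cyclic modulo $N_1$, and then check the few maximal-$\delta$ configurations by Magma, as in Lemma \ref{lemma:reducingLMTtofinitegp}.
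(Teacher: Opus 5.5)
Your Step 3 rests on a misreading of Proposition \ref{prop:atmost4inI}: its proof does not classify the cases of equality. It disposes of every case with $\delta_{\overline R\cap\overline N_1}(C_2)\le 3$ by the crude bound $\delta_{R\cap N_1}(C_2)\le\delta_{\overline R\cap\overline N_1}(C_2)+1$. It singles out the orthogonal $\mathscr C_4$/$\mathscr C_7$ configurations only when $\delta_{\overline R\cap\overline N_1}(C_2)\ge 4$.

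So $\delta_{R\cap N_1}(C_2)=4$ also occurs when $\delta_{\overline R\cap\overline N_1}(C_2)=3$ and the scalars add one more non-Frattini central $C_2$. Examples are $R\in\mathscr C_1$ of type $\O^{\epsilon_1}_m(q)\perp\O^{\epsilon_2}_{n-m}(q)$ with $R\cap N_1=I_1\times I_2$ and $I_j/\Omega_j\cong C_2^2$, and $R\in\mathscr S$ with $\delta_{R_{\overline N_1}}(C_2)=3$. These are exactly the cases where the paper must produce explicit commutators: $[R,R_{N_1}]\ge S_1\times S_2$ from $\delta_1\delta_2\in R_K$, and $[\overline\delta_R^k\overline\iota_R,\overline\delta_R^j]=\overline\delta_R^{2j}$. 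Your plan never reaches them.

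For the configurations you do list, the proposed ``swapping'' mechanism is not what happens. In $\mathscr C_4$, the type $\O_{n_1}\otimes\O_{n_2}$ with both $n_i$ even is not maximal in $K$ at all (\cite[Table 3.5.E]{KL}); for the maximal ones, $R_I=I_1\otimes S_2$ gives $3$ directly. In $\mathscr C_7$, maximality forces $t=2$ and $m\equiv 2\pmod 4$. One then uses Lemma \ref{lemma:jumpingbabyresultfull} or a commutator, according as $R_{N_1}$ does or does not project onto $\Sym(2)$.

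The same $+1$ problem breaks your claim for central $A\not\cong C_2$. Theorem \ref{theorem:LMTcrowns}(3) together with Proposition \ref{prop:atmost4inI} gives only $\delta_{R\cap N_1}(A)\le\delta_{\overline R\cap\overline N_1}(A)+1\le 4$, not $3$.

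More basically, you never extract what the hypotheses say. The paper opens by observing that $d(\overline K/\overline I)=2$ and $d(K/N_1)\ge 3$ force $\Omega$ to be of type $\boldL$ or $\boldO^\pm$ with $f$ even, and this drives the whole argument:
\begin{itemize}
\item it excludes $\mathscr C_6$;
\item it makes every chief factor of a parabolic $R$ in its unipotent radical non-central, since these are $\mathbb F_q$-modules and $f>1$;
\item it restricts non-quasisimple $\Omega$ to $\SL_1(q)$, $\Omega_2^\pm(q)$ and $\Omega_4^+(q)$. Without this, Tables \ref{table:chieffactorsnonqs1} and \ref{table:CFsofIinOmega} do not give $3$ for entries such as $\boldO$ with $(m,q)=(3,3)$, where $\delta_{\Omega\cap H}(C_2)=2$ and $I/\Omega\cong C_2^2$.
\end{itemize}
The hypothesis $d(\overline K/\overline I)=2$ is also what supplies the elements of $R$ outside $N_1$ used in the commutator steps, such as $\delta_1\delta_2$ or an element projecting onto $\Sym(2)$.

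Your Step 1 inequality also goes the wrong way. For $R=C_4\ge R\cap N_1=C_2$, the $C_2$ is Frattini in $R$, yet it is non-Frattini in $(R\cap N_1)/[R,R\cap N_1]\cong C_2$. Proposition \ref{prop:CFsFromNormalSubgp}(4) does not reverse this. Only $\delta_{(R\cap N_1)/[R,R\cap N_1]}(C_2)\le\delta_{R\cap N_1}(C_2)$ is available.

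What is missing is the class-by-class Aschbacher analysis of $R$, together with the commutator reductions in $\mathscr S$, $\mathscr C_1$, $\mathscr C_2$ of type $\O_{n/2}(q)^2$, $\mathscr C_7$ and the $\Omega_4^+(q)$ case. A finite Magma check of a few extremal configurations does not replace it.
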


Assuming the statement of this lemma, we provide the following proof of Proposition \ref{prop:jumpingappliedtoclassmaxs}.

\begin{proof}[Proof of Proposition \ref{prop:jumpingappliedtoclassmaxs}, assuming Lemma \ref{lemma:prereqjumpingclassicals}]
	Let $W$ be a non-Frattini chief factor of $M$ in $(R \cap N_1)^t$, and let $A$ be a chief factor of $R$ in $R \cap N_1$ with $W$ in $A^t$. Let $A_1$ be a chief factor of $R \cap N_1$ in $A$. By Theorem \ref{theorem:mainjumping},
	\[
	\delta_{M, (R \cap N_1)^t}(W) \leq \delta_{R \cap N_1}(A_1).
	\]
	Hence, if $A_1 \not\cong C_2$, the proof is complete by Lemmas \ref{lemma:maxsubgpsSigmai} and \ref{lemma:prereqjumpingclassicals}, so we assume that $A_1 \cong C_2$. If $A = A_1$, then $A$ is a central chief factor of $R$, and so, by Corollary \ref{cor:CFsinwreathproduct},
	\[
	\delta_{M, (R \cap N_1)^t}(W) \leq \delta_{(R \cap N_1)/[R, R \cap N_1]}(A_1) \leq 3.
	\]
	Thus, we assume that $A \cong C_2^k$ for some $k > 1$. Note that, by the proof of Corollary \ref{cor:CFsinwreathproduct}, $\delta_{M, (R \cap N_1)^t}(W)$ is at most the number of chief factors of $R$ in $R \cap N_1$ which are isomorphic to $C_2^k$ and are non-Frattini and central in $R \cap N_1$. By Propositions \ref{prop:CFsFromNormalSubgp} and \ref{prop:CFequivalenceinN}, the number of such chief factors of $R$ in $R \cap N_1$ is at most $\frac{1}{k}\delta_{R \cap N_1}(C_2)$. Since $\delta_{R \cap N_1}(C_2) \leq 4$ by Proposition \ref{prop:atmost4inI}, we obtain $\delta_{M, (R \cap N_1)^t}(A) \leq 2$, as required.
\end{proof}

\begin{proof}[Proof of Lemma \ref{lemma:prereqjumpingclassicals}]
	Since $d(K/N_1) \geq 3$ and $d(\overline K/\overline I) = 2$, $\Omega$ is of type $\boldL$ or $\boldO^{\pm}$ with $f$ even. Note that, if $\delta_{R \cap N_1}(C_2) \leq 3$, then $\delta_{(R \cap N_1)/[R, R \cap N_1]}(C_2) \leq 3$.
	
	We begin by considering the case where $\Omega$ is not quasisimple. Thus, $\Omega$ is one of $\SL_1(q)$, $\Omega_2(q)$ or $\Omega_4(q)$. In the first two cases, it is clear that $\delta_{R \cap N_1}(A) \leq 2$ for any chief factor $A$, since $N_1$ is either cyclic, or a dihedral group. In the latter case, we have 
	\[
	\Sigma = \langle \Delta_1 \circ \Delta_2, \phi, \sigma \rangle \leq \Sigma_1 \wr \Sym(2).
	\]
	If $K$ does not project onto $\Sym(2)$, then $K \cap N_1$ is equal to $(\SL_2(q) \circ \SL_2(q)).C$ where $C$ is either trivial or $C_2$. By Proposition \ref{prop:maxsubgpdirectproduct}, $R \cap N_1$ is isomorphic to $\SL_2(q) \circ \SL_2(q)$, $\SL_2(q) . C$ or $(R_1 \circ \SL_2(q)).C$ for some maximal or novelty maximal subgroup $R_1$ of $\SL_2(q)$. Since $\delta_{R_1}(A) \leq 2$ for any chief factor $A$, in all of the above cases we obtain that $\delta_{R \cap N_1}(A) \leq 3$ for any chief factor $A$. 
	
	Suppose instead that $K$ projects onto $\Sym(2)$. Thus, by Proposition \ref{prop:XSimpleMaxSubgrpXkC}, $R \cap \Omega$ is equal to one of $Z(\SL_2(q))$, $\SL_2(q)$, $\Omega$, or $R_1 \circ R_2$ for some isomorphic, maximal or novelty maximal subgroups $R_i$ of $\SL_2(q)$. In the first three cases we obtain $\delta_{R \cap N_1}(A) \leq 3$ for any chief factor $A$, so we assume that $R \cap \Omega = R_1 \circ R_2$. If $A \not\cong C_2$ is central then $\delta_{R_1}(A) \leq 1$, and so, by Corollary \ref{cor:upperbounddeltaGN}, $\delta_{R \cap \Omega}(A) \leq 2$. Since $\delta_{N_1/\Omega}(A) = 0$, this implies that $\delta_{R \cap N_1}(A) \leq 2$. We now consider $A \cong C_2$. We have $\diag(R_1 \times R_2) \leq [R, R \cap \Omega]$, so $\delta_{R \cap N_1, \frac{R \cap \Omega}{[R, R \cap \Omega]}}(C_2) \leq \delta_{R_i}(C_2) \leq 2$. Additionally, since $d(\overline K/\overline I) = 2$, if $\frac{R \cap N_1}{R \cap \Omega}$ is isomorphic to $C_2^2$, then $[R, R \cap N_1](R \cap \Omega)$ contains a subgroup isomorphic to $C_2$. Thus, $\delta_{\frac{R \cap N_1}{[R, R \cap N_1] (R \cap \Omega)}}(C_2) \leq 1$. Hence, we have shown that $\delta_{\frac{R \cap N_1}{[R, R \cap N_1]}}(C_2) \leq 3$.
	
	We assume for the remainder of this proof that $\Omega$ is quasisimple. If $R$ contains $\Omega$, then $\delta_{R \cap N_1}(A) \leq 2$ for any chief factor $A$, by Lemma \ref{lemma:maxsubgpsSigmai}. If $R$ does not contain $Z(N_1)$, then $R  \cap N_1 = Z . \overline N_1$ for some $Z < Z(N_1)$, so again the result is clear by Lemma \ref{lemma:maxsubgpsSigmai}. Hence, we may assume that $R$ contains $Z(N_1)$ and does not contain $\Omega$, so it is classified by Aschbacher's Theorem. 
	
	Suppose first that $R \in \mathscr{S}$, and so $\overline R$ is almost simple. If $\delta_{R_{\overline N_1}}(A) \leq 2$ for some chief factor $A$, then $\delta_{R_{N_1}}(A) \leq 3$. So we assume that $\delta_{R_{\overline N_1}}(A) = 3$, which implies that $\overline R$ is a classical almost simple group of type $\boldL$ or $\boldO^\pm$ and $A \cong C_2$, by \cite[Proposition 3.12]{LMT}. Let $T$ denote the socle of $\overline R$. If $T$ is of type $\boldL$, let $\Aut(T) = \langle T, \overline \delta_R, \overline \phi_R, \overline \iota_R \rangle$ as defined in \cite[Section 2.2]{KL}. Then $\delta_{R_{\overline N_1}}(C_2) = 3$ implies that $R_{\overline N_1}$ contains $\overline \delta_R^k \overline \iota_R$ for some $k$. Additionally, $R_{\overline N_1} \cap \langle T, \overline \delta_R \rangle = \langle T, \overline \delta_R^i\rangle$ and $\overline R \cap \langle T, \overline \delta_R \rangle = \langle T, \overline \delta_R^j\rangle$ for some $i, j$ with $j \mid i$ and $|i|_2 > |j|_2$. This implies that $[\overline \delta_R^k \overline \iota_R, \overline\delta_R^j] = \overline \delta_R^{2j} \in [R_{\overline N_1}, \overline R]$, and so   $R_{\overline N_1} \cap \langle T, \overline \delta_R \rangle \leq [\overline R, R_{\overline N_1}]$. Thus, $\delta_{R_{\overline N_1}/[\overline R, R_{\overline N_1}]}(C_2) \leq 2$ and hence $\delta_{R_{N_1}/[R, R_{N_1}]}(C_2) \leq 3$. The case for $T$ of type $\boldO$ is similar.
	
	We now consider $R$ in each class $\mathscr{C}_i$, using the details provided in Section \ref{section:Aschbacher} regarding the structure of these groups.
	
	If $R \in \mathscr{C}_1$ and is parabolic, then $R_{\Omega} = Q : L$ where $Q$ is the unipotent radical and $L$ is the Levi subgroup. Since $f>1$, the chief factors of $ R$ in $Q$ are all products of $\mathbb{F}_q$ and as such are non-central. Next, 
	\[
	\Omega_1 \circ \cdots \circ \Omega_d \leq R_{N_1}/Q \leq I_1 \circ \cdots \circ I_d
	\]
	for some classical groups $\Omega_i$ of type $\boldL$ or $\boldO^\pm$, over $\mathbb{F}_q$. Since $f > 1$, each $\Omega_i$ is either quasisimple, a product of quasisimple groups, or a cyclic group. If $\Omega$ is in case $\boldL$, then $d \leq 3$ and $I_i$ are all of type $\boldL$, so $\delta_{R_i}(A) \leq 1$ for any chief factor $A$ and any group $R_i$ such that $\Omega_i \leq R_i \leq I_i$. Hence, $\delta_{R_{N_1}}(A) \leq 3$ for any chief factor $A$. If instead $\Omega$ is in case $\boldO^\pm$, then we note we have $d \leq 2$. Additionally, $I_1$ is of type $\boldL$ while $I_2$ is of type $\boldO$. Thus, for any groups $R_i$ with $\Omega_i \leq R_i \leq I_i$, and any chief factor $A$, we have that $\delta_{R_1}(A) \leq 1$ and $\delta_{R_2}(A) \leq 2$, so again the result is clear.
	
	If $R \in \mathscr{C}_1$ is of type $I_1 \oplus I_2$, then by Section \ref{section:C1explanation} this implies that
	\[
	\Omega_1 \times \Omega_2 \leq R_{N_1} \leq I_1 \times I_2,
	\]
	where once again $\Omega_i$ are classical groups of type $\boldL$ or $\boldO$, over $\mathbb{F}_q$. Therefore, when $\Omega$ is in type $\boldL$ we have that $\delta_{R_{N_1}}(A) \leq 2$ for any chief factor $A$. If $\Omega$ is in type $\boldO$ then $\delta_{R_{N_1}}(A) \leq 3$ unless all of the following hold: $A \cong C_2$, $R_{N_1} = I_1 \times I_2$, and $I_j/\Omega_j \cong C_2^2$. Note that $[R_{N_1}, R_{N_1}]$ contains $\Omega_1 \times \Omega_2$. This is clear if $\Omega_i$ is trivial, or perfect. If $\Omega_i$ is instead cyclic, then $I_i$ is a dihedral group and $[I_i, I_i] = \Omega_i$ by definition. Additionally, $R_K$ contains $\delta_1 \delta_2$, since $\overline K$ contains $\overline \delta$. Hence, since $[I_i/\Omega_i, \delta_i \Omega_i] = S_i/\Omega_i$, we have that $[R, R_{N_1}] \geq S_1 \times S_2$. Thus, $\delta_{\frac{R_{N_1}}{[R, R_{N_1}]}}(C_2) \leq 2$, as required.
	
	By \cite[Tables 3.5.E, 3.5.F]{KL}, if $R \in \mathscr{C}_1$ is of type $\Sp_{n-2}(q)$, then $q$ is even, which is a contradiction.
	
	Next assume that $R$ is in $\mathscr{C}_2$, so $R$ is the stabiliser of a decomposition $V_1 \oplus \cdots \oplus V_t$ with $\dim(V_i) = m$ for all $i$. First we consider $R$ of type $I_1 \wr \Sym(t)$. If $\Omega_1 = \Omega_1(q)$, then 
	\[
	2^{t-1} . \Alt(t) \leq R_{\Omega} \leq 2^{t-1} . \Sym(t),
	\]
	with $t \geq 5$, so $\delta_{R \cap N_1}(A) \leq 3$ for any chief factor $A$. Otherwise, there exists a group $L_1$ with $\Omega_1 \leq L_1 \leq I_1$ such that
	\[
	R_{N_1} = L_1^t . \del((I_1/L_1)^t) . \Sym(t),
	\]
	and $R_{N_1}$ satisfies the conditions of Corollary \ref{cor:CFsinwreathproduct}. Hence, $\delta_{R \cap N_1}(A) \leq 3$ when $\Omega_1$ is quasisimple. When $\Omega_1$ is not quasisimple, the same result follows using the fact that $f > 1$. For instance, if $\Omega_1 = \Omega_2^\pm(q)$, then $\Omega = \Omega_{2t}^{\epsilon}(q)$ where $\epsilon = (\pm 1)^t$. Thus, $t > 2$, implying that $\delta_{R \cap N_1}(A) \leq 3$.
	
	We next assume that $R$ is of type $\GL_{n/2}(q).2$. This implies that
	\[
	\SL_{n/2}(q) \leq R_{N_1} \leq \GL_{n/2}(q).2.
	\]
	As such, since $f > 1$, we have that $\delta_{R_{N_1}}(C_2) \leq 2$, by Lemma \ref{lemma:maxsubgpsSigmai}.
	
	Assume now that $R$ is of type $\O_{n/2}(q)^2$ where $n/2 > 1$ is odd. Then
	\[
	S_1 \times S_2 \leq R_{N_1} \leq I_1 \times I_2,
	\]
	and so $\delta_{R_{N_1}}(A) = 0 $ if $A \not \cong C_2$ is a central chief factor. Additionally, $\Omega_1$ is isomorphic to $\Omega_2$, both are quasisimple, and $R$ contains an element swapping $I_1$ and $I_2$. Hence, $[R, R_{N_1}] \geq \Omega_1 \times \Omega_2 . \del(I_1/\Omega_1 \times I_2 / \Omega_2)$, and so
	\[
	\delta_{\frac{R_{N_1}}{[R, R_{N_1}]}}(C_2) \leq 2.
	\]
	
	Next, we consider $R \in \mathscr{C}_3$. Then, we have
	\[
	\Omega_\sharp \leq R_{N_1} \leq R_I = I_\sharp.Z_r
	\]
	by \cite[(4.3.11)]{KL}. Thus, $\delta_{R_{N_1}}(A) \leq 3$ by Lemma \ref{lemma:maxsubgpsSigmai}.
	
	Assume next that $R \in \mathscr{C}_4$, with $R$ of type $I_1 \otimes I_2$. As such, we have that either
	\[
	R_{I} = \frac{( I_1 \times  I_2).\del( \Delta_1/ I_1 \times  \Delta_2/ I_2)}{\del(Z(\Delta_1) \times Z(\Delta_2))},
	\]
	or
	\[
	R_{I} = \frac{I_1 \times I_2}{\del(Z(I_1) \times Z(I_2))}.
	\]
	For ease of notation, given any groups $G_1, G_2$ we let $G_1 \otimes G_2$ denote $\frac{G_1 \times G_2}{\del(Z_1 \times Z_2)}$ where $Z_i$ denotes the scalars of $G_i$.
	
	If $\Omega_1$ is in case $\boldL$ or $\boldS$, then $\delta_{R_{N_1}}(A) \leq 3$. Hence, we assume that $\Omega_1$ is in case $\boldO$. Then, by \cite[Table 3.5.E]{KL}, $R$ is maximal in $K$ if and only if $R$ is of type $\O_{m_1}^{\epsilon_1} \otimes \O_{m_2}^{\epsilon_2}$ with $m_2$ odd. By \cite[Proposition 4.4.17]{KL}, this implies that $R_{I} = I_1 \otimes S_2$, so $\delta_{R_{N_1}}(A) \leq 3$.
	
	Let $R \in \mathscr{C}_5$, so $\overline \Omega_\sharp \leq R_{\overline N_1} \leq \overline \Delta_\sharp$. Then, $\delta_{R_{\overline N_1}}(A) \leq 2$ and $\delta_{R_{N_1}}(A) \leq 3$ for any chief factor $A$.
	
	We do not consider $R \in \mathscr{C}_6$, since this implies that $f=1$.
	
	Suppose that $R \in \mathscr{C}_7$ is of type $I_1 \wr \Sym(t)$. Thus, either
	\[
	R_{I} = \frac{(I_1 \times \cdots \times I_t) . \del(\Delta_1/I_1 \times \cdots \times \Delta_t/I_t)}{\del(Z(\Delta_1) \times \cdots \times Z(\Delta_t))} .\Sym(t),
	\]
	or
	\[
	R_{I} = \frac{I_1 \times \cdots \times I_t}{\del(Z(I_1) \times \cdots \times Z(I_t))}.\Sym(t).
	\]
	Hence, we let $G_1 \otimes \cdots \otimes G_t$ denote $\frac{G_1 \times \cdots \times G_t}{\del(Z(G_1) \times \cdots \times Z(G_t))}$ for any groups $G_i$ where $Z_i$ denotes the scalars in $G_i$. By \cite[Section 4.7]{LMT}, we have that 
	\[
		R_{\Omega} \geq (S_1 \otimes \cdots \otimes S_t).\del(I_1/S_1 \otimes \cdots \otimes I_t/S_t) . \Alt(t).
	\]
	
	Suppose that $I_1$ is of type $\boldL$ or $\boldS$, so that $\Delta_1 / \Omega_1$ is cyclic. If $t > 2$ then $R_\Omega$ satisfies Corollary \ref{cor:CFsinwreathproduct}, so $\delta_{R_{N_1}}(A) \leq 3$. If $t=2$ then $\delta_{R_{N_1}}(A) \leq 3$ for any central chief factor $A$ not isomorphic to $C_2$, by Corollary \ref{cor:upperbounddeltaGN}. Thus, we let $A \cong C_2$ be a chief factor of $R$. If $R_{N_1}$ projects onto $\Sym(2)$ then the same argument as in the proof of Proposition \ref{prop:t2CFsinwreathproduct} applies, proving that $\delta_{R_{N_1}}(C_2) \leq 3$. Finally, if $R_{N_1}$ does not project onto $\Sym(2)$, then $R_{N_1} \leq \Delta_1 \otimes \Delta_2$, so $\delta_{R_{N_1}}(C_2) \leq 2$.
	
	Hence, we may assume that $R$ is of type $\O^\epsilon_m(q) \wr \Sym(t)$. Then, by \cite[Table 3.5.E]{KL}, we have $t=2$ and $m \equiv 2 \pmod 4$, in order for $R$ to be maximal in $K$. Note that, if $A \not\cong C_2$ is central, then $\delta_{R_{N_1}}(A) \leq 1$, so we assume that $A \cong C_2$.
	
	If $R_{N_1}$ projects onto $\Sym(2)$, then $N_1 = I$, by the proofs of \cite[Proposition 4.7.6, 4.7.7]{KL}. We can now apply Lemma \ref{lemma:jumpingbabyresultfull} to determine that $\delta_{R_{N_1}}(A) \leq 3$ for all central chief factors $A$. Therefore, we assume that $R_{N_1}$ does not project onto $\Sym(2)$, and so is contained in
	\[
	R_I \cap (\Delta_1 \otimes \Delta_2) = (I_1 \otimes I_2).\del(\Delta_1/I_1 \otimes \Delta_2/I_2).
	\]
	Since $d(\overline K / \overline I) = 2$, as in the proof of Proposition \ref{prop:LMTC7calcs}, we can determine that $R$ projects onto $\Sym(2)$. If $L_1$ is equal to the intersection of $R_{N_1}$ with $\Delta_1 \otimes Z_2 \cong \Delta_1$ and $L_2$ is defined similarly, we have that $(\Omega_1 \otimes \Omega_2).\del(L_1/\Omega_1 \otimes L_2/\Omega_2)$ is contained in $[R, R_{N_1}]$. As a result, $\delta_{R_{N_1}/[R, R_{N_1}]}(C_2) \leq 3$.
	
	We finally consider $R \in \mathscr{C}_8$. Thus, $\Omega$ is in case $\boldL$, and $R_\Delta = \Delta_\sharp$, where $X_\sharp = X(V, \mathbb{F}, \kappa_\sharp)$ for some form $\kappa_\sharp$ as given in \cite[Table 4.8.A]{KL}. Hence, $\delta_{R_G}(A) \leq 3$.
\end{proof}

\section{Proof of Theorem \ref{theorem:bigresult}: classical groups}\label{section:classicals}

We continue to use the notation of Section \ref{section:classicalprelims}. Let $(V, \mathbb F, \kappa)$ be a classical geometry of type $\boldL, \boldU, \boldS$, or $\boldO$, and let $G_0 = \Omega = \Omega(V, \mathbb F, \kappa)$, which we assume to be a quasisimple group. Let $\overline G$ be an almost simple group with socle $\overline G_0$, and let $\overline M$, $\overline H$ be subgroups of $\overline G$ such that
\[
	\overline M \mleq \overline H \mleq \overline G.
\]
In this section, we prove that $d(\overline M) \leq 7$, and we show this bound is tight (see Example \ref{example:boundtightC2}).

Recall that $\overline \Sigma$ is a subgroup of $\Aut(\overline \Omega)$. Moreover, $\overline \Sigma = \Aut(\overline \Omega)$, unless $\Omega$ is equal to $\Sp_4(q)$ with $q$ even, or is equal to $\Omega_8^+(q)$ for any $q$. In these cases, $\Aut(\overline \Omega)$ is equal to $\langle \overline \Sigma, \gamma \rangle$, for some graph automorphism $\gamma$. We defer the treatment of the cases where $\overline G \nleq \overline \Sigma$ to the end of this section. 

Hence, we begin by assuming that $\overline \Omega \leq \overline G \leq \overline \Sigma$. We may therefore let $G$, $H$, and $M$ denote the preimages of $\overline G$, $\overline H$, and $\overline M$ in $\Sigma$, respectively. By the correspondence theorem, $M \mleq H \mleq G$.

We classify the second maximal subgroups of $G$ by applying Aschbacher's Theorem. In order to do so we use the descriptions of the maximal subgroups of $G$ given in \cite{BHRD}, \cite[Chapter 4]{KL}, and Section \ref{section:classicalprelims} when applying Aschbacher's Theorem.

\begin{theorem}[{\cite[Main Theorem]{KL}}]
	Suppose that $\overline \Omega \leq \overline G \leq \overline \Sigma$ is a classical group as above, and $\overline H \mleq \overline G$. Then $\overline H$ is either one of the groups in \cite[Tables 3.5A-F]{KL}, or $\overline H$ is almost simple.
\end{theorem}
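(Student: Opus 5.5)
This statement is Aschbacher's Theorem in the form given in \cite[Main Theorem]{KL}, and we do not reprove it. The plan is to check that its hypotheses match our setting and then quote it.

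The reduction runs as follows. Since $\overline\Omega$ is simple and $\overline\Omega \leq \overline G \leq \overline\Sigma$, the group $\overline G$ is an almost simple classical group in the sense of \cite{KL}. A maximal subgroup $\overline H$ either contains $\overline\Omega$ or it does not.
\begin{enumerate}[(1)]
	\item If $\overline H$ contains $\overline\Omega$, then $\overline H/\overline\Omega$ is maximal in $\overline G/\overline\Omega$. Such subgroups are either excluded from the statement or handled separately, in the same way the paper later treats subgroups containing $\Omega$.
	\item If $\overline H$ does not contain $\overline\Omega$, we use the dichotomy of \cite{KL}, which is Aschbacher's Theorem for the group $G$ lying between $\Omega$ and $\Sigma$. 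Either the preimage $H$ lies in one of the geometric classes $\mathscr C_1,\dots,\mathscr C_8$, or $H\in\mathscr S$.
\end{enumerate}

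In the geometric case, the Main Theorem of \cite{KL} gives more. For $n\geq 13$, the members of $\mathscr C_1\cup\cdots\cup\mathscr C_8$ are exactly the groups $H_G$ of the types listed in \cite[Tables 3.5A--F]{KL}, together with the conditions stated there for maximality. In dimension at most $12$ we would instead appeal to \cite{BHRD}, which yields the same description.

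In the case $H\in\mathscr S$, the image $\overline H$ is almost simple by definition of the class $\mathscr S$.

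The only real point needing care is the matching of conventions. The notation $H_L = H_\Sigma\cap L$ used here must be identified with the groups $H_G$ appearing in \cite{KL}. We must also confirm that the exceptional cases, where $\overline\Sigma\neq\Aut(\overline\Omega)$, are excluded by the standing assumption $\overline G\leq\overline\Sigma$. The paper defers those groups to the end of the section. With these conventions confirmed, the statement is a direct citation.
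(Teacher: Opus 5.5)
Your proposal matches the paper, which gives no argument beyond quoting \cite[Main Theorem]{KL}; it relies on \cite{BHRD} elsewhere for small dimensions, as you also suggest. One correction: the case $\overline\Omega\le\overline H$ is not excluded by the statement, but it needs no separate handling, since $\overline\Omega\le\overline H\le\overline\Sigma\le\Aut(\overline\Omega)$ forces $C_{\overline H}(\overline\Omega)=1$, so $\overline H$ is almost simple with socle $\overline\Omega$ and falls under the second alternative.
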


When $\overline H$ is almost simple, we have that $d(\overline M) \leq 5$ by Theorem \ref{theorem:LMTgenerators}. So we assume in the remainder of this section that $\overline H$ is one of the groups in \cite[Tables 3.5A-F]{KL}.

\subsection{Class \texorpdfstring{$\mathscr{C}_1$}{C1}, not parabolic}

\begin{prop}\label{prop:C1secondmaxls}
	Suppose that $H \in \mathscr{C}_1$ is not parabolic. Then $d(\overline M) \leq 7$.
\end{prop}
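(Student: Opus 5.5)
The plan is to use Proposition \ref{prop:prelimsC1} to write $H \le \Sigma^*_1 \times \Sigma^*_2$ as a subgroup of a direct product, with $H_\Omega = (\Omega_1\times\Omega_2):\del(I_1/\Omega_1\times I_2/\Omega_2)$ and $H/(H\cap(I_1\times I_2))$ a quotient of $\diag(\Sigma_1^*/I_1\times\Sigma_2^*/I_2)$, hence isomorphic to a subgroup of $\overline G/\overline\Omega$. Choose a chief series of $H$ passing through $H\cap(\Omega_1\times 1)$, $H\cap(\Omega_1\times\Omega_2)$, $H\cap(I_1\times I_2)$. Apply Proposition \ref{prop:maximalsubgroupgeneralmethod} together with Proposition \ref{prop:maxsubgpdirectproduct}, taking $X_i=\Sigma_i^*$, $P_i=\pi_i(H)$, $N_i = H\cap X_i$. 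Then either $M=(P_1\times R_2)\cap H$ with $R_2 \mleq P_2$ (or the same with indices swapped), or $M$ is subdirect in $P_1\times P_2$ and $M=(M_1\times M_2).C$ with $M_i$ maximal $P_i$-normal in $N_i$. In the latter case $M$ contains $\Omega_1\times\Omega_2$ up to the structure of $N_i/\Omega_i$, which is abelian of small rank. The chief factors of $M$ are then those of $\Omega_1\times\Omega_2$ plus a bounded abelian part, and $d(\overline M)\le 5$ follows from Theorem \ref{theorem:crowns}, Lemma \ref{lemma:maxsubgpsSigmai} and the fact that $\overline G/\overline\Omega$ has at most $3$ non-Frattini $C_2$ factors.

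The main case is $M=(P_1\times R_2)\cap H$ with $R_2 \mleq P_2$, where $P_2$ lies between $I_2$ and $\Sigma_2^*$. We have the normal series
\[
M \cap (\Omega_1\times 1)\le M\cap(I_1\times I_2)\le M .
\]
By Lemma \ref{lemma:maxsubgpsSigmai} (applied both to $\Omega_1\le P_1$ and to the maximal subgroup $R_2$ of $P_2$), the terms can be bounded as follows:
\begin{itemize}
\item $\delta_{M,\Omega_1\times 1}(A)\le\delta_{\Omega_1}(A)\le 2$, using Corollary \ref{cor:upperbounddeltaGN};
\item the contribution of $(I_1\times (R_2\cap I_2))/(\Omega_1\times 1)$ is at most $\delta_{I_1/\Omega_1}(A)+\delta_{R_2\cap I_2}(A)$, that is, at most $1$ (or $2$ in case $\boldO$) plus $4$, and at most $3$ for non-central $A$;
\item the top section $M/(M\cap(I_1\times I_2))$ is isomorphic to a subgroup of $\overline\Sigma_2^*/\overline I_2$, so it contributes at most $2$ non-Frattini $C_2$'s.
\end{itemize}

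Passing to $\overline M$ removes the scalars $\diag(Z)$. This is the crucial saving: the central $C_2$ in $\del(I_1/\Omega_1\times I_2/\Omega_2)$ coincides with a scalar class, or is Frattini by Lemma \ref{lemma:jumpingbabyresultdel}-type commutator arguments. Concretely, I would show that $[M, M\cap(I_1\times I_2)]$ contains $\Omega_1\times(R_2\cap\Omega_2)$ together with the image of $\diag$ of the similarity part, as in the $\mathscr C_1$ paragraph of the proof of Lemma \ref{lemma:prereqjumpingclassicals}. This would give $\delta_{\overline M}(C_2)\le 7$ and $\delta_{\overline M}(A)\le 5$ for all other $A$. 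Theorem \ref{theorem:crowns}(3) then gives $d(\overline M)\le 7$, using the $5.92$ lemma for non-abelian factors, where $\delta\le 3$.

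The expected obstacle is the precise count of central $C_2$ factors in the top part. The sum of the separate bounds, namely $4$ from $R_2\cap N$ via Proposition \ref{prop:atmost4inI}, at most $2$ from $I_1/\Omega_1$, and up to $3$ from $\overline G/\overline\Omega$, naively exceeds $7$ in case $\boldO$. I would handle this by observing that the diagonal coupling $\diag(\Sigma_1^*/I_1\times\Sigma_2^*/I_2)$ forces the $P_1$-section and the $P_2$-section above $I$ to be identified, so that the top contributes once rather than twice. In case $\boldO$, the $\del$ identification kills one further $C_2$ between $I_1/\Omega_1$ and $I_2/\Omega_2$. Small non-quasisimple $\Omega_i$ (dimension $\le 4$, $q\le 3$) would be treated with Tables \ref{table:chieffactorsnonqs1} and \ref{table:chieffactorsnonqs2}, or by Magma.
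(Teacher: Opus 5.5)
There is a genuine gap in your main (non-subdirect) case. Writing $M=(P_1\times R_2)\cap H$, you cut $M$ at $\Omega_1\times 1$ and at $M\cap(I_1\times I_2)$, and bound each layer separately. This splits $\overline{R_2}$ into $R_2\cap I_2$ (up to $4$ by Proposition~\ref{prop:atmost4inI}) and its top (up to $2$). It also splits $N_1=H\cap\Sigma_1^*$ into $\Omega_1$ and the layer above it. Even with $\Omega_1$ quasisimple this gives a $C_2$-count of $2+4+2=8$ in case $\boldO$. The excess comes from splitting $\overline{R_2}$, and, for non-quasisimple $\Omega_1$, also from splitting $N_1$. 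Your proposed repairs do not address this and none of them is established:
\begin{itemize}
\item The scalar and $\del$ identifications act on the $I_1/\Omega_1$ layer, which in the quasisimple case is not overcounted.
\item The diagonal coupling acts on the top, which you already count once.
\item The containment $[M,M\cap(I_1\times I_2)]\ge\Omega_1\times(R_2\cap\Omega_2)$ does not transfer from the $\mathscr C_1$ paragraph of Lemma~\ref{lemma:prereqjumpingclassicals}. There the second factor is the classical group $\Omega_2$; here $R_2$ is an arbitrary maximal subgroup of $P_2$, and $R_2\cap\Omega_2$ is typically not perfect.
\end{itemize}
Also, $I_1\times(R_2\cap I_2)$ is not in general a subgroup of $M$; the layer actually contained in $M$ is $N_1$.

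The missing idea is to treat $N_1$ and $M/N_1$ as single blocks. Since $M\ge N_1$ and $M/N_1\cong\pi_2(M)=R_2$, with $R_2\mleq P_2$ and $I_2\le P_2\le\Sigma_2^*$, and since $N_1$ meets the diagonal scalars trivially, you get $\delta_{\overline M}(A)\le\delta_{M,N_1}(A)+\delta_{\overline{R_2}}(A)$. Lemma~\ref{lemma:maxsubgpsSigmai}(3) gives $\delta_{N_1}(A)\le 2$, as $\Omega_1\le N_1\le I_1$. Hence $\delta_{M,N_1}(A)\le 2$ by Corollary~\ref{cor:upperbounddeltaGN}. The same lemma bounds the whole of $\overline{R_2}$, including its part above $\overline I_2$: $\delta_{\overline{R_2}}(C_2)\le 5$, and $\delta_{\overline{R_2}}(A)\le 3$ otherwise. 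Thus $\delta_{\overline M}(C_2)\le 7$, and Theorem~\ref{theorem:crowns} gives $d(\overline M)\le 7$ with no cancellation needed.

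Two further omissions:
\begin{itemize}
\item Type $\Sp_{n-2}(q)$ is not covered by Proposition~\ref{prop:prelimsC1}. There $\overline H$ is almost simple, and Theorem~\ref{theorem:LMTgenerators} gives $d(\overline M)\le 5$.
\item In the subdirect case, $M_1\ge\Omega_1$ needs $\Omega_1$ quasisimple. Non-quasisimple factors such as $\Omega_2^\pm(q)$ and $\Omega_4^+(q)$ occur for every $q$, not only $q\le 3$, so Magma cannot dispose of them. There one uses $d(M_1)\le 4$ for every normal subgroup $M_1$ of $I_1$, giving $d(\overline M)\le d(M_1)+d(\overline P_2)\le 4+3$. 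So in that case only the bound $7$, not $5$, is obtained.
\end{itemize}
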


\begin{proof}
	If $H$ is of type $\Sp_{n-2}(q)$ then $\overline H$ is almost simple by \cite[Proposition 4.1.7]{KL}, so $d(\overline M) \leq 5$ by Theorem \ref{theorem:LMTgenerators}.
	
	Hence, we may assume that $H$ is of type $\GL^\epsilon_m(q) \perp \GL^\epsilon_{n-m}(q)$, $\Sp_m(q) \perp \Sp_{n-m}(q)$ or $\O_m^{\epsilon_1}(q) \perp \O_{n-m}^{\epsilon_2}(q)$. Thus, $H$ stabilises $V_1 \oplus V_2 = V$ for some vector subspaces $V_i$ of $V$, and
	\begin{align*}
		H_\Omega & = (\Omega_1 \times \Omega_2) . \diag(I_1/\Omega_1 \times I_2 / \Omega_2),\\
		H_\Sigma & = (I_1 \times I_2).\diag(\Sigma^*_1/I_1 \times \Sigma^*_2/I_2) \leq \Sigma_1^* \times \Sigma_2^*.
	\end{align*}
	
	As in Proposition \ref{prop:maxsubgpdirectproduct}, let $\pi_i \colon H \to \Sigma_i$ denote the natural projection maps, and then define
	\[
	N_i = H \cap \Sigma^*_i, \,\, P_i = \pi_i(H)
	\]
	for each $i$. Thus, by above, for each $i$ we have that
	\begin{align*}
		\Omega_i \leq N_i & \leq I_i\\
		I_i \leq P_i & \leq \Sigma^*_i.
	\end{align*}
	Additionally, the scalars of $G$ in $H_{\Sigma}$ are equal to $\diag(Z(\Delta_1) \times Z(\Delta_2)) $.
	
	We assume first that $M$ is not a subdirect subgroup of $P_1 \times P_2$, so, by Proposition \ref{prop:maxsubgpdirectproduct}, $M$ contains either $N_1$ or $N_2$. Suppose without loss of generality that $M$ contains $N_1$. Then there exists some maximal subgroup $J$ of $P_2$ such that $M = (P_1 \times J) \cap H$. Hence, $\delta_{\overline M}(A) \leq \delta_{M, N_1}(A) + \delta_{\overline J}(A)$, so we bound the two summands separately. Firstly, by Lemma \ref{lemma:maxsubgpsSigmai}, we have that $\delta_{\overline J}(A) \leq 5$, with equality only if $A \cong C_2$. Additionally, by Lemma \ref{lemma:maxsubgpsSigmai}, $\delta_{N_1}(A) \leq 2$. Hence, $\delta_{M, N_1}(A) \leq 2$ by Corollary \ref{cor:upperbounddeltaGN}, and so
	\begin{align*}
		\delta_{\overline M}(A) & \leq \delta_{M, N_1}(A) + \delta_{\overline J}(A)\\
		& \leq \begin{cases*}
			2 + 5 \qquad & if $A \cong C_2$,\\
			2 + 4 & otherwise,
		\end{cases*}
	\end{align*}
	implying that $d(\overline M) \leq 7$ by Theorem \ref{theorem:crowns}.
	
	As such, we can assume that $M$ is subdirect, and does not contain $N_i$ for any $i$. Hence, $M = (M_1 \times M_2) . C$ where each $M_i$ is a maximal $P_i$-normal subgroup of $N_i$, with $C \cong P_i / M_i$. Thus,
	\[
	\delta_{\overline M}(A) \leq \delta_{M, M_1}(A) + \delta_{\overline P_2}(A).
	\]
	When $\Omega_1$ is quasisimple, since $\overline \Omega_1 \not\cong \overline \Omega_2$, we have that $M_1$ contains $\Omega_1$. Hence, by Corollary \ref{cor:upperbounddeltaGN} and Lemma \ref{lemma:maxsubgpsSigmai}, we have that $\delta_{M, M_1}(A) \leq 2$. Additionally, by Lemma \ref{lemma:maxsubgpsSigmai}, $\delta_{\overline P_2}(A) \leq 3$ with equality only if $A \cong C_2$. Hence,
	\[
	\delta_{\overline M}(A) \leq \begin{cases*}
		5 \qquad & if $A \cong C_2$,\\
		4 & otherwise,
	\end{cases*}
	\]
	which implies that $d(\overline M) \leq 5$, by Theorem \ref{theorem:crowns}.
	
	On the other hand, when $\Omega_1$ is not quasisimple, it can be shown that for any normal subgroup $M_1$ of $I_1$ we have $d(M_1) \leq 4$, and hence by Lemma \ref{lemma:maxsubgpsSigmai},
	\[
	d(\overline M) \leq d(M_1) + d(\overline P_2) \leq 4 + 3 = 7.
	\]
\end{proof}

\subsection{Class \texorpdfstring{$\mathscr{C}_2$}{C2}}\label{section:classicalsC2}

We begin this section by assuming that $H$ is of type $\GL^\epsilon_m(q) \wr \Sym(t)$, $\Sp_m(q) \wr \Sym(t)$ or $\O_m^\epsilon(q) \wr \Sym(t)$, and that $m > 1$ in this final case. As such, $H$ is the stabiliser of a decomposition $V_1 \oplus \cdots \oplus V_t$ of $V$ for some $V_i \leq V$ which are pairwise isometric. By Section \ref{section:C2explanation},
\[
H_{\Omega} = \Omega_1^t . \del((I_1 / \Omega_1)^t) . \Sym(t)
\]
and
\[
H_\Sigma = I_1^t . \diag((\Sigma^*_1/I_1)^t) : \Sym(t) \leq \Sigma^*_1 \wr \Sym(t),
\]
unless $\Omega_1 = \GL_1(q)$ and $t=2$, in which case
\[
	H_\Sigma = I_1^t . \diag((\Sigma_1/I_1)^t) : \Sym(t) \leq \Sigma_1 \wr \Sym(t) \leq \Sigma_1^* \wr \Sym(t).
\]

We now define some notation, given the structure of $H$ indicated above. Let $\Omega_1 \leq N_1 \leq I_1$ be the largest group such that $N_1^t \leq H$. Additionally, let $I_1 \leq P_1 \leq \Sigma^*_1$ be the projection of $H \cap (\Sigma^*_1)^t$ to the first coordinate. Then, by the discussion in Section \ref{section:wreathproducts}, we have that
\[
H = N_1^t . \del((I_1/N_1)^t) . \diag((P_1/I_1)^t).\Sym(t).
\]
To be more precise, $H \cap I_1^t = \{(a_1, \dots, a_t) \in I_1^t : \prod a_i \in N_1 \}$, and $H/(H \cap I_1^t)$ contains a subgroup of the form $\{(x, \dots, x) \in (\Sigma^*_1/I_1)^t : x \in P_1/I_1 \}$. For any $\sigma \in \Sym(t)$ and $1 \leq i \leq t$, there exists some $a \in I_i$ with determinant $\pm 1$ such that $(1, \dots, 1, a, 1, \dots, 1) \sigma \in H$. Finally, $H$ contains the natural copy of $\Alt(t)$. In particular, this implies that $H$ satisfies the conditions of Corollary \ref{cor:CFsinwreathproduct}. For convenience, we record the structure of $I_1/\Omega_1$ and $\overline \Sigma^*_1 / \overline I_1$ in Table \ref{table:C2basicstructure}.
\begin{table}
	\centering
    \caption{The structure of $I_1/\Omega_1$ and $\overline \Sigma_1^*/\overline I_1$.}
	\begin{tabular}{lll}
		Type & $I_1/\Omega_1$ & $\overline \Sigma^*_1/ \overline I_1$ \\  \toprule
		$\boldL$ & $C_{q-1}$ & $C_f \times C_2$\\
		$\boldU$ & $C_{q+1}$ & $C_{2f}$\\
		$\boldS$ & $1$ & $C_f$, $C_{2f}$, or $C_f \times C_2$\\
		$\boldO$ & $C_2^2$, or $C_2$ & $C_f$, $C_{2f}$, or $C_f \times C_2$
	\end{tabular}
	\label{table:C2basicstructure}
\end{table}

Following the method described in Section \ref{section:prelimsmethod}, we classify the maximal subgroups of $H$ by considering the normal series
\[
1 < \Omega_1^t < N_1^t < I_1^t \cap H < P_1^t \cap H < H.
\]
For each pair of successive normal subgroups $X < Y$ in the series above, we classify which maximal subgroups $M$ of $H$ can contain $X$ while not containing $Y$. We begin by considering the cases with $Y = \Omega_1^t$ or $Y = N_1^t$. In particular, $M$ projects onto $H/N_1^t$. Note that
\[
\overline H = \frac{N_1^t}{\diag(Z(N_1)^t)} . \frac{\del((I_1/N_1)^t)}{\diag((Z(I_1)/Z(N_1))^t \cap \del((I_1/N_1)^t)} . ((\overline P_1/\overline I_1) \times \Sym(t)).
\]
Let $\pi \colon H \to \overline H$ denote the natural projection map. We use this notation to distinguish, for instance, between $\overline N_1^t = (N_1/Z(N_1))^t$, and $\pi(N_1^t)$ which is equal to $N_1^t/\diag(Z(N_1)^t)$ by above. Since $M$ projects onto $H/N_1^t$, we have $\overline M = (\overline M \cap \pi(N_1^t)).(\overline H/\pi(N_1^t))$. Thus, to bound $d(\overline M)$ in this case, we first wish to bound the number of chief factors of $\overline H/\pi(N_1^t)$.

\begin{prop} \label{prop:boundtopC2}
	Let $H$ be as above. We have $d(\overline H/\pi(N_1^t)) \leq 4$ with equality only if $t = 2$, and $d(\overline H/\pi(H \cap I_1^t)) \leq 3$ . Additionally, if $t > 2$, then
	\[
	\delta_{\frac{\overline H}{\pi(N_1^t)}}(A) \leq \begin{cases*}
		3 \qquad & if $A \cong C_2^2$ and $t=4$, or $A \cong C_2$,\\
		2 & otherwise.
	\end{cases*}
	\]
	If instead $t=2$, then we have
	\[
	\delta_{\frac{\overline H}{\pi(N_1^t)}}(A) \leq \begin{cases*}
		4 \qquad & if $A \cong C_2$,\\
		2 & otherwise.
	\end{cases*}
	\]
\end{prop}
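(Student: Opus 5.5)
The plan is to analyse the quotient $Q := \overline H/\pi(N_1^t)$ through its normal series
\[
1 \leq \pi(H \cap I_1^t)/\pi(N_1^t) \leq Q,
\]
where the bottom term is isomorphic to a quotient of $\del((I_1/N_1)^t)$. The top quotient is $\overline H/\pi(H\cap I_1^t) \cong (\overline P_1/\overline I_1) \times \Sym(t)$. By Table \ref{table:C2basicstructure}, $\overline P_1/\overline I_1$ is a subgroup of $C_f\times C_2$ or $C_{2f}$, so $d(\overline P_1/\overline I_1)\le 2$. The first step is to bound $d\big((\overline P_1/\overline I_1)\times\Sym(t)\big)\le 3$ and the corresponding $\delta$ values.

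For the top quotient I will use Theorem \ref{theorem:crowns}(3). The non-Frattini chief factors are:
\begin{enumerate}[(a)]
\item $\Alt(t)$, or its chief factors when $t\le 4$;
\item a central $C_2$ from $\Sym(t)/\Alt(t)$;
\item the chief factors of the abelian group $\overline P_1/\overline I_1$, which has at most two $C_2$'s, each central.
\end{enumerate}
This gives $\delta(C_2)\le 3$, and hence $d\le 3$. When $t=4$, $\Sym(4)$ contributes a single $C_2^2$ and a $C_3$. When $t=3$ it contributes a non-central $C_3$ that is not equivalent to any central $C_3$. In every case $\delta(A)\le 2$ for $A\not\cong C_2$.

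Next, the bottom term $\del((I_1/N_1)^t)$. Here $I_1/N_1$ is a quotient of $I_1/\Omega_1$, which by Table \ref{table:C2basicstructure} is cyclic or $C_2^2$. Since $H$ satisfies Corollary \ref{cor:CFsinwreathproduct}, I will apply that corollary with $E=I_1/N_1$, taking the ambient group to be $H/N_1^t$ (before quotienting by scalars). Each chief factor $A$ of $I_1/N_1$ is central in $F$, so case (2) applies: $\del(A^t)$ contributes either a factor $A^{t-1}$ with $\exp(A)\nmid t$, or the factors $A^{t-2}$ and a top $A$, where the first two listed in (2)(ii) are Frattini. The cyclic part of $I_1/\Omega_1$ has at most one $C_r$ for each prime $r$, so the only possible non-Frattini factors in the bottom term are:
\begin{itemize}
\item for $t\ge 3$, a $C_r^{t-1}$ or $C_r^{t-2}$ (with $r\ge 2$, these coincide with a chief factor of the top quotient only when $t=4$, $r=2$, giving $C_2^2$);
\item a $C_r$ when $r\mid t$.
\end{itemize}
In the orthogonal case with $I_1/N_1\cong C_2^2$, the $C_2^{t-1}$ or $C_2^{t-2}$ factors from different chief factors of $E$ are not $E$-equivalent, by the last sentence of Corollary \ref{cor:CFsinwreathproduct}, and the central $C_2$'s number at most two. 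Passing to $\overline H$ only kills factors, because it quotients by the scalars.

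Combining the two parts, for $t>2$ I get $\delta_{Q}(C_2)\le 3$ and $\delta_Q(C_2^2)\le 1+2=3$ when $t=4$, and otherwise at most $2$, as claimed. When $t=2$, $\del(A^2)\cong A$ is central with respect to $\Sym(2)$, so each such factor is a $C_r$. The cyclic $I_1/N_1$ gives at most one $C_2$ and the $C_2^2$ case gives two. Whenever there are two, we are in type $\boldO$, where $\overline P_1/\overline I_1$ can contain at most one non-Frattini $C_2$ alongside them. I still need to verify this using the case analysis of Proposition \ref{prop:LMTcorrectionOC2} together with Lemma \ref{lemma:jumpingbabyresultdel}, and I expect this to be the main obstacle. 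Granting it, $\delta_Q(C_2)\le 4$ and all other $\delta\le 2$.

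Finally, $d(Q)$ follows from Theorem \ref{theorem:crowns}(1),(3). For non-central $A$ with $\delta\le 3$, the bound $h(A)\le 1+\lceil(\delta+s)/r\rceil$ gives at most $3$, using the $s(A)=0$, $r(A)=2$ computation from Proposition \ref{prop:LMTcorrectionOC2} for the $C_2^2$ factor. For central factors, $d=\delta$. This yields $d(Q)\le 4$, with $4$ attained only by $\delta(C_2)=4$, which forces $t=2$.
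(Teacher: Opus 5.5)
\textbf{Gap 1: the bottom term for $t>2$.} Your split of $Q=\overline H/\pi(N_1^t)$ into the top $(\overline P_1/\overline I_1)\times\Sym(t)$ and the bottom $\del((I_1/N_1)^t)$ is the paper's. However, your description of the bottom term is wrong, and this breaks the case $t>2$.

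You cannot apply the \emph{statement} of Corollary~\ref{cor:CFsinwreathproduct} with $F=I_1/N_1$, because $(I_1/N_1)^t\not\le H/N_1^t$. Modulo $N_1^t$, the group $H\cap I_1^t$ is exactly $\del((I_1/N_1)^t)$. So the ``top $A$'', namely $A^t/\del(A^t)$ from case (2)(ii), is not a section of $Q$ at all.

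Inside $\del(A^t)$ with $r\mid t$ and $t>2$, the only copy of $A$ is $\diag(A^t)$, at the bottom, and it is Frattini. To see this, suppose $U$ complements it in $H$. Then $U\cap\del(A^t)$ would be an $H$-invariant complement to $\diag(A^t)$ in the abelian normal subgroup $\del(A^t)$. But by the proofs of Propositions~\ref{prop:t5CFsinwreathproduct} and~\ref{prop:t3CFsinwreathproduct}, the only $H$-invariant subgroups of $\del(A^t)$ are $1$, $\diag(A^t)$ and $\del(A^t)$. The factor that can be non-Frattini is $\del(A^t)/\diag(A^t)\cong A^{t-2}$, so you have the roles reversed.

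As written, your list allows a non-Frattini central $C_2$ in the bottom whenever $t$ is even, and two of them when $I_1/N_1\cong C_2^2$. Adding these to the three $C_2$'s of $(\overline P_1/\overline I_1)\times\Sym(t)$ permits up to $4$ or $5$. So neither $\delta_Q(C_2)\le 3$ for $t>2$ nor ``$d(Q)=4$ only if $t=2$'' follows from what you wrote. What is needed, and what the paper uses, is that for $t>2$ the bottom term contains no non-Frattini $C_2$ at all.

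\textbf{Gap 2: the case $t=2$ with $I_1/N_1\cong C_2^2$.} Here you explicitly leave the decisive step unproved (``granting it''), so $\delta_Q(C_2)\le 4$ is not established. No case analysis from Proposition~\ref{prop:LMTcorrectionOC2} is needed; the paper's argument is short.

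If $\delta_{\overline P_1/\overline I_1}(C_2)=2$, then $P_1$ contains an element $\delta_1$ acting non-trivially on $I_1/\Omega_1\cong C_2^2$ (here $N_1=\Omega_1$), as in the proof of Proposition~\ref{prop:lastcaseC2}. Hence $C=[\langle\delta_1\rangle\Omega_1,I_1/\Omega_1]$ is a central chief factor of order $2$ lying in the commutator, and Lemma~\ref{lemma:jumpingbabyresultdel} makes $\del(C^2)$ Frattini. So the bottom term and $\overline P_1/\overline I_1$ together contribute at most three non-Frattini $C_2$'s, and $\Sym(2)$ supplies the fourth.

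The rest of your argument matches the paper, including the use of $s(A)=0$ and $r(A)=2$ for the $C_2^2$ factor when $t=4$.
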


\begin{proof}
	Firstly, by Table \ref{table:C2basicstructure}, we have
	\[
	\delta_{(\overline P_1/\overline I_1) \times \Sym(t)}(A) \leq \begin{cases*}
		3 \qquad & if $A \cong C_2$,\\
		2 & if $A$ is otherwise central,\\
		1 & otherwise.
	\end{cases*}
	\]
	Indeed, this table implies that $\delta_{\overline P_1/\overline I_1}(A)$ is bounded by 2 when $A \cong C_2$, by 1 when $A$ is a central chief factor, and is 0 otherwise. Hence, we have $d(\overline H/\pi(H \cap I_1^t)) \leq 3$ by Theorem \ref{theorem:crowns}.
	
	We next prove the bounds on $\delta_{\frac{\overline H}{\pi(N_1^t)}}(A)$. Assume that $I_1/\Omega_1$ is cyclic. Then $\delta_{I_1/N_1}(A) \leq 1$, so, by the proof of Corollary \ref{cor:CFsinwreathproduct}, $\delta_{ H, \del((I_1/N_1)^t)}(A) \leq 1$ for any chief factor $A$. Additionally, if $A = C_2$ then we have equality only if $t=2$. Hence, we have proven the desired bounds.
	
	Assume that $I_1/N_1$ is not cyclic, and thus is isomorphic to $C_2^2$. Hence, the non-Frattini chief factors of $H$ in $\del((I_1/N_1)^t)$ are isomorphic to $C_2^{t-1}$ (if $t=2$ or $t$ is odd), or $C_2^{t-2}$ (if $t \neq 2$ is even), and there are at most 2 such chief factors. Additionally, if $\delta_{\overline P_1/\overline I_1}(C_2) = 2$ then there is only one non-Frattini chief factor of $H$ in $\del((I_1/N_1)^t)$, by Lemma \ref{lemma:jumpingbabyresultdel}. Thus, we can again prove the result by combining this with the bound on the chief factors of $(\overline P_1/\overline I_1) \times \Sym(t)$.
	
	By Theorem \ref{theorem:crowns}, to prove the bound on $d(\overline H/\pi(N_1^t))$ it is sufficient to show that $d((L_A)_{\delta_{\overline H/\pi(N_1^t)}(A)}) \leq 3$ when $A \cong C_2^2$. If $\delta_{\overline H/\pi(N_1^t)}(A) \leq 2$ then this is clear, so we assume that $\delta_{\overline H/\pi(N_1^t)}(A) = 3$. Hence, $\overline H/\pi(N_1^t)$ acts on $A$ as $\Sym(4)$ acts on its unique minimal normal subgroup isomorphic to $C_2^2$. Thus, as in the proof of Proposition \ref{prop:LMTcorrectionOC2}, we obtain $d((L_A)_{3})  = 3$. 
\end{proof}

\begin{prop}\label{prop:C2subdirectcase}
	Let $H$ be as above and suppose that $M \mleq H$ does not contain $\Omega_1^t$. If $M\cap \Omega_1^t$ is subdirect then $M$ contains $K^t$, where $K$ is a maximal $P_1$-normal subgroup of $\Omega_1$. 
\end{prop}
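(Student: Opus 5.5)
This is a direct application of Proposition \ref{prop:subdirectwreathcase}, with $K$ there replaced by the ambient coordinate group. Concretely, I would take $H \leq P_1 \wr \Sym(t)$, or more precisely $H \leq \Sigma_1^* \wr \Sym(t)$, and set $X := \Omega_1$, which is normal in $P_1$ with $\Omega_1^t \leq H$. The hypotheses of Proposition \ref{prop:subdirectwreathcase} then need checking:
\begin{enumerate}[(i)]
\item $\pi(H) = \Sym(t)$ is transitive, since $H$ contains the natural copy of $\Alt(t)$ and projects onto $\Sym(t)$ (for $t = 2$, via the elements $(1,\dots,1,a)\sigma$).
\item $H \cap P_1^t$ is subdirect in $P_1^t$. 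This follows from the description $H = N_1^t . \del((I_1/N_1)^t) . \diag((P_1/I_1)^t).\Sym(t)$: by definition, $P_1$ is the projection of $H \cap (\Sigma_1^*)^t$ onto the first coordinate, and $\Sym(t)$-conjugation transports this to every coordinate.
\item $M \cap \Omega_1^t$ is proper and subdirect, which holds by assumption since $M \not\geq \Omega_1^t$.
\end{enumerate}
Proposition \ref{prop:subdirectwreathcase} then gives that $M$ contains $(\bigcap_j N_j)^t$, where the $N_j$ run over the maximal $P_1$-normal subgroups of $\Omega_1$.

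The remaining point is that the statement asks for a single maximal $P_1$-normal subgroup $K$ rather than the intersection of all of them, and I expect this to be the main obstacle. I would redo the last step of the proof of Proposition \ref{prop:subdirectwreathcase} more carefully. There, $\widetilde M \geq M \cap P_1^t$ is a maximal subgroup of $H \cap P_1^t$, and each $L_i = \rho_i(\widetilde M \cap \iota_i(\Omega_1))$ is either all of $\Omega_1$ or a maximal $P_1$-normal subgroup. Since $\widetilde M \cap \Omega_1^t < \Omega_1^t$, at least one $L_i$ is proper, say $L_1 =: K$. Transitivity of $\pi(M)$ on coordinates, together with $\rho_i(M \cap \iota_i(P_1)) = P_1$-invariance, gives that conjugating $K$ by elements of $M$ yields subgroups normal in each coordinate. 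I would then argue that
\[
\bigcap_{m \in M} (\widetilde M \cap \Omega_1^t)^m \;\geq\; \prod_i \iota_i(K).
\]
The reason is that every conjugate $L_{\sigma^{-1}(i)}$ is either $\Omega_1$ or (as a $P_1$-normal subgroup obtained from $K$ by an automorphism induced by $P_1 \wr \Sym(t)$, which fixes $K$ since $K$ is $P_1$-normal) equal to $K$. The awkward case is when different coordinates have different proper $L_i$. Here I would use that $\widetilde M$ is maximal and subdirect: by Goursat-type reasoning as in Proposition \ref{prop:maxsubgpdirectproduct}, a maximal subdirect subgroup of $P_1^t$ fails to contain $\iota_i(\Omega_1)$ for essentially one "block", and the $L_i$ in that block are identified via the diagonal isomorphisms that preserve $P_1$-normality.

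If this refinement becomes messy, a fallback is available in the cases where it is used: $\Omega_1$ quasisimple, or $\Omega_1$ non-quasisimple from Table \ref{table:CFsofIinOmega}. There, the maximal $P_1$-normal subgroups of $\Omega_1$ are either unique (for example $Z(\Omega_1)$ in the quasisimple case, or $\Phi$-type subgroups), or we can pick $K$ to be any single one containing the intersection. In the quasisimple case the conclusion is immediate, since the unique maximal $P_1$-normal subgroup is $Z(\Omega_1)$ and $\bigcap_j N_j = Z(\Omega_1) = K$. The genuinely non-unique cases, such as $\Omega_4^+(q)$ and the small soluble groups, can be checked directly.
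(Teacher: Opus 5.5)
Your reduction is sound, and in the quasisimple case it is complete. There $Z(\Omega_1)$ is the unique maximal $P_1$-normal subgroup of $\Omega_1$, so Proposition~\ref{prop:subdirectwreathcase} (with $X=\Omega_1$ and ambient group $P_1$) gives $Z(\Omega_1)^t\le M$; this matches the paper's argument via $Z(\Omega_1)^t=\Phi(\Omega_1)^t$.

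The step you flag is a genuine gap, however, and your refinement does not close it. Conjugating $\widetilde M$ by $m=k\sigma\in M$ merely permutes the list $(L_1,\dots,L_t)$ belonging to $\widetilde M$, since each $L_i$ is $P_1$-normal. It yields no relation between $L_1$ and $L_2$ for the \emph{same} $\widetilde M$, so the claim that every proper $L_{\sigma^{-1}(i)}$ equals $K$ is unsupported. The ``block'' remark does not help either. Maximality of $\widetilde M$ (e.g.\ via the primitive group $(H\cap P_1^t)/\Core(\widetilde M)$) only identifies the sections $\Omega_1/L_i$, for $L_i$ proper, with minimal normal subgroups of that quotient. At best this relates them by isomorphisms, which does not give $L_i=L_j$ as subgroups of $\Omega_1$ once $\Omega_1$ has several maximal $P_1$-normal subgroups with isomorphic quotients. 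If the proper $L_i$ differ, $\bigcap_{m\in M}(\widetilde M\cap\Omega_1^t)^m$ is only known to contain $(\bigcap_i L_i)^t$, which is where you started.

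The fallback has the same hole: from $(\bigcap_j N_j)^t\le M$ you cannot ``pick $K$ to be any single one containing the intersection''; that is exactly what must be proved. Nor are the problematic cases finite or of ``$\Phi$-type''. For $\Omega_1=\Omega_2^\pm(q)\cong C_l$, the maximal $P_1$-normal subgroups are the subgroups of index $r$, one for each prime $r\mid l$. This is an infinite family and cannot be checked directly.

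The paper handles this case by working modulo $\Phi(\Omega_1)^t\le M$, so that $l$ is squarefree, and showing that for each prime $r$ either $C_{l/r}^t\le M$ or $C_r^t\le M$. Since $C_{l/r}^t\unlhd H$, maximality gives $(M\cap\Omega_1^t)C_{l/r}^t\in\{M\cap\Omega_1^t,\,C_l^t\}$. In the second case, Goursat's Lemma with $\gcd(r,l/r)=1$ forces $C_r^t\le M$. This cannot hold for every $r$, since then $\Omega_1^t\le M$.

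For $\Omega_4^+(3)$, with its two index-$3$ subgroups $K_1,K_2$, the paper argues as follows. When $t>2$, it uses $\del((I_1/\Omega_1)^t)$ together with subdirectness to reach a contradiction; when $t=2$, it relies on a computation.

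Your own route can be repaired in the cyclic case. In the primitive quotient above, each $\iota_i(\Omega_1)$ with $L_i$ proper maps onto an \emph{abelian} minimal normal subgroup, hence onto the unique one. So all proper $L_i$ have the same index, hence coincide in the cyclic group $\Omega_1$, and your intersection argument then goes through. Some such structural input about $\Omega_1$ is indispensable, though, and as written the proposal only proves the quasisimple case.
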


\begin{proof}
	Note that, since $M$ projects onto $H/\Omega_1^t$ and $M \cap \Omega_1^t$ is subdirect, $M \cap P_1^t$ is a subdirect subgroup of $P_1^t$.
	
	Let $K$ be the intersection of $M$ with $\Omega_1 \times 1 \times \cdots \times 1$, so $K^t \leq M$. Since $M \cap P_1^t$ is a subdirect subgroup of $P_1^t$, $K$ is a normal subgroup of $P_1$. If $\Omega_1$ is quasisimple, then $M$ contains $Z(\Omega_1^t) = \Phi(\Omega_1)^t$, so $K = Z(\Omega_1)$ which is a maximal $P_1$-normal subgroup of $\Omega_1$.
	
	Now assume that $\Omega_1$ is not quasisimple. Thus, we apply Proposition \ref{prop:subdirectwreathcase}, with $N_1$ as $X$ in the statement of the proposition. Let $N$ be the intersection of the maximal $P_1$-normal subgroups of $N_1$. Then, $M \cap \Omega_1^t$ contains $(N \cap \Omega_1)^t$. For most choices of $\Omega_1$, $N \cap \Omega_1$ is itself a maximal $P_1$-normal subgroup of $\Omega_1$, and hence the result is proven. Thus, we only need to consider the few $\Omega_1$ where this is not the case, these being $\Omega_1 = \Omega_2^\pm(q)$ for certain $q$, and $\Omega_1 = \Omega_4^+(3)$.
	
	If $\Omega_1 = \Omega_2^\pm(q)$, then $\Omega_1 = C_{\frac{q \mp 1}{(q-1,2)}}$. It follows that $M$ contains $\Phi \! \left( C_{\frac{q \mp 1}{(q-1,2)}} \right)^t$, and hence we consider $(M \cap \Omega_1^t)/\Phi(\Omega_1)^t$ which is a subgroup of $C_l^t$ for some square-free integer $l$. We claim that, for each prime $r \mid \ell$, the group $M/\Phi(\Omega_1)^t$ contains either $C_r^t$ or $C_{l/r}^t$. Indeed, suppose that $M/\Phi(\Omega_1)^t$ does not contain $C_r^t$. Note that $C_{l/r}$ is a characteristic subgroup of $C_l$, so $C_{l/r}^t$ is a normal subgroup of $H/\Phi(\Omega_1)^t$. Hence, $\frac{M \cap \Omega_1^t}{\Phi(\Omega_1)^t} C_{l/r}^t$ is an $M$-normal subgroup of $C_l^t$, implying that it is equal to $(M \cap \Omega_1^t)/\Phi(\Omega_1)^t$ or $C_l^t$. In the former case we have $C_{l/r}^t \leq M/\Phi(\Omega_1)^t$, so assume that $\frac{M \cap \Omega_1^t}{\Phi(\Omega_1)^t} C_{l/r}^t = C_l^t$. Hence, $\frac{M \cap \Omega_1^t}{\Phi(\Omega_1)^t} \leq C_r^t \times C_{l/r}^t$ projects onto $C_r^t$, and has $M/\Phi(\Omega_1)^t \cap C_r^t < C_r^t$. Since $r$ and $l/r$ are coprime, this is a contradiction by Goursat's Lemma. Thus, we have shown that $M/\Phi(\Omega_1)^t$ contains $C_r^t$ or $C_{l/r}^t$, proving the statement of the proposition in this case.
	
	Assume now that $\Omega_1 = \Omega_4^+(3)$. Then, if the intersection of a maximal $P_1$-normal subgroup of $N_1$ with $\Omega_1$ is proper, it is equal to one of two index 3 subgroups of $\Omega_1$. We denote these subgroups by $K_1, K_2$. Hence, $N \cap \Omega_1$ contains $K_1 \cap K_2$, which is a subgroup of index 9 in $\Omega_1$. In fact, $\Omega_1/(K_1 \cap K_2)$ is isomorphic to $C_3^2$, and, for each $i$, $K_i/(K_1 \cap K_2)$ is a $P_1$-normal subgroup which is a non-central section of $I_1$.
	
	Suppose that $t > 2$. Then the projection of the group $(M \cap I_1^t) / (K_1 \cap K_2)^t$ onto $(I_1/\Omega_1)^t$ contains $\del((I_1/\Omega_1)^t)$. Additionally, $(M \cap P_1^t) / (K_1 \cap K_2)^t$ is subdirect. Thus one can show that $M / (K_1 \cap K_2)^t$ contains $K_i^t/(K_1 \cap K_2)^t$ for both $i$, leading to a contradiction in this case. If instead $t=2$, then we can check computationally that $M$ contains either $K_1^2$ or $K_2^2$.
\end{proof}

For this next result, we introduce the following notation. Given a group $G$ and a $G$-group $A$, we write $d_G(A)$ for the smallest size set $X \subseteq A$ such that $A = \langle x^G : x \in X \rangle$. We say such a set $X$ generates $A$ under the action of $G$.

\begin{prop}\label{prop:C2McontainingOmega1}
	Let $H$ be as above. Suppose that $M \mleq H$ does not contain $\Omega_1^t$. Then $d(\overline M) \leq 7$.
\end{prop}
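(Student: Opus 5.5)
Since $M$ does not contain $\Omega_1^t$, it projects onto $H/\Omega_1^t$ (indeed $M\Omega_1^t = H$), so $\overline M = (\overline M \cap \pi(\Omega_1^t)).(\overline H/\pi(\Omega_1^t))$. For every chief factor $A$ we then have
\[
\delta_{\overline M}(A) \leq \delta_{\overline M, \overline M \cap \pi(\Omega_1^t)}(A) + \delta_{\overline H/\pi(\Omega_1^t)}(A).
\]
The plan is to bound the second summand by Proposition \ref{prop:boundtopC2}, together with the extra layer $N_1^t/\Omega_1^t$, which contributes at most one further non-Frattini factor per $A$ by Corollary \ref{cor:CFsinwreathproduct} applied to the cyclic or $C_2^2$ section $N_1/\Omega_1$. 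We then bound the first summand by splitting according to Proposition \ref{prop:XSimpleMaxSubgrpXkC} (or Proposition \ref{prop:XNonsimpleMaxSubgpXkC} when $\Omega_1$ is not quasisimple) into the subdirect and non-subdirect cases for $M \cap \Omega_1^t$. Finally, Theorem \ref{theorem:crowns} converts these bounds into $d(\overline M)\le 7$. The only chief factor that can reach $\delta = 7$ is $C_2$, where $d = \delta$. Every other factor must have $\delta$ small enough that $h(A) \le 7$.

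\textbf{Subdirect case.} By Proposition \ref{prop:C2subdirectcase}, $M$ contains $K^t$ for a maximal $P_1$-normal subgroup $K$ of $\Omega_1$. When $\Omega_1$ is quasisimple, $K = Z(\Omega_1)$, and $(M\cap\Omega_1^t)/K^t$ is a diagonal-type subgroup $S^\ell$ with $\ell < t$. Its non-Frattini chief factors are non-abelian and occur at most once each, and the central part $\pi(K^t)$ is Frattini. Hence $\overline M$ has essentially the chief factors of $\overline H/\pi(\Omega_1^t)$ plus one non-abelian factor, giving $d(\overline M) \le 5$ by Proposition \ref{prop:boundtopC2}. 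When $\Omega_1$ is not quasisimple, the quotient $\Omega_1/K$ is a small abelian group, for example $C_r$ or $C_3$ in the $\Omega_4^+(3)$ case. I would use Table \ref{table:CFsofIinOmega} and Corollary \ref{cor:CFsinwreathproduct} to show that $(\Omega_1/K)^t$ adds at most one non-Frattini factor of each type, keeping the total at most $5 + 1$.

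\textbf{Non-subdirect case.} By Proposition \ref{prop:nonsubdirectwreathcase} (after conjugation), $M = (R\wr\Sym(t))\cap H$, where $R$ is a maximal subgroup of $P_1$ not containing $\Omega_1$. Here Proposition \ref{prop:jumpingappliedtoclassmaxs} is the intended tool: it gives $\delta_{M,(R\cap N_1)^t}(A) \le 3$, provided $d(\overline P_1/\overline I_1)=2$ and $d(P_1/N_1)\ge 3$. If those hypotheses fail, I would instead use Theorem \ref{theorem:mainjumping} with Lemma \ref{lemma:maxsubgpsSigmai}, which gives $\delta_{R\cap N_1}(A) \le 4$ for central $A$. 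The point is that a large top contribution forces the hypotheses of Proposition \ref{prop:jumpingappliedtoclassmaxs}. For $C_2$ the count is then
\begin{itemize}
\item at most $3$ from $(R\cap N_1)^t$,
\item at most $1$ from $\del$ of $I_1/N_1$,
\item at most $3$ from $(\overline P_1/\overline I_1)\times \Sym(t)$.
\end{itemize}
This totals $7$. For other $A$, Lemma \ref{lemma:maxsubgpsSigmai} and Theorem \ref{theorem:mainjumping} give $\delta \le 3+1+2$. Theorem \ref{theorem:crowns}(3) handles the abelian non-central factors, using $\theta(A)=1$ and $r(A)\ge 2$ whenever $\delta$ exceeds $3$.

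The main obstacle is the $C_2$ bookkeeping in the non-subdirect case with $t=2$. There, Proposition \ref{prop:boundtopC2} alone permits $4$ top factors, so naive addition gives $4 + 3 = 7$ only if the $\del((I_1/N_1)^t)$ factors and the $(R\cap N_1)^2$ factors do not both attain their maxima simultaneously. I would try to show this with Lemma \ref{lemma:jumpingbabyresultdel} and Lemma \ref{lemma:jumpingbabyresultfull}: when the top contributes $4$, the relevant central $C_2$ sections lie in commutators and are therefore Frattini in $M$. Failing that, I would treat the few remaining configurations, with $\boldO^\pm$ and $q$ odd, by a direct computation analogous to Lemma \ref{lemma:reducingLMTtofinitegp}.
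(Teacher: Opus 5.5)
\textbf{Verdict.} Your treatment of the wreath case broadly follows the paper, but the case $M \ge K^t$ has a genuine gap, mainly in how you count chief factors inside $\pi(K^t)$.

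\textbf{The case $M=(R\wr\Sym(t))\cap H$.} Here your argument is the paper's. You split on whether the $P_1/N_1$-layer of $\overline H/\pi(N_1^t)$ contributes three non-Frattini $C_2$'s.
\begin{itemize}
\item If it does, Proposition~\ref{prop:jumpingappliedtoclassmaxs} bounds the bottom part (inside $\pi(N_1^t)$) by $3$, and Proposition~\ref{prop:boundtopC2} bounds the top by $4$.
\item If not, the top is at most $3$, and Proposition~\ref{prop:atmost4inI} with Corollary~\ref{cor:CFsinwreathproduct} bounds the bottom by $4$.
\end{itemize}
This dichotomy already settles the $t=2$ bookkeeping you call the main obstacle, so no extra Frattini argument or computation is needed there. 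Your ``at most $1$ from $\del$'' is wrong when $I_1/N_1\cong C_2^2$ and $t=2$, since $\del((I_1/N_1)^2)$ can then give two $C_2$'s. Only the layer's total, which is at most $3$, matters.

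Two pieces are missing from this part.
\begin{itemize}
\item If $\Sigma_1^*=\Sigma_1:C_2$ (case $\boldL$, $m\le 2$), then $P_1$ is not a classical group. Proposition~\ref{prop:jumpingappliedtoclassmaxs} then does not apply, even when the top has four $C_2$'s. The paper instead uses $d(R\cap N_1)\le 3$ directly.
\item Proposition~\ref{prop:nonsubdirectwreathcase} requires $M\cap\Omega_1^t$ not to be $P_1^t$-normal. So the non-subdirect, $P_1^t$-normal case is covered by neither of your cases; there maximality gives $M\cap\Omega_1^t=K^t$.
\end{itemize}

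\textbf{The case $M\ge K^t$: the Frattini claim.} You assert that $\pi(Z(\Omega_1)^t)$ is Frattini in $\overline M$, but this is false in general. Take $H$ of type $\Sp_2(q)\wr\Sym(3)$ in $\Sp_6(q)$ with $q\ge 5$ odd, and $M$ the preimage of $\diag$. Then $\overline M\cong\Sym(4)\times\PSL_2(q)$, and the image of $Z(\Omega_1)^3$ is a complemented $C_2^2$. The fix is the paper's: use $d_{\overline M}(\pi(Z(\Omega_1)^t))\le d(Z(\Omega_1))=1$ together with $d(\overline H/\pi(\Omega_1^t))\le 5$ from Theorem~\ref{theorem:LMTgenerators}, which gives $6$. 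Note that Proposition~\ref{prop:boundtopC2} concerns $\overline H/\pi(N_1^t)$, not $\overline H/\pi(\Omega_1^t)$.

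\textbf{The case $M\ge K^t$ with $\Omega_1$ not quasisimple.} This is the more serious problem. You count only $(\Omega_1/K)^t$ and never account for the non-Frattini chief factors of $\overline M$ inside $\pi(K^t)$, so ``at most $5+1$'' is unsupported. The paper uses $d(\overline M)\le d(K)+d(\overline M/\pi(K^t))$, with $\delta_{M,(M\cap N_1^t)/K^t}(A)\le 1$. It then needs the following further inputs:
\begin{itemize}
\item $d(K)\le 2$ when $q>3$, since $I_1$ is cyclic, dihedral, or contained in $(\GL_2(q)\circ\GL_2(q)):\Sym(2)$;
\item when $\Omega_1=\Omega_4^+(q)$ and $d(K)=2$, the fact that $(M\cap\Omega_1^t)/Z(\Omega_1^t)$ has only non-abelian chief factors;
\item for $\SU_3(2)$ and $\Omega_4^+(2)$, where $d(K)=3$, the facts that $Z(\Omega_1)\le\Phi(K)$ and that $d_M(B_i^t)=1$ when $t>2$, for the two $P_1$-chief factors $B_1,B_2$ of $K/Z(\Omega_1)$;
\item a computer check when $q\le 3$ and $t=2$.
\end{itemize}
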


\begin{proof}
	By Propositions \ref{prop:XNonsimpleMaxSubgpXkC}, \ref{prop:nonsubdirectwreathcase} and \ref{prop:C2subdirectcase}, one of the following holds: $M$ contains $K^t$ for some maximal $P_1$-normal subgroup of $\Omega_1$, or $M$ is conjugate to $(R \wr \Sym(t)) \cap H$ for some non-normal, maximal subgroup $R$ of $P_1$. Indeed, if $M \cap \Omega_1^t$ is subdirect, then, by Proposition \ref{prop:C2subdirectcase}, it is clear that the former holds. If $M \cap \Omega_1^t$ is not subdirect in $\Omega_1^t$ and is normal in $P_1^t$, then it is equal to $K^t$ where $K$ is a maximal $P_1$-normal subgroup of $\Omega_1$, by maximality of $M$. If $M \cap \Omega_1^t$ is not subdirect in $\Omega_1^t$ and is not normal in $P_1^t$, then $M$ is conjugate to $( R \wr \Sym(t)) \cap H$, for some $ R \mleq P_1$, by Proposition \ref{prop:nonsubdirectwreathcase}.
	
	Hence, we treat each of these cases separately below. We assume first that $M$ is conjugate to $( R \wr \Sym(t)) \cap H$ for some $ R \mleq P_1$. As we wish to bound $d(\overline M)$, we may assume simply that $M$ is equal to $( R \wr \Sym(t)) \cap H$. If $\Sigma_1^* = \Sigma_1:C_2$, then we have that $d(R \cap N_1) \leq 3$. Indeed, $\Sigma_1$ is of type $\boldL$, and $\dim(V_1) \leq 2$, so either $N_1$ is cyclic, or $\SL_2(q) \leq N_1 \leq \GL_2(q)$, so a similar argument to that of Lemma \ref{lemma:maxsubgpsSigmai} proves the desired result. Hence, by Proposition \ref{prop:boundtopC2},
	\[
	d(\overline M) \leq d(R \cap N_1) + d(\overline H / \pi(N_1^t)) \leq 7.
	\]
	Thus, we assume that $\Sigma^*_1 = \Sigma_1$, and so $P_1$ is a classical group. If $\delta_{\overline H, \frac{\pi(P_1^t \cap H)}{\pi(N_1^t)}}(C_2) = 3$, then by Proposition \ref{prop:jumpingappliedtoclassmaxs} we have that $\delta_{M, M \cap N_1^t}(A) \leq 3$ for any chief factor $A$. Hence, by Proposition \ref{prop:boundtopC2},
	\[
	\delta_{\overline M}(A) \leq \delta_{M, M \cap N_1^t}(A) + \delta_{\overline H, \frac{\overline H}{\pi(N_1^t)}}(A) \leq  \begin{cases*}
		3+4 \qquad \qquad & if $A \cong C_2$,\\
		3+3 & otherwise.
	\end{cases*}
	\]
	If instead $\delta_{\overline H, \frac{\pi(P_1^t \cap H)}{\pi(N_1^t)}}(C_2) \neq 3$, then by the proof of Proposition \ref{prop:boundtopC2} we have that $\delta_{\overline H/\pi(N_1^t)}(A) \leq 3$ for any chief factor $A$, with equality only if $A $ is $C_2^2$ and $t=4$, or $A \cong C_2$. Additionally, by Proposition \ref{prop:atmost4inI}, $\delta_{R \cap N_1}(A) \leq 4$ for any chief factor $A$, with equality only if $A$ is central. Thus, by Corollary \ref{cor:CFsinwreathproduct}, $\delta_{M, M \cap N_1^t}(A) \leq 4$ for any chief factor $A$. Additionally, if $A \cong C_2^2$ and $t=4$ then $\delta_{M, M\cap N_1^t}(A) \leq \delta_{R, R_{N_1}}(A) \leq 3$, by Lemma \ref{lemma:maxsubgpsSigmai}. Hence,
	\[
	\delta_{\overline M}(A) \leq \delta_{M, M \cap N_1^t}(A) + \delta_{\overline H/\pi(N_1^t)}(A) \leq \begin{cases*}
		4 + 3 & if $A \cong C_2$,\\
		3 + 3 \qquad & if $A \cong C_2^2$ and $t=4$,\\
		4 + 2 & otherwise.
	\end{cases*}
	\]
	This implies that $d(\overline M) \leq 7$, by Theorem \ref{theorem:crowns}.
	
	We next assume that $M$ contains $K^t$ for some maximal $P_1$-normal subgroup $K$ of $\Omega_1$. Firstly, if $\Omega_1$ is quasisimple, then $M \cap \Omega_1^t$ is equal to $Z(\Omega_1^t).\diag(\Omega_1^t)$, by Proposition \ref{prop:XSimpleMaxSubgrpXkC}. Thus, by applying Theorem \ref{theorem:crowns}, we can determine that
	\begin{align*}
		d(\overline M) & \leq d_{\overline M}(\pi(Z(\Omega_1)^t)) + d(\overline M/\pi(Z(\Omega_1)^t)) \\
		& \leq d(Z(\Omega_1)) + d(\overline H/\pi(\Omega_1^t))\\
		& \leq 1 + 5 = 6.
	\end{align*}
	Indeed, $Z(\Omega_1)$ is cyclic, and $\overline H/\pi(\Omega_1^t)$ is a quotient of a maximal subgroup of $\overline G$, so $d(\overline H/\pi(\Omega_1^t)) \leq 5$ by Theorem \ref{theorem:LMTgenerators}.
	
	Next, if $\Omega_1$ is not quasisimple, we use the following bound
	\begin{align*}
		d(\overline M) & \leq d(K) + d(\overline M / \pi(K^t)).
	\end{align*}
	If $q \leq 3$ and $t = 2$, then we can check computationally that $d(\overline M) \leq 6$ in this case. So we assume that either $q > 3$ or $t > 2$.
	
	If $q \leq 3$, we can compute directly that $d(K) \leq 3$, with equality only if $\Omega_1$ is one of $\SU_3(2)$ or $\Omega_4^+(2)$. On the other hand, if $q > 3$, then $I_1$ is one of: a cyclic group, a dihedral group, or a subgroup of $(\GL_2(q) \circ \GL_2(q)) : \Sym(2)$, where this final case arises when $\Omega_1 = \Omega_4^+(q)$. Thus in these cases we have that $d(K) \leq 2$ for any normal subgroup $K$ of $I_1$.
	
	In order to bound $d(\overline M / \pi(K^t))$, we note that, by Corollary \ref{cor:CFsinwreathproduct}, we have $\delta_{M, \frac{M \cap N_1^t}{K^t}}(A) \leq 1$ for any chief factor $A$, since $\Omega_1/K$ is a chief factor of $P_1$. Hence,
	\[
	\delta_{\overline M/\pi(K^t)}(A) \leq 1 + \delta_{\overline H/\pi(\Omega_1^t)}(A).
	\]
	Additionally, by Theorem \ref{theorem:LMTcrowns},
	\[
	\delta_{\overline H/\pi(\Omega_1^t)}(A) \leq \begin{cases*}
		5 \qquad & if $A \cong C_2$ and $t = 2$,\\
		4 & if $A \cong C_2$ and $t > 2$,\\
		3 & otherwise.
	\end{cases*}
	\]
	Suppose that $q > 3$. When $d(K) \leq 1$, we obtain $d(\overline M) \leq 7$, as required. So assume that $d(K) = 2$, which implies that $\Omega_1 = \Omega_4^+(q) = \SL_2(q) \circ \SL_2(q)$. Hence, $\delta_{M,  (M \cap \Omega_1^t)/Z(\Omega_1^t)}(A) \leq 2$ when $A$ is a non-abelian chief factor, and is 0 otherwise. Hence, by Theorem \ref{theorem:crowns},
	\[
	d(\overline M) \leq d(Z(\Omega_1)) + d(\overline H / \pi( \Omega_1^t)) \leq 6.
	\]
	Next let $q \leq 3$, and $t > 2$. If $d(K) = 2$ then we can conclude by Theorem \ref{theorem:crowns} that $d(\overline M) \leq 7$. So assume that $\Omega_1$ is one of $\SU_3(2)$ or $\Omega_4^+(2)$. Let $Z := Z(\Omega_1)$, and note that $Z$ is either $C_3$ or 1, respectively. It can be checked that $Z \leq \Phi(K)$, so $d(\overline M) = d(\overline M/\pi(Z^t))$. In both cases, $P_1$ has two chief factors in $K/Z$, one isomorphic to $C_3^2$ and one isomorphic to $C_2$. Let these chief factors be denoted by $B_1$ and $B_2$ respectively. Since $M$ projects onto $\Sym(t)$, we have $d_M(B_1^t) = d_M(B_2^t) = 1$. Hence,
	\[
	d(\overline M) = d(\overline M/\pi(Z^t)) \leq 2 + d(\overline M / \pi(K^t)) \leq 7,
	\]
	using the fact that $t > 2$.
\end{proof}

\begin{prop}
	Let $H$ be as above. Suppose that $M$ contains $\Omega_1^t$ and does not contain $N_1^t$. Then $d(\overline M) \leq 7$.
\end{prop}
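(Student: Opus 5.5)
The plan is to pass to the quotient $H/\Omega_1^t$ and classify $M$ via the normal series $\Omega_1^t < N_1^t < \ldots$. Since $M$ contains $\Omega_1^t$ but not $N_1^t$, $M/\Omega_1^t$ is a maximal subgroup of $H/\Omega_1^t$ not containing $N_1^t/\Omega_1^t$. Here $N_1/\Omega_1$ is a subgroup of $I_1/\Omega_1$, which by Table \ref{table:C2basicstructure} is abelian (cyclic, $C_2$ or $C_2^2$). Hence every chief factor of $H$ in $N_1^t/\Omega_1^t$ is abelian, and the maximal subgroup $M$ is a complement (modulo its core) to some abelian chief factor $W$ of $H$ lying in $N_1^t/\Omega_1^t$. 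By the discussion at the start of Section \ref{section:prelimsmethod}, $M \cap N_1^t$ has the chief factors of $N_1^t$ except that one factor $W$ is deleted. Also $M N_1^t = H$, so $M$ projects onto $H/N_1^t$.

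The bound is obtained by splitting along $N_1^t$:
\[
\delta_{\overline M}(A) \leq \delta_{M, M \cap N_1^t}(A) + \delta_{\overline H/\pi(N_1^t)}(A),
\]
and the second summand is controlled by Proposition \ref{prop:boundtopC2}: it is at most $4$ for $A \cong C_2$ (and only when $t=2$), at most $3$ if $A \cong C_2^2$ with $t=4$, and at most $2$ otherwise. For the first summand, split $M \cap N_1^t$ into $\Omega_1^t$ and $(M\cap N_1^t)/\Omega_1^t$.

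\begin{itemize}
\item \emph{The part in $\Omega_1^t$.} Apply Corollary \ref{cor:CFsinwreathproduct} and Theorem \ref{theorem:mainjumping} to $H$, with $E = P_1$ and $F = \Omega_1$. Since $M \geq \Omega_1^t$ and $M$ projects onto $H/N_1^t$, $M$ still contains $\Alt(t)$ and the element $(1,\dots,1,\alpha)(1,2)$, and $M\cap P_1^t$ is subdirect. So the same corollary applies to $M$ with $E$ replaced by the projection of $M \cap P_1^t$, which still contains $N_1$-cosets covering $P_1/N_1$. This should give $\delta_{M,\Omega_1^t}(A) \leq \delta_{\Omega_1}(B) \leq 2$, using Lemma \ref{lemma:maxsubgpsSigmai} and Table \ref{table:CFsofIinOmega}.
\item \emph{The part $(M \cap N_1^t)/\Omega_1^t$.} This is an abelian section of rank at most $2t$ with at most one deleted factor. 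Using the explicit diagonal/deleted decomposition in Corollary \ref{cor:CFsinwreathproduct}, its chief factors equivalent to a fixed $A$ number at most $1$ (or $2$ when $I_1/\Omega_1 \cong C_2^2$).
\end{itemize}

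Adding up gives $\delta_{\overline M}(A) \leq 7$ for $A \cong C_2$ and at most $6$ otherwise. Then Theorem \ref{theorem:crowns}(2),(3) gives $d(\overline M)\leq 7$; the hypothesis of (2) holds via the lemma following Theorem \ref{theorem:crowns}, since the non-abelian counts are at most $2$.

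An alternative for the first summand is to avoid the chief factor count: note $d_{\overline M}(\pi(M\cap N_1^t)) \leq d(N_1)$-type bounds, using that $M\cap N_1^t$ is generated as an $M$-group by few elements because $M$ acts transitively on coordinates. Concretely,
\[
d(\overline M) \leq d_M(\Omega_1^t) + d_M\bigl((M\cap N_1^t)/\Omega_1^t\bigr) + d(\overline H/\pi(N_1^t)),
\]
where $d_M(\Omega_1^t) \leq d(\Omega_1)\le 2$ in the quasisimple case (generated by one coordinate's copy under conjugation), and $d_M$ of the abelian middle part is at most $1$ or $2$.

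The main obstacle is the case $t=2$, $A\cong C_2$, where the top already contributes $4$. Here the crude sum $2+2+4$ is too big, and one must show the deleted factor $W$ or a Frattini argument (Lemmas \ref{lemma:jumpingbabyresultdel} and \ref{lemma:diagnonfrattini}) removes one $C_2$. I would handle it by noting that if $\delta_{\overline H/\pi(N_1^t)}(C_2)=4$ then $\delta_{\overline P_1/\overline I_1}(C_2)=2$, which by Lemma \ref{lemma:jumpingbabyresultdel} makes the $C_2$ factors in $\del((I_1/N_1)^2)$ Frattini and bounds the middle contribution by $1$. Any remaining small cases ($q\le 3$) would be checked computationally, as in Proposition \ref{prop:C2McontainingOmega1}.
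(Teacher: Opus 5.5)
\textbf{The proposal has a gap at the one case you single out as the main obstacle, and the fix you propose does not address it.}

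The decomposition into $\Omega_1^t$, $(M\cap N_1^t)/\Omega_1^t$ and $\overline H/\pi(N_1^t)$ is fine. The trouble is where $\del((I_1/N_1)^2)$ sits: it lies above $N_1^2$, so it belongs to the top quotient $\overline H/\pi(N_1^t)$. The bound $4$ of Proposition \ref{prop:boundtopC2} already accounts for it, and that proof already invokes Lemma \ref{lemma:jumpingbabyresultdel}. Moreover $\delta_{(\overline P_1/\overline I_1)\times\Sym(2)}(C_2)\le 3$, so the top can reach $4$ only if some $C_2$ factor in $\del((I_1/N_1)^2)$ is non-Frattini. The lemma therefore cannot make these factors Frattini. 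In any case it says nothing about the middle section $(M\cap N_1^t)/\Omega_1^t$.

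So your estimate $2+2+4$ is never reduced. Your alternative $d_M$-argument has the same defect: $d(\Omega_1)+2+4=8$ in the worst case.

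Two further steps are unsupported.
\begin{itemize}
\item $MN_1^t=H$ only gives elements $n\sigma$ with $n\in N_1^t$. It does not give the natural $\Alt(t)$ or $(1,\dots,1,\alpha)(1,2)$, so Corollary \ref{cor:CFsinwreathproduct} cannot simply be applied to $M$.
\item The middle count is not at most $1$ when $N_1/\Omega_1$ is cyclic. Write $C_4=\mathbb{Z}/4\mathbb{Z}$ and take the maximal subgroup $\{(a,b)\sigma^{\epsilon} : a+b \text{ even}\}\cong C_4\circ D_8$ of $C_4\wr C_2$. Its Frattini subgroup is $\langle(2,2)\rangle$, so it has two non-Frattini $C_2$ factors inside $C_4^2$.
\end{itemize}

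The missing idea, which is what the paper uses, is to avoid separating $\Omega_1^t$ from the middle section.
\begin{itemize}
\item If $M\cap N_1^t$ is not subdirect, then by Proposition \ref{prop:XNonsimpleMaxSubgpXkC} it is conjugate to $R^t$ with $\Omega_1\le R<N_1$. Then $R^t$ is generated as an $M$-group by $d(R)\le 2$ elements (Lemma \ref{lemma:maxsubgpsSigmai}), so $d(\overline M)\le d(R)+d(\overline H/\pi(N_1^t))\le 2+4$.
\item If it is subdirect and $N_1/\Omega_1$ is cyclic, then $M\ge K^t$ for a maximal $P_1$-normal subgroup $K$ with $\Omega_1\le K<N_1$. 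The argument of Proposition \ref{prop:C2McontainingOmega1} then gives $d(\overline M)\le d(K)+d(\overline M/\pi(K^t))\le 2+5$.
\item If it is subdirect and $N_1/\Omega_1\cong C_2^2$, then $N_1=I_1$. The top is then only $d(\overline H/\pi(H\cap I_1^t))\le 3$, and $d(\overline M)\le 2+1+1+3$.
\end{itemize}

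Alternatively, you could rescue your own count by showing that the top equals $4$ only when $t=2$, $|I_1/N_1|$ is even and $\overline P_1/\overline I_1$ has non-cyclic $2$-part. That forces case $\boldL$ or $\boldO$ with $f$ even. Then $\Omega_1$ is trivial, quasisimple, or $\Omega_4^+(q)$ with $q\ge 4$, so $\delta_{M,\Omega_1^t}(C_2)=0$. Also $N_1/\Omega_1$ is cyclic, so the middle contributes at most $2$, and the total is at most $6$.
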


\begin{proof}
	If $M$ contains $K^t$ where $K$ is a maximal $P_1$-normal subgroup of $N_1$, then the same method as in Proposition \ref{prop:C2McontainingOmega1} proves that $d(\overline M) \leq 7$, since $d(K) \leq 2$ for any normal subgroup $K$ of $I_1$ containing $\Omega_1$ by Lemma \ref{lemma:maxsubgpsSigmai}.
	
	We now apply Proposition \ref{prop:XNonsimpleMaxSubgpXkC}. Assume first that $M \cap N_1^t$ is subdirect. If $N_1/\Omega_1$ is cyclic, then, as previously, $M \cap N_1^t$ contains $K^t$ for some maximal $P_1$-normal subgroup $K$ of $N_1$ containing $\Omega_1$. Hence, $d(\overline M) \leq 7$. If instead $N_1/\Omega_1 = C_2^2$, then $I_1 = N_1$. Additionally, for some $\Sym(t)$-invariant subgroup $X$ of $C_2^t$, $M$ has shape
	\[
	M = \Omega_1^t . X . C_2^t . \frac{H}{H \cap I_1^t}.
	\]
	As such, since $X$ is either trivial, $\del(C_2^t)$ or $\diag(C_2^t)$, we have 
	\[
	d(\overline M) \leq d(\Omega_1) + 2d(C_2) + d \left( \frac{\overline H}{\overline H \cap \pi(I_1^t)} \right) \leq 7,
	\]
	by Proposition \ref{prop:boundtopC2}.
	
	If instead $M \cap N_1^t$ is not subdirect then $M \cap N_1^t$ is conjugate to $R^t$ for some subgroup $\Omega_1 \leq R < N_1$. As such,
	\[
	d(\overline M) \leq d(R) + d \left( \frac{\overline H}{\pi(N_1^t)} \right) \leq 2 + 4 = 6.\qedhere
	\]
\end{proof}

\begin{prop}
	Let $H$ be as above. Suppose that $M$ contains $N_1^t$ and does not contain $H \cap I_1^t$. Then, $d(\overline M) \leq 7$.
\end{prop}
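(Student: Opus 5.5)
We apply the general method of Section \ref{section:prelimsmethod} to the layer $(H \cap I_1^t)/N_1^t = \del((I_1/N_1)^t)$. Since $M$ contains $N_1^t$ but not $H \cap I_1^t$, we have $M(H\cap I_1^t) = H$. The intersection $(M \cap I_1^t)/N_1^t$ is then a proper $M$-invariant subgroup of the abelian section $\del((I_1/N_1)^t)$, hence of $\del((I_1/N_1)^t)$ viewed as an $H$-module. By Proposition \ref{prop:maximalsubgroupgeneralmethod} the relevant chief factor is abelian, so $M$ meets it trivially. Concretely, $M/N_1^t$ complements a chief factor $W$ of $H/N_1^t$ lying in $\del((I_1/N_1)^t)$, and $M \cap I_1^t$ is a maximal $H$-submodule of this layer. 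This gives the shape
\[
M = N_1^t . U . \frac{H}{H \cap I_1^t},
\]
where $U$ is a maximal $H$-invariant subgroup of $\del((I_1/N_1)^t)$.

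The bound will come from
\[
d(\overline M) \leq d_{\overline M}(\pi(N_1^t)) + d\left(\overline M/\pi(N_1^t)\right).
\]
First, $d_{\overline M}(\pi(N_1^t)) \leq d(N_1)$, because $M$ projects onto $\Sym(t)$, so any generating set of one coordinate $N_1$ generates $N_1^t$ under conjugation by $M$. By Lemma \ref{lemma:maxsubgpsSigmai} we have $d(N_1)\leq 2$ for $\Omega_1 \leq N_1 \leq I_1$, except in the few small non-quasisimple cases, where Table \ref{table:CFsofIinOmega} gives $d(N_1)\leq 3$. Second, $\overline M/\pi(N_1^t)$ is an extension of $\pi(U)$ by $\overline H/\pi(H \cap I_1^t)$. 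The quotient satisfies $d(\overline H/\pi(H\cap I_1^t)) \leq 3$ by Proposition \ref{prop:boundtopC2}. For $U$, the structure of $\del$ as an $H$-module from Corollary \ref{cor:CFsinwreathproduct} applies, noting that $I_1/N_1$ is cyclic or $C_2^2$ (Table \ref{table:C2basicstructure}). It gives that $U$ is one of: $\del$ of a maximal $P_1$-submodule, the part $\diag \cap \del$, or a codimension-one part. In every case $d_{M}(U) \leq 2$, and $d_M(U)\le 1$ when $I_1/N_1$ is cyclic. Combining, $d(\overline M) \leq 2 + 2 + 3 = 7$ in the generic case.

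A sharper alternative is to count chief factors via Theorem \ref{theorem:crowns}. The quotient $\overline M/\pi(N_1^t)$ has at most one fewer non-Frattini chief factor than $\overline H/\pi(N_1^t)$, since exactly one chief factor is lost. Proposition \ref{prop:boundtopC2} bounds $\delta_{\overline H/\pi(N_1^t)}(A)$ by $4$ for $A \cong C_2$ and by $3$ otherwise. Hence $\delta_{\overline M/\pi(N_1^t)}(C_2)\leq 4$, and adding at most $\delta_{M,N_1^t}(C_2)\leq 3$ from Corollary \ref{cor:CFsinwreathproduct} together with Lemma \ref{lemma:maxsubgpsSigmai} gives at most $7$.

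The main obstacle is the small exceptional $\Omega_1$ with $d(N_1)=3$, namely $\SU_3(2)$ and $\Omega_4^+(2)$. Here I would use the fact that $Z(\Omega_1) \leq \Phi(N_1)$ and count $d_M$ of each $B^t$ as $1$, as in the proof of Proposition \ref{prop:C2McontainingOmega1}, since $M$ still projects onto $\Sym(t)$. For $q\leq 3$ and $t=2$ the remaining cases would be checked computationally.
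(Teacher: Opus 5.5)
Your main argument is correct and is essentially the paper's: $M=N_1^t.U.(H/(H\cap I_1^t))$ with $U$ a maximal $H$-submodule of $\del((I_1/N_1)^t)$, and then
\[
d(\overline M)\le d(N_1)+d_M(U)+d(\overline H/\pi(H\cap I_1^t))\le 2+2+3.
\]
Your description of $U$ simply repackages the paper's subdirect/non-subdirect case split.

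Two side remarks. First, the ``main obstacle'' does not exist: $d(N_1)\le 2$ in every case (cf.\ Table~\ref{table:C2crudebound}). The value $3$ in Proposition~\ref{prop:C2McontainingOmega1} is $d(K)$ for a maximal $P_1$-normal subgroup $K<\Omega_1$, which is irrelevant once $M\ge N_1^t\ge\Omega_1^t$. Second, you should drop the ``sharper alternative'', which is unsound. Frattini chief factors of $\overline H$ can become non-Frattini in $\overline M$; for example $\diag(C_2^t)<\del(C_2^t)$ when $t$ is even. So Proposition~\ref{prop:boundtopC2} does not bound $\delta_{\overline M/\pi(N_1^t)}(C_2)$.
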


\begin{proof}
	Computing the structure of $M$ follows as previously. Firstly, if $(M \cap I_1^t)/N_1^t$ is not subdirect in $(I_1/N_1)^t$, then there exists some $N_1 \leq R < I_1$ such that
	\[
	M = N_1^t.\del((R/N_1)^t).(H/(H \cap I_1^t)).
	\]
	As such, by Proposition \ref{prop:boundtopC2} and Lemma \ref{lemma:maxsubgpsSigmai},
	\[
	d(\overline M) \leq d(N_1) + d(R/N_1) + d(\overline H/\pi(H \cap I_1^t)) \leq 2+2+3 = 7.
	\]
	
	Hence, we assume that $(M \cap I_1^t)/N_1^t$ is subdirect. If $I_1/N_1$ is cyclic, then there exists some maximal subgroup $K$ of $I_1$ containing $N_1$ such that
	\[
	M = N_1^t . \del((K/N_1)^t) . X . \frac{H}{H \cap I_1^t},
	\]
	where $X=\diag((I_1/K)^t)$. Hence,
	\[
	d(\overline M) \leq d(N_1) + d(K/N_1) + d(I_1/K) + d \left( \frac{\overline H}{\pi(H \cap I_1^t)} \right) \leq 2+1+1+3 = 7.
	\]
	
	If instead $I_1/N_1$ is isomorphic to $C_2^2$, then $N_1 = \Omega_1$. Hence, $M$ has shape
	\[
	M = \Omega_1^t . X . \del(C_2^t) . \frac{H}{H \cap I_1^t},
	\]
	for some subgroup $X $ of $\del(C_2^t)$ which is $\Sym(t)$-invariant. Hence, as before, we obtain $d(\overline M) \leq 7$.
\end{proof}

\begin{prop}
	Let $H$ be as above. Suppose that $M$ contains $H \cap I_1^t$ and does not contain $H \cap P_1^t$. Then $d(\overline M) \leq 7$.
\end{prop}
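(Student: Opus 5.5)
Since $M$ contains $H \cap I_1^t$ but not $H \cap P_1^t$, I will work in the quotient $H/(H\cap I_1^t)$. This quotient is isomorphic to a subgroup of $(\overline P_1/\overline I_1) \times \Sym(t)$, since the base part is $\diag((P_1/I_1)^t)$ and the top part is $\Sym(t)$. Write $D := (H \cap P_1^t)/(H \cap I_1^t) \cong P_1/I_1$, which is abelian by Table \ref{table:C2basicstructure}. Then $M/(H \cap I_1^t)$ is a maximal subgroup of $H/(H\cap I_1^t)$ not containing $D$.

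Applying Proposition \ref{prop:maxsubgpdirectproduct} to $D \times \Sym(t)$, with $D$ central because $\Sym(t)$ centralises the diagonal, shows that $M/(H \cap I_1^t) = (J \times \Sym(t)) \cap (H/(H\cap I_1^t))$ for some maximal subgroup $J$ of $D$ of prime index $r$. Alternatively, $M$ is a subdirect "diagonal" subgroup whose intersection with $D$ is a maximal subgroup $J$ of $D$ with $D/J \cong C_r$ identified with a quotient of $\Sym(t)$. In that case $r = 2$ and the identification is via the sign map. In both cases $M = (H \cap I_1^t).(J.X)$, where $X \cong \Sym(t)$, and $M \cap (H \cap P_1^t)$ has index $r$ in $H\cap P_1^t$.

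I will then bound $d(\overline M) \leq d_{\overline M}(\pi(H \cap I_1^t)) + d(\overline M/\pi(H\cap I_1^t))$. The second term is at most $d(J) + d(\Sym(t)) \leq 2 + 2$ in the worst case. I expect this to be too crude, so I will use crowns instead. The plan is to bound $\delta_{\overline M}(A)$ by $\delta_{M, H\cap I_1^t}(A) + \delta_{\overline M/\pi(H \cap I_1^t)}(A)$.

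For the first summand, $M$ contains $H\cap I_1^t$ together with the natural $\Alt(t)$ and elements $(1,\dots,1,a,1,\dots,1)\sigma$. These elements lie in $H\cap I_1^t\cdot\Sym(t)$, which is contained in $M$ in the first case. In the second case I will check separately that a suitable conjugate of $\Alt(t)$ and such a swapping element still lie in $M$. Hence the chief factors of $M$ in $H\cap I_1^t$ are governed by Corollary \ref{cor:CFsinwreathproduct}, exactly as for $H$. I then want to reuse the counts in Proposition \ref{prop:boundtopC2}, Lemma \ref{lemma:maxsubgpsSigmai} and Theorem \ref{theorem:LMTcrowns} for $\overline H$, with the top quotient $\overline P_1/\overline I_1$ replaced by $J$. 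Since $J$ is a maximal subgroup of the abelian group $\overline P_1/\overline I_1$, which has at most $2$ non-Frattini $C_2$ factors and at most $1$ of any other central type, the subgroup $J$ has at most the same counts. Also $d(J) \leq d(\overline P_1/\overline I_1) \leq 2$.

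The main obstacle is to show that passing from $H$ to $M$ does not create new non-Frattini $C_2$ factors below. Lemma \ref{lemma:jumpingbabyresultdel} and Lemma \ref{lemma:diagnonfrattini} made certain factors in $\del((I_1/N_1)^t)$ and $N_1^t$ Frattini in $H$ because of elements of $P_1$ outside $I_1$, and these elements may be lost in $M$. I will handle this by a crude count: $M/\pi(N_1^t)$ has at most $\delta_{H,\del((I_1/N_1)^t)}(C_2) \le 2$ factors from the deleted part, plus at most $2$ from $J$ and $1$ from $\Sym(t)$. Below $N_1^t$, Theorem \ref{theorem:mainjumping} together with Lemma \ref{lemma:maxsubgpsSigmai} gives at most $2$. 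This yields $\delta_{\overline M}(C_2) \leq 7$, and every other factor gets at most $6$, using the $C_2^2$, $t=4$ exception and the $h(A)=3$ computation as in Proposition \ref{prop:LMTcorrectionOC2}. Theorem \ref{theorem:crowns} then gives $d(\overline M)\le 7$. If the count of $2$ from the deleted part fails, I fall back on the cyclic versus $C_2^2$ split for $I_1/N_1$ used in the preceding propositions.
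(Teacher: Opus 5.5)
Your reduction to $M/(H\cap I_1^t)\le D\times\Sym(t)$ and the two cases from Proposition~\ref{prop:maxsubgpdirectproduct} are correct. However, you abandon the additive bound too early, and the crowns route you use instead does not go through as written.

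The missing observation is that the top quotient is only $3$-generated. The quotient $\overline M/\pi(H\cap I_1^t)$ has shape $K.\Sym(t)$, with $K$ a subgroup of the abelian group $\overline P_1/\overline I_1$. By Table~\ref{table:C2basicstructure}, $K$ has at most two non-Frattini chief factors $C_2$ and at most one of any other type, and all of these are central. The group $\Sym(t)$ adds one further central $C_2$; its other non-Frattini chief factors are non-central or non-abelian, each with multiplicity one. Theorem~\ref{theorem:crowns} therefore gives $d(K.\Sym(t))\le 3$, not $d(J)+d(\Sym(t))=4$.

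For the bottom part, note that $H\cap I_1^t=N_1^t.\del((I_1/N_1)^t)$ and $M$ permutes the $t$ coordinates transitively. Hence $\pi(H\cap I_1^t)$ is normally generated in $\overline M$ by $d(N_1)+d(I_1/N_1)\le 2+2$ elements, using Lemma~\ref{lemma:maxsubgpsSigmai}, Table~\ref{table:C2basicstructure} and the argument of Lemma~\ref{lemma:delgens}. So your very first inequality already gives $d(\overline M)\le 2+2+3=7$, which is exactly the paper's proof.

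The crowns count you substitute has concrete defects.
\begin{itemize}
\item In the diagonal case, $M/(H\cap I_1^t)=\{(k,\sigma):\chi(k)=\sgn(\sigma)\}$ with $\ker\chi=J$. Every element of $M$ lying over an odd permutation has diagonal part $(p,\ldots,p)$ with $p\notin\widetilde P_1$, where $\widetilde P_1$ is the preimage of $J$ in $P_1$. Hence no element $(1,\ldots,1,a,1,\ldots,1)\sigma$ with $\sigma$ odd lies in $M$, contrary to what you planned to check.
\item In the same case, $M\not\le \widetilde P_1\wr\Sym(t)$, while $M\cap P_1^t$ is not subdirect in $P_1^t$. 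So neither Corollary~\ref{cor:CFsinwreathproduct} nor Theorem~\ref{theorem:mainjumping} applies to $M$ for any choice of $E$.
\item Passing to the index-two subgroup $M\cap(\widetilde P_1\wr\Sym(t))$ and using Corollary~\ref{cor:upperbounddeltaGN} would handle $t\ge 5$. For $t\le 4$ this still fails, because that subgroup only projects onto $\Alt(t)$ and so lacks the required swap elements.
\item Your bound for the deleted part quotes $\delta_{H,\del((I_1/N_1)^t)}(C_2)\le 2$. That is a statement about $H$, and transferring it to $M$ is precisely the obstacle you identified. An independent count for $M$ is needed: for cyclic $I_1/N_1$ at most one such factor survives, since the Frattini subgroup of a normal cyclic subgroup lies in $\Phi(M)$, while $I_1/N_1\cong C_2^2$ needs a separate look.
\item The claim that every other chief factor occurs at most $6$ times is asserted without verification.
\end{itemize}

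These gaps can probably be filled, but as written the argument is incomplete, and the extra machinery is unnecessary.
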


\begin{proof}
	There exists some subgroup $K < \overline P_1/\overline I_1$ such that 
	$$\overline M = \pi(N_1^t . \del((I_1/N_1)^t)) . K . \Sym(t).$$ 
	By Table \ref{table:C2basicstructure}, $\delta_K(A) \leq 2$ for any chief factor $A$, with equality only if $A \cong C_2$. Hence $d(K.\Sym(t)) \leq 3$, and so
	\[
	d(\overline M) \leq d(N_1) + d(I_1/N_1) + d(K.\Sym(t)) \leq 2 + 2 + 3 = 7.
	\]
\end{proof}

\begin{prop} \label{prop:lastcaseC2}
	Let $H$ be as above, and suppose that $M$ contains $H \cap P_1^t$. Then $d(\overline M) \leq 7$. 
\end{prop}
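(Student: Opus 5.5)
In this final case $M$ contains $H \cap P_1^t$, which contains $N_1^t$, $H\cap I_1^t$, and every element of $H$ acting diagonally on the factors. So $M$ is determined by its image in the top quotient $H/(H \cap P_1^t)$. By the description of $H$ above, and in particular the fact that $H$ contains the natural copy of $\Alt(t)$, this quotient is a transitive subgroup $J$ with $\Alt(t) \leq J \leq \Sym(t)$. Hence $M = (H\cap P_1^t).Y$ for a maximal subgroup $Y$ of $J$. When $J = \Sym(t)$, $Y$ is either $\Alt(t)$ or a maximal subgroup of $\Sym(t)$ not containing $\Alt(t)$. The plan is to bound $d(\overline M)$ by splitting along the normal series
\[
\pi(N_1^t) \leq \pi(H\cap I_1^t) \leq \pi(H \cap P_1^t) \leq \overline M,
\]
and bounding the number of non-Frattini chief factors in each layer.

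First, the top part is easy. We have $\overline M/\pi(H\cap I_1^t) \cong ((\overline P_1/\overline I_1).Y)$, a quotient of a subgroup of $(\overline P_1/\overline I_1)\times \Sym(t)$. By Table \ref{table:C2basicstructure}, $\delta_{\overline P_1/\overline I_1}(A)\leq 2$, with equality only for $A\cong C_2$. For $Y$ I would use the known bound $d(Y)\leq 2$ on maximal subgroups of symmetric and alternating groups. This is the place where a cruder argument is needed, since $Y$ is now intransitive or imprimitive and can have $d(Y)=2$ with $C_2$ quotients. It should give $d(\overline M/\pi(H\cap I_1^t))\leq 4$.

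Next, the part $\pi(H\cap I_1^t)$. When $Y$ is transitive and contains a suitable odd element (e.g.\ $Y=\Alt(t)$, or imprimitive $\Sym(a)\wr\Sym(b)$), I would apply the crown method as in Proposition \ref{prop:boundtopC2}, together with Corollary \ref{cor:CFsinwreathproduct} where its hypotheses survive. This gives at most one non-Frattini chief factor of $M$ in each $\del((I_1/N_1)^t)$ layer and in each $A^t$ layer for $A$ a chief factor of $P_1$ in $N_1$. When $Y$ is intransitive, say $Y = (\Sym(k)\times\Sym(t-k))\cap J$, I would instead split $\{1,\dots,t\}$ into two orbits and view $M\cap I_1^t$ as sitting inside a product of two wreath-type groups, each satisfying Corollary \ref{cor:CFsinwreathproduct} (or being a direct product when an orbit has size $1$ or $2$). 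This gives at most about twice as many chief factors, but each bounded via Lemma \ref{lemma:maxsubgpsSigmai}.

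Finally, I combine the layers with Theorem \ref{theorem:crowns}. The crude estimate
\[
d(\overline M)\leq d_{\overline M}(\pi(N_1^t)) + d_{\overline M}\bigl(\pi(H\cap I_1^t)/\pi(N_1^t)\bigr) + d(\overline M/\pi(H\cap I_1^t))
\]
is attractive. Using $d(N_1)\leq 2$ from Lemma \ref{lemma:maxsubgpsSigmai} and $d(I_1/N_1)\leq 2$, it gives $7$ only if the last term is at most $3$. The main obstacle is therefore the intransitive $Y$ case with $t=2$ or small $t$. For $t=2$ we have $Y=1$, so $M=H\cap P_1^2$, a subdirect subgroup of $P_1^2$, and I would treat this directly. $\overline M$ is a quotient of $H\cap P_1^2 \leq P_1 \times P_1$, with $N_1^2$ and diagonal top. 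A Goursat-type count via Proposition \ref{prop:maxsubgpdirectproduct} bounds $\delta_{\overline M}(C_2)$ by $2\delta_{N_1}(C_2)$ plus the diagonal contribution, and I expect this to close at $7$ using the bound $\delta\leq 4$ from Proposition \ref{prop:atmost4inI}. For intransitive $Y$ with $t\geq 3$, I would first attempt to show that the orbit decomposition allows one of the two $I_1$-layers to absorb the $C_2$ factors coming from $\Sym(k)\times\Sym(t-k)$, via Lemma \ref{lemma:jumpingbabyresultdel}, keeping $\delta_{\overline M}(C_2)\leq 7$.
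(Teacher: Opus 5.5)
Your set-up matches the paper: $M=(H\cap P_1^t).Y$ with $Y$ maximal in $\Sym(t)$, peeled along the layers $N_1^t$, $\del((I_1/N_1)^t)$ and $(\overline P_1/\overline I_1)\times Y$. The argument breaks, however, at the step you treated as easy.

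\textbf{The bound $d(Y)\le 2$ is false, and the hard case is transitive $Y$.} A maximal subgroup $Y=T^k.(\Out(T)\times\Sym(k))$ of $\Sym(t)$ of simple diagonal type can have $\delta_Y(C_2)=4$, hence $d(Y)=4$ (Theorem \ref{theorem:LMTgenerators}(2)(i)). Since $\overline M/\pi(H\cap I_1^t)\cong(\overline P_1/\overline I_1)\times Y$, and $\delta_{\overline P_1/\overline I_1}(C_2)=2$ occurs in types $\boldL$ and $\boldO^\pm$, this quotient alone can need $6$ generators. Your additive estimate then gives at best $d(N_1/\Omega_1)+d(I_1/N_1)+6=8$, even after discarding $\pi(\Omega_1^t)$. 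So the decisive case is a transitive $Y$ of diagonal type, not an intransitive one. There Corollary \ref{cor:CFsinwreathproduct} is unavailable, because $M$ no longer contains $\Alt(t)$.

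\textbf{What is missing.} You need to stop adding generator numbers and instead count non-Frattini chief factors layer by layer, using module theory:
\begin{enumerate}
\item Since $Y$ is transitive, the relevant layers are cyclic $Y$-modules (for the deleted layer this is Lemma \ref{lemma:delgens}). By Lemma \ref{lemma:permmodulemults}, each irreducible $W$ then occurs at most $\dim_E W$ times in their heads, which keeps the crown bound of Theorem \ref{theorem:crowns} at most $5$ for $A\not\cong C_2$.
\item Lemma \ref{lemma:jumpingbabyresultfull} leaves, for each prime $r$, at most one non-Frattini section $C_r^t$ in $(N_1/\Omega_1)^t$. This contributes at most one $C_2$.
\item Lemma \ref{lemma:delmodulemults}, applied with $N=\soc(Y)=T^k$ together with Corollary \ref{cor:upperbounddeltaGN}, shows that $\del((I_1/N_1)^t)$ contributes no $C_2$ at all.
\end{enumerate}
Together these give $\delta_{\overline M}(C_2)\le 1+6=7$. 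None of these ingredients appears in your plan.

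\textbf{Non-quasisimple $\Omega_1$ is not treated.} For groups such as $\Omega_2^\pm(q)$ or $\SU_2(2)$ the crude bound is $8$ to $10$. One needs extra Frattini arguments, for instance $\Omega_1=[I_1,N_1]$ when $N_1>\Omega_1$. One also needs the O'Nan--Scott fact that $\delta_Y(C_2)\le 2$ unless $Y$ is diagonal or almost simple.

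\textbf{The intransitive case uses too weak an input.} Your two-orbit splitting is the paper's route. But with only $\delta\le 4$ from Proposition \ref{prop:atmost4inI}, the term $2\delta_{N_1}(C_2)$ is bounded only by $8$. You need $\delta_{N_1}(C_2)\le 2$ (Lemma \ref{lemma:maxsubgpsSigmai}) and $\delta_{N_1}(C_2)+\delta_{I_1/N_1}(C_2)\le 3$ (Lemma \ref{lemma:non_simple_gen_bound}). This closes $t=2$ via $2\delta_{N_1}(C_2)+\delta_{I_1/N_1}(C_2)+2\le 7$. For $t\ge 3$, the same count with $\delta_Y(C_2)=2$ can still reach $9$. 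One must then show, in type $\boldO$ with $q$ odd, that the offending sections $\del(C^t)$ or $C^t$ are Frattini via Lemmas \ref{lemma:jumpingbabyresultdel} and \ref{lemma:jumpingbabyresultfull}; your sketch only gestures at this.
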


In order to prove this proposition, we require some preliminary lemmas.

\begin{lemma} \label{lemma:permmodulemults}
	Let $K$ be a group, $r$ a prime, $V$ a cyclic $\mathbb F_r[K]$-module, and $W$ an irreducible $\mathbb F_r[K]$-module. Then $W$ has multiplicity at most $\dim_E(W)$ in $V/\Rad(V)$.
\end{lemma}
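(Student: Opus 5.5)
Let $E := \End_{\mathbb F_r[K]}(W)$; by Schur's lemma $E$ is a finite field, and $W$ is an $E$-vector space, so $\dim_E(W)$ makes sense. The plan is to reduce to the semisimple quotient $V/\Rad(V)$ and count how many copies of $W$ a cyclic semisimple module can contain, by comparing $E$-dimensions of $\Hom$-spaces.

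First, since $V$ is cyclic, say $V = v\,\mathbb F_r[K]$, the quotient $\overline V := V/\Rad(V)$ is also cyclic, generated by the image $\bar v$ of $v$. Moreover $\overline V$ is semisimple, because $\Rad(V)$ is the intersection of the maximal submodules of $V$. So we may write $\overline V \cong W^{k} \oplus U$, where $k$ is the multiplicity of $W$ and $U$ has no summand isomorphic to $W$. Projecting onto the isotypic component, $W^k$ is again cyclic (generated by the image of $\bar v$), so it suffices to show that a cyclic module $W^k$ satisfies $k \le \dim_E(W)$.

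For this I will use the surjection $\mathbb F_r[K] \to W^k$, $x \mapsto wx$, where $w = (w_1,\dots,w_k)$ is a generator. Then
\[
\Hom_{\mathbb F_r[K]}(W^k, W) \hookrightarrow \Hom_{\mathbb F_r[K]}(\mathbb F_r[K], W) \cong W,
\]
where the injection is by precomposition and the isomorphism is evaluation at $1$. The composite is the map $\varphi \mapsto \varphi(w)$, which is $E$-linear for the natural action of $E$ on both sides by post-composition. The left-hand side is isomorphic to $E^k$ as an $E$-vector space, by Schur's lemma applied to each coordinate. Hence $k = \dim_E \Hom(W^k,W) \le \dim_E W$, as required. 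An alternative route to the same inequality is to observe that the $k$ coordinates $w_1,\dots,w_k \in W$ must be $E$-linearly independent, since otherwise some nonzero $E$-combination of the coordinate projections would kill $w$ and hence all of $W^k$.

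The main point needing care is the $E$-linearity and the identification $\Hom(W^k,W)\cong E^k$. In particular, the action of $E$ on $W$ must be set up so that it commutes with the $K$-action; this holds because $E$ is by definition the centraliser of the $K$-action. Everything else is standard module theory.
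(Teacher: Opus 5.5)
Your proposal is correct and is essentially the paper's argument: both use Schur's lemma to identify the relevant $\Hom$-space into $W$ with $E^k$, and both show that evaluation at a generator of the cyclic module is an injective $E$-linear map into $W$. The differences are cosmetic. You first project onto the $W$-isotypic component of $V/\Rad(V)$ and get injectivity from the regular module. The paper instead works with $\Hom_{\mathbb F_r[K]}(V,W)$ directly and checks that the images $f_i(a)$ of the generator are $E$-linearly independent, which is exactly your ``alternative route''.
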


\begin{proof}
	Let $n$ be the multiplicity of $W$ in $V/\Rad(V)$. We show first that 
	\[
		\dim_E(\End_{\mathbb{F}_r[K]}(W)) = n.
	\]
	We may write $V/\Rad(V)=W^n \oplus W_1 \oplus \cdots \oplus W_m$ for some irreducible $\mathbb F_r[K]$-modules $W_i$ with $W_i\not\cong W$. Since $W$ is irreducible, Schur's Lemma implies that the kernel of any map in $\Hom_{\mathbb F_r[K]}(V, W)$ contains the preimage of $W_1\oplus\hdots\oplus W_m$ in $V$. Thus,
	\begin{align*}
		\Hom_{\mathbb F_r[K]}(V, W) \cong \Hom_{\mathbb F_r[K]}(W^n, W)
		\cong \left( \Hom_{\mathbb F_r[K]}(W, W) \right)^n
		\cong E^n.
	\end{align*}
	
	Since $V$ is cyclic, we can write $V = \langle a \rangle_{\mathbb{F}_r[K]}$ for some $a \in V$. Let $\{f_1, \dots, f_\ell\}$ be a basis of $\Hom_{\mathbb{F}_r[K]}(V,W)$ over $E$, and let $f_i(a) = w_i \in W$ for each $i$. We claim that $\{w_1, \dots, w_{\ell}\}$ is a linearly independent set of $W$ over $E$, and hence $\dim_E(\Hom_{\mathbb{F}_r[K]}(V,W)) \leq \dim_E(W)$.
	
	Suppose that $\alpha_1 w_1 + \cdots + \alpha_\ell w_\ell = 0$ for some $\alpha_i \in E$. Then, by definition,
	\[
	\alpha_1 f_1(a) + \cdots \alpha_\ell f_\ell(a) = 0,
	\]
	and hence, for any $k \in \mathbb{F}_r[K]$, we have
	\[
	\alpha_1 f_{w_1}(ka) + \cdots \alpha_\ell f_{w_\ell}(ka) = 0,
	\]
	since $\alpha_i \in \End_{\mathbb{F}_r[K]}(W)$ and $f_{w_i} \in \Hom_{\mathbb{F}_r[K]}(V,W)$ commute with $k$. However, $V$ is cyclic, so this implies that $\sum_i \alpha_i f_i = 0$. Thus, $\alpha_i = 0$ for all $i$.
\end{proof}

\begin{lemma} \label{lemma:delmodulemults}
	Let $N$ be a transitive permutation group of degree $d$, and let $V = \mathbb{F}_r^d$ be the natural permutation module of $N$ over $\mathbb{F}_r$, for some prime $r$. Suppose that $N$ has no quotients isomorphic to a non-trivial subgroup of $C_r:C_{r-1}^2$. Then $\del(V)$ has no quotients isomorphic to $C_r$.
\end{lemma}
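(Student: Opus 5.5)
We argue by contraposition: assuming $\del(V)$ has a quotient isomorphic to $C_r$ (as an $\mathbb F_r[N]$-module, with $N$ acting on the quotient through some character $\lambda\colon N \to \mathbb F_r^\times$), we produce a non-trivial quotient of $N$ embedding in $C_r : C_{r-1}^2$. Such a quotient is a surjective $N$-homomorphism $f\colon \del(V)\to U$, where $U=\mathbb F_r$ with $N$ acting via $\lambda$.

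The first step is to extend $f$ to the whole permutation module $V$. Consider the extension $0\to \del(V)\to V\to \mathbb F_r\to 0$, where the right-hand term is the trivial module (the augmentation). Push it out along $f$ to get an $\mathbb F_r[N]$-module $E$ of dimension 2 with submodule $U$ and trivial quotient $\mathbb F_r$. The action of $N$ on $E$ is by lower triangular matrices $\begin{pmatrix} \lambda(n) & 0\\ c(n) & 1\end{pmatrix}$, so the image of $N$ in $\GL(E)$ lies in $C_r : C_{r-1}$. That image is a quotient of $N$ embedding in $C_r:C_{r-1}\le C_r:C_{r-1}^2$, so by hypothesis it is trivial. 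In particular $\lambda$ is trivial, so $U$ is the trivial module, and $N$ acts trivially on $E$. (The $C_{r-1}^2$ in the statement allows us to keep track of a second character if the quotient is twisted on both sides; the argument above needs only one.)

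Next, we show this gives a contradiction. Since $N$ acts trivially on $E$, the composite $V\to E$ is an $N$-map from the permutation module to a trivial module. It is therefore constant on the $N$-orbit of basis vectors, which by transitivity is all of them. So the map factors through the augmentation $V\to \mathbb F_r$, and its image is $1$-dimensional (spanned by the image of a basis vector). But the image is $E$, which is $2$-dimensional, because $f$ is surjective onto $U$ and $V$ surjects onto the quotient $\mathbb F_r$.

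More directly: $f$ restricted to $\del(V)$ is constant on the basis vectors $e_i$. Since $\del(V)$ is spanned by the differences $e_i-e_j$, the map $f$ vanishes on it, contradicting surjectivity. The only genuinely delicate point is making sure the pushout $E$ is well defined and that its action matrices really are triangular with trivial bottom-right entry. That follows because $V/\del(V)$ is trivial: $N$ permutes coordinates and so preserves the sum map.
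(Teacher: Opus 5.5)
Your proof is correct and is essentially the paper's argument run in the opposite order. The paper first shows that every $1$-dimensional module is trivial, then uses Lemma~\ref{lemma:permmodulemults} (a transitive permutation module has at most one trivial quotient) to force $V/U$ to be indecomposable, so that $N$ has non-trivial image in the Borel subgroup $C_r:C_{r-1}^2$ of $\GL_2(r)$. You instead use the hypothesis to make the image of $N$ on $E=V/\ker f$ trivial, and then contradict the same multiplicity-one fact, which you verify directly from transitivity.

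One wording slip in your final paragraph: it is the extension $V\to E$, not $f$ itself, that is constant on the basis vectors $e_i$, since the $e_i$ do not lie in $\del(V)$.
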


\begin{proof}
	Firstly, any 1-dimensional module of $N$ over $\mathbb{F}_r$ is equal to the trivial module, as otherwise $N$ maps non-trivially into $\Aut(C_r) = C_{r-1}$, contradicting our assumption. So we can suppose for a contradiction that there exists a submodule $U$ of $\del(V)$ such that $\del(V)/U$ is isomorphic to $C_r$, the trivial $\mathbb{F}_r[N]$-module.
	
	By Lemma \ref{lemma:permmodulemults}, $V$ has at most one trivial quotient. However, $V/\del(V)$ is a trivial quotient, and thus $V/U = \mathbb{F}_r^2$ is an indecomposable 2-dimensional module of $N$. Thus, there exists a map from $N$ into $\GL_2(r)$, with the image of $N$ lying in the parabolic subgroup $P_1 \cong C_r : C_{r-1}^2$. Additionally, in order for this module to be indecomposable, the image of $N$ in $P_1$ cannot be trivial. Thus, we have a contradiction.
\end{proof}

\begin{lemma}\label{lemma:delgens}
	Let $L$ be an abelian group, and define injective maps $\iota_i \colon L \to L^t$ as in Section \ref{section:StC}. Let $\Sym(t)$ act on $L^t $ in such a way that $\iota_i(L)^\sigma = \iota_{i^\sigma}(L)$ for any $i$ and any $\sigma \in \Sym(t)$. If $K$ is a maximal subgroup of $\Sym(t)$ such that $\del(L^t)$ is a $K$-invariant subgroup of $L^t$, then $d_K(\del(L^t)) \leq d(L)$.
\end{lemma}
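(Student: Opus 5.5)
Write $L$ additively, and let $K$ be maximal in $\Sym(t)$ with $\del(L^t)$ invariant under $K$. The plan is to exhibit explicit elements of $\del(L^t)$ whose $K$-conjugates generate it. Fix a generating set $\{l_1,\dots,l_d\}$ of $L$ with $d=d(L)$. For each $j$ consider the element
\[
x_j := \iota_1(l_j) - \iota_2(l_j) \in \del(L^t).
\]
The claim is that $X=\{x_1,\dots,x_d\}$ generates $\del(L^t)$ under the action of $K$. The subgroup $\langle x_j^K : j \rangle$ contains $\iota_{1^k}(l_j)-\iota_{2^k}(l_j)$ for each $k\in K$. It is also invariant under the action of $K$, because the action maps $\iota_i(L)$ to $\iota_{i^\sigma}(L)$. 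Possibly the action twists within coordinates by automorphisms of $L$. If it does, I would replace $l_j$ by its images and use that the generated subgroup is still the full coordinate-difference group. I expect to handle this by noting that the automorphism applied along the edge is the same for all $j$, so the images of the $l_j$ still generate $L$.

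Consider the graph $\Gamma$ on $\{1,\dots,t\}$ whose edges are the $K$-orbit of $\{1,2\}$. The elements $\iota_a(l)-\iota_b(l)$ for edges $\{a,b\}$ of $\Gamma$ and $l\in L$ lie in the subgroup, and telescoping sums along paths give $\iota_a(l)-\iota_b(l)$ for any $a,b$ in the same connected component. If $\Gamma$ is connected, these differences generate $\del(L^t)$, since any element with coordinate sum $0$ equals $\sum_{i\ge2}(\iota_i(y_i)-\iota_1(y_i))$.

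The main step is therefore choosing the pair $\{1,2\}$ (after relabelling) so that $\Gamma$ is connected, and this is where maximality of $K$ enters. Split into cases. If $K$ is transitive, then any orbital graph of a nontrivial $K$-orbit on pairs has connected components forming a block system. If $K$ is primitive, every such graph is connected. If $K$ is imprimitive, it is maximal, so $K=\Sym(a)\wr\Sym(b)$, and taking $1,2$ in different blocks gives a connected orbital graph; in fact any pair works because the other orbital is the complete multipartite graph. If $K$ is intransitive, then $K=\Sym(s)\times\Sym(t-s)$, and choosing $1$ in one orbit and $2$ in the other yields the complete bipartite graph $K_{s,t-s}$, which is connected.

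The obstacle I foresee is the degenerate case $t=2$ with $K$ trivial. There the graph is still the single edge $\{1,2\}$, which is connected, so the argument goes through. A further check is that maximality excludes the Alt$(t)$ issues; this is harmless because $\Alt(t)$ is primitive for $t\ge3$. Combining the cases gives $d_K(\del(L^t))\le |X| = d(L)$.
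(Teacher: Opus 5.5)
Your proof is correct and is essentially the paper's argument. The paper uses the same elements $\iota_1(x)-\iota_b(x)$, with $b$ in the other orbit in the intransitive case or in another block in the imprimitive case, and the same ``components of the orbital graph are blocks'' argument in the primitive case; your uniform orbital-graph treatment just repackages this.

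Two small corrections. First, in the imprimitive case only cross-block pairs work: the within-block orbital is a disjoint union of $b$ cliques, hence disconnected, so ``any pair works'' should read ``any cross-block pair works.'' Second, in the twisted case the fact you actually need is that $K$-invariance of $\del(L^t)$ forces each $g\in K$ to act by the same automorphism $\alpha_g$ of $L$ on every coordinate, not merely for every $j$. That gives $(\iota_1(l)-\iota_2(l))^g=\iota_{1^g}(\alpha_g(l))-\iota_{2^g}(\alpha_g(l))$, a point the paper's proof uses tacitly.
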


\begin{proof}
	Fix a generating set $X$ of $L$. Assume first that $K$ is intransitive, so $K = \Sym(m) \times \Sym(t-m)$ for some $m$. Then, the set $\{\iota_1(x)\iota_{m+1}(x^{-1}) : x \in X \}$ generates $\del(L^t)$ under the action of $K$. If instead $K$ is transitive but imprimitive, then $K = \Sym(k) \wr \Sym(t)$ and the set $\{\iota_1(x)\iota_{k+1}(x^{-1}) : x \in X \}$ generates $\del(L^t)$ under the action of $K$.
	
	Hence, we assume that $K$ is a primitive subgroup of $\Sym(t)$. Consider the complete graph on $t$ points, on which $\Sym(t)$, and thus $K$, acts naturally. Let $\Gamma$ be the subgraph given by the orbit of the edge $\{1,2\}$ under the action of $K$. This graph is connected, since any connected component is a block of $K$. Indeed, any element $g \in K$ maps a path from $i$ to $j$ to a path from $i^g$ to $j^g$, so it is clear that $g$ maps each connected component to itself, or to a different connected component.
	
	We show that this implies that $Y = \{\iota_1(x)\iota_2(x^{-1}) : x \in X \}$ generates $\del(L^t)$ under the action of $K$. Let $\widetilde L = \langle y^K : y \in Y \rangle$. For any $g \in K$, $\widetilde L$ contains $\del(\iota_{1^g}(L) \times \iota_{2^g}(L))$. Hence, for any edge $\{i,j\}$ in $\Gamma$, $\widetilde L$ contains $\del(\iota_i(L) \times \iota_j(L))$. Since $\Gamma$ is connected, for any $i, j$ there exists a path from $i$ to $j$, given by $\{i_0, i_1\}, \dots, \{i_{r-1}, i_r\}$ where $i_0 = i$ and $i_r = j$. Then $\langle \del(\iota_{i_s}(L) \times \iota_{i_{s+1}}(L)) \rangle_s$ contains $\del(\iota_i(L) \times \iota_j(L))$, and hence $\widetilde L$ contains $\del(L^t)$.
\end{proof}

\begin{proof}[Proof of Proposition \ref{prop:lastcaseC2}]
	Let $J_1$ be the projection of $M$ to $\Sym(t)$, so that
	\[
	M = N_1^t . \del((I_1/N_1)^t) . (P_1/I_1) . J_1,
	\]
	where $J_1 \mleq \Sym(t)$.
	
	To begin with, if $J_1$ is intransitive, then $J_1 = \Sym(m) \times \Sym(t-m)$ with $m \neq t-m$, unless $t=2$, in which case we have $m =t-m = 1$ and $J_1 = 1$. By Corollary \ref{cor:CFsinwreathproduct}, for each chief factor $A$ of $P_1$ in $N_1$, the non-Frattini chief factors of $M$ in $A^m$ are isomorphic to $A^m$, $A^{m-1}$, or $A$. If $A, B$ are two chief factors of $P_1$ in $N_1$, then the chief factors of $M$ in $A^m$ and $B^m$ are $M$-equivalent only if $A$ and $B$ are $N_1$-equivalent as sections. Finally, $A^m$ is non-Frattini in $M$ only if $A$ is non-Frattini in $N_1$. Similar results can be obtained for the chief factors of $M$ in $N_1^{t-m}$. Hence, by Corollary \ref{cor:upperbounddeltaGN}, we obtain that, for any chief factor $A \cong S^b$,
	\[
	\delta_{M, N_1^t}(A) \leq 2 \max_{\{N_1\text{-group } B \cong S^c, \, c \mid b \}} \delta_{N_1}(B).
	\]
	If $B$ is a chief factor of $P_1$ in $I_1/N_1$, then $B$ has prime order, and so $d_{M}(\del(B^m)) = 1$ by Lemma \ref{lemma:delgens}. Thus, by Lemma \ref{lemma:permmodulemults},
	\[
	\delta_{\overline M, \pi(M \cap I_1^t)/\pi(N_1^t)}(A) \leq \dim_E(A) \max_{\{N_1\text{-group } B \cong S^c, \, c \mid b\}} \delta_{I_1/N_1}(B),
	\]
	where $E = \End_{\mathbb{F}_r[\overline M/C_{\overline M}(A)]}(A)$. We have the following bounds, by Lemma \ref{lemma:maxsubgpsSigmai} and Table \ref{table:C2basicstructure}:
	\begin{align*}
		\delta_{N_1}(B) & \leq \begin{cases*}
			2 \qquad & if $B \cong C_2$,\\
			1 & otherwise,
		\end{cases*}\\
		\delta_{I_1/N_1}(B) & \leq \begin{cases*}
			2 \qquad & if $B \cong C_2$,\\
			1 & otherwise,
		\end{cases*}\\
		\delta_{\overline P_1/\overline I_1 \times J_1}(A) & \leq \begin{cases*}
			4 \qquad & if $A \cong C_2$,\\
			2 & if $A$ is otherwise central, or $A \cong C_3$,\\
			1 & otherwise.
		\end{cases*}
	\end{align*}
	Additionally, by Lemma \ref{lemma:non_simple_gen_bound}, $\delta_{N_1}(C_2) + \delta_{I_1/N_1}(C_2) \leq 3$. For any chief factor $A$, let 
	\[
	\delta_1 := 2 \max_{N_1\text{-group } B \cong S^c, c \mid b} \delta_{N_1}(B) + \delta_{\overline P_1/\overline I_1 \times J_1}(A).
	\]
	Then Theorem \ref{theorem:crowns} implies that
	\[
	d((L_A)_{\delta_{\overline M}(A)}) \leq \max_{N_1\text{-group } B \cong S^c, c \mid b} \delta_{I_1/N_1}(B) + \theta(A) +  \Bigg\lceil \frac{\delta_1 + s(A)}{\dim_E(A)} \Bigg\rceil.
	\]
	This is bounded above by 7 when $A$ is not isomorphic to $C_2$, by the bounds above. Thus, $d(\overline M) > 7$ if and only if $\delta_{\overline M}(C_2) > 7$, by Theorem \ref{theorem:crowns}. If $A$ is isomorphic to $C_2$, then $\dim_E(A) =1$, so we have
	\[
	\delta_{\overline M}(C_2) \leq 2 \delta_{N_1}(C_2) + \delta_{I_1/N_1}(C_2) + \delta_{\overline P_1/\overline I_1 \times J_1}(C_2).
	\] 
	Suppose for a contradiction that $\delta_{\overline M}(C_2) > 7$, and so by the bounds above we have
	\begin{align*}
		\delta_{N_1}(C_2) = 1, & \qquad \delta_{I_1/N_1}(C_2) = 2, \qquad  \delta_{\overline P_1/\overline I_1}(C_2) = 2, \text{ or }\\
		\delta_{N_1}(C_2) = 2, & \qquad \delta_{I_1/N_1}(C_2) + \delta_{\overline P_1/\overline I_1}(C_2) \geq 2.
	\end{align*}
	In either case, $P_1$ is of type $\boldO$. Suppose that $\overline M$ satisfies the conditions of the first case. Then $\delta_{I_1/N_1}(C_2) = 2$ implies that $I_1/N_1 \cong C_2^2$, and so $N_1 = \Omega_1$. However, $\delta_{\overline P_1/\overline I_1}(C_2) = 2$ implies that $P_1$ contains an element $\delta_1$ such that $C := [\langle \delta_1 \rangle \Omega_1, I_1/\Omega_1]$ has order 2. Hence, by Lemma \ref{lemma:jumpingbabyresultdel}, $\del(C^t)$ is Frattini in $M$. Thus, $\delta_{M, \del((I_1/N_1)^t)}(C_2) \leq 1$, and so $\delta_{\overline M}(C_2) \leq 7$ as required.
	
	In the second case, we assume first that $q$ is even. This implies that $I_1 / \Omega_1 \cong C_2$, so $\delta_{N_1}(C_2) = 2$ only if $\Omega_1 = \Omega_4^+(2)$. However, in this case we have $\overline P_1 / \overline I_1 = 1$, so $\delta_{I_1/N_1}(C_2) + \delta_{\overline P_1 / \overline I_1}(C_2) \leq 1$. Hence, we may assume that $q$ is odd.
	
	Suppose that $I_1 = N_1$, so $N_1 / \Omega_1 \cong C_2^2$. Hence, since $\delta_{I_1/N_1}(C_2) = 0$ we have $\delta_{\overline P_1/\overline I_1}(C_2) = 2$, and so $C := [P_1/\Omega_1, N_1/\Omega]$ has order 2. Thus, by Lemma \ref{lemma:jumpingbabyresultfull}, $C^t$ is Frattini in $M$. Therefore, $\delta_{M,N_1^t}(C_2) \leq 2$, implying that $\delta_{\overline M}(C_2) \leq 6$. Suppose instead that $N_1 < I_1$. Since $\delta_{N_1}(C_2) = 2$, this implies that $\Omega_1 = \Omega_2^\pm(q)$, and $N_1$ is isomorphic to a dihedral group of index 2 in $I_1$. Thus, $[I_1, N_1] \geq \Omega_1$, and so as previously we have $\delta_{M, N_1^t}(C_2) \leq 2$.
	
	We have shown that $d(\overline M) \leq 7$ when $J_1$ is intransitive, so we assume for the remainder of this proof that $J_1$ is transitive. We begin by assuming that $\Omega_1$ is quasisimple, and hence the non-Frattini chief factors of $\overline M$ are equal to those of $\overline M / \pi(\Omega_1^t)$, except for an additional $\overline \Omega_1^t$ in $\overline M$. Thus, by Theorem \ref{theorem:crowns}, we have that $d(\overline M) = d(\overline M/\pi(\Omega_1^t))$. By Lemma \ref{lemma:delgens},
	\[
	d(\overline M / \pi(\Omega_1^t)) \leq d(N_1/\Omega_1) + d(I_1/N_1) + d(\overline P_1 / \overline I_1 \times J_1).
	\]
	If $ \Omega_1$ is not in case $\boldL$ or $\boldO^\pm$, then 
	\[
	d(\overline M / \pi(\Omega_1^t)) \leq 1+1+5 = 7.
	\]
	So we assume that $\Omega_1$ is in case $\boldL$ or $\boldO^\pm$ with $q$ odd, and that the first inequality above gives us a bound of 8. Hence, $d(N_1/\Omega_1) + d(I_1/N_1) = 2$, $\delta_{\overline P_1 / \overline I_1}(C_2) = 2$ and $\delta_{J_1}(C_2) = 4$, by Lemma \ref{lemma:maxsubgpsSigmai}, Table \ref{table:C2basicstructure} and \cite[Theorem 2.7]{LMT}. In particular, $J_1$ is of simple diagonal type, so we write $J_1 = T^k . (\Out(T) \times \Sym(k)) \mleq \Sym(t)$ for some simple group $T$ with $t = |T|^{k-1}$.
	
	If $\Omega_1$ is of type $\boldO$ and $N_1/\Omega_1 = I_1/\Omega_1 \cong C_2^2$, then, by Lemma \ref{lemma:jumpingbabyresultfull}, $(S_1/\Omega_1)^t \cong C_2^t$ is Frattini in $M$. Hence, for each prime $r$, there is at most one chief factor $X/Y$ of $P_1$ contained in $N_1/\Omega_1$ isomorphic to $C_r$ such that the corresponding section $X^t/Y^t$ of $M$ is non-Frattini. Note the same is true in the $\boldL$ case, since $N_1/\Omega_1$ is cyclic. Let this section $(X/Y)^t$ of $M$ be denoted by $W \cong C_r^t$. By Corollary \ref{cor:CFsinwreathproduct}, any non-Frattini chief factor of $M$ in $(N_1/\Omega_1)^t$ which is isomorphic to $A \cong C_r^a$ is contained in $W$. Hence, $\delta_{M, (N_1/\Omega_1)^t}(A) = \delta_{M,W}(A)$. Similarly, there exists a chief factor $B$ of $P_1$ in $I_1/N_1$ such that $\delta_{M, \del((I_1/N_1)^t)}(A) = \delta_{M, B^t}(A)$.
	
	Let $A$ be a chief factor of $\overline M$. If $A$ is non-abelian, then $\delta_{\overline M}(A) \leq 1$, so we assume that $A \cong C_r^k$ for some prime $r$ and some $k \in \mathbb N$. Recall that $\delta_{\overline M, \pi(\Omega_1^t)}(A) = 0$. Let $\delta_1 = \delta_{\overline M, \pi(N_1^t)/\pi(\Omega_1^t)}(A)$, $\delta_2 = \delta_{\overline M, \pi(M \cap I_1^t)/\pi(N_1^t)}(A)$ and $\delta_3 = \delta_{\overline M/\pi(M \cap I_1^t)}(A)$. Note that, by \cite[Theorem 2.7]{LMT} and Table \ref{table:C2basicstructure},
	\[
	\delta_3 \leq \begin{cases*}
		6 \qquad & if $A \cong C_2$,\\
		3 & if $A$ is otherwise central,\\
		2 & otherwise.
	\end{cases*}
	\]
	Additionally, for $i = 1, 2$, by Lemmas \ref{lemma:permmodulemults} and \ref{lemma:delgens}, we have that $\delta_i \leq \dim_E(A)$, where $E = \End_{\mathbb{F}_r[\overline M/C_{\overline M}(A)]}(A)$. If $A \not\cong C_2$ then, by Theorem \ref{theorem:crowns}, 
	\[
	d((L_A)_{\delta_{\overline M}(A)}) \leq \theta(A) + 2 + \Bigg\lceil \frac{\delta_3 + s(A)}{\dim_E(A)} \Bigg\rceil \leq 5.
	\]
	If $A \cong C_2$ then $\dim_E(A) = 1$, $s(A) = 0$, and $\theta(A) = 0$. Additionally, $\delta_2 = 0$ by applying Lemma \ref{lemma:delmodulemults} and Corollary \ref{cor:upperbounddeltaGN} with $N = T^k \unlhd J_1$. Hence,
	\[
	d((L_A)_{\delta_{\overline M}(A)}) \leq 1 + \delta_3 \leq 7.
	\]
	So we have shown that $d(\overline M)\leq 7$, by Theorem \ref{theorem:crowns}.
	
	\begin{table}
		\centering
        \caption{Bounds on the number of generators of certain subquotients of $\Sigma_1^*$, and the corresponding bound obtained for $d(\overline M)$, in the case that $\overline \Omega_1$ is not simple.}
		\begin{tabular}{llrrrr}
			Case & $(m,q)$ & $d(N_1)$ & $d(I_1/N_1)$ & $d(\overline P_1/\overline I_1)$ & $d( \overline M)$ \\ \toprule
			$\boldL$ & $(2,2)$ & 2 & 0 & 1 & 7 \\ 
			& $(2,3)$ & 2 & 1 & 1 & 8 \\ \midrule
			$\boldU$ & $(2,2)$ & 2 & 1 & 1 & 8 \\ 
			& $(2,3)$ & 2 & 1 & 1 & 8 \\
			& $(3,2)$ & 2 & 1 & 1 & 8 \\ \midrule
			$\boldS$ & $(2,2)$ & 2 & 0 & 0 & 6 \\ 
			& $(2,3)$ & 2 & 0 & 0 & 6 \\
			& $(4,2)$ & 2 & 0 & 1 & 7 \\ \midrule
			$\boldO$ & $(3,3)$ & 2 & 2 & 0 & 8 \\
			& $(2,q,\pm)$ & 1 & 2 & 2 & 9 \\
			& $(4,2,+)$ & 2 & 1 & 0 & 7 \\
			& $(4,3,+)$ & 2 & 2 & 1 & 9 \\
			& $(4,q,+)$ & 2 & 2 & 2 & 10 \\ \bottomrule
		\end{tabular}
		\label{table:C2crudebound}
	\end{table}
	
	Finally, we assume that $\Omega_1$ is not quasisimple. If $\Omega_1 = 1$ then the same proof as above holds, so we additionally assume that $m > 1$. Hence, $\Omega_1$ is one of the groups in Table \ref{table:C2crudebound}. By Lemma \ref{lemma:delgens},
	\[
	d(\overline M) \leq d(N_1) + d(I_1/N_1) + d((\overline P_1/\overline I_1) \times J_1)
	\]
	which gives the desired bound of 7 if $\Omega_1$ is of type $\boldS$, or is equal to $\SL_2(2)$ or $\Omega_4^+(2)$. Otherwise, we indicate the non-Frattini chief factors of $I_1/\Omega_1$, and the chief factors of $I_1$ in $\Omega_1$ which are non-Frattini in $\Omega_1$, in Table \ref{table:C2CFsdetailed}. If $B$ is a chief factor of $I_1$ in $\Omega_1$ which is Frattini in $\Omega_1$, then $B^t$ is Frattini in $M$, so we do not consider these chief factors. Additionally, if $B$ is a non-central chief factor of $I_1$ in $\Omega_1$, then $B^t$ is a chief factor of $M$, since $J_1$ transitive implies that $t > 2$.
	
	\begin{table}
		\centering
        \caption{The chief factors of $I_1/\Omega_1$ and of $I_1$ in $\Omega_1$ which are non-Frattini in $\Omega_1$, for certain classical groups.}
		\begin{tabular}{llll}
			Case & $(m,q)$ & C.f.s of $I_1$ in $\Omega_1$, NF in $\Omega_1$ & NF C.f.s of $I_1/\Omega_1$ \\ \toprule
			$\boldL$ & $(2,3)$ & $C_2^2$, non-central $C_3$ & $C_2$\\\midrule
			$\boldU$ & $(2,2)$ & non-central $C_3$, $C_2$ & $C_3$\\
			& $(2,3)$ & $C_2^2$, non-central $C_3$ & $C_2$\\
			& $(3,2)$ & $C_2^2$, $C_3^2$ & $C_3$ \\ \midrule
			$\boldO$ & $(3,3)$ & $C_2^2$, non-central $C_3$ & $C_2$, $C_2$\\
			& $(2,q,\pm)$ & $C_r$ for each $r \mid \frac{q\mp 1}{(q-1,2)}$ & $C_2$, $C_2$ \\
			& $(4,3,+)$ & $C_2^4$, non-central $C_3$, non-central $C_3$ & $C_2$, $C_2$ \\
			& $(4,q,+)$ & $\PSL_2(q)^2$ & $C_2$, $C_2$ \\ \bottomrule
		\end{tabular}
		\label{table:C2CFsdetailed}
	\end{table}
	
	Suppose that the chief factors of $I_1$ in $\Omega_1$ which are non-Frattini in $\Omega_1$ are all non-central. Then Table \ref{table:C2CFsdetailed} implies that $\delta_{M, \Omega_1^t}(A) \leq 1$ for all $A$. Additionally, in all such cases the chief factors of $\overline M$ in $\pi(\Omega_1^t)$ are non-isomorphic to the chief factors of $\overline M / \pi(\Omega_1^t)$. Hence, by Theorem \ref{theorem:crowns}, 
	\[
	d(\overline M) \leq \max(2,d(\overline M/\pi(\Omega_1^t))) \leq 7.
	\]
	So we assume in the following that there exists a chief factor of $I_1$ in $\Omega_1$ which is non-Frattini in $\Omega_1$ and is central.
	
	Suppose that $\Omega_1 = \SU_2(2) \cong \Sym(3)$. Let $A$ be a chief factor of $\overline M$. If $A$ is non-abelian, then $\delta_{\overline M}(A) \leq 2$, so assume that $A$ is abelian. Let $\delta_0 = \delta_{\overline M, \pi(\Omega_1^t)}$, and define $\delta_1, \delta_2$ and $\delta_3$ as previously. Then, we have that: $\delta_0 = 1$ if $A \cong C_3^t$; $\delta_0 \leq \dim_E(A)$ if $A \cong C_2^k$ for some $k$; and $\delta_0 = 0$ otherwise. Similarly, $\delta_1$ and $\delta_2$ are bounded above by $\dim_E(A)$ if $A \cong C_3^k$ for some $k \leq t$, and are 0 otherwise. Moreover, $\delta_1 > 0$ implies that $\delta_2 = 0$, and vice versa. Finally, $\delta_3 \leq 5$ if $A \cong C_2$, $\delta_3 \leq 3$ if $A$ is otherwise central, and $\delta_3 \leq 2$ otherwise. Hence, as previously, we obtain that
	\[
	d((L_A)_{\delta_{\overline M}(A)}) \leq \begin{cases*}
		6 \qquad & if $A \cong C_2$,\\
		5 & otherwise,
	\end{cases*}
	\]
	so $d(\overline M) \leq 6$.
	
	Suppose that $\Omega_1 = \Omega_2^\pm(q)$, so $I_1$ is a dihedral group and $\Omega_1$ is a cyclic group. When $r$ is odd, the non-Frattini chief factor of $I_1$ in $\Omega_1$ isomorphic to $C_r$ is non-central, so we have that 
	\[
	\delta_{\overline M}(C_r^t) \leq 1 + \delta_{\overline M/\pi(\Omega_1^t)}(C_r^t) \leq 3.
	\]
	Additionally, $\delta_{\overline M}(C_r^a) = \delta_{\overline M/\pi(\Omega_1^t)}(C_r^a) \leq 3$ for any $a \neq t$. Hence, we let $A$ be a chief factor isomorphic to $C_2^a$ for some $a$. Note that $\Omega_1 = [I_1, I_1]$, so $\delta_{M, \del(\Omega_1^t)}(A) = 0$ by Lemma \ref{lemma:jumpingbabyresultdel}. If $|N_1/\Omega_1| \geq 2$, then $\Omega_1 = [I_1, N_1]$, so $\delta_{M,\Omega_1^t}(A) = 0$, implying that $d(\overline M) \leq \max(3, d(\overline M/\pi(\Omega_1^t))) \leq 7$ by Theorem \ref{theorem:crowns}. Therefore, we assume that $N_1 = \Omega_1$. We have $\delta_{\overline M, \pi(\Omega_1)^t}(A) \leq 1$ with equality only if $A \cong C_2$, so $\delta_{\overline M}(A) = \delta_{\overline M / \pi(\Omega_1^t)}(A) $ for any chief factor $A \cong C_2^a$ with $a > 1$. 
	
	We now show that $\delta_{\overline M}(C_2) \leq 7$. Firstly, if $J_1$ is of simple diagonal type or is almost simple, then $\soc(J_1)$ is a product of non-abelian simple groups, and is transitive. Hence, the same proof as previously shows that $\delta_1 = \delta_2 = 0$, so $\delta_{\overline M/\pi(\Omega_1^t)}(C_2) \leq 6$. Therefore, $\delta_{\overline M}(C_2) \leq 7$. On the other hand, for any other maximal subgroups $J_1$ of $\Sym(t)$, we have $\delta_{J_1}(C_2) \leq 2$ by the O'Nan-Scott Theorem. So, using the fact that $\delta_1 = 0$ and $\delta_2 \leq 2$, we obtain that $\delta_{\overline M / \pi(\Omega_1^t)}(C_2) \leq 6$. Thus, $\delta_{\overline M}(A)$ is bounded above by 7 when $A \cong C_2$, and otherwise is bounded above by $\delta_{\overline M/\pi(\Omega_1^t)}(A)$ or 3. As such, $d(\overline M) \leq \max(7, d(\overline M/\pi(\Omega_1^t)) ) \leq 7$, by Theorem \ref{theorem:crowns}.
\end{proof}

\begin{prop}
	Suppose that $H \in \mathscr C_2$ is of type $\O_1(q) \wr \Sym(t)$. Then $d(\overline M) \leq 7$.
\end{prop}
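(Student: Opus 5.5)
We work with the explicit structure of $H$ from Proposition \ref{prop:C2wreathstructure}. Here $q=p$ is odd and $t\geq 5$, using \cite[Proposition 4.2.15]{KL} and \cite[Table 8.17]{BHRD} as in the proof of Proposition \ref{prop:LMTcorrectionOC2}. The group $H_\Omega$ lies between $\del(2^t).\Alt(t)$ and $\del(2^t).\Sym(t)$. Modulo scalars, $\overline H$ is a subgroup of $(C_2^t/\diag(C_2^t)).\Sym(t)$ extended by the small cyclic part of $\overline \Sigma/\overline\Omega$. Concretely, $\overline H = E.J$ where $E\leq C_2^t/Z$ is a $\Sym(t)$-submodule (either $\del$ or the full module, modulo scalars), $\Alt(t)\leq J$, and $\overline H/E \cong J\times C$ with $C$ a subquotient of $\overline G/\overline\Omega$, which has order at most $8$ here.

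We split according to whether $M$ contains $E$, following Proposition \ref{prop:maximalsubgroupgeneralmethod}. The chief factors of $\overline H$ in $E$ come from Proposition \ref{prop:t5CFsinwreathproduct}: a single non-central factor $C_2^{t-1}$ or $C_2^{t-2}$, plus possibly one $C_2$.

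\textbf{Case 1: $M$ does not contain the non-central factor.} Irreducibility forces $M\cap E$ to be a proper submodule, namely $1$ or a diagonal $C_2$. Then $M$ is a complement-type subgroup $(M\cap E).(\overline H/E)$, and
\[
d(\overline M)\leq 1+d(J\times C).
\]
Since $J\in\{\Alt(t),\Sym(t)\}$ and $C$ is generated by at most $3$ elements, this gives $d(\overline M)\leq 1+2+3\leq 6$. We can sharpen this using Theorem \ref{theorem:crowns} if needed.

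\textbf{Case 2: $M$ contains the non-central factor.} Then $M = E.\widetilde J$ with $\widetilde J$ maximal in $J\times C$. This means $\widetilde J = K\times C$ with $K$ maximal in $J$, or $\widetilde J$ is subdirect of index $2$ or $p$.

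\textbf{Main obstacle: bounding $d(\overline M)$ in Case 2.} The plan is as follows.
\begin{enumerate}[(i)]
\item Bound $d_{\overline M}$ of the $C_2$-chief factors in $E$ restricted to $K$, using Lemma \ref{lemma:permmodulemults} for the multiplicity of each irreducible in the head of a cyclic permutation module.
\item Use Lemma \ref{lemma:delgens} to show that $\del$-type pieces are generated by one element under $K$.
\item Use Lemma \ref{lemma:delmodulemults} to exclude trivial $C_2$ quotients when $K$ is transitive with no $C_2$ quotient (for example $K$ of diagonal or almost simple type).
\item Bound the remaining top contribution by $d(\widetilde J)\leq d(K)+3$, where $K$ maximal in $\Sym(t)$ or $\Alt(t)$ has $d(K)\leq 4$ by Theorem \ref{theorem:LMTgenerators}.
\end{enumerate}
The delicate subcase is $K$ intransitive, $\Sym(m)\times\Sym(t-m)$. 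There $E$ restricted to $K$ splits into pieces on $m$ and $t-m$ points, each contributing up to two $C_2$ chief factors. We would count $\delta_{\overline M}(C_2)$ directly and show it is at most $7$, possibly using Lemma \ref{lemma:jumpingbabyresultdel} to make some of these factors Frattini. The counting is done via Theorem \ref{theorem:crowns}(3), exactly as in the proof of Proposition \ref{prop:lastcaseC2}.
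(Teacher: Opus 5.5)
Your outline has the same skeleton as the paper's proof: you split on whether $\overline M$ contains the image $W$ of $\del(C_2^t)$, then write $\overline M=W.\widetilde J$ and use the argument of Lemma~\ref{lemma:delgens} to see that $W$ is generated by one element under the top. However, the one step that needs a real idea is not carried out, and your plan points at the wrong subcase.

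Your own estimates $d_{\overline M}(W)\le 1$ and $d(\widetilde J)\le d(K)+3$ give $d(\overline M)\le d(K)+4$. This already disposes of every $K$ with $d(K)\le 3$. In particular, the intransitive $K=\Sym(m)\times\Sym(t-m)$ that you call delicate is immediate, since $d(K)=2$. The count fails exactly when $K$ is of simple diagonal type with $\delta_K(C_2)=d(K)=4$, where it gives $8$.

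For that $K$ your step (iii) does not apply as stated. Lemma~\ref{lemma:delmodulemults} needs a transitive group with no $C_2$ quotient, and your parenthetical claim that diagonal-type $K$ has none is false (here $\delta_K(C_2)=4$). The paper instead applies the lemma to $\soc(K)=T^k$, which is transitive and perfect, and transfers the conclusion to $\overline M$ via Corollary~\ref{cor:upperbounddeltaGN}. Hence $W$ contains no non-Frattini $C_2$ chief factor of $\overline M$, and $\delta_{\overline M}(C_2)\le\delta_K(C_2)+\delta_{\overline H/H_{\overline\Omega}}(C_2)\le 4+3$. For $A\not\cong C_2$, Lemma~\ref{lemma:permmodulemults}, applied to $W$ (a cyclic module under the primitive group $K$), shows that $W$ adds at most $r(A)$ copies of $A$. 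That costs at most $1$ in the bound of Theorem~\ref{theorem:crowns}(3). This is the missing argument.

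Alternatively, the loss comes from your loose description of the top, and tightening it avoids the crown count altogether. Since $q=p$ and $\dim V_1=1$, the group $\Sigma_1^*=\GL_1(p)$ consists of scalars. So Proposition~\ref{prop:C2wreathstructure} gives $H_{\overline\Sigma}=H_{\overline I}$, i.e.\ $\overline H\le H_{\overline I}=\overline{C_2\wr\Sym(t)}$. Hence $\overline H/W$ is isomorphic to a subgroup of $C_2\times\Sym(t)$ containing $\Alt(t)$. Thus your $C$ has order at most $2$, not up to $8$. Also, the top need not be a direct product $J\times C$, so $\widetilde J$ need not have the form $K\times C$. Any maximal subgroup $\widetilde J$ of $\overline H/W$ has a central subgroup of order at most $2$, with quotient $\Alt(t)$, $\Sym(t)$, or a maximal subgroup of one of these. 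So $d(\widetilde J)\le 1+4$ by Theorem~\ref{theorem:LMTgenerators}, and $d(\overline M)\le 1+d(\widetilde J)\le 6$. Either repair closes the proof, but as written the proposal leaves the decisive case open.
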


\begin{proof}
	Note that $q = p \geq 3$, $t \geq 5$, and, by \cite[Proposition 4.2.15]{KL} we have
	\[
	\Alt(t) \leq H_\Omega = \del(C_2^t).J \leq C_2 \wr \Sym(t),
	\]
	for some group $J$ such that $\Alt(t) \leq J \leq \Sym(t)$. Since $t \geq 5$, $H_\Omega$ satisfies the conditions of Corollary \ref{cor:CFsinwreathproduct}. If $M$ contains $H_\Omega$, then $d(\overline M) = d(H_{\overline L})$ for some $L$, and hence $d(\overline M) \leq 4$ by Theorem \ref{theorem:LMTcrowns}. If $M$ does not contain $\del(C_2^t)$ then $M \cap \del(C_2^t)$ is either trivial or $\diag(C_2^t)$, by Corollary \ref{cor:CFsinwreathproduct}. Hence, by Theorem \ref{theorem:crowns} and \cite[Proposition 3.12]{LMT},
	\[
	d(\overline M) \leq d((M \cap \del(C_2^t)).J) + d(\overline H/H_{\overline \Omega}) \leq 2+3 = 5.
	\]
	Thus, we may assume that
	\[
	M \cap H_\Omega = \del(2^t).J_1
	\]
	where $J_1$ is a maximal or novelty maximal subgroup of $J$, by Corollary \ref{cor:maximalsubgpextension}. Note that, since $t \geq 5$, the same proof as in Lemma \ref{lemma:delgens} applies to prove that $d_{J_1}(\del(C_2^t)) = 1$. Hence, if $d(J_1) \leq 3$, we have that
	\[
	d(\overline M) \leq 1 + d(J_1) + d(\overline H/H_{\overline \Omega}) \leq 7,
	\]
	by \cite[Proposition 3.12]{LMT}. So we can assume by \cite[Theorem 2.7]{LMT} that $\delta_{J_1}(C_2) = 4$ and $J_1$ is of simple diagonal type. Note that, since $J_1$ is a primitive subgroup of $\Sym(t)$, we may apply the same proof as in Lemma \ref{lemma:delgens} to determine that $\del(C_2^t)$ is a cyclic module of $J_1$.
	
	By Lemma \ref{lemma:delmodulemults} and Corollary \ref{cor:upperbounddeltaGN} applied with $N = \soc(J_1)$, we have that $\delta_{\overline H, \del(C_2^t)}(C_2) = 0$. Hence, 
	\[
	\delta_{\overline M}(C_2) \leq \delta_{J_1}(C_2) + \delta_{\overline H/H_{\overline \Omega}}(C_2) \leq 7,
	\]
	by \cite[Theorem 2.7, Proposition 3.12]{LMT}. Additionally, for any chief factor $A \not\cong C_2$ we have
	\[
	\delta_{\overline M/\pi(\del(2^{t-1}))}(A) \leq \delta_{J_1}(A) + \delta_{\overline H/H_{\overline \Omega}}(A) \leq 4
	\]
	by \cite[Theorem 2.7, Proposition 3.12]{LMT}. Therefore, by Lemma \ref{lemma:permmodulemults}, since $\del(C_2^t)$ is a cyclic module of $J_1$,
	\[
	d((L_A)_{\delta_{\overline M}(A)}) \leq \theta(A) + 1 + \Bigg\lceil \frac{\delta_{\overline M/\pi(\del(2^{t-1}))}(A) + s(A)}{\dim_E(A)} \Bigg\rceil \leq 6,
	\]
	by the same method as in Proposition \ref{prop:lastcaseC2}. Hence, by Theorem \ref{theorem:crowns}, $d(\overline M) \leq 7$.
\end{proof}

\begin{prop}
	Suppose that $H$ is in $\mathscr{C}_2$, and is of type $\GL_{n/2}(q^u).2$. Then $d(\overline M) \leq 7$.
\end{prop}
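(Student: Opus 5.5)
The plan is to use the structure of $H$ of type $\GL_{n/2}(q^u).2$ from Section \ref{section:C2explanation}: $\Omega_1 \leq H_\Omega \leq I_1.2 = H_I$, where $I_1$ is a classical group (of type $\boldL$ over $\mathbb F_{q^u}$) acting on a totally singular $n/2$-space. I will also use the fact that $H_\Sigma$ normalises $\Omega_1$, so that $\overline H$ has a normal series
\[
\pi(\Omega_1) \leq \pi(H \cap I_1) \leq \pi(H \cap \Sigma_1^*) \leq \overline H,
\]
with $\overline H/\pi(H\cap I_1)$ a small group: an extension of a subquotient of $\overline\Sigma_1^*/\overline I_1$ (at most $C_f\times C_2$ or $C_{2f}$) by the swapping $C_2$. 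Following Section \ref{section:prelimsmethod}, I take the least term of this series not contained in $M$ and treat each case separately. In every case I bound $\delta_{\overline M}(A)$ by splitting $\overline M$ along the series, and then conclude with Theorem \ref{theorem:crowns}.

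\textbf{Case 1: $M$ does not contain $\Omega_1$.} When $\Omega_1$ is quasisimple, Corollary \ref{cor:maximalsubgpextension} shows that $M\cap\Omega_1$ is central, maximal, or novelty maximal in $\Omega_1$. If it is central, then $d(\overline M)\leq 1+d(\overline H/\pi(\Omega_1))\leq 1+5$ by Theorem \ref{theorem:LMTgenerators}. Otherwise $M\cap (H\cap\Sigma_1^*) = R\cap(H\cap \Sigma_1^*)$ for a maximal subgroup $R$ of a group between $\Omega_1$ and $\Sigma_1^*$. Lemma \ref{lemma:maxsubgpsSigmai} then gives $\delta_{R\cap N}(A)\leq 4$, with equality only for central $A$, and $\delta \leq 3$ otherwise. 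The quotient $\overline M/\pi(M\cap I_1)$ is isomorphic to a quotient of $\overline H/\pi(H\cap I_1)$, and by Table \ref{table:C2basicstructure} this has $\delta(C_2)\leq 3$ and $\delta(A)\leq 2$ for other $A$. Summing gives $\delta_{\overline M}(C_2)\leq 7$ and $\delta_{\overline M}(A)\leq 6$ otherwise, hence $d(\overline M)\leq 7$. The small non-quasisimple $\Omega_1$, namely $\SL_2(2)$, $\SL_2(3)$, and the $m=1$ case where $I_1$ is cyclic, I would handle by direct computation.

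\textbf{Case 2: $M$ contains $\Omega_1$.} Then $\overline M/\pi(\Omega_1)$ is a maximal subgroup of the solvable group $\overline H/\pi(\Omega_1)$, whose chief factors come from a cyclic group $I_1/\Omega_1$ and from $\overline H/\pi(H\cap I_1)$. It then suffices to bound $d(\Omega_1)\leq 2$ crudely, or better, to note that $\Omega_1$ contributes only a non-abelian chief factor with $\delta=1$, giving $d(\overline M)=d(\overline M/\pi(\Omega_1))$ when $\Omega_1$ is quasisimple. For the solvable quotient, $d$ of any subgroup of a group of shape $C_a.(C_b\times C_c).C_2$ with cyclic pieces is at most $4$. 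This gives $d(\overline M)\leq 5$ here.

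\textbf{Main obstacle.} I expect the hardest step to be Case 1 when $R$ itself has many $C_2$ chief factors and the top quotient is simultaneously as large as possible, since naive addition could reach $8$. If the crude count exceeds $7$, my first attempt would be to show, as in Lemma \ref{lemma:prereqjumpingclassicals}, that some $C_2$ factor of $R\cap I_1$ lies in $[R, R\cap I_1]$. Specifically, I would use the swapping involution, which acts as the inverse-transpose graph automorphism on $\Omega_1$ and inverts $I_1/\Omega_1$, to make a determinant $C_2$ Frattini via Lemma \ref{lemma:jumpingbabyresultfull}-type commutator arguments.
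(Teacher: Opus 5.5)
Your proposal is correct and is essentially the paper's argument. The paper also disposes of the case $M \geq \Omega_1$ via the small solvable quotient. Otherwise it identifies $M\cap N$, with $N = H\cap I_1$, as the trace of a maximal or novelty maximal subgroup contributing at most $4$: it uses $d(M\cap N)\leq 4$ from Proposition \ref{prop:atmost4inI}, where you count $\delta$'s via Lemma \ref{lemma:maxsubgpsSigmai}. It then adds $1+2$ for the swapping $C_2$ and for $\overline H/\pi(H\cap I)\cong \overline G/(\overline G\cap\overline I)$, so the total is exactly $7$ and the obstacle you anticipate never arises.

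The one repair needed is your justification of the top bound $\delta(C_2)\leq 3$. Here $H$ does not sit inside $\Sigma_1^*\wr\Sym(2)$. The swap itself induces the inverse-transpose on $\Omega_1$, and ambient similarities acting trivially on one of the two totally singular spaces are not seen in $\overline\Sigma_1^*$. So argue via the abelian, at most $2$-generated group $\overline G/(\overline G\cap\overline I)$ rather than via Table \ref{table:C2basicstructure}. Also, your Case 2 already covers $m=1$, so no computation is needed there.
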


\begin{proof}
	Firstly, if $M$ contains $H_I$, then $d(\overline M) \leq 3$ by Theorem \ref{theorem:LMTgenerators}, so we assume that $M$ does not contain $H_I$.
	
	By \cite[Lemma 4.2.3]{KL}, $H_I = I_1 . 2$, and $H_\Omega \geq \Omega_1$. Assume that $\Omega_1$ is not quasisimple. This implies that either $n/2 = 1$, or $n/2=2$ and $q^u=2,3$. The latter we treat computationally to show that $d(\overline M) \leq 3$. Otherwise, if $n = 2$, then $H_I = C_{q-1}.C_2$, so $d(\overline M \cap \overline I) \leq 2$. Hence, $d(\overline M) \leq 5$ by \cite[Proposition 3.12]{LMT}.
	
	Thus, we assume that $\Omega_1$ is quasisimple. Let $N = H_G \cap I_1$, which contains $\Omega_1$. If $M$ does not contain $Z(N)$, then $M = Z . (H/Z(N))$ for some subgroup $Z$ of $Z(N)$, so $d(\overline M) \leq 1 + d(H) \leq 5$ by Theorem \ref{theorem:LMTgenerators}. Similarly, if $M$ contains $\Omega_1$ then $M \cap I = \Omega_1.J$ where $J$ is a subgroup of $ C_{q^u-1}.C_2$. Thus, since $\Omega_1$ is quasisimple, by Theorem \ref{theorem:crowns} and \cite[Proposition 3.12]{LMT} we have that
	\[
	d(\overline M) \leq d(J) + d(H_{\overline G}/H_{\overline G \cap \overline I}) \leq  4.
	\]
	So we assume that $M$ contains $Z(N)$, and does not contain $\Omega_1$. Thus, by Corollary \ref{cor:maximalsubgpextension}, $(M \cap N)/Z(N)$ is either a maximal or novelty maximal subgroup of $\overline N$. By Proposition \ref{prop:atmost4inI}, this implies that $d(M \cap N) \leq 4$, and hence 
	\[
		d(\overline M) \leq 4 + d(C_2) + d(H_{\overline G}/H_{\overline G \cap \overline I}) \leq 7,
	\]
	by \cite[Proposition 3.12]{LMT}.
\end{proof}

\begin{prop}
	Suppose that $H \in \mathscr{C}_2$ is of type $\O_{n/2}(q)^2$. Then $d(\overline M) \leq 7$.
\end{prop}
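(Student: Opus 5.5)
We treat this case as a degenerate instance of the $t=2$ wreath product analysis, using the structure in Proposition \ref{prop:prelimC2Omq2}: $H_\Omega = S_1 \times S_2$ and
\[
H_\Sigma = (I_1^2 : \diag((\Sigma_1/I_1)^2)) : \Sym(2) \leq \Sigma_1 \wr \Sym(2),
\]
where $n/2 = m$ is odd, $q$ is odd, and $\Omega_1 \cong \Omega_2$ are quasisimple (or $m$ small, handled computationally via Table \ref{table:CFsofIinOmega}). As before, we let $N_1$ be the largest subgroup of $I_1$ with $N_1^2 \leq H$ (so $S_1 \leq N_1 \leq I_1$, and $I_1/S_1 \cong C_2$ since $m$ is odd) and $P_1$ the projection of $H \cap \Sigma_1^2$. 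Then $H = N_1^2.\del((I_1/N_1)^2).\diag((P_1/I_1)^2).J$ with $J \leq \Sym(2)$. We work through the normal series $1 < \Omega_1^2 < N_1^2 < H \cap I_1^2 < H \cap P_1^2 < H$ and classify $M$ by which term it first fails to contain, as in the preceding propositions.

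First we bound the top. Here $I_1/\Omega_1$ is $C_2^2$ when $q$ is odd and $m$ is odd, since $I_1 = S_1 \times Z(I_1)$. So $\delta_{\overline H/\pi(N_1^2)}(C_2) \leq 4$ and all other $\delta$ are at most $2$, exactly as in Proposition \ref{prop:boundtopC2}; hence $d(\overline H/\pi(N_1^2)) \leq 4$. If $J=1$ we may instead use Proposition \ref{prop:maxsubgpdirectproduct} as in Proposition \ref{prop:C1secondmaxls}, giving the bound $7$ directly.

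Suppose $M$ does not contain $\Omega_1^2$. If $M \cap \Omega_1^2$ is subdirect, then by Proposition \ref{prop:C2subdirectcase} it is $Z(\Omega_1^2).\diag(\Omega_1^2)$, and $d(\overline M) \leq 1 + d(\overline H/\pi(\Omega_1^2)) \leq 6$ by Theorem \ref{theorem:LMTgenerators}. Otherwise Proposition \ref{prop:nonsubdirectwreathcase} makes $M$ conjugate to $(R \wr \Sym(2)) \cap H$ with $R \mleq P_1$. In that case we combine Proposition \ref{prop:jumpingappliedtoclassmaxs} (when $d(K/N_1) \geq 3$) or Proposition \ref{prop:atmost4inI} with Corollary \ref{cor:CFsinwreathproduct} to get $\delta_{M, M\cap N_1^2}(A) \leq 4$, with $3$ in the critical $C_2$ case, and add the top bound. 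For $M$ containing $\Omega_1^2$ but not some later term, the quotient $\overline M/\pi(\Omega_1^2)$ has order-$2$ layers, and the crude estimates $d(N_1)+d(I_1/N_1)+d(\cdot) \leq 7$ from the earlier propositions carry over verbatim. In particular $d(N_1) \leq 2$ by Lemma \ref{lemma:maxsubgpsSigmai}.

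The main obstacle is the $R \wr \Sym(2)$ case, where $\delta(C_2)$ could reach $4+4$. There I would show that some $C_2$ layer is Frattini. The approach is to apply Lemma \ref{lemma:jumpingbabyresultdel} or Lemma \ref{lemma:jumpingbabyresultfull}, using that $\overline G$ contains $\delta$-type elements so that $[P_1,I_1]$ meets $I_1/\Omega_1$ nontrivially, or to apply Lemma \ref{lemma:diagnonfrattini} when the swap lies in $H$ only as $(a,1)\sigma$ with $a \in S_1\setminus\Omega_1$. Where the generic argument fails for small $q$, I would fall back on a Magma computation.
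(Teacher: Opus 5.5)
Your treatment of the decisive case, $M$ conjugate to $(R\wr\Sym(2))\cap H$, is not an argument. Having found that the number of $C_2$ factors ``could reach $4+4$'', you only list lemmas you might use and a Magma fallback. Moreover, the commutator mechanism you sketch is not available here. For $m$ odd every similarity of $V_1$ is a scalar multiple of an isometry, since $\det(g)^2=\lambda^m$. Hence $\overline\Delta_1=\overline I_1$, $\overline P_1/\overline I_1$ is cyclic, and $I_1/\Omega_1\cong C_2^2$ is central in $P_1/\Omega_1$. So $[P_1,I_1]\le\Omega_1$, and the hypothesis $d(\overline K/\overline I)=2$ of Proposition~\ref{prop:jumpingappliedtoclassmaxs} fails. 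You also apply Proposition~\ref{prop:boundtopC2} and Corollary~\ref{cor:CFsinwreathproduct} as they stand, but they were proved for the isometric wreath types. Here $V_1$ and $V_2$ are similar but not isometric, $H_\Omega=S_1\times S_2$, and no isometry interchanges $V_1$ and $V_2$. So the hypotheses of Corollary~\ref{cor:CFsinwreathproduct} are not automatic, and you never check them.

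The ``obstacle'' comes from that transfer. Proposition~\ref{prop:boundtopC2} allows $I_1/N_1\cong C_2^2$ and $\overline P_1/\overline I_1\cong C_2\times C_f$. Here, however, $N_1\ge S_1$ forces $|I_1/N_1|\le 2$, and $\overline P_1/\overline I_1$ is cyclic. So $\overline H/\pi(N_1^2)$ has a normal series with quotients of order at most $2$, cyclic, and of order at most $2$, and is $3$-generated. Since $M$ interchanges the two factors of $M\cap N_1^2=(R\cap N_1)^2$, Proposition~\ref{prop:atmost4inI} and Theorem~\ref{theorem:crowns} already give $d(\overline M)\le d(R\cap N_1)+3\le 4+3$, with no Frattini analysis.

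The paper's proof is shorter still. As $-1\notin S_1$ for $m$ odd, $S_i=\SO_m(q)$ is almost simple, so it works modulo $H_\Omega=S_1\times S_2$, where $\overline H/H_{\overline\Omega}\cong\overline G/\overline\Omega$ is $3$-generated. If $M\ge\Omega_1\times\Omega_2$ then $d(\overline M)\le 4$. Otherwise $M$ projects onto $\Sym(2)$ and contains neither $\Omega_i$. Then $M\cap H_\Omega=R_1\times R_2$ with $R_i$ maximal or novelty maximal in $S_i$; the remaining possibility of a diagonal or trivial intersection with $\Omega_1\times\Omega_2$ is harmless by Theorem~\ref{theorem:crowns}. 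Hence $d(\overline M)\le d(R_1)+3\le 6$ by Theorem~\ref{theorem:LMTgenerators}.
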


\begin{proof}
	If $M$ contains $H_\Omega$, then $M = H_L$ for some $L$, so $d(\overline M) \leq d(H_{\overline L}) \leq 3$ by Theorem \ref{theorem:LMTgenerators}. So we assume that $M$ does not contain $H_\Omega$.
	
	Let $m = n/2$, which is odd by definition. If $\Omega_m(q)$ is not quasisimple, then $(n, q) = (6, 3)$, so we can check computationally that $d(\overline M) \leq 4$. Hence, we assume that $\Omega_m(q)$ is quasisimple. Let $H_{\Gamma} = N_{\Gamma} \{V_1, V_2\}$ for some $m$-dimensional, orthogonal vector spaces $V_i$ which are similar but not isomorphic.
	
	By Proposition \ref{prop:prelimC2Omq2}, we have
	\[
	H_{\Omega} = S_1 \times S_2,
	\]
	and
	\[
	H_\Sigma = (I_1 \times I_2).\diag((\Sigma_1/I_1) \times (\Sigma_2/I_2)) : \Sym(2).
	\]
	If $H$ does not project onto $\Sym(2)$, then $H \leq \Sigma_1 \times \Sigma_2$. Thus, we can use the same method as for $\mathscr{C}_1$ to determine that $d(\overline M) \leq 5$.
	
	If $H$ projects onto $\Sym(2)$, we apply a similar argument to the case where $H$ of type $\O_m^{\epsilon}(q) \wr \Sym(t)$, noting that in this case $H$ may not satisfy the conditions of Corollary \ref{cor:CFsinwreathproduct}. Since $m$ is odd, $-1 \not\in S_1$, so $S_1, S_2$ are almost simple groups. If $M$ contains $\Omega_1 \times \Omega_2$, then $M \cap H_\Omega = (\Omega_1 \times \Omega_2).J$ where $J < C_2^2$. Thus, by Theorem \ref{theorem:crowns} and \cite[Proposition 3.12]{LMT},
	\[
	d(\overline M) \leq d(J) + d(\overline H/H_{\overline \Omega}) \leq 4.
	\]
	So we assume that $M$ does not contain $\Omega_1 \times \Omega_2$. Thus $M$ does not contain $\Omega_1$ or $\Omega_2$, as $M$ projects onto $\Sym(2)$. The group $M \cap H_\Omega$ is not subdirect in $S_1 \times S_2$, by Goursat's Lemma, since $M \cap S_1$ does not contain $\Omega_1$ and so is not normal in $S_1$. Hence, by Corollary \ref{cor:maximalsubgpextension} and Proposition \ref{prop:XNonsimpleMaxSubgpXkC}, $M \cap H_\Omega = R_1 \times R_2$ where each $R_i$ is a maximal or novelty maximal subgroup of $S_i$. Note that $d(R_i) \leq 3$, by Theorem \ref{theorem:LMTgenerators}. Thus, since $M$ projects onto $\Sym(2)$, $d(\overline M) \leq d(R_i) + d(\overline H/H_{\overline \Omega}) \leq 6$ by \cite[Proposition 3.12]{LMT}.
\end{proof}

We end this section with the following example, which exhibits an infinite family of maximal subgroups $H$ of type $\O_m^{\epsilon}(q) \wr \Sym(t)$ together with maximal subgroups $M$ of $H$ such that $d(\overline M) = 7$.

\begin{example}\label{example:boundtightC2}
	Let $\Omega = \Omega_{20m}^+(9)$ for any $m \geq 1$, and let $G = \langle I, \phi \rangle$. Let $H$ be a maximal subgroup of $G$ of type $\O_{4m}^{+}(9) \wr \Sym(5)$. Hence,
	\[
	H = I_1^5 . \diag((\langle I_1, \phi \rangle/I_1)^5) : \Sym(5),
	\]
	and
	\[
	H/\Omega_1^t \cong ((C_2^2) \wr \Sym(5)) \times C_2.
	\]
	Using Magma, we can show that there exists a maximal subgroup $L$ of $H/\Omega_1^t$ for which $\delta_L(C_2) = 7$. For instance, 
	\[
	L = ((C_2^2) \wr (\Sym(2) \times \Sym(3)) \times C_2
	\]
	is one such maximal subgroup. Let $M$ be the preimage of $L$ in $H$. Since the scalars of $I_1$ are contained in $\Omega_1$, the scalars of $H$ are contained in $\Omega_1^t$, so the non-Frattini chief factors of $M$ in $M/\Omega_1^t$ are the same as those of $\overline M$ in $\overline M / \pi(\Omega_1^t)$. Thus, $\delta_{\overline M/\pi(\Omega_1^t)}(C_2) = 7$, and hence $\delta_{\overline M}(C_2) \geq 7$. By Theorem \ref{theorem:crowns}, this implies that $d(\overline M) \geq 7$, so by the results above we have $d(\overline M) = 7$.
\end{example}

\subsection{Class \texorpdfstring{$\mathscr{C}_3$}{C3}}

\begin{prop}
	Suppose that $H \in \mathscr{C}_3$. Then $d(\overline M) \leq 7$.
\end{prop}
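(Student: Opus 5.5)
We use the structure from Section \ref{section:C3explanation}: $\Omega_\sharp \leq H_\Omega$ and $H_\Sigma = \Gamma_\sharp.2$ in case $\boldL$, otherwise $H_\Sigma = \Gamma_\sharp$. Here $\Gamma_\sharp$ is a semisimilarity group over the extension field $\mathbb F_\sharp$ of prime degree $r$. So $H$ contains $\Omega_\sharp$ as a normal subgroup, and $H/(H\cap I_\sharp)$ is a subgroup of an extension of $\Gamma_\sharp/I_\sharp$ by a group of order at most $2$.

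The proof splits according to whether $M$ contains $\Omega_\sharp$.

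\textbf{Case 1: $M$ contains $\Omega_\sharp$.} Then $M/\Omega_\sharp$ is maximal in $H/\Omega_\sharp$. This quotient is an extension of $I_\sharp/\Omega_\sharp$ (cyclic or $C_2^2$) by $\Gamma_\sharp/I_\sharp$, extended by $C_r$ (the Galois part) and possibly by $C_2$. When $\Omega_\sharp$ is quasisimple, its non-Frattini chief factors contribute at most one non-abelian factor to $\overline M$, so Theorem \ref{theorem:crowns} gives $d(\overline M)\le d(\overline M/\pi(\Omega_\sharp))$. The latter is a subgroup of a soluble group with few chief factors. Bounding $\delta(C_2)\le 5$ as in Lemma \ref{lemma:maxsubgpsSigmai} (with $(\gamma),(F),(D)$ contributing at most $3$, and the extra $C_r$ and $C_2$ contributing at most $2$), we get $d(\overline M)\le 6$.

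\textbf{Case 2: $M$ does not contain $\Omega_\sharp$.}
\begin{itemize}
\item If $M$ does not contain $Z(\Omega_\sharp)$, Corollary \ref{cor:maximalsubgpextension} gives $M\cap\Omega_\sharp$ central. Then $\overline M\cong Z.(\overline H/\pi(\Omega_\sharp))$ with $Z$ cyclic, so $d(\overline M)\le 1+d(\overline H)\le 6$ by Theorem \ref{theorem:LMTgenerators}.
\item Otherwise, $(M\cap\Omega_\sharp)/Z$ is maximal or novelty maximal in $\overline\Omega_\sharp$, and $M = (M\cap H_{\sharp,N}).(\text{top})$, where $H_{\sharp,N}$ is the intersection of $H$ with some $N$ satisfying $\Omega_\sharp\le N\le I_\sharp$. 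Here $M\cap \Gamma_\sharp$ is a maximal or novelty maximal subgroup of a classical group between $\Omega_\sharp$ and $\Gamma_\sharp$. We split $\overline M$ along $M\cap N$ and use $\delta_{M, M\cap N}(A)\le\delta_{M\cap N}(A)\le 4$ (equality only for central $A$, from Proposition \ref{prop:atmost4inI} and Corollary \ref{cor:upperbounddeltaGN}). The top $\overline M/\pi(M\cap N)\cong \overline H/\pi(H\cap N)$ is a quotient of $\overline G/\overline\Omega$ (as $\overline H\overline\Omega=\overline G$), so it has $\delta(C_2)\le 3$ and $\delta(A)\le 2$ otherwise, by \cite[Proposition 3.12]{LMT}. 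For non-$C_2$ factors this gives $\delta_{\overline M}(A)\le 6$, and Theorem \ref{theorem:crowns}(3) yields $d\le 7$. For $A\cong C_2$ this gives $\delta_{\overline M}(C_2)\le 7$, hence $d(\overline M)\le 7$.
\end{itemize}

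\textbf{Main obstacle.} The step we expect to be hardest is handling $I_\sharp/\Omega_\sharp$ versus $I/\Omega$, and the extra $C_r$ and $C_2$ on top. We must be careful that the bound $3$ on $\delta(C_2)$ in the top refers to $\overline G/\overline\Omega$ and not to the larger $\Gamma_\sharp/\Omega_\sharp$. This is ensured by choosing $N=H\cap I$ (rather than $I_\sharp$), so that everything above $N$ embeds in $\overline G/\overline\Omega$, and by invoking Lemma \ref{lemma:maxsubgpsSigmai} for the classical group $\langle\Omega_\sharp, H\cap I\rangle$.

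The remaining cases, where $\Omega_\sharp$ is not quasisimple (small $n/r$, e.g. $\Omega_\sharp=\SL_1$ or $\SL_2(2),\SL_2(3)$), will be treated directly. In those cases $H$ is soluble of small rank and $d(\overline M)\le d(M\cap H_I)+3\le 7$, using Table \ref{table:chieffactorsnonqs1}, with the tiny fields checked computationally.
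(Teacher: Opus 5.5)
Your route differs from the paper's: you split $\overline M$ along $\pi(M\cap N)$, bound the bottom by Proposition~\ref{prop:atmost4inI}, and bound the top by $\overline G/\overline\Omega$. Its central accounting step fails as written.

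Proposition~\ref{prop:atmost4inI}, applied in the $\sharp$-geometry, requires $\Omega_\sharp\le N\le I_\sharp$. Your claim that $\overline M/\pi(M\cap N)\cong\overline H/\pi(H\cap N)$ is a quotient of $\overline G/\overline\Omega$ requires $H_\Omega\le N$. No normal subgroup satisfies both. By \cite[(4.3.11)]{KL}, $H_I=I_\sharp.Z_r$, where $Z_r$ comes from the $\mathbb F$-linear Galois automorphism of $\mathbb F_\sharp/\mathbb F$, and $H_\Omega$ already maps onto this $Z_r$. For instance, in case $\boldL$, the Frobenius of $\mathbb F_{q^r}/\mathbb F_q$, corrected if necessary by an element of $\GL_{n/r}(q^r)$ of suitable determinant, lies in $H_\Omega\setminus I_\sharp$.

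Your proposed remedy $N=H\cap I$ therefore violates $N\le I_\sharp$. Lemma~\ref{lemma:maxsubgpsSigmai}, applied to the classical group $H\cap I$, then only gives $\delta(C_2)\le 5$ for the image of $M\cap I$ modulo $\mathbb F_\sharp$-scalars. Adding the cyclic layer coming from $Z(\Delta_\sharp)$ and your top bound of $3$, the stated estimates no longer give $7$. Incidentally, your first sub-case of Case~2 is vacuous, since $Z(\Omega_\sharp)\le\Phi(\Omega_\sharp)\le\Phi(H)$. The cyclic normal subgroup that actually needs separate treatment is $Z:=Z(\Delta_\sharp\cap H)$.

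The bookkeeping can be repaired as follows.
\begin{itemize}
\item Take $N=H\cap I$ and split $M\cap I$ further along $M\cap I_\sharp$. Then Proposition~\ref{prop:atmost4inI} and Corollary~\ref{cor:upperbounddeltaGN} give at most $4+1$ non-Frattini factors $C_2$, the extra one occurring only if $r=2$.
\item The top is $\overline H/(\overline H\cap\overline I)\cong\overline G/(\overline G\cap\overline I)\le\overline\Sigma/\overline I$. In every case this has at most two such factors, not three.
\end{itemize}
This gives a total of $7$.

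The paper avoids this bookkeeping by observing that $H_{G\cap\Gamma}$ is itself a classical group between $\Omega_\sharp$ and $\Gamma_\sharp$.
\begin{itemize}
\item If $G\le\Gamma$, then $M$ is simply a maximal subgroup of a classical group over $\mathbb F_\sharp$. Theorem~\ref{theorem:LMTgenerators} with Lemma~\ref{lemma:non_simple_gen_bound} gives $d(M)\le 6$.
\item Otherwise $G$ is in case $\boldL$ and $H_G=H_{G\cap\Gamma}.2$. Once the cases $M\not\ge Z$ and $M\ge\Omega_\sharp$ are disposed of, Corollary~\ref{cor:maximalsubgpextension} makes $(M/Z)\cap\overline\Gamma_\sharp$ maximal or novelty maximal in an almost simple group. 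Hence $d(M)\le 1+5+1=7$.
\end{itemize}
That argument needs only the black-box bound $5$ for maximal subgroups of almost simple groups. It does not need the finer separation of isometry and semilinear layers that your crown count requires.
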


\begin{proof}
	Let $\mathbb{F}_{\sharp}$ be a finite extension of $\mathbb{F}$ of degree $r$, which is prime. Let $V_{\sharp}$ be the vector space $V \otimes_{\mathbb{F}} \mathbb{F}_{\sharp}$ over $\mathbb{F}_{\sharp}$, and $\kappa_{\sharp}$ a form on this vector space as defined in \cite[Section 4.3]{KL}. Then, by Section \ref{section:C3explanation}, if $\Omega \leq G \leq \Gamma$ we have
	\[
	\Omega_{\sharp} \leq H_G \leq \Gamma_{\sharp}.
	\]
	Thus, by Theorem \ref{theorem:LMTgenerators} and Lemma \ref{lemma:non_simple_gen_bound},
	\[
	d(M) \leq 6.
	\]
	
	So we can assume that $G$ is not contained in $\Gamma$. The following are immediate by the definition of $\mathscr C_3$: $G$ and $\Omega_\sharp$ are in case $\boldL$; $n > 2$;
	\[
	H_G = (H_{G \cap \Gamma}).C_2;
	\]
	and $\Omega_\sharp$ is quasisimple unless $n = r$. However, if $n=r$ then $\Gamma_\sharp$ is isomorphic to $C_{q-1} : C_{f}$, and so $d(\overline M) \leq 3$. Thus, we may assume that $\Omega_\sharp$ is quasisimple.
	
	Let $Z := Z(\Delta_\sharp \cap H)$. If $M$ does not contain $Z \unlhd H$, then $M$ is equal to $C . (H/Z)$, where $C$ is a subgroup of the cyclic group $Z$. Hence, by Theorem \ref{theorem:LMTgenerators},
	\[
	d(M) \leq 1 + d(H/Z) \leq 5.
	\]
	Hence, we assume that $M$ contains $Z$.
	
	If $M$ contains $\Omega_\sharp$, then $M = \Omega_\sharp . J$ where $J$ is a subgroup of $(\Sigma_\sharp/\Omega_\sharp).C_2$. Thus, by \cite[Section 2.2]{LMT} and Theorem \ref{theorem:crowns}, we have that $d(M) \leq 1 + d(M/\Omega_\sharp) \leq 5$. As such, we assume additionally that $M$ does not contain $\Omega_\sharp$. By Corollary \ref{cor:maximalsubgpextension}, this implies that $(M/Z) \cap \overline \Gamma_\sharp$ is a maximal or novelty maximal subgroup of the almost simple group $(H/Z) \cap \overline \Gamma_\sharp$. Thus, by Theorem \ref{theorem:LMTgenerators},
	\[
	d(M) \leq d(Z) + d((M/Z) \cap \overline \Gamma_\sharp) + d(C_2) \leq 7.\qedhere
	\]
\end{proof}

\subsection{Class \texorpdfstring{$\mathscr{C}_4$}{C4}}

\begin{prop}
	Suppose that $H \in \mathscr{C}_4$. Then $d(\overline M) \leq 7$.
\end{prop}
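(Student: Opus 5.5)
The plan is to follow the template of the $\mathscr{C}_1$ case (Proposition \ref{prop:C1secondmaxls}), since by Section \ref{section:C4explanation} the group $H_{\overline \Sigma} = (\overline \Delta_1 \times \overline \Delta_2) : \diag((\overline \Sigma^*_1/\overline \Delta_1) \times (\overline \Sigma^*_2/\overline \Delta_2))$ is a subgroup of the direct product $\overline \Sigma^*_1 \times \overline \Sigma^*_2$. Write $\pi_i$ for the two projections, $P_i = \pi_i(\overline H)$ and $N_i = \overline H \cap \overline \Sigma^*_i$, so that $\overline \Omega_i \leq N_i \leq \overline \Delta_i$ and $\overline \Delta_i \leq P_i \leq \overline \Sigma^*_i$. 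We then apply Proposition \ref{prop:maxsubgpdirectproduct} to $\overline M \mleq \overline H$. If needed, the small cases where some $\overline \Omega_i$ is not simple (e.g.\ $\Omega_i = \Sp_2(2),\Sp_2(3), \Omega_3(3)$) are handled first, either by computation or via the crude bounds of Lemma \ref{lemma:non_simple_gen_bound}.

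\emph{Non-subdirect case.} Here $\overline M = (P_1 \times J) \cap \overline H$ with $J \mleq P_2$ (up to swapping indices). We use
\[
\delta_{\overline M}(A) \leq \delta_{\overline M, N_1}(A) + \delta_{J}(A).
\]
Lemma \ref{lemma:maxsubgpsSigmai}, applied to $J$ regarded as a maximal subgroup of the classical group $P_2$, gives $\delta_J(A) \leq 5$, with equality only for $A \cong C_2$. For the other term, $N_1$ lies between $\overline \Omega_1$ and $\overline \Delta_1$, and $\overline \Delta_1/\overline \Omega_1$ is small. Lemma \ref{lemma:maxsubgpsSigmai} together with Corollary \ref{cor:upperbounddeltaGN} should then give $\delta_{\overline M, N_1}(A) \leq 2$. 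Combining these yields at most $7$ for $C_2$ and at most $6$ otherwise, so Theorem \ref{theorem:crowns} gives $d(\overline M)\le 7$. The danger is that the scalar quotient blurs the direct product: $\overline \Delta_1$ is a projective image, and we must check that $\overline H$ really embeds in $\overline \Sigma_1^*\times\overline \Sigma_2^*$ with $N_1 \cap N_2 = 1$. I would verify this using the tensor-product description $\overline\Delta_1\times\overline\Delta_2 = H_{\overline\Delta}$ from \cite[(4.4.8)]{KL}.

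\emph{Subdirect case.} Here $\overline M = (M_1 \times M_2).C$ with each $M_i$ a maximal $P_i$-normal subgroup of $N_i$. When $\overline \Omega_1$ is simple, $M_1$ must contain $\overline \Omega_1$, since $N_1/M_1$ is a quotient of $P_2$ and cannot be nonabelian simple unless $\overline\Omega_1\cong\overline\Omega_2$. In that situation $M_1 \geq \overline \Omega_1$, so $\delta_{\overline M, M_1}(A) \leq 2$, and $\delta_{P_2}(A)\le 3$ by Lemma \ref{lemma:maxsubgpsSigmai}. This gives $d(\overline M) \leq 5$. The awkward subcase is when $\overline \Omega_1 \cong \overline \Omega_2$, as happens for type $\Sp_{m}(q)\otimes\O_{m'}$ with coincident socles, or for the $\O^+_4$ factors. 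Here $M$ may meet $\overline\Omega_1\times\overline\Omega_2$ in a diagonal, and I would bound it directly as $d(\overline M)\le d(\overline\Omega_1\text{-part})+d(P_2)\le 1+\dots$, using that the nonabelian factor contributes no obstruction to crown counts.

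The main obstacle I expect is the non-subdirect case with $A\cong C_2$. Here the bound $\delta_{\overline M,N_1}(C_2)\le 2$ must be justified when $N_1$ has $C_2^2$ above $\overline\Omega_1$ (orthogonal factors with $q$ odd). There the central $C_2$'s of $N_1$ may or may not be Frattini in $\overline M$. I would first attempt to use commutator arguments as in Lemma \ref{lemma:prereqjumpingclassicals} (the $\mathscr C_4$ paragraph shows $R_I=I_1\otimes S_2$ in the orthogonal case, which reduces the count by one), falling back on Proposition \ref{prop:atmost4inI} to cap the total at $7$.
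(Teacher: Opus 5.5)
Your proposal is correct and is essentially the paper's proof. The paper also views $\overline H$ inside $\overline\Sigma_1^*\times\overline\Sigma_2^*$, splits into cases via Proposition~\ref{prop:maxsubgpdirectproduct}, and in the non-subdirect case bounds $\delta_{\overline M}(A)$ by $\delta_{\overline M,N_1}(A)+\delta_J(A)$, using $\delta_{\overline M,N_1}(A)\le 2$ and Lemma~\ref{lemma:maxsubgpsSigmai} for $J$, exactly as you describe.

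The paper is slightly more economical in two places. It disposes of the subdirect case uniformly by $d(\overline M)\le d(M_1)+d(\overline M/M_1)=d(M_1)+d(P_2)\le 4+3$, so your split according to whether $\overline\Omega_1\cong\overline\Omega_2$, and your separate treatment of non-simple $\overline\Omega_i$, are unnecessary. The $C_2$ count you single out as the main obstacle is also immediate, because $\delta_{N_1}(C_2)\le 2$: for simple $\overline\Omega_1$, every such factor lies in $N_1/\overline\Omega_1\le\overline\Delta_1/\overline\Omega_1$, which is cyclic or embeds in $D_8$. So no Frattini or commutator analysis, and no fallback, is needed.
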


\begin{proof}
	If $H \in \mathscr{C}_4$, then $H$ stabilises a tensor decomposition $V = V_1 \otimes V_2$. Let $\kappa_i$ be a form on $V_i$ as defined in \cite[Section 4.4]{KL}. Then, by Section \ref{section:C4explanation}, we have that
	\[
	\overline \Omega_1 \times \overline \Omega_2 < \overline H \leq \overline \Sigma_1^* \times \overline \Sigma_2^*.
	\]
	As previously, let $P_i$ be the projection of $\overline H$ to each coordinate, and $N_i$ the intersection of $\overline H$ with $P_i \times 1$. Note that $\overline \Omega_i < N_i \leq \overline \Delta_i$.
	
	Hence, by Proposition \ref{prop:maxsubgpdirectproduct}, for any $\overline M \mleq H_{\overline G}$, either $\overline M$ contains $N_i$ for some $i$, or $\overline M$ is subdirect. In the first case, if we assume that $\overline M$ contains $N_1$ without loss of generality, then we have
	\[
	\overline M = N_1 . L_2
	\]
	where $L_2 \mleq P_2$. Thus, by Lemma \ref{lemma:maxsubgpsSigmai}, Theorem \ref{theorem:LMTgenerators} and Corollary \ref{cor:upperbounddeltaGN},
	\begin{align*}
		\delta_{\overline M}(A) \leq \delta_{\overline M, N_1}(A) + \delta_{L_2}(A) \leq \begin{cases*}
			2 + 5 \qquad & if $A \cong C_2$,\\
			2 + 3 & otherwise.
		\end{cases*}
	\end{align*}
	If instead $\overline M$ is subdirect, then it is equal to $(K_1 \times K_2).C$ for some maximal $P_i$-normal subgroups $K_i$ of $N_i$ with $K_i.C \cong P_i$. As in the proof of Proposition \ref{prop:C1secondmaxls}, we have that $d(K_i) \leq 4$ for each $i$. Additionally, $d(P_i) \leq 3$ by Lemma \ref{lemma:maxsubgpsSigmai}. Hence,
	\[
	d(\overline M) \leq d(K_1) + d(P_2) \leq  7. \qedhere
	\]
\end{proof}

\subsection{Class \texorpdfstring{$\mathscr{C}_5$}{C5}}

\begin{prop}
	Suppose that $\overline H \in \mathscr{C}_5$. Then $d(\overline M) \leq 7$.
\end{prop}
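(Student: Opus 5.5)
The plan is to use the structure from Section~\ref{section:C5explanation}: $\overline \Omega_\sharp \leq H_{\overline \Omega}$ and $H_{\overline \Sigma} = \overline \Delta_\sharp . (\overline \Sigma/\overline \Delta)$. First I dispose of the case where $\overline \Omega_\sharp$ is not simple. Then $(n,q_\sharp)$ is one of the small pairs in Lemma~\ref{lemma:non_simple_gen_bound}, where $\overline\Omega_\sharp$ is soluble or a product of two copies of $\PSL_2$. For these pairs I would bound $d(\overline M)$ directly, either by reading off Tables~\ref{table:chieffactorsnonqs1} and~\ref{table:chieffactorsnonqs2} or by a Magma check for $q_\sharp \le 3$, together with the crude estimate $d(\overline M) \le d(\overline M\cap\overline\Delta_\sharp)+d(\overline H/(\overline H\cap \overline\Delta_\sharp))$.

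Now assume $\overline\Omega_\sharp$ is simple, and write $X := \overline H \cap \overline\Delta_\sharp$, so that $\overline\Omega_\sharp \le X \le \overline \Delta_\sharp$.

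\textbf{Case 1: $\overline M \ge \overline\Omega_\sharp$.} Then $\overline M/\overline\Omega_\sharp$ is a subgroup of $\overline H/\overline\Omega_\sharp$, which is an extension of $\overline\Delta_\sharp/\overline\Omega_\sharp$ by a subgroup of $\overline\Sigma/\overline\Delta$. Both pieces are small abelian or metabelian groups described in \cite[Chapter 2]{KL}. Since $\overline\Omega_\sharp$ contributes only one non-abelian chief factor, Theorem~\ref{theorem:crowns} gives $d(\overline M)=\max(2,d(\overline M/\overline\Omega_\sharp))$. I expect the bound on $d(\overline M/\overline\Omega_\sharp)$ to come out at most $5$ using \cite[Proposition 3.12]{LMT}.

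\textbf{Case 2: $\overline M \not\ge \overline\Omega_\sharp$.} By Corollary~\ref{cor:maximalsubgpextension} applied to $\overline H = \overline\Omega_\sharp.(\overline H/\overline\Omega_\sharp)$, the intersection $\overline M\cap\overline\Omega_\sharp$ is trivial or is a maximal or novelty maximal subgroup of $\overline\Omega_\sharp$.
\begin{itemize}
\item If the intersection is trivial, then $\overline M$ embeds in $\overline H/\overline \Omega_\sharp$, and Case~1 gives $d(\overline M)\le 5$.
\item Otherwise, $\overline M\cap X$ is a maximal or novelty maximal subgroup $R$ of the classical group $X$. I bound the chief factors using the series $R \le \overline M$ and the quotient $\overline M/R\cong \overline H/X$, which embeds in $\overline\Sigma/\overline\Delta$:
\[
\delta_{\overline M}(A)\le \delta_{\overline M, R}(A)+\delta_{\overline H/X}(A).
\]
Lemma~\ref{lemma:maxsubgpsSigmai} and Theorem~\ref{theorem:LMTcrowns} bound $\delta_{\overline M,R}(A)\le \delta_{R}(A)$ by $5$ for $A\cong C_2$ and by $3$ otherwise.
\end{itemize}

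The main obstacle is the $C_2$ count in the second subcase of Case~2, and more precisely the size of $\delta_{\overline H/X}(C_2)$. The group $\overline\Sigma/\overline\Delta$ is $C_f$, or $C_f\times C_2$ in case $\boldL$. The contribution of its field-automorphism part acting on $X$ overlaps with the part already counted inside $R$, so the crude sum could exceed $7$.

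To handle this, I would first argue that $\delta_{\overline H/X}(C_2)\le 2$, with equality only in case $\boldL$. I would then show that whenever $\delta_R(C_2)=5$, the field automorphisms of $\overline\Sigma_\sharp$ lying in $R$ already account for a $C_2$ chief factor that becomes Frattini or equivalent in $\overline M$. This would force $\delta_{\overline M}(C_2)\le 7$. If that argument fails, the fallback is a direct Table~\ref{table:LMTtable} check: the $\mathscr{C}_5$ subgroup $X$ essentially never satisfies the conditions in that table, so $\delta_R(C_2)\le 4$ and the sum is at most $6$ or $7$. Theorem~\ref{theorem:crowns} then yields $d(\overline M)\le 7$.
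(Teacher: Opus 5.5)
Your plan is essentially the paper's proof. You split according to whether $\overline M\ge\overline\Omega_\sharp$; otherwise you use Corollary~\ref{cor:maximalsubgpextension} to see that $R=\overline M\cap X$ is trivial or a maximal or novelty maximal subgroup of $X$, with $\overline M/R\cong\overline H/X\le\overline\Sigma/\overline\Delta$.

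In the simple case your argument already closes as written, so the ``main obstacle'' is illusory. Your own bounds $\delta_{\overline M,R}(C_2)\le\delta_R(C_2)\le 5$ and $\delta_{\overline H/X}(C_2)\le 2$ give $\delta_{\overline M}(C_2)\le 7$. The other chief factors contribute at most $6$ in Theorem~\ref{theorem:crowns}.

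The field-automorphism repair you sketch is misplaced, because $R\le X\le\overline\Delta_\sharp$ contains no field automorphisms at all. That observation is exactly what the paper exploits: $(F)_0$ and $(\gamma)_0$ hold for the pair $(X,R)$. So Theorem~\ref{theorem:LMTgenerators} and Table~\ref{table:LMTtable} give $d(R)\le 4$, and hence $d(\overline M)\le 4+d(\overline H/X)\le 6$. This, rather than $X$ ``never satisfying the conditions'' of the table, is the correct justification for your fallback.

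In the non-simple case your sketch is imprecise. Tables~\ref{table:chieffactorsnonqs1} and~\ref{table:chieffactorsnonqs2} bound the chief factors of \emph{maximal} subgroups of a classical group, but $\overline M\cap X$ need not be maximal in $X$ there.

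The paper instead uses $d(\overline M)\le d(\overline M\cap\overline\Omega_\sharp)+2+d(\overline H/X)$. The middle term holds because $(\overline M\cap X)/(\overline M\cap\overline\Omega_\sharp)$ embeds in $\overline\Delta_\sharp/\overline\Omega_\sharp$, all of whose subgroups are $2$-generated. It combines this with the fact that every subgroup $K\le\overline\Omega_\sharp$ satisfies $d(K)+d(\overline H/X)\le 5$. This holds by computation when $q^{1/r}\le 3$, and because $\overline\Omega_\sharp$ is cyclic otherwise. When $\overline\Omega_\sharp\cong\PSL_2(q^{1/r})^2$, where $\overline H/X$ is cyclic, the paper gives a separate argument along the lines of Lemma~\ref{lemma:non_simple_gen_bound}. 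You need a bound of this kind on arbitrary subgroups to make that step rigorous.
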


\begin{proof}
	If $\overline M$ contains $H_{\overline \Omega}$, then $d(\overline M) \leq 5$ by Theorem \ref{theorem:LMTgenerators}, so we assume that $\overline M$ does not contain $H_{\overline \Omega}$. Let $\mathbb{F}_\sharp$ be a subfield of $\mathbb{F}$ of index $r$ prime, and $V_\sharp$ an $n$-dimensional vector space over $\mathbb{F}_\sharp$, so that $V = V_\sharp \otimes \mathbb{F}$. Additionally, let $X_\sharp := X(V_\sharp)$ for $X \in \{\Omega, \Delta\}$. By \cite[(4.5.5)]{KL} we have that
	\[
	\overline \Omega_\sharp \unlhd H_{\overline \Omega} \unlhd H_{\overline \Delta} = \overline \Delta_\sharp.
	\]
	Thus, $H_{\overline G}$ is equal to $N.J$ for some groups $N, J$ with $\overline \Omega_\sharp \leq N \leq \overline \Delta_\sharp$ and $J \leq \overline \Sigma/\overline \Delta$.
	
	If $\overline M$ contains $\overline \Omega_\sharp$, then 
	\[
	d(\overline M) \leq d(\overline M \cap N) + d(J) \leq 2 + 2 = 4,
	\]
	by Lemma \ref{lemma:maxsubgpsSigmai} and \cite[Proposition 3.12]{LMT}. So we assume that $\overline M$ does not contain $\overline \Omega_\sharp$. If $\overline \Omega_\sharp$ is simple, $\overline M \cap N$ is a maximal or novelty maximal subgroup of $N$ by Corollary \ref{cor:maximalsubgpextension}, so, by Theorem \ref{theorem:LMTgenerators}, $d(\overline M) \leq 4 + d(J) \leq 6$.
	
	Hence, we assume that $\overline \Omega_\sharp$ is not simple. If $\overline \Omega_\sharp = \overline \Omega_4^+(q^{1/r})$ with $q^{1/r} \geq 4$, then $\overline \Omega_\sharp = \PSL_2(q^{1/r})^2$, where $\PSL_2(q^{1/r})$ is simple. In this case $G$ is not of type $\boldL$, so $d(J) \leq 1$ by \cite[Proposition 3.12]{LMT}. Applying a similar argument to that used in the proof of  Lemma \ref{lemma:non_simple_gen_bound}, we have $d(\overline M \cap N) \leq 5$ and so $d(\overline M) \leq 6$.
	
	Otherwise, any subgroup $K$ of $\overline \Omega_\sharp$ has $d(K) + d(J) \leq 5$. Indeed, if $q^{1/r} \leq 3$ then we can check computationally that this holds. Otherwise, $\overline \Omega_\sharp$ is a cyclic group, so the result follows. Hence,
	\[
	d(\overline M) \leq d(M \cap \overline \Omega_\sharp) + d(N/\overline \Omega_\sharp) + d(J) \leq 7,
	\]
	by Lemma \ref{lemma:maxsubgpsSigmai}.
\end{proof}

\subsection{Class \texorpdfstring{$\mathscr{C}_6$}{C6}}

\begin{prop}
	Suppose that $H \in \mathscr{C}_6$. Then $d(\overline M) \leq 7$.
\end{prop}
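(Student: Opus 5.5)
We will use the explicit structure of $\mathscr{C}_6$ subgroups recorded in Propositions \ref{prop:C6prelims1}--\ref{prop:C6prelims4}. In every case $\overline H$ has a normal subgroup $R \cong r^{2m}$ (elementary abelian, with $r$ prime), and $\overline H/R$ is a subgroup of $\GSp_{2m}(r)$, of $\Sp_{2m}(2)\times 2$, of $\O^+_{2m}(2)$, or of $\Sym(4)$, containing the corresponding quasisimple or almost simple group $\Sp_{2m}(r)$, $\Omega^+_{2m}(2)$, and so on. The small exceptions are $3^2.Q_8$, $2^4.\Alt(6)$, and $\Alt(4)\le \overline H\le \Sym(4)$. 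Note that $R$ is the unique minimal normal subgroup of $\overline H$ in the generic case, since $\overline H/R$ acts irreducibly on $R$ as the natural module. Also, $q=p$ or $q=p^2$ in these cases, so $\overline G/\overline\Omega$ is small: by \cite[Proposition 3.12]{LMT} we get $\delta_{\overline G/\overline \Omega}(A)\le 3$.

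The plan is to follow Proposition \ref{prop:maximalsubgroupgeneralmethod}, applied to the series $1<R<\overline H$, together with a possible further series in $\overline H/R$. First, suppose $\overline M$ does not contain $R$. Since $R$ is abelian and irreducible, $\overline M\cap R=1$, so $\overline M\cong \overline H/R$. This group is a subgroup of $\GSp_{2m}(r).(\text{small})$ containing the symplectic group, so it has bounded generation. Concretely, $\overline H/R$ is, modulo a central cyclic group, an almost simple classical group (or a small group). Its chief factors have $\delta\le 3$ or so, and Theorem \ref{theorem:crowns}, together with \cite[Proposition 3.12]{LMT}, gives $d(\overline M)\le 4$.

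Second, suppose $\overline M\ge R$. Then $\overline M/R$ is maximal in $\overline H/R$. Write $\overline H/R = X.C$, with $X$ equal to $\Sp_{2m}(r)$ (respectively $\Omega^\pm_{2m}(2)$) and $C$ cyclic or $C_2$-ish. By Corollary \ref{cor:maximalsubgpextension}, either $\overline M/R\ge X$, or $\overline M/R\cap X$ is central in $X$, or it is a maximal or novelty maximal subgroup $Y$ of the (quasi)simple group $X$. In the first two cases $d(\overline M)\le d_{\overline M}(R)+d(\overline M/R)$, which is small. In the main case we bound
\[
d(\overline M)\le d_{\overline M}(R)+d(\overline M/R),
\]
and then use the crown count on the abelian chief factors of $\overline M$ inside $R$. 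Here $d(\overline M/R)\le 5$ by Theorem \ref{theorem:LMTgenerators}, applied to $Y$ as a (novelty) maximal subgroup of an almost simple classical group; we include the central cyclic extension, which costs at most one more generator via Theorem \ref{theorem:crowns}(3). For the contribution of $R$ we observe the following. The chief factors of $\overline M$ in $R$ are $Y$-composition factors of the natural module $\mathbb F_r^{2m}$, and each isomorphism type contributes at most $\dim_E$ of itself by Lemma \ref{lemma:permmodulemults}-style multiplicity bounds, since $R$ restricted to $Y$ is a module of dimension $2m$. Hence $\lceil \delta/r(A)\rceil$ increases by at most $2$ in total: one from a trivial or central constituent, and one from a non-central constituent. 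If $Y$ acts irreducibly on $R$, the contribution is at most $1$, and $d(\overline M)\le 6$. The worst case is when $Y$ is reducible, i.e.\ a parabolic or $\mathscr C_1$-type subgroup of $\Sp_{2m}(r)$ stabilising a subspace. Then $R$ has at most three $Y$-chief factors: $U$, $U^\perp/U$, and $R/U^\perp$. Using the $\theta(A)+\lceil(\delta+s)/r\rceil$ formula of Theorem \ref{theorem:crowns}(3), we would check that $d(\overline M)\le 7$. If the constituents are $C_2$, they can add to $\delta_{\overline M}(C_2)$. A trivial $1$-dimensional constituent occurs only when $U$ is a line on which $Y$ acts by scalars, so $\delta(C_2)$ gains at most $2$, and $\delta_{\overline M/R}(C_2)\le 5$ must be rechecked against \cite[Table 1]{LMT}: the 4- and 5-generated cases there arise only for $\L_n$ and $\POmega^\pm_n$ with large outer parts, which do not occur in our small $\ddot G$.

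The small exceptional groups ($3^2.Q_8$, $2^4.\Alt(6)$, $\Sym(4)$ and their overgroups) will be handled by a direct Magma check of all maximal subgroups of $\overline H$. The main obstacle we expect is the reducible case for $r=2$: bounding $\delta_{\overline M}(C_2)$ when both $R$ and $\overline M/R$ contribute central $C_2$ factors. We would first try to show that trivial $Y$-constituents of $R$ are Frattini, or are few, using $s(A)$ and the fact that $[Y,R]$ is large, in the spirit of Lemma \ref{lemma:jumpingbabyresultfull}.
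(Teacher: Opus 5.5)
You follow the paper's decomposition. Split on whether $\overline M$ contains the normal elementary abelian subgroup $R$. If it does not, then $\overline M\cap R=1$ and $\overline M\cong\overline H/R$ is a quotient of $\overline H$, so Theorem~\ref{theorem:LMTgenerators} bounds it. If it does, then $\overline M=R.B$ with $B$ maximal in the classical quotient, and $d(\overline M)\le d_{\overline M}(R)+d(B)$. The small cases ($n=3,4$ and type $2^{1+2}_-.\O_2^-(2)$) are finite checks.

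The gap is in the one step that carries the content: bounding the contribution of $R$ when $B$ acts reducibly on it. You invoke multiplicity bounds ``in the style of'' Lemma~\ref{lemma:permmodulemults} ``since $R$ restricted to $Y$ is a module of dimension $2m$''. That lemma requires $R$ to be a \emph{cyclic} $\mathbb F_r[\overline M/R]$-module, and the dimension of $R$ says nothing about this. You then leave the reducible case as something you ``would check'', and flag $r=2$ with trivial constituents as unresolved. Even granting the multiplicity bound, ``increases by at most $2$ in total'' misreads Theorem~\ref{theorem:crowns}: $d$ is a maximum over $A$, so per-type increments do not add. As written, the proposition is not proved in exactly the case you identify as the hard one.

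The missing ingredient is $d_{\overline M}(R)=1$, that is, $R$ is generated by one element under the action of $\overline M$. With this, no crown bookkeeping is needed at all, since $d(\overline M)\le 1+d(\overline M/R)$. This is the inequality the paper takes from \cite[Lemma 2.5]{BLS}. It concludes $d(\overline M)\le 1+4$ for type $r^{2m}.\Sp_{2m}(r)$, applying Theorem~\ref{theorem:LMTgenerators} modulo the cyclic centre of $\GSp_{2m}(r)$. It concludes $d(\overline M)\le 1+5$ for type $(4\circ 2^{1+2m}).\Sp_{2m}(2)$ with $n\ge 8$.

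If you want cyclicity directly, it is automatic when $B$ is irreducible on $R$, and the reducible maximal subgroups lie in $\mathscr C_1$. For example, if $B$ stabilises a totally singular $U$, then $O_r(B)$ acts trivially on every irreducible $B$-module and $[R,O_r(B)]=U^\perp$. Hence $R/U^\perp$ is the whole head of $R$, and $R$ is cyclic. The remaining $\mathscr C_1$ types are checked similarly. Once this is in place, your worry about central $C_2$ factors for $r=2$ disappears.

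Finally, the ``central'' alternative from Corollary~\ref{cor:maximalsubgpextension}, which you call ``small'', cannot occur here: the top group $(\overline H/R)/X$ has no quotient isomorphic to $X/Z(X)$. This matters, because if it did occur $\overline M/R$ would be tiny and $d_{\overline M}(R)$ could be as large as $2m$.
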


\begin{proof}
	Suppose that $G$ is in case $\boldL$ or $\boldU$, and that $H$ is of type $r^{2m}.\Sp_{2m}(r)$, with $n = r^m$ and $r$ an odd prime. By \cite[Lemma 5.8]{BLS}, when $n=3$ we have $d(\overline{M}) \leq 3$. So we assume that $n \geq 5$, and hence, by Proposition \ref{prop:C6prelims1}, 
	\[
	r^{2m} : \Sp_{2m}(r) \leq H_{\overline{G}} \leq r^{2m} : \GSp_{2m}(r).
	\]
	Let $N := r^{2m}$ be the unique minimal normal subgroup of $H_{\overline{G}}$. If $\overline{M}$ contains $N$, then $\overline{M} = N . B$ where $B$ is maximal in a classical group. Hence, by \cite[Lemma 2.5]{BLS} and Theorem \ref{theorem:LMTgenerators}, we have
	\[
	d(\overline{M}) \leq 1 + 4 = 5.
	\]
	So next we assume that $\overline{M}$ does not contain $N$, and so, since $N$ is an abelian minimal normal subgroup, we have
	\[
	\overline{M} \cong H_{\overline{G}} / N.
	\]
	Thus, by Theorem \ref{theorem:LMTgenerators},
	\[
	d(\overline{M}) \leq d(H_{\overline G}) \leq 3.
	\]
	
	Now assume that $H$ is of type $(4 \circ 2 ^{1+2m}). \Sp_{2m}(2)$, so $G$ is in case $\boldL^\epsilon$. We begin by assuming that $n \geq 8$. By Proposition \ref{prop:C6prelims2},
	\[
	H_{\overline{G}} = (2^{2m} . \Sp_{2m}(r)) \times C,
	\]
	where $C \leq C_2$. Additionally, $n = 2m \geq 8$, so $\Sp_{2m}(r)$ is a quasisimple group. If $\overline M$ does not contain $2^{2m}$, then $d(\overline M) \leq 2$ by Theorem \ref{theorem:crowns}. Otherwise, $\overline M = 2^{2m} . J$, where $J \mleq \Sp_{2m}(r) \times C$. As such, $J$ is equal to $J_1 \times C$ or $\Sp_{2m}(r) \times 1$, for some maximal subgroup $J_1$ of $\Sp_{2m}(r)$. Thus, $d(J) \leq 5$ by Theorem \ref{theorem:LMTgenerators}, and so $d(\overline M) \leq 6$ by \cite[Lemma 2.5]{BLS}.
	
	Assume next that $H$ is of type $(4 \circ 2 ^{1+2m}). \Sp_{2m}(2)$ with $n = 4$. By Proposition \ref{prop:C6prelims2},
	\[
	2^4 . \Alt(6) \leq H_{\overline{\Omega}} \leq  H_{\overline G} \leq (2^4 . \Sym(6)) \times C_2.
	\]
	Write $H_{\overline G} = 2^4.K$ where $K \leq \Sym(6) \times C_2$. If $\overline M$ does not contain $2^4$, then $d(\overline M) = d(K) \leq 2$ by Theorem \ref{theorem:crowns}. Hence, we assume that $\overline M = 2^4 . J$, where $J \mleq K$. If $J$ contains $\Alt(6)$ then $\delta_{M, 2^4}(A) \leq 1$ for any $A$, so $d(\overline M) \leq 1 + d(J) \leq 3$. Otherwise, $J \cap \Alt(6)$ is a maximal or novelty maximal subgroup of $\Alt(6)$. However, we can use Magma to show $\delta_J(C_2) \leq 4$ and $\delta_J(A) \leq 1$ for any other chief factor $A$. Additionally, for any such $J$, we have $\delta_{M, 2^4}(A) \leq 1$. Hence,
	\begin{align*}
		\delta_M(A) & = \delta_{\overline M, 2^4}(A) + \delta_J(A) \\
		& \leq  \begin{cases*}
			1 + 4 \qquad & if $A \cong C_2$\\
			1 + 1 \qquad & otherwise.
		\end{cases*}
	\end{align*}
	and so $d(\overline M) \leq 5$.
	
	Now assume that $G$ is in case $\boldL$, and $H$ is of type $2_-^{1+2}.\O_2^-(2)$ with $q=p \geq 3$. Then $H_{\overline{G}}/2_-^{1+2} = \Alt(4)$ or $\Sym(4)$ by \cite[Proposition 4.6.7]{KL}, and hence
	\[
	d(\overline M) \leq d(\overline M \cap 2_-^{1+2}) + d(\overline M/(\overline M \cap 2_-^{1+2})) \leq 3 + 2 =5.
	\]
	Assume next that $H$ is of type $2_+^{1+2m}.\O_{2m}^+(2)$. Then, by Proposition \ref{prop:C6prelims4},
	\[
	2^{2m} . \Omega^+_{2m}(2) \leq H_{\overline{G}} \leq 2^{2m} . \O^+_{2m}(2) .
	\]
	As such, using the same argument as above and Proposition \ref{prop:atmost4inI}, we obtain that $d(\overline M) \leq 5$.
\end{proof}

\subsection{Class \texorpdfstring{$\mathscr{C}_7$}{C7}}

The proof in this section closely resembles the proof for $H \in \mathscr{C}_2$. In this case, $H$ stabilises a tensor decomposition $V_1 \otimes \cdots \otimes V_t$ of $V$, where the vector spaces $V_i$ are pairwise similar. Let $\kappa_i$ be forms on each $V_i$, as defined in \cite[Section 4.7]{KL}. By Section \ref{section:C7explanation},
\[
H_{\overline \Sigma} = \overline \Delta_1^t . \diag((\overline \Sigma_1^*/\overline \Delta_1)^t).\Sym(t) \leq \overline \Sigma_1^* \wr \Sym(t),
\]
and
\[
H_{\overline \Omega} \geq \overline \Omega_1^t . \del((\overline I_1/\overline \Omega_1)^t) . \Alt(t).
\]
Additionally, if $H_{\overline \Omega}$ projects onto $\Sym(t)$, then
\[
\overline \Omega_1^t . \del((\overline \Delta_1/\overline \Omega_1)^t).\Sym(t) \leq H_{\overline \Omega} \leq \overline \Delta_1 \wr \Sym(t),
\]
and so $H_{\overline \Omega}$ satisfies the conditions of Corollary \ref{cor:CFsinwreathproduct}. We note that this is always the case if $t \geq 4$. If $t = 3$ then $H_{\overline \Omega}$ contains $\Alt(3)$ and $\overline \Omega_1^t.\del((\overline \Delta_1/\overline \Omega_1)^t)$. Hence, when $t \geq 3$, we have
\[
H_{\overline G} = N_1^t . \del((\overline \Delta_1/ N_1)^t) . \diag(( P_1/\overline  \Delta_1)^t) . J,
\]
for some groups $N_1, P_1, J$ with 
\begin{align*}
	\overline{\Omega}_1 \leq N_1 & \leq \overline{\Delta}_1,\\
	\overline \Delta_1 \leq P_1 & \leq \overline \Sigma_1^*,\\
	\Alt(t) \leq J \, \,\,& \leq \Sym(t),
\end{align*}
and $J = \Sym(t)$ if $t \geq 4$. If instead $t=2$, then, by Goursat's Lemma,
\[
H_{\overline G} = N_1^2 . \diag((P_1/N_1)^2).J,
\]
for some groups $N_1, P_1, J$ with
\begin{align*}
	\overline{\Omega}_1 \leq N_1 & \leq \overline{\Delta}_1,\\
	N_1 \leq P_1 & \leq \overline \Sigma_1^*,\\
	J & \leq \Sym(2).
\end{align*}

We begin by assuming that $ H_{\overline G}$ projects onto $\Sym(2)$ when $t = 2$. In the following, we divide the maximal subgroups of $H_{\overline{G}}$ into cases, depending on which section of the following normal series they do not contain:
\[
1 < (\overline \Omega_1')^t < N_1^t < H_{\overline{G}} \cap \overline \Delta_1^t < H_{\overline{G}} \cap P_1^t < H_{\overline{G}}.
\]
For more details regarding this method, see Sections \ref{section:prelimsmethod} and \ref{section:classicalsC2}.

We first prove the following preliminary result regarding the chief factors of $\overline H / N_1^t$.

\begin{prop}\label{prop:boundC7top}
	Suppose that $\overline H$ projects onto $\Sym(2)$ when $t=2$. Then $d(\overline H/N_1^t) \leq 4$, with equality only if $t = 2$. Additionally, if $t > 2$, then
	\[
	\delta_{\overline H/N_1^t}(A) \leq \begin{cases*}
		3 \qquad & if $A \cong C_2^2$ and $t=4$, or $A \cong C_2$,\\
		2 & otherwise.
	\end{cases*}
	\]
	If instead $t=2$, then
	\[
	\delta_{\overline H/N_1^t}(A) \leq \begin{cases*}
		4 \qquad & if $A \cong C_2$,\\
		2 & otherwise.
	\end{cases*}
	\]
\end{prop}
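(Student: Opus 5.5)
This is the $\mathscr{C}_7$ analogue of Proposition \ref{prop:boundtopC2}, and I will follow the same route. I use the normal series
\[
N_1^t \leq \overline H \cap \overline \Delta_1^t \leq \overline H \cap P_1^t \leq \overline H,
\]
whose successive quotients are $\del((\overline \Delta_1/N_1)^t)$ (respectively $\diag((P_1/N_1)^2)$ when $t=2$), then $\diag((P_1/\overline\Delta_1)^t) \cong P_1/\overline\Delta_1$, then $J$. Here $J = \Sym(t)$ for $t \geq 4$, $J \in \{\Alt(3), \Sym(3)\}$ for $t=3$, and $J = \Sym(2)$ when $t=2$ by assumption. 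The top part $\overline H/(\overline H \cap \overline\Delta_1^t)$ is a quotient of $(P_1/\overline\Delta_1) \times J$, and $P_1/\overline\Delta_1 \leq \overline\Sigma_1^*/\overline\Delta_1$. The latter is a quotient of $\overline\Sigma_1^*/\overline I_1$, which is recorded in Table \ref{table:C2basicstructure}: it is cyclic or $C_f \times C_2$. Hence $\delta_{P_1/\overline\Delta_1}(A) \leq 2$ for $A \cong C_2$, at most $1$ for other central $A$, and $0$ otherwise. Adding the contribution of $J$ gives the same table of bounds as in the first paragraph of the proof of Proposition \ref{prop:boundtopC2}. In particular, at most $3$ copies of $C_2$ occur, and $A_3 \cong C_3$ can appear when $t=3$.

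Next I bound the bottom layer $\del((\overline\Delta_1/N_1)^t)$. For $t \geq 3$, the group $\overline H$ satisfies the hypotheses of Corollary \ref{cor:CFsinwreathproduct}: for $t \geq 4$ this holds because $J = \Sym(t)$, and for $t=3$ I must check that some element $(1,1,\alpha)(1,2)$ lies in $\overline H$, or else argue with $\Alt(3)$ directly as in the remark following Proposition \ref{prop:t5CFsinwreathproduct}. I then split on the structure of $\overline\Delta_1/N_1$. If it is cyclic, each prime contributes one chief factor of $\overline H$ of the form $C_r^{t-1}$ or $C_r^{t-2}$, so this layer contributes at most $1$ to each $\delta$. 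Copies of $C_2$ arise only when $t=2$, where the layer is $\diag$ rather than $\del$, or when $t=3$ with $r=2$, giving $C_2^2$. If it is not cyclic, then it is $C_2^2$ (type $\boldO$), and the layer gives at most two factors $C_2^{t-1}$ or $C_2^{t-2}$. By Lemma \ref{lemma:jumpingbabyresultdel}, only one of these is non-Frattini when $\delta_{P_1/\overline\Delta_1}(C_2)=2$. Combining the two layers yields $\delta \leq 3$ for $C_2$, and also for $C_2^2$ when $t=4$ (since $C_2^{t-2} = C_2^2$ matches the $\Sym(4)$ factor). All other $\delta$ values are at most $2$.

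The case $t=2$ is handled by Goursat: $\overline H = N_1^2.\diag((P_1/N_1)^2).\Sym(2)$. Here $\overline H/N_1^2$ is $(P_1/N_1) \times C_2$ up to an extension by the swap, and $P_1/N_1$ has the $C_2$-count of $\overline\Sigma_1^*/N_1$, which is at most $3$ by Lemma \ref{lemma:maxsubgpsSigmai}(4) together with the bounds on $\overline\Sigma_1^*/\overline\Delta_1$. Adding the swap gives at most $4$ copies of $C_2$, and at most $2$ of anything else.

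Finally, Theorem \ref{theorem:crowns} converts these counts into generation bounds. Central $C_2$ factors give $d$ equal to $\delta$, hence at most $4$, and at most $3$ when $t>2$. For the non-central $C_2^2$ with $\delta = 3$ and $t=4$, the action is that of $\Sym(4)$ on $V_4$, so $h(A)=3$ as in the proof of Proposition \ref{prop:LMTcorrectionOC2}. All other cases are bounded by $\theta(A)+\delta_{\overline H/N_1^t}(A) \leq 3$.

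I expect the main obstacle to be $t=3$ in the $\boldO$ case with $\overline\Delta_1/N_1 \cong C_2^2$, together with verifying the hypothesis of Corollary \ref{cor:CFsinwreathproduct} there. I would settle $t=3$ by direct inspection, using the fact that non-Frattini factors in $\del(C_2^3)$ are isomorphic to $C_2^2$ and so do not add to the $C_2$ count.
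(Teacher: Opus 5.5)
Your proposal follows the paper's proof: for $t\ge 4$, and for $t=3$ when the hypotheses of Corollary~\ref{cor:CFsinwreathproduct} hold, you reduce to the argument of Proposition~\ref{prop:boundtopC2}; for $t=2$ you use $\overline H/N_1^2=(P_1/N_1).C_2$ together with Lemma~\ref{lemma:maxsubgpsSigmai}; and in the exceptional orthogonal $t=3$ case you observe, as the paper does, that the non-Frattini factors in the $\del$ layer are copies of $C_2^2$, at most two of them. Two points need correcting: in $\mathscr{C}_7$ the bottom layer is $\overline\Delta_1/N_1$ and $\overline\Delta_1/\overline\Omega_1$ can be $D_8$, so your claim that a non-cyclic $\overline\Delta_1/N_1$ is $C_2^2$ needs $N_1\ge\overline S_1$ (from \cite[Propositions 4.7.6, 4.7.7]{KL}); and the bound $\delta_{P_1/N_1}(C_2)\le 3$ comes from part (2) of Lemma~\ref{lemma:maxsubgpsSigmai} applied to $P_1$, not from part (4) combined with a subgroup-to-overgroup comparison.
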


\begin{proof}
	If $t > 3$ then $H_{\overline \Omega}$ satisfies the conditions of Corollary \ref{cor:CFsinwreathproduct}, and hence so does $H_{\overline G}$. Thus the result follows as in Proposition \ref{prop:boundtopC2}. If $t=2$ then $\overline H/N_1^t$ has shape $(P_1/N_1).C_2$. Hence, the result follows, since $\delta_{P_1/N_1}(A) \leq 3$ for all $A$, with equality only if $A \cong C_2$, by Lemma \ref{lemma:maxsubgpsSigmai}.
	
	Hence, we assume that $t=3$ and that $H_{\overline \Omega}$ does not satisfy the conditions of Corollary \ref{cor:CFsinwreathproduct}. Thus $\Omega_1$ is of type $\boldO^\pm$, and $m \equiv 2 \pmod 4$. Additionally, $H_{\overline G}$ contains $\Alt(3)$. Note that $\overline \Sigma_1^*/\overline \Delta_1$ is isomorphic to $C_f$, and that $\overline \Delta_1 / \overline \Omega_1$ is isomorphic to $D_8$ or $C_2^2$. The non-Frattini chief factors of $H_{\overline G}$ in $\del((\overline \Delta_1/\overline \Omega_1)^t)$ are isomorphic to $C_2^2$, and there are 2 such chief factors. Hence, 
	\[
	\delta_{\overline H/N_1^t}(A) \leq \delta_{\overline H, \del((\overline \Delta_1/N_1)^2)}(A) + \delta_{P_1/\overline \Delta_1}(A) + \delta_J(A) \leq \begin{cases*}
		2 + 0 + 0 \qquad & if $A \cong C_2^2$,\\
		0 + 1 + 1 & otherwise.\\
	\end{cases*}
	\]
\end{proof}

\begin{prop}\label{prop:C7Omegatcase}
	Suppose that $\overline H$ projects onto $\Sym(2)$ when $t=2$. Let $\overline M$ be a maximal subgroup of $\overline H$ which does not contain $(\overline \Omega_1')^t$. Then $d(\overline M) \leq 7$.
\end{prop}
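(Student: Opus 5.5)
We will follow the proof of Proposition \ref{prop:C2McontainingOmega1} closely, with Proposition \ref{prop:boundC7top} taking the place of Proposition \ref{prop:boundtopC2}. Since $\overline M$ does not contain $(\overline \Omega_1')^t$, we first classify $\overline M \cap (\overline\Omega_1')^t$. There are three possibilities, coming from Propositions \ref{prop:XNonsimpleMaxSubgpXkC}, \ref{prop:nonsubdirectwreathcase} and \ref{prop:XSimpleMaxSubgrpXkC} together with the argument of Proposition \ref{prop:C2subdirectcase}:
\begin{enumerate}[(a)]
	\item $\overline M$ is conjugate to $(R\wr\Sym(t))\cap\overline H$ for some non-normal maximal subgroup $R$ of $P_1$;
	\item $\overline M$ contains $K^t$ for a maximal $P_1$-normal subgroup $K$ of $\overline\Omega_1'$, which is trivial when $\overline\Omega_1$ is simple;
	\item $t=2$ and $\overline M\cap(\overline\Omega_1')^2$ is a diagonal copy of $\overline\Omega_1'$.
\end{enumerate}
In case (c) the section $(\overline\Omega_1')^2/\diag$ contributes only a single non-abelian factor. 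Hence $d(\overline M)\le \max(2,d(\overline M/\text{that section}))$, and this quotient is controlled by Theorem \ref{theorem:LMTgenerators}.

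In case (a) we split on whether $\Sigma_1^*=\Sigma_1:C_2$. If it is, then $\dim V_1\le 2$, so $d(R\cap N_1)\le 3$, and combining this with Proposition \ref{prop:boundC7top} gives $d(\overline M)\le 3+4=7$. Otherwise $P_1$ is classical, and we distinguish two subcases.
\begin{itemize}
	\item If the top section $\pi(P_1^t\cap H)/N_1^t$ carries three $C_2$ factors, then Proposition \ref{prop:jumpingappliedtoclassmaxs} gives $\delta_{\overline M,(R\cap N_1)^t}(A)\le 3$.
	\item If not, then Proposition \ref{prop:atmost4inI} with Theorem \ref{theorem:mainjumping} gives $\delta_{\overline M,(R\cap N_1)^t}(A)\le 4$, with equality only for central $A$.
\end{itemize}
Adding the bounds on $\delta_{\overline H/N_1^t}(A)$ from Proposition \ref{prop:boundC7top} then yields $\delta_{\overline M}(C_2)\le 7$ and $\delta_{\overline M}(A)\le 6$ for every other $A$. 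For $A\cong C_2^2$ with $t=4$ we use the sharper bound of Lemma \ref{lemma:maxsubgpsSigmai}, and Theorem \ref{theorem:crowns} finishes the argument.

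In case (b) with $\overline\Omega_1$ simple, the $\overline M$-chief factors in $(\overline\Omega_1')^t$ vanish. This gives $d(\overline M)\le 1+d(\overline H/(\overline\Omega_1)^t)\le 6$ by Theorem \ref{theorem:LMTgenerators}, noting that $\overline H/(\overline \Omega_1')^t$ is a quotient of a maximal subgroup. When $\overline\Omega_1$ is not simple, we bound $d(\overline M)\le d(K)+d(\overline M/K^t)$, as in the $\mathscr C_2$ proof. Since $\overline\Omega_1'/K$ is a chief factor of $P_1$, Corollary \ref{cor:CFsinwreathproduct} gives that $\overline M/K^t$ has at most one extra equivalent factor. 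The few small cases, namely $\Omega_4^+(q)$, $\SU_3(2)$ and $q\le 3$, are then handled using Table \ref{table:CFsofIinOmega} and a computation for $t=2$.

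\textbf{Main obstacle.} We expect the main difficulty to be $t=3$ with $\Omega_1$ orthogonal and $m\equiv 2\pmod 4$. There $\overline H$ fails the hypotheses of Corollary \ref{cor:CFsinwreathproduct}, so Theorem \ref{theorem:mainjumping} cannot be quoted directly. Our plan is to pass to the subgroup containing $\del((\overline\Delta_1/\overline\Omega_1)^3).\Alt(3)$ and rerun the orbit argument of Proposition \ref{prop:t5CFsinwreathproduct}, checking by hand which $C_2$ sections can be non-Frattini. Failing that, since $\overline\Delta_1/\overline\Omega_1$ is a $2$-group of order at most $8$, we would settle this case by a Magma computation, as in Lemma \ref{lemma:reducingLMTtofinitegp}.
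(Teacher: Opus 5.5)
Your trichotomy for $\overline M\cap(\overline\Omega_1')^t$ is the paper's, and cases (b) and (c) are fine. Note that $\overline\Omega_1'$ is non-abelian simple by the definition of $\mathscr{C}_7$, so $N_1$ is almost simple and $K$ is always trivial. The small cases you list ($\Omega_4^+(q)$, $\SU_3(2)$, $q\le 3$) never occur.

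The gap is in case (a). By routing it through Theorem \ref{theorem:mainjumping}, you need the hypotheses of Corollary \ref{cor:CFsinwreathproduct} for $\overline M$, and you leave the case $t=3$, $J=\Alt(3)$ open. Your fallback does not work as described. The factors to be controlled are the chief factors of $\overline M$ inside $(R\cap N_1)^3$. These depend on $R\cap N_1$, a maximal or novelty maximal subgroup of one of infinitely many almost simple groups $N_1$. So this is not a finite computation in $\overline\Delta_1/\overline\Omega_1$; the top $\overline H/N_1^3$ is already handled by Proposition \ref{prop:boundC7top}.

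The missing observation is that no chief-factor analysis of $(R\cap N_1)^t$ is needed. We have $\overline M\cap N_1^t=(R\cap N_1)^t$ and $\overline M N_1^t=\overline H$, and $\overline M$ permutes the $t$ factors transitively. Hence $d(\overline M)\le d(R\cap N_1)+d(\overline H/N_1^t)$. Because $N_1\le\overline\Delta_1$, the conditions $(F)_0$ and $(\gamma)_0$ hold for $R\cap N_1$, so $d(R\cap N_1)\le 4$ by Theorem \ref{theorem:LMTgenerators}. By Proposition \ref{prop:boundC7top}, $d(\overline H/N_1^t)\le 4$, with equality only for $t=2$. So every $t\ge 3$, including your obstacle, is immediate.

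Only $t=2$ with both terms equal to $4$ remains. There $\delta_{\overline H/N_1^2}(C_2)=4$ rules out $(D)_2$ for $R\cap N_1$, and Table \ref{table:LMTtable} leaves two possibilities:
\begin{itemize}
\item a $\mathscr{C}_4$-subgroup of type $\O_{n_1}^{\epsilon_1}\otimes\O_{n_2}^{\epsilon_2}$ with $n_1,n_2$ even, which is not maximal in $P_1\ge\overline I_1$;
\item a $\mathscr{C}_7$-subgroup with $N_1=\overline\Omega_1$ and $R\cap N_1=\overline I_*^{\,2}$ for an orthogonal geometry $V_*$ with $D(V_*)=\Box$. Here Lemma \ref{lemma:jumpingbabyresultfull} makes the abelian factors in $(\overline S_*^{\,2})^2$ Frattini, giving $d(\overline M)\le 6$.
\end{itemize}

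Your $t=2$ route via Proposition \ref{prop:jumpingappliedtoclassmaxs} can be repaired, since Lemma \ref{lemma:prereqjumpingclassicals} contains exactly this $\mathscr{C}_4$/$\mathscr{C}_7$ analysis. However, that proposition and Proposition \ref{prop:atmost4inI} assume $N_1\le I$, whereas here only $N_1\le\overline\Delta_1$. You would have to check that three non-Frattini $C_2$'s in $P_1/N_1$ force $N_1\le\overline I_1$ and $d(P_1/\overline I_1)=2$. In your second subcase you should use Theorem \ref{theorem:LMTcrowns} in place of Proposition \ref{prop:atmost4inI}.
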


\begin{proof}
	By the definition of $H$ in \cite[Section 4.7]{KL}, $\overline \Omega_1'$ is a non-abelian simple group. This implies by \cite[Proposition 2.9.2]{KL} that $\overline \Omega_1$ is either a non-abelian simple group, or is equal to $\Sp_4(2) \cong \Sym(6)$. Note that, in the latter case, $\overline \Sigma_1 = \overline \Omega_1$. Hence, $\overline \Sigma_1$ is an almost simple group with socle $\overline \Omega_1'$, and so is $N_1$.
	
	If $M \cap (\overline \Omega_1')^t$ is subdirect, it is equal to $\diag((\overline \Omega_1')^t)$, by Lemma \ref{lemma:subdirectsubgpXk}. If $M \cap (\overline \Omega_1')^t$ is not subdirect in $(\overline \Omega_1')^t$, and is normal in $P_1^t$, then it is equal to the trivial group. In either case, by Theorem \ref{theorem:crowns},
	\[
	d(\overline M) \leq d \left( \frac{\overline M}{\overline M \cap (\overline \Omega_1')^t} \right) = d(\overline H/(\overline \Omega_1')^t) \leq d(\overline H) \leq 5,
	\]
	where the final inequality is by Theorem \ref{theorem:LMTgenerators}. Hence we assume that $\overline M \cap (\overline \Omega_1')^t$ is not subdirect in $(\overline \Omega_1')^t$ and is not normal in $P_1^t$. Thus, by Proposition \ref{prop:nonsubdirectwreathcase}, $\overline M$ is conjugate to $( R \wr \Sym(t)) \cap \overline H$ for some $R \mleq P_1$. As such,
	\[
	d(\overline M) \leq d( R \cap N_1) + d(\overline H/N_1^t).
	\]
	The group $R \cap N_1$ is a maximal or novelty maximal subgroup of $N_1$, by Corollary \ref{cor:maximalsubgpextension}. By Theorem \ref{theorem:LMTgenerators} and Proposition \ref{prop:boundC7top}, we have $d(R \cap N_1) \leq 4$ and $d(\overline H/N_1^t) \leq 4$. Thus, for a contradiction, we assume that $d(R \cap N_1) = 4$ and $d(\overline H/N_1^t) = 4$. This implies by Proposition \ref{prop:boundC7top} that $t=2$ and $\delta_{\overline H / N_1^t}(C_2) = 4$. In the following, we use the notation of Section \ref{section:LMTcorrections}, in particular of Table \ref{table:LMTtable}. Since $N_1 \leq \overline \Delta_1$, both $(F)_0$ and $(\gamma)_0$ hold for $R \cap N_1$, so $d(R \cap N_1) = 4$ implies that $\Omega_1$ is in case $\boldO^\pm$. Additionally, since $\delta_{\overline H / N_1^t}(C_2) = 4$, $(D)_2$ does not hold for $R \cap N_1$. Hence, one of the following holds:
	\begin{enumerate}[\upshape(1)]
		\item $R \in \mathscr{C}_4$, $q$ is odd, $R$ is of type $\O_{n_1}^{\epsilon_1} \otimes \O_{n_2}^{\epsilon_2}$ with $n_i$ both even; or
		\item $R \in \mathscr{C}_7$, $q$ is odd, $m \equiv 2 \pmod 4$, $D(V_i) = \Box$, and $N_1 \in \{\overline \Omega, \langle \overline \Omega, \overline \delta \rangle, \langle \overline \Omega, \overline \rho^2 \overline \delta \rangle\}$.
	\end{enumerate}
	By \cite[Tables 3.5.E, 3.5.G]{KL}, if $R \in \mathscr C_4$ is of type $\O_{n_1}^{\epsilon_1} \otimes \O_{n_2}^{\epsilon_2}$ with $n_i$ both even, then $R$ is not maximal in $P_1$, since $P_1$ contains $\overline I_1$. Thus, $R$ is not in case (1).
	
	Let $R$ be in case (2). Note that $N_1$ is a normal subgroup of $P_1$, and $P_1$ contains $\langle N_1, \overline I_1 \rangle$. Thus, $N_1$ is not equal to $\langle \overline \Omega, \overline \delta \rangle$ or $\langle \overline \Omega, \overline \rho^2 \overline \delta \rangle$, so $N_1 = \overline \Omega$. Hence, there exists a classical geometry $(V_*, \mathbb F, \kappa_*)$ of type $\boldO$ such that $R \cap N_1 = \overline I_*^2$, where $\overline \Omega_*$ is a simple group. Since $P_1 \geq \overline I_1$, we have 
	\[
	R \geq  \left(\overline I_*^2 . \del((\overline \Delta_*/\overline I_*)^2) \right) : \Sym(2).
	\]
	Hence, by Lemma \ref{lemma:jumpingbabyresultfull}, there are no abelian, non-Frattini chief factors of $\overline M$ in $(\overline S_*^2)^t$. Thus, by Theorem \ref{theorem:crowns} and Proposition \ref{prop:boundC7top}, we have
	\begin{align*}
		d(\overline M) & \leq d\left(\overline M \Big/ \left(\overline S_*^2 \right)^t \right) \\
		& \leq d\left((R \cap N_1)\big/\overline S_*^2\right) + d(\overline H / N_1^t)\\
		& \leq d((\overline I_*/\overline S_*)^2) + 4 = 6. \qedhere
	\end{align*}
\end{proof}

\begin{prop}\label{prop:C7casecontainsOmega1}
	Suppose that $\overline H$ projects onto $\Sym(2)$ when $t=2$. Suppose that $\overline M \mleq \overline H$ contains $(\overline \Omega_1')^t$ and does not contain $N_1^t$. Then $d(\overline M) \leq 7$.
\end{prop}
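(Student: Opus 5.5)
The plan is to follow the argument for the analogous $\mathscr C_2$ statement, namely the proposition treating $M$ containing $\Omega_1^t$ but not $N_1^t$. Since $\overline M$ contains $(\overline \Omega_1')^t$, the bound on $d(\overline M)$ reduces to two ingredients: the structure of $\overline M \cap N_1^t$ modulo $(\overline \Omega_1')^t$, and the top $\overline H/N_1^t$, which Proposition \ref{prop:boundC7top} controls. By the argument at the start of Proposition \ref{prop:C7Omegatcase}, $N_1$ is almost simple with socle $\overline \Omega_1'$, so $N_1/\overline \Omega_1'$ is a subgroup of $\Out(\overline \Omega_1')$. Only its $\overline \Delta_1/\overline\Omega_1'$-part matters here. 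This part is cyclic in cases $\boldL,\boldU,\boldS$, and is a subgroup of $D_8$ or $C_2^2$ in case $\boldO$.

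I then split according to Proposition \ref{prop:XNonsimpleMaxSubgpXkC} applied to $X = N_1/\overline\Omega_1'$ (working modulo $(\overline\Omega_1')^t$).

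In the first case, $\overline M \cap N_1^t$ is not subdirect. Then $\overline M\cap N_1^t = R^t$ for some $\overline\Omega_1' \le R < N_1$. Here $R$ is almost simple, and Lemma \ref{lemma:maxsubgpsSigmai} gives $d(R)\le 2$, or $d(R) \le 3$ from $\delta_{R}(C_2)\le 3$. Combined with Proposition \ref{prop:boundC7top}, this yields
\[
d(\overline M)\le d(R)+d(\overline H/N_1^t)\le 3+4 = 7.
\]
It remains to check the extreme case $d(\overline H/N_1^t)=4$, which forces $t=2$. In that case the non-abelian factor $\overline\Omega_1'^2$ contributes nothing by Theorem \ref{theorem:crowns}. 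So it suffices to bound $d(R/\overline\Omega_1')\le 2$, which holds because $R/\overline\Omega_1'$ is a subgroup of $\overline\Delta_1/\overline\Omega_1'$, and therefore cyclic, $C_2^2$ or a subgroup of $D_8$.

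In the second case, $\overline M\cap N_1^t$ is subdirect. If $N_1/\overline\Omega_1'$ is cyclic, Proposition \ref{prop:subdirectwreathcase} shows that $\overline M \ge K^t$ for a maximal $P_1$-normal subgroup $K$ of $N_1$. Then $N_1/K$ is a chief factor of prime order, and Corollary \ref{cor:CFsinwreathproduct} gives at most one non-Frattini factor of $\overline M$ in $(N_1/K)^t$. Hence
\[
d(\overline M)\le d(K/\overline\Omega_1') + 1 + d(\overline H/N_1^t)\le 1+1+4.
\]
The remaining subcase is $N_1/\overline\Omega_1'\in\{C_2^2, D_8\}$ (type $\boldO$). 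Here I would write $\overline M=(\overline\Omega_1')^t.X.Y.(\overline H/N_1^t)$ with $X,Y$ being $\Sym(t)$-invariant subgroups of $C_2^t$, namely trivial, $\diag$, $\del$ or everything, exactly as in the $\mathscr C_2$ proof. Then $d(\overline M)\le 1+2+4$ when $t=2$, and $\le 1+2+3$ otherwise, using $d_{\Sym(t)}$ of each such module being at most $2$.

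I expect the main obstacle to be the $t=2$, type $\boldO$ subcases, where $d(\overline H/N_1^2)=4$ and $N_1/\overline\Omega_1'\cong D_8$ could make the naive count reach $8$. There I would first try Lemma \ref{lemma:jumpingbabyresultfull} or Lemma \ref{lemma:jumpingbabyresultdel}. Since $P_1\ge \overline I_1$ acts on $\overline\Delta_1/\overline\Omega_1$ with commutator containing $\overline S_1/\overline\Omega_1$, one $C_2$ layer of $N_1^2$ becomes Frattini in $\overline M$, which brings the total back to $7$. If this fails, I would fall back on the finite reduction of Lemma \ref{lemma:reducingLMTtofinitegp} and check the bound on the finitely many subgroups of $(D\wr C_2)\times C_4$ in Magma.
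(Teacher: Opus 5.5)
Your strategy is the paper's: work modulo the non-abelian layer $(\overline\Omega_1')^t$, bound each abelian layer of $N_1^t$ by one generator using the transitive action of $\overline M$ on coordinates, and add Proposition~\ref{prop:boundC7top} for the top. The paper organises this slightly differently. It takes $K$ largest with $K^t\le\overline M$ and shows $N_1/K\cong C_r$ or $C_2^2$. Your non-subdirect case is the case $K=R$, and there $d(R)+d(\overline H/N_1^t)\le 3+4$ already suffices, so your extra discussion of that case is unnecessary. The gap is that you do not finish the argument. The case you call the main obstacle ($t=2$, type $\boldO$, $N_1/\overline\Omega_1'\cong D_8$, $d(\overline H/N_1^2)=4$) is left to two tentative fallbacks: Lemma~\ref{lemma:jumpingbabyresultfull}, or a Magma check via Lemma~\ref{lemma:reducingLMTtofinitegp}. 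You carry out neither.

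The missing observation is that this configuration cannot occur. Since $\overline\Omega_1\le N_1\le\overline\Delta_1$ and $|\overline\Delta_1/\overline\Omega_1|\le 8$, the condition $N_1/\overline\Omega_1\cong D_8$ forces $N_1=\overline\Delta_1$. Then $P_1/N_1=P_1/\overline\Delta_1$ is cyclic, and $d(\overline H/N_1^2)\le 2$. More generally, whenever $N_1/\overline\Omega_1'$ is $C_2^2$ or $D_8$, $P_1/N_1$ has at most two $C_2$ chief factors, so $d(\overline H/N_1^t)\le 3$. This is exactly what the paper uses to get $d(\overline M)\le 6$ when $N_1/K\cong C_2^2$.

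Your count in this subcase is also off. For $D_8$ there are three $C_2^t$ layers, not two. The bottom layer $\Phi(D_8)^t$ lies in $\Phi(\overline H/(\overline\Omega_1')^t)$, hence in $\overline M/(\overline\Omega_1')^t$. Each layer is acted on through the projection onto $J$, by permutations. Its $J$-submodules of $\mathbb F_2^t$ are $0$, $\diag$, $\del$ or everything, and each is generated by a single element under $J$. So each layer contributes at most $1$, not $2$, and even the naive count gives $3+4=7$.

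A minor point: Proposition~\ref{prop:subdirectwreathcase} only gives containment of the intersection of all maximal $P_1$-normal subgroups of $N_1$. To get a single $K$ you need the Goursat/coprimality argument from Proposition~\ref{prop:C2subdirectcase}. Alternatively, note that a sum of cyclic modules of coprime orders is cyclic.
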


\begin{proof}
	Let $K$ be the largest subgroup of $N_1$ such that $K^t$ is contained in $M$. If $N_1/\overline \Omega_1'$ is cyclic, then, by the same argument as in Proposition \ref{prop:C2subdirectcase}, we have that $N_1/K \cong C_r$ for some prime $r$. If $N_1/\overline \Omega_1' \cong D_8$, then $\overline M/(\overline \Omega_1')^t$ contains $\Phi(D_8)^t$, so $N_1/K$ is either $C_2$ or $C_2^2$. Finally, if $N_1/\overline \Omega_1' \cong C_2^2$ then it is clear that $N_1/K$ is isomorphic to $C_2$ or $C_2^2$. If $N_1/K \cong C_r$ for some prime $r$, then there exists some $J$-invariant group $Y < C_r^t$, possibly trivial, such that
	\[
	\overline M = K^t . Y . (\overline H /N_1^t).
	\]
	Note that $d_J(Y) \leq 1$. Indeed, this is clear by Corollary \ref{cor:CFsinwreathproduct} when $t \geq 4$. When $t = 2$, $Y$ is cyclic, so $d(Y) = 1$. When $t=3$ we can use Goursat's Lemma to determine that there exists some isomorphism $\phi$ of $C_r$ such that $Y$ is equal to $\{(a, a^\phi, a^{\phi^2} : a \in C_r\}$ or $\langle (a, a^\phi, 1), (1, a, a^\phi) : a \in C_r \rangle$. In either case, $d_{\Alt(3)}(Y) = 1$. Thus, by Theorem \ref{theorem:crowns}, Lemma \ref{lemma:maxsubgpsSigmai} and Proposition \ref{prop:boundC7top},
	\[
	d(\overline M) \leq d(K/\Omega_1') + 1 + d(\overline H / N_1^t) \leq 2+1+4 = 7.
	\]
	Otherwise, if $N_1/K \cong C_2^2$, there exists some $J$-invariant group $Y < C_2^t$ such that
	\[
	\overline M = K^t . Y . C_2^t . (\overline H /N_1^t).
	\]
	As previously, $d_J(Y) = 1$. We have $N_1/K \cong C_2^2$, so $d(K/\Omega_1') \leq 1$ by \cite[Chapter 2]{KL}, and $d(\overline H / N_1^t) \leq 3$ by Proposition \ref{prop:boundC7top}. Thus,
	\[
	d(\overline M) \leq d(K/\Omega_1') + 1 + d(C_2) + d(\overline H/N_1^t) \leq 6. \qedhere
	\]
\end{proof}

When $\overline M$ contains $N_1^t$ and does not contain $\overline H \cap P_1^t$, we briefly split into cases depending on whether $t > 2$ or $t=2$.

\begin{prop}
	Let $t > 2$. Suppose that $\overline M \mleq \overline H$ contains $N_1^t$, and does not contain $\overline H \cap \overline \Delta_1^t$. Then $d(\overline M) \leq 7$.
\end{prop}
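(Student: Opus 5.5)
The plan is to copy the corresponding $\mathscr C_2$ argument for the section $N_1^t < \overline H \cap \overline\Delta_1^t$. For $t>2$ we have
\[
\overline H = N_1^t . \del((\overline\Delta_1/N_1)^t) . \diag((P_1/\overline\Delta_1)^t) . J ,
\]
and $\overline H$ satisfies the hypotheses of Corollary \ref{cor:CFsinwreathproduct} (for $t=3$ we use $\Alt(3) \leq J$ together with the observations made in the proof of Proposition \ref{prop:boundC7top}). Since $\overline M$ contains $N_1^t$ and projects onto $\overline H/(\overline H \cap \overline\Delta_1^t)$, everything is governed by $Y := (\overline M \cap \overline\Delta_1^t)/N_1^t$. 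This is a proper $\overline M$-invariant subgroup of $\del((\overline\Delta_1/N_1)^t)$, and it is maximal among such subgroups. Here $\overline\Delta_1/N_1$ is a quotient of $\overline\Delta_1/\overline\Omega_1$, which is cyclic in cases $\boldL,\boldU,\boldS$ and is $C_2^2$ or $D_8$ in case $\boldO$.

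First I would classify $Y$ as in the $\mathscr C_2$ proposition where $M$ contains $N_1^t$ but not $H \cap I_1^t$. In the non-subdirect case we get $Y = \del((R/N_1)^t)$ for some $N_1 \leq R < \overline\Delta_1$. In the subdirect case, if $\overline\Delta_1/N_1$ is cyclic then $Y$ contains $\del((K/N_1)^t)$ for a maximal $P_1$-normal $K$ with $\overline\Delta_1/K \cong C_r$, and the remaining top piece is at most $\diag((\overline\Delta_1/K)^t)$. If instead $\overline\Delta_1/N_1$ is $C_2^2$ or a $D_8$-quotient, then $Y$ is an extension by a $J$-invariant subgroup of $C_2^t$, which is trivial, $\diag$, or $\del$ by Proposition \ref{prop:t5CFsinwreathproduct}; the $t=3$ analogue comes from Goursat's Lemma as in Proposition \ref{prop:C7casecontainsOmega1}.

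Next I would bound $d(\overline M)$ additively along the series
\[
N_1^t \leq \overline M \cap \overline\Delta_1^t \leq \overline M .
\]
The first piece is $d_{\overline M}(N_1^t) \leq d(N_1) \leq 2$, by Lemma \ref{lemma:maxsubgpsSigmai} together with the fact that $J$ is transitive. The middle layers cost at most $d(R/N_1) \leq 2$, or $d(K/N_1) + d(\overline\Delta_1/K) \leq 2$, or $1+1$ in the $C_2^2$ case; here I use that a $\del$ or $\diag$ of a cyclic group is cyclic under $J$, by Lemma \ref{lemma:delgens}. The top piece is $\overline H/(\overline H \cap \overline\Delta_1^t) \cong (P_1/\overline\Delta_1) \times J$. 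With $t>2$ and $J \in \{\Alt(t),\Sym(t)\}$, this gives $d \leq 3$: the group $P_1/\overline\Delta_1$ is a subgroup of $\overline\Sigma_1^*/\overline\Delta_1$, which has at most two $C_2$ chief factors ($C_f \times C_2$ in case $\boldL$), and $J$ contributes at most one more. Adding these gives $2+2+3 = 7$.

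The main obstacle is the $D_8$ case in type $\boldO^\pm$ with $m \equiv 2 \pmod 4$, where $\overline\Delta_1/N_1$ can be non-abelian and $Y$ may sit non-diagonally. For this case I would pass to the quotient $\overline M/N_1^t$, which is a subgroup of $D_8 \wr \Sym(t) \times C_f$. I would then count its $C_2$ chief factors using Lemma \ref{lemma:jumpingbabyresultdel}: the central $C_2$ of $D_8$ equals $[\overline\Delta_1,\overline\Delta_1]$, so its $\del$ is Frattini. This shows that $\delta_{\overline M/N_1^t}(C_2) \leq 5$, and adding $d(N_1) \leq 2$ keeps the total at most $7$, using Theorem \ref{theorem:crowns}.
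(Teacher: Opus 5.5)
Your overall strategy matches the paper's. The paper also says (``as previously'') that $\overline M/N_1^t$ contains $\del(K^t)$ with $(\overline\Delta_1/N_1)/K\cong C_r$ or $C_2^2$. It then reruns the count of Proposition~\ref{prop:C7casecontainsOmega1}, with the top $\overline H/(\overline H\cap\overline\Delta_1^t)$ costing at most $3$ because $t>2$, and it even obtains $6$.

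\emph{The invalid step.} You add $d_{\overline M}(N_1^t)\le d(N_1)\le 2$ to the other layers. This uses $d(G)\le d_G(N)+d(G/N)$ with $N=N_1^t$ non-abelian. That inequality holds when $N$ is abelian: if $X$ normally generates $N$ and $\widetilde Y$ lifts generators of $G/N$, then $L=\langle X,\widetilde Y\rangle$ has $LN=G$, and $L\cap N$ is normalised by $L$ and by the abelian $N$, so it is normal in $G$ and contains $X$. It fails in general: $G=N=\Alt(5)$ has $d_G(N)=1$, $d(G/N)=0$ and $d(G)=2$.

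\emph{The repair.} This is what the paper means by writing $d(N_1/\overline\Omega_1')$ rather than $d(N_1)$. The only non-abelian chief factor of $\overline M$ inside $N_1^t$ is $(\overline\Omega_1')^t$, and it occurs once. So Theorem~\ref{theorem:crowns}(2) gives $d(\overline M)\le\max\bigl(2,d(\overline M/(\overline\Omega_1')^t)\bigr)$. In that quotient $(N_1/\overline\Omega_1')^t$ is abelian, since $N_1<\overline\Delta_1$ and every proper subgroup of $D_8$ is abelian. Transitivity of $J$ then gives $d_{\overline M}\bigl((N_1/\overline\Omega_1')^t\bigr)\le d(N_1/\overline\Omega_1')\le 2$. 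With this change your count $2+2+3$ goes through.

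\emph{Your final paragraph.} This is the weakest part, and the paper's argument does not need it: the paper never treats a non-abelian layer $\overline\Delta_1/N_1$ separately, but works with an abelian quotient $C_r$ or $C_2^2$ of it. As sketched, your argument would not suffice anyway, for three reasons:
\begin{enumerate}
\item Lemma~\ref{lemma:jumpingbabyresultdel} is stated only for abelian $D$, so it does not apply to $D_8$ as you use it.
\item A bound $\delta_{\overline M/N_1^t}(C_2)\le 5$ does not by itself give $d(\overline M/N_1^t)\le 5$ via Theorem~\ref{theorem:crowns}. You would also have to control the other abelian chief factors, such as $C_2^{t-1}$, $C_2^{t-2}$, and $C_2^2$ when $t=4$.
\item ``Adding $d(N_1)\le 2$'' repeats the invalid additive step above.
\end{enumerate}
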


\begin{proof}
	As previously, there exists a group $K < \overline \Delta_1/N_1$ such that $\overline M / N_1^t$ contains $\del(K^t)$, and either $\overline \Delta_1/K \cong C_r$ for some prime $r$, or $\overline \Delta_1/K \cong C_2^2$. Hence we may use the same proof as in Proposition \ref{prop:C7casecontainsOmega1} to prove that $d(\overline M) \leq 6$.
\end{proof}

\begin{prop}
	Let $t > 2$. Suppose that $\overline M \mleq \overline H$ contains $\overline H \cap \overline \Delta_1^t$, and does not contain $\overline H \cap P_1^t$. Then $d(\overline M) \leq 7$.
\end{prop}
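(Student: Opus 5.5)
The argument parallels the corresponding $\mathscr{C}_2$ result (the proposition treating $M$ that contains $H \cap I_1^t$ but not $H \cap P_1^t$). Since $\overline M$ contains $\overline H \cap \overline \Delta_1^t$ and $\overline M\,(\overline H \cap P_1^t) = \overline H$, the image of $\overline M$ in $\overline H/(\overline H \cap \overline \Delta_1^t)$ is a maximal subgroup of $\diag((P_1/\overline \Delta_1)^t).J$ that projects onto $J$. We have $J = \Sym(t)$ for $t \geq 4$ and $\Alt(3) \leq J \leq \Sym(3)$ for $t = 3$, and $J$ acts trivially on the diagonal. So this image is $K.J$ for some maximal subgroup $K < P_1/\overline \Delta_1$, and
\[
\overline M = N_1^t . \del((\overline \Delta_1/N_1)^t) . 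K . J .
\]

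Bound $d(\overline M)$ additively along this normal series, as the paper does in $\mathscr{C}_2$:
\[
d(\overline M) \leq d(N_1) + d_{\overline M}\bigl(\del((\overline \Delta_1/N_1)^t)\bigr) + d(K.J).
\]
For the first term, Lemma \ref{lemma:maxsubgpsSigmai} and the structure of $\overline \Delta_1/\overline \Omega_1$ from \cite[Chapter 2]{KL} give $d(N_1) \leq 2$. For the middle term, $\overline \Delta_1/N_1$ is a quotient of $\overline \Delta_1/\overline \Omega_1$, which is cyclic, $C_2^2$ or $D_8$. Its image in the abelianised setting needs at most $2$ generators as a $J$-module, because $J \geq \Alt(t)$ is $2$-transitive, so Lemma \ref{lemma:delgens} applies with a generating set of the abelian quotient. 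For the last term, $K \leq P_1/\overline \Delta_1 \leq \overline \Sigma_1^*/\overline \Delta_1$, which is $C_f$ or $C_f \times C_2$. Hence $\delta_K(A) \leq 2$, with equality only if $A \cong C_2$, and since $J \cong \Sym(t)$ or $\Alt(3)$ contributes at most one $C_2$, Theorem \ref{theorem:crowns} gives $d(K.J) \leq 3$. Together these give $d(\overline M) \leq 2+2+3 = 7$.

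The main obstacle is the middle term when $\overline \Delta_1/N_1$ is non-abelian ($D_8$, so $N_1 = \overline \Omega_1$ in type $\boldO^\pm$ with $m \equiv 2 \pmod 4$), and also the $t=3$ case where Corollary \ref{cor:CFsinwreathproduct} may fail. My first attempt is to replace the additive bound there by a count of chief factors. By Proposition \ref{prop:boundC7top}, $\overline H/N_1^t$ has at most two non-Frattini $C_2^2$-factors in the deleted part and few $C_2$'s. In the $D_8$ case, $\Phi(D_8)^t$ is Frattini, so $d(\overline \Delta_1/N_1)$ effectively drops to $2$. Then $\delta_{\overline M}(C_2) \leq 2 + 3 + 1$, and applying Theorem \ref{theorem:crowns} again gives $d(\overline M) \leq 7$. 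If that still leaves a slack case, I would use Lemma \ref{lemma:jumpingbabyresultdel}: when $K$ still induces a non-trivial action on $\overline \Delta_1/\overline \Omega_1$, some $\del$-chief factor lies in a commutator and is Frattini, which removes one more generator.
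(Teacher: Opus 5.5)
Your proposal is correct and is essentially the paper's own proof: it uses the same shape $\overline M = N_1^t.\del((\overline\Delta_1/N_1)^t).K.J$ and the same count $2+2+3$ along it (the paper uses $d(N_1/\overline\Omega_1')$ where you use $d(N_1)$, which changes nothing). Your final paragraph is unnecessary and somewhat muddled, because even when $\overline\Delta_1/N_1\cong D_8 = \langle x,y\rangle$ the middle section is normally generated in $\overline M$ by $(x,x^{-1},1,\dots,1)$ and $(y,y^{-1},1,\dots,1)$: for $t\ge 3$, taking the commutator of the first with $(y,1,y^{-1},1,\dots,1)\in\overline M$ gives $([x,y],1,\dots,1)$, so $Z(D_8)^t$ lies in the normal closure, and Lemma \ref{lemma:delgens} handles the quotient by it.
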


\begin{proof}
	There exists some $K < P_1/\overline \Delta_1$ such that
	\[
	\overline M = N_1^t . \del((\overline \Delta_1/N_1)^t) . K . J.
	\]
	Note that $P_1/\overline \Delta_1$ is either cyclic, or a direct product of two cyclic groups. As such, by Theorem \ref{theorem:crowns} and Lemma \ref{lemma:maxsubgpsSigmai},
	\[
	d(\overline M) \leq d(N_1/\overline \Omega_1') + d(\overline \Delta_1/N_1) + d(K . J) \leq 2 + 2 + 3 = 7.\qedhere
	\]
\end{proof}

\begin{prop}
	Suppose that $t=2$ and that $\overline H$ projects onto $\Sym(2)$. Additionally, assume that $\overline M \mleq \overline H$ contains $N_1^t$ and does not contain $\overline H \cap P_1^t$. Then $d(\overline M) \leq 7$.
\end{prop}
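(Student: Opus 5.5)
When $t=2$ we have $\overline H = N_1^2 . \diag((P_1/N_1)^2) . J$ with $J = \Sym(2)$. The plan is to argue exactly as in the $t>2$ cases, but using the shape of $\overline H \cap P_1^2$ given by Goursat's Lemma instead of a $\del/\diag$ decomposition.

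Since $\overline M$ contains $N_1^2$ but not $\overline H \cap P_1^2$, the image $\overline M/N_1^2$ is a maximal subgroup of
\[
\overline H/N_1^2 \cong \diag((P_1/N_1)^2).\Sym(2)
\]
that does not contain the base $\diag((P_1/N_1)^2) \cong P_1/N_1$. Maximality together with supplementation by the transposition then forces
\[
\overline M = N_1^2 . \diag((K/N_1)^2) . \Sym(2),
\]
where $K/N_1$ is a proper subgroup of $P_1/N_1$ that is normalised by the swapping element. In fact $K/N_1$ must be a maximal subgroup invariant under the induced automorphism; this is the case where $\overline H\cap P_1^2$ itself is not contained in $\overline M$. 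Note that $P_1/N_1$ is a subquotient of $\overline \Sigma_1^*/\overline \Omega_1$, and hence is soluble with small generation.

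We then bound $d(\overline M)$ by
\[
d(\overline M) \le d(N_1^2 \cap \overline M \text{ as an } \overline M\text{-group}) + d(K/N_1) + 1.
\]
Because $\overline M$ projects onto $\Sym(2)$, which swaps the two factors of $N_1^2$, we have $d_{\overline M}(N_1^2) \le d(N_1)$. Next, $N_1/\overline\Omega_1'$ is at most $2$-generated by Lemma \ref{lemma:maxsubgpsSigmai} and \cite[Chapter 2]{KL}. The factor $\overline \Omega_1'$ contributes non-abelian chief factors only, and these do not affect the count by Theorem \ref{theorem:crowns}.

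For the top part, Lemma \ref{lemma:maxsubgpsSigmai} gives $\delta_{P_1/N_1}(A) \le 3$, with equality only when $A\cong C_2$. Passing to a subgroup $K/N_1$ of this abelian-by-small group, I expect $d(K/N_1) \le 3$. This gives roughly $2 + 3 + 1 = 6$, or at worst $7$.

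The main obstacle is controlling $C_2$ chief factors. In the worst case $N_1/\overline\Omega_1'$ contributes two $C_2$ factors and $K/N_1$ contributes three, and the swap adds one more, giving $\delta_{\overline M}(C_2)$ possibly as large as $2\cdot 2 + 3 + 1$. To handle this I would do the following.

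First, with $\Sym(2)$ present, the chief factors of $\overline M$ in $N_1^2$ are $\diag$ and $\del$ pieces. By Corollary \ref{cor:CFsinwreathproduct} and Lemma \ref{lemma:jumpingbabyresultfull}, any central section of $N_1$ lying in $[P_1,N_1]$ is Frattini, so only $\delta_{N_1/[K,N_1]}(C_2)$ contributes.

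Second, use Lemma \ref{lemma:diagnonfrattini} to discard the $\diag$ pieces whenever the swap acts as $(b,1)\sigma$.

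Third, when $N_1/\overline\Omega_1'$ genuinely contributes two $C_2$ factors, $\Omega_1$ is of type $\boldO^\pm$ with $m\equiv 2 \pmod 4$. In that case the number of remaining configurations is small: $P_1/\overline\Omega_1 \le D_8\times C_f$. I would reduce these via Lemma \ref{lemma:reducingLMTtofinitegp} to finitely many subgroups of $(D\wr C_2)\times C_4$ and check $\delta_{\overline M}(C_2)\le 7$ in Magma, as was done in Proposition \ref{prop:LMTC7calcs}.

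Finally, all non-$C_2$ factors satisfy $\delta\le 6$ by the same bounds, so Theorem \ref{theorem:crowns} yields $d(\overline M)\le 7$.
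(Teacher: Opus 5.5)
The part of your proposal before ``The main obstacle'' is already a complete proof, and it is essentially the paper's. You have $\overline M=N_1^2.\diag((K/N_1)^2).\Sym(2)$, and the factor $(\overline\Omega_1')^2$ can be ignored by Theorem~\ref{theorem:crowns}. An element of $\overline M$ mapping to the transposition carries $N_1\times 1$ onto $1\times N_1$, so the two copies of $N_1/\overline\Omega_1'$ cost only $d(N_1/\overline\Omega_1')\le 2$ generators. Hence $d(\overline M)\le 2+d(K/N_1)+1\le 6$, and you should commit to this rather than ``roughly $6$, or at worst $7$''.

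Two points should be tightened. First, count genuine generators of the subgroup $(N_1/\overline\Omega_1')\times 1$ of $\overline M/(\overline\Omega_1')^2$, rather than using the normal generation number $d_{\overline M}$. The inequality $d(G)\le d_G(N)+d(G/N)$ fails in general (take $G=N=\Sym(3)$); it holds here only because $N_1/\overline\Omega_1'$ is nilpotent, and you would need to say so. Second, derive $d(K/N_1)\le 3$, as the paper does, from the explicit structure of $\overline\Sigma_1^*/\overline\Omega_1$ in \cite[Chapter 2]{KL}, of which $K/N_1$ is a subquotient. It does not follow by ``passing to a subgroup'' in the bound $\delta_{P_1/N_1}(A)\le 3$, since chief-factor counts are not inherited by subgroups.

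Everything from ``The main obstacle'' onward should be dropped. The feared value $\delta_{\overline M}(C_2)=2\cdot 2+3+1$ comes from counting the $C_2$ factors in both copies of $N_1/\overline\Omega_1'$, which your own swap argument already avoids. Indeed, since $d((L_A)_{\delta})=\delta$ for $A\cong C_2$, your bound forces $\delta_{\overline M}(C_2)\le d(\overline M)\le 6$.

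The fallback would also not go through as written. Lemma~\ref{lemma:reducingLMTtofinitegp} is proved for the groups $H_{\overline L}$ themselves: the kernel of $\pi_{\overline L}$ is shown to contain no non-Frattini $C_2$ chief factors of $H_{\overline L}$. It therefore gives no reduction for a maximal subgroup $\overline M$ of $\overline H$ without a new argument. Your claim that two $C_2$ factors in $N_1/\overline\Omega_1'$ force type $\boldO^\pm$ with $m\equiv 2\pmod 4$ is also unsupported. For example, when $4\mid m$ and $\sgn(Q_1)=+$, one already has $N_1\ge\overline I_1$ with $\overline I_1/\overline\Omega_1\cong C_2^2$.
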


\begin{proof}
	Since $t=2$, we have $\overline H = N_1^2 . (P_1/N_1).C_2$. Hence, $\overline M = N_1^2 . K . C_2$ for some subgroup $K$ of $P_1/N_1$. Note that $d(N_1/\overline \Omega_1') \leq 2$ and $d(K) \leq 3$ by Lemma \ref{lemma:maxsubgpsSigmai} and \cite[Chapter 2]{KL}, so $d(\overline M) \leq 6$.
\end{proof}

\begin{prop}
	Suppose that $\overline H$ projects onto $\Sym(2)$ when $t = 2$. Let $\overline M \mleq \overline H$ contain $\overline H \cap P_1^t$. Then $d(\overline M) \leq 7$.
\end{prop}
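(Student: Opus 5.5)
This is the $\mathscr C_7$ analogue of Proposition~\ref{prop:lastcaseC2}, and I would follow that proof closely. Since $\overline M$ contains $\overline H \cap P_1^t$, it is determined by a maximal subgroup $J_1$ of $J$:
\[
\overline M = N_1^t . \del((\overline \Delta_1/N_1)^t) . \diag((P_1/\overline \Delta_1)^t) . J_1 ,
\]
with the evident modification $\overline M = N_1^2.(P_1/N_1).J_1$ when $t=2$. In the case $t=2$ we have $J_1=1$, so $\overline M = \overline H \cap P_1^2 = N_1^2.(P_1/N_1)$. Its $C_2$ chief factors then come from two copies of $N_1/\overline\Omega_1'$ together with $P_1/N_1$, and Lemma~\ref{lemma:maxsubgpsSigmai} and Proposition~\ref{prop:atmost4inI} should give $d(\overline M)\le 2+2+3=7$ directly. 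The bound $2+2+3$ is a direct sum of generator counts, so I would not need crowns here. For $t\ge3$ I split according to whether $J_1$ is intransitive or transitive, as in the $\mathscr C_2$ proof.

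\textbf{Intransitive $J_1 = \Sym(m)\times\Sym(t-m)$.} I would apply Corollary~\ref{cor:CFsinwreathproduct} to each orbit separately. This gives
\[
\delta_{\overline M, N_1^t}(A) \le 2\max_B \delta_{N_1}(B).
\]
Lemmas~\ref{lemma:delgens} and \ref{lemma:permmodulemults} bound the contribution of $\del((\overline\Delta_1/N_1)^t)$ by $\dim_E(A)\,\delta_{\overline\Delta_1/N_1}$. Feeding these into Theorem~\ref{theorem:crowns}(3) handles every $A\not\cong C_2$. For $A\cong C_2$ we reach $8$ only when $\Omega_1$ is orthogonal with $q$ odd. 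In that situation Lemmas~\ref{lemma:jumpingbabyresultfull} and \ref{lemma:jumpingbabyresultdel} show that one of the relevant central $C_2$ factors lies in a commutator $[P_1,\,\cdot\,]$ and is therefore Frattini in $\overline M$. This brings the count back to $7$.

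\textbf{Transitive $J_1$.} Here $(\overline\Omega_1')^t$ contributes only a non-abelian chief factor, so by Theorem~\ref{theorem:crowns} it suffices to bound $d(\overline M/(\overline\Omega_1')^t)$. Lemma~\ref{lemma:delgens} gives the crude bound
\[
d(N_1/\overline\Omega_1')+d(\overline\Delta_1/N_1)+d((P_1/\overline\Delta_1)\times J_1).
\]
Using Lemma~\ref{lemma:maxsubgpsSigmai}, \cite[Theorem 2.7]{LMT} and the fact that $P_1/\overline\Delta_1$ is cyclic or $2$-generated, this is at most $7$ except in one case: $\delta_{J_1}(C_2)=4$, so that $J_1$ is of simple diagonal type, together with two $C_2$ factors from $\overline\Delta_1/\overline\Omega_1$. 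In that case I would take $N=\soc(J_1)$ and apply Lemma~\ref{lemma:delmodulemults} with Corollary~\ref{cor:upperbounddeltaGN}. This shows that $\del$-type sections contribute no $C_2$ chief factors. Lemma~\ref{lemma:jumpingbabyresultfull} then removes one of the $C_2^t$ sections from $(N_1/\overline\Omega_1')^t$, giving $\delta_{\overline M}(C_2)\le 7$. For other $A$, Lemma~\ref{lemma:permmodulemults} gives $\delta_i\le\dim_E(A)$ for each $i$, as in the $\mathscr C_2$ argument, and hence a bound of at most $5$.

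\textbf{Main obstacle.} The hard case is $\Omega_1$ orthogonal with $m\equiv2\pmod4$ and $\overline\Delta_1/\overline\Omega_1\cong D_8$. There $H_{\overline\Omega}$ may fail the hypotheses of Corollary~\ref{cor:CFsinwreathproduct} when $t=3$, and the $D_8$ contributes non-cyclic sections. I would first try to handle it via $\Phi(D_8)^t$, which is Frattini, and then check the residual small configurations using the explicit description from Proposition~\ref{prop:LMTC7calcs}.
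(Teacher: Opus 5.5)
Your outline is the paper's: $t=2$ is handled separately, and for $t\ge3$ you split on the transitivity of $J_1$, using Lemmas \ref{lemma:delgens}, \ref{lemma:permmodulemults}, \ref{lemma:delmodulemults} and the two ``jumping'' lemmas. The $t=2$ case and the intransitive count for $J=\Sym(t)$ are fine, except that for $t=2$ you do need Theorem \ref{theorem:crowns} (or $d(N_1)\le 2$) to replace $N_1^2$ by $(N_1/\overline\Omega_1')^2$.

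The first gap is that for $t\ge3$ you tacitly take $J=\Sym(t)$. When $t=3$, $\overline H$ may project only onto $J=\Alt(3)$. Then $J_1=1$ and $\overline M=N_1^3.\del((\overline\Delta_1/N_1)^3).(P_1/\overline\Delta_1)$ has three orbits, so Lemma \ref{lemma:delgens} is unavailable. The count becomes $3d(N_1/\overline\Omega_1)+2d(\overline\Delta_1/N_1)+d(P_1/\overline\Delta_1)$, which can reach $9$ if $\overline\Delta_1/\overline\Omega_1\cong D_8$. The paper closes this case with a structural fact: $J=\Alt(3)$ forces $\overline\Omega_1$ orthogonal with $D(Q_1)=\boxtimes$, so $\overline\Delta_1/\overline\Omega_1\cong C_2^2$ \cite[Propositions 4.7.6, 4.7.7]{KL}. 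Hence $3d(N_1/\overline\Omega_1)+2d(\overline\Delta_1/N_1)\le 6$, giving at most $7$. Your last paragraph attaches the $t=3$ difficulty to the $D_8$ case, which is not where it lives.

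The second gap is the configuration you yourself call hard: $J_1$ transitive of simple diagonal type with $\delta_{J_1}(C_2)=4$, $\overline\Omega_1$ orthogonal, $\overline\Delta_1/\overline\Omega_1\cong D_8$ and $\overline\Omega_1<N_1<\overline\Delta_1$. Here you only offer a plan. That $\Phi(D_8)^t$ is Frattini in $\overline M$ does not follow from $\Phi(D_8)$ being Frattini in $D_8$, because $\overline M\cap\overline\Delta_1^t$ is only $N_1^t.\del((\overline\Delta_1/N_1)^t)$. Proposition \ref{prop:LMTC7calcs} concerns $t=2$ and the chief factors of $\overline H$, so it cannot dispose of what remains. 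The paper splits on $|N_1/\overline\Omega_1|$:
\begin{itemize}
\item If $|N_1/\overline\Omega_1|=4$, then $X=[\overline\Delta_1/\overline\Omega_1,N_1/\overline\Omega_1]\cong C_2$ is central. Lemma \ref{lemma:jumpingbabyresultfull} makes $X^t$ Frattini, and the crude count drops to $1+1+5=7$.
\item If $N_1/\overline\Omega_1\cong C_2$, it is the derived subgroup of $D_8$ and is central in $P_1/\overline\Omega_1$, so that lemma does not apply. Instead, Lemma \ref{lemma:jumpingbabyresultdel} makes $\del((N_1/\overline\Omega_1)^t)$ Frattini, and Lemma \ref{lemma:permmodulemults} leaves at most one $C_2$ in $(N_1/\overline\Omega_1)^t$. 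Lemma \ref{lemma:delmodulemults}, applied with $N=\soc(J_1)$, excludes $C_2$ from the two $\del$-sections coming from $\overline\Delta_1/N_1\cong C_2^2$. Together these give $\delta_{\overline M}(C_2)\le 6$.
\item For $A\cong C_2^b$ with $b>1$ in that last case, the two $\del$-sections contribute up to $2\dim_E(A)$, not $\dim_E(A)$ as in your ``for each $i$'' bound. Theorem \ref{theorem:crowns} still gives $d((L_A)_{\delta_{\overline M}(A)})\le 4$.
\end{itemize}
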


\begin{proof}
	Firstly, when $t=2$, $\overline M$ is equal to $N_1^2.(P_1/N_1)$. Hence, by Theorem \ref{theorem:crowns} and Lemma \ref{lemma:maxsubgpsSigmai},
	\[
	d(\overline M) \leq d(\overline M/(\overline \Omega_1')^t) \leq 2d(N_1/\overline \Omega_1') + d(P_1/N_1) \leq 4 + 3 = 7.
	\]
	
	Next, assume that $t=3$ and $J = \Alt(3)$, so $\overline \Omega_1$ is of type $\boldO^\pm$. Additionally, by \cite[Propositions 4.7.6, 4.7.7]{KL}, we have that $D(Q_1) = \boxtimes$, implying that $\overline \Delta_1/\overline \Omega_1 \cong C_2^2$. Since $\overline M = N_1^3 . \del((\overline \Delta_1/N_1)^3).(P_1/\overline \Delta_1)$, by Theorem \ref{theorem:crowns},
	\[
	d(\overline M) \leq 3 d(N_1/\overline \Omega_1) + 2 d(\overline \Delta_1/N_1) + d(P_1/\overline \Delta_1).
	\]
	However, since $\overline \Delta_1/\overline \Omega_1 \cong C_2^2$, we have that $d(N_1/\overline \Omega_1) \leq 2$, $d(\overline \Delta_1/N_1) \leq 2$, and $d(N_1/\overline \Omega_1) + d(\overline \Delta_1/N_1) \leq 2$. So $3 d(N_1/\overline \Omega_1) + 2 d(\overline \Delta_1/N_1) \leq 6$. Since $P_1/\overline \Delta_1$ is cyclic, we obtain $d(\overline M) \leq 7$ as required.
	
	For the remainder of this proof, we assume that $t > 2$ and that $J = \Sym(t)$. There exists some $J_1 \mleq \Sym(t)$ such that
	\[
	\overline M = N_1^t . \del((\overline\Delta_1/N_1)^t) . \diag((P_1/\overline\Delta_1)^t) . J_1.
	\]
	Suppose that $J_1$ is intransitive, so $J_1 = \Sym(m) \times \Sym(t-m)$ for some $m < t/2$. Note that $\delta_{J_1}(A) \leq 2$ for any chief factor $A$, with equality only if $A$ is isomorphic to $C_2$ or $C_3$. Hence, by Theorem \ref{theorem:crowns}, $d((P_1/\overline \Delta_1).J_1) \leq 4$, with equality only if $\Omega_1$ is of type $\boldL$. By Lemma \ref{lemma:delgens},
	\[
	d(\overline M/N_1^t) \leq d(\overline \Delta_1/N_1) + d((P_1 / \overline \Delta_1).J_1).
	\]
	If $\overline \Omega_1$ is in case $\boldO$, with $d(N_1/\overline\Omega_1) = 2$ and $d(\overline\Delta_1/N_1) = 1$, then $\overline S_1^t/\overline \Omega_1^t$ is Frattini in $\overline M$ by Lemma \ref{lemma:jumpingbabyresultfull}. Hence, by Theorem \ref{theorem:crowns},
	\[
	d(\overline M) \leq 2d(N_1/\overline S_1) + d(\overline\Delta_1/N_1) + d((P_1/\overline\Delta_1).J_1) \leq 2+1+3 = 6.
	\]
	In all other cases, by \cite[Chapter 2]{KL}, we have that either $d(N_1/\overline \Omega_1') \leq 1$ and $d(\overline \Delta_1/N_1) \leq 2$, or $d(N_1/\overline \Omega_1') = 2$ and $d(\overline \Delta_1/N_1) = 0$, so
	\[
	d(\overline M) \leq 2d(N_1/\overline \Omega_1') + d(\overline\Delta_1/N_1) + d((P_1/\overline\Delta_1).J_1) \leq 7.
	\]
	Thus, we assume that $J_1$ is transitive for the remainder of this proof. By Lemma \ref{lemma:delgens} and Theorem \ref{theorem:crowns}, we have
	\[
	d(\overline M) \leq d(N_1/\overline\Omega_1') + d(\overline \Delta_1/N_1) + d((P_1/N_1).J_1).
	\]
	We bound each of the terms above using \cite[Chapter 2]{KL}. If $\overline \Omega_1$ is in case $\boldL$, we have the bounds
	\[
	d(N_1/\overline\Omega_1') + d(\overline \Delta_1/N_1) \leq 2, \qquad d((P_1/\overline \Delta_1).J_1) \leq 6.
	\]
	When $\overline \Omega_1$ is in case $\boldO^\pm$, we have
	\[
	d(N_1/\overline\Omega_1') + d(\overline \Delta_1/N_1) \leq 3, \qquad d((P_1/\overline \Delta_1).J_1) \leq 5.
	\]
	Finally, if $\overline \Omega_1$ is in case $\boldU$, $\boldS$ or $\boldO^\circ$, we have
	\[
	d(N_1/\overline\Omega_1') + d(\overline \Delta_1/N_1) \leq 2, \qquad d((P_1/\overline \Delta_1).J_1) \leq 5.
	\]
	In this last case, the desired result is proved. Hence, we only need to consider $\overline \Omega_1$ in cases $\boldL$ and $\boldO^\pm$, specifically when both upper bounds above are tight. Note, in these cases $\overline \Omega_1 = \overline \Omega_1'$.
	
	In the $\boldL$ case, the proof that $d(\overline M) \leq 7$ follows in the exact same way as for $H \in \mathscr{C}_2$, since $I_1 = \Delta_1$ in this case.
	
	Hence, we assume that $\overline \Omega_1$ is of type $\boldO^\pm$. Additionally, we can assume that $\overline \Delta_1 / \overline \Omega_1 \cong D_8$ and $\overline \Omega_1 < N_1 < D_8$, as otherwise $d(N_1/\overline\Omega_1') + d(\overline \Delta_1/N_1) \leq 2$. Since $d((P_1/N_1).J_1) = 5$, we have that $\delta_{J_1}(C_2) = 4$ by Theorem \ref{theorem:LMTgenerators}, and $J_1$ is of simple diagonal type. Let the socle of $J_1$ be $T^k$ for some non-abelian simple group $T$.
	
	Firstly, if $|N_1 / \overline \Omega_1| = 4$, then $X := [\overline \Delta_1/\overline\Omega_1, N_1/\overline\Omega_1]$ is isomorphic to $C_2$. Hence, by Lemma \ref{lemma:jumpingbabyresultfull}, the corresponding section $X^t$ in $\overline M$ is Frattini, and thus, 
	\[
	d(\overline M) \leq d((N_1/\overline\Omega_1)/X) + d(\overline \Delta_1/N_1) + d((P_1/\overline \Delta_1).J_1) \leq 1 + 1 + 5 = 7.
	\]
	So we can assume instead that $N_1/\overline\Omega_1 \cong C_2$, and $\overline \Delta_1/N_1 \cong C_2^2$. In particular, since there is a unique normal subgroup of $D_8$ of order 2, we have that $N_1/\overline \Omega_1 = (\overline \Delta_1/\overline \Omega_1)'$.
	
	By Lemma \ref{lemma:delmodulemults} and Corollary \ref{cor:upperbounddeltaGN} applied with $N = T^k \unlhd J_1$, there are no chief factors isomorphic to $C_2$ in $\del((\overline \Delta_1/N_1)^t)$. By Lemma \ref{lemma:permmodulemults}, there is at most one chief factor of $\overline M$ in $(N_1/\overline \Omega_1)^t$ isomorphic to $C_2$, so
	\[
	\delta_{\overline M}(C_2) \leq 1 + \delta_{(P_1/\overline \Delta_1).J_1}(C_2) \leq 6.
	\]
	We additionally note that $\del((N_1/\overline \Omega_1)^t)$ is Frattini in $\overline M$ by Lemma \ref{lemma:jumpingbabyresultdel}. Hence there are no other non-Frattini chief factors of $\overline M$ in $(N_1/\overline \Omega_1)^t$, other than the chief factor isomorphic to $C_2$ mentioned previously.
	
	Suppose now that $A \cong C_2^b$ with $b > 1$. Then, $\delta_{\overline M, N_1^t}(A) = 0$ and $\delta_{(P_1/\overline \Delta_1).J_1}(A) \leq 1$. Additionally, by Lemmas \ref{lemma:permmodulemults} and \ref{lemma:delgens}, $\delta_{\overline M, (\overline M \cap \overline \Delta_1^t)/N_1^t}(A) \leq 2\dim_E(A)$. Hence, by Theorem \ref{theorem:crowns},
	\[
	d((L_A)_{\delta_{\overline M}(A)}) \leq \theta(A) + 2 + \Bigg\lceil \frac{1 + s(A)}{\dim_E(A)} \Bigg\rceil \leq 4.
	\]
	For any other chief factor $A$, we have $\delta_{\overline M}(A) = \delta_{(P_1/\overline \Delta_1).J_1}(A) \leq 3$. Thus, $d(\overline M) \leq 6$ by Theorem \ref{theorem:crowns}.
\end{proof}

\begin{prop}
	Suppose that $t=2$ and that $\overline H$ does not project onto $\Sym(2)$. Then $d(\overline M) \leq 7$.
\end{prop}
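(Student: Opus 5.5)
In this case $t=2$ and $J=1$, so $H_{\overline G} = N_1^2.\diag((P_1/N_1)^2)$ is a subgroup of the direct product $P_1 \times P_1$. It is subdirect, with $H_{\overline G} \cap (P_1 \times 1) = N_1 \times 1$. The plan is therefore to copy the argument used for $\mathscr C_1$ and $\mathscr C_4$: apply Proposition \ref{prop:maxsubgpdirectproduct} with $P_i$ equal to the two projections (both equal to $P_1$) and the two intersections equal to $N_1$. This splits $\overline M \mleq \overline H$ into two cases.

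\emph{Case 1: $\overline M$ contains one factor $N_1$.} Without loss of generality, $\overline M = (P_1 \times L)\cap \overline H$ for some $L \mleq P_1$, so $\overline M = N_1.L$. Here $N_1$ satisfies $\overline\Omega_1 \le N_1 \le \overline\Delta_1$, and $P_1$ lies between $\overline\Delta_1$ and $\overline\Sigma_1^*$; the group $P_1$ contains $\overline\Omega_1$, but $\overline\Omega_1$ need not be simple. We use
\[
\delta_{\overline M}(A) \le \delta_{\overline M,N_1}(A)+\delta_{L}(A).
\]
The first term is bounded by combining Corollary \ref{cor:upperbounddeltaGN} with Lemma \ref{lemma:maxsubgpsSigmai}, which gives $\delta_{N_1}(A) \le 2$. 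The second term is bounded by Lemma \ref{lemma:maxsubgpsSigmai} applied to the maximal subgroup $L$ of $P_1$. That lemma is stated for preimages, so $L$ should first be pulled back to a group between $\Omega_1$ and $\Sigma_1^*$. This gives $\delta_L(A) \le 5$ if $A\cong C_2$ and $\delta_L(A) \le 3$ otherwise. Hence $\delta_{\overline M}(C_2)\le 7$ and $\delta_{\overline M}(A)\le 5$ for all other $A$, and Theorem \ref{theorem:crowns} together with the preliminary lemma bounding non-abelian multiplicities by $5$ gives $d(\overline M)\le 7$.

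\emph{Case 2: $\overline M$ is subdirect.} Then $\overline M=(K_1\times K_2).C$, where each $K_i$ is a maximal $P_1$-normal subgroup of $N_1$ and $K_i.C\cong P_1$. We bound
\[
d(\overline M)\le d(K_1)+d(P_1).
\]
Lemma \ref{lemma:maxsubgpsSigmai} gives $d(P_1)\le 3$ (via $\delta_{\overline G}(A)\le 3$ and Theorem \ref{theorem:crowns}). It then suffices to show $d(K_1)\le 4$. If $\overline\Omega_1'$ is non-abelian simple, then $K_1$ either contains $\overline\Omega_1'$, so that $K_1/\overline\Omega_1'$ is a section of $\overline\Delta_1/\overline\Omega_1'$ and $d(K_1)\le 1+2$ by Theorem \ref{theorem:crowns}, or $K_1$ is a normal subgroup meeting the socle trivially, which forces $K_1=1$. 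The remaining small non-simple cases, where $\overline\Omega_1=\Sp_4(2)$, are finite and will be checked directly, as in the proof of Proposition \ref{prop:C1secondmaxls}.

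The main obstacle is Case 1: making Lemma \ref{lemma:maxsubgpsSigmai} apply to $L$ inside the projective group $P_1$ while keeping the count of $C_2$ factors at $5$ rather than $6$. If the lift to $\Sigma_1^*$ costs an extra central factor, I would use the fact that in Case 1 the scalars lie in $N_1$ and contribute no $C_2$ factor to $L$. If necessary, I would instead use Proposition \ref{prop:atmost4inI} to bound $\delta_{\overline M,N_1}(C_2)\le 2$ directly, noting that the $C_2$ chief factors of $N_1$ lie in $\overline\Delta_1/\overline\Omega_1$.
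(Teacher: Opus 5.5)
Your proposal is correct and is essentially the paper's argument. The paper observes that $\overline H$ lies between $(\overline S_1\times\overline S_2).\del((\overline I_1/\overline S_1)^2)$ and $\overline\Sigma_1\times\overline\Sigma_2$, so it has the same direct-product shape as in the $\mathscr C_4$ case, and reruns that proof: Proposition \ref{prop:maxsubgpdirectproduct}, the bound $2+5$ in the non-subdirect case, and $d(K_1)+d(P_2)\le 4+3$ in the subdirect case, which is what you do (your closing worry is unnecessary, since Lemma \ref{lemma:maxsubgpsSigmai} already bounds $\delta_{\overline H}(A)$ for the image modulo scalars, and the $\overline\Omega_1=\Sp_4(2)$ case is already covered by your argument with $\overline\Omega_1'\cong\Alt(6)$).
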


\begin{proof}
	We have
	\[
	(\overline S_1 \times \overline S_2). \del((\overline I_1/\overline S_1)^2) \leq \overline H \leq  (\overline \Delta_1 \times \overline \Delta_2).(P_1/\overline \Delta_1) \leq \overline \Sigma_1 \times \overline \Sigma_2.
	\]
	As such, $\overline H$ has the same shape as in the $\mathscr{C}_4$ case, except $\overline \Delta_1 \cong \overline \Delta_2$. Hence, a similar proof can be used to show that $d(\overline M) \leq 7$.
\end{proof}

\subsection{Class \texorpdfstring{$\mathscr{C}_8$}{C8}}

\begin{prop}
	Suppose that $H \in \mathscr{C}_8$. Then $d(\overline M) \leq 6$.
\end{prop}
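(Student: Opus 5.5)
We follow the same approach as for the classes $\mathscr{C}_3$ and $\mathscr{C}_5$. By Section \ref{section:C8explanation} we have $\overline\Omega_\sharp \leq H_{\overline\Omega}$ and $H_{\overline\Sigma} = \overline\Gamma_\sharp . C$ with $C \leq \overline\Sigma/\overline\Gamma$. Since $\mathscr{C}_8$ only occurs in case $\boldL$, this quotient has order at most $2$. Hence $H_{\overline G} = N.J$, where $\overline\Omega_\sharp \leq N \leq \overline\Delta_\sharp$ and $J$ is a subquotient of $(\overline\Gamma_\sharp/\overline\Delta_\sharp).C$. Moreover $d(J) \leq 2$ by \cite[Proposition 3.12]{LMT} together with the structure of $\Out$ for the form of type $\boldU$, $\boldS$ or $\boldO$ listed in \cite[Table 4.8.A]{KL}.

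\textbf{Easy cases.} We split according to whether $\overline M$ contains $\overline\Omega_\sharp$.
\begin{enumerate}[(1)]
\item Suppose $\overline M$ contains $H_{\overline\Omega}$. Then $\overline M$ corresponds to a subgroup of $\overline H/H_{\overline\Omega} \cong \overline G/\overline\Omega$, so Theorem \ref{theorem:LMTgenerators} (applied to $\overline H$) together with \cite[Proposition 3.12]{LMT} gives $d(\overline M) \leq 5$.
\item Suppose $\overline M$ contains $\overline\Omega_\sharp$. Then $\overline M = \overline\Omega_\sharp.X$ with $X \leq \overline\Gamma_\sharp.C/\overline\Omega_\sharp$. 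Lemma \ref{lemma:maxsubgpsSigmai} bounds the chief factors of $X$, and Theorem \ref{theorem:crowns} then gives $d(\overline M) \leq 1 + d(X) \leq 5$.
\end{enumerate}

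\textbf{Main case.} Now assume $\overline M$ does not contain $\overline\Omega_\sharp$. If $\overline\Omega_\sharp$ is simple, Corollary \ref{cor:maximalsubgpextension} shows that $\overline M\cap N$ is a maximal or novelty maximal subgroup of the almost simple group $N$. Theorem \ref{theorem:LMTgenerators}, together with the subsequent remark extending it to novelty maximal subgroups, then gives $d(\overline M \cap N) \leq 4$.

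The form $\kappa_\sharp$ is of type $\boldU$, $\boldS$ or $\boldO$, and $N \leq \overline\Delta_\sharp$. Hence $(\gamma)_0$ and $(F)_0$ hold for $N$, and this gives a sharper estimate: by Table \ref{table:LMTtable}, only the $\boldO$-type rows can have $d = 4$, and those require $(F)_1$ or $(\gamma)_1$ apart from a few $\mathscr{C}_4$ and $\mathscr{C}_7$ rows. So we either get $d(\overline M\cap N)\leq 3$, giving $d(\overline M) \leq 3 + d(J) \leq 5$, or we must handle the case $d(\overline M\cap N)=4$ with $J$ small. In the latter case $q$ is odd and the form is orthogonal with $n$ even. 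Then $\overline\Sigma/\overline\Gamma$ together with $\overline\Gamma_\sharp/\overline\Delta_\sharp$ gives $d(J)\leq 2$. This yields $d(\overline M)\leq 6$, and better when the $C_2$ factors coincide. The cleaner route is to bound $\delta_{\overline M}(C_2) \leq \delta_{\overline M\cap N}(C_2) + \delta_J(C_2) \leq 4 + 2$ and apply Theorem \ref{theorem:crowns}.

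\textbf{Small cases.} If $\overline\Omega_\sharp$ is not simple, the cases are few: $\Sp_4(2)'$, $\Omega_4^+(q)$ (excluded in case $\boldL$, since $n\geq 3$ in $\mathscr{C}_8$), and small $\SU_3(2)$. These we handle computationally, or with Lemma \ref{lemma:non_simple_gen_bound} as in the proof for $\mathscr{C}_5$.

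The main obstacle is the orthogonal subcase, where $d(\overline M\cap N)$ could reach $4$. There the bound $d(\overline M) \leq 6$ requires checking that $J$ contributes at most two generators. If that fails, we would use Theorem \ref{theorem:crowns} directly on $\delta(C_2)$, noting that the $C_2$ from $C$ (the graph automorphism) is shared.
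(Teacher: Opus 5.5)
Your argument for simple $\overline\Omega_\sharp$ reaches the right bound, but it cuts $H_{\overline G}$ at a different place from the paper. The paper first disposes of $H_{\overline G}\le H_{\overline\Gamma}$: then $\overline H$ is itself a classical group for $\kappa_\sharp$, and Theorem~\ref{theorem:LMTgenerators} and Lemma~\ref{lemma:non_simple_gen_bound} apply directly. Otherwise it writes $H_{\overline G}=H_{\overline G\cap\overline\Gamma}.C_2$. Since $H_{\overline G\cap\overline\Gamma}$ is almost simple with socle $\overline\Omega_\sharp$, Corollary~\ref{cor:maximalsubgpextension} and the uniform bound $5$ of Theorem~\ref{theorem:LMTgenerators} give $d(\overline M)\le d(\overline M\cap\overline\Gamma)+1\le 6$. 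You cut instead at $N=H_{\overline G}\cap\overline\Delta_\sharp$. This costs $d(J)\le 2$ on top and therefore forces the sharper bound $d(\overline M\cap N)\le 4$. That bound is true, but not for the reason you give, since Table~\ref{table:LMTtable} also has $\boldL$- and $\boldU$-type rows with $d=4$. What you need is the following: every row with $d(H)=5$ requires $(F)_1$, and in the $\L_n$ rows also $(\gamma)_1$; and no $N$ with $\overline\Omega_\sharp\le N\le\overline\Delta_\sharp$ meets these conditions. This must also be checked through exceptional isomorphisms such as $\POmega_4^-(q)\cong\L_2(q^2)$, where $\overline\Delta_\sharp$ does induce a field automorphism but there is no graph automorphism. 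The paper's cut avoids this table analysis entirely.

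The genuine gap is in the non-simple case. Discarding $\Omega_4^+(q)$ ``since $n\ge 3$'' is a non sequitur, as $n=4$. For odd $q$ there are $\mathscr C_8$ maximal subgroups of type $\O_4^+(q)$ in almost simple groups with socle $\L_4(q)$; under $\L_4(q)\cong\POmega_6^+(q)$ they correspond to subgroups of type $\O_3(q)\perp\O_3(q)$. For these, $\overline\Omega_\sharp\cong\PSL_2(q)^2$ is not simple. This is an infinite family, so neither your main case nor a computer check covers it. The $\mathscr C_5$ argument you cite does not transfer verbatim either. There $d(J)\le 1$ because $G$ is not of type $\boldL$, whereas here the graph automorphism of $\L_4(q)$ can contribute a further generator, so the analogous estimate only yields $7$. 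When $\overline G\le\overline\Gamma$, the paper's first reduction settles it, because Lemma~\ref{lemma:non_simple_gen_bound} applies to $\overline H$ itself. When $\overline G$ contains a graph automorphism you need a separate argument, for example Proposition~\ref{prop:XSimpleMaxSubgrpXkC} applied to $\PSL_2(q)^2$ as in the proof of Lemma~\ref{lemma:non_simple_gen_bound}, keeping track of the extra $C_2$. (The paper's own paragraph on non-simple $\overline\Omega_\sharp$ asserts that only $n=2$ remains after the small computational cases, so it does not isolate this family either.) A minor slip: the non-simple group over $\mathbb F_2$ is $\Sp_4(2)\cong\Sym(6)$, not $\Sp_4(2)'\cong\Alt(6)$.
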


\begin{proof}
	By Section \ref{section:C8explanation},
	\[
	H_{\overline \Gamma} = \overline \Gamma_\sharp.
	\]
	Hence, if $H_{\overline G} \leq H_{\overline \Gamma}$, then $d(\overline M) \leq 6$ by Theorem \ref{theorem:LMTgenerators} and Lemma \ref{lemma:non_simple_gen_bound}. As such, we assume that $H_{\overline G} \nleq H_{ \overline \Gamma}$, and so $G$ is in case $\boldL$, with $H_{\overline \Sigma} = H_{\overline \Gamma} . C_2$. In particular, $H_{\overline G}$ is equal to $H_{\overline G \cap \overline\Gamma} . C_2$. 
	
	Suppose first that $\overline \Omega_\sharp$ is a simple group. If $\overline M$ contains $\overline \Omega_\sharp$, then $\overline M = \overline \Omega_\sharp . J$ where $J$ is a subgroup of $(\overline \Gamma_\sharp/\overline \Omega_\sharp).C_2$. Thus, by Theorem \ref{theorem:crowns} and \cite[Proposition 3.12]{LMT}, we have that $d(\overline M) \leq 1 + d(J) \leq 5$. So we assume that $\overline M$ does not contain $\overline \Omega_\sharp$.
	
	Hence, $\overline M \cap \overline \Gamma$ is either trivial, or is a maximal or novelty maximal subgroup of $H_{\overline G \cap \overline \Gamma}$, by Corollary \ref{cor:maximalsubgpextension}. Thus, $d(\overline M) \leq d(\overline M \cap \overline \Gamma) + 1 \leq 6$, by Theorem \ref{theorem:LMTgenerators}.
	
	So we suppose that $\overline \Omega_\sharp$ is not a simple group. If $n = 2$ and $q \leq 9$, or if $n \leq 4$ and $q \leq 4$, we check computationally that $d(\overline M) \leq 4$. Otherwise, by \cite[Table 3.5.A]{KL}, $n=2$ and $\overline \Omega_\sharp = \overline \Omega_2^\pm(q)$, which is cyclic. Hence, $\overline M$ is equal to $C.(\overline H/\overline \Omega_\sharp)$ for some cyclic group $C$, or is equal to $\overline \Omega_\sharp.J$ for some $J \leq (\overline \Gamma_\sharp/\overline \Omega_\sharp).C_2$. In the first case we have $d(\overline M) \leq 1 + d(\overline H) \leq 4$ by Theorem \ref{theorem:LMTgenerators}. In the second case we have that $d(\overline M) \leq 2 + d(J) \leq 6$, by \cite[Proposition 3.12]{LMT} and Lemma \ref{lemma:non_simple_gen_bound}.
\end{proof}

\subsection{Maximal subgroups with exceptional automorphisms}

In the preceding section we have shown that $d(\overline M) \leq 7$ when $\overline M$ is a second maximal subgroup of a classical group $\overline G$ with $\overline \Omega \leq \overline G \leq \overline \Sigma$. Hence, the only almost simple groups with classical socle left to consider are those with $\overline G \nleq \overline \Sigma$. As such, we may assume that $\overline G_0$ is one of $\PSp_4(q)$ or $\POmega_8^+(q)$ and so $\Aut(\overline G_0) = \langle \overline \Sigma, \gamma \rangle$ for some graph automorphism $\gamma$ of order 2 or 3 respectively. A maximal subgroup $\overline H$ of $\overline G$ falls into one of two families, those for which $\overline H \cap \overline \Sigma$ is maximal in $\overline G \cap \overline \Sigma$, and those for which $\overline H \cap \overline \Sigma$ is a novelty maximal subgroup of $\overline G \cap \overline \Sigma$.

The novelty maximal subgroups not contained in \cite[Chapter 3]{KL} are described in \cite[Table 2]{BLS}, and we record them in Table \ref{table:excpsnovelty}. The non-novelty maximal subgroups $\overline M \cap \overline \Sigma$ which can appear are also somewhat restricted, since they are normalised by $\Aut(\overline G_0) / \overline \Sigma$. We list the possibilities in Table \ref{table:excpsnonnovelty}, excluding any of type $\mathscr{S}$ since we have $d(\overline M) \leq 5$ for such an $H$. For the $\PSp_4(q)$ novelty cases, it is proved in \cite[Lemma 5.11]{BLS} that $d(\overline M) \leq 7$.

\begin{table}
	\centering
    \caption{The maximal subgroups $\overline H$ of $\overline G$ for which $\overline H \cap \overline \Sigma$ is a novelty maximal subgroup of $\overline G \cap \overline \Sigma$.}
	\begin{tabular}{lll}
		$\overline{G}_0$ & Type of $\overline H$ & Conditions\\ \toprule
		$\PSp_4(q)$ & $O_2^\epsilon(q) \wr \Sym(2)$ & $q > 2$ even \\
		& $O_2^-(q^2).2$ & $q > 2$ even\\
		$\POmega_8^+(q)$ & $\GL_3^\epsilon(q) \times \GL_1^\epsilon(q)$ & \\
		& $\O_2^-(q^2) \times \O_2^-(q^2)$ & \\
		& $[2^9] . \SL_3(2)$ & $q=p>2$ \\ \bottomrule
	\end{tabular}
	\label{table:excpsnovelty}
\end{table}

\begin{table}
	\centering
    \caption{The maximal subgroups $\overline H$ of $\overline G$ for which $\overline H \cap \overline \Sigma$ is a maximal subgroup of $\overline G \cap \overline \Sigma$.}
	\begin{tabular}{lll}
		$\overline{G}_0$ & Type of $\overline H$ & Conditions\\ \toprule
		$\PSp_4(q)$ & $\Sp_4(q_0)$ & $q = q_0^r$ \\
		$\POmega_8^+(q)$ & $\Omega^+_2 (q)^4.(2d)^3.\Sym(4)$ & \\
		& $\Omega^-_2 (q)^4.(2d)^3.\Sym(4)$ & \\ 
		& $\Omega_4^+ (q)^2.[2d].\Sym(2)$ & \\
		& $\Omega ^+_8 ( q_0 )$ & $q = q_0^r$ \\
		& $\SO^+_8 (q_0).2$ & $q = q_0^2$ \\ \bottomrule
	\end{tabular}
	\label{table:excpsnonnovelty}
\end{table}

We remark that the maximal subgroups $\overline H$ for which $\overline H \cap \overline \Sigma$ is a maximal subgroup of $\overline G \cap \overline \Sigma$ were omitted from the analysis in \cite{BLS}, and so the result below which bounds $d(\overline M)$ for these cases is new.

We begin with a useful lemma regarding the number of generators of $\Out(\overline \Omega)$ in the case that $\Omega = \Omega_8^+(q)$.

\begin{lemma}\label{lemma:triality}
	Let $G_0 = \Omega_8^+(q)$, and consider a group $\overline G$ with $\overline G_0 \leq \overline G \leq \Aut(\overline G_0)$. Assume that $\overline G$ contains a triality automorphism of $\overline G_0$. Then $d(\overline G/ \overline G_0) \leq 2$.
\end{lemma}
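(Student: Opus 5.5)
The plan is to reduce the statement to a computation in $\Out(\overline G_0)$, whose structure is known from \cite[Chapter 2]{KL}. For $G_0 = \Omega_8^+(q)$ with $q = p^f$ we have
\[
\Out(\overline G_0) \cong \begin{cases*} \Sym(4) \times C_f & if $q$ is odd,\\ \Sym(3) \times C_f & if $q$ is even, \end{cases*}
\]
where the first factor is generated by the diagonal and graph automorphisms, and $C_f = \langle \ddot\phi \rangle$ is generated by a field automorphism commuting with it (one may choose $\phi$ to commute with the graph automorphisms). The quotient $\overline G/\overline G_0$ is a subgroup $X$ of this group. The hypothesis that $\overline G$ contains a triality means that $X$ contains an element of order 3 mapping onto a $3$-cycle in $\Sym(3)$ (for $q$ odd, take the quotient of $\Sym(4)$ by its Klein four subgroup $V$).

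Write $D$ for the first factor ($\Sym(4)$ or $\Sym(3)$), and let $\pi_1 \colon X \to D$ and $\pi_2 \colon X \to C_f$ be the projections. By Goursat's Lemma, $X$ is a subdirect product of $\pi_1(X) \leq D$ and $\pi_2(X) \leq C_f$, amalgamated over a common quotient $Q$. Since $\pi_2(X)$ is cyclic, $Q$ is cyclic. Moreover, $\pi_1(X)$ contains a $3$-cycle modulo $V$, so it is one of $C_3, \Sym(3)$ (when $q$ is even), or $\Alt(4), \Sym(4)$ (when $q$ is odd). In every case $\pi_1(X)$ is $2$-generated, and its cyclic quotients have order $1$, $2$ or $3$.

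The main step, and the main obstacle, is to show that the subdirect product $X$ is $2$-generated even though each factor can need up to two generators. I would apply Theorem~\ref{theorem:crowns} directly to $X$ rather than build generators by hand. The abelian chief factors of $X$ come from two sources. The first is $\pi_1(X)$, which contributes factors $C_3$, $C_2$, $V$ (non-central) and possibly a central $C_2$. The second is $\pi_2(X)$, which is cyclic and so contributes, for each prime $r$, at most one non-Frattini factor $C_r$. The danger is a central $C_2$ or $C_3$ appearing twice, once from each source. For $C_3$ this cannot happen: if $\pi_1(X) \in \{C_3, \Alt(4)\}$ then its $C_3$ quotient is amalgamated with the $3$-part of $\pi_2(X)$, and if $\pi_1(X)$ is $\Sym(3)$ or $\Sym(4)$ then $C_3$ is non-central. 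For $C_2$, the abelianisation $X/X'$ is a quotient of a subgroup of $\pi_1(X)^{ab} \times C_f$, and $\pi_1(X)^{ab}$ is one of $C_2$, $C_3$ or $1$, which is cyclic. Hence $\delta_X(C_2) \leq 2$ and $\delta_X(C_3) \leq 2$ in the central case. I would check $\theta$, $r$ and $s$ for the non-central factor $V$ (with $r(V) = 2$, and $\delta_X(V) \leq 1$) and for the non-central $C_3$ using Theorem~\ref{theorem:crowns}(3), which should give $h \leq 2$.

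Putting these together, Theorem~\ref{theorem:crowns}(1) gives $d(X) \leq 2$, and therefore $d(\overline G/\overline G_0) \leq 2$. If the crown bookkeeping becomes delicate, I would fall back on explicit generators. Take $x \in X$ to be an element projecting to a generator of $\pi_2(X)$ whose $D$-component lies in the appropriate coset, and take $y$ to be an element of the form (triality or transposition-type element, $1$). Then verify that $\langle x, y \rangle$ surjects onto both factors and contains $X \cap D$.
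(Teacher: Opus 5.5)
Your proposal is correct and is essentially the paper's proof. The image $\pi_1(X)\cong X/(X\cap C_f)$ in $D$ contains an element of order $3$, so each chief factor occurs there at most once. The cyclic central subgroup $X\cap C_f$ adds at most one further central factor, and Theorem~\ref{theorem:crowns} then gives $d(X)\le 2$.

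There are two slips, neither of which affects the conclusion. First, for $q$ odd, $\pi_1(X)$ can also be $C_3$ or $\Sym(3)$, not only $\Alt(4)$ or $\Sym(4)$. Second, the $C_3$ quotient of $\pi_1(X)$ need not be amalgamated with $\pi_2(X)$; for example $X\cong C_3\times C_3$ when $3\mid f$. A central factor occurring twice is not actually a danger, since then $d((L_A)_2)=2$. Moreover, your observation that $X/X'$ embeds in $\pi_1(X)^{\mathrm{ab}}\times C_f$ already gives $\delta_X(A)\le 2$ for every central $A$, including $C_3$.
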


\begin{proof}
	Note, $\Out(\overline G_0)$ is isomorphic to $\Sym(l) \times C_f$, with $l = 3$ when $q$ is even, and $l = 4$ when $q$ is odd. Consider a subgroup $K$ of $\Sym(l) \times C_f$ such that $K/(K \cap C_f)$ contains an element of $\Sym(l)$ of order 3. This implies that $\delta_{K/(K \cap C_f)}(A) \leq 1$, and hence $\delta_K(A) \leq 2$, with equality only if $A$ is central. Thus, the result is proven by Theorem \ref{theorem:crowns}.
\end{proof}

Let $H_{\overline X}$ be defined as previously for any subgroup $X$ with $\Omega \leq X \leq \Sigma$, and let $H_X$ be the preimage of this group in $X$. Let $\pi$ denote the projection map $H_\Sigma \to H_{\overline \Sigma}$. Additionally, for a group $L \leq \overline \Sigma$, we let $L^\wedge$ be the full preimage of $L$ in $ \Sigma$.

\begin{prop}
	Let $\overline G_0 = \POmega_8^+(q)$. Suppose that $\overline H$ is a maximal subgroup of $\overline G \nleq \overline \Sigma$, and that $\overline H \cap \overline \Sigma$ is a novelty maximal subgroup of $\overline G \cap \overline \Sigma$. Then for any maximal subgroup $\overline M$ of $\overline H$, we have $d(\overline M) \leq 7$.
\end{prop}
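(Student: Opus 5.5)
The plan is to work through the three novelty types of Table \ref{table:excpsnovelty} for $\POmega_8^+(q)$ one at a time. In each case the same two facts do most of the work. First, since $\overline G \nleq \overline \Sigma$ and $\overline H\cap\overline\Sigma$ is a novelty, $\overline G$ contains a triality automorphism. Hence $d(\overline G/\overline G_0)\le 2$ by Lemma \ref{lemma:triality}, and in the proof of that lemma $\delta_{\overline G/\overline G_0}(A)\le 2$, with equality only for central $A$. Second, $\overline H\overline G_0=\overline G$, so $\overline H/(\overline H\cap \overline G_0)\cong \overline G/\overline G_0$. Thus for any $\overline M\mleq \overline H$ we will have
\[
\delta_{\overline M}(A)\le \delta_{\overline M,\overline M\cap\overline G_0}(A)+\delta_{\overline G/\overline G_0}(A)\le \delta_{\overline M,\overline M\cap\overline G_0}(A)+2 ,
\]
and the whole problem reduces to bounding the chief factors of $\overline M$ inside $\overline H\cap \overline G_0$, using the structure given in \cite[Table 2]{BLS}.

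For type $[2^9].\SL_3(2)$ with $q=p$, the group is small and explicit, and I would just compute with Magma. The result should be $d(\overline M)\le 7$, since $\overline G/\overline G_0$ is a subgroup of $\Sym(4)$. For type $\GL_3^\epsilon(q)\times \GL_1^\epsilon(q)$, I expect $\overline H\cap\overline G_0$ to be $(\SL^\epsilon_3(q)\circ\cdots).[\text{small abelian}]$. Triality permutes the three $\mathscr C_1$-like pieces, which are the stabilisers of a nondegenerate $2$-space, a $\GL_4$-type subgroup and its image, and their intersection gives this subgroup. I would then choose the $\overline H$-chief series through $\overline\Omega_1=\PSL_3^\epsilon(q)$ and the cyclic torus section. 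After that I apply Proposition \ref{prop:maximalsubgroupgeneralmethod} to split into cases according to which layer $\overline M$ fails to contain.
\begin{enumerate}[(a)]
\item $\overline M$ misses the simple layer. Then $\overline M\cap\PSL^\epsilon_3(q)$ is maximal or novelty maximal by Corollary \ref{cor:maximalsubgpextension}, and Lemma \ref{lemma:maxsubgpsSigmai} gives at most $5$ for $C_2$ (and $3$ otherwise). Since the torus part is cyclic and the outer part is $2$-generated, we get $\le 5$ for $C_2$ in that layer plus at most $2$ more.
\item $\overline M$ contains the simple layer. Then $d(\overline M)\le 1+d(\text{abelian part})+2$, which is small.
\end{enumerate}
Theorem \ref{theorem:crowns} then yields $d(\overline M)\le 7$.

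For type $\O_2^-(q^2)\times\O_2^-(q^2)$, the group $\overline H$ is solvable: a product of two dihedral-type pieces of order about $2(q^2+1)$, extended by $\overline G/\overline G_0$. Here I would imitate the $\mathscr C_1$ argument (Proposition \ref{prop:C1secondmaxls}) via Proposition \ref{prop:maxsubgpdirectproduct}. Each factor $N_i$ is cyclic-by-$C_2$ with $d(N_i)\le 2$. If $\overline M$ contains some $N_i$, then $\delta \le 2 + \delta$ of a maximal subgroup of a dihedral-by-outer group. If $\overline M$ is subdirect, then $\overline M=(M_1\times M_2).C$ with $M_i\unlhd N_i$, and $d\le d(M_1)+d(P_2)$. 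In either case the count for $C_2$ is at most $2+2+2$, and for odd-order factors it is smaller.

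The main obstacle I expect is the $C_2$ count. The $C_2$-chief factors coming from $\overline G/\overline G_0$ (up to two, e.g.\ $\Sym(4)$ combined with the field part $C_f$) could combine with $C_2$'s in the dihedral pieces or in $\GL^\epsilon_1$ and push $\delta_{\overline M}(C_2)$ to $8$. To rule this out I would use the fact that triality acts nontrivially, by a $3$-cycle, on the relevant factors. This forces several of those $C_2$ sections to be non-central or to lie in $[\overline M,\overline M]$, so Lemma \ref{lemma:jumpingbabyresultfull} makes them Frattini. Where this is not transparent, I would fall back on a Magma check for small $q$.
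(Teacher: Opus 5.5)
The step that does not go through is case (a) for type $\GL_3^\epsilon(q)\times\GL_1^\epsilon(q)$, which is where the real work lies. Lemma \ref{lemma:maxsubgpsSigmai} only controls a maximal subgroup of a group lying between $\Omega$ and $\Sigma^*$ for the $3$-dimensional geometry. But $\overline H$ contains more solvable material than that: $H_\Omega=(C_{(q-\epsilon)/d}\times\frac1d\GL_3^\epsilon(q)).C_2^d$ with $d=(q-1,2)$. So besides the $\GL_3^\epsilon$-layer there is a further torus $C_{(q-\epsilon)/d}$ and a section $C_2^d$, each of which can carry non-Frattini $C_2$ chief factors. On top of that, $\overline G/\overline G_0$ contributes up to two more. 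Your tally ``$5$ in that layer plus at most $2$ more'' has no room for these, and a naive count exceeds $7$. Your proposed repair (triality acting by a $3$-cycle to make sections Frattini, with Magma for small $q$) is not carried out and cannot cover all $q$.

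The paper avoids this bookkeeping as follows. Let $Q$ be the perfect core of $H_{\overline\Omega}$; it is characteristic, hence normal in $\overline H$. If $\overline M\not\ge Q$, then $\overline M/(\overline M\cap Q)\cong\overline H/Q$. Moreover $\overline M\cap Q$ is a maximal or novelty maximal subgroup of the quasisimple group itself, so $(F)_0$ and $(\gamma)_0$ hold and Theorem \ref{theorem:LMTgenerators} gives $d(\overline M\cap Q)\le 4$. The bound $d(\overline H/Q)\le 3$ is imported from \cite[Proposition 7.1]{LMT}, so $d(\overline M)\le 4+3$. A bound of this kind on $\overline H$ modulo its perfect core is the ingredient your argument lacks.

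The $[2^9].\SL_3(2)$ case has a related problem. Here $q=p$ ranges over all odd primes, so ``compute with Magma'' is not a finite verification until you show that $\overline H$, or at least its chief-factor data, is independent of $p$. Also, $\overline G/\overline G_0\le\Sym(4)$ does not by itself give $7$. The paper's argument is uniform in $p$. First, $H_{\overline\Omega}$ embeds in $(C_2\wr\Sym(8))/\diag(C_2^8)$, meeting the base in $\del(C_2^8)/\diag(C_2^8)\cong C_2^6$. Its image in $\Sym(8)$ is $\AGL_3(2)$, so $H_{\overline\Omega}=C_2^3.C_2^3.C_2^3.\L_3(2)$ with $\L_3(2)$ irreducible on each layer. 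From this, $d(\overline M)\le 3$ if $\overline M\not\ge O_2(H_{\overline\Omega})$, and $\delta_{\overline M}(C_2)\le 3+1+2$ otherwise.

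Two smaller points. First, your inequality $\delta_{\overline M}(A)\le\delta_{\overline M,\overline M\cap\overline G_0}(A)+2$ presumes $\overline M\overline G_0=\overline G$. The case $\overline M\ge H_{\overline\Omega}$ must therefore be handled separately; the paper uses Theorem \ref{theorem:LMTgenerators}. Second, for type $\O_2^-(q^2)\times\O_2^-(q^2)$ your final count agrees with the paper's $\delta_{\overline M}(C_2)\le 4+2$. However, Proposition \ref{prop:maxsubgpdirectproduct} is not available there, since the triality element need not normalise the two dihedral factors. It is also unnecessary: as $l$ is odd, every subgroup of $D_{2l}^2$ has at most two non-Frattini factors $C_r$ for each prime $r$, and Corollary \ref{cor:upperbounddeltaGN} finishes the count.
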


\begin{proof}
	If $\overline M$ contains $H_{\overline \Omega}$, then $d(\overline M) \leq 5$ by Theorem \ref{theorem:LMTgenerators}, so we assume in the remainder of the proof that this is not the case.
	
	Suppose that $\overline H$ is of type $\GL_3^\epsilon(q) \times \GL_1^\epsilon(q)$, for $\epsilon \in \{\pm 1\}$. By \cite[Table 1]{K}, if we define $d := (q-1, 2)$, then
	\[
	H_\Omega = \left( C_{\frac{q - \epsilon}{d}} \times \frac{1}{d} \GL_3^\epsilon(q) \right) . C_2^d.
	\]
	In fact, by \cite[Section 3.2]{K}, we have that $H_\Gamma$ is the intersection of two maximal subgroups of $\Gamma$. The first is a maximal subgroup $R$ of type $\mathscr{C}_1$ which is the stabiliser of an $\epsilon 2$-space. The second is a maximal subgroup $K$ which is either of type $\mathscr{C}_2$ when $\epsilon = +$, or of type $\mathscr{C}_3$ if $\epsilon = -$. 
	
	For any $X \in \{\Omega, S, I, \Delta, \Gamma, \Sigma, \Sigma^*\}$, we let $X_i = X(V_i)$ where $V_1$ is a $\epsilon 2$-space and $V_2 = V_1^\perp$.
	
	When $\epsilon = +$, if we define $y, \phi$ as in the proof of \cite[Proposition 4.2.7]{KL}, then $H_\Gamma = R_\Gamma \cap K_\Gamma$ implies that
	\[
	H_\Gamma = \langle I_1 \times I_2, y, \phi \rangle \leq \Sigma_1 \times \Sigma_2 .
	\]
	In particular, $H_\Gamma = (I_1 \times I_2) . (C_2 \times C_f)$. On the other hand, if $\epsilon = -$, then we have
	\[
	H_\Gamma = \langle I_1 \times I_2, \delta, \phi_\sharp \rangle \leq \Sigma_1 \times \Sigma_2 
	\]
	where $\phi_\sharp$ is as defined in \cite[(4.3.8)]{KL}, and $\delta$ as in \cite[Section 2.7]{KL}. In particular, $H_\Gamma = (I_1 \times I_2) . C_{q-1} . C_{2f}$. Finally, $\overline H = H_{\overline G \cap \overline \Gamma } . C_3$.
	
	Now consider $\overline M \mleq \overline H$. Note that $\pi(\Omega_2)$ is a normal subgroup of $\overline H$, since it is equal to the perfect core of $H_{\overline \Omega}$, and hence it is a characteristic subgroup of $H_{\overline \Omega}$. If $\overline M$ contains $ \pi(\Omega_2)$, then
	\[
	\overline M = \pi(\Omega_2). J . C_3
	\]
	where $J$ is a subgroup of $(\pi(I_1 \times I_2)/\pi(\Omega_2)) . \overline \Sigma_2/ \overline I_2$. Since $I_1$ and $I_2/\Omega_2$ are cyclic, and any subgroup of $\overline \Sigma_2/\overline I_2$ is 2-generator, we have that $d(J) \leq 4$. Thus, since the only non-Frattini chief factor of $\overline M$ in $\pi(\Omega_2)$ is $\overline \Omega_2$, which is non-abelian, we obtain that $d(\overline M) \leq d(J) + d(C_3) \leq 5$ by Theorem \ref{theorem:crowns}.
	
	So we may assume that $\overline M$ does not contain $ \pi(\Omega_2) \cong \Omega_2$. By Proposition \ref{prop:maxsubgpsextensioncenter1}, the group $\overline M \cap \pi(\Omega_2)$ is equal to a maximal or novelty maximal subgroup of $\Omega_2$, which we denote by $J$. By Theorem \ref{theorem:LMTgenerators}, since $(F)_0$ and $(\gamma)_0$ hold we have that $d(J) \leq 4$. Thus, by \cite[Proposition 7.1]{LMT},
	\[
	d(\overline M) \leq d(J) + d(\overline H/\pi(\Omega_2)) \leq 4 + 3 = 7.
	\]
	
	Suppose next that $H$ is of type $\O_2^-(q^2) \times \O_2^-(q^2)$. Then we have that
	\[
	H_{\overline \Omega} = (D_{2l} \times D_{2l}).[2^2]
	\]
	where $l = (q^2+1)/2$, and as such is always odd. Thus, for any subgroup $K \leq D_{2l}^2$, $K$ has 2 non-Frattini chief factors isomorphic to $C_r$ for each $r \mid 2l$, and $K$ has no other non-Frattini chief factors. Thus, by Corollary \ref{cor:upperbounddeltaGN}, we have
	\begin{align*}
		\delta_{\overline M}(A) & \leq \delta_{\overline M, \overline M \cap \overline \Omega}(A) + \delta_{\overline H/H_{\overline \Omega}}(A)\\
		& \leq \begin{cases*}
			4 + 2 \qquad & if $A \cong C_2$,\\
			2 + 2 & otherwise.
		\end{cases*}
	\end{align*}
	Thus, we have shown that $d(\overline M) \leq 6$.
	
	We finally assume that $H$ is of type $[2^9].\SL_3(2)$. Then, $H_{\overline \Omega}$ embeds in $\frac{C_2 \wr \Sym(8)}{\diag(C_2^8)}$, with 
	\[
	H_{\overline{\Omega}} \cap \frac{C_2^8}{\diag(C_2^8)} = \frac{\del(C_2^8)}{\diag(C_2^8)} \cong C_2^6
	\]
	Thus, $C_2^3. \L_3(2) \leq \Sym(8)$. By the proof of \cite[Proposition 3.4.2]{K}, the action of $\L_3(2)$ on $C_2^3$ is the natural action. We can use Magma to confirm that, up to conjugation, the only group of this shape in $\Sym(8)$ is the affine group $C_2^3 : \L_3(2)$. As such, $H_{\overline{\Omega}} = C_2^3 . C_2^3 . C_2^3 . \L_3(2)$, with $\L_3(2)$ acting irreducibly on each $C_2^3$.
	
	Note, $[2^9] = O_2(H_{\overline \Omega})$ is a normal subgroup of $\overline H$. Let a normal series of $H_{\overline\Omega}$ through $O_2(H_{\overline \Omega})$ be denoted by
	\[
	1 = X_0 < X_1 < X_2 < X_3 = O_2(H_{\overline \Omega}).
	\]
	By above, $X_i/X_{i-1}$ is isomorphic to $C_2^3$, and $X_2/X_0 \cong C_2^6$. The groups $X_i$ are not necessarily normal in $\overline H$. However, we can compute the chief factors of $M_{\overline \Omega}$ contained in each $X_i/X_{i-1}$, and then use Corollary \ref{cor:upperbounddeltaGN} to bound the chief factors of $\overline M$.
	
	Suppose that $\overline M$ does not contain $O_2(H_{\overline \Omega})$, and thus
	\[
	\overline M \cap \overline \Omega = (\overline M \cap O_2(H_{\overline \Omega})) . \L_3(2).
	\]
	Since $\overline M \cap \overline \Omega$ projects onto $\L_3(2)$, any chief factor of $\overline M \cap \overline \Omega$ in $\overline M \cap O_2(H_{\overline \Omega})$ is isomorphic to $C_2^3$, and there are at most 2 such chief factors. Thus, 
	\begin{align*}
		\delta_{\overline M}(A) & = \delta_{\overline M, \overline M \cap O_2(H_{\overline \Omega})}(A) + \delta_{\L_3(2)}(A) + \delta_{\overline H/H_{\overline \Omega}}(A) \\
		& \leq \begin{cases*}
			2 + 0 + 0 \qquad & if $A \cong C_2^{3m}$ for some $m$,\\
			0 + 1 + 0  & if $A \cong \L_3(2)$,\\
			0 + 0 + 2 & otherwise.
		\end{cases*}
	\end{align*}
	This implies that $d(\overline M) \leq 3$ by Theorem \ref{theorem:crowns}.
	
	Next we assume that $\overline{M}$ contains $O_2(H_{\overline \Omega})$. Thus, by Corollary \ref{cor:maximalsubgpextension}, $\overline M \cap \overline \Omega$ is equal to $[2^9].J$, where $J$ is some maximal or novelty maximal subgroup of $\L_3(2)$. Since $\L_3(2)$ acts naturally on the three copies of $C_2^3$, we have $\delta_{\overline M \cap \overline \Omega, X_i/X_{i-1}}(A) \leq 1$ for any $i$ and any chief factor $A$. Additionally, $\delta_{\overline M \cap \overline \Omega, X_i/X_{i-1}}(A) = 0 $ for any chief factor $A$ not isomorphic to $C_2^a$ with $1 \leq a \leq 3$. Using Magma, we can determine that $\delta_{J}(A) \leq 1$ for any chief factor $A$. Thus, by Corollary \ref{cor:upperbounddeltaGN},
	\begin{align*}
		\delta_{\overline M}(A) & = \delta_{\overline M, [2^9]}(A) + \delta_{\overline M, J}(A) + \delta_{\overline G / \overline G_0}(A)\\
		& \leq \begin{cases*}
			3 + 1 + 2 \qquad & if $A \cong C_2$,\\
			3 + 1 + 1 \qquad & if $A \cong C_2^c$, for $c > 1$,\\
			0 + 1 + 2 \qquad & otherwise.
		\end{cases*}
	\end{align*}
	Hence, $d(\overline M) \leq 6$.
\end{proof}

We now treat the non-novelty maximal subgroups.

\begin{prop}
	Suppose that $\overline H$ is a maximal subgroup of $\overline G \nleq \overline \Sigma$, and that $\overline H \cap \overline \Sigma$ is a maximal subgroup of $\overline G \cap \overline \Sigma$. Then for any maximal subgroup $\overline M$ of $\overline H$, we have $d(\overline M) \leq 7$.
\end{prop}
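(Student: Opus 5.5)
The plan is to go through the rows of Table \ref{table:excpsnonnovelty} one at a time. In each case we first dispose of the subgroups $\overline M$ that contain $H_{\overline \Omega}$. Such an $\overline M$ is then of the form $H_{\overline L}.J$, where $J$ is a subgroup of the outer part. Using Theorem \ref{theorem:LMTgenerators} (which also applies to novelty maximal subgroups, by the remark following it) together with Lemma \ref{lemma:triality} to control $d(\overline G/\overline G_0)$, this gives $d(\overline M)\leq 5$. So we may assume $\overline M$ does not contain $H_{\overline \Omega}$. We then apply the method of Section \ref{section:prelimsmethod}: choose a normal series of $\overline H$ through $H_{\overline \Omega}$ and the relevant characteristic subgroups, and locate the first term that $\overline M$ fails to contain.

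\emph{Subfield cases.} These are $\Sp_4(q_0)$ in $\PSp_4(q)$, and $\Omega_8^+(q_0)$ and $\SO_8^+(q_0).2$ in $\POmega_8^+(q)$. Here $H_{\overline \Omega}$ contains a simple group $T$ (namely $\PSp_4(q_0)$ or $\POmega_8^+(q_0)$, apart from small $q_0$, which we check computationally). We argue as in the proof for $\mathscr{C}_5$.
\begin{itemize}
\item If $\overline M\geq T$, then $\overline M = T.J$ with $J\leq \Out(T)\times C_r$, and Theorem \ref{theorem:crowns} gives $d(\overline M)\leq 1+d(J)$. This is at most $5$, since Lemma \ref{lemma:triality} or the cyclic structure bounds $d(J)$.
\item Otherwise, by Corollary \ref{cor:maximalsubgpextension}, $\overline M\cap \overline H T$-part is a maximal or novelty maximal subgroup of an almost simple group with socle $T$. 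Then Theorem \ref{theorem:LMTgenerators} bounds it by $5$, and the remaining quotient is cyclic of order dividing $r$ (or $2$). This gives $d(\overline M)\leq 7$.
\end{itemize}
The bound $5$ from Theorem \ref{theorem:LMTgenerators} can only occur for $T$ of type $\boldO$ with $(F)_1$ or $(\gamma)_1$ holding relative to $T$. Where needed, we sharpen this using the fact that the graph automorphism forces $\delta(C_2)$ to drop.

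\emph{Imprimitive cases.} These are $\Omega_2^\pm(q)^4.(2d)^3.\Sym(4)$ and $\Omega_4^+(q)^2.[2d].\Sym(2)$. We treat them in parallel with Section \ref{section:classicalsC2}, reusing the wreath product machinery: Corollary \ref{cor:CFsinwreathproduct}, Propositions \ref{prop:XNonsimpleMaxSubgpXkC} and \ref{prop:nonsubdirectwreathcase}, and Lemmas \ref{lemma:delgens} and \ref{lemma:jumpingbabyresultdel}.
\begin{itemize}
\item For the $\Omega_2^\pm(q)^4$ case, $H_{\overline\Omega}$ is soluble-by-$\Sym(4)$ with cyclic factors. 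So every non-Frattini abelian chief factor inside the base appears with multiplicity at most $2$ per prime, by the dihedral argument in the proof of Proposition \ref{prop:lastcaseC2}. The top $\overline H/H_{\overline\Omega}$ contains triality, so Lemma \ref{lemma:triality} bounds it by $2$ generators, and we conclude with Theorem \ref{theorem:crowns}.
\item For the $\Omega_4^+(q)^2$ case, $\Omega_4^+(q)=\SL_2(q)\circ\SL_2(q)$. This gives a base of four $\PSL_2(q)$ factors permuted by a group containing the triality image. We use Proposition \ref{prop:XSimpleMaxSubgrpXkC} on $\PSL_2(q)^4$, with $\delta_{R_i}(A)\leq 2$ from \cite[Table 8.1]{BHRD}, as in Lemma \ref{lemma:non_simple_gen_bound}.
\end{itemize}

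I expect the main obstacle to be the $C_2$-count in these imprimitive $\POmega_8^+(q)$ cases with $q$ odd, where $\Out$ is $\Sym(4)\times C_f$. The naive estimate adds up the $C_2$ factors from the base (up to about 4), from $(2d)^3$, and from the top, and can exceed $7$. To get past this, I would first try to show that the presence of an element of order $3$ (triality) in $\overline G/\overline G_0$ makes the $C_2^3$ and $C_2^2$ sections irreducible and non-central. That would leave only $O(1)$ central $C_2$ chief factors. I would then settle the smallest residual configurations by a Magma computation on $\overline H/O_{p'}$-type quotients, in the spirit of Lemma \ref{lemma:reducingLMTtofinitegp}.
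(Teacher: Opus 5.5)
Your reduction for $\overline M\geq H_{\overline\Omega}$ and your treatment of the three subfield rows essentially match the paper. You split on whether $\overline M$ contains the simple subfield group, then apply Corollary~\ref{cor:maximalsubgpextension} and Theorem~\ref{theorem:LMTgenerators}, and absorb a small outer part. The outer part need not be cyclic once triality is present, but it is still $2$-generated, which suffices.

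\emph{The gap.} The genuine gap is in the two imprimitive $\POmega_8^+(q)$ rows, which is where the work of this proposition lies. You concede that your $C_2$-count can exceed $7$ and only say what you ``would try''.

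\emph{A structural problem with the tools.} Corollary~\ref{cor:CFsinwreathproduct}, Proposition~\ref{prop:nonsubdirectwreathcase} and Lemmas~\ref{lemma:delgens} and \ref{lemma:jumpingbabyresultdel} all require the group to lie in a wreath product $E\wr\Sym(t)$ with the stated properties. Here $\overline H$ contains a triality element, which does not preserve $V_1\oplus\cdots\oplus V_t$. Only $H_{(\overline G\cap\overline\Gamma)^\wedge}$ has the wreath structure, and the normal series of Section~\ref{section:classicalsC2} need not be $\overline H$-invariant.

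\emph{Further problems.} Your ``multiplicity at most $2$ per prime'' claim is unjustified at $r=2$: in $H_{(\overline G\cap\overline\Gamma)^\wedge}$ the base contributes two $C_2$'s and three $C_2^2$'s. Also, $q\leq 3$ must be treated separately, since $\PSL_2(q)$ is not simple there and Proposition~\ref{prop:XSimpleMaxSubgrpXkC} on $\PSL_2(q)^4$ is unavailable.

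\emph{The missing idea.} Work only with subgroups that are characteristic in $H_{\overline\Omega}$, hence normal in $\overline H$. Apply the wreath machinery to $H_{(\overline G\cap\overline\Gamma)^\wedge}$, and transfer chief-factor information to $\overline H$ via Proposition~\ref{prop:CFsFromNormalSubgp}.

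\emph{The case $\Omega_1=\Omega_4^+(q)$, $t=2$, $q>3$.} The perfect core $\pi(\Omega_1^2)$ of $H_{\overline\Omega}$ is normal in $\overline H$.
\begin{itemize}
\item If $\overline M$ does not contain it, one gets $d(\overline M/(\overline M\cap\pi(\Omega_1^2)))\leq 5$. This follows as in Proposition~\ref{prop:boundtopC2}, together with control of the $C_3$ chief factors. When $\overline M\cap\pi(\Omega_1^2)=\pi((R\circ R)^2)$, transitivity of $\overline M$ on the four $\SL_2(q)$ factors means you only add $d(R)\leq 2$.
\item If $\overline M$ does contain it, a direct count using Lemma~\ref{lemma:triality} gives $\delta_{\overline M}(C_2)\leq 5$.
\end{itemize}

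\emph{The case $\Omega_1=\Omega_2^\pm(q)$, $t=4$.} The $\overline H$-chief factors in $H_{\overline\Omega}$ coincide with those of $H_{\overline G\cap\overline\Gamma}$, except possibly a single $C_2^6$ formed from the three permuted $C_2^2$'s.
\begin{itemize}
\item First peel off the characteristic subgroups $\pi(C^4_{lp_1\cdots p_k})$ of the cyclic base for the primes $p_k\neq 2,3$. Each such step gives $d(\overline M)\leq 1+1+5$.
\item This leaves the finite family $C_b^4.\del((I_1/\Omega_1)^4).\Sym(4)\leq D_{4b}\wr\Sym(4)$ with $b\in\{2,3,6\}$. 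There a Magma computation shows that all subgroups of index $2,2^2,2^6,3,3^2$ are $4$-generated.
\item This gives $d(\overline M)\leq 1+4+2$, or $2+3+2$ when the projection to $\Sym(4)$ is intransitive.
\end{itemize}
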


\begin{proof}
	If $\overline M$ contains $H_{\overline \Omega}$, then $d(\overline M) \leq 5$, by Theorem \ref{theorem:LMTgenerators}. Thus, we assume that $\overline M$ does not contain $H_{\overline \Omega}$ throughout. 
	
	Assume that $\overline G_0 = \PSp_4(q)$ with $q > 2$ even. Then, $\overline H$ is of type $\Sp_4(q_0)$ for some $q_0$ such that $q_0^r = q$, where $r$ is a prime. Let $\mathbb{F}_\sharp := \mathbb{F}_{q_0}$ which is a subfield of $\mathbb{F}_q$. By Section \ref{section:C5explanation}, we have
	\[
	H_{\overline \Delta} = \overline \Delta_\sharp,
	\]
	and $H_{\overline \Omega}$ contains $\overline \Omega_\sharp$. Let $\overline H = N.J$ where $N = \overline H \cap \overline \Delta_\sharp$ and $J = \overline H/N$. Note that $N$ is an almost simple group, since $q_0=2$ implies that $N = \Sym(6)$. Any subgroup of $\Aut(\overline G_0)/\overline \Delta$ is 2-generated, by \cite[Section 2.4]{KL}. Hence, if $\overline M$ contains $N$ then $d(\overline M) \leq d(N/\overline \Omega_\sharp') + 2 \leq 3$ by \cite[Section 2.4]{KL} and Theorem \ref{theorem:crowns}. On the other hand, if $\overline M$ does not contain $N$, then $\overline M \cap N$ is either: trivial, a subgroup containing $\overline \Omega_\sharp$, a maximal subgroup of $N$, or a novelty maximal subgroup of $N$, by Corollary \ref{cor:maximalsubgpextension}. Thus, by Theorem \ref{theorem:LMTgenerators} and \cite[Proposition 3.12]{LMT},
	\[
	d(\overline M) \leq d(\overline M \cap N) + 2 \leq 5.
	\]
	
	Assume for the remainder of this proof that $\overline G_0 = \POmega_8^+(q)$. Additionally, we can treat $q = 2, 3$ computationally to show $d(\overline M) \leq 6$ in this case, and so we may assume that $q > 3$.
	
	Suppose first that $\overline H$ is in $\mathscr{C}_2$ and is of type $I_1 \wr \Sym(t)$, so $\Omega_1$ is either equal to $\Omega_2^\pm(q)$ or $\Omega_4^+(q)$. We treat this case similarly to the $\mathscr{C}_2$ case in Section \ref{section:classicalsC2}. However, this case requires more care, since the normal series given in this section is no longer guaranteed to be a normal series of $\overline H$. Let $N_1$ be the largest group such that $N_1^t \leq H_{(\overline G \cap \overline \Gamma)^\wedge}$, and $P_1$ the projection of $H_{(\overline G \cap \overline \Gamma)^\wedge} \cap \Sigma_1^t$ to the first coordinate. Hence,
	\[
	H_{(\overline G \cap \overline \Gamma)^\wedge} = N_1^t . \del((I_1/N_1)^t). \diag((P_1/I_1)^t) . \Sym(t).
	\]
	
	Assume that $\Omega_1 = \Omega_4^+(q)$ with $t=2$, and recall that $q > 3$. This implies that $ \pi(\Omega_1^t) = \pi((\SL_2(q) \circ \SL_2(q))^{t})$, is equal to the perfect core of $H_{\overline \Omega}$, and hence is characteristic. Additionally, $H_{\overline \Omega}$ contains $\pi(\Omega_1^t.\del((I_1/\Omega)^t))$, so $\overline \Omega_1^t$ is a chief factor of $\overline H$.
	
	Suppose that $\overline M$ does not contain $\pi(\Omega_1^t)$. Then, since $t=2$, we have
	\[
	d\left( \frac{\overline M \cap \overline \Gamma}{ \overline M \cap \pi(\Omega_1^t)} \right) \leq 5,
	\]
	by the same method as in Proposition \ref{prop:boundtopC2}. Additionally, by Corollary \ref{cor:upperbounddeltaGN}, there is at most one non-Frattini chief factor of $\overline M$ in $\frac{\overline M \cap \overline \Gamma}{ \pi(M \cap \Omega_1^t)}$ isomorphic to $C_3$, so $d(\overline M / \pi(M \cap \Omega_1^t)) \leq 5$ by Theorem \ref{theorem:crowns}.
	
	By Proposition \ref{prop:CFsFromNormalSubgp}, $(\overline M \cap \pi(\Omega_1^t))/\pi(Z(\Omega_1^t))$ is isomorphic to: $\PSL_2(q)^b$ for some $0 \leq b \leq 2$; or to $\overline R^{2t}$ where $\overline R$ is a maximal, or novelty maximal subgroup of $\PSL_2(q)$. If $(\overline M \cap \pi(\Omega_1^t))/\pi(Z(\Omega_1^t))$ is isomorphic to $\PSL_2(q)^b$, then $d(\overline M \cap \pi(\Omega_1^t)) \leq 2$ by Theorem \ref{theorem:crowns}, so $d(\overline M) \leq 7$. 
	
	Hence, we assume that $\overline M \cap \pi(\Omega_1^t) = \pi((R \circ R)^t)$ for some group $R$ which is either a maximal or novelty maximal subgroup of $\SL_2(q)$. Note that $d(R) \leq 2$ by \cite[Tables 8.1, 8.2]{BHRD}. Since $\overline M \cap \overline \Gamma$ projects onto $\Sym(t)$ and onto $\pi(\del(I_1^t))/\pi(\Omega_1^t)$, it acts transitively on the $2t$ copies of $\SL_2(q)$ in $\pi(\Omega_1^t)$. Hence,
	\[
	d(\overline M) \leq d(R) + d\left( \frac{\overline M}{ \overline M \cap \pi(\Omega_1^t)} \right) \leq 7,
	\]
	by above. On the other hand, if $\overline M$ contains $\pi(\Omega_1^t)$, then
	\[
	(\overline M \cap \overline \Omega)/\pi(\Omega_1^t) < [2d] . \Sym(2).
	\]
	Hence, by Corollary \ref{cor:upperbounddeltaGN}, 
	\[
	\delta_{\overline M, (\overline M \cap \overline \Omega)/\pi(\Omega_1^t)}(A) \leq \begin{cases*}
		3 \qquad & if $A \cong C_2$,\\
		2 & if $A \cong C_2^2, C_2^3$,\\
		0 & otherwise.
	\end{cases*}
	\]
	Additionally, $\delta_{\overline M,\pi(\Omega_1^t)}(A) \leq 4$ if $A \cong \PSL_2(q)^b$ for some $b \leq 4$, and is 0 for any other chief factor $A$. Finally, $\delta_{\overline M/(\overline M \cap \overline \Omega)}(A) \leq 2$ for any chief factor $A$, by Lemma \ref{lemma:triality}, since $\overline M/(\overline M \cap \overline \Omega) \cong \overline G/\overline \Omega$. Thus,
	\[
	\delta_{\overline M}(A) \leq \begin{cases*}
		5 \qquad & if $A \cong C_2$,\\
		4 \qquad & otherwise,
	\end{cases*}
	\]
	implying that $d(\overline M) \leq 5$ by Theorem \ref{theorem:crowns}.
	
	We next assume that $\Omega_1 = \Omega_2^\pm(q) = C_{\frac{q \mp 1}{(q-1,2)}}$ and that $t=4$. We begin by classifying the possible non-Frattini chief factors of $\overline H$ contained in $H_{ \overline \Omega}$. Note that, by Proposition \ref{prop:CFsFromNormalSubgp}, the non-Frattini chief factors of $\overline H$ are equal to products of non-Frattini chief factors of $H_{\overline G \cap \overline \Gamma}$ in $H_{\overline \Omega}$. Additionally, $\overline H$ permutes these chief factors of $H_{\overline G \cap \overline \Gamma}$ in $H_{\overline \Omega}$. Thus, a chief factor of $\overline H$ in $H_{\overline \Omega}$ is not equal to a chief factor of $H_{\overline G \cap \overline \Gamma}$ in $H_{\overline \Omega}$ only if $H_{\overline \Omega}$ contains 3 isomorphic non-Frattini chief factors of $H_{\overline G \cap \overline \Gamma}$.
	
	If $\frac{q \mp 1}{(q-1,2)}$ has prime factorisation equal to
	\[
	\frac{q \mp 1}{(q-1,2)} = p_1 ^{i_1} \cdots p_j^{i_j},
	\]
	let $l = p_1^{i_1-1} \cdots p_j^{i_j-1}$ and $m = p_1 \cdots p_j$, so that $\Phi(\Omega_1^t) = C_l^t$ and $\Omega_1^t/\Phi(\Omega_1^t) \cong C_m^t$. Since $H_{(\overline G \cap \overline \Gamma)^\wedge}$ satisfies the conditions of Corollary \ref{cor:CFsinwreathproduct}, for any $a$ and any $r \neq 2,3$ the group $H_{(\overline G \cap \overline \Gamma)^\wedge}$ has at most 1 non-Frattini chief factor in $H_{\Omega}$ which is isomorphic to $C_r^a$. This implies that $H_{\overline G \cap \overline \Gamma}$ has at most 1 non-Frattini chief factor in $H_{\overline \Omega}$ isomorphic to $C_r^a$. Thus, by Proposition \ref{prop:CFsFromNormalSubgp}, any non-Frattini chief factor of $\overline H$ in $H_{\overline \Omega}$ isomorphic to $C_r^a$ is a chief factor of $H_{\overline G \cap \overline \Gamma}$. Similarly, when $r=3$, $H_{\Omega}$ contains at most two non-Frattini chief factors of $H_{(\overline G \cap \overline \Gamma)^\wedge}$ isomorphic to $C_3^a$ for any $a$. Thus, the chief factors of $\overline H$ in $H_{\overline \Omega}$ which are isomorphic to $C_3^a$ are chief factors of $H_{\overline G \cap \overline \Gamma}$. Finally, the non-Frattini chief factors of $H_{(\overline G \cap \overline \Gamma)^\wedge}$ in $H_{\Omega}$ isomorphic to $C_2^a$ are either isomorphic to $C_2$, of which there are 2, or isomorphic to $C_2^2$, of which there are 3. Hence, a non-Frattini chief factor of $\overline H$ in $H_{\overline \Omega}$ isomorphic to $C_2^a$ is not a chief factor of $H_{\overline \Omega}$ only if $a = 6$, and this chief factor is unique.
	
	Since $\overline M$ contains $\pi(\Phi(\Omega_1^t)) = \pi(C_l^t)$, we classify $(\overline M \cap \overline \Omega)/\pi(\Phi(\Omega_1^t))$. Note that $H_{\Omega}/\Phi(\Omega_1^t)$ is isomorphic to
	\[
	C_m^t . \del((I_1/\Omega_1)^t) . \Sym(t),
	\]
	where $m = p_1 \cdots p_j$, and so is square-free. Without loss of generality, let $p_1, \dots, p_s$ be the primes in $p_1, \dots, p_j$ which are not equal to 2 or 3, for some $j-2 \leq s \leq j$. In $H_{\overline \Omega}$, the following are characteristic subgroups, and thus are normal subgroups of $\overline H$:
	\[
	\pi(C_{lp_1}^t) < \pi(C_{lp_1 p_2}^t) < \cdots < \pi(C_{lp_1 \cdots p_s}^t).
	\]
	Assume that $\overline M$ contains $\pi(C_{lp_1 \cdots p_{k-1}}^t)$ and does not contain $\pi(C_{lp_1 \cdots p_k}^t)$ for some $2 \leq k \leq i$. Hence, by Corollary \ref{cor:CFsinwreathproduct}, there exists some subgroup $X$ of $C_{p_k}^t$ equal to $\del((C_{p_k})^t)$, $\diag((C_{p_k})^t)$ or 1 such that
	\[
	\overline M = \pi(C_{lp_1 \cdots p_{k-1}}^t . X) . (\overline H/\pi(C_{lp_1 \cdots p_k}^t)).
	\]
	By Theorem \ref{theorem:LMTgenerators}, $d(\overline H/\pi(C_{lp_k}^t)) \leq 5$, and so $d(\overline M) \leq 7$, since $\overline M \cap \overline \Omega$ projects onto $\Sym(4)$.
	
	Thus, we assume that $\overline M$ contains $L := \pi(C_{lp_1 \cdots p_s}^t)$. The group $H_{\Omega}/C_{lp_1 \cdots p_s}^t$ is isomorphic to
	\[
	C_b^t . \del((I_1/\Omega_1)^t) . \Sym(4) \leq D_{4b} \wr \Sym(4),
	\]
	where $b$ is one of $2$, $3$, or $6$. Given the information in \cite[Section 4.2]{KL}, we can construct all possibilities for this group in Magma. For each possible $H_{\Omega}/L$, we determine its subgroups of index $3, 3^2, 2, 2^2$ and $2^6$, and prove that each of these groups is $4$-generated. Hence, $d((\overline M \cap \overline \Omega)/L) \leq 4$. Thus, if $\overline M \cap \overline \Omega$ projects onto a transitive subgroup of $\Sym(4)$, we have 
	\[
	d(\overline M) \leq d(C_{lp_1 \cdots p_s}) + 4 + d(\overline G / \overline G_0) \leq 7,
	\]
	by Lemma \ref{lemma:triality}. On the other hand, if $\overline M \cap \overline \Omega$ does not project onto a transitive subgroup of $\Sym(4)$, then it projects onto $\Sym(3)$, by our analysis above. Additionally, $\overline M$ contains $C_{2lp_1 \cdots p_s}^t$, and $(\overline M \cap \overline \Omega)/\pi(C_{2l}^t)$ is 3-generated. Thus,
	\[
	d(\overline M) \leq 2d(C_{2lp_1 \cdots p_s}) + d((\overline M \cap \overline \Omega)/\pi(C_{2l}^t)) + d(\overline G / \overline G_0) \leq 2 + 3 + 2 = 7.
	\]
	
	The last two cases with $\overline H$ of type $\Omega_8^+(q_0)$ and $\SO_8^+(q_0).2$ can be treated in the same way as $\Sp_4(q_0)$ above to obtain $d(\overline M) \leq 7$.
\end{proof}

\section{Proof of Theorem \ref{theorem:bigresult}: alternating groups}\label{section:alternating}

Let $G$ be an almost simple group with socle $G_0$, and let $M \mleq H \mleq G$. In this section, we assume that $G_0 := \soc(G)$ is $\Alt(n)$ for some $n \geq 5$. We show that $d(M) \leq 7$, and that this bound is tight (see Proposition \ref{prop:altdiagonaltype}). The case where $n \leq 8$ can be checked computationally to confirm that $d(M) \leq 3$. Hence, we assume that $n \geq 9$ and thus $\Alt(n) \leq G \leq \Sym(n)$.

Throughout, we use the O'Nan-Scott Theorem, which classifies the maximal subgroups $H$ of $G$, and states the following.

\begin{theorem}
	If $H$ is a maximal subgroup of $G \in \{\Alt(n), \Sym(n)\}$, then one of the following holds:
	\begin{enumerate}[\upshape(1)]
		\item $H$ is intransitive, and $H = (\Sym(m) \times \Sym(n-m)) \cap G$ for some $m < n/2$;
		\item $H$ is an affine group $\AGL_d(p) \cap G$, with $n = p^d$;
		\item $H = (\Sym(k) \wr \Sym(t)) \cap G$, for some $k \geq 2, t \geq 2$ such that $n = kt$ or $n = k^t$;
		\item $H = (T^k . (\Out(T) \times \Sym(k))) \cap G$, for some simple group $T$, and $k \geq 2$ such that $n = |T|^{k-1}$; or
		\item $H$ is an almost simple group.
	\end{enumerate}
\end{theorem}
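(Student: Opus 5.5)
This statement is the O'Nan--Scott Theorem for $\Alt(n)$ and $\Sym(n)$, which the paper quotes rather than proves. So the plan is to reduce to the classical Aschbacher--O'Nan--Scott/Liebeck--Praeger--Saxl argument and record the key steps.

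\emph{Intransitive case.} Let $H$ be maximal in $G\in\{\Alt(n),\Sym(n)\}$ with $n\geq 9$. If $H$ is intransitive, it fixes some orbit $\Delta$ with $|\Delta|=m\leq n/2$. Hence $H\leq (\Sym(m)\times\Sym(n-m))\cap G$, and maximality forces equality. The case $m=n/2$ is excluded: there the setwise stabiliser of the partition $\{\Delta,\Delta^c\}$ strictly contains this group, and that case lands in (3).

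\emph{Imprimitive case.} If $H$ is transitive but imprimitive, it preserves a block system with $t$ blocks of size $k$, where $n=kt$. Then $H\leq(\Sym(k)\wr\Sym(t))\cap G$, and again maximality gives equality.

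\emph{Primitive case.} Suppose $H$ is primitive, and set $N=\soc(H)$. Either $N$ is elementary abelian, or $N\cong T^k$ for a nonabelian simple group $T$. The core work is the analysis of $N$ and its point stabiliser $N_\alpha$.
\begin{enumerate}[(i)]
	\item If $N$ is abelian, then $N$ is regular and $n=p^d$. Identifying the point set with $N$, we get $H\leq\AGL_d(p)\cap G$, which is case (2).
	\item If $N=T^k$ is nonabelian and $N_\alpha$ projects onto every factor, then $N_\alpha$ is a product of full diagonal subgroups over a partition of the $k$ factors. If there is one block, we are in diagonal type: $H\leq T^k.(\Out(T)\times\Sym(k))$ with $n=|T|^{k-1}$, which is case (4). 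If there are several blocks, the action embeds in a product action, which is case (3).
	\item If the projections of $N_\alpha$ onto the factors are proper, then $N_\alpha=R^k$ and $H$ embeds in $\Sym(l)\wr\Sym(k)$ in product action, giving $n=l^k$ and case (3). The exception is $k=1$, which is the almost simple case (5).
	\item Twisted wreath type, where $N$ is regular and nonabelian, is shown to be non-maximal: such $H$ lies inside a product action group.
\end{enumerate}

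\emph{Main obstacle.} The bulk of the difficulty is the final step of the primitive case. One must exclude the non-maximal configurations, namely twisted wreath type and the cases where a diagonal group sits inside a product action group. One must also verify that each listed type really is maximal for $n\geq 9$. For this I would invoke the Liebeck--Praeger--Saxl classification of maximal subgroups of $\Alt(n)$ and $\Sym(n)$ rather than reprove it. Consequently the statement is taken as the cited theorem, and the remaining work in the section is to apply it case by case to bound $d(M)$.
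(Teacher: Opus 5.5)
Your proposal is correct and matches the paper, which likewise just quotes the O'Nan--Scott theorem for $\Alt(n)$ and $\Sym(n)$ without proof. Your sketch of the standard reduction via the socle and the point stabiliser is sound, with one quibble in your final paragraph: the statement only gives a necessary condition on a maximal $H$, so you need not verify that every listed group is maximal, and in general they are not (e.g.\ $\AGL_1(11)\cap\Alt(11) < \PSL_2(11) < \Alt(11)$).
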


If $H$ is an almost simple group, then $M \mleq H$ implies that $d(M) \leq 5$ by \cite[Theorem 1]{LMT}. We now treat the remaining cases for $H$.

\subsection{Intransitive subgroups} \label{section:altcase1}

\begin{prop}
	Let $\Alt(n) \leq G \leq \Sym(n)$ for some $n \geq 9$, and let $H := (\Sym(m) \times \Sym(n-m)) \cap G$ be an intransitive maximal subgroup of $G$. Then any maximal subgroup $M$ of $H$ has $d(M) \leq 5$.
\end{prop}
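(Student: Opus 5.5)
We apply Proposition \ref{prop:maxsubgpdirectproduct} to $H$, viewed as a subgroup of $\Sym(m) \times \Sym(n-m)$ with $m < n/2$. Write $P_1, P_2$ for the projections of $H$; these are $\Sym(m)$ and $\Sym(n-m)$, since $H$ is either the full direct product or its index-two subgroup of even permutations, which is subdirect. Write $N_1 = H \cap \Sym(m)$ and $N_2 = H \cap \Sym(n-m)$. Then $N_i$ is $\Sym$ or $\Alt$ of the relevant degree according as $G = \Sym(n)$ or $\Alt(n)$. The maximal subgroup $M$ then falls into one of two cases.

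\emph{Case (1).} Here $M \geq N_i$ for some $i$, and $M = (P_i \times J) \cap H$ with $J \mleq P_{3-i}$. Then $M$ has a normal subgroup $N_i$ with $M/N_i \cong J$. Consequently $\delta_M(A) \leq \delta_{M,N_i}(A) + \delta_J(A)$. Since $N_i$ is $\Alt(k)$ or $\Sym(k)$, Corollary \ref{cor:upperbounddeltaGN} gives $\delta_{M,N_i}(A) \leq 1$ for all $A$. The only exception is the degenerate small case $k \leq 4$, where one instead checks the few non-Frattini chief factors of $\Sym(4)$, $\Sym(3)$, $\Sym(2)$ directly. Every $J$ is a maximal subgroup of a symmetric group, so \cite[Theorem 1]{LMT} (equivalently Theorem \ref{theorem:LMTgenerators}) gives $d(J) \leq 4$. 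Moreover $d(J) = 4$ only when $J$ is of diagonal type with $\delta_J(C_2) = 4$; in that situation $J$ contains odd permutations, so it is a maximal subgroup of $\Sym$. I would then bound $d((L_A)_{\delta_M(A)})$ via Theorem \ref{theorem:crowns}. For $A \cong C_2$ this gives at most $1 + 4 = 5$. For other abelian $A$ the bound is $\theta(A) + \lceil(\delta_J(A)+1+s(A))/r(A)\rceil$, which stays small because $\delta_J(A) \leq 2$ for non-central $A$. For non-abelian $A$, $\delta \leq 2 \leq 5$ and Theorem \ref{theorem:crowns}(2) applies. Altogether this gives $d(M) \leq 5$.

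\emph{Case (2).} Here $M$ is subdirect, $M = (M_1 \times M_2).C$ with $M_i$ maximal $P_i$-normal in $N_i$, and $P_i/M_i \cong C$. The maximal $\Sym(k)$-normal subgroups of $\Alt(k)$ or $\Sym(k)$ are $1$ (for $\Alt(k)$) and $\Alt(k)$ (for $\Sym(k)$), apart from small-degree exceptions such as $k = 4$. Because $m \neq n-m$, the simple sections $\Alt(m)$ and $\Alt(n-m)$ are non-isomorphic. Hence the common quotient $C$ must be abelian, so $C \leq C_2$. This forces $M$ to be essentially $(\Alt(m) \times \Alt(n-m)).C$ or a small variation of it. Such a group has at most two non-Frattini $C_2$ chief factors and otherwise non-abelian ones, so $d(M) \leq 3$. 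The cases with $m \leq 4$, where $P_1$ is soluble, need a direct check of the normal subgroup lattice of $\Sym(m)$. Even there, $M$ is an extension of a subgroup of $\Sym(4)$ by $\Alt(n-m)$ or $\Sym(n-m)$ with few $C_2$ factors, and the bound $d(M) \leq 4$ follows from Theorem \ref{theorem:crowns}.

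The main obstacle is the count of $C_2$ chief factors in case (1) when $d(J) = 4$. We must confirm that $N_i$ contributes at most one additional non-Frattini $C_2$ equivalent to those of $J$. This rules out a total of $6$, and here one uses the parity structure of $H$: when $G = \Alt(n)$ the $C_2$ of $N_i = \Sym(m)$ is identified with that of $J$ modulo $\Alt$. I would handle this by directly computing the abelianisation $M/[M,M]$, which has $2$-rank at most $5$.
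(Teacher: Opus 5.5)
Your proposal is correct and follows the paper's proof essentially step for step. You split via Proposition~\ref{prop:maxsubgpdirectproduct}; in the non-subdirect case you bound $\delta_{M,N_i}(A)\le 1$ by Corollary~\ref{cor:upperbounddeltaGN} and bound $\delta_J(A)$ by the LMT results; and in the subdirect case you observe that the common quotient $C$ must be $C_2$, so that $G=\Sym(n)$ and $M=(\Alt(m)\times\Alt(n-m)).2$, with no such $M$ when $G=\Alt(n)$.

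Your closing ``main obstacle'' paragraph is unnecessary, because $\delta_{M,N_i}(C_2)\le 1$ already caps $\delta_M(C_2)$ at $1+4=5$. It is also slightly garbled, since for $G=\Alt(n)$ one has $N_i=\Alt(m)$, not $\Sym(m)$.
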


\begin{proof}
	Note that, by definition, $m \neq n/2$. Since $n \geq 9$, one of $\Alt(m)$ or $\Alt(n-m)$ is simple. Additionally, $H$ is a subdirect subgroup of $\Sym(m) \times \Sym(n-m)$. Let $N_1 := G \cap \Sym(m)$ and $N_2 := G \cap \Sym(n-m)$. By Proposition \ref{prop:maxsubgpdirectproduct}, either $M$ contains $N_i$ for some $i$, or $M$ is subdirect in $\Sym(m) \times \Sym(n-m)$. 

	If $M$ contains some $N_i$, then we can assume without loss of generality that $M \geq N_1$. Hence, 
	\[
	M = (\Sym(m) \times J) \cap G
	\]
	for some $J \mleq \Sym(n-m)$. As such, 
	\[
	\delta_M(A) = \delta_{M, N_1}(A) + \delta_{J}(A).
	\]
	When $n-m \geq 5$, \cite[Theorem 1]{LMT} states that
	\[
	\delta_J(A) \leq \begin{cases*}
		4 \qquad & if $A \cong C_2$,\\
		3 & otherwise.
	\end{cases*}
	\]
	If $n-m < 5$, we can check computationally that the same bound holds. To bound $\delta_{M, N_1}(A)$, note that $N_2$ is either $\Alt(m)$ or $\Sym(m)$. For any $m$, we have that $\delta_{N_1}(A) \leq 1$ for any chief factor $A$. Hence, $\delta_{M, N_1}(A) \leq 1$ for any chief factor $A$, by Corollary \ref{cor:upperbounddeltaGN}. Thus, 
	\[
	\delta_M(A) \leq \begin{cases*}
		5 \qquad & if $A \cong C_2$,\\
		4 \qquad & otherwise,
	\end{cases*}
	\]
	implying that $d(M) \leq 5$, by Theorem \ref{theorem:crowns}.

	Assume next that $M$ is subdirect, and let $P_1 = \Sym(m)$ and $P_2 = \Sym(n-m)$. By Proposition \ref{prop:maxsubgpdirectproduct}, $M = (M_1 \times M_2).C$ for some maximal $P_i$-normal subgroups $M_i$ of $N_i$ which satisfy $P_i/M_i \cong C$. If $G = \Sym(n)$, then $N_1 = \Sym(m)$ and $N_2 = \Sym(n-m)$. Hence, $M_1 = \Alt(m)$ and $M_2 = \Alt(n-m)$. Thus, by Corollary \ref{cor:upperbounddeltaGN} and the fact that $\delta_{M_i}(A) \leq 1$ for any chief factor $A$, we have that
	\[
	\delta_M(A) \leq \delta_{M, N_1}(A) + \delta_{P_2}(A) \leq 2,
	\]
	for any chief factor $A$. Therefore, $d(M) \leq 3$ by Theorem \ref{theorem:crowns}.

	If $G = \Alt(n)$, then $N_1 = \Alt(m), N_2 = \Alt(n-m)$. However, by above, $N_i$ is simple for some $i$, so we assume without loss of generality that $N_2$ is simple. Hence, $M_2 = 1$, and so $\Sym(m)/M_1 \cong \Sym(n-m)$. This is a contradiction, as $m \neq n-m$. As such, there cannot exist such a maximal subgroup $M$ when $G = \Alt(n)$.
\end{proof}

\subsection{Affine subgroups} \label{section:altcase2}

\begin{prop}
	Let $\Alt(n) \leq G \leq \Sym(n)$ for some $n \geq 9$, and let $H := \AGL_d(p) \cap G$ be a maximal subgroup of $G$ of affine type. Then any maximal subgroup $M$ of $H$ has $d(M) \leq 6$.
\end{prop}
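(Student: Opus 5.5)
We will use the structure $H = V : L$, where $V = \mathbb F_p^d$ is the unique minimal normal subgroup of $H$ (the translations) and $L = \GL_d(p) \cap G_0'$. More precisely, $L$ is either $\GL_d(p)$ or its subgroup of index at most $2$ consisting of the elements acting as even permutations. Since $n = p^d \geq 9$, $V$ is a faithful irreducible $L$-module. By Proposition \ref{prop:maximalsubgroupgeneralmethod} applied to the chief series through $V$, a maximal subgroup $M$ of $H$ either contains $V$, or satisfies $M \cap V = 1$.

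\textbf{Case $M \cap V = 1$.} Then $M$ is a complement to $V$, so $M \cong H/V \cong L$. This $L$ is a subgroup of index at most $2$ in $\GL_d(p)$ containing $\SL_d(p)$ (or the group itself when $d=1$). By \cite[Proposition 3.12]{LMT} and \cite[Section 2.2]{KL}, $d(L) \leq 3$; when $d=1$, $L$ is cyclic.

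\textbf{Case $V \leq M$.} Then $M = V : J$, where $J$ is maximal in $L$. We bound the two pieces separately.

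For $J$: the group $L$ is sandwiched between $\SL_d(p)$ and $\GL_d(p)$, so $J$ is a maximal subgroup of a classical group of type $\boldL$ over a prime field. If $J$ contains $\SL_d(p)$, then $J/\SL_d(p)$ is cyclic and $d(J) \leq 2$. Otherwise we reduce to $\overline J$ in $\PGL_d(p)$, together with the cyclic scalars. Because $f=1$, the properties $(F)_0$ and $(\gamma)_0$ hold, so Table \ref{table:LMTtable} shows $\delta_{\overline J}(C_2) \leq 3$ and $d(\overline J) \leq 3$. Lemma \ref{lemma:maxsubgpsSigmai} (or Proposition \ref{prop:atmost4inI}) then gives $\delta_J(A) \leq 4$ for every chief factor $A$, with equality only for central $A$, and $\delta_J(A) \leq 3$ for non-central $A$. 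Small cases ($d \leq 2$, or $p \leq 3$ with $d$ small, where $\SL_d(p)$ is not quasisimple) we would handle via Lemma \ref{lemma:non_simple_gen_bound} or by computer.

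For $V$: $V$ contributes at most $d_J(V)$ generators, and Theorem \ref{theorem:crowns} lets us do better.
\begin{enumerate}[\upshape(1)]
\item If $V$ is an irreducible $J$-module, then $V$ is a single non-central chief factor of $M$. Theorem \ref{theorem:crowns}(3) gives $d((L_V)_{\delta})\leq 1+\lceil(\delta+s)/r\rceil$, and since $\delta \leq 1 + \delta_J(V)$ is small this is fine.
\item If $V$ is reducible (for example, $J$ a $\mathscr C_1$ stabiliser), then $V$ has at most two $J$-composition factors. Each is non-central unless it has dimension one, and then $\delta_M(C_p) \leq 2 + \delta_J(C_p)$.
\end{enumerate}
Feeding these into Theorem \ref{theorem:crowns} gives $d(M) \leq \max(d(J)+1, \ldots)$, and we aim for $d(M) \leq 6$.

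The main obstacle is the parabolic case. Here $J$ is the stabiliser of a $k$-subspace $U$ of $V$, so $V$ has $J$-chief factors $U$ and $V/U$. When $k=1$ or $k=d-1$, these may be one-dimensional central-ish factors $C_p$ that coincide with abelian chief factors of $J$ coming from $\GL_1(p)$-type Levi components. We would first try to show that they are non-central when $p>2$, which holds because the Levi factor acts through nontrivial scalars. When $p=2$ they are $C_2$, and we count directly: $\delta_J(C_2) \leq 3$ from the Levi and sign, plus $2$ from $V$, giving at most $5$, hence $d(M) \leq 6$. Any residual small $(d,p)$ would be checked in Magma.
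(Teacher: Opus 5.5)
Your proposal is correct and follows essentially the paper's route. You split on whether $V \leq M$, use $M \cong L$ in the complement case, and otherwise combine three facts: $V$ has at most two $J$-composition factors, $\delta_J(A)$ is bounded by Proposition \ref{prop:atmost4inI}, and Theorem \ref{theorem:crowns} then yields the generator bound. Your extra analysis of whether the one-dimensional factors in the parabolic case are central is unnecessary. The crude bound $\delta_M(A)\le 2+\delta_J(A)$ already gives $\delta_M(A)\le 6$ for central $A$ and $\delta_M(A)\le 5$ for non-central $A$, hence $d(M)\le 6$ at once. Your separate treatment of $d\le 2$ also usefully covers $(d,p)=(2,3)$ with $n=9$, which the paper dismisses by wrongly asserting $n\le 8$.
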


\begin{proof}
	The group $H$ is of the form $V : L$, where $V = \mathbb{F}_p^d$ and $L = \GL(V) \cap G$. Let $M$ be a maximal subgroup of $H$. Let $Z := Z(\GL(V))$.

	To begin with, if $M$ contains $V$ then $M = V : J$ for some $J <_{\max} L$. If $\SL(V)$ is quasisimple then, by Aschbacher's Classification, this implies that $J$ acts either irreducibly on $V$, or $V \downarrow J = V_1 | V_2$ for some vector spaces $V_i$, on which $J$ acts irreducibly. Thus,
	\[
	\delta_M(A) \leq \begin{cases*}
		1 + \delta_J(A) \qquad & if $V$ is irreducible and $A \cong V$,\\
		2 + \delta_J(A) \qquad & if $V$ is reducible, and $A \cong V_i \leq V$,\\
		\delta_J(A) & otherwise.
	\end{cases*}
	\]
	Since $J$ is a maximal subgroup of $L$, by Proposition \ref{prop:atmost4inI}, we have that
	\[
	\delta_J(A) \leq \begin{cases*}
		4 \qquad & if $A$ is central,\\
		3 \qquad & otherwise.
	\end{cases*}
	\]
	Therefore, 
	\[
	\delta_M(A) \leq \begin{cases*}
		6 \qquad & if $A$ is central,\\
		5 \qquad & otherwise.
	\end{cases*}
	\]
	Hence $d(M) \leq 6$ by Theorem \ref{theorem:crowns}.

	If instead $\SL(V)$ is not quasisimple, then we have the following possibilities: $d=1$, or $(d,p) = (2,2), (2,3)$. If $(d,p)$ is one of $(2,2)$ or $(2,3)$ then $n \leq 8$, so we do not consider these cases. On the other hand, if $d=1$ then $V$ and $\GL(V)$ are cyclic groups, so $d(M) \leq 2$.

	So we may assume that $V \not\leq M$. Thus, since $V$ is an abelian minimal normal subgroup of $H$, this implies that $M \cong L$. Hence, $d(M) = d(L) \leq 2$. Indeed, when $\SL(V)$ is quasisimple this is clear, since $\SL(V) \leq L \leq \GL(V)$. On the other hand, if $\SL(V)$ is not quasisimple, then we again have that $d=1$, since $n \geq 8$, and hence $d(L) = 1$.
\end{proof}

\subsection{Imprimitive and wreath-type subgroups} \label{section:altwreath}

In this section, we let
\[
H = (\Sym(k) \wr \Sym(t)) \cap G,
\]
for some $k, t$ such that $n = kt$ or $n = k^t$, according as $H$ is of wreath or product-action type. Since $H \unlhd \Sym(k) \wr \Sym(t)$, we have $\Alt(k)^t \leq H$, and hence we use Proposition \ref{prop:XSimpleMaxSubgrpXkC} in the case that $k \geq 5$, as well as Proposition \ref{prop:XNonsimpleMaxSubgpXkC} in the case that $k \leq 4$.

To be more precise, if $H$ is strictly contained in $\Sym(k) \wr \Sym(t)$, then $H$ is equal to one of 
\[
\Alt(k)^t . (\del(C_2^t) . \Sym(t)), \text{ or } \,\,\,  \Sym(k)^t . \Alt(t).
\]
Note that, in the first case, we have
\[
H \cap \Sym(k)^t = \{ (g_1, \dots, g_t) \in \Sym(k)^t : \prod_i \sgn(g_i) = 1 \}.
\]
Let $J$ be the projection of $H$ to $\Sym(t)$, so $\Alt(k) \leq J \leq \Sym(k)$.

\begin{prop}
	Let $\Alt(n) \leq G \leq \Sym(n)$ for some $n \geq 9$, and let $H := (\Sym(k) \wr \Sym(t)) \cap G$. Then any maximal subgroup $M$ of $H$ has $d(M) \leq 7$.
\end{prop}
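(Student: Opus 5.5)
I would follow the template of Section \ref{section:classicalsC2}, with $E=\Sym(k)$, $F=\Alt(k)$ and $H\le \Sym(k)\wr\Sym(t)$. The normal series to use is
\[
1 < \Alt(k)^t < H\cap \Sym(k)^t < H,
\]
where $(H\cap\Sym(k)^t)/\Alt(k)^t$ is $C_2^t$ or $\del(C_2^t)$, and $H/(H\cap\Sym(k)^t)\cong J$ with $\Alt(t)\le J\le\Sym(t)$. For $k\le 4$ I would refine the bottom term through the chief series of $\Alt(k)$, namely $C_2^2<\Alt(4)$ and $C_3$. The maximal subgroups $M$ are then sorted by the first term of this series that they fail to contain, and in each case I bound $\delta_M(A)$ and apply Theorem \ref{theorem:crowns}.

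\textbf{Case 1: $M\not\ge \Alt(k)^t$ with $k\ge5$.} Apply Proposition \ref{prop:XSimpleMaxSubgrpXkC}.
\begin{itemize}
\item If $M\cap\Alt(k)^t$ is trivial or diagonal-type $S^\ell$, then its chief factors are non-abelian and few. So $d(M)\le \max(2,d(H))$, and $d(H)\le 5$ by Theorem \ref{theorem:LMTgenerators}.
\item Otherwise, Proposition \ref{prop:nonsubdirectwreathcase} gives $M=(R\wr\Sym(t))\cap H$ up to conjugacy, with $R$ maximal in $\Sym(k)$ (or $R\cap \Alt(k)$ novelty maximal). I would use
\[
d(M)\le d(R\cap\Alt(k)) + d\bigl(H/\Alt(k)^t\bigr).
\]
The top quotient is a subgroup of $C_2\wr\Sym(t)$ and is at most 3-generated, by an argument like Proposition \ref{prop:boundtopC2}. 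The term $d(R\cap\Alt(k))$ is at most $4$ by Theorem \ref{theorem:LMTgenerators}, since intransitive, imprimitive, affine and diagonal maximals of $\Alt(k)$ all have small $\delta(C_2)$.
\item If this crude sum exceeds $7$, I would sharpen it using Theorem \ref{theorem:mainjumping}. Central $C_2$ factors of $R\cap\Alt(k)$ that lie in $[R,R\cap\Alt(k)]$ become Frattini in $M$, by Lemma \ref{lemma:jumpingbabyresultfull}.
\end{itemize}
For $k\le4$ the same steps run through Proposition \ref{prop:XNonsimpleMaxSubgpXkC} and Corollary \ref{cor:CFsinwreathproduct}, with the small chief factors $C_2^2$ and $C_3$ handled as in Proposition \ref{prop:C2McontainingOmega1}. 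The subdirect case is handled by Proposition \ref{prop:subdirectwreathcase}.

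\textbf{Case 2: $M\ge\Alt(k)^t$ but $M\not\ge H\cap\Sym(k)^t$.} Here $M/\Alt(k)^t$ contains a $J$-invariant submodule of $C_2^t$, which is $1$, $\diag$ or $\del$, and $d_J$ of it is at most $1$. Counting generators gives $d(M)\le 1+1+1+d(J)\le 7$, using $d(J)\le 2$.

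\textbf{Case 3: $M\ge H\cap\Sym(k)^t$.} Then $M=(H\cap\Sym(k)^t).J_1$ with $J_1$ maximal in $J$. This is the main obstacle. I would proceed as follows.
\begin{itemize}
\item If $J_1$ is intransitive, $J_1=\Sym(m)\times\Sym(t-m)$. The base splits into two orbits, and Corollary \ref{cor:CFsinwreathproduct} gives at most two $C_2$ factors per orbit. This yields $\delta_M(C_2)\le 2+2+2$, using Lemma \ref{lemma:delgens} for the $\del$ part.
\item If $J_1$ is transitive, Lemma \ref{lemma:delgens} gives $d(M)\le 1+1+d(J_1)$. This is at most $7$ unless $d(J_1)>5$, which cannot happen since $d(J_1)\le5$ by Theorem \ref{theorem:LMTgenerators}. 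So the tight situation is $J_1$ of diagonal type with $\delta_{J_1}(C_2)=4$.
\item In that tight situation, Lemma \ref{lemma:delmodulemults}, applied with $N=\soc(J_1)$, shows that $\del(C_2^t)$ contributes no $C_2$ quotient. The permutation module then adds at most one $C_2$ factor, so $\delta_M(C_2)\le 1+1+4+1\le 7$.
\end{itemize}
I expect this diagonal case to be where the bound $7$ is attained, which would match the tightness claimed in Proposition \ref{prop:altdiagonaltype}.
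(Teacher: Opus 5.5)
Your route is the same as the paper's. You use the same series $1<\Alt(k)^t<H\cap\Sym(k)^t<H$ and the same split according to which term $M$ fails to contain. You also have the same key estimate $d(M)\le d(R\cap\Alt(k))+d(H/\Alt(k)^t)\le 4+3$ for $M=(R\wr\Sym(t))\cap H$, which is the only place where $7$ is needed. However, you allow $\Alt(t)\le J\le\Sym(t)$ and then argue as if $J$ were transitive, and for $t=2$ the projection $J$ can be trivial. Take $G=\Alt(n)$ in product action with $n=k^2$ and $k\equiv 2\pmod 4$. Every element of $\Sym(k)^2$ is then even, while the coordinate swap is a product of $k(k-1)/2$ transpositions and hence odd, so $H=\Sym(k)^2$. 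In this situation $\Alt(k)^2$ is not a minimal normal subgroup of $H$ and $H$ does not act transitively on the two factors. So neither Proposition \ref{prop:XSimpleMaxSubgrpXkC} nor Proposition \ref{prop:nonsubdirectwreathcase} applies, and your Case 1 says nothing here. This case has to be handled separately through Proposition \ref{prop:maxsubgpdirectproduct}, as for intransitive maximal subgroups; the paper does this and gets $d(M)\le 5$.

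Second, for $k\le 4$, saying the subdirect case ``is handled by Proposition \ref{prop:subdirectwreathcase}'' is not yet an argument. That result only gives $K^t\le M$, where $K$ is the intersection of the maximal $\Sym(k)$-normal subgroups of $N_1$, the largest subgroup of $\Sym(k)$ with $N_1^t\le H$. You still need three further points:
\begin{enumerate}
\item $N_1=\Sym(k)$ gives $K=\Alt(k)$, contradicting $\Alt(k)^t\not\le M$.
\item For $N_1=\Alt(k)$ and $t>2$, the elements of $M$ that induce sign changes from $\del((\Sym(k)/\Alt(k))^t)$ force $M\ge\Alt(k)^t$, again a contradiction.
\item The remaining cases $t=2$, $n\in\{9,16\}$ must be checked directly; the paper does this by computer.
\end{enumerate}

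Finally, your closing expectation about tightness is wrong. When $M\ge H\cap\Sym(k)^t$ one actually gets $d(M)\le 5$, for three reasons. First, $d(J_1)\le 4$ for maximal subgroups of $\Alt(t)$ and $\Sym(t)$ by Theorem \ref{theorem:LMTgenerators}. Second, when $J_1$ is transitive the $C_2$-layer needs only one normal generator, by Lemma \ref{lemma:delgens}. Third, $\Alt(k)^t$ does not raise the bound, by Theorem \ref{theorem:crowns}. So your ``tight situation'' analysis is unnecessary. The example with $d(M)=7$ in Proposition \ref{prop:altdiagonaltype} is for $H$ of diagonal type, where three of the $C_2$ factors come from $\Out(T)$ with $T=\PSL_4(9)$; it is not an instance of your Case 3.
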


We prove this proposition using the method described in Section \ref{section:prelimsmethod}. In order to do so, we consider the following normal series of $H$:
\[
1 < \Alt(k)^t < \Sym(k)^t \cap H < H,
\]
and consider whether $M$ contains each normal subgroup of this series.

\begin{prop}
	Suppose that $J$ is transitive, and $M$ does not contain $\Alt(k)^t$. Then $d(M) \leq 7$.
\end{prop}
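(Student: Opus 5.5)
The plan is to mirror the treatment of $\mathscr{C}_2$ in Proposition \ref{prop:C2McontainingOmega1}, splitting on whether $k\ge 5$ or $k\le 4$. In both cases I would first reduce to one of two shapes. Either $M$ contains $K^t$ for a maximal $\Sym(k)$-normal subgroup $K$ of $\Alt(k)$, or $M$ is conjugate to $(R\wr\Sym(t))\cap H$ with $R$ a non-normal maximal subgroup of $\Sym(k)$.

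\emph{Case $k\ge5$.} Here $\Alt(k)^t$ is a minimal normal subgroup of $H$, because $J$ is transitive. So Proposition \ref{prop:XSimpleMaxSubgrpXkC} applies, and $M\cap\Alt(k)^t$ is either $1$, or a product of diagonals $\Alt(k)^\ell$, or $(M_1\times\cdots\times M_t)$ with the $M_i$ maximal or novelty maximal in $\Alt(k)$. In the first two cases the only non-Frattini chief factors of $M$ inside $\Alt(k)^t$ are non-abelian, and there are few of them. By Theorem \ref{theorem:crowns} we then get $d(M)\le \max(2,d(M/(M\cap\Alt(k)^t)))$. Since $M/(M\cap\Alt(k)^t)\cong H/\Alt(k)^t$ is a quotient of $H$, the bound $d(H)\le 5$ from Theorem \ref{theorem:LMTgenerators} finishes this case. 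In the third case, Proposition \ref{prop:nonsubdirectwreathcase} shows that $M=(R\wr\Sym(t))\cap H$ with $R\mleq\Sym(k)$ and $R\cap\Alt(k)=M_1$.

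\emph{Bounding the wreath shape.} I would bound
\[
d(M)\le d_M\bigl((R\cap\Alt(k))^t\bigr)+d\bigl(H/\Alt(k)^t\bigr).
\]
Now $H/\Alt(k)^t$ is a subgroup of $C_2\wr\Sym(t)$ of the shape $\del(C_2^t).\Sym(t)$, or $C_2^t.\Alt(t)$, or $C_2\wr\Sym(t)$. So by Corollary \ref{cor:CFsinwreathproduct} and \cite[Proposition 3.12]{LMT}, $d(H/\Alt(k)^t)\le 3$. This uses $\delta_{\Sym(t)}(C_2)\le 1$ together with at most two $C_2$ factors arising from $C_2^t$.

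For the first term I would use Theorem \ref{theorem:mainjumping}. It gives $\delta_{M,(R\cap\Alt(k))^t}(W)\le \delta_{R\cap \Alt(k)}(B)$, and for central $W$ it gives the sharper bound via $(R\cap\Alt(k))/[R,R\cap\Alt(k)]$. Here $R\cap\Alt(k)$ is a maximal subgroup of $\Alt(k)$ of one of the O'Nan–Scott types, and by \cite[Theorem 1]{LMT} (Theorem \ref{theorem:LMTgenerators}) its $\delta$-values are at most $3$ or $4$. Combining the two pieces and applying Theorem \ref{theorem:crowns} chief factor by chief factor should give $\delta_M(A)\le 7$, and $d((L_A)_{\delta_M(A)})\le 7$ for noncentral $A$.

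\emph{Main obstacle.} The hard subcase is $A\cong C_2$ when $R\cap\Alt(k)$ is of diagonal type with $\delta(C_2)=4$ and $H/\Alt(k)^t$ also contributes $3$. That would give $7$ exactly, but I must check that nothing pushes the total past $7$. I expect to use Lemma \ref{lemma:jumpingbabyresultfull} to show that central $C_2$ factors of $R\cap\Alt(k)$ lying in $[R,R\cap\Alt(k)]$ become Frattini. I would also use the observation that only the factors of $(R\cap\Alt(k))/[R,R\cap \Alt(k)]$ survive, and check that this quotient has $\delta(C_2)\le 3$, by the analogue of Lemma \ref{lemma:prereqjumpingclassicals} for $\Sym(k)$.

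\emph{Case $k\le4$.} Here $\Alt(k)\in\{C_3,\Alt(4)\}$, and I would argue as in Proposition \ref{prop:C2subdirectcase}. If $M\cap\Alt(k)^t$ is subdirect, Proposition \ref{prop:subdirectwreathcase} gives $K^t\le M$ with $K\in\{1,C_2^2\}$. Otherwise Proposition \ref{prop:XNonsimpleMaxSubgpXkC} applies. In either situation $d(K)\le 2$ and the chief factors in $\Alt(k)^t/K^t$ are elementary abelian, and Corollary \ref{cor:CFsinwreathproduct} bounds them by $1$ per isomorphism type. The bound then follows as $d(M)\le 2+1+d(H/\Alt(k)^t)\le 6$, or $\le 7$ in the worst case, after small cases are checked computationally.
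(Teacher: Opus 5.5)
Your proposal is correct and uses the same case split as the paper: Proposition~\ref{prop:XSimpleMaxSubgrpXkC} for $k\ge5$, and Propositions~\ref{prop:subdirectwreathcase} and \ref{prop:XNonsimpleMaxSubgpXkC} for $k\le4$. In the decisive wreath-shaped case, however, you take a much heavier route than needed.

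\emph{The wreath-shaped case.} You already wrote down
\[
d(M)\le d_M\bigl((R\cap\Alt(k))^t\bigr)+d(H/\Alt(k)^t),
\]
and the paper finishes at this point. Since $M$ permutes the $t$ coordinates transitively and $R$ normalises $M_1=R\cap\Alt(k)$, the $M$-normal closure of $\iota_1(M_1)$ is all of $M_1^t$. Hence $d_M(M_1^t)\le d(M_1)\le 4$ by \cite[Theorem 1]{LMT}, and $d(M)\le 4+3=7$ at once.

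Your detour through Theorem~\ref{theorem:mainjumping} and a factor-by-factor crowns count also reaches $7$. With your own numbers, $\delta_M(C_2)\le 4+3$, so your ``main obstacle'' does not actually arise. The extra Frattini arguments, via Lemma~\ref{lemma:jumpingbabyresultfull} or an analogue of Lemma~\ref{lemma:prereqjumpingclassicals}, are therefore unnecessary. In fact $\delta_{H/\Alt(k)^t}(C_2)\le 2$, because by Corollary~\ref{cor:CFsinwreathproduct} the section $C_2^t$ contributes at most one non-Frattini $C_2$.

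The crowns route also has a cost the additive bound avoids. You would have to verify the hypotheses of Corollary~\ref{cor:CFsinwreathproduct} for $M\le R\wr\Sym(t)$:
\begin{itemize}
\item subdirectness of $M\cap R^t$ in $R^t$;
\item that $M$ contains the natural $\Alt(t)$;
\item for $t\le4$, an element $(1,\dots,1,\alpha)(1,2)$ in $M$, which uses that $R$ contains odd permutations.
\end{itemize}
You would also need to control the non-central factors separately.

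\emph{The case $k\le4$.} Here the routes differ slightly, and yours is arguably simpler. In the subdirect case, the paper shows the case cannot occur for $t>2$: $M$ projects onto $\del((\Sym(k)/\Alt(k))^t)$, which forces $\Alt(k)^t\le M$. It then settles $t=2$ by computer. Your uniform estimate
\[
d(M)\le d(K)+d_M\bigl((M\cap\Alt(k)^t)/K^t\bigr)+d(H/\Alt(k)^t)\le 2+1+3
\]
is also valid. Any proper subdirect $H$-invariant subgroup of $C_3^t$, if one exists, is $\diag$ or $\del$, and so is $M$-cyclic. This avoids the computer check. In the non-subdirect subcase, phrase the bound in terms of $M_1$, where $M\cap\Alt(k)^t=M_1\times\cdots\times M_t$ and $d(M_1)\le2$, rather than in terms of $K$; your wording conflates the two.
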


\begin{proof}
	Assume first that $k \geq 5$, so that $\Alt(k)$ is a simple group. Since $J$ is a primitive subgroup of $\Sym(t)$, Proposition \ref{prop:XSimpleMaxSubgrpXkC} states that $M \cap \Alt(k)^t$ is conjugate to one of:
	\begin{enumerate}[\upshape(1)]
		\item $M_1^t$ where $M_1$ is a maximal, novelty maximal or trivial subgroup of $\Alt(k)$; or
		\item $\diag(\Alt(k)^t)$.
	\end{enumerate}
	Next we consider the chief factors of $H/\Alt(k)^t$. This group is either equal to $\del(C_2^t).\Sym(t)$, $C_2^t.\Alt(t)$, or $C_2 \wr \Sym(t)$. In any case we have $\delta_{H/\Alt(k)^t}(A) \leq 2$, with equality only if $A \cong C_2$, or $A \cong C_2^2$ if $t=4$, by a similar argument to that of Corollary \ref{cor:CFsinwreathproduct}. Hence, if $M$ is of type (1) above, by \cite[Theorem 1]{LMT}, we have 
	\[
	d(M) \leq d(M_1) + d(H/\Alt(k)^t) \leq 4+3 = 7.
	\]
	If $M$ is of type (2), then
	\[
	\delta_M(A) \leq \begin{cases*}
		2 \qquad & if $A \cong C_2, C_2^2$,\\
		1 \qquad & otherwise,
	\end{cases*}
	\]
	so $d(M) \leq 3$.

	Next, we consider the case where $k \leq 4$. Let $N_1$ be the largest subgroup of $\Sym(k)$ such that $N_1^t \leq H$. Since we satisfy the conditions of Proposition \ref{prop:subdirectwreathcase}, and $M$ does not contain $\Alt(k)^t$, we have that either:
	\begin{enumerate}[\upshape(1)]
		\item $M \cap \Alt(k)^t = M_1^t$ for some $M_1 < \Alt(k)$; or
		\item $M\cap N_1^t$ is subdirect, and contains $K^t$ where $K$ is the intersection of all of the maximal normal subgroups of $\Sym(k)$ in $N_1$.
	\end{enumerate}
	The first case follows exactly as above to show that $d(M) \leq 5$.

	Assume that $M$ is in the second case. If $N_1 = \Sym(k)$ then $K = \Alt(k)$, which is a contradiction. We hence assume that $N_1 = \Alt(k)$. Then there is a unique maximal $\Sym(k)$-invariant subgroup of $\Alt(k)$, which is trivial if $k=2,3$, or equal to $C_2^2$ if $k=4$. If $k=2$, then $M$ contains $\Alt(k)^t = 1$, so we have a contradiction. 

	If $k=3$, then $\Alt(k) \cong C_3$, and $M \cap C_3^t$ is subdirect. If $t > 2$ then, since $M$ contains a subgroup which projects onto $\del((\Sym(k)/\Alt(k))^t)$, we have that $M$ contains $C_3^t$, which is a contradiction. On the other hand, if $t=2$, we have $n$ equal to 6 or 9, in which case we can check computationally that $d(M) \leq 4$.

	If $k=4$, then $K \cong C_2^2$ and $\Sym(4)/K = \Sym(3)$. Hence, by the same reasoning, when $t > 2$ the quotient $M/K^t$ contains $\Alt(3)^t$, implying that $M$ contains $\Alt(k)^t$, which is a contradiction. If $t=2$ then $n$ is one of $8$ or $16$, and we can check computationally that $d(M) \leq 4$.
\end{proof}

\begin{prop}
	Suppose that $J$ is transitive. Assume that $M$ contains $\Alt(k)^t$ and does not contain $\Sym(k)^t \cap H$. Then $d(M) \leq 5$.
\end{prop}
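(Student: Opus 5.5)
The structure of $M$ in this case is very restricted. Since $M$ contains $\Alt(k)^t$ but not $\Sym(k)^t \cap H$, the subgroup $M$ is determined by a maximal $H$-invariant subgroup of the elementary abelian $2$-group $(\Sym(k)^t \cap H)/\Alt(k)^t$. This quotient is either $C_2^t$ or $\del(C_2^t)$, and it is a permutation module (or deleted module) for $J$. Since $M$ is maximal and does not contain this section, $M$ supplements it, so
\[
M = \Alt(k)^t . X . (H/(\Sym(k)^t \cap H)),
\]
where $X$ is a maximal $J$-invariant subgroup of $(\Sym(k)^t\cap H)/\Alt(k)^t$, and $H/(\Sym(k)^t \cap H) \cong J$ with $J \in \{\Alt(t),\Sym(t)\}$. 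The case $t=2$ has $n \in \{2k, k^2\}$ small relative to structure and can be handled directly. For $t \geq 3$, $J$ contains $\Alt(t)$, and the analysis in the proof of Proposition \ref{prop:t5CFsinwreathproduct} (or a direct check for $t=3,4$) shows that $X$ is one of $\del(C_2^t)$ or $\diag(C_2^t)$ (intersected appropriately), or trivial when the quotient is $\del(C_2^t)$ with $t$ odd.

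I would then bound $d(M)$ by counting non-Frattini chief factors layer by layer. For $k\geq 5$, the socle layer $\Alt(k)^t$ contributes a single non-abelian chief factor ($J$ is transitive), so by Theorem \ref{theorem:crowns} it does not affect $d(M)$ beyond $2$. The section $X$ is a cyclic $J$-module (Lemma \ref{lemma:delgens}, or trivially for $\diag$). Hence $X$ contributes at most one chief factor of each isomorphism type: $C_2^{t-1}$, $C_2^{t-2}$ or $C_2$. The top $J$ contributes $\delta_J(A) \le 1$, with the exception of $C_2$ when $J=\Sym(t)$.

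Putting these together gives $\delta_M(C_2)\leq 2$ and $\delta_M(A)\leq 2$ otherwise. The apparent exception $A\cong C_2^2$ with $t=4$ also has $\delta\leq 2$, because only one layer can produce it. Then $d(M)\leq 3\leq 5$ by Theorem \ref{theorem:crowns}. When $k\leq 4$, $\Alt(k)$ is solvable, and I would instead use Corollary \ref{cor:CFsinwreathproduct} on $\Alt(k)^t$ with $E=\Sym(k)$. That corollary gives at most one non-Frattini $M$-chief factor per $\Sym(k)$-chief factor of $\Alt(k)$: for $k=3$ this is $C_3^t$ or its diagonal/deleted pieces, and for $k=4$ it is $(C_2^2)^t$ and $C_3^t$. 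A small crown computation as in Proposition \ref{prop:boundtopC2} then again gives $d(M)\leq 5$.

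The main obstacle is verifying that abelian chief factors of the same isomorphism type arising from different layers cannot pile up. Such pile-up could occur, for instance, for $C_2$ coming from $X=\diag(C_2^t)$, from $\Sym(t)/\Alt(t)$, and (for $k=4$) from $\Alt(4)$'s layer. Here I would use Lemma \ref{lemma:jumpingbabyresultfull} and Lemma \ref{lemma:jumpingbabyresultdel} to show that some of these are Frattini. In the worst case I would accept the crude bound $\delta_M(C_2)\leq 4$, which still yields $d(M)\leq 5$.
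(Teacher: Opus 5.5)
Your argument is correct in outline, but it takes a much heavier route than the paper. The structural step is the same in both. You write $M=\Alt(k)^t.X.J$, where $X\in\{1,\diag(C_2^t),\del(C_2^t)\}$ is a maximal $J$-invariant subgroup of $(\Sym(k)^t\cap H)/\Alt(k)^t$; the paper settles $t\le 4$ by computer.

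After that, the paper does no chief-factor counting at all. It uses three facts:
\begin{enumerate}
\item Because $J$ is transitive, $\Alt(k)^t$ is the normal closure in $M$ of $\Alt(k)\times 1\times\cdots\times 1$, so it costs at most $d(\Alt(k))\le 2$ generators.
\item $X$ is a cyclic $J$-module.
\item $d(J)\le 2$.
\end{enumerate}
Hence $d(M)\le 2+1+2=5$, uniformly in $k$ and $t$. This needs no separate treatment of $k\le 4$ or $t=2$, and no Frattini or equivalence questions.

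Your crown count gives a sharper result where you finish it: for $k\ge 5$ it yields $d(M)\le 3$. The cost is case distinctions that you only sketch. The case $t=2$ is ``handled directly''. The case $k\le 4$ is deferred to a ``small crown computation'' that is not carried out. The fallback bound $\delta_M(C_2)\le 4$ is asserted, not derived.

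Two details of the sketch also need fixing.
\begin{enumerate}
\item When $X=1$ (the quotient is $\del(C_2^t)$ with $t$ odd), $M\cap\Sym(k)^t=\Alt(k)^t$ is not subdirect in $\Sym(k)^t$. So Corollary~\ref{cor:CFsinwreathproduct} cannot be applied with $E=\Sym(k)$; use $E=\Alt(k)$, or read off the chief factors directly.
\item For $t=4$, the chief factor $C_2^2$ can occur in two layers, not one. The factor $\del(C_2^4)/\diag(C_2^4)$ inside $X$ is $J$-isomorphic to the Klein four-subgroup of $J$, and both can be non-Frattini. An example is $M=\{(g_i)\sigma\in\Sym(k)\wr\Sym(4):\prod_i\sgn(g_i)=1\}$.
\end{enumerate}
The bound $\delta_M(C_2^2)\le 2$ that you state still holds, so nothing breaks. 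But the target is only $5$, and your own description of $M$ already supports the one-line normal-generation argument, which makes all of these checks unnecessary.
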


\begin{proof}
	If $t \geq 5$ then $H$ satisfies the conditions of Corollary \ref{cor:CFsinwreathproduct}, and hence we can determine that the proper, non-trivial $J$-invariant subgroups of $(\Sym(k)/\Alt(k))^t$ are $\diag(C_2^t)$ and $\del(C_2^t)$. On the other hand, if $t \leq 4$, we can determine computationally that this result still holds. This implies that
	\[
	M = \Alt(k)^t . K . J
	\]
	where $K$ is equal to $1$, $\diag(C_2^t)$ or $\del(C_2^t)$. Hence, since $d_J(K) = 1$, we have
	\[
	d(M) \leq d(\Alt(k)) + 1 + d(J) \leq 5. \qedhere
	\]
\end{proof}

\begin{prop}
	Suppose that $J$ is transitive and that $M$ contains $H \cap \Sym(k)^t$. Then $d(M) \leq 5$.
\end{prop}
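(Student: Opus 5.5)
Since $M$ contains $H \cap \Sym(k)^t$, the maximal subgroups in question correspond to maximal subgroups of $H/(H \cap \Sym(k)^t) \cong J$. Write $M = (H \cap \Sym(k)^t).J_1$ with $J_1 \mleq J$, where $\Alt(t) \leq J \leq \Sym(t)$. I would mirror the final case of the $\mathscr{C}_2$ analysis (Proposition \ref{prop:lastcaseC2}), but with much simpler bottom sections. The chain to use is
\[
1 < \Alt(k)^t < \Sym(k)^t \cap H < M.
\]
Here $(\Sym(k)^t \cap H)/\Alt(k)^t$ is one of $\del(C_2^t)$ or $C_2^t$, and $M/(\Sym(k)^t\cap H)\cong J_1$. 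We may assume $k\geq 5$, since small $k$ together with $n\geq 9$ leaves finitely many configurations that can be checked as in the earlier propositions. Then $\Alt(k)^t$ contributes only non-abelian, non-Frattini chief factors. By Corollary \ref{cor:upperbounddeltaGN} (or directly), each such $A$ has $\delta_{M,\Alt(k)^t}(A)\leq 1$ when $J_1$ is transitive and $\leq 2$ otherwise, so these factors never dominate in Theorem \ref{theorem:crowns}.

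The abelian part is where the bound is decided. If $J_1$ is transitive, Lemma \ref{lemma:delgens}, or the cyclic-module argument in the case $C_2^t$, shows that the section $(\Sym(k)^t\cap H)/\Alt(k)^t$ is generated as a $J_1$-module by at most two elements (one for $\del$, plus one for the diagonal quotient). This gives $d(M)\leq 1 + 2 + d(J_1)$ at first glance, which is too weak. Instead I would argue as follows:
\begin{itemize}
\item Use Theorem \ref{theorem:crowns} factor by factor. By Lemma \ref{lemma:permmodulemults}, a chief factor $A$ of $M$ in this section occurs with multiplicity at most $\dim_E(A)$.
\item A trivial $C_2$ factor can arise only from $C_2^t/\del(C_2^t)$, so it contributes at most $1$.
\item Moreover, when $J_1$ has non-abelian transitive socle, Lemma \ref{lemma:delmodulemults} kills any $C_2$ quotient of $\del(C_2^t)$.
\end{itemize}
Combining these with the bounds $\delta_{J_1}(C_2)\leq 4$ and $\delta_{J_1}(A)\leq 3$ otherwise (from \cite[Theorem 2.7]{LMT} for maximal subgroups of $\Alt(t)$ and $\Sym(t)$, with small $t$ checked directly) should yield $\delta_M(C_2)\leq 5$. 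The remaining modules then satisfy $h(A)\leq 5$, so $d(M)\leq 5$.

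If $J_1$ is intransitive, then $J_1=(\Sym(m)\times\Sym(t-m))\cap J$. In this case $\Sym(k)^t$ splits into two orbits. Applying Corollary \ref{cor:CFsinwreathproduct} on each orbit gives at most two $C_2$ factors from the base, each from a diagonal. Since $\delta_{J_1}(C_2)\leq 2$, we get $\delta_M(C_2)\leq 4$ or $5$, and the other factors are bounded by $2$.

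The main obstacle is the transitive case where $J_1$ is of diagonal type with $\delta_{J_1}(C_2)=4$. There one has to verify carefully that the base contributes at most one further non-Frattini $C_2$, and that it does not contribute two. I would try Lemma \ref{lemma:delmodulemults} with $N=\soc(J_1)$ first, together with Lemma \ref{lemma:jumpingbabyresultfull} to show that the diagonal $C_2$ is Frattini whenever $H\cap\Sym(k)^t=\del$-type.
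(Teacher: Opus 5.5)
\paragraph{A gap: the reduction to $k\geq 5$.} You claim that $k\leq 4$ together with $n\geq 9$ leaves only finitely many configurations, and this is false. The imprimitive subgroups $(\Sym(k)\wr\Sym(t))\cap G$ with $n=kt$ occur for every $t$; only $t\leq 3$ is finite.

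For $k=2$ nothing is lost: $\Alt(2)=1$, so your analysis of $(H\cap\Sym(k)^t)/\Alt(k)^t$ and of $J_1$ applies verbatim. For $k\in\{3,4\}$ it does not. There $\Alt(k)^t$ is solvable and contributes abelian, non-central chief factors of $M$, namely $C_3^m$ and (for $k=4$) $C_2^{2m}$, where $m$ is the length of a $J_1$-orbit. Your claim that the factors in $\Alt(k)^t$ never dominate rests on part (2) of Theorem \ref{theorem:crowns}. That part only disposes of non-abelian factors, so it says nothing about this infinite family.

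The paper closes this case as follows.
\begin{itemize}
\item The chief factors of $\Sym(k)$ in $\Alt(k)$ are non-central. For $t\geq 4$, a maximal $J_1$ has at most two orbits, and they have distinct lengths. Hence $\delta_{M,\Alt(k)^t}(A)\leq 1$ for every $A$.
\item If $J_1$ is intransitive, this gives $d(M)\leq 1+d(M/\Alt(k)^t)\leq 1+4$.
\item If $J_1$ is transitive, the only factors in $\Alt(k)^t$ are $C_3^t$ and $C_2^{2t}$. By \cite[Theorem 2.7]{LMT}, $\delta_{M/\Alt(k)^t}(A)\leq 2$ for these. So $\delta_M(A)\leq 3$, and $h(A)\leq\theta(A)+\delta_M(A)\leq 4$.
\end{itemize}
Your intransitive discussion also tacitly needs $t\geq 4$. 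For instance, $t=3$, $J=\Alt(3)$, $J_1=1$ gives three orbits. This is why the paper treats $t\leq 3$ separately.

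\paragraph{The obstacle you anticipate is not real.} It comes from a miscount. For transitive $J_1$, the permutation module $C_2^t$ is cyclic, generated by a single coordinate vector, and $\del(C_2^t)$ is cyclic by Lemma \ref{lemma:delgens}. Hence $d(M/\Alt(k)^t)\leq 1+d(J_1)\leq 5$. For intransitive $J_1$ one gets $d(M/\Alt(k)^t)\leq 2+2$. For $k\geq 5$ (and $k=2$) this already gives $d(M)\leq\max(2,d(M/\Alt(k)^t))\leq 5$, which is exactly the paper's argument.

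Your finer crown count is not wrong. Cyclicity and Lemma \ref{lemma:permmodulemults} give at most one non-Frattini trivial $C_2$ from the base, so $\delta_M(C_2)\leq 1+4$ even when $J_1$ is of diagonal type. But this is more than is needed, and the planned appeal to Lemmas \ref{lemma:delmodulemults} and \ref{lemma:jumpingbabyresultfull} is unnecessary.
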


\begin{proof}
	Let $M = (H \cap \Sym(k)^t) . J_1$ for some maximal subgroup $J_1$ of $J$. If $t \leq 3$ then $d(J_1) \leq 1$ and $d(M \cap \Sym(k)^t) \leq 3$. Indeed, this latter bound is clear by Theorem \ref{theorem:crowns} in the case that $k \geq 5$, and can be proven computationally otherwise. So we obtain that $d(M) \leq 4$.

	Hence, we assume that $t \geq 4$. We begin by bounding $d(M/\Alt(k)^t)$. The group $J_1$ has at most two orbits on $\{1, \dots, t\}$, implying that $d_{J_1}((M \cap \Sym(k)^t)/\Alt(k)^t) \leq 2$, with equality only if $J_1$ is imprimitive . By \cite[Theorem 1]{LMT}, we have $d(J_1) \leq 2$ if $J_1$ is imprimitive, and $d(J_1) \leq 4$ otherwise. Thus, $d(M/\Alt(k)^t) \leq 5$. 

	If $k \geq 5$ or $k=2$ then, by Theorem \ref{theorem:crowns}, we have $d(M) = d(M/\Alt(k^t)) \leq 5$. Hence, we assume that $3 \leq k \leq 4$. The chief factors of $\Sym(k)$ in $\Alt(k)$ are either $\{C_3\}$ or $\{C_3, C_2^2\}$ according as $k$ is 3 or 4, and are all non-central. Since $t \geq 4$, the chief factors of $M$ in $\Alt(k)^t$ are hence isomorphic to $C_3^m$ or $C_2^{2m}$ for some $m \leq t$ equal to the size of an orbit of $J_1$. Since $t \geq 4$, $J_1$ has at most two orbits, and their sizes are distinct. Hence, $\delta_{M, \Alt(k)^t}(A) \leq 1$. 

	If $J_1$ is intransitive, then $d(J_1) \leq 2$ and $\delta_M(A) \leq 1 + \delta_{M/\Alt(k)^t}(A)$. Hence, by Theorem \ref{theorem:crowns}, we have that $d(M) \leq 1 + d(M/\Alt(k)^t) \leq 5$. If $J_1$ is transitive, then the chief factors of $M$ in $\Alt(k)^t$ are isomorphic to $C_3^t$ or $C_2^{2t}$. By \cite[Theorem 2.7]{LMT}, $\delta_{M/\Alt(k)^t}(A) \leq 2$ for any such chief factor. So we obtain that $d(M) \leq \max(3, d(M/\Alt(k)^t)) \leq 5$, by Theorem \ref{theorem:crowns}.
\end{proof}

\begin{prop}
	Suppose that $J$ is intransitive. Then $d(M) \leq 5$.
\end{prop}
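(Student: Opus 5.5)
The first step is to pin down what $H$ can be. Since $\Alt(t) \leq J \leq \Sym(t)$ (the inequality displayed before the proposition should read this way, as $J$ is the projection to $\Sym(t)$), the group $J$ is transitive for every $t \geq 3$. So $J$ intransitive forces $t = 2$ and $J = 1$. Among the possibilities listed above, this leaves only $H = \Sym(k)^2 . \Alt(2) = \Sym(k) \times \Sym(k)$, lying in $G$ with the two factors not interchanged (the product action case $n = k^2$ inside $\Alt(n)$). The other shape $\Alt(k)^2.(\del(C_2^2).\Sym(2))$ projects onto $\Sym(2)$, so it does not occur here. Hence $H = P_1 \times P_2$ is a genuine direct product with $P_1 \cong P_2 \cong \Sym(k)$. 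Since $n \geq 9$, we have $k \geq 3$, and for $k \leq 4$ the degree satisfies $n \leq 16$; I would dispose of those small cases computationally, as is done elsewhere in this section. So assume $k \geq 5$, so that $\Alt(k)$ is simple.

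Next I would apply Proposition \ref{prop:maxsubgpdirectproduct} with $N_i = P_i$. In case (1), $M$ contains one factor, say $N_1$, and $M = \Sym(k) \times J_2$ for some $J_2 \mleq \Sym(k)$. Then $\delta_M(A) \leq \delta_{\Sym(k)}(A) + \delta_{J_2}(A)$. Here $\delta_{\Sym(k)}(A) \leq 1$ for every $A$, and \cite[Theorem 1]{LMT} gives $\delta_{J_2}(A) \leq 4$ if $A \cong C_2$ and $\delta_{J_2}(A) \leq 3$ otherwise. This yields $\delta_M(C_2) \leq 5$ and $\delta_M(A) \leq 4$ otherwise, so Theorem \ref{theorem:crowns} gives $d(M) \leq 5$. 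Alternatively, one can simply use $d(M) \leq d(\Sym(k)) + d(J_2)$, but the crown count is sharper.

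In case (2), $M$ is subdirect and $M = (M_1 \times M_2).C$ with $M_i$ a maximal $P_i$-normal subgroup of $\Sym(k)$. Hence $M_i = \Alt(k)$ and $C \cong C_2$, so $M = (\Alt(k) \times \Alt(k)).2$, a diagonal index-2 subgroup. Its non-Frattini chief factors are two copies of $\Alt(k)$ and one $C_2$, so Theorem \ref{theorem:crowns} (or directly $d(M) \leq 3$) gives the bound.

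The only delicate point is the reduction in the first paragraph: confirming that the listed forms of $H$ exhaust the cases, and that for $t = 2$ with $J = 1$ the group is exactly the direct product, so that Proposition \ref{prop:maxsubgpdirectproduct} applies with $N_i = P_i = \Sym(k)$. Once that is settled, the remaining bounds follow immediately, mirroring the proof in Section \ref{section:altcase1}.
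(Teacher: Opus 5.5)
Your proposal is correct and follows essentially the same route as the paper. In both, $J \geq \Alt(t)$ forces $t = 2$ and $H = \Sym(k)^2$, the cases $k \leq 4$ are checked computationally, and for $k \geq 5$ Proposition~\ref{prop:maxsubgpdirectproduct} reduces $M$ to either $\Sym(k) \times J_2$ or a subdirect $(\Alt(k) \times \Alt(k)).2$, each bounded exactly as in Section~\ref{section:altcase1}.
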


\begin{proof}
	Since $J \geq \Alt(t)$, this implies $t=2$ and $H = \Sym(k)^2$. Hence, the same proof as in Section \ref{section:altcase1} proves the desired result. Indeed, if $k < 5$ then we can prove computationally that $d(M) \leq 4$. Otherwise, if $k \geq 5$, then the proof in Section \ref{section:altcase1} implies that $M$ is either $\Alt(k).C_2$, $\Alt(k)^2 . C_2$, or $J \times \Sym(k)$ for some $J \mleq \Sym(k)$. In each case we can obtain $d(M) \leq 5$ as previously.
\end{proof}

\subsection{Diagonal type subgroups} \label{subsection:case4}

In this section, we consider $H$ of diagonal type, which implies
\[
H = (T^k. (\Out(T) \times \Sym(k))) \cap G,
\]
for some non-abelian simple group $T$. More precisely,
\[
T^k. (\Out(T) \times \Sym(k)) = \langle T^k, \diag(\Aut(T)^k), \Sym(k) \rangle \leq \Aut(T) \wr \Sym(k).
\]
Since $T$ is a non-abelian simple group, if $r = [T^k. (\Out(T) \times \Sym(k)):H] \leq 2$, then
\[
H = T^k . \frac{1}{r} (\Out(T) \times \Sym(k)).
\]
Hence, we let $L$ denote $H/T^k = \frac{1}{r} (\Out(T) \times \Sym(k))$.

\begin{prop}\label{prop:altdiagonaltype}
	Let $\Alt(n) \leq G \leq \Sym(n)$ for some $n \geq 9$. If
	\[
	H = (\langle T^k, \diag(\Aut(T)^k), \Sym(k) \rangle) \cap G \mleq G,
	\]
	and $M \mleq H$, we have that $d(M) \leq 7$. Additionally, there exists such a second maximal subgroup $M$ of this type such that $d(M) = 7$.
\end{prop}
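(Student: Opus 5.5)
I will follow the method of Section \ref{section:prelimsmethod}, working with the normal series $1 < T^k < H$, where $H/T^k = L \leq \Out(T) \times \Sym(k)$. Since $T^k$ is a minimal normal subgroup of $H$ and $\Sym(k)$ acts transitively, Proposition \ref{prop:XSimpleMaxSubgrpXkC} gives three possibilities for a maximal subgroup $M$ of $H$. Either $M = T^k . L_1$ with $L_1 \mleq L$; or $M \cap T^k = R^k$ with $R$ a maximal or novelty maximal subgroup of $T$; or $M \cap T^k \cong T^\ell$ with $\ell < k$ attached to an $L$-invariant partition.

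\textbf{Case $T^k \leq M$.} Here $T^k$ contributes a single non-abelian non-Frattini chief factor. So by Theorem \ref{theorem:crowns} we have $d(M) = \max(2, d(L_1))$, up to that factor. Now $L_1$ is a maximal subgroup of a subgroup of $\Out(T) \times \Sym(k)$, and we apply Proposition \ref{prop:maxsubgpdirectproduct}. If $L_1$ contains the $\Out(T)$ part, then $L_1 = \Out(T).J$ with $J$ maximal in $\Sym(k)$ (or in $\Alt(k)$). By \cite[Theorem 1]{LMT}, or by Theorem \ref{theorem:LMTgenerators} for $\Sym(k)$, we have $\delta_J(C_2) \leq 4$ and $\delta_J(A) \leq 3$ for other $A$. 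Since $\Out(T)$ has at most $3$ central chief factors $C_2$ (it is soluble of small rank, by the classification of $\Out(T)$), adding the bounds gives $\delta_{L_1}(C_2) \leq 7$. In the other subcase, $L_1 = R_1 \times \Sym(k)$ with $R_1 \mleq \Out(T)$, or $L_1$ is a subdirect product; both are easily at most $4$-generated. Theorem \ref{theorem:crowns} then yields $d(M) \leq 7$.

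\textbf{Case $M \cap T^k = R^k$.} Now $M = R^k . L$. Since the $\Sym(k)$ part permutes the coordinates transitively (and $\Alt(k) \leq L$ up to index $2$ when $k \geq 3$), Corollary \ref{cor:CFsinwreathproduct} and Theorem \ref{theorem:mainjumping} bound $\delta_{M, R^k}(A)$ by the non-Frattini chief factor counts of $R$. Those counts are at most $3$ by Theorem \ref{theorem:LMTcrowns}. In addition, $\delta_L(C_2) \leq 3$, since $\Sym(k)$ gives $1$ and $\Out(T)$ gives at most $2$ once we pass to the diagonal. This gives $\delta_M(C_2) \leq 6$ and smaller counts for other factors, so $d(M) \leq 7$. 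The crude bound $d(M) \leq d(R) + d(L) \leq 5 + 2$ also suffices whenever $d(L) \leq 2$.

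\textbf{Case $M \cap T^k \cong T^\ell$.} Here the abelian chief factors of $M$ all lie in $M/(M\cap T^k) \cong L$. So $d(M) \leq \max(2, d(L)) \leq 4$.

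\textbf{Tightness.} The main obstacle is the first case, $M = T^k . (\Out(T) \times J)$, which I expect to give $d(M) = 7$. I would take $T$ with $\Out(T) \cong C_2^2 \times C_f$ with $f$ even, for instance $T = \PSL_2(q)$ for suitable odd $q$ with $\Out(T)$ containing $C_2 \times C_2 \times C_2$ as a quotient, so that $\delta_{\Out(T)}(C_2) = 3$. I would take $G = \Sym(n)$, so that $r = 1$. For $J$ I would take the diagonal-type maximal subgroup of $\Sym(k)$ with $\delta_J(C_2) = 4$ from Theorem \ref{theorem:LMTgenerators}(2)(i). In $M = T^k.(\Out(T) \times J)$ these central $C_2$ factors persist as non-Frattini in the direct product quotient $\Out(T) \times J$, so $\delta_M(C_2) = 7$. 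By Theorem \ref{theorem:crowns}(3) this gives $d(M) \geq 7$, and hence $d(M) = 7$.

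The delicate points are the following:
\begin{itemize}
\item verifying that $\Out(T) \times J$ really is maximal in $L$;
\item verifying that the product factors remain non-Frattini;
\item bounding $\delta_{\Out(T)}(C_2) \leq 3$ uniformly;
\item for the tightness example, checking the constraint $n = |T|^{k-1}$ against the existence of a diagonal-type $J$.
\end{itemize}
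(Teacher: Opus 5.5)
Your treatment of the case $T^k\le M$ (Proposition \ref{prop:maxsubgpdirectproduct} applied to $L\le\Out(T)\times\Sym(k)$, giving $\delta_M(C_2)\le 3+4$) and of the diagonal case agrees with the paper. The case $M\cap T^k=R^k$, however, has a genuine gap: both bounds you add there are false. Write $P_1,P_2$ for the projections of $L$ to $\Out(T)$ and $\Sym(k)$, and $N_2=L\cap(1\times\Sym(k))$. Your claim $\delta_L(C_2)\le 3$ fails. For $G=\Sym(n)$ we have $L=\Out(T)\times\Sym(k)$, and for $T=\PSL_4(9)$ the group $\Out(T)\cong C_4:(C_2\times C_2)$ has $C_2^3$ as a quotient, so $\delta_L(C_2)=4$. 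Indeed, your own tightness construction requires $\delta_{\Out(T)}(C_2)=3$. The correct bound is $\delta_L(C_2)\le\delta_{P_1}(C_2)+\delta_{L,N_2}(C_2)\le 3+1$. Likewise, Theorem \ref{theorem:LMTcrowns} does not give $\delta_R(C_2)\le 3$. Part (3) permits $\delta(C_2)=4$ for the pairs in Table \ref{table:LMTtable}, and this already happens inside $T$ itself when $T$ is orthogonal and $R$ lies in $\mathscr C_4$ or $\mathscr C_7$ with square discriminants. So adding the contributions, or using the crude bound $d(M)\le d(R)+d(L)$ with $d(R),d(L)\le 4$, yields only $8$. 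You also apply Corollary \ref{cor:CFsinwreathproduct} and Theorem \ref{theorem:mainjumping} without checking their hypotheses ($M\cap E^k$ subdirect in $E^k$, and $\Alt(k)\le M$). These can fail when $L$ is not a direct product.

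The missing idea is the paper's extremal analysis. The crude bound settles everything except $d(R)=d(L)=4$. In that case $\delta_{L,N_2}(C_2)=1$ forces $N_2=P_2=\Sym(k)$, hence $L=P_1\times\Sym(k)$. Proposition \ref{prop:nonsubdirectwreathcase} then lets you take $M=(\widetilde M_1\wr\Sym(k))\cap H$, where $\widetilde M_1$ is maximal in the preimage $A_1$ of $P_1$ in $\Aut(T)$ with $\widetilde M_1\cap T=R$. Now Corollary \ref{cor:CFsinwreathproduct} genuinely applies. Moreover $T$ must be orthogonal with $R$ in $\mathscr C_4$ or $\mathscr C_7$. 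When $A_1$ does not contain $\overline I(V,\mathbb F,\kappa)$, one shows that the special orthogonal part $\overline S_1\times\overline S_2$ (resp.\ $\overline S_*^2$) of $R$ lies in $[\widetilde M_1,R]$. By Lemma \ref{lemma:jumpingbabyresultfull}, the corresponding $C_2$ sections of $R^k$ are then Frattini in $M$, giving $\delta_{M,R^k}(C_2)\le 2$ and $d(M)\le 6$. The remaining subcase goes as in Proposition \ref{prop:C7Omegatcase}. Without this Frattini argument your bound does not close.

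There are also two smaller problems. First, your tightness example with $T=\PSL_2(q)$ fails: $\Out(\PSL_2(q))\cong C_{(2,q-1)}\times C_f$ has elementary abelian $2$-quotients of rank at most $2$, so you only get $\delta_M(C_2)\le 6$. You need $\delta_{\Out(T)}(C_2)=3$, for example $T=\PSL_4(9)$ as in the paper; your construction with $J$ of diagonal type and $\delta_J(C_2)=4$ then coincides with the paper's. Second, for $k=2$ the group $H$ need not project onto $\Sym(2)$. In that case $T^2$ is not minimal normal in $H$, so Proposition \ref{prop:XSimpleMaxSubgrpXkC} does not apply, and the case must be handled directly via Proposition \ref{prop:maxsubgpdirectproduct}, which gives $d(M)\le 5$.
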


\begin{proof}
	We assume first that $k = 2$, and $L = \Out(T)$. Then, $H = T^2 . \Out(T) \leq \Aut(T)^2$. If $M < \Aut(T)^2$ is not subdirect then, by Proposition \ref{prop:maxsubgpdirectproduct}, $M$ is equal to $(M_1 \times \Aut(T)) \cap H$ for some $M_1$ maximal in $\Aut(T)$. Thus, by Theorem \ref{theorem:LMTcrowns} and \cite[Theorem 1]{LMT}, we have
	\[
	\delta_M(A) \leq \delta_{M_1}(A) + \delta_{M, T}(A) \leq \begin{cases*}
		5 + 0 \qquad & if $A \cong C_2$,\\
		3 + 0 \qquad & if $A \not\cong C_2$ is abelian,\\
		2 + 1 & otherwise.
	\end{cases*}
	\]
	As such, $d(M) \leq 5$. On the other hand, if $M$ is subdirect, then $M \cong \Aut(T)$, so $d(M) \leq 3$ by \cite[Proposition 3.12]{LMT}. Hence, we may assume that the projection of $L$ to $\Sym(k)$ is equal to $\Sym(2)$ in the $k=2$ case, and contains $\Alt(k)$ otherwise. By Proposition \ref{prop:XSimpleMaxSubgrpXkC}, $M$ is one of the following:
	\begin{enumerate}[\upshape(1)]
		\item $T^k . J$ for some $J \mleq L$;
		\item $(\prod_{i=1}^k M_i) . L$ where $M_i$ are maximal, novelty maximal, or trivial subgroups of $T$, all conjugate in $\Aut(T)$; or
		\item $\diag(T^k) . L$.
	\end{enumerate}
	Note, for the final case we have used the fact that the projection of $L$ to $\Sym(k)$ is primitive. 

	Assume first that $M = T^k . J$ for some $J \mleq L$. A maximal subgroup of $\Alt(k)$ or $\Sym(k)$ has at most three orbits on $\{1, \dots, k\}$, so
	\[
	\delta_{M,T^k}(A) \leq \begin{cases*}
		3 \qquad & if $A \cong T^i$ for some $i$,\\
		0 & otherwise.
	\end{cases*}
	\]
	Let the maps $\pi_1, \pi_2$ be the projections of $L \leq \Out(T) \times \Sym(k)$ to each of its coordinates. We define $N_i := \ker(\pi_{3-i})$, and $P_i := \pi_i(L)$ for $i=1,2$. Note, since $r = [\Out(T) \times \Sym(k) : L] \leq 2$, we have two possibilities. Either $L = P_1 \times P_2$, or $P_1=\Out(T)$, $P_2=\Sym(k)$, and $L = (N_1 \times N_2).2$. Thus $L$ contains $1 \times \Alt(k)$, and contains a subgroup of index 2 of $\Out(T) \times 1$.

	Assume that $J$ is not a subdirect subgroup of $P_1 \times P_2$. Then, by Proposition \ref{prop:maxsubgpdirectproduct}, $J$ contains $N_1$ or $N_2$. If $J \geq N_1$, then
	\[
	J = (P_1 \times J_2) \cap G
	\]
	for some $J_2 \mleq P_2$. Note that $P_2$ is one of $\Alt(k)$ or $\Sym(k)$. Hence, we apply \cite[Propositions 3.12, 5.1]{LMT} and Corollary \ref{cor:upperbounddeltaGN} to obtain that
	\begin{align*}
		\delta_M(A) & = \delta_{M,T^k}(A) + \delta_{J, N_1}(A) + \delta_{J_2}(A)\\
		& \leq \begin{cases*}
			0 + 3 + 4 \qquad & if $A \cong C_2$,\\
			0 + 2 + 3 & if $A$ is otherwise central,\\
			0 + 2 + 2 & if $A$ is abelian and non-central,\\
			3 + 0 + 2 & otherwise.
		\end{cases*}
	\end{align*}
	Thus, $d(M) \leq 7$. We show that this bound is tight. Let $G = \Sym(n)$, so $L = \Out(T) \times \Sym(k)$ is a direct product. Hence,
	\[
	\delta_M(A) = \delta_{M,T^k}(A) + \delta_{\Out(T)}(A) + \delta_{J_2}(A)
	\]
	Letting $T = \PSL_4(9)$, we have $\delta_{\Out(T)}(C_2) = 3$. For $k$ large enough, there exists a maximal subgroup $J_2$ of $\Sym(k)$ such that $\delta_{J_2}(C_2) = 4$, by \cite[Theorem 1]{LMT}. Hence, there exists a maximal subgroup $M$ of $H$ such that $\delta_M(C_2) = 7$, and so $d(M) = 7$ by Theorem \ref{theorem:crowns}.

	If instead $J$ contains $N_2$, then we can similarly obtain that $J = (J_1 \times P_2) \cap G$ for some $J_1 \mleq P_1$, and so, by \cite[Proposition 3.12]{LMT} and Corollary \ref{cor:upperbounddeltaGN},
	\begin{align*}
		\delta_M(A) & \leq \delta_{M,T^k}(A) + \delta_{J_1}(A) + \delta_{J, N_2}(A)\\
		& \leq \begin{cases*}
			0 + 3 + 1 \qquad & if $A \cong C_2$,\\
			0 + 2 + 1 & if $A$ is otherwise abelian,\\
			3 + 0 + 1 & otherwise.
		\end{cases*}
	\end{align*}
	Thus, $d(M) \leq 4$.

	If $J$ is subdirect, then, by Proposition \ref{prop:maxsubgpdirectproduct}, $J = (J_1 \times J_2) . C$ for some maximal $P_i$-normal subgroups $J_i$ of $N_i$, with $P_i/J_i \cong C$ for each $i$. Thus, $\delta_M(A) \leq \delta_{M, T^k}(A) + \delta_J(A) \leq 3 + 1$ when $A$ is non-abelian. Hence, $d(M) = d(M/T^k)$ by Theorem \ref{theorem:crowns}, so
	\begin{align*}
		d(M) & \leq d(J_1) + d(P_2) \leq 5,
	\end{align*}
	by \cite[Proposition 3.12]{LMT}.

	We next assume that $M = (\prod_{i=1}^k M_i) . L$ for some subgroups $M_i$ of $T$. If $M_i=1$ for all $i$ then $d(M) = d(H/T^k) \leq 4$ by Theorem \ref{theorem:LMTgenerators}. Hence we assume that each $M_i$ is either a maximal or novelty maximal subgroup of $T$. We have
	\[
	d(M) \leq d(M_1) + d(L).
	\]
	By \cite{BLS2013}, $d(M_1) \leq 4$. On the other hand, since $\delta_{N_2}(A) \leq 1$ for any chief factor $A$, by Corollary \ref{cor:upperbounddeltaGN}, we have
	\begin{align*}
		\delta_L(A) & = \delta_{P_1}(A) + \delta_{L, N_2}(A)\\
		& \leq \begin{cases*}
			3 + 1 \qquad & if $A \cong C_2$,\\
			1 + 1 & otherwise.
		\end{cases*}
	\end{align*}
	Thus, $d(L) \leq 4$. If $d(L) \leq 3$ or $d(M_1) \leq 3$ we obtain $d(M) \leq 7$ by above, so we assume that $d(L) = d(M_1) = 4$. Hence, we have $\delta_{L, N_2}(C_2) = 1$, which implies that $N_2 = P_2 = \Sym(k)$. Thus, by Goursat's Lemma, $N_1 = P_1$ also. In particular, the conditions of Corollary \ref{cor:CFsinwreathproduct} hold. Let $A_1$ be the preimage of $P_1$ in $\Aut(T)$. The intersection of $H \leq A_1 \wr \Sym(k)$ with $A_1^k$ is subdirect. Additionally, $M \cap T^k$ is not normal in $\Aut(T)^k$. Thus, by Proposition \ref{prop:nonsubdirectwreathcase}, we can assume that $M$ is equal to $(\widetilde M_1 \wr \Sym(k)) \cap H$ for some maximal subgroup $\widetilde M_1$ of $A_1$ with $\widetilde M_1 \cap T = M_1$.

	Since $\delta_{P_1}(C_2) = 3$, $T$ is equal to $\PSL_n(q)$ or $\POmega_n^\pm(q)$ for some $q$ a square prime power, and some $n$. Let $T = \overline \Omega(V, \mathbb F, \kappa)$  for some classical geometry $(V, \mathbb F, \kappa)$ of type $\boldL$ or $\boldO^\pm$. By Theorem \ref{theorem:LMTgenerators}, the following hold: $T$ is of type $\boldO^\pm$, $M_1$ is in $\mathscr{C}_4$ or $\mathscr C_7$, and $\delta_{M_1}(C_2) = 4$. If $A_1$ contains $\overline I(V, \mathbb F, \kappa)$, then the same proof as in Proposition \ref{prop:C7Omegatcase} shows that $d(M) \leq 7$. So we assume that this is not the case. Hence, $\delta_{P_1}(C_2) = 3$ implies that $\Out(T) \cong D_8 \times C_f$, $P_1 \cap D_8 = C_2^2$, and $P_1 = (P_1 \cap D_8) \times (P_1 \cap C_f)$. In particular, $A_1$ contains $\overline S(V, \mathbb F, \kappa)$.

	Suppose that $M_1 \in \mathscr C_4$. Then, as $d(M_1) = 4$, there exist some classical geometries $(V_i, \mathbb F_i, \mathbb \kappa_i)$ for $i = 1,2$ such that $M_1 = \overline I(V_1, \mathbb F_1, \mathbb \kappa_1) \times \overline I(V_2, \mathbb F_2, \mathbb \kappa_2)$. Additionally, both classical geometries are of type $\boldO^\pm$ and have $D(V_i) = \Box$. Let $X_i := X(V_i, \mathbb F_i, \mathbb \kappa_i)$ for $X \in \{\Omega, I, \Delta, \Sigma\}$. Note that $\widetilde M_1 \leq \overline \Sigma_1 \times \overline \Sigma_2$ and $\widetilde M_1 \cap (\overline \Delta_1 \times \overline \Delta_2)$ is subdirect, by \cite[Propositions 4.4.14, 4.4.15]{KL}, since $A_1$ contains $\overline S(V, \mathbb F, \kappa)$. Hence, $\overline S_1 \times \overline S_2$ is contained in $[\widetilde M_1, M_1]$. Thus, by Lemma \ref{lemma:jumpingbabyresultfull}, if $A \cong C_2$ is a chief factor of $\widetilde M_1$ in $\overline S_1 \times \overline S_2$ then $A^t$ is Frattini in $M$. Note that $\delta_{M_1}(A) \leq 2$ for any chief factor $A$ not isomorphic to $C_2$, by \cite[Chapter 2]{KL} and Lemma \ref{lemma:non_simple_gen_bound}, as $q$ is a square prime power. By Corollary \ref{cor:CFsinwreathproduct}, for any chief factor $A$ of $M$,
	\begin{align*}
		\delta_M(A) & \leq \delta_{M,M_1^k}(A) + \delta_L(A) \\
		& \leq \begin{cases*}
			2 + 4 \qquad & if $A \cong C_2$,\\
			2 + 2 \qquad & otherwise. 
		\end{cases*}
	\end{align*}
	Hence, by Theorem \ref{theorem:crowns}, $d(M) \leq 6$.

	Suppose that $M_1 \in \mathscr C_7$. Then, as in the proof of Proposition \ref{prop:LMTC7calcs}, we have that $M_1 = \overline I(V_*, \mathbb F_*, \kappa_*)^2$ and $\widetilde M_1$ contains $\overline \Delta(V_*, \mathbb F_*, \kappa_*)^2$ for some classical geometry $(V_*, \mathbb F_*, \kappa_*)$ of type $\boldO^\pm$ with $D(V_*) = \Box$. Let $X_* := X(V_*, \mathbb F_*, \kappa_*)$ for $X \in \{\Omega, S\}$. Then, since $\overline \Omega_*$ is a simple group, $[\widetilde M_1, M_1]$ contains $\overline S_*^2$. Thus, by Lemma \ref{lemma:jumpingbabyresultfull} and Theorem \ref{theorem:crowns},
	\[
	d(M) \leq d(M/(\overline S_*^2)^k) \leq d(\overline I_*^2/\overline \Omega_*^2) + d(H/T^k) \leq 6
	\]
	where the final inequality is by \cite[Theorem 1]{LMT}.
\end{proof}

\section{Proof of Theorem \ref{theorem:bigresult}: sporadic groups}\label{section:sporadics}

In this last section, we finish the proof of Theorem \ref{theorem:bigresult} by considering the case where $G$ has sporadic socle. In particular, we prove the following.

\begin{theorem} \label{theorem:sporadics}
	Let $G_0$ be a sporadic group, and $G$ an almost simple group with socle $G_0$. Let $H$ be a maximal subgroup of $G$, and let $M$ be a maximal subgroup of $H$. Then $d(M) \leq 5$. Moreover, this bound is tight.
\end{theorem}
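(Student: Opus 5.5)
The proof will be essentially computational, in the spirit of the introduction's remark that for sporadic socle the maximal subgroups of $G$ are explicitly known. We have $G_0 \leq G \leq \Aut(G_0)$, and $|\Out(G_0)| \leq 2$, so $G$ is either $G_0$ or $G_0.2$. For each of the $26$ sporadic groups and each of these (at most $52$) almost simple groups $G$, we take the list of maximal subgroups $H$ of $G$ up to conjugacy from the $\Atlas$ together with the later completions (including the Monster). We then split into two families.

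\emph{Family 1: $H$ is small enough to be constructed explicitly in Magma}, for instance as a permutation or matrix group from the standard generators. In this case we compute the conjugacy classes of maximal subgroups $M$ of $H$ directly and obtain $d(M)$. Usually we do this by exhibiting an explicit generating set of the appropriate size. Where a lower bound is needed, we compute the non-Frattini chief factors of $M$ and apply Theorem \ref{theorem:crowns}: for $M$ non-cyclic, $d(M) \leq \max_A d((L_A)_{\delta_M(A)})$, which in practice reduces to bounding $\delta_M(A)$ and applying part (3) of that theorem to the abelian factors.

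\emph{Family 2: $H$ is too large to handle directly.} This arises in the Baby Monster and the Monster, and for a few large local subgroups of $\Fi_{24}'$, $\Co_1$, etc. Here the plan is to use the structural tools of Section \ref{section:prelimsmethod}.

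1. If $H$ is almost simple, then $M$ is a maximal subgroup of an almost simple group, so $d(M) \leq 5$ by Theorem \ref{theorem:LMTgenerators}. A small refinement, again via Theorem \ref{theorem:LMTgenerators}, shows that $d(M)=4,5$ cannot occur unless the socle is classical of the listed shape.
2. If $H$ is of the form $S^t.C$, $(S_1\times S_2).C$ or a $p$-local subgroup $p^a.L$, we apply Proposition \ref{prop:maximalsubgroupgeneralmethod} along a chief series of $H$. This gives the shape of $M$: either $M$ contains the bottom chief factor, or it meets it in $L^t$, $S^d$ or the trivial group. This is combined with Proposition \ref{prop:maxsubgpdirectproduct} and Propositions \ref{prop:XSimpleMaxSubgrpXkC} and \ref{prop:nonsubdirectwreathcase} when $H$ has wreath-like structure, for example $(\Alt(5)\times\Alt(5)\times\ldots).2$ or $\L_2(11)\wr 2$-type subgroups of the Monster.
3. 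We then bound $\delta_M(A)$ using Corollary \ref{cor:upperbounddeltaGN} and Theorem \ref{theorem:mainjumping}. The key observation is that the quotients $C$ in sporadic maximal subgroups have very few central $C_2$ factors, because $|\Out(G_0)|\leq 2$. In practice, for $p$-locals the quotient $M/O_p(H)\cap M$ is a maximal subgroup of a small group $L$ that is computable by machine, while $O_p(H)$ contributes at most $\dim_E(A)$-bounded multiplicities via Lemma \ref{lemma:permmodulemults}.

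For tightness we will search the Family 1 computations for an $M$ with $d(M)=5$. The natural candidates are elementary-abelian-heavy subgroups such as $M = C_2^4\times$ something small, or a maximal subgroup of a $2$-local $2^{1+8}.\ldots$ with large $\delta_M(C_2)$. We then confirm $d(M)=5$ via $\delta_M(C_2)=5$ and Theorem \ref{theorem:crowns}(3).

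The main obstacle is Family 2, and specifically the $2$-local maximal subgroups of $\mathbb M$ and $\mathbb B$ (e.g.\ $2^{1+24}.\Co_1$, $2^{2+11+22}.(\M_{24}\times\Sym(3))$, $2^{10+16}.\Omega_{10}^+(2)$). For these, the maximal subgroups of $H$ are not fully tabulated, and the Frattini status of the abelian chief factors inside $O_2(H)$ is delicate. My first attempt will be to show that every $\mathbb F_2$-chief factor of $H$ inside $O_2(H)$ is non-central of dimension at least $2$ and occurs with multiplicity at most $2$. Part (3) of Theorem \ref{theorem:crowns} then gives a contribution of at most $1+\lceil (2+s(A))/r(A)\rceil \leq 4$, so the bound reduces to the computable quotient $M/(M\cap O_2(H))$, which is a maximal subgroup of $\Co_1$, $\M_{24}\times \Sym(3)$ and so on, handled by Theorem \ref{theorem:LMTgenerators} or by direct computation.
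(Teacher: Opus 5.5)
Your overall framework matches the paper's. You build each $H$ explicitly from permutation or matrix representations and Web Atlas words, enumerate its maximal subgroups, compute $d(M)$, and fall back on structure plus Theorem \ref{theorem:crowns} only for groups too large to handle.

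\textbf{The gap is in your step for the hard $2$-local cases.} Knowing that the $\mathbb F_2$-chief factors of $H$ inside $N=O_2(H)$ are non-central with multiplicity at most $2$ does not let you reduce to $M/(M\cap N)$. In the main case $M\geq N$, one has $M=N.J$ with $J$ maximal in $L=H/N$. What matters is how each $H$-chief factor in $N$ decomposes on restriction to $J$, and three things can go wrong:
\begin{itemize}
\item Restriction can split a factor into several pieces, including trivial ones. The natural module of $\L_5(2)$ restricted to the maximal parabolic $2^4{:}\L_4(2)$ already has a trivial composition factor, i.e.\ a potential central $C_2$ of $M$.
\item Pieces coming from different $H$-factors can become isomorphic.
\item Since $N$ acts trivially on all of these pieces, they can be $M$-equivalent to chief factors of $J$ itself, for example those in $O_2(J)$ when $J$ is parabolic. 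Multiplicities then add rather than combine through a maximum.
\end{itemize}
So neither non-centrality nor your bound $h(A)\leq 4$ transfers from $H$ to $M$, and the reduction to the quotient is unjustified.

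The paper closes this as follows (see the proof of Proposition \ref{prop:maxMonster}). It identifies the $H$-chief factors in $N$ using the irreducible $\mathbb F_2[L]$-modules and the character table of $\mathbb M$. It then computes their restrictions to every maximal $J$, and checks which pieces are non-Frattini and that they are not isomorphic to each other or to chief factors of $J$. Only then does it apply Theorem \ref{theorem:crowns}. The case $M\not\geq N$ is the easy one, since the chief factors of $M$ are then a sub-multiset of those of $H$.

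\textbf{Smaller points.}
\begin{itemize}
\item Your ``key observation'' is not valid. The bound $|\Out(G_0)|\leq 2$ constrains only $H/(H\cap G_0)$, not the $C_2$ chief factors of $H\cap G_0$. This is why groups such as $(2^2\times F_4(2)){:}2$ or $(\Sym(6)\times\Sym(6)).4$ in $\mathbb B$ need ad hoc arguments. For $(2^2\times F_4(2)){:}2$ the crude additive estimate gives $6$, and one needs $Z(H)\leq\Phi(M)$ to reach $5$.
\item Lemma \ref{lemma:permmodulemults} concerns cyclic modules and says nothing about $O_p(H)$ here.
\item You do not actually prove tightness. You only propose a search, and your candidates are not shown to be second maximal subgroups of a sporadic group. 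In the paper the value $5$ is attained, by exhaustive computation, for socle $\Fi_{22}$.
\end{itemize}
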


In order to prove this result, we rely primarily on computer-aided computations of the generator number of the second maximal subgroup $M$. This is done using the following general framework:
\begin{enumerate}[(1)]
	\item We construct the almost simple group $G$ or the maximal subgroup $H$ explicitly in Magma.
	\item We enumerate the second maximal subgroups $M$ contained in $G$ or $H$ respectively.
	\item We compute the generator number of $M$.
\end{enumerate}
While this framework allows us to prove Theorem \ref{theorem:sporadics} for most of the second maximal subgroups $M$, the specific implementation of the framework for each $M$ can vary greatly, and depends on a number of factors such as the data available in the $\Atlas$ and the degree of the representations of $G$ or $H$. As such, we will dedicate the majority of this section to explaining the different techniques used in the proof of Theorem \ref{theorem:sporadics}. The implementation of these techniques for each second maximal subgroup $M$ can be found in the GitHub repository \url{https://github.com/patriciamedinacapilla/SecondMaximalSubgroupsSporadics}. The output of these files gives the proof of Theorem \ref{theorem:sporadics}, which we collate at the end of the section in Tables \ref{table:smallsporadics}, \ref{table:BabyMonster}, and \ref{table:Monster}.

\subsection{Small sporadic groups}

The most straightforward application of the paradigm given above occurs when the almost simple group $G$ has a permutation representation stored in Magma. In this case, step (1) can be achieved by simply loading in this representation into Magma. We may then use the \texttt{MaximalSubgroups} function to enumerate $H$ and $M$, completing step (2). For each such second maximal subgroup $M$, its generator number can then be computed using the \texttt{SmallestGeneratingSet} function, completing step (3) and giving a proof of Theorem \ref{theorem:sporadics} in this case.

While quite straightforward, this technique can be applied to most of the sporadic groups, namely those with socle equal to one of $\M_{11}$, $\M_{12}$, $\J_1$, $\M_{22}$, $\J_2$, $\M_{23}$, $\HS$, $\J_3$, $\M_{24}$, $\McL$, $\He$, $\Ru$, $\Suz$, $\ON$, $\Co_3$, $\Co_2$, $\Fi_{22}$, $\Fi_{23}$, or $\Co_1$.

For the groups $G_0 = \HN$ and $G_0 = \Fi_{24}'$, Magma stores a permutation representation of degree 1140000 and 306936 respectively. While these representations can still be easily loaded into Magma, we found that it was unfeasible to use the \texttt{MaximalSubgroups} function to enumerate the maximal subgroups $H$ of $G$. Instead, we utilise the data in the Web $\Atlas$ (see \url{https://brauer.maths.qmul.ac.uk/Atlas/}) to construct the maximal subgroups of $G$. Indeed, generators for each maximal subgroup of $\Aut(G_0)$ are stored in the Web $\Atlas$ as words in the standard generators of $\Aut(G_0)$. We can therefore construct each maximal subgroup of $\Aut(G_0)$ as a permutation group by evaluating these words on the generators of our permutation representation of $\Aut(G_0)$. The second maximal subgroups $M$ and their generator number are then enumerated and computed as previously.

We note however that the Web $\Atlas$ does not contain generators for the maximal subgroups of $G_0$ in these two cases. To remedy this, we note that most of the maximal subgroups of $G_0$ are equal to the intersection of a maximal subgroup of $\Aut(G_0)$ with $G_0$. We can thus construct these maximal subgroups as previously. The remaining maximal subgroups are all almost simple, and hence $d(M)$ can be bounded by Theorem \ref{theorem:LMTgenerators}.

Finally, when $G_0 = \Ly$, $G_0 = \Th$ or $G_0 = \J_4$, Magma does not store a permutation representation of $G$. However, Magma does store moderately large degree matrix representations of these groups. To enumerate the maximal subgroups $H$, we again use the generators for these groups stored in the Web $\Atlas$ as above. However, to enumerate the second maximal subgroups $M$, it is unfeasible to use the \texttt{MaximalSubgroups} function, as the degree of the matrix representation is too large. Instead, we utilise the \texttt{LMGMaximalSubgroups} function, which is specifically tailored towards computations with large degree matrix representations.

When computing the generator number of $M$, we found that using \texttt{SmallestGeneratingSet} does not always work as smoothly as with permutation groups. This is due to the random behaviour of \texttt{AbelianQuotient}, which attempts to construct a presentation for $M$. In the file \texttt{core/mingenset\_lmg.m} in our GitHub repository, we provide an alternative version of the function \texttt{SmallestGeneratingSet} which uses LMG functions instead. This seems to eliminate the worst case behaviour and may be of independent interest when computing generator numbers of matrix groups.

The remaining two sporadic groups are the Baby Monster and the Monster group. Both of these groups do not have small permutation or matrix representations, and as such cannot be treated using the above methods. Instead, we treat these groups separately in the following two sections.

\subsection{The Monster group}

When $G = \mathbb M$, most of the maximal subgroups $H$ of $G$ are provided in the Web $\Atlas$ as small degree permutation or matrix groups. As such, we may proceed using the techniques described in the previous section to enumerate the second maximal subgroups $M$ contained in $H$, and to bound $d(M)$.

When $H = 2 . \mathbb{B}$, $3.\Fi_{24}$, $2^2.{}^2E_6(2):\Sym(3)$, $\Sym(3) \times \Th$ or $(D_{10} \times \HN).2$, the provided representations are quite large, and the above computations very resource intensive. Instead, we note that the structure of these groups is quite straightforward, and the desired bounds easily obtainable from the material developed in previous sections. Notably, if $H$ is $2 . \mathbb{B}$, $3.\Fi_{24}$, or $2^2.{}^2E_6(2):\Sym(3)$, then $H$ is almost quasisimple. As such, we may use Theorem \ref{theorem:LMTgenerators} in order to bound $d(M)$ by 4, 3, and 5 respectively. If $H$ is $\Sym(3) \times \Th$ or $(D_{10} \times \HN).2$, we may use Proposition \ref{prop:maxsubgpdirectproduct} to compute the possibilities for $M$. We may then use Theorem \ref{theorem:LMTgenerators} and Theorem \ref{theorem:crowns} to bound $d(M)$ by 4 and 5 respectively.

This leaves us with the three $2$-local subgroups $H = 2^{1+24} . \Co_1$, $2^{10+16}.\O_{10}^+(2)$ and $2^{5+10+20}.(\Sym(3) \times L_5(2))$. These groups do not have representations or generators in the Web $\Atlas$, and as such we cannot apply the method described previously. Instead, we make use of computer-aided calculations to determine the possible chief factors of the maximal subgroups $M$ of $H$ and apply the theory of crowns as in previous sections.

The general method for these three groups proceeds as follows. Let $N = O_2(H)$ and $L = H/N$. Note that since an irreducible subgroup of $\GL_m(2)$ has trivial $2$-core, each chief factor of $H$ in $N$ is an irreducible $\mathbb{F}_2[L]$-module. Firstly, we compute the chief factors of $H$ using the list of irreducible $\mathbb{F}_2[L]$-modules in conjunction with the character table of $\mathbb M$. If $M$ does not contain $N$, then we show that the chief factors of $M$ are a subset of those of $H$, since $N$ centralises any chief factor of $H$ in $N$. Hence we can use Theorem \ref{theorem:crowns} to bound $d(M)$ in this case. If $M$ contains $N$ then $M = N.J$ for some maximal subgroup $J$ of $L$. We then use Magma to compute the restriction of the chief factors of $H$ contained in $N$ to $J$, thus computing the chief factors of $M$. This again allows us to bound $d(M)$ by Theorem \ref{theorem:crowns}. To do this effectively, we provide the function \texttt{BoundGensFromChiefFactors} in the file \texttt{core/generation.m} of our GitHub repository which computes the bound in Theorem \ref{theorem:crowns} from a list of chief factors.

To illustrate this procedure, we give a detailed proof in the case that $H = 2^{5+10+20}.(\Sym(3) \times L_5(2))$.

\begin{prop}\label{prop:maxMonster}
	Suppose $H = 2^{5+10+20}.(\Sym(3) \times L_5(2)) \mleq \mathbb{M}$. Then $d(M) \leq 5$.
\end{prop}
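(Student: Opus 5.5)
Following the general method for the $2$-local subgroups, set $N = O_2(H) = 2^{5+10+20}$ and $L = H/N \cong \Sym(3) \times L_5(2)$. The first step is to determine the chief factors of $H$ inside $N$. The section $2^5$ is the natural (or dual) module for $L_5(2)$ with $\Sym(3)$ acting trivially. The section $2^{10}$ is the exterior square $\Lambda^2$ of the natural module, again with $\Sym(3)$ trivial. The section $2^{20}$ is the tensor product of the $2$-dimensional irreducible $\mathbb{F}_2[\Sym(3)]$-module with the $10$-dimensional $L_5(2)$-module. I would confirm these identifications in Magma against the list of irreducible $\mathbb{F}_2[L]$-modules, cross-checking with the $2$-local structure recorded in the character table of $\mathbb M$. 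Each of these chief factors is non-central, and $\delta_H(A) \leq 1$ for each of them as an $H$-group. Above $N$, the chief factors are $C_3$, $C_2$ and $L_5(2)$, so $d(H) \leq 2$ or $3$.

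\textbf{Case $M \not\geq N$.} Since $N$ is a $2$-group, the minimal-index argument of Section~\ref{section:prelimsmethod} applies. Let $i$ be minimal with $G_i \not\leq M$ in a chief series through $N$. The chief factor $G_i/G_{i-1}$ is abelian, so $M \cap G_i = G_{i-1}$. By Proposition~\ref{prop:maximalsubgroupgeneralmethod}, the remaining sections of $M$ are isomorphic to sections of $H$ with one abelian factor deleted. As $N$ centralises every chief factor of $H$ in $N$, each of these sections stays irreducible for $M$, because $M$ projects onto $L$. So the chief factors of $M$ form a subset of those of $H$, and each $\delta_M(A)$ is at most $\delta_H(A) \leq 1$ for factors in $N$. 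Theorem~\ref{theorem:crowns} then gives $d(M) \leq d(H)$-type bounds of at most $3$.

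\textbf{Case $M \geq N$.} Here $M = N.J$ with $J \mleq L$, and Proposition~\ref{prop:maxsubgpdirectproduct} classifies the possibilities. Either $J = \Sym(3) \times J_2$ with $J_2 \mleq L_5(2)$ (a parabolic $2^4{:}L_4(2)$ or $2^6{:}(\Sym(3)\times L_3(2))$, or $31{:}5$), or $J = C \times L_5(2)$ with $C \mleq \Sym(3)$. There is no subdirect case, since $\Sym(3)$ and $L_5(2)$ have no common nontrivial quotient. For each $J$ I would use Magma to restrict the three modules to $J$ and compute composition factors, and also the non-Frattini status where this is needed, together with $r(A)$, $s(A)$ and $\theta(A)$. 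I would then feed the resulting list of chief factors of $M$, along with those of $J$ itself, into \texttt{BoundGensFromChiefFactors}, which evaluates the bound of Theorem~\ref{theorem:crowns}(3). For $J = C \times L_5(2)$ the three $L_5(2)$-modules remain irreducible, apart from the $20$-dimensional one splitting under $C = C_3$ or $C_2$ into at most two $10$-dimensional pieces. This gives small $\delta$ and a bound of at most $4$.

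\textbf{Main obstacle.} I expect the parabolic cases $J_2 = 2^4{:}L_4(2)$ and $2^6{:}(\Sym(3)\times L_3(2))$ to be the hardest. There the restrictions of $\Lambda^2$ and of the $20$-dimensional module break into many composition factors, including possibly repeated trivial factors $C_2$. A naive count could then push $\delta_M(C_2)$ above $5$. In that situation I would compute, for each repeated trivial or small factor, whether it lies in $[M, N]$ or is otherwise Frattini, using the fact that the unipotent radical of $J_2$ acts nontrivially on most such sections. The aim is to show that at most the $C_2$ factors coming from $J$ (at most $\delta_J(C_2) \leq 3$) plus at most two from $N$ survive. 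Combined with $s(A)$ from the Magma cohomology computation, this would give $d(M) \leq 5$.
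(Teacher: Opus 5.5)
Your overall structure matches the paper's. If $M\not\geq N$, then $MN=H$ and the chief factors of $M$ are a sub-multiset of those of $H$. If $M\geq N$, then $M=N.J$ with $J\mleq \Sym(3)\times L_5(2)$, and one restricts the layers of $N$ to $J$ and counts non-Frattini factors.

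\textbf{The gap.} The step everything rests on is the identification of the chief factors of $H$ inside $N$. You assert it without a workable check: the paper notes that no representation or generators of $H$ are available, so there is nothing to ``confirm in Magma''. The assertion is also wrong, because $\Sym(3)$ does not act trivially on the $2^{10}$. The paper shows that the $2^{10}$ is $2^5\otimes 2^2$, the natural $L_5(2)$-module tensored with the $2$-dimensional $\Sym(3)$-module. The $2^{20}$ is either $2^{10}\otimes 2^2$ or two copies of $2^5\otimes 2^2$.

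\textbf{The missing argument.} An irreducible subgroup of $\GL_m(2)$ has trivial $2$-core, so every chief factor of $H$ in $N$ is an irreducible $\mathbb{F}_2[\Sym(3)\times L_5(2)]$-module. Now use coprime action: $C_{X/Y}(a)=C_X(a)Y/Y$ and $C_{H/N}(a)=C_H(a)N/N$ for $a$ of odd order.
\begin{enumerate}
\item Take $a$ of order $31$. Its image centralises the $\Sym(3)$, while the character table gives $|C_H(a)|_2=2$. Hence $C_N(a)=1$, so $L_5(2)$ acts nontrivially on every chief factor in $N$.
\item Take $a$ of order $3$ in the $\Sym(3)$. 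It centralises the bottom $2^5$, and $|C_H(a)|_2=2^{15}=|L_5(2)|_2\cdot 2^5$. Hence $C_N(a)=2^5$, so $a$ is fixed-point-free on $N/2^5$.
\end{enumerate}
The irreducible $\mathbb{F}_2[L_5(2)]$-modules of dimension at most $20$ have dimensions $1$, $5$ and $10$. Combined with the two facts above, this forces the structure just described.

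\textbf{Consequences downstream.} Your $M\geq N$ analysis would be run on the wrong modules. Your ``main obstacle'' is an artefact of the misidentification: it comes from restricting $\Lambda^2$, with $\Sym(3)$ acting trivially, to a parabolic and getting trivial $C_2$ composition factors. When $J\geq\Sym(3)$, the fixed-point-free element of order $3$ rules out any central factor above $2^5$. So a central factor inside $N$ can only come from $2^5\downarrow J$.

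Other claims change too.
\begin{enumerate}
\item For $J=C_2\times L_5(2)$, the $2^{10}$ layer does not stay irreducible; it becomes uniserial with two $5$-dimensional factors.
\item $\delta_H(A)\le 1$ need not hold, since the $2^{20}$ may split into two factors isomorphic to the $2^{10}$. Theorem~\ref{theorem:crowns}(3) still gives a small bound there, as $r(A)=10$.
\end{enumerate}

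With the correct modules, the paper's restriction computation gives one non-Frattini factor from each of the two lower layers, never isomorphic to each other, and at most two from the $2^{20}$. Together with $\delta_J(A)\le 2$ for $A\in\{C_2,C_3\}$ and $\delta_J(A)\le 1$ otherwise, this gives $\delta_M(A)\le 4$ for all $A$. Hence $d(M)\le 5$.
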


\begin{proof}
	For reference, we remark that the order of $\L_5(2)$ is $2^{10} \times 3^2 \times 5 \times 7 \times 31$, and that the only irreducible $\mathbb{F}_2[\L_5(2)]$-modules of dimension at most 20 have dimensions 1, 5, and 10, with the only $1$-dimensional module being the trivial module. The only irreducible $\mathbb{F}_2[\Sym(3)]$-modules are the trivial module, and a module of dimension 2.
	
	Given any chief factor $X/Y \cong 2^m$ of $H$ contained in $[2^{35}]$, the quotient $H/C_H(X/Y)$ is an irreducible subgroup of $\GL_m(2)$. As such, the $2$-core of $H/C_H(X/Y)$ is trivial, and so $H/C_H(X/Y)$ is a quotient of $\Sym(3) \times \L_5(2)$. Hence, each chief factor of $H$ in $[2^{35}]$ is an irreducible $(\Sym(3) \times \L_5(2))$-module. 
	
	We now determine the possibilities for the chief factors of $H$ contained in $[2^{35}]$, as $(\Sym(3) \times \L_5(2))$-modules. Firstly, $2^5$ is a chief factor of $H$, with $\L_5(2)$ acting naturally on it and $\Sym(3)$ acting trivially on it, by \cite[Lemma 4.14]{DLP}. 
	
	We show that $\L_5(2)$ cannot act trivially on any chief factor of $H$ in $[2^{35}]$. Let $a$ be an element of $[2^{35}].\L_5(2)$ of order $31$, and let $A = \langle a \rangle$. Additionally, write $L_1 := [2^{35}].\Sym(3)$. Then, the element $a[2^{35}]$ centralises the subgroup $L_1/[2^{35}] \cong \Sym(3)$.  
	By \cite[Corollary 3.28]{Isaacs}, this implies that
	\[
	\Sym(3) \cong C_{L_1/[2^{35}]}(A) = \frac{C_{L_1}(A)[2^{35}]}{[2^{35}]},
	\]
	since $(2^{35}, |A|) = 1$. Hence, $C_H(A)$ contains an element of even order whose image in $H/[2^{35}]$ has order 2. However, from the character table of $H$ we can determine that $|C_H(a)|_2 = 2$, and so $C_H(A) \cap [2^{35}]$ is the trivial group. As such, given any chief factor $X/Y$ of $H$ in $[2^{35}]$, the element $a$ cannot act trivially on $X/Y$, by \cite[Corollary 3.28]{Isaacs}. Hence the same is true for $\L_5(2)$.
	
	Similarly, taking $a$ in $[2^{35}].\Sym(3)$ of order 3 and letting $L_2 := [2^{35}].\L_5(2)$, we note that $a[2^{35}]$ centralises $L_2/[2^{35}]$. Hence, as above, the image of $C_H(a)$ in $H/[2^{35}]$ has order divisible by $|\L_5(2)| = 2^{10} \times 3^2 \times 5 \times 7 \times 31$. Additionally, $a$ centralises a minimal normal subgroup of $H$ in $[2^{35}]$ of order $2^5$. There is a unique conjugacy class of $H$ of elements of order 3 whose centralisers have order divisible by 31, and so we can compute that $|C_H(a)|_2 =2^{15}$. Thus, we can apply the same method as for $\L_5(2)$ to determine that $\Sym(3)$ does not act trivially on any chief factor of $H$ in $[2^{35}]/2^5$.
	
	Hence, we have the following possibilities for the chief factors of $H$ in $[2^{35}]$. The bottom $2^5$ is the natural module for $\L_5(2)$, tensored with the trivial module for $\Sym(3)$. The $2^{10}$ factor is irreducible, and isomorphic to $2^5 \otimes 2^2$. Finally, the $2^{20}$ factor is either irreducible, and isomorphic to $2^{10} \otimes 2^2$, or contains two chief factors, both isomorphic to $2^5 \otimes 2^2$.
	
	If $M$ does not contain $[2^{35}]$, then the set of chief factors of $M$ in $[2^{35}] \cap M$ is a subset of one of the multisets $\{2^5, 2^{10}, 2^{10}, 2^{10}\}$ or $\{2^5, 2^{10}, 2^{20}\}$. On the other hand, the chief factors of $M$ not in $[2^{35}]$ are $\{\L_5(2), C_2, C_3\}$. Thus, by Theorem \ref{theorem:crowns},
	\[
	d(M) \leq 4.
	\]
	
	Hence, we may assume that $M$ contains $[2^{35}]$, and so $M$ is of the form $[2^{35}].J$ for some maximal subgroup $J$ of $\Sym(3) \times \L_5(2)$. In the file \texttt{monster/2\textasciicircum (5+10+20).(S3 x L5(2)).m} of our GitHub repository, we determine how the modules above restrict to $J$. We find that:
	\begin{enumerate}[\upshape(1)]
		\item $2^5 \downarrow J$ has only one non-Frattini chief factor;
		\item $(2^5 \otimes 2^2) \downarrow J$ has only one non-Frattini chief factor, which is never isomorphic to the non-Frattini chief factor in $2^5 \downarrow J$; and
		\item $2^{20} \downarrow J$ has either one or two non-Frattini chief factors. If it has two, they are isomorphic to $2^{10}$.
	\end{enumerate}
	Additionally, $\delta_J(A) \leq 2$ if $A \cong C_2, C_3$, and $\delta_J(A) \leq 1$ for any other chief factor $A$. Thus, we have $\delta_M(A) \leq 4$ for any chief factor $A$ of $M$, and so $d(M) \leq 5$ by Theorem \ref{theorem:crowns}.
\end{proof}

\subsection{The Baby Monster group}

When $G = \mathbb B$, generators of most of the maximal subgroups $H$ of $G$ are given in the Web $\Atlas$ as words in the standard generators of $\mathbb B$. However, the smallest degree representation of $\mathbb B$ is a $4370$-dimensional representation over $\mathbb F_2$. In practice, computing the maximal subgroups of these maximal subgroups is unfeasible in such a large dimensional matrix group. To alleviate this issue, one can attempt to construct a smaller dimensional faithful representation of the maximal subgroup, from the original $4370$-dimensional representation. We provide a function that uses the \texttt{Meataxe} and other LMG functions to achieve this. One can then use the previously described techniques to enumerate $M$ and bound $d(M)$. While this is theoretically possible for most of the maximal subgroups of $\mathbb{B}$, it is incredibly slow to check whether a representation is faithful, and so we try to avoid this method where possible. In fact, we only use this method when $H$ is $[2^{30}].\L_5(2)$.

Another way of constructing a representation of $H$ is by using the double cover of $\mathbb B$ contained in $\mathbb M$. Let $t$ be an involution of $\mathbb M$ in class $2A$. Then, the centraliser $C_{\mathbb M}(t)$ is isomorphic to the double cover $2.\mathbb B$ of the Baby Monster. As such, any maximal subgroup $H$ of $\mathbb B$ has a preimage $2.H$ contained in the Monster. If $2.H$ is contained in another maximal subgroup of $\mathbb M$, say $K$, then $2.H$ is equal to $C_K(t)$. As such, when this is the case, we can obtain maximal subgroups of $\mathbb B$ as centralisers in maximal subgroups of the Monster.

This is particularly of interest when $H$ is a $p$-local subgroup. In Table \ref{table:plocalinclusion} we indicate some $p$-local subgroups $H$ of $\mathbb B$ for which we have a corresponding $p$-local subgroup $K$ of $\mathbb M$ with $2.H \leq K$. These results can be deduced from \cite[Theorem 2]{MS} in the case that $p=2$, and obtained from \cite[Table 1]{AW} in the case that $p$ is odd.

\begin{table}
	\caption{Some $p$-local maximal subgroups of $\mathbb{B}$, and their overgroups in $\mathbb{M}$}
	\begin{center}
		\begin{tabular}{cc}\label{table:plocalinclusion}
			$H \mleq \mathbb B$ & $2.H \leq K \mleq \mathbb M$ \\\toprule
			$2^{1+22}.\Co_2$ & $2^{1+24}.\Co_1$\\
			$2^{2+10+20}.(\M_{22}:2 \times S_3)$ & $2^{2 + 11 + 22}.(S_3 \times \M_{24})$\\
			$[2^{35}].(S_5 \times \L_3(2))$ & $2^{3+6+12+18}.((3S_6 \times \L_3(2))$\\
			$5^{1+4}:2^{1+4}.A_5.4$ & $5^{1+6}:2.J2.4$\\
			$5^2:4\Sym(4) \times \Sym(5)$ & $(5^2:[2^4] \times U_3(5)).\Sym(3)$\\
			$3^{1+8}.2^{1+6}.U_4(2).2$ & $3^{1+12}.2\Suz.2$\\
			$(3^2:D_8 \times U_4(3).2.2).2$ & $(3^2:2 \times O_8^+(3)).\Sym(4)$\\
			$[3^{11}].(\Sym(4) \times 2\Sym(4))$ & $3^{2+5+10}.(M_{11} \times 2\Sym(4))$\\\bottomrule
		\end{tabular}
	\end{center}
\end{table}

In practice, we construct $K$ as a small degree permutation or matrix group, using the Web $\Atlas$. We then generate random involutions $t$ in $K$ and compute their centralisers, until $|C_K(t)| = |2.H|$. Using the character table of $K$ provided in GAP's \cite{GAP4} \texttt{CTblLib} package, we can confirm that there is a unique centraliser of an involution of this order, and hence $C_K(t) = 2.H$. We can now apply the previous techniques in order to bound $d(M)$.

The $2$-local subgroups $2^{1+22}.\Co_2$ and $2^{1+8+16}.\S_8(2)$ we treat using the same module analysis method as for the $2$-local maximal subgroups $2^{1+24}.\Co_1$, $2^{10+16}.\O_{10}^+(2)$ and $2^{5+10+20}.(\Sym(3) \times \L_5(2))$ of $\mathbb M$.

For the remaining maximal subgroups $H$, we analyse their structure and use the theory of crowns in order to bound $d(M)$, following the same approach used for certain maximal subgroups of $\mathbb M$. If $H$ is almost simple or almost quasisimple, then $d(M)$ can be bounded using Theorem \ref{theorem:LMTgenerators}. If $H$ is a direct product, we can use Proposition \ref{prop:maxsubgpdirectproduct} to enumerate $M$, and then use Theorem \ref{theorem:LMTgenerators} to bound $d(M)$. However, there are a few groups for which the structure is harder to analyse, so we provide a complete proof for these three groups here.

\begin{prop}\label{prop:maxBabyMonster6}
	Suppose $H = (2^2 \times F_4(2)):2$. Then $d(M) \leq 5$.
\end{prop}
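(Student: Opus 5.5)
The plan is to work with the normal subgroup $N := 2^2 \times F_4(2)$ of index $2$ in $H$. Let $\tau$ denote an element of $H \setminus N$. The structure to record first is the following: $\tau$ acts on $2^2$ by swapping two of its involutions, so it fixes exactly one, and it induces the graph automorphism of $F_4(2)$. The socle of $H$ is then $2^2 \times F_4(2)$, with chief factors $C_2$ (the $\tau$-fixed involution), $C_2$ (the quotient $2^2/C_2$), $F_4(2)$, and $C_2$ on top. Write $V = 2^2$ and $S = F_4(2)$, so that $H = (V \times S).2$. The proof then splits according to whether $M$ contains $S$, since $S$ is the unique non-abelian minimal normal subgroup of $H$.

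\emph{Case 1: $M$ contains $S$.} Then $M/S$ is a maximal subgroup of $H/S \cong D_8$. Hence $M/S$ is $C_4$ or $C_2^2$, and $M$ is $S.C_4$ or $S.C_2^2$. Here $M/S$ is $2$-generated, and $S$ contributes a single non-abelian chief factor. Theorem \ref{theorem:crowns} then gives $d(M) \le \max(2, d(M/S)) \le 2$, or at worst $3$.

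\emph{Case 2: $M$ does not contain $S$.} By Corollary \ref{cor:maximalsubgpextension}, applied to the almost simple group $X = S.2 = H/V$ and the quotient $M V/V$, we split further. Either $MV \ne H$, in which case $M \ge V$ and $M/V$ is maximal in $F_4(2).2$. Or $MV = H$, in which case $M \cap V$ is a proper $M$-invariant subgroup of $V$.

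In the first subcase, $M/V$ is a maximal subgroup of $F_4(2).2$, so $d(M/V) \le 5$ by Theorem \ref{theorem:LMTgenerators}. Since $F_4(2).2$ is exceptional, I expect a table check to give $d(M/V) \le 3$; the $\Atlas$ list of maximal subgroups of $F_4(2).2$ together with \cite{LMT} should confirm this. Adding at most two $C_2$ chief factors from $V$ then bounds $d(M)$ via $\delta_M(C_2) \le 2 + \delta_{M/V}(C_2)$. A cleaner alternative is the direct estimate $d(M) \le d(V) + d(M/V) \le 2 + 3 = 5$.

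In the second subcase, $M \cap V$ is either $1$ or the $\tau$-fixed $C_2$. Since $M$ supplements $V$, we have $M/(M\cap V) \cong H/V$ or a section thereof. More precisely, $M \cap N$ projects onto $S$, and because $S$ is perfect with $V$ central in $N$, $M \cap N$ contains $S$. This contradicts the case hypothesis, so either the subcase is empty or $M \cong F_4(2).2$ extended by at most a $C_2$. In that event $d(M) \le 1 + d(F_4(2).2) \le 3$.

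The main obstacle is obtaining a sharp enough bound on $d(J)$ for maximal subgroups $J$ of $F_4(2).2$, including the novelty maximals, so that adding the $2^2$ keeps the total at most $5$. The risk is the $C_2$ count. If some $J$ has $\delta_J(C_2) = 3$, the two extra central $C_2$'s could in principle push $\delta_M(C_2)$ to $5$. In that case I would argue that the factor $2^2/C_2$ is non-central in $M$ whenever $M$ contains an element acting as $\tau$, so it is not $M$-equivalent to the trivial $C_2$. Failing that, I would verify the worst cases in Magma by constructing $H$ inside $\mathbb{B}$ as a centraliser in a maximal subgroup of $\mathbb{M}$, as described above.
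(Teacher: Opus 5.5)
Your argument is correct, but it runs differently from the paper's. The paper reads off from the character table that $|Z(H)|=2$. Hence the outer involution swaps two involutions of $V=2^2$, and $Z(H)\le Z(M)\cap[M,M]\le\Phi(M)$ in the relevant case. It then writes $M=(V\times J).2$ with $J=M\cap F_4(2)$ maximal or novelty maximal in $F_4(2)$. The Frattini containment saves one generator, giving $d(M)\le d(J)+2\le 5$.

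You instead show $M\ge V$ whenever $M\not\ge F_4(2)$, via perfectness, and bound $d(M)\le d(V)+d(M/V)$. This needs no Frattini argument and no information about $Z(H)$. Since $M/V$ is a genuine maximal subgroup of an almost simple group with exceptional socle, Theorem~\ref{theorem:LMTgenerators}(2) gives $d(M/V)\le 3$ immediately, with no table check needed. You also never have to bound novelty maximal subgroups of $F_4(2)$, which that theorem does not cover directly.

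The price is that everything rests on $H/V\cong F_4(2).2$, i.e.\ on the outer involution inducing the graph automorphism of $F_4(2)$. You assert this without proof, and it is essential: if $H$ were $D_8\times F_4(2)$, your estimate would only give $2+d(J\times 2)\le 6$. The fact is true. Take $y\in F_4(2)$ of order $17$. From $C_{\mathbb M}(17A)\cong 17\times L_3(2)$ one gets $|C_{\mathbb B}(y)|=68=|V\times\langle y\rangle|$. An element of $H\setminus(V\times F_4(2))$ centralising $F_4(2)$ would force $|C_H(y)|\ge 136$, a contradiction.

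Two small corrections. First, the socle of $H$ is $Z(H)\times F_4(2)$, not $2^2\times F_4(2)$. Second, your final paragraph is unnecessary once $d(M/V)\le 3$, and its proposed fallback is false as stated. The factor $V/Z(H)$ has order $2$, so it is a central chief factor of $M$ however $\tau$ acts on $V$.
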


\begin{proof}
	Using the character table of $H$ provided in GAP, it can be shown that the centre of $H$ has order $2$. Hence, if $g$ is an element of order 2 such that $H = (2^2 \times F_4(2)) : \langle g \rangle$, and $h_1, h_2$ are generators of $2^2$, we have that
	\[
	(h_1, 1)^g = (h_2, 1),
	\]
	with the subgroup $\langle (h_1 h_2^{-1}, 1) \rangle$ being equal to the center of $H$. This implies that 
	\[
	Z(H) = Z(H) \cap [H, H] \leq \Phi(H).
	\]
	Additionally, any subgroup of $H$ which contains $2^2$ and projects onto $H/(2^2 \times F_4(2))$ also contains $Z(H)$ in its Frattini subgroup, by the same reasoning.
	
	We now consider the different possibilities for $M$. If $M$ contains $F_4(2)$, then $M$ is equal to $F_4(2).J$ for some group $J$ of order dividing 4. As such, by Theorem \ref{theorem:crowns}, $d(M) = 2$.
	
	If $M$ does not contain $F_4(2)$, then $M$ contains $2^2$. Indeed, by the same argument as in Proposition \ref{prop:maxsubgpdirectproduct}, the subgroup $M 2^2$ contains $M$ and is proper, so is equal to $M$.  Thus, by Corollary \ref{cor:maximalsubgpextension}, $M$ is equal to $(2^2 \times J).2$ where $J$ is a maximal or novelty maximal subgroup of $F_4(2)$. Additionally, by above, $\Phi(M)$ contains $\langle (h_1 h_2^{-1}, 1) \rangle$. Thus, by Theorem \ref{theorem:crowns},
	\[
	d(M) \leq d(J) + 2 \leq 5.\qedhere
	\]
\end{proof}

\begin{prop} \label{prop:maxBabyMonster19}
	Suppose $H = (\Sym(6) \times \L_3(4) : 2 ) :2$. Then $d(M) \leq 4$.
\end{prop}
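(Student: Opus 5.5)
The plan is to realise $H$ as a subgroup of a wreath-type extension and split the maximal subgroups $M$ according to which term of a normal series they fail to contain, exactly as in the proof of Proposition \ref{prop:maxBabyMonster6}. Write $S := \Alt(6)$ and $T := \L_3(4)$. Then $H$ contains the normal subgroup $N := S \times T$ with $H/N$ of order $8$, namely $(C_2 \times C_2).2$ coming from the outer parts $\Sym(6)/\Alt(6)$, $\L_3(4):2/\L_3(4)$ and the final $2$. Using the character table of $H$ in GAP's \texttt{CTblLib}, we will identify $H/N$ and the action of the outer involution, which induces a non-inner automorphism of $S$ and of $T$ simultaneously. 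The normal series is then $1 < T < N < H$ (or $S$ in place of $T$), and both $S$ and $T$ are minimal normal in $H$ because they are non-isomorphic.

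If $M$ contains $N$, then $M/N$ is a maximal subgroup of a group of order $8$. Its non-abelian chief factors are just $S$ and $T$, each with $\delta_M = 1$. Theorem \ref{theorem:crowns} then gives $d(M) \le \max(2, d(M/N)) \le 2$.

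If $M$ does not contain $N$, then by Proposition \ref{prop:maxsubgpdirectproduct} (applied via the embedding of Proposition \ref{prop:maxsubgpsextensioncenter1} into $\Aut(S) \times \Aut(T)$) there are two possibilities. Since $S \not\cong T$, the subdirect case cannot occur. Otherwise $M$ contains one of $S$ or $T$ and meets the other factor in a maximal or novelty maximal subgroup, by Corollary \ref{cor:maximalsubgpextension}.

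\textbf{Case $M \ge T$.} Here $M = (R \times T).(H/N)$, where $R$ is a maximal or novelty maximal subgroup of $\Alt(6)$, namely $\Alt(5)$, $3^2:4$, $\Sym(4)$ or $D_{10}$-type novelties. The bound is
\[
d(M) \le d\bigl((R\times T).(H/N)/T\bigr)
\]
together with crowns. These quotients are subgroups of $\Aut(\Alt(6)) \times C_2$, so their $C_2$ chief factor count is small, at most $3$, and we get $d(M) \le 4$.

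\textbf{Case $M \ge S$.} Symmetrically, $M \cap T$ is a maximal or novelty maximal subgroup of $\L_3(4)$ normalised by the relevant part of $H$. Using Table 8.3 of \cite{BHRD} for these subgroups, we bound $\delta_M(A)$ for each chief factor by summing the contributions from $M \cap T$ and from $H/N$.

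The main obstacle is the second case, where $M\cap T$ can be $2$-local, for example $2^4:\Alt(5)$ or $(C_4 \times C_4):\Sym(3)$ type novelties. There $C_2$ chief factors of $M\cap T$ could stack with the three $C_2$ factors from $H/N$ and push the count to $5$. To rule this out, the first tool is Lemma \ref{lemma:jumpingbabyresultfull}-style commutator arguments: the outer involution should force some central $C_2$ factors into $[M,M]$, making them Frattini. Failing that, we would verify directly in Magma, since $H$ has a small permutation representation obtainable from $\mathbb{B}$ via the Web $\Atlas$ words, that every maximal subgroup satisfies $d(M) \le 4$.
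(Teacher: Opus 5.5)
\textbf{There is a genuine gap.} The case $M \geq \Alt(6)$, where $M\cap \L_3(4)$ is a maximal or novelty maximal subgroup of $\L_3(4)$, is never actually proved. You correctly see the danger: non-Frattini $C_2$ factors of $M\cap\L_3(4)$ (for instance from $3^2:Q_8$ or the $2$-local subgroups) could combine with the $C_2$ factors coming from $H/N$ and push $\delta_M(C_2)$ to $5$. But you then only say that a commutator argument ``should'' make some of these Frattini. Otherwise you defer to a Magma check in a ``small permutation representation obtainable from $\mathbb B$'', which you neither construct nor justify. The paper avoids working inside the $4370$-dimensional representation of $\mathbb B$ for exactly this reason.

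The same lack of structural information weakens your case $M\geq\L_3(4)$. There, the claims that $M/\L_3(4)$ lies in $\Aut(\Alt(6))\times C_2$ and has at most three non-Frattini $C_2$ factors are asserted rather than derived. You also omit the novelty $D_8=[2^5]\cap\Alt(6)$, for which $M/\L_3(4)$ is a $2$-group of order $64$.

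The missing idea is to show that $C_H(\Alt(6)\times\L_3(4))=1$, i.e.\ that the natural map $\pi\colon H\to\Aut(\Alt(6))\times\Aut(\L_3(4))$ is injective. The paper argues as follows.
\begin{enumerate}[(1)]
\item $\pi$ is injective on $\Sym(6)\times\L_3(4):2$, since both factors are almost simple.
\item Hence a non-trivial kernel would have order $2$, generated by an involution whose centraliser contains $\Alt(6)\times\L_3(4)$.
\item The character table of $H$ shows that no involution centraliser has order divisible by $|\Alt(6)\times\L_3(4)|$.
\end{enumerate}
Once $H$ is realised inside the small direct product $\Aut(\Alt(6))\times\Aut(\L_3(4))$, the possibilities for $H$ can be written down directly in Magma. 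One then checks $d(M)\le 4$ for every maximal subgroup, without ever touching $\mathbb B$.

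This embedding also supplies the control your crowns argument lacks. $H/N$ becomes a subgroup of order $8$ of $\Out(\Alt(6))\times\Out(\L_3(4))$, whose Sylow $2$-subgroups are elementary abelian, so $H/N\cong C_2^3$. The kernels of $H/\Alt(6)\to\Aut(\L_3(4))$ and $H/\L_3(4)\to\Aut(\Alt(6))$ then become determinable. That is precisely what you need to bound $d(M/\Alt(6))$ and $d(M/\L_3(4))$.
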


\begin{proof}
	Consider the quotient map 
	\[
	\pi \colon H \to \Aut(\Alt(6) \times \L_3(4)) \cong \Aut(\Alt(6)) \times \Aut(\L_3(4)).
	\]
	We show that $\pi$ is injective. Note that the restriction of $\pi$ to the subgroup $\Sym(6) \times \L_3(4) : 2$ is injective, since $\Sym(6)$ and $\L_3(4):2$ are almost simple groups. Hence, if $\pi$ is not injective, its kernel has order 2. For a contradiction, assume that $h$ is an involution of $H$ in $\ker(\pi)$, and so its centraliser $C_H(h)$ contains $\Alt(6) \times \L_3(4)$. However, using the character table of $H$, we can find that there are no centralisers of involutions of order divisible by $|{\Alt(6)} \times \L_3(4)|$. 
	
	Hence, we can view $H$ as a subgroup of index $2$ of the direct product $\Aut(\Alt(6)) \times \Aut(\L_3(4))$. It is now straightforward to find all possibilities for $H$ in Magma, and compute that $d(M) \leq 4$ for each one.
\end{proof}

\begin{prop} \label{prop:maxBabyMonster22}
	Suppose $H = (\Sym(6) \times \Sym(6)).4$. Then $d(M) \leq 5$.
\end{prop}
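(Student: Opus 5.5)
The plan is to mirror the treatment of $(\Sym(6)\times \L_3(4):2):2$ in Proposition \ref{prop:maxBabyMonster19}. First I will embed $H$ into $\Aut(\Alt(6)\times\Alt(6)) = \Aut(\Alt(6))\wr \Sym(2)$, and then use the direct product and wreath product machinery of Section \ref{section:prelimsmethod} together with a short Magma computation.

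\textbf{Step 1: the embedding.} Let $N=\Alt(6)^2$, which is the unique minimal normal subgroup of $H$; the two factors are swapped, since the ``$.4$'' on top has order $4$. Consider the conjugation map
\[
\pi\colon H\to \Aut(N)=\Aut(\Alt(6))\wr\Sym(2).
\]
If $\pi$ were not injective, its kernel would be a nontrivial normal subgroup of $H$ meeting $N$ trivially. Such a kernel contains an element of prime order whose centraliser contains $N$. I will rule this out from the character table of $H$ in GAP, by checking that no element of prime order has centraliser order divisible by $|\Alt(6)|^2$. Hence $H$ is a subgroup of $\Aut(\Alt(6))\wr\Sym(2)$ of order $|H|=4\cdot|\Sym(6)|^2$. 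It therefore has index $2$ in $\Aut(\Alt(6))\wr\Sym(2)$, which has order $2\cdot 1440^2$. It also contains $\Sym(6)^2$ and projects onto $\Sym(2)$.

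\textbf{Step 2: determining $H$ and its maximal subgroups.} There are only a handful of index-$2$ subgroups of $\Aut(\Alt(6))\wr\Sym(2)$ containing $\Sym(6)^2$ with this shape, and I will enumerate them in Magma. For each candidate I will compute $\mathtt{MaximalSubgroups}$ and $\mathtt{SmallestGeneratingSet}$ directly; a permutation representation of degree $20$ on two copies of the $10$-point action makes this very cheap. This gives $d(M)\le 5$ for every maximal subgroup $M$.

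\textbf{Fallback and main obstacle.} As a check independent of the computer, I will also carry out the theoretical bound via Proposition \ref{prop:XSimpleMaxSubgrpXkC} applied to $N$. There are three cases. If $M\ge N$, then $M=N.J$ with $J$ a maximal subgroup of the $2$-group $H/N$ of order $16$; then $\delta_{M,N}(A)\le 1$ and $d(J)\le 4$. If $M\cap N$ is $\diag(N)$ or trivial, then $d(M)\le d(H/N)\le 4$. Otherwise $M=(M_1\times M_2).(H/N)$, where $M_1$ is a (novelty) maximal subgroup of $\Alt(6)$ with $d(M_1)\le 2$, so $d(M)\le 2+d(H/N)$.

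The delicate point is bounding $d(H/N)$, and more precisely $\delta_M(C_2)$, in this last case. I will use Lemma \ref{lemma:jumpingbabyresultfull} and Corollary \ref{cor:CFsinwreathproduct} to show that the central $C_2$ factors coming from $M_1^2$ collapse under the swap. Failing that, I will simply fall back on the explicit Magma verification from Step 2.
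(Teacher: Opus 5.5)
Your argument is correct, but its main line differs from the paper's. The paper never identifies $H$. It splits according to whether $M \geq \Alt(6)^2$, and then whether $\Alt(6)\times 1 \unlhd H$, a case it leaves open. Otherwise it applies Proposition \ref{prop:XSimpleMaxSubgrpXkC} to get $M\cap\Alt(6)^2 \in \{1, \diag(\Alt(6)^2), J^2\}$, and concludes with $d(M)\le d(J)+3\le 5$. Its only computer input is $d(J)\le 2$ for maximal and novelty maximal subgroups $J$ of $\Alt(6)$.

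You instead transplant the method of Proposition \ref{prop:maxBabyMonster19}, and your character-table test is conclusive. Suppose $h$ has prime order and $|N|$ divides $|C_H(h)|$. Then $|N:C_N(h)| = |NC_H(h):C_H(h)|$ divides $|H:C_H(h)|$, which divides $|H:N|=16$. Since $\Alt(6)^2$ has no proper subgroup of $2$-power index, $h\in C_H(N)$.

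Once $\pi$ is injective, $H/\Sym(6)^2\cong C_4$ embeds in $(\Aut(\Alt(6))\wr\Sym(2))/\Sym(6)^2\cong C_2\wr C_2\cong D_8$. Its unique $C_4$ forces $H=\langle \Sym(6)^2,(a,1)\sigma\rangle$ with $a\in\Aut(\Alt(6))\setminus\Sym(6)$, so there is really only one candidate. This is also the proper justification of your opening claim that the factors are swapped. ``The $.4$ has order $4$'' gives this only after the embedding, because $\Aut(\Alt(6))^2/\Sym(6)^2$ has exponent $2$. So that sentence belongs after Step 1.

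Your route gives an exact identification of $H$, showing that the paper's case $\Alt(6)\times 1\unlhd H$ is vacuous, and it gives exact values of $d(M)$. The paper's route needs neither GAP's table of $H$ nor the embedding, and it does not depend on the precise extension.

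The ``delicate point'' in your fallback is not delicate. The quotient $M/(M\cap N)\cong H/N$ is an extension of $C_2^2$ by $C_4$, so $d(H/N)\le 3$; in fact $H/N\cong C_2^2\rtimes C_4$ with $d(H/N)=2$. Since $M$ swaps the factors, $d(M)\le d(M_1)+d(H/N)\le 5$ follows immediately. No appeal to Lemma \ref{lemma:jumpingbabyresultfull} or Corollary \ref{cor:CFsinwreathproduct} is needed. With this observation, your fallback is exactly the paper's proof, restricted to the swapping case that your Step 1 establishes.
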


\begin{proof}
	If $M$ contains the normal subgroup $\Alt(6) \times \Alt(6)$ of $H$, this implies that $M = (\Alt(6) \times \Alt(6)).J$ where $J$ is a maximal subgroup of a group of order 16. Thus, $d(M) \leq 3$ by Theorem \ref{theorem:crowns}.
	
	Hence, we can assume that $M$ does not contain $\Alt(6) \times \Alt(6)$. If $\Alt(6) \times 1$ is a normal subgroup of $H$, we have that $M$ is equal to $J.\Alt(6).2^2.4$, for some maximal or novelty maximal subgroup $J$ of $\Alt(6)$. Using Magma, we can prove that $d(J) \leq 2$, and hence $d(M)\leq 5$.
	
	So we may assume that $\Alt(6) \times \Alt(6)$ is a minimal normal subgroup of $H$. Hence, by Proposition \ref{prop:XSimpleMaxSubgrpXkC}, the group $M \cap (\Alt(6) \times \Alt(6))$ is isomorphic to 1, $\Alt(6)$, or $J^2$ for some maximal or novelty maximal subgroup $J$ of $\Alt(6)$. In the first and second cases we have $d(M) \leq 3$. In the latter case, we have
	\[
	d(M) \leq d(J) + 3 \leq 5. \qedhere
	\]
\end{proof}

\subsection{Tables}

The computations outlined in the previous sections and implemented in our GitHub repository provide the proof of Theorem \ref{theorem:sporadics}. We summarise the output of these computations in the following three tables. These tables indicate an upper bound on $d(M)$, where $M$ is a second maximal subgroup contained in $G$, in the first table, or $H$, in the second and third tables. The third column of each table states which method has been used to compute this bound, while the fourth column indicates the file containing the code for this computation. A value marked with a dagger signifies that the bound is known to be sharp. 

\begin{table}
	\caption{A bound on the number of generators of the second maximal subgroups of $G$, when $\soc(G)$ is a sporadic group not equal to the Baby Monster group or the Monster group. All files are found in the \texttt{maximals} folder of our GitHub repository.}
	\label{table:smallsporadics}
	\begin{center}
    \addtolength{\leftskip} {-2cm}
    \addtolength{\rightskip}{-2cm}
		\begin{tabular}{>{\raggedright}p{3.5cm}S[table-format=1]ll}
			$G$ & {$d(M)$} & Method & File\\ \toprule
			$\M_{11}$, $\J_1$, $\M_{24}$ & 2\textsuperscript{\textdagger} & Permutation representation & \texttt{small\_sporadics.m}\\
			$\M_{12}$, $\M_{22}$, $\J_2$, $\M_{23}$, $\J_3$, $\McL$, $\He$, $\Ru$, $\ON$, $\Co_3$, $\Co_2$, $\Fi_{23}$, $\Co_1$ & 3\textsuperscript{\textdagger}\\
			$\HS$, $\Suz$ & 4\textsuperscript{\textdagger}\\
			$\Fi_{22}$ & 5\textsuperscript{\textdagger}\\ \midrule
			$\Fi_{24}$ & 4\textsuperscript{\textdagger} & Permutation representation \& $\Atlas$ & \texttt{Fi24.m}\\
			$\HN$ & 4\textsuperscript{\textdagger} &  & \texttt{HN.m}\\ \midrule
			$\J_4$ & 2\textsuperscript{\textdagger} &  Matrix representation \& $\Atlas$ & \texttt{J4.m}\\ 
			$\Ly$ & 3\textsuperscript{\textdagger} & & \texttt{Ly.m}\\
			$\Th$ & 3\textsuperscript{\textdagger} &  & \texttt{Th.m}\\\bottomrule
		\end{tabular}
	\end{center}
\end{table}

\begin{table}
	\caption{A bound on the number of generators of the maximal subgroups of each maximal subgroup of the Baby Monster. All files are found in the \texttt{maximals/baby\_monster} folder of our GitHub repository.}
	\label{table:BabyMonster}
	\begin{center}
    \addtolength{\leftskip} {-2cm}
    \addtolength{\rightskip}{-2cm}
		\begin{tabular}{>{\raggedright}p{5cm}S[table-format=1]ll}
			$H$ & {$d(M)$} & Method & File\\ \toprule
            $2^{2+10+20}.(M_{22}:2 \times \Sym(3))$ & 3 & Double cover in $\mathbb M$ & \texttt{2\textasciicircum(2+10+20).(M22 2 x S3).m}\\
            $[2^{35}].(\Sym(5) \times L_3(2))$ & 3 & & \texttt{[2\textasciicircum35].(S5 x L3(2)).m}\\
			$3^{1+8}.2^{1+6}.U_4(2).2$ & 3 &  & \texttt{3\textasciicircum(1+8).2\textasciicircum(1+6).U4(2).2.m}\\
            $[3^{11}].(\Sym(4) \times 2\Sym(4))$ & 3 & & \texttt{[3\textasciicircum11].(S4 x 2S4).m}\\
            $5^{1+4}.2^{1+4}.\Alt(5).4$ & 3 & & \texttt{5\textasciicircum(1+4).2\textasciicircum(1+4).A5.4.m}\\
            $5^2:4\Sym(4) \times \Sym(5)$ & 3 & & \texttt{5\textasciicircum2 4S4 x S5.m}\\
            $(3^2:D_8 \times U_4(3).2.2).2$ & 4 & & \texttt{(3\textasciicircum2 D8 x U4(3).2.2).2.m}\\ \midrule
			$2^{1+8+16}.\S_8(2)$ & 3 & Module analysis & \texttt{2\textasciicircum(9+16).Sp8(2).m}\\
			$2^{1+22}.\Co_2$ & 4 & & \texttt{2\textasciicircum(1+22).Co2.m}\\
			$5^3.\L_3(5)$ & 4 & & \texttt{5\textasciicircum3.L3(5).m}\\ \midrule
			$[2^{30}].L_5(2)$ & 2\textsuperscript{\textdagger} & Meataxe & \texttt{[2\textasciicircum30].L5(2)}\\ \midrule
            $\Fi_{23}$, $\Th$, $\O_8^+(3):\Sym(4)$, $\L_2(31)$, $M_{11}$, $\L_3(3)$, $\L_2(17):2$, $\L_2(11):2$ & 2\textsuperscript{\textdagger} & Almost simple & -\\
			$\HN : 2$, $\L_2(49).2$ & 3 & & -\\ \midrule
			$2.{}^2E_6(2):2$ & 4 & Almost quasisimple & - \\ \midrule
			$47:23$ & 1\textsuperscript{\textdagger} & Structure & -\\
			$\Sym(3) \times \Fi_{22}:2$, $5:4 \times \HS:2$, $\Sym(4) \times {}^2F_4(2)$, $\Sym(5) \times M_{22}:2$ & 4 &  & -\\ \midrule
			$(\Sym(6) \times \L_3(4):2).2$ & 4 & Proposition \ref{prop:maxBabyMonster19} & -\\
			$(2^2 \times F_4(2)):2$ & 5 & Proposition \ref{prop:maxBabyMonster6} & -\\
			$(\Sym(6) \times \Sym(6)).4$ & 5 & Proposition \ref{prop:maxBabyMonster22} & -\\
			\bottomrule
		\end{tabular}
	\end{center}
\end{table}

\begin{table}
	\caption{A bound on the number of generators of the maximal subgroups of each maximal subgroup of the Monster. All files are found in the \texttt{maximals/monster} folder of our GitHub repository.}
	\label{table:Monster}
	\begin{center}
    \addtolength{\leftskip} {-2cm}
    \addtolength{\rightskip}{-2cm}
		\begin{tabular}{>{\raggedright}p{6cm}S[table-format=1]ll}
			$H$ & {$d(M)$} & Method & File \\ \toprule
			$2 . \mathbb{B}$ & 4 & Quasisimple & - \\ \midrule
			
			$3.\Fi_{24}$ & 3 & Almost quasisimple & - \\
			$2^2.{}^2E_6(2):\Sym(3)$ & 5 &  & - \\ \midrule
			
			$2^{1+24} . \Co_1$ & 3 & Module analysis & \texttt{2\textasciicircum(1+24).Co1.m}\\
			$2^{10+16}.\O_{10}^+(2)$ & 3 &  & \texttt{2\textasciicircum(10+16).O10p(2).m}\\
			$2^{5+10+20}.(\Sym(3) \times L_5(2))$ & 5 &  & \texttt{2\textasciicircum(5+10+20).(S3 x L5(2)).m}\\ \midrule
			
			$\Sym(3) \times \Th$ & 3 & Structure & - \\
			$(D_{10} \times \HN).2$ & 5 & & - \\ \midrule
			
			$2^{2+11+22}.(M_{24} \times \Sym(3))$, $3^{1+12}.2\Suz.2$, $2^{3+6+12+18}.(3\Sym(6) \times L_3(2))$, $3^8.O_8^-(3).2_3$, $(3^2:2 \times O_8^+(3)).\Sym(4)$, $(7:3 \times \He):2$, $(\Alt(5) \times \Alt(12)):2$, $5^{3+3}.(2 \times L_3(5))$, $(\Alt(5) \times U_3(8):3_1):2$, $(L_3(2) \times \Sym(4):2).2$, $7^{1+4}:(3 \times 2\Sym(7))$, $(5^2:[2^4] \times U_3(5)).\Sym(3)$, $(L_2(11) \times M_{12}):2$, $7^{2+1+2}:\GL_2(7)$, $(L_2(11) \times L_2(11)):4$, $13^2:2L_2(13).4$, $(7^2:(3 \times 2\Alt(4)) \times L_2(7)).2$, $(13:6 \times L_3(3)).2$, $13^{1+2}:(3 \times 4\Sym(4))$, $\L_2(71)$, $\L_2(59)$, $11^2:(5 \times 2\Alt(5))$, $\L_2(29):2$, $7^2:\SL_2(7)$, $\L_2(19):2$, $41:40$ & 2\textsuperscript{\textdagger} & Direct & \texttt{small\_maximals.m}\\

			$3^{2+5+10}.(M_{11} \times 2\Sym(4))$, $3^{3+2+6+6}:(L_3(3) \times SD_{16})$, $5^{1+6}:2\J_2:4$, $(\Alt(6) \times \Alt(6) \times \Alt(6)).(2 \times \Sym(4))$, $5^{2+2+4}:(\Sym(3) \times \GL_2(5))$, $(\Alt(7) \times (\Alt(5) \times \Alt(5)):2^2):2$, $5^4:(3 \times 2L_2(25)):2_2$, $\Sym(5)^3:\Sym(3)$ & 3\textsuperscript{\textdagger} &  & \\
			
			$M_{11} \times \Alt(6).2^2$ & 4\textsuperscript{\textdagger} &  & \\
			
			\bottomrule
		\end{tabular}
	\end{center}
\end{table}

\clearpage
\bibliography{bib}{}
\bibliographystyle{plain}

\end{document}